\newlist{steps}{enumerate}{1}
\setlist[steps, 1]{label = \textbf{Step \arabic*}:}
\newcommand{\mathleft}{\@fleqntrue\@mathmargin0pt}
\newtheorem{lemma}[subsubsection]{Lemma}
\newtheorem*{definition*}{Definition}
\newtheorem*{proposition*}{Proposition}
\newtheorem*{remark*}{Remark}
\newtheorem*{theorem*}{Theorem}
\newtheorem*{observation*}{Observation}
\newtheorem*{assumption*}{Assumption}
\newtheorem*{lemma*}{Lemma}
\newtheorem*{claim*}{Claim}
\newtheorem{theorem}[subsubsection]{Theorem}
\newtheorem{proposition}[subsubsection]{Proposition}
\newtheorem{definition}[subsubsection]{Definition}
\newtheorem{corollary}[subsubsection]{Corollary}
\newtheorem{remark}[subsubsection]{Remark}
\newtheorem{thmx}{Theorem}
\newtheorem{corx}{Corollary}
\newtheorem{propx}{Proposition}
\newtheorem*{lem*}{Key Lemma}
\newtheorem*{thm*}{Main Theorem}
\newtheorem*{assum*}{Assumption}
\title{On Dynamical Parameter Space of Cubic Polynomials with a Parabolic Fixed Point}
\author{Runze Zhang}
\date{}
\begin{document}

\maketitle
\begin{abstract}
    This article focus on the connected locus of the cubic polynomial slice $Per_1(\lambda)$ with a parabolic fixed point of multiplier $\lambda=e^{2\pi i\frac{p}{q}}$. We first show that any parabolic component, which is a parallel notion of hyperbolic component, is a Jordan domain. Moreover, a continuum $\mathcal{K}_\lambda$ called the central part in the connected locus is defined. This is the natural analogue to the closure of the main hyperbolic component of $Per_1(0)$. We prove that $\mathcal{K}_\lambda$ is almost a double covering of the filled-in Julia set of the quadratic polynomial $P_{\lambda}(z) = \lambda z+z^2$.
\end{abstract}

\section{Introduction}
The dynamics of iteration of degree $d\geq 2$ complex polynomials acting on the complex plane is very rich. As a consequence, the connected locus $\mathcal{C}^{d} :=\{f\in\mathcal{P}^d;\,\text{ the Julia set of }f \text{ is  connected.}\}$ presents complicate fractal structures, where $\mathcal{P}^d\cong\mathbb{C}^{d-1}$ is the parameter space of degree $d$ polynomials. When $d=2$, $\mathcal{C}^2\subset\mathbb{C}$ is known as the Mandelbrot set $\textbf{M}$. The famous MLC conjecture asserting that $\textbf{M}$ is locally connected is a central problem in the study of dynamics of quadratic polynomials, since it is equivalent to the combinatorial rigidity conjecture and will imply the density of hyperbolicity conjecture. It also implies that $\mathbf{M}$ has a topological model of a certain quotient of the closed unit disk. However when $d$ augments, $\mathcal{C}^d$ exhibits much more sophisticated structures and the analogue of MLC conjecture for $\mathcal{C}^3$ turns out to be false \cite{lavaurs}. For this reason, Milnor suggested to investigate $\mathcal{C}^3$ by restricting the parameter space to complex 1-dimensional algebraic curves by adding dynamical conditions. In fact, when restricting to 1-dimensional slices, the local connectivity usually holds for non-renormalisable or finitely renormalisable parameters (\cite{Roesch1}, \cite{WANG2021107554}, \cite{misha}). This paper will focus on $Per_1(\lambda)$, the 1-dimensional slice of the cubic polynomials which have a parabolic fixed point with multiplier $\lambda =e^{2\pi i\frac{p}{q}}$ ($p,q$ coprime). We aim at investigating local connectivity at certain parameters in $\mathcal{C}_\lambda := \mathcal{C}^3\cap Per_1(\lambda)$ and giving a global description of $\mathcal{C}_\lambda$. As a byproduct, we prove that $\mathcal{C}_\lambda$ presents a big patch called "the central part" which is almost a double covering of the filled-in Julia set of $P_\lambda(z) =\lambda z+z^2$.

More precisely, consider the space of unitary cubic polynomials fixing the origin 0:
\begin{equation}\label{eq.family_a}
   f_{\lambda,a}(z) = \lambda z+az^2 + z^3,\,\,(\lambda,a)\in\mathbb{C}^2.
\end{equation}
Notice that every cubic polynomial is affinely conjugated to one of the polynomials of form (\ref{eq.family_a}). By fixing $\lambda$, one gets the slice
\[Per_{1}(\lambda) := \{f_{\lambda,a};\,a\in \mathbb{C}\}\cong\mathbb{C}\]
and its corresponding connected locus $\mathcal{C}_{\lambda} := \{a\in\mathbb{C};\,J_{\lambda,a} \text{ is connected}\}$, where $J_{\lambda,a}$ is the Julia set of $f_{\lambda,a}$. It is a classical result that the Julia set of a polynomial is connected if and only if none of its critical points escapes to infinity. Thus one also has
\begin{equation*}
    \mathcal{C}_\lambda
       = \{a\in \mathbb{C};\text{both critical points of $f_{\lambda,a}$ do not escape to } \infty\}.
\end{equation*}

When $|\lambda|\textless 1$. (resp. $\lambda = e^{\frac{2\pi ip}{q}}$), $0$ is an attracting (resp. parabolic) fixed point. By classical results in holomorphic dynamics, 0 attracts at least one of the two critical points of $f_{\lambda,a}$. For each $\lambda$ fixed, it is natural to consider the following attracting locus:

\begin{equation}\label{eq.attracting_locus}
    \mathcal{H}^{\lambda} := \{a\in\mathbb{C};\,\text{both critical points of }f_{\lambda,a} \text{ are attracted by }0\}.
\end{equation}

\begin{definition*}
    Let $|\lambda|\textless 1$. resp. $\lambda = e^{\frac{2\pi ip}{q}}$. The \textbf{central part} $\mathcal{K}_\lambda$ of $\mathcal{C}_\lambda$ is defined to be the connected component of $\overline{\mathcal{H}_\lambda}$ containing $a=0$. 
\end{definition*}

Our main reult is the following:
\begin{thm*}
Let $\lambda = e^{2\pi i\frac{p}{q}}$. Every connected component of $\mathring{\mathcal{H}^\lambda}$ is a Jordan domain. The central part $\mathcal{K}_\lambda$ is a full continuum and is locally connected. Moreover, there exists a dynamically defined double covering: 
\[\mathfrak{G}:\mathcal{K}_\lambda\setminus\mathcal{I}\longrightarrow K_\lambda\setminus\bigcup^{q-1}_{i=0} \overline{P_i}\] 
where $\mathcal{I}\subset\mathcal{K}_\lambda$ is a curve passing $a=0$; $K_\lambda$ the filled-in Julia set of $P_\lambda(z) = \lambda z+z^2$ and $(P_i)_i$ are $q$ petals contained respectively in the $q$ immediate basins of $P_\lambda(z)$.
\end{thm*}

The Main Theorem can also be stated for $|\lambda|\textless 1$ and is proved in \cite{Roesch1} ($\lambda$ = 0) and \cite{Tanlei} ($0\textless |\lambda|\textless 1$). Our result generalizes theirs to all parabolic slices. In a recent paper by A. Blokh, L. Oversteegen and V. Timorin \cite{blokh}, a global description of $\mathcal{C}_\lambda$ for $|\lambda|\leq 1$ is given by decomposing it into a full continuum $\mathcal{CU}_\lambda$ called "main cubioid" and the limbs attached to it, where they define $\mathcal{CU}_\lambda$ to be the collection of all $f_{\lambda,a}$ verifying the following property:
\begin{itemize}
    \item[~] $f_{\lambda,a}$ has a non-repelling fixed point, $f_{\lambda,a}$ has no repelling periodic cutpoints
in $J_{\lambda,a}$, and all non-repelling periodic points of $f_{\lambda,a}$, except at most one fixed point,
have multiplier 1.
\end{itemize}

As a byproduct of the proof of our Main Theorem, we are able to obtain a more visualisable description of $\mathcal{CU}_\lambda$:
\begin{corx}\label{cor.blokh}
   For $\lambda = e^{2\pi ip/q}$ we have $\mathcal{K}_\lambda = \mathcal{CU}_\lambda$.
\end{corx}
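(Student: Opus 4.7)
\bigskip

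\noindent\textbf{Proof proposal for Corollary \ref{cor.blokh}.} The plan is to establish both inclusions $\mathcal{K}_\lambda \subseteq \mathcal{CU}_\lambda$ and $\mathcal{CU}_\lambda \subseteq \mathcal{K}_\lambda$ by combining the structural information packaged in the Main Theorem (Jordan parabolic components, full continuum, double covering $\mathfrak{G}$) with the defining properties of the main cubioid. Both sets are full continua containing $a=0$, so the comparison will ultimately reduce to matching their dynamical characterisations on each relevant region, and then transporting connectedness information between $\mathcal{C}_\lambda$ and the quadratic slice $K_\lambda$ via $\mathfrak{G}$.

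\textbf{Forward inclusion.} For $a \in \mathring{\mathcal{H}^\lambda}$ the three cubioid conditions can be checked directly: the origin is a non-repelling fixed point of multiplier $\lambda$; by Fatou's theorem on non-repelling cycles, and because both critical orbits are absorbed by $0$, there simply is no other non-repelling cycle, so the multiplier condition is vacuously satisfied; and the absence of repelling periodic cutpoints follows from the Jordan property of each parabolic component (first part of the Main Theorem) together with the classical description of Julia sets for geometrically finite maps in a parabolic slice. To pass from $\mathring{\mathcal{H}^\lambda}$ to $\mathcal{K}_\lambda$, I would argue by taking limits along sequences in $\mathring{\mathcal{H}^\lambda}$, relying on the continuity of multipliers of periodic points, lower semicontinuity of the Julia set, and the fact that any non-repelling cycle appearing in a boundary limit must be parabolic of multiplier $1$ (it is born at a pinched petal of the already existing parabolic fixed point at $0$, not at a genuinely new petal system).

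\textbf{Reverse inclusion.} Take $a \in \mathcal{CU}_\lambda$. The cubioid axioms forbid attracting cycles other than the fixed point at $0$, and forbid parabolic cycles of multiplier $\ne 1$ apart from the one at $0$. By a Fatou/Mañé-type alternative, each critical orbit of $f_{\lambda,a}$ either lands in the parabolic basin of $0$, is captured by a multiplier-$1$ parabolic cycle, or accumulates on a recurrent subset of $J_{\lambda,a}$. The no-cutpoint hypothesis, transported through $\mathfrak{G}$ to the analogous statement for the quadratic Julia set $K_\lambda$ (where the description of the main cardioid is classical), excludes the recurrent third alternative. The first two alternatives together say precisely that $a \in \overline{\mathcal{H}^\lambda}$. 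Since $\mathcal{CU}_\lambda$ is connected (by Blokh--Oversteegen--Timorin) and contains $0 \in \mathcal{K}_\lambda$, any such $a$ lies in the same connected component of $\overline{\mathcal{H}^\lambda}$ as $0$, i.e.\ $a \in \mathcal{K}_\lambda$.

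\textbf{Main obstacle.} The delicate point is the reverse inclusion at boundary parameters: one must rule out a scenario in which the cubioid conditions are met while a critical orbit is pulled out of the central basin in such a way that $a$ lands in a different connected component of $\overline{\mathcal{H}^\lambda}$ than $\mathcal{K}_\lambda$, or in $\mathcal{C}_\lambda \setminus \overline{\mathcal{H}^\lambda}$ altogether. I expect that overcoming this will require a careful landing/impression analysis of the curve $\mathcal{I}$ and of the preimages under $\mathfrak{G}$ of the petals $\overline{P_i}$, so as to identify where in $\mathcal{C}_\lambda$ a given combinatorial datum can live and thereby force $a \in \mathcal{K}_\lambda$.
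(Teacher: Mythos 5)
Your outline identifies the right two inclusions, but both halves rest on steps that do not work as stated. For the forward inclusion, the only nontrivial cubioid axiom is the absence of repelling periodic cutpoints, and this is a statement about the dynamical plane of $f_{\lambda,a}$ at boundary parameters of $\mathcal{K}_\lambda$; it does not follow from the Jordan-curve property of parabolic components (a parameter-space fact), and it does not pass to limits along sequences in $\mathring{\mathcal{H}^\lambda}$ -- repelling cutpoints do appear in limits, precisely at and beyond renormalisable boundary points, so a semicontinuity argument cannot detect where they fail to appear. The paper obtains this axiom on $\partial\mathcal{K}_\lambda$ from the classification of boundary parameters of $\mathscr{K}^m$ (Propositions \ref{prop.mandel} and \ref{prop.landing.mathscrH}: Misiurewicz parabolic, non-renormalisable, double parabolic, or a cusp with a multiplier-$1$ parabolic cycle) together with the fact that exactly one external ray lands at every repelling periodic point for such parameters (Proposition \ref{prop.non-trivial-limb} and Lemma \ref{lem.unique.external.end-point}); nothing in your sketch supplies a substitute for this ray-landing rigidity.

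For the reverse inclusion your argument is circular and its trichotomy misclassifies genuine cubioid parameters. The covering $\mathfrak{G}$ is defined only on $\mathcal{K}_\lambda\setminus\mathcal{I}$, so you cannot ``transport the no-cutpoint hypothesis through $\mathfrak{G}$'' for an $a\in\mathcal{CU}_\lambda$ that you have not yet placed in $\mathcal{K}_\lambda$ -- that is exactly what is to be proved. Moreover the claim that the first two alternatives characterise $\overline{\mathcal{H}^\lambda}$ fails: at non-renormalisable parameters of $\partial\mathcal{K}_\lambda$ the free critical value lies on the Julia set (it eventually hits the boundary of the immediate basins, Corollary \ref{cor.critical-value-on-boundary}), so these parameters fall into your ``recurrent'' third alternative, which you propose to exclude even though they belong to $\mathcal{CU}_\lambda$. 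The paper's actual mechanism is the global decomposition of Theorem \ref{thm.decompo.para}: within each sector, $\mathcal{C}_\lambda$ is the union of $\mathscr{K}^m$ and limbs attached at renormalisable boundary parameters, and any parameter strictly inside such a limb violates the cubioid axioms (it carries a repelling periodic cutpoint coming from the wake of the quadratic-like renormalization, or a non-repelling cycle of multiplier $\neq 1$), so $\mathcal{CU}_\lambda$ is forced into $\mathscr{K}\cup-\mathscr{K}\cup\tilde{\mathscr{K}}\cup-\tilde{\mathscr{K}}=\mathcal{K}_\lambda$. Without this decomposition (or an equivalent landing/separation statement), your connectedness argument has nothing to anchor it.
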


Before illustrating the strategy of the proof, let us say a few more words about $\mathcal{H}^\lambda$. Let $c_1,c_2$ be the two critical points of $f_{\lambda,a}$. Following Milnor \cite{Milnor}, parameters in $\mathcal{H}^{\lambda}$ are divided into four different types: 
\begin{enumerate}[label=(\Alph*)]
\item Adjacent: $c_1,c_2$ are contained in the same Fatou component.
\item Bitransitive: $c_1,c_2$ belong to two different periodic Fatou components in the same cycle.
\item Capture: one of $c_1,c_2$ eventually hits a periodic Fatou component. 
\item Disjoint: $c_1,c_2$ belong to different cycles of periodic Fatou components.
\item[(MP)] Misiurewicz parabolic: one of $c_1,c_2$ eventually hits the parabolic fixed point $z=0$.
\end{enumerate}

\begin{definition*}
Let $\lambda = e^{2\pi i\frac{p}{q}}$. Type (D) parameters are exactly those $a\in\mathcal{C}_\lambda$ such that $z=0$ becomes parabolic degenerate. We also call them \textbf{double parabolic} parameters, and denote by $\mathcal{A}_{p/q}$ the collection of them. A connected component of Type (A) (B) or (C) is usually called a \textbf{parabolic component}.
\end{definition*}

\paragraph{Strategy of the proof.} Let $\lambda = e^{2\pi i\frac{p}{q}}$. To prove the Main Theorem, it is natural to begin with building an identification between $\mathring{\mathcal{K}_\lambda}$ and $\mathring{K_{\lambda}}$: intuitively, Type (A) and (B) corresponds to the immediate basins of $P_{\lambda}$; Type (C) in $\mathring{\mathcal{K}_\lambda}$ corresponds to infinite towers of preimages of the immediate basins attached on their boundaries (see Figure \ref{fig.firstpicture} to have a global picture of $\mathcal{K}_\lambda$ in mind). This can be done by giving dynamical parametrisations to $\mathring{\mathcal{H}_\lambda}$.
The hard part of the Main Theorem is to extend this identification continuously to $\partial\mathcal{K}_{\lambda}$, where the need of local connectivity naturally arises. The strategy for the proof of local connectivity of $\partial\mathcal{K}_\lambda$ is to use the "dynamical-parameter puzzle" technique: transfer the shrink property of dynamical puzzles to that of parameter puzzles. However, comparing to the super-attracting case (\cite{Faught}, \cite{Roesch1}) where this technique is applied, there are several difficulties that we need to overcome:
\begin{itemize}
    \item A priori, the existence of Type (A) (B) (D) parameters with given combinatorics (namely, the dynamical gap between two critical points) is not obvious. Based on Cui-Tan's pinching deformation theory \cite{CuiTan} (Appendix \ref{subsec.appendix_pinching}), we are able to fully solve this question. 
 
    \item The parametrisation of Type (A) (B) components is more complicate. We need to deal with $f_{\lambda,a}$ such that the boundary of its maximal Fatou petal contains two critical points of the return map $f^q_{\lambda,a}$, since at such $a$, the parametrisation by locating the free critical value fails. We will show that the collection of such parameters in each Type (A) (B) components is a curve with end points at Type (D) parameters. Then the parametrisation will work in the complement of this curve.
    \item $\mathcal{C}_\lambda$ can be viewed as a "pinched model" of $\mathcal{C}_0$, where the pinched points are exactly Type (D) (MP) parameters. At such parameters there are no longer nested puzzles surrounding it and thus Grötzsch inequality can not be applied. Instead, we will use an argument based on holomorphic motion to conclude local connectivity. See \ref{subsec.loc.connec.Misiurewicz-parabolic}. Moreover, in order to construct puzzle pieces adapted to Type (D) parameters, we need to investigate the landing external rays at them, which is a rather subtle subject, see \ref{subsec.landing_double_parabolic}.
\end{itemize}

\begin{figure}[H] 
\centering 
\includegraphics[width=0.75\textwidth]{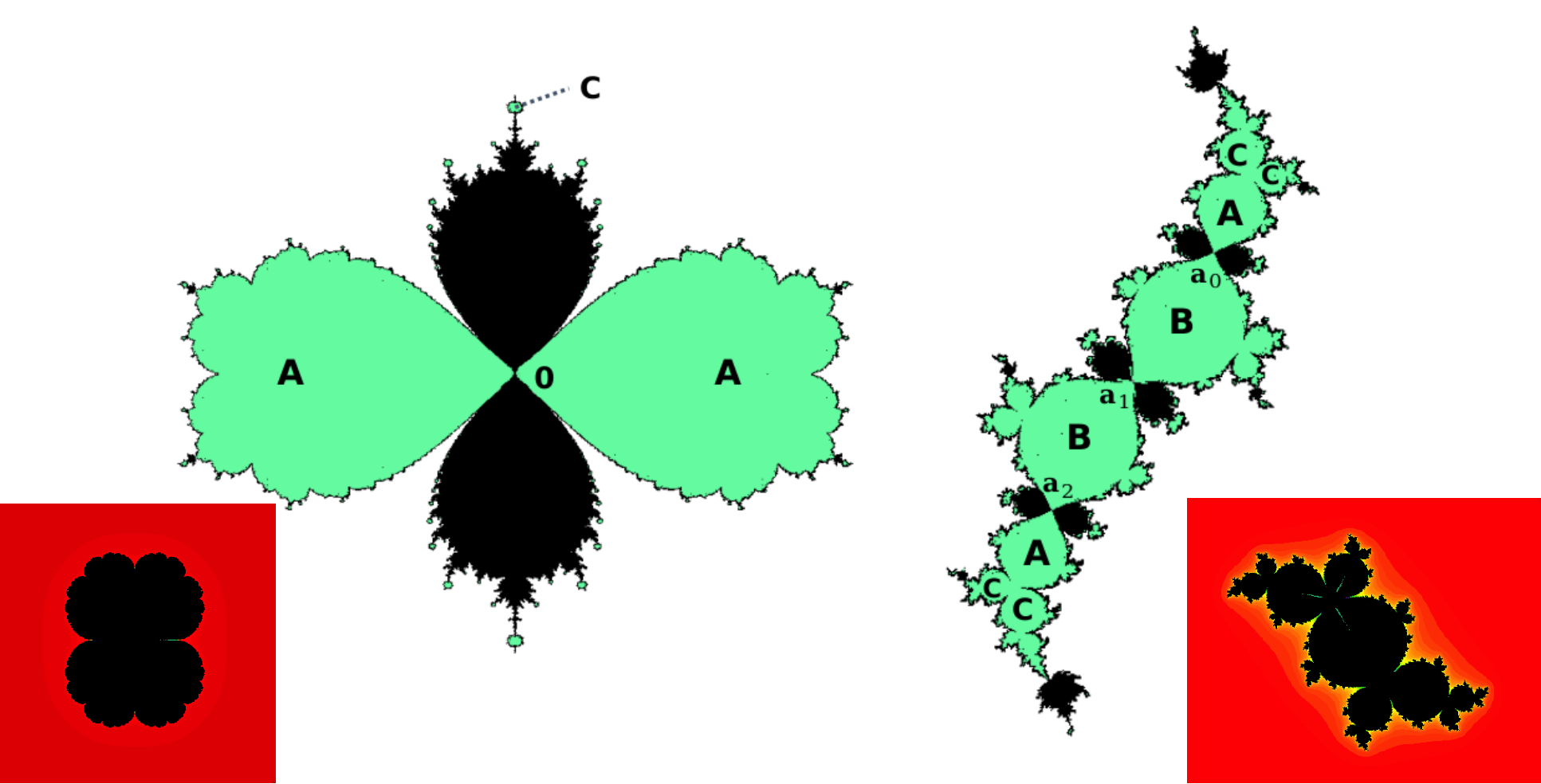}
\caption{$\mathcal{C}_{\lambda}$ and the Julia set of $\lambda z+z^2$, with $\lambda  = 1$ on the left and $e^{\frac{2\pi i}{3}}$ on the right. Different types of parameters in $\mathcal{H}^\lambda$ are marked out. Type (A) (B) and (C) are in green. $0, \boldsymbol{\mathrm{a}}_0,\boldsymbol{\mathrm{a}}_1,\boldsymbol{\mathrm{a}}_2$ are of Type (D). Black parts are copies of the Mandelbrot set or the parabolic Mandelbrot set, correspond respectively to polynomial-like or parabolic-like maps \cite{lomonaco}.} 
\label{fig.firstpicture} 
\end{figure}

\paragraph{Outline of the paper.}
In Section \ref{sec.prepa} we recall some known results of parameter external rays, and the description of $\mathcal{C}_0$ given by \cite{Roesch1}. In Section \ref{sec.prop1_and_prop2}, we first prove the existence of Type (A) (B) components in \ref{subsec.existence.compo} and prove that there are exactly $q$ Type (D) parameters  in \ref{subsec.double_parabolic}. Then we parametrize Type (A) (B) components in \ref{subsec.parame.component} and prove their uniqueness (Corollary \ref{cor.unique.B_m}). We describe relative positions among Type (A) (B) (D) parameters and investigate landing parameter external rays at Type (D) parameters. (Proposition \ref{Prop.grand1} and \ref{Prop.grand2}). These results have their own interests and will simplify the construction of parameter puzzles later. Section \ref{sec.dym.graph} focus on the construction of admissible dynamical graphs, so that one can apply Yoccoz's Theorem to obtain shrinking puzzle pieces. In Section \ref{sec.paragraph}, we construct parameter puzzles and show that dynamical graphs moves holomorphically when the parameter varies in parameter puzzles. Finally Section \ref{sec.loc} is devoted to the proof of the Main Theorem and Corollary \ref{cor.blokh}.
\paragraph{Acknowledgements.}
I would like to thank Pascale Roesch for comments on the the manuscript. The pictures are generated by It of Mannes Technology and a personnal computer program of Arnaud Chéritat.

\section{Preparations}\label{sec.prepa}

\subsection{Families with marked out critical points}
Consider the following two families
\begin{equation}\label{eq.para.c}
    g_{\lambda,c}(z) = \lambda z\left(1-\frac{1+1/c}{2}z + \frac{1/c}{3}z^2\right),\,\,(\lambda,c)\in (\mathbb{C}^*)^2
\end{equation}
\begin{equation}\label{eq.para.s}
    \hat{g}_{\lambda,s}(z) = \lambda z\left(1-\frac{s+1/s}{2}z + \frac{1}{3}z^2\right),\,\,(\lambda,s)\in (\mathbb{C}^*)^2
\end{equation}
The advantage of $g_{\lambda,c}$ (or $\hat{g}_{\lambda,s}$) is that the two critical points $1,c$ (resp. $s,1/s$) are marked out. Fix $\lambda$-slice, denote by $\check{\mathcal{C}}_\lambda$ $\hat{\mathcal{C}}_\lambda$ respectively the connected locus for these two families. The attracting locus $\check{\mathcal{H}}^\lambda,\hat{\mathcal{H}}^\lambda$ are defined likewise as in (\ref{eq.attracting_locus}). For $\lambda\not=0$, the relation between $f_{\lambda,a}$, $g_{\lambda,c}$ and $\hat{g}_{\lambda,s}$ is given by 
\begin{lemma}\label{lem.relation}
$\hat{g}_{\lambda,s}$ is conjugate to $f_{\lambda,a}$ by $z\mapsto \sqrt{3/\lambda}\cdot z$ with $a = \sigma(s) = - \sqrt{{3}\lambda}\cdot\frac{s+1/s}{2}$ and is conjugate to $g_{\lambda,c}$ by $z\mapsto \frac{1}{s}\cdot z$ with $c = \iota(s) = s^2$.
\end{lemma}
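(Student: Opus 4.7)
The statement is an identification of affine conjugacies between three one-parameter families of cubics, so the plan is to verify each claim by direct substitution. For the first conjugacy, I would set $\phi(z) = \sqrt{3/\lambda}\, z$ and compute $f_{\lambda,a}(w) := \phi^{-1}(\hat g_{\lambda,s}(\phi(w))) = \sqrt{\lambda/3}\cdot\hat g_{\lambda,s}(\sqrt{3/\lambda}\,w)$. The factor $\sqrt{3/\lambda}$ is forced by the requirement that the resulting cubic be monic: under $z\mapsto\alpha z$ the leading coefficient of a monic-up-to-scaling cubic is multiplied by $\alpha^2$, so $\tfrac{\lambda}{3}\alpha^2 = 1$ yields $\alpha^2 = 3/\lambda$. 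With this choice, term-by-term expansion shows that the linear coefficient is preserved as $\lambda$, and the new quadratic coefficient is $-\tfrac{\lambda(s+1/s)}{2}\alpha = -\sqrt{3\lambda}\cdot(s+1/s)/2 = \sigma(s)$, exactly the claimed $a$.

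For the second conjugacy, I would set $\psi(z) = z/s$ and compute $\psi^{-1}(\hat g_{\lambda,s}(\psi(w))) = s\cdot\hat g_{\lambda,s}(w/s)$. A one-line expansion yields $\lambda w - \tfrac{\lambda(1+1/s^2)}{2}w^2 + \tfrac{\lambda}{3s^2}w^3$, which matches $g_{\lambda,c}(w) = \lambda w - \tfrac{\lambda(1+1/c)}{2}w^2 + \tfrac{\lambda}{3c}w^3$ precisely when $c = s^2 = \iota(s)$. As an a posteriori check, $\psi$ sends the critical pair $\{s, 1/s\}$ of $\hat g_{\lambda,s}$ to $\{1, 1/s^2\} = \{1, 1/c\}$, which is the critical pair of $g_{\lambda,c}$ obtained from $g'_{\lambda,c}(z) = \tfrac{\lambda}{c}(z-1)(z-c)$; and similarly $\phi$ sends the critical pair of $f_{\lambda,a}$ to $\{s,1/s\}$.

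There is no substantive obstacle here. The only care needed is the bookkeeping of square roots — an implicit choice of branch of $\sqrt{3/\lambda}$ is being made, the two branches swapping $a\leftrightarrow -a$ and reflecting the $\mathbb{Z}/2$-symmetry $z\mapsto -z$ of the $f$-family — together with consistently tracking the direction of each conjugacy, after which both identities reduce to elementary polynomial expansions.
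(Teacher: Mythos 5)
Your two substitution computations are correct and are exactly the (omitted, purely computational) verification the paper has in mind: forcing the cubic coefficient to be $1$ gives $\alpha^2=3/\lambda$ and hence $a=-\sqrt{3\lambda}\cdot\frac{s+1/s}{2}$, and expanding $s\,\hat{g}_{\lambda,s}(w/s)$ gives $g_{\lambda,c}$ with $c=s^2$; the remark about the branch of $\sqrt{3/\lambda}$ corresponding to $a\leftrightarrow -a$ is also apt. One small slip in your a posteriori check: since you established $g_{\lambda,c}=\psi^{-1}\circ\hat{g}_{\lambda,s}\circ\psi$ with $\psi(z)=z/s$, the critical points of $g_{\lambda,c}$ are the images of $\{s,1/s\}$ under $\psi^{-1}(z)=sz$, namely $\{c,1\}$ — consistent with $g'_{\lambda,c}(z)=\frac{\lambda}{c}(z-1)(z-c)$ and with the paper's statement that the marked critical points are $1,c$ — not $\{1,1/c\}$ as you wrote; this does not affect the main verification.
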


\subsection{Parameter external rays in $Per_1(\lambda)$, $|\lambda|\leq 1$}

Let $f$ be a polynomial with connected Julia set and $z$ a (pre-)periodic point. It is a classical result by Yoccoz that $z$ admits an external ray landing (cf. \cite{DH}). The collection of all the angles of external rays landing at $z$ is called the \textbf{portrait} at $z$. For family (\ref{eq.family_a}), the Böttcher coordinate $\phi^\infty_{\lambda,a}$ at $\infty$ depends analytically on $(\lambda,a)$ by taking the normalization $\phi^\infty_{\lambda,a}(z) = z+{o}(1)$. So there is no ambiguity of angles when $(\lambda,a)$ varies.

Now suppose $|\lambda|\leq 1$. It is a classical result (cf. \cite{Zakeri1999DynamicsOC}) that $\mathcal{C}_\lambda$ is a full continuum containing $\pm\sqrt{3\lambda}$. Thus one can define analytically the two critical points of $f_{a,\lambda}$ for $a\in\mathbb{C}\setminus\mathcal{C}_\lambda$: $c^\pm_{\lambda,a} = \frac{-a\pm\sqrt{a^2-3\lambda}}{3}$ such that $c^+_{\lambda,a}\in K_{\lambda,a}$ and $c^-_{\lambda,a}\in\mathbb{C}\setminus K_{\lambda,a}$. Set $v^{\pm}_{\lambda,a} = f_a(c_{\pm}(a))$. One can parametrize $\mathbb{C}\setminus\mathcal{C}_\lambda$ by looking at the position of $v^{-}_{\lambda,a}$ in the Böttcher coordinate: 
\begin{proposition}[\cite{Zakeri1999DynamicsOC}]\label{prop.parametri.H_infini}
Let $|\lambda|\leq 1$. The mapping $\Phi^\lambda_{\infty}:\mathbb{C}\setminus{\mathcal{C}_{\lambda}}\longrightarrow\mathbb{C}\setminus\mathbb{\overline{D}}$ defined by $a\mapsto \phi^{\infty}_{a}(v^{-}_{\lambda,a})$ is a degree 3 covering.
\end{proposition}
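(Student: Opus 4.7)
The plan is to follow the classical Branner--Hubbard / Douady--Hubbard recipe for polynomial parameter planes: first check that $\Phi^\lambda_\infty$ is well-defined and holomorphic, then establish properness and read off the degree from the behavior at infinity, and finally apply Riemann--Hurwitz to rule out interior critical points and upgrade a branched covering to an actual covering.

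First I would verify that $c^-_{\lambda,a}=(-a-\sqrt{a^2-3\lambda})/3$ defines a single-valued holomorphic function on $\mathbb{C}\setminus\mathcal{C}_\lambda$. Its branch points $\pm\sqrt{3\lambda}$ both lie in the full continuum $\mathcal{C}_\lambda$, so any loop in the complement encircles them the same number of times, making the square root single-valued; in particular $v^-_{\lambda,a}=f_{\lambda,a}(c^-_{\lambda,a})$ is holomorphic in $a$. The Böttcher coordinate $\phi^\infty_{\lambda,a}$ is defined on $\{z:G_{\lambda,a}(z)>G_{\lambda,a}(c^-)\}$ and depends holomorphically on $a$ there, and since $G_{\lambda,a}(v^-)=3G_{\lambda,a}(c^-)$ the point $v^-$ lies in this domain. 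Hence $\Phi(a)=\phi^\infty_{\lambda,a}(v^-_{\lambda,a})$ is holomorphic on $\mathbb{C}\setminus\mathcal{C}_\lambda$.

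Next I would examine the two ends. As $a\to\mathcal{C}_\lambda$, continuity of the Green's function at the boundary of the escape locus forces $G_{\lambda,a}(c^-)\to 0$, hence $|\Phi(a)|=\exp(3G_{\lambda,a}(c^-))\to 1$. As $|a|\to\infty$, a Laurent expansion of the critical points yields $c^-_{\lambda,a}=-\tfrac{2a}{3}+O(1/a)$ and $v^-_{\lambda,a}=\tfrac{4a^3}{27}+O(a)$; combined with the normalization $\phi^\infty_{\lambda,a}(z)=z+o(1)$ at infinity, this gives $\Phi(a)\sim\tfrac{4a^3}{27}\to\infty$. Thus $\Phi$ is proper, and for $|w|$ sufficiently large it has exactly three preimages (all clustered near $\infty$), so its global degree is $3$.

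The step I expect to be the main obstacle is ruling out interior critical points, without which $\Phi$ would only be a branched covering and the statement would fail. My plan is to invoke Riemann--Hurwitz for proper holomorphic maps of non-compact Riemann surfaces. Both $\mathbb{C}\setminus\mathcal{C}_\lambda$ (since $\mathcal{C}_\lambda$ is a full continuum) and $\mathbb{C}\setminus\overline{\mathbb{D}}$ deformation-retract onto $S^1$, so their Euler characteristics vanish. Riemann--Hurwitz applied to the proper degree-$3$ map $\Phi$ then reads
\[ 0=\chi(\mathbb{C}\setminus\mathcal{C}_\lambda)=3\cdot\chi(\mathbb{C}\setminus\overline{\mathbb{D}})-\sum_{a}(e_a-1)=-\sum_a(e_a-1), \]
forcing $e_a=1$ at every point of the domain. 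Consequently $\Phi$ has no critical points in $\mathbb{C}\setminus\mathcal{C}_\lambda$ and is therefore an unbranched covering of degree $3$, completing the proof.
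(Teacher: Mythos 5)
Your proof is correct. The paper does not give its own argument but defers to \cite{Zakeri1999DynamicsOC}, which in turn adapts the Branner--Hubbard method for the escape locus of cubics; your argument is precisely that standard approach, and the Riemann--Hurwitz step you flag as the ``main obstacle'' is the clean way to see that a proper holomorphic map between finite-type annuli (both of vanishing Euler characteristic) must be unramified.
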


A parameter external ray with angle $t$ is defined to be a connected component of $(\Phi^\lambda_{\infty})^{-1}(\{re^{2\pi it};\,r\textgreater 1\})$. In most cases, we denote a parameter external ray by $\mathcal{R}^\lambda_\infty$ without precising the component. When we say "$\mathcal{R}^\lambda_\infty$ lands at $a_0\in\mathcal{C}_\lambda$", it means that one of the three components of $\Phi^\lambda_{\infty})^{-1}(\{re^{2\pi it};\,r\textgreater 1\})$ accumulates at $a_0$. We use the notation $\mathcal{R}^\lambda_\infty(t)*(r_1,r_2)$ to represent the set $(\Phi^\lambda_{\infty})^{-1}(\{re^{2\pi it};\,r\in(r_1,r_2)\})$.

\begin{remark}
Let us just mention that in \cite{Zakeri1999DynamicsOC} the proposition above is stated only for $\lambda = e^{2\pi i\theta}$ with $\theta$ of Brjuno type. However the proof there works without any change for all $\lambda\in\overline{\mathbb{D}}\setminus\{0\}$.
\end{remark}

Proposition \ref{prop.zakeri.parametri} can be passed to family (\ref{eq.para.s}) by Lemma \ref{lem.relation}: 

\begin{proposition}\label{prop.zakeri.parametri}
For $\lambda\in\overline{\mathbb{D}}\setminus\{0\}$, $\mathbb{C}^*\setminus\hat{{\mathcal{C}}}_{\lambda}$ has exactly two connected components ($\tau:s\mapsto1/s$)
\[\hat{\mathcal{H}}_{\infty}=\{s\in\mathbb{C}^*;\,\hat{g}^n_{\lambda,s}(s)\to\infty\text{ as }n\to\infty\}\]
\[\tau\hat{\mathcal{H}}_{\infty} = \{s\in\mathbb{C}^*;\,\hat{g}^n_{\lambda,s}(1/s)\to\infty\text{ as }n\to\infty\}\]
which are punctured neighorhoods of $\infty,0$ respectively and are homeomorphic to punctured disk. Moreover the mapping $\hat{\Phi}_{\infty}:\hat{\mathcal{H}}_{\infty,0}\longrightarrow\mathbb{C}\setminus\overline{D}$ given by $\hat{\Phi}_{\infty}(s) = \hat{\phi}^{\infty}_s(\hat{g}_{\lambda,s}(s))$ is a degree 3 covering, where $\hat{\phi}^{\infty}_{s}(z) = \phi^\infty_{\sigma(s)}(\frac{\lambda}{3}\cdot z)$.
\end{proposition}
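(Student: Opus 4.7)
The plan is to deduce this proposition from Proposition~\ref{prop.parametri.H_infini} by pulling everything back along the two-to-one map $\sigma:\mathbb{C}^*\to\mathbb{C}$, $s\mapsto -\sqrt{3\lambda}(s+1/s)/2$, furnished by Lemma~\ref{lem.relation}. Since $\hat{g}_{\lambda,s}$ is affinely conjugate to $f_{\lambda,\sigma(s)}$, one has $\hat{\mathcal{C}}_\lambda=\sigma^{-1}(\mathcal{C}_\lambda)$, hence $\mathbb{C}^*\setminus\hat{\mathcal{C}}_\lambda=\sigma^{-1}(\mathbb{C}\setminus\mathcal{C}_\lambda)$. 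The map $\sigma$ extends to a degree $2$ rational map $\mathbb{CP}^1\to\mathbb{CP}^1$ with $\sigma(0)=\sigma(\infty)=\infty$, ramified only at $s=\pm 1$ with critical values $\mp\sqrt{3\lambda}$. Since the latter lie in $\mathcal{C}_\lambda$ by the Zakeri result recalled before Proposition~\ref{prop.parametri.H_infini}, $\sigma$ restricts to an unramified $2$-to-$1$ covering $\mathbb{C}^*\setminus\hat{\mathcal{C}}_\lambda\to\mathbb{C}\setminus\mathcal{C}_\lambda$.

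I would then argue that this covering is trivial. The involution $\tau(s)=1/s$ satisfies $\sigma\circ\tau=\sigma$ and acts freely as the deck transformation. A large circle $\{|a|=R\}$, which generates $\pi_1(\mathbb{C}\setminus\mathcal{C}_\lambda)\cong\mathbb{Z}$, lifts to two disjoint circles (one near $0$ and one near $\infty$), since the roots of $s+1/s=-2a/\sqrt{3\lambda}$ multiply to $1$. Hence the covering splits: $\mathbb{C}^*\setminus\hat{\mathcal{C}}_\lambda$ has exactly two connected components, exchanged by $\tau$, and $\sigma$ restricts to a biholomorphism on each. Denote by $\hat{\mathcal{H}}_\infty$ the component containing a punctured neighbourhood of $\infty$, so that $\tau\hat{\mathcal{H}}_\infty$ is a punctured neighbourhood of $0$. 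Both are homeomorphic to $\mathbb{C}\setminus\mathcal{C}_\lambda$, which, because $\mathcal{C}_\lambda$ is a full continuum, is conformally a punctured disk.

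To match the dynamical description I would verify that for $|s|\gg1$ the marked critical point $s$ of $\hat{g}_{\lambda,s}$ is indeed the escaping one, by the direct asymptotic $\hat{g}_{\lambda,s}(s)\sim\lambda s^4/3$. This persists throughout $\hat{\mathcal{H}}_\infty$ by continuity, since on $\mathbb{C}^*\setminus\hat{\mathcal{C}}_\lambda$ exactly one of the two critical points $s, 1/s$ escapes (the other being trapped in the filled Julia set) and the set of $s$ for which $s$ is the escaping one is therefore open and closed in $\hat{\mathcal{H}}_\infty$; the analogous description of $\tau\hat{\mathcal{H}}_\infty$ follows by applying $\tau$.

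For the degree $3$ covering, the conjugacy $z\mapsto \sqrt{3/\lambda}\cdot z$ of Lemma~\ref{lem.relation} transports the Böttcher coordinate of $f_{\lambda,\sigma(s)}$ to a Böttcher-type coordinate for $\hat{g}_{\lambda,s}$, with the scaling $\lambda z/3$ accounting for the fact that $\hat{g}_{\lambda,s}$ has leading coefficient $\lambda/3$ rather than $1$; under this identification $\hat{g}_{\lambda,s}(s)$ corresponds to the escaping critical value $v^-_{\lambda,\sigma(s)}$. Therefore $\hat{\Phi}_\infty=\Phi^\lambda_\infty\circ\bigl(\sigma|_{\hat{\mathcal{H}}_\infty}\bigr)$, and since $\sigma|_{\hat{\mathcal{H}}_\infty}$ is a biholomorphism while $\Phi^\lambda_\infty$ is a degree $3$ covering by Proposition~\ref{prop.parametri.H_infini}, so is $\hat{\Phi}_\infty$. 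The main technical point to be careful about is tracking the Böttcher normalisations through the non-monic conjugacy, but it is a finite check.
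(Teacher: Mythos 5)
Your proposal is correct and takes essentially the paper's own route: the paper obtains this proposition precisely by transferring Zakeri's parametrization (Proposition \ref{prop.parametri.H_infini}) to the family $\hat{g}_{\lambda,s}$ through the degree-two parameter map $\sigma$ of Lemma \ref{lem.relation}, and your covering/monodromy argument together with the identification $\hat{\Phi}_\infty=\Phi^\lambda_\infty\circ\sigma|_{\hat{\mathcal{H}}_\infty}$ just makes explicit the details the paper leaves implicit. One cosmetic slip: $\hat{g}_{\lambda,s}(s)=\tfrac{\lambda}{2}s-\tfrac{\lambda}{6}s^{3}\sim-\lambda s^{3}/6$ rather than $\lambda s^{4}/3$, which does not affect your escape argument for $|s|\gg 1$.
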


By Lemma \ref{lem.relation} we can write $\mathbb{C}^*\setminus \check{\mathcal{C}}_{\lambda} = \check{\mathcal{H}}^\lambda_{\infty}\cup\tau\check{\mathcal{H}}^\lambda_\infty$, where $\tau:c\mapsto \frac{1}{c}$ and
\[ \check{\mathcal{H}}^\lambda_{\infty} = \{c\in\mathbb{C}^*;\,g^n_{\lambda,c}(c)\to\infty\text{ as }n\to\infty\}\]
\[ \tau\check{\mathcal{H}}^\lambda_{\infty} = \{c\in\mathbb{C}^*;\,g^n_{\lambda,c}(1)\to\infty\text{ as }n\to\infty\}\]
by noticing that $cg_{\lambda,c}(\frac{z}{c}) = g_{\lambda,1/c}$. These two escaping regions are simply connected.

\begin{lemma}\label{lem.boundary.components}
Let $\check{\mathcal{U}}\subset\check{\mathcal{H}}^{\lambda}$ be an open component. Then $\check{\mathcal{U}}$ is simply connected and $\partial\check{\mathcal{U}}\subset\partial\check{\mathcal{C}}_{\lambda}$. In particular, if $\partial\check{\mathcal{U}}$ is locally connected, then it is a Jordan curve.
 \end{lemma}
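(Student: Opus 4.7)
The statement decomposes into three parts: $\partial\check{\mathcal{U}}\subset\partial\check{\mathcal{C}}_\lambda$, simple connectivity of $\check{\mathcal{U}}$, and (under local connectedness of the boundary) the Jordan curve property. I would handle them in this order. For the first, the key input is stability. The condition ``both critical points are attracted by the parabolic fixed point $0$'' is open in $c$, since small perturbations preserve the attracting petals at $0$ and the combinatorial absorption of each critical orbit into them; hence $\check{\mathcal{H}}^\lambda$ is open and $\check{\mathcal{U}}$ is an open component. If a boundary point $c_0\in\partial\check{\mathcal{U}}$ lay in $\mathring{\check{\mathcal{C}}}_\lambda$, then the connected component $W$ of $\mathring{\check{\mathcal{C}}}_\lambda$ containing $c_0$ would be $J$-stable by Ma\~n\'e--Sad--Sullivan (the critical orbits are uniformly bounded on $W$), and the combinatorial type of each critical orbit would be locally constant on $W$. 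In particular, the attracted-by-$0$ property would be both open and closed on $W$, forcing $W\subset\check{\mathcal{U}}$ and contradicting $c_0\in\partial\check{\mathcal{U}}\setminus\check{\mathcal{U}}$.

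For simple connectivity of $\check{\mathcal{U}}$, I would construct a proper holomorphic modulus $\rho:\check{\mathcal{U}}\to\mathbb{D}$ of Douady--Hubbard type, tailored to the combinatorial type of $\check{\mathcal{U}}$. For Types (A) and (B), $\rho$ is the multiplier of the unique non-parabolic attracting cycle absorbing both critical orbits; for Type (C), $\rho$ is the linearizing (or Böttcher) coordinate of the captured critical value inside its eventual periodic Fatou component. As $c$ approaches $\partial\check{\mathcal{U}}$, either the attracting cycle collides with the parabolic point $0$ (a Type (D) degeneration), becomes indifferent, or the captured critical value reaches the boundary of its Fatou component; each scenario forces $|\rho(c)|\to 1$. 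A proper holomorphic map to $\mathbb{D}$ is a branched cover, so $\check{\mathcal{U}}$ is simply connected.

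Given local connectedness of $\partial\check{\mathcal{U}}$, Carath\'eodory's theorem extends the Riemann map $\mathbb{D}\to\check{\mathcal{U}}$ continuously to $\overline{\mathbb{D}}\to\overline{\check{\mathcal{U}}}$. To conclude that $\partial\check{\mathcal{U}}$ is a Jordan curve I must rule out pinch points: a pinch $c_0$ would split $\check{\mathcal{U}}$ locally into two sectors meeting at $c_0$. Using the parameter ray structure of Proposition~\ref{prop.zakeri.parametri}, parameter rays from $\check{\mathcal{H}}^\lambda_\infty$ accumulate continuously on $\partial\check{\mathcal{U}}$ as a family parameterized by external angles; such a pinch would force this family to split in a way inconsistent with $\Phi^\lambda_\infty$ being a degree-$3$ covering with connected fibres, excluding pinches. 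I expect the construction of the proper modulus $\rho$ to be the main obstacle, since parabolic components degenerate at their boundaries in several geometrically distinct ways (Type (D) collisions, Misiurewicz--parabolic points, adjacent parabolic boundaries) that require case-by-case Fatou coordinate analysis.
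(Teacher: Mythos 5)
Your reduction of the Jordan-curve statement to Carath\'eodory plus boundary injectivity is reasonable, and your first step (no point of $\partial\check{\mathcal{U}}$ can be interior to $\check{\mathcal{C}}_\lambda$) is in the right spirit, though soft as written: ``attracted by the parabolic point'' is \emph{not} an open condition at double parabolic parameters, and ``the combinatorial type of each critical orbit is locally constant on a $J$-stable component'' is not a quotable fact. What actually closes this step is normality of the critical-orbit functions $c\mapsto g^{nq}_{\lambda,c}(c)$, $g^{nq}_{\lambda,c}(1)$ on $\mathring{\check{\mathcal{C}}}_\lambda$, Vitali's theorem (pointwise convergence to $0$ on an open subset propagates to the whole interior component), and the classical fact that an orbit converging to a parabolic point without hitting it eventually enters an attracting petal; the paper itself just cites its source for this part.

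The genuine gaps are in the other two parts. For simple connectivity you propose a proper modulus $\rho:\check{\mathcal{U}}\to\mathbb{D}$ given by the multiplier of ``the unique non-parabolic attracting cycle'' for Types (A),(B) and a B\"ottcher or linearizing coordinate for Type (C). No such objects exist: in Type (A)/(B) components both critical points lie in the parabolic basin of $z=0$, so there is no attracting cycle at all, and a captured critical value in Type (C) lands in a parabolic basin component, which carries a Fatou coordinate, not a linearizer. Building a usable parametrization of these components is exactly the hard content of Subsection \ref{subsec.parame.component}, and even there the natural map degenerates along the curve $\check{\mathcal{I}}_m$ and is not a proper map onto $\mathbb{D}$. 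Moreover, even granting such a proper map, properness alone does not imply simple connectivity: an annulus admits a proper degree-two branched cover onto the disk (Riemann--Hurwitz allows $\chi=d-r=0$), so your final implication is false. The paper's argument is elementary instead: for a Jordan curve $\gamma\subset\check{\mathcal{U}}$ the maximum principle bounds $g^n_{\lambda,c}(c)$ and $g^n_{\lambda,c}(1)$ on the enclosed disk, and Vitali then forces both critical orbits there to converge to $0$, so the disk lies in $\check{\mathcal{U}}$. Likewise, your exclusion of pinch points via the degree-$3$ covering $\Phi^\lambda_\infty$ is not a proof: it presupposes that the family of parameter rays lands or accumulates continuously on $\partial\check{\mathcal{U}}$ as a function of the angle (not available at this stage, and for rational angles something the paper must labor to prove), and ``inconsistent with a degree-$3$ covering with connected fibres'' does not identify an actual contradiction. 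What the lemma uses is the pocket argument: if the Carath\'eodory extension were not injective, two internal rays landing at a common boundary point would bound a Jordan domain whose closure is trapped inside $\check{\mathcal{C}}_\lambda$ by the maximum principle, yet it would contain points of $\partial\check{\mathcal{U}}\subset\partial\check{\mathcal{C}}_\lambda$, which must be approximated by escaping parameters --- a contradiction. So parts two and three of your proposal need to be replaced.
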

 \begin{proof}
 Simply connectivity can be shown easily by applying maximum principle to $g^n_{\lambda,c}(c)$ or $g^n_{\lambda,c}(1)$; for $\partial\check{\mathcal{U}}\subset\partial\check{\mathcal{C}}_{\lambda}$, see \cite[Lem. 2.1.5]{runze}. We prove the last statement. Suppose $\partial\check{\mathcal{U}}$ is locally connected, then any conformal representation $\Psi:\mathbb{D}\longrightarrow\check{\mathcal{U}}$ can be extended continuously and surjectively to the boundary. Moreover $\Psi:\partial\mathbb{D}\longrightarrow\partial\check{\mathcal{U}}$ is injective: if not, then there exists $a\in\partial\check{\mathcal{U}}$ accessible by two rays $\Psi(re^{it_1}),\Psi(re^{it_2})$ which bound a simply connected region containing part of $\partial\check{\mathcal{U}}$. This contradicts $\partial\check{\mathcal{U}}\subset\check{\mathcal{C}}_\lambda$
 \end{proof}

\begin{definition}
Let $|\lambda|\leq 1$. $a_0\in\mathcal{C}_\lambda$ is a \textbf{Misiurewicz parameter} if one of the critical points is repelling pre-periodic. For $\lambda = e^{\frac{2\pi ip}{q}}$, $a_0\in\mathcal{C}_\lambda$ is a \textbf{Misiurewicz parabolic parameter} if one of the critical points is pre-periodic to $z=0$. The same notions are defined for $g_{\lambda,c}$ and $\hat{g}_{\lambda,s}$.
\end{definition}

\begin{lemma}\label{lem.rational_rays_property}
Let $|\lambda|\leq 1$, $t\in\mathbb{Q}/\mathbb{Z}$. Then $\mathcal{R}^\lambda_\infty(t)$ land at some $a_0\in\mathcal{C}_\lambda$ which is geometrically finite.
\end{lemma}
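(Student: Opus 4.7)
The plan is to adapt the classical Douady--Hubbard argument for landing of rational parameter rays in the Mandelbrot set to our parabolic slice $Per_1(\lambda)$.

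\textbf{Strategy.} By the defining property of $\mathcal{R}^\lambda_\infty(t)$, for every $a\in\mathcal{R}^\lambda_\infty(t)$ the escaping critical value $v^-_{\lambda,a}$ lies on the dynamical external ray $R^\infty_a(t)$ at potential $\log|\Phi^\lambda_\infty(a)|$, which tends to $0$ as one moves along the ray towards $\mathcal{C}_\lambda$. In the limit the dynamical ray $R^\infty_{a_0}(t)$ should then land exactly at $v^-_{\lambda,a_0}$, and rationality of $t$ will force $v^-_{\lambda,a_0}$ to be a (pre-)periodic point.

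\textbf{Key steps.} First, since $\mathcal{C}_\lambda$ is a full continuum and $\Phi^\lambda_\infty$ is proper at infinity, the accumulation set $A$ of $\mathcal{R}^\lambda_\infty(t)$ as the potential tends to $0$ is a nonempty compact connected subset of $\partial\mathcal{C}_\lambda$. For any $a_0\in A$, using the holomorphic dependence of the B\"ottcher coordinate on $(\lambda,a)$ in the escape locus, I would show that $R^\infty_{a_0}(t)$ extends down to potential $0$ and accumulates on $v^-_{\lambda,a_0}$. Since $t\in\mathbb{Q}/\mathbb{Z}$ and $J_{\lambda,a_0}$ is connected, the classical Douady--Hubbard--Yoccoz theorem ensures that $R^\infty_{a_0}(t)$ actually lands at a (pre-)periodic point $z_0$ which is either repelling or parabolic; hence $c^-_{\lambda,a_0}$ is (pre-)periodic with finite orbit in $J_{\lambda,a_0}$. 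For the second critical point, the Fatou--Shishikura inequality forces $c^+_{\lambda,a}$ into the parabolic basin of $0$ for every $a$ on the ray (since $c^-$ is already escaping), and continuity of attracting petals of $f_{\lambda,a}^q$ over $Per_1(\lambda)$ should carry this to the limit parameter $a_0$; thus $c^+_{\lambda,a_0}$ sits in the Fatou set and $a_0$ is geometrically finite. Finally, to upgrade ``accumulation set $A$'' to ``landing at a unique $a_0$'', I would invoke the rigidity of (pre-)periodic external-landing combinatorics combined with local injectivity of $\Phi^\lambda_\infty$ along the ray.

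\textbf{Main obstacle.} The most delicate step is controlling $c^+_{\lambda,a_0}$: parabolic basins are only upper semicontinuous at parameters where the dynamics changes, so one cannot immediately transfer ``$c^+_{\lambda,a}\in$ parabolic basin of $0$'' to the limit $a_0$. The saving grace is that throughout $Per_1(\lambda)$ the parabolic multiplier is frozen at $\lambda$, so a fixed attracting petal for $f_{\lambda,a}^q$ at $0$ moves holomorphically in $a$; the entrance time of $c^+_{\lambda,a}$ into this petal should then be locally uniform in $a$ on the ray, which lets the limit argument go through and secures geometric finiteness of the landing parameter.
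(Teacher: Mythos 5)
Your global strategy (the accumulation set on $\partial\mathcal{C}_\lambda$ is connected, so it suffices to pin down its points) is the right one, and is broadly the Douady--Hubbard style argument the paper also follows, but the two steps that carry all the weight are missing or incorrect. The central claim of your sketch --- that at every accumulation parameter $a_0$ the dynamical ray $R^{\infty}_{\lambda,a_0}(t)$ lands at the free critical value, so that the critical value is (pre-)periodic --- is false in the case where the landing point is parabolic. For $t$ periodic the parameter ray typically accumulates at parabolic parameters where $R^{\infty}_{\lambda,a_0}(t)$ lands at a parabolic cycle while the free critical value lies in the corresponding basin and has infinite orbit (already in the Mandelbrot set the ray of angle $1/3$ lands at $c=-3/4$, whose critical orbit is not preperiodic). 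This is exactly why the paper's proof splits into two cases: when the landing point is repelling (pre-)periodic, or preperiodic to $z=0$, stability of the Koenigs respectively repelling Fatou coordinate forces the landing point to coincide with the critical value, so the parameter satisfies a nontrivial algebraic equation with finitely many solutions; when the landing point is parabolic away from $0$, no such identification is made and one only records that the parameter carries an extra parabolic cycle of bounded period.

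The second gap is the landing itself. Your ``rigidity of landing combinatorics combined with local injectivity of $\Phi^\lambda_\infty$ along the ray'' does not rule out the a priori possibility that the accumulation set is a nondegenerate continuum of parabolic parameters; note also that $\Phi^\lambda_\infty$ is not defined on $\partial\mathcal{C}_\lambda$, so injectivity along the ray says nothing about the accumulation set. The paper concludes by proving the accumulation set is \emph{finite}, and in the slice $Per_1(e^{2\pi i p/q})$ this is genuinely delicate: every $f_{\lambda,a}$ already has a parabolic fixed point at $0$, so the algebraic variety defined by $\hat g^{k+l}_{\lambda,s}(z)=\hat g^{l}_{\lambda,s}(z)$, $(\hat g^{k}_{\lambda,s})'(\hat g^l_{\lambda,s}(z))=1$ always contains the one-dimensional component $z=0$, and one must show (the paper does this with a projection/unboundedness argument on the components of the variety) that there is no other one-dimensional component; only then is the set of ``extra-parabolic'' parameters finite. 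Your proposal contains no argument for this finiteness, which is the heart of the lemma. By contrast, the issue you single out as the main obstacle --- controlling $c^+_{\lambda,a}$ via persistence of petals --- is not where the difficulty lies: geometric finiteness at the accumulation parameters comes directly from the dichotomy above (critical value preperiodic, or critical orbit converging to a parabolic cycle), not from tracking $c^+$.
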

\begin{proof}
It suffices to prove the accumulation set of $\mathcal{R}^\lambda_\infty(t)$ is finite, since the accumulation set is connected. It will be convenient to work in family (\ref{eq.para.s}) since the critical points are marked out and by Proposition \ref{prop.zakeri.parametri}, external rays with angle $t$, denoted by $\hat{\mathcal{R}}_\infty^\lambda(t)$, in $\hat{\mathcal{H}}_\infty$ are well-defined. Suppose $s_0$ is accumulated by $\hat{\mathcal{R}}_\infty^\lambda(t)$. Moreover if $\lambda = e^{2\pi i\frac{p}{q}}$ suppose $s_0$ is not double parabolic (it is not hard to see that $\#\mathcal{A}_{p/q}\textless\infty$). Suppose $\hat{R}^\infty_{\lambda,s_0}(t)$ (the dynamical external ray of $\hat{g}_{\lambda,s_0}$) lands at $x_{\lambda,s_0}$. 
\begin{itemize}
    \item If $x_{\lambda,s_0}$ is repelling pre-periodic or if $\lambda = e^{\frac{2\pi ip}{q}}$ and $x_{\lambda,s_0}$ is pre-periodic to $z=0$, then by stability of repelling Koenigs coordinate or repelling Fatou coordinate one concludes that $x_{\lambda,s_0} = \hat{v}(s_0) := \hat{g}_{\lambda,s_0}(s_0)$. Therefore $s_0$ satisfies a non-trivial algebraic equation: $\hat{g}^{lk}(\hat{v}(s_0)) = \hat{g}^l(\hat{v}(s_0))$ which has only finitely many solutions.
    \item If $x_{\lambda,s_0}$ is parabolic pre-periodic and when $\lambda = e^{\frac{2\pi ip}{q}}$ it is not pre-periodic to $0$. Then there exists $l\geq 0,k\geq 1$ (only depend on $t$) such that $\hat{g}_{\lambda,s_0}^l(\hat{R}^\infty_{\lambda,s_0}(t))$ is fixed by $\hat{g}_{\lambda,s_0}^k$. By the snail lemma, $(s_0,x_{\lambda,s_0})$ verifies \begin{equation}\label{eq.parabolic1}
        \hat{g}_{\lambda,s}^{k+l}(z) = \hat{g}_{\lambda,s}^l(z),\,\,(\hat{g}_{\lambda,s}^k)'(\hat{g}_{\lambda,s}^l(z)) = 1.
    \end{equation} 
     (\ref{eq.parabolic1}) defines a non-trivial algebraic variety (not equal to $\mathbb{C}^*\times\mathbb{C}$) and hence consists of only finitely many irreducible components of dimension 1 and 0 (i.e. points). We claim that when $\lambda\not= e^{\frac{2\pi ip}{q}}$, there are no dimension 1 component; when $\lambda\not = e^{\frac{2\pi ip}{q}}$, the only dimension 1 component is $z = 0$. Indeed, suppose there is another such component $X$. Then $X$ is unbounded (goes to the boundary of $\mathbb{C}^*\times\mathbb{C}$). Consider the projections $\pi_1: X\longrightarrow \mathbb{C}$, $\pi(s,z) = s$ and $\pi_2: X\longrightarrow \mathbb{C}$, $\pi(s,z) = z$. Then at least one of $\pi_i(X),i=1,2$ is unbounded. If $\pi_1(X)$ is unbounded, let $(s_n,z_n)\subset X$ with $s_n\to \infty$ or 0, then $\lambda$ has to be $e^{\frac{2\pi ip}{q}}$ and for $n$ large enough, $z_n = 0$, so $X$ has to be $z = 0$. If $\pi_2(X)$ is unbounded, let $(s_n,z_n)\subset X$ with $z_n\to \infty$. If $(s_n)$ also tends to $\infty$ or $0$, then we are in the precedent case; if not, then for all $n$, the basin at infinity of $\hat{g}_{\lambda,s_0}$ contains a common neighborhood of $\infty$. Hence for $n$ large enough, $z_n$ escapes to $\infty$, contradicting the assumption that $(s_n,z_n)$ verifies (\ref{eq.parabolic1}).
\end{itemize}
To conclude, the above analysis shows that the accumulation set of $\mathcal{R}_0(t)$ is finite and the possible accumulations are geometrically finite maps
\end{proof}

For our purpose, we extract from the lemma above the case of $\lambda = e^{2\pi i\frac{p}{q}}$:

\begin{lemma}\label{lem.landing.rational.external}
Let $\lambda = e^{2\pi i\frac{p}{q}}$, $t\in\mathbb{Q}/\mathbb{Z}$. Then $\mathcal{R}^\lambda_\infty(t)$ lands at some $a_0$. In the dynamical plan of $f_{a_0}$, $R^\infty_{\lambda,a_0}(t)$ lands at a (pre-)periodic point $x(a_0)$. Moreover
\begin{itemize}
    \item If $x(a_0)$ is repelling, then $x(a_0)$ is the free critical value. 
    \item If $a_0\not\in\mathcal{A}_{p/q}$, and $x(a_0)$ is in the inverse orbit of $0$, then $x(a_0)$ is the free critical value. 
    \item If $a_0 = \boldsymbol{\mathrm{a}}_m\in\mathcal{A}_{p/q}$ and $t$ is not pre-periodic to the portrait at $z=0$ of $f_{\lambda,a_0}$, then $\mathcal{R}^\lambda_\infty(t)$ does not land at $a_0$. 
\end{itemize}
\end{lemma}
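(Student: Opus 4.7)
The plan is to observe that the first two assertions are essentially packaged inside the proof of Lemma \ref{lem.rational_rays_property}, and that the third follows by combining them with the structural constraints on double parabolic parameters. The existence of a landing parameter $a_0$ and of a landing point $x(a_0)$ for the dynamical ray are direct consequences of that lemma together with Yoccoz's landing theorem applied to the connected Julia set $J_{\lambda,a_0}$.

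For the first bullet, I would unfold the stability argument already present in the proof of Lemma \ref{lem.rational_rays_property}. If $x(a_0)$ is repelling (pre-)periodic, then by holomorphic dependence of the Koenigs linearizer, there is a neighborhood $U$ of $a_0$ on which the cycle carrying $x(a_0)$ persists as $x(a)$, and for $a \in U$ in the escape locus the dynamical ray $R^\infty_{\lambda,a}(t)$ lands at $x(a)$. For $a \in \mathcal{R}^\lambda_\infty(t)$ approaching $a_0$, the critical value $v^-(a)$ lies on $R^\infty_{\lambda,a}(t)$ at a potential tending to $1$. Iterating $v^-(a)$ forward until it enters the linearization domain and comparing Koenigs images forces $v^-(a_0) \in \overline{R^\infty_{\lambda,a_0}(t)} \cap K_{\lambda,a_0}$, and the only possibility is the landing point $x(a_0)$. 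The second bullet is proved by the same template with the repelling Fatou coordinate at the parabolic fixed point $0$ replacing the Koenigs coordinate; holomorphic dependence of this Fatou coordinate inside $Per_1(\lambda)$ (where the parabolic cycle at $0$ is locked) allows the same limiting comparison. The hypothesis $a_0 \notin \mathcal{A}_{p/q}$ is used exactly to guarantee that $c^-(a)$ is not absorbed by the parabolic basin, so that it genuinely plays the role of the free critical point tracked by $\Phi^\lambda_\infty$.

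For the third bullet I would argue by contradiction. Suppose $\mathcal{R}^\lambda_\infty(t)$ lands at $a_0 = \boldsymbol{\mathrm{a}}_m \in \mathcal{A}_{p/q}$. Since both critical orbits of $f_{\lambda,\boldsymbol{\mathrm{a}}_m}$ are absorbed by the immediate basin of $0$, the value $v^-(a_0)$ lies in the Fatou set and converges to $0$ under iteration; in particular it is neither (pre-)periodic nor repelling. By Fatou's bound on non-repelling cycles, all other cycles of $f_{\lambda,\boldsymbol{\mathrm{a}}_m}$ are repelling, so $x(a_0)$ is either repelling (pre-)periodic or in the grand orbit of $0$. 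The first case contradicts the first bullet, since it would force $v^-(a_0) = x(a_0)$ while $v^-(a_0)$ is not (pre-)periodic. In the second case, some forward iterate of $t$ under the tripling map would land at an angle of the portrait of $0$, i.e. $t$ would be pre-periodic to that portrait, contradicting the standing hypothesis.

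The main subtlety is the limit identification inside the first two bullets: one must verify that, along the parameter ray, the position of $v^-(a)$ read in the Koenigs (resp.\ Fatou) coordinate genuinely converges and that the limit coincides with a point on the image of the dynamical ray of angle $t$. The fact that $a_0 \in \mathcal{C}_\lambda$ forces $v^-(a_0) \in K_{\lambda,a_0}$, and in the parabolic case the fact that we remain in the fixed slice $Per_1(\lambda)$ so that the repelling Fatou coordinate extends holomorphically across $a_0$, are exactly what make the limit equal the landing point rather than an interior point of the ray.
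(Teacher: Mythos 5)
Your proposal is correct and follows essentially the same route as the paper, which presents this lemma as an extraction from the proof of Lemma \ref{lem.rational_rays_property}: your first two bullets are exactly the stability-of-Koenigs/repelling-Fatou-coordinate argument along the parameter ray used there, and your contradiction for the third bullet (a repelling landing point would have to be the free critical value, impossible since at a double parabolic parameter both critical values lie in the parabolic basin; otherwise the landing point is in the inverse orbit of $0$, forcing $t$ to be pre-periodic to the portrait) is the natural completion that the paper leaves implicit. One small correction: the hypothesis $a_0\notin\mathcal{A}_{p/q}$ in the second bullet is really needed because the stability of the repelling Fatou coordinate at $z=0$ breaks down when the parabolic point degenerates (the number of petals jumps and the coordinates do not depend continuously on $a$ there), not merely to keep $c_-(a)$ from being absorbed by the basin — but since you assume that hypothesis anyway, and your third bullet only uses the Koenigs case plus the combinatorial portrait argument at $\boldsymbol{\mathrm{a}}_m$, this misattribution does not create a gap.
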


\begin{lemma}\label{lem.landing.Misiurewicz}
Suppose $|\lambda|\leq 1$. Let $a_0$ be a Misiurewicz parameter or parabolic Misiurewicz parameter, $t\in\mathbb{Q}/\mathbb{Z}$. Then $R^{\infty}_{\lambda,a_0}(t)$ lands at one of the critical values of $f_{\lambda,a}$ if and only if $\mathcal{R}^\lambda_{\infty}(t)$ lands at $a_0$.
\end{lemma}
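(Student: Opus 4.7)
The plan is to prove each implication separately. The ``$\Leftarrow$'' direction is a continuity argument on truncated external rays; the ``$\Rightarrow$'' direction requires a local transversality analysis at $a_0$.

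For ``$\Leftarrow$'', pick $a_n\in\mathcal{R}^\lambda_\infty(t)$ with $a_n\to a_0$, so that $\phi^\infty_{a_n}(v^-(a_n))=r_n e^{2\pi it}$ with $r_n\to 1^+$. Thus $v^-(a_n)$ sits on the dynamical ray $R^\infty_{\lambda,a_n}(t)$ at potential $\log r_n$. Since $a_0$ is Misiurewicz or parabolic Misiurewicz, $J_{\lambda,a_0}$ is locally connected, so $R^\infty_{\lambda,a_0}(t)$ lands at a well-defined point $z_0$. The truncated rays $R^\infty_{\lambda,a}(t)*(1,e^s)$ depend continuously on $a$ in Hausdorff distance and collapse onto $\{z_0\}$ as $s\to 0^+$. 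Combined with the continuity of $a\mapsto c^-(a)$, this forces $\lim_n v^-(a_n)=z_0$; on the other hand the same limit equals $f_{\lambda,a_0}(c_i)$ for some critical point $c_i$ of $f_{\lambda,a_0}$. So $z_0$ is a critical value.

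For ``$\Rightarrow$'', suppose $R^\infty_{\lambda,a_0}(t)$ lands at a critical value $v_0$. By Lemma~\ref{lem.rational_rays_property}, $\mathcal{R}^\lambda_\infty(t)$ accumulates on a finite subset $A\subset\mathcal{C}_\lambda$ of geometrically finite parameters, and the ``$\Leftarrow$'' direction applied on $A$ shows that at every $a_1\in A$, $R^\infty_{\lambda,a_1}(t)$ lands at a critical value of $f_{\lambda,a_1}$. It thus suffices to prove $a_0\in A$. I would first produce a holomorphic continuation $a\mapsto z(a)$ of $z_0=v_0$ on a neighborhood $U$ of $a_0$: in the Misiurewicz case, the implicit function theorem applies to $f_{\lambda,a}^{k+l}(z)=f_{\lambda,a}^l(z)$ using the repelling multiplier; in the parabolic Misiurewicz case, the parabolic fixed point $0$ is persistent on the slice $Per_1(\lambda)$, and $z(a)$ is obtained by pulling back $0$ along a fixed inverse branch. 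Away from $z(a)$ the ray $R^\infty_{\lambda,a}(t)$ itself moves holomorphically over $U$. Then $a\mapsto\phi^\infty_a(v^-(a))$ extends continuously across $a_0$ with boundary value $e^{2\pi it}$, and the preimage of $\{re^{2\pi it}:r>1\}$ under this extension has at least one branch accumulating at $a_0$; that branch is $\mathcal{R}^\lambda_\infty(t)$.

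The main obstacle is the transversality that justifies this local extension: one must show $v^-(a)-z(a)$ has an isolated zero at $a_0$. In the repelling Misiurewicz case this is a standard Douady-Hubbard-type argument using that the multiplier at $z(a)$ varies non-trivially with $a$. In the parabolic Misiurewicz case the multiplier at $z(a)$ is identically $1$ along the slice, so the naive derivative vanishes; however, pulling back by enough iterates and using the persistent attracting Fatou coordinate at $0$ recovers a non-constant leading term in $a-a_0$, giving an isolated zero and hence the required accumulation of $\mathcal{R}^\lambda_\infty(t)$ at $a_0$.
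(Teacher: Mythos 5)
Your ``$\Leftarrow$'' direction follows the paper's line (stability of the landing point of the dynamical $t$-ray under perturbation of $a$), but one justification is wrong: for $\lambda$ Cremer, and possibly for Siegel $\lambda$ of unbounded type, $J_{\lambda,a_0}$ is \emph{not} locally connected, so you cannot invoke local connectivity. What you actually need is only the classical landing theorem for rational rays on a connected Julia set, plus the fact that at a Misiurewicz or parabolic Misiurewicz parameter the landing point is repelling (pre)periodic or in the inverse orbit of the persistent parabolic point $0$; the uniform collapse of ray tails that you assert is then exactly the stability of the repelling Koenigs (resp.\ repelling Fatou) coordinate that the paper cites. With that substitution this half is fine.

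The genuine gap is in ``$\Rightarrow$''. First, a side remark: applying ``$\Leftarrow$'' to the accumulation set $A$ is unsound, since Lemma~\ref{lem.rational_rays_property} only guarantees that points of $A$ are geometrically finite; they may be parabolic (non-Misiurewicz) parameters at which the dynamical $t$-ray lands on a parabolic cycle rather than at a critical value (this is not fatal, since you then reduce to showing $a_0\in A$). The fatal step is the last one: the assertion that $a\mapsto\phi^\infty_a(v^-(a))$ ``extends continuously across $a_0$ with boundary value $e^{2\pi i t}$'' is false in general --- several rational parameter rays can land at a Misiurewicz parameter, so $\Phi^\lambda_\infty$ admits no continuous extension there --- and it is essentially a restatement of what has to be proved. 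The missing mechanism is a Rouch\'e/Hurwitz argument of the kind the paper uses for the analogous landing statements (Proposition~\ref{prop.summary.para.adj}, Lemma~\ref{lem.landing.Misiur.parabo}): let $\xi_a(s)$ be the point of potential $s$ on $R^\infty_{\lambda,a}(t)$, which moves holomorphically in $a$ near $a_0$ away from the finitely many parameter rays of angles $3^kt$ where the ray crashes; since $v^-(a)-z(a)$ has an isolated zero at $a_0$, comparing $v^-(a)-\xi_a(s)$ with $v^-(a)-z(a)$ on a small circle around $a_0$ produces, for every small $s>0$, a parameter $a(s)$ with $\phi^\infty_{a(s)}(v^-(a(s)))=e^{s+2\pi i t}$, i.e.\ $a(s)\in\mathcal{R}^\lambda_\infty(t)$, and $a(s)\to a_0$ as $s\to 0$; finiteness of the accumulation set then upgrades accumulation to landing. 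Finally, your transversality discussion in the parabolic Misiurewicz case is needlessly elaborate and, as written, unconvincing: the isolated zero does not come from any multiplier or Fatou-coordinate expansion, but simply from the fact that $a\mapsto v^-(a)-z(a)$ (equivalently $a\mapsto f^n_a(c^-(a))$ minus the continued preimage of $0$) is analytic on the slice and not identically zero, e.g.\ because it cannot vanish on the escape locus, so its zeros are isolated.
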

\begin{proof}
First of all $a_0 \not=\pm\sqrt{3\lambda}$, i.e. the two critical points are not the same. This is clear when $|\lambda|\textless1$ or $\lambda = e^{\frac{2\pi ip}{q}}$ since $0$ attracts at least one critical point. If $\lambda$ is Siegel, then the boundary of the Siegel disk is contained in the accumulation set of critical orbits; if $\lambda$ is Cremer, then the Julia set is not locally connected. Therefore $c^\pm_{\lambda,a}$ can be analytically defined near $a_0$. Then one uses stability of repelling Koenigs coordinate or repelling Fatou coordinate to conclude the proof. 
\end{proof}

\begin{lemma}\label{lem.landing.stable-region}
Let $t\in\mathbb{Q}/\mathbb{Z}$. 
\begin{itemize}
    \item Suppose $|\lambda|\leq 1$ but $\lambda \not= e^{2\pi i\frac{p}{q}}$. Let $\mathcal{O}\subset\mathbb{C}$ be the set of parameters $a$ such that $R^\infty_{\lambda,a}(t)$ lands at a repelling (pre-)periodic point, Then $\mathbb{C}\setminus\bigcup_{k\geq 1}\overline{\mathcal{R}_\infty^\lambda(3^kt)}\subset\mathcal{O}$. Moreover $\bigcup_{k\geq 0}\overline{R^\infty_{\lambda,a}(3^kt)}$ admits a dynamical holomorphic motion for $a$ in any component of $\mathcal{O}$.
    
    \item Suppose $\lambda = e^{2\pi i\frac{p}{q}}$. Let $\mathcal{O}\subset\mathbb{C}$ be the set of parameters $a$ such that $R^\infty_{\lambda,a}(t)$ lands at a repelling (pre-)periodic point or the inverse orbit of 0. Then $(\mathbb{C}\setminus\bigcup_{k\geq 1}\overline{\mathcal{R}_\infty^\lambda(3^kt)})\setminus\mathcal{A}_{p/q}\subset\mathcal{O}$. Moreover $\bigcup_{k\geq 0}\overline{R^\infty_{\lambda,a}(3^kt)}$ admits a dynamical holomorphic motion for $a$ in any component of $\mathcal{O}\setminus\mathcal{A}_{p/q}$ ($\mathcal{A}_{p/q}$ is the set of double parabolic parameters).
\end{itemize}
\end{lemma}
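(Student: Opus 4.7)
The plan is to proceed in three steps. Write $t_k := 3^k t \pmod 1$; since $t \in \mathbb{Q}/\mathbb{Z}$, only finitely many angles appear in $(t_k)_{k\ge 0}$. Set $\Omega := \mathbb{C}\setminus \bigcup_{k\ge 1}\overline{\mathcal{R}^\lambda_\infty(t_k)}$; in Case 2 I will work instead in $\Omega\setminus \mathcal{A}_{p/q}$.

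First, I would show that for every $a \in \Omega$ each dynamical ray $R^\infty_{\lambda,a}(t_k)$ extends without bifurcation from $\infty$ down to $J_{\lambda,a}$: the only obstruction is that $R^\infty_{\lambda,a}(t_k)$ passes through the free critical point $c^-_{\lambda,a}$, equivalently $v^-_{\lambda,a}\in R^\infty_{\lambda,a}(t_{k+1})$, equivalently $a\in \mathcal{R}^\lambda_\infty(t_{k+1})$, which is excluded on $\Omega$. Rationality of $t$ then guarantees (via connectedness of the accumulation set and its forward-invariance under some iterate of $f_{\lambda,a}$) that each $R^\infty_{\lambda,a}(t_k)$ lands at a single (pre-)periodic point $x_k(a)$, a priori repelling or parabolic by Douady-Hubbard.

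Next, I would prove that $x_k(a)$ is in fact repelling (or, in Case 2, either repelling or in the inverse orbit of $0$). If $x_k(a_0)$ were parabolic and not already an iterated preimage of $z=0$, the snail lemma would place $t_k$ in its portrait; the portrait calculus of parabolic cycles, combined with parabolic-implosion Fatou coordinates at $a_0$, then shows that any small perturbation of $a_0$ must push the free critical orbit onto the orbit of the ray $R^\infty_{\lambda,a}(t_k)$, putting $a_0$ in the closure of some $\mathcal{R}^\lambda_\infty(t_j)$, $j\ge 1$, and contradicting $a_0\in \Omega$. In Case 2 the fixed point $z=0$ is always parabolic, so rays may legitimately land at preimages of $0$ without forcing $a_0$ onto a parameter ray; this is exactly why $\mathcal{O}$ is enlarged and why $\mathcal{A}_{p/q}$ must be removed (where the extra parabolic coalescence of Lemma \ref{lem.landing.rational.external} would destroy stability).

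Finally, on each connected component $U$ of $\mathcal{O}$ (resp.\ $\mathcal{O}\setminus \mathcal{A}_{p/q}$), fix a base point $a_0$ and set $X_{a_0} := \bigcup_{k\ge 0}\overline{R^\infty_{\lambda,a_0}(t_k)}$, a finite union of rays together with their landing points. Since $\phi^\infty_{\lambda,a}$ depends holomorphically on $(a,z)$, the open arcs of the rays move holomorphically with $a$, while the repelling landing points (resp.\ iterated preimages of $0$) are followed holomorphically by the implicit function theorem; these assemble into a dynamical holomorphic motion $h_a\colon X_{a_0}\to\mathbb{C}$ extending continuously to the closure. The main obstacle is the second step -- translating the dynamical condition ``the ray lands at a parabolic point'' into the parameter-space condition ``$a_0$ is on the closure of a parameter ray'' -- which requires careful parabolic-implosion analysis together with the Douady-Hubbard dictionary between critical-orbit portraits and parameter-ray portraits.
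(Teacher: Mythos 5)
Your Step 1 and Step 3 are fine in outline (off the closed parameter rays $\overline{\mathcal{R}^\lambda_\infty(3^kt)}$ the dynamical rays do not crash and hence land, and once the landing points are known to be repelling or iterated preimages of $0$, the rays plus landing points move holomorphically by the Böttcher coordinate together with the implicit function theorem and stability of landing). The genuine gap is in Step 2, which is the heart of the lemma. Your claim that ``any small perturbation of $a_0$ must push the free critical orbit onto the orbit of the ray $R^\infty_{\lambda,a}(t_k)$'' is false as stated: lying on a dynamical ray is a real-codimension-one condition on the critical value, so a generic perturbation of a parabolic parameter does not place $v^-_{\lambda,a}$ on any ray of the orbit of $t$. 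What you would actually need is the weaker assertion that $a_0$ is \emph{accumulated} by parameters belonging to $\bigcup_{k\geq1}\mathcal{R}^\lambda_\infty(3^kt)$, i.e.\ the correspondence ``dynamical ray of angle $t$ lands at a parabolic cycle $\Rightarrow$ $a_0$ lies in the closure of a parameter ray with angle in the orbit of $t$''. In the quadratic family this is exactly the Douady--Hubbard landing theorem at parabolic parameters (the ``Tour de valse'' analysis), and in this cubic slice it is not available at this point of the paper; indeed the author explicitly avoids this route (it is essentially what Proposition \ref{Prop.grand1} and Corollary \ref{cor.landing.rational.double.parabolic} are later fighting to establish by other means). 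So your proposal silently invokes a statement at least as hard as what is being proved.

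The paper's proof rules out parabolic landing points (other than the inverse orbit of $0$) by a purely local and more elementary argument: if $x(a_0)$ were parabolic beyond $z=0$, then the portrait at $z=0$ avoids $\{3^kt\}_{k\geq0}$ and is stable near $a_0$ (repelling Fatou coordinate at $0$, using $a_0\notin\mathcal{A}_{p/q}$ --- this is where the exclusion of double parabolic parameters really enters, not a ``coalescence'' with Lemma \ref{lem.landing.rational.external}); parabolic parameters of the relevant period form a discrete set by the algebraic-variety argument inside the proof of Lemma \ref{lem.rational_rays_property}; hence on a punctured neighborhood of $a_0$ the landing point $x(a)$ is repelling, and the maximum modulus principle applied to $a\mapsto 1/(f^n_a)'(x(a))$ (modulus $<1$ off $a_0$, $=1$ at $a_0$) gives the contradiction. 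For the first bullet the paper simply cites \cite[Lem.~3.8]{Roesch1}. To repair your write-up you should replace the implosion/portrait claim in Step 2 by an argument of this discreteness-plus-maximum-principle type (or an explicit reference covering it); as it stands, Step 2 does not go through.
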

\begin{proof}
For the first point, see \cite[Lem. 3.8]{Roesch1}. For the second point, for $a\in\mathbb{C}\setminus\bigcup_{k\geq 1}\overline{\mathcal{R}_\infty^\lambda(3^kt)}$, ${R^\infty_{\lambda,a}(3^kt)}$ land for $k\geq 0$ since they do not crash on the free critical point. It suffices to prove that the landing point $x(a)$ of $R^\infty_{\lambda,a}(t)$ is either repelling pre-periodic or in the inverse orbit of $0$. Suppose the contrary that it is parabolic (pre-)periodic beyond $z=0$, then the portrait at $z=0$ of $f_{\lambda,a}$ does not intersect $\{3^kt\}_{k\geq 0}$. Since the portrait at $z=0$ is stable by the stability of repelling Fatou coordinate at $z=0$ (notice that $a\not\in\mathcal{A}_{p/q}$), so for $a'$ in a neighborhood of $a$, the portrait at $z=0$ of $f_{\lambda,a'}$ does not intersect $\{3^kt\}_{k\geq 0}$. However on the other hand, from the second point in the proof of Lemma \ref{lem.rational_rays_property}, the set of parabolic parameters with a given period is discrete. So for $a'$ in a punctured neighborhood of $a$, $x(a')$ is either repelling or sent to $z=0$ with portrait containing the cycle in $\{3^kt\}_{k\geq 0}$. So the second case is impossible. The first case contradicts the maximum principle for $a\mapsto 1/(f^n)'({x(a)})$ for $n$ large enough.
\end{proof}

\subsection{Results for $\mathcal{C}_0$}
When $\lambda = 0$, $z=0$ is a supper-attracting fixed point of $f_{0,a}$, $-\frac{2a}{3}$ is the free critical point. $\mathcal{C}_0$ has a unique hyperbolic component $\mathcal{H}^0_0$ containing $a = 0$. (The upper index means $\lambda = 0$, the lower means it is of depth 0, i.e. $\frac{-2a}{3}$ is in the immediate basin of 0). Similarly as $\mathbb{C}\setminus\mathcal{C}_0$, one can parametrize $\mathcal{H}_0^0$:

\begin{proposition}[\cite{Roesch1}]\label{prop.parametrisation.H00}
The mapping $\Phi^0_0:\mathcal{H}^0_0\longrightarrow\mathbb{D}$ given by $\Phi^0_0(a) = \phi^{0}_{0,a}(f_{0,a}(-\frac{2a}{3}))$ is a degree 4 holomorphic covering ramified at 0. 
\end{proposition}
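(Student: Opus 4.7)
The plan is to show that $\Phi^0_0$ is a proper holomorphic map $\mathcal{H}^0_0\to\mathbb{D}$ and then compute its degree via the local data at $a=0$. First I would check well-definedness and holomorphicity: for $a\in\mathcal{H}^0_0$ the free critical point $-2a/3$ lies in the immediate basin of the super-attracting fixed point $0$; the Böttcher coordinate $\phi^0_{0,a}$ realizes a conformal bijection from the maximal univalent subdomain $U_a$ of the immediate basin onto $\mathbb{D}$, and the critical value $v_a=f_{0,a}(-2a/3)=\tfrac{4}{27}a^3$ lies in $U_a$. Since the Böttcher coordinate depends holomorphically on $a$ on the open set of its definition, $\Phi^0_0$ is holomorphic on $\mathcal{H}^0_0$. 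Properness then follows from the standard fact that on the boundary of a hyperbolic component parametrized by the position of the free critical value in the Böttcher chart, the critical value is forced to hit $\partial U_a$, i.e.\ $|\Phi^0_0(a_n)|\to 1$ whenever $a_n\to a^*\in\partial\mathcal{H}^0_0$. Hence $\Phi^0_0$ is a branched covering of some finite degree $d$.

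To compute $d$, I examine the fiber over $0$. First, $\Phi^0_0(a)=0$ forces $v_a=0$, i.e.\ $a=0$, so $(\Phi^0_0)^{-1}(0)=\{0\}$. Second, the Böttcher coordinate with normalization $\phi^0_{0,a}(z)=a z+O(z^2)$ (forced by $\phi\circ f=\phi^2$ with $f_{0,a}(z)=az^2+O(z^3)$, so $\beta^{d-1}=\alpha$ with $d=2$, $\alpha=a$) yields
\[
\Phi^0_0(a)=\phi^0_{0,a}\!\left(\tfrac{4}{27}a^3\right)=\tfrac{4}{27}a^4+O(a^5),
\]
so the local degree of $\Phi^0_0$ at $a=0$ is $4$. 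Since the global degree of a proper holomorphic map equals the sum of local multiplicities over any fiber, $d=4$.

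Finally, $\mathcal{H}^0_0$ is simply connected (a standard maximum-principle argument applied to $a\mapsto f_{0,a}^n(-2a/3)$, in the spirit of Lemma \ref{lem.boundary.components}), so Riemann-Hurwitz for the degree-$4$ branched cover $\Phi^0_0$ gives $1=4\cdot 1-\sum_{p}(e_p-1)$, hence total ramification weight $3$; the contribution $e_0-1=3$ from $a=0$ already saturates this, so $a=0$ is the unique branch point and $\Phi^0_0$ is unramified elsewhere. The most delicate step is the properness argument: one has to verify that as $a\to\partial\mathcal{H}^0_0$ the critical value genuinely reaches $\partial U_a$ rather than persisting in its interior, which ultimately rests on the maximality of $U_a$ together with the impossibility of extending $\phi^0_{0,a}$ univalently across the free critical point inside the immediate basin.
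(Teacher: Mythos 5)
Two preliminary remarks: the paper itself gives no proof of this proposition (it is quoted from \cite{Roesch1}), so there is nothing internal to compare with; your overall route (properness, identification of the fiber over $0$, local degree at $a=0$, Riemann--Hurwitz) is the standard one and is consistent with the cited source. Your two pointwise computations are essentially right: $(\Phi^0_0)^{-1}(0)=\{0\}$, and $\Phi^0_0$ vanishes to order exactly $4$ at $a=0$. But the way you derive the latter, by substituting $v_a=\tfrac{4}{27}a^3$ into ``$\phi^0_{0,a}(z)=az+O(z^2)$'', is not legitimate as written: the Taylor coefficients of $\phi^0_{0,a}$ blow up as $a\to 0$ (at $a=0$ the fixed point has local degree $3$; for instance the $z^3$-coefficient equals $\tfrac{a}{4}-\tfrac{1}{8a}$), so one needs a uniform statement such as the telescoping product $\phi^0_{0,a}(z)=az\prod_{n\ge0}\bigl(1+f^n_{0,a}(z)/a\bigr)^{2^{-(n+1)}}$, which indeed gives $\Phi^0_0(a)=\tfrac{4}{27}a^4\bigl(1+O(a^2)\bigr)$ and also settles holomorphy at $a=0$. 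Also, you need not assume simple connectivity separately: once properness and the totally ramified point over $0$ are known, Riemann--Hurwitz forces $\chi(\mathcal{H}^0_0)=1$, hence simple connectivity and the absence of other branch points in one stroke.

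The genuine gap is the properness step, which is where the proposition actually lives, and which you only invoke as a ``standard fact''; moreover your description of it is not the correct statement. For every $a\in\mathcal{H}^0_0$ the Böttcher coordinate maps $U_a$ onto a round disk $\mathbb{D}_{r_a}$ of radius $r_a<1$ (not onto $\mathbb{D}$), the free critical point lies on $\partial U_a$ at modulus $r_a$, and the critical value lies strictly inside $U_a$ at modulus $|\Phi^0_0(a)|=r_a^2$; so the critical value never ``reaches $\partial U_a$''. What has to be proved is that $r_a\to 1$ as $a\to\partial\mathcal{H}^0_0$, and this needs an actual argument, e.g.: if $a_n\to a^*\in\partial\mathcal{H}^0_0$ with $r_{a_n}\le\sqrt{\rho}<1$, use $a^*\ne 0$ to get a local Böttcher coordinate on a disk of uniform size near $a^*$, hence a uniform lower bound on $r_{a_n}$; then the univalent inverse Böttcher maps $\phi_{0,a_n}^{-1}:\mathbb{D}_{r_{a_n}}\to U_{a_n}$ form a normal family whose limit continues $\phi_{0,a^*}^{-1}$, still satisfies the functional equation, and therefore places the limit critical value $v_{a^*}$ in the immediate basin of $0$ for $f_{0,a^*}$; finally one must rule out that the free critical point of $f_{0,a^*}$ lies in a capture component (an open set disjoint from $\mathcal{H}^0_0$), which together with the openness of the condition ``free critical point in the immediate basin'' contradicts $a^*\in\partial\mathcal{H}^0_0$. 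Without some version of this chain, your proof is incomplete precisely at its crucial point, and the closing sentence about ``maximality of $U_a$'' does not substitute for it.
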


The internal ray of angle $t$ in $\mathcal{H}^0_0$ is defined to be a connected component of \[(\Phi^0_0)^{-1}(\{re^{2\pi i t};\,r\in(0,1)\}).\]

\begin{remark}
It is easy to verify by symmetric that \[\Phi^0_0(-a) = \Phi^0_0(a), \Phi^0_0(\overline{a}) = \overline{\Phi^0_0({a})};\,\, \Phi^0_{\infty}(-a) = -\Phi^0_{\infty}(a), \Phi^0_{\infty}(\overline{a}) = \overline{\Phi^0_{\infty}({a})}\]
\end{remark}

\begin{remark}
By Proposition \ref{prop.parametrisation.H00} and \ref{prop.parametri.H_infini}, there are 4 (resp. 3) internal (resp. external) rays associated to a given angle. Let ${\mathcal{S}} = \{x+iy,x\geq0,y\textgreater0\}$. These four rays are contained in four sectors $\mathcal{S},i\mathcal{S},-\mathcal{S},-i\mathcal{S}$ respectively. Hence we precise a ray contained in a sector by adding this sector in the above index, for example $\mathcal{R}^{0,\mathcal{S}}_0(t)$ (resp. $\mathcal{R}^{0,\mathcal{S}}_\infty(t)$) denotes the internal (resp. external) ray with angle $t$ contained in $\mathcal{S}$. Nevertheless in most time we omit for simplicity this index if the sector to which this ray belong is clear or if there is no need to precise, according to the context.
\end{remark}

The main results in \cite{Roesch1} can be summarized as the following proposition and theorem:

\begin{proposition}\label{prop.pascale.landingH0}\cite[Lem 2.29]{Roesch1}.
For all $t\in\mathbb{R}/\mathbb{Z}$, $\mathcal{R}^{0,\mathcal{S}}_0(t)$ lands at some $a(t)\in \partial\mathcal{H}^0_0$. In the dynamical plan, $R^0_{a(t)}(t)$ lands at a parabolic $k$-periodic point if and only if $\frac{t+1}{2}$ is $k$-periodic under multiplication by 2.
\end{proposition}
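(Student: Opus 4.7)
The plan is to leverage Proposition~\ref{prop.parametrisation.H00}, which presents $\Phi^0_0:\mathcal{H}^0_0\to\mathbb{D}$ as a proper degree~$4$ holomorphic branched covering ramified only at $a=0$. By properness, $\mathcal{H}^0_0$ is relatively compact, and the parameter internal ray $\mathcal{R}^{0,\mathcal{S}}_0(t)$, which is a lift of the radial ray $\{re^{2\pi it}:0<r<1\}$ to the sector $\mathcal{S}$, has nonempty accumulation in $\partial\mathcal{H}^0_0$. The first task is to promote this accumulation to a single landing point $a(t)$; the second is to read its dynamical meaning off Böttcher coordinates on the immediate basin $\mathcal{U}_0$ of $0$.

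For uniqueness of the landing I would split on whether $t$ is rational. When $t\in\mathbb{Q}/\mathbb{Z}$, at any accumulation parameter $a_0$ the critical value $f_{0,a_0}(-\tfrac{2a_0}{3})=\tfrac{4a_0^{3}}{27}$ lies on $\partial\mathcal{U}_0$ at internal Böttcher angle~$t$, which forces $a_0$ to satisfy a nontrivial algebraic equation expressing that the critical orbit hits a (pre-)periodic boundary point of prescribed combinatorics, in the spirit of the second bullet in the proof of Lemma~\ref{lem.rational_rays_property}. The accumulation set is then both finite and connected, hence a single point $a(t)$. For irrational $t$, I would instead use a stability/holomorphic-motion argument along the ray: the dynamical internal ray at angle~$t$ in $\mathcal{U}_0$ lands at the critical value at every accumulation parameter, and tracking this landing point holomorphically along $\mathcal{R}^{0,\mathcal{S}}_0(t)$ should rule out two distinct boundary values via the identity principle.

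The dynamical identification is then essentially combinatorial. By continuity of $\Phi^0_0$ at the boundary, the dynamical internal ray $R^0_{a(t)}(t)$ lands at the critical value, which sits on $\partial\mathcal{U}_0$ at internal angle~$t$; since $f_{0,a(t)}|_{\mathcal{U}_0}$ is Böttcher-conjugate to $z\mapsto z^2$, the induced action on $\partial\mathcal{U}_0$ is the doubling map. A periodic boundary point is parabolic rather than repelling precisely when the free critical point $-\tfrac{2a(t)}{3}$ lies on its cycle. The two $f$-preimages of the critical value on $\partial\mathcal{U}_0$ sit at internal angles $t/2$ and $(t+1)/2$; the symmetries $\Phi^0_0(-a)=\Phi^0_0(a)$ and $\Phi^0_0(\overline{a})=\overline{\Phi^0_0(a)}$ pin down the sector $\mathcal{S}$ to be the one in which the critical point occupies the preimage at angle $(t+1)/2$. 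Hence the landing point is parabolic of period~$k$ iff the angle $(t+1)/2$ lies in the $k$-cycle of $t$ under doubling, which a direct computation shows is equivalent to $(t+1)/2$ itself being $k$-periodic.

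I expect the main obstacle to be the uniqueness of the landing in the irrational-angle case, where no algebraic equation is available and the argument must rely on stability of the Böttcher coordinate of $\mathcal{U}_0$ along the ray, together with some control on the geometry of $\partial\mathcal{U}_0$. The rational case and the subsequent angle bookkeeping, though delicate, are algebraic/combinatorial in nature.
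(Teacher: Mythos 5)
First, note that the paper does not prove this statement at all: it is quoted as a summary of \cite[Lem 2.29]{Roesch1}, so your attempt has to be judged against what such a proof actually requires. Your treatment of rational angles is in the right spirit (finiteness of the accumulation set via algebraic conditions — Misiurewicz relations or existence of a parabolic cycle of bounded period — plus connectivity, as in Lemma \ref{lem.rational_rays_property}), although even there the phrase ``the critical value lies on $\partial\mathcal{U}_0$ at internal angle $t$'' is only correct in the Misiurewicz case, not at parabolic accumulation parameters. The genuine gap is the irrational case, which you yourself flag but do not close, and which is the heart of the matter. For irrational $t$ the dynamical internal ray $R^0_{a_0}(t)$ need not land at an accumulation parameter $a_0$ unless one already knows local connectivity of the boundary of the immediate basin for such $a_0$ (a nontrivial puzzle theorem in \cite{Roesch1}, the analogue of Theorem \ref{thm.local-connectivity-julia} here), and its landing point admits no holomorphic motion in the parameter: the accumulation set lies in $\partial\mathcal{C}_0$, outside any stability region, so there is no holomorphic family on which an ``identity principle'' could act. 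In \cite{Roesch1}, as in Sections \ref{sec.dym.graph}--\ref{sec.loc} of this paper for the parabolic analogue, landing of \emph{all} internal rays is obtained only by transferring dynamical puzzle shrinking to parameter puzzles (holomorphic motions plus the Shishikura/Gr\"otzsch modulus estimates), i.e.\ by proving that $\partial\mathcal{H}^0_0$ is a Jordan curve; nothing in your sketch substitutes for that machinery.

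The combinatorial half also rests on a false criterion. ``A periodic boundary point is parabolic rather than repelling precisely when the free critical point $-\tfrac{2a(t)}{3}$ lies on its cycle'' cannot be right: a cycle containing a critical point is superattracting and lies in the Fatou set, whereas a parabolic cycle contains no critical point — its basin merely attracts the free critical orbit. The correct dichotomy is that when $\frac{t+1}{2}$ is strictly preperiodic the landing parameter is Misiurewicz, with the critical value equal to the repelling periodic landing point of $R^0_{a(t)}(t)$, while when $\frac{t+1}{2}$ is periodic this configuration is impossible (the internal ray of angle $\frac{t+1}{2}$ would have to terminate at the free critical point), and one must then \emph{prove} that the cycle is parabolic, e.g.\ by a perturbation or multiplier argument. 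That implication, which is exactly the content of the ``if and only if'', is asserted rather than proved in your proposal, so both halves of the statement still need real arguments.
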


\begin{theorem}\label{thm.pascale.descriptionC0}
For all $n\geq 0$, $\partial\mathcal{H}^0_n$ is a union of Jordan curve. If $a_0\in\partial\mathcal{H}^0_n$ is renormalizable, then there are exactly two external rays landing at it; otherwise there is only one. 

If $a_0\in\partial\mathcal{H}^0_0$ is renormalizable, then the two corresponding rays landing at it separate $\mathbb{C}$ into two connected components $U_{a_0}$ and $W_{a_0}$ with $\mathcal{H}^0_0\subset U_{a_0}$ and $\overline{\mathcal{W}_{a_0}}$ containing a copy of Mandelbrot set rooted at $a_0$. For $a\in\partial\mathcal{H}^0_0$, define limbs by $\mathcal{L}_{a}:=\mathcal{W}_{a}\cap\mathcal{C}_0$ if $a$ is renormalisable; otherwise $\mathcal{L}_{a}:=\{a\}$. Then $\mathcal{C}_0 = \mathcal{H}^0_0\cup(\bigcup\limits_{a\in\partial\mathcal{H}^0_0}\mathcal{L}_{a})$.
\end{theorem}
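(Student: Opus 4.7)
The plan is to follow the dynamical--parameter puzzle strategy outlined in the introduction, carried out in three stages.

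The first and hardest stage is to prove that $\partial \mathcal{H}^0_n$ is locally connected, hence a Jordan curve by Lemma \ref{lem.boundary.components}. The central case is $n=0$: via Proposition \ref{prop.parametrisation.H00}, $\Phi^0_0$ realizes (after quotienting by the obvious symmetries) a conformal model of $\mathcal{H}^0_0$, so local connectivity of the boundary reduces to showing that each internal ray lands and that the landing map is continuous. For each $a_0 \in \partial\mathcal{H}^0_0$, I would construct Yoccoz puzzles in the dynamical plane of $f_{0,a_0}$ based on the periodic external rays landing at the $\alpha$-type fixed point on the boundary of the immediate basin of $0$, together with their preimages. Shrinkage of puzzle pieces around the free critical value follows from Yoccoz's theorem in the non-renormalizable case, and from tubing off the small filled Julia set in the renormalizable case. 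By Lemma \ref{lem.landing.stable-region}, the puzzle graphs move holomorphically with the parameter, transferring shrinkage to parameter puzzles around $a_0$ and yielding local connectivity. For $n \geq 1$, each component of $\mathcal{H}^0_n$ admits an analogous parametrization via the iterated capture map, so the Jordan property propagates.

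For the count of landing rays at $a_0 \in \partial\mathcal{H}^0_n$, Lemma \ref{lem.landing.Misiurewicz} reduces the problem to counting dynamical rays landing at the free critical value, which lies on the boundary of some Fatou component. In the renormalizable case, the free critical value coincides with the parabolic periodic point at the root of that component, whose portrait consists of exactly two angles (the pair bounding the associated basin bubble); in the Misiurewicz case the free critical value is a repelling pre-periodic point whose portrait is a single angle.

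For the structure of the wake at a renormalizable $a_0 \in \partial\mathcal{H}^0_0$, the two landing rays separate $\mathbb{C}$ into $U_{a_0} \supset \mathcal{H}^0_0$ and $W_{a_0}$. I would apply Douady--Hubbard's polynomial-like maps theory to a restriction of $f^k_{0,a}$ (where $k$ is the period of the parabolic cycle) on a suitable tubular neighborhood, obtaining an analytic family of quadratic-like maps over a thickening of $W_{a_0}$; straightening then yields a homeomorphism from $\mathbf{M}$ onto a copy rooted at $a_0$ contained in $\overline{W_{a_0}}$. The decomposition $\mathcal{C}_0 = \mathcal{H}^0_0 \cup \bigcup_{a\in\partial\mathcal{H}^0_0}\mathcal{L}_a$ then follows because any $a \in \mathcal{C}_0 \setminus \overline{\mathcal{H}^0_0}$ is cut off from $\mathcal{H}^0_0$ by a wake bounded by a pair of rays landing at a unique renormalizable boundary point. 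The main obstacle is the first stage: constructing admissible parameter puzzles that shrink, where the free critical orbit can interfere with the Yoccoz construction, and where Gr\"otzsch modulus estimates at non-renormalizable boundary points require careful control.
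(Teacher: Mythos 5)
This theorem is not proved in the paper at all: it is quoted as a summary of the main results of \cite{Roesch1}, and your overall plan --- dynamical Yoccoz puzzles whose graphs move holomorphically with the parameter, transfer of shrinking to parameter puzzles, and straightening of a quadratic-like family over the wake to produce the Mandelbrot copy --- is indeed the strategy of the cited work (and of the present paper's generalization to the parabolic slices). So your first and third stages are the right route, modulo the substantial work of constructing admissible graphs that avoid the free critical orbit, which you acknowledge.

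Your second stage, however, contains a genuine error. At a renormalizable $a_0\in\partial\mathcal{H}^0_0$ (the root of a Mandelbrot copy) the free critical value does \emph{not} coincide with the parabolic periodic point: it lies inside a Fatou component of the parabolic basin, and the two dynamical rays $R^\infty_{0,a_0}(\theta^+),R^\infty_{0,a_0}(\theta^-)$ land at the parabolic point and merely \emph{separate} $v_{0,a_0}$ from the immediate basin of $0$ --- this is precisely the content of Lemma \ref{lem.landing.mandelbrot.Per0}. Consequently Lemma \ref{lem.landing.Misiurewicz}, which requires the critical value itself to be a repelling (or Misiurewicz-parabolic) pre-periodic point, cannot be used to count parameter rays at such $a_0$; the landing of exactly two parameter rays at a parabolic parameter is the delicate point here (the cubic analogue of Douady--Hubbard's ``Tour de valse'', handled in \cite{Roesch1} via limits of the boundary angles of nested puzzle pieces, in the spirit of Steps 1--4 of Lemma \ref{lem.landing.mandelbrot.Per0}; see also the paper's own remarks before Proposition \ref{Prop.grand1}). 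A second, smaller gap: non-renormalizable points of $\partial\mathcal{H}^0_n$ are not all Misiurewicz, so ``a repelling pre-periodic critical value with a one-angle portrait'' does not cover the generic boundary point; there the uniqueness of the landing ray must be extracted from the shrinking of the puzzle nest itself (the analogue of Lemma \ref{lem.unique.external.end-point} and Proposition \ref{prop.non-trivial-limb} in the parabolic setting), not from a portrait count.
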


Let us give three lemmas which will be used later in the proof of Proposition \ref{prop.landing.four}:
\begin{lemma}\label{lem.thurston.rigid}
Let $a_1,a_2\in\partial\mathcal{H}^0_0$ be two different renormalizable parameters. By Theorem \ref{thm.pascale.descriptionC0}, suppose that $\mathcal{R}^0_{0,\infty}(t_i),\mathcal{R}^0_{0,\infty}(t'_i)$ land at $a_i$, $i=1,2$. Then $\{t_1,t'_1\}\not=\{t_2,t'_2\}$.
\end{lemma}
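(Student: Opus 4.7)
I would argue by contradiction, assuming $a_1\neq a_2$ but $\{t_1,t'_1\}=\{t_2,t'_2\}=:\{\alpha,\beta\}$. The first step is to translate the hypothesis into the dynamical plane. Since $a_i\in\partial\mathcal{H}^0_0$, Proposition \ref{prop.pascale.landingH0} applies: in the dynamical plane of $f_{0,a_i}$, the rays $R^\infty_{a_i}(\alpha),R^\infty_{a_i}(\beta)$ land at a parabolic periodic point $z_i\in\partial B_0(a_i)$, where $B_0(a_i)$ is the immediate basin of $0$. The standard principle relating parameter and dynamical rays (namely that the parameter ray $\mathcal{R}^0_\infty(\alpha)$ ends where the critical value $v^-_a=4a^3/27$ lands on $R^\infty_{a}(\alpha)$, which is forced to be a parabolic point when $a_i\in\partial\mathcal{H}^0_0$) ensures that $\{\alpha,\beta\}$ is precisely the portrait of the parabolic cycle seen at the cycle point $z_i$.

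Next, I would use that each $a_i\in\partial\mathcal{H}^0_0$ is the landing point of a unique internal ray $\mathcal{R}^{0,\mathcal{S}_i}_0(\tau_i)$ in its appropriate sector (Proposition \ref{prop.pascale.landingH0}), and that the Böttcher-type conjugacy $\phi^0_{0,a_i}:\overline{B_0(a_i)}\to\overline{\mathbb{D}}$ identifies $f_{0,a_i}|_{\overline{B_0(a_i)}}$ with $z\mapsto z^2$, mapping the parabolic point $z_i$ to $e^{2\pi i\tau_i}$. The crucial combinatorial fact I would establish is that the external portrait $\{\alpha,\beta\}$ at $z_i$ determines $\tau_i$: pulling back the dynamical puzzle bounded by $R^\infty_{a_i}(\alpha)\cup R^\infty_{a_i}(\beta)\cup\{z_i\}$ isolates on $\partial B_0(a_i)$ exactly the access at internal angle $\tau_i$, so the map $\tau\mapsto\{\alpha(\tau),\beta(\tau)\}$ among renormalisable roots is injective. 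Consequently $\tau_1=\tau_2$, and by the single-valuedness of $\tau\mapsto a(\tau)$ in each sector, either $a_1=a_2$ or $a_1$ and $a_2$ are related by the $\pm$ symmetry $\Phi^0_\infty(-a)=-\Phi^0_\infty(a)$. The latter sends $\{\alpha,\beta\}$ to $\{\alpha+\tfrac12,\beta+\tfrac12\}$, so equality of unordered angle pairs would force $\beta\equiv\alpha+\tfrac12\pmod 1$; this degenerate case is excluded because the corresponding parabolic point would have to be fixed by $z\mapsto -z$ in the dynamical plane, leaving only the trivial fixed point at $0$, which is not a renormalisable root.

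The main obstacle is of course the combinatorial rigidity statement in the middle step --- that the external angle pair at a parabolic point on $\partial B_0$ determines its internal angle. There are two natural routes: a puzzle/Yoccoz argument directly inside $\overline{B_0(a_i)}$ using that $f_{0,a_i}|_{\overline{B_0(a_i)}}$ is conjugate to the doubling map, or invoking Thurston-type rigidity for geometrically finite cubic polynomials (which forces $f_{0,a_1}$ and $f_{0,a_2}$ sharing the same portrait to be conformally conjugate, hence equal up to $a\mapsto -a$ in our family). Either way, the non-trivial content is rigidity of combinatorics, while the bookkeeping of the $\pm$-ambiguity at the end is a short verification.
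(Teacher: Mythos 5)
Your plan hinges on the middle claim that the unordered external pair $\{\alpha,\beta\}$ landing at the parabolic point determines the internal angle $\tau_i$ of the corresponding access on $\partial B_0(a_i)$, and you leave that step exactly where the difficulty lies (``a puzzle/Yoccoz argument \dots or Thurston-type rigidity''). This is not bookkeeping: the correspondence between external angles on $\partial B_0$ and internal angles is a priori parameter-dependent, and proving such a statement is a combinatorial rigidity result of essentially the same depth as the lemma itself --- precisely the kind of analysis the paper is structured to avoid. The paper's proof is much softer and genuinely different: it uses the parametrization of the escape locus by the \emph{co-critical point}, $a\mapsto\phi^{\infty}_{0,a}(\tilde{c}_a)$, which is injective (unlike the degree-$3$ critical-value parametrization $\Phi^0_\infty$). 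Injectivity forces two distinct parameters to be landing points of parameter rays with disjoint angle sets, while equality of the pairs $\{t_1,t_1'\}=\{t_2,t_2'\}$ would force equal dynamical portraits at the co-critical points $\tilde{c}_{a_1},\tilde{c}_{a_2}$; this is an immediate contradiction requiring no rigidity of the renormalization combinatorics.

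Even granting your rigidity claim, the endgame has a gap. By Proposition \ref{prop.parametrisation.H00}, $\Phi^0_0$ is a degree-$4$ covering, so a given internal angle $\tau$ is carried by four internal rays, one in each of the sectors $\mathcal{S},i\mathcal{S},-\mathcal{S},-i\mathcal{S}$, landing at (generically) four distinct points of $\partial\mathcal{H}^0_0$; only two of these are exchanged by $a\mapsto-a$. Hence $\tau_1=\tau_2$ does \emph{not} yield ``either $a_1=a_2$ or $a_1=-a_2$'': you have not excluded the other two points of the fiber, whose maps are not conformally conjugate to $f_{0,a_1}$, so no symmetry argument applies to them --- and distinguishing them is exactly what the lemma asserts. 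Finally, your dismissal of the antipodal case is also unjustified as written: $z\mapsto-z$ conjugates $f_{0,a}$ to $f_{0,-a}$, not to itself, so there is no reason the parabolic point of $f_{0,a_1}$ should be fixed by $z\mapsto-z$; excluding $\beta\equiv\alpha+\frac{1}{2}$ would need a real argument (for instance in the spirit of Corollary \ref{cor.two_angles}), not this symmetry heuristic.
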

\begin{proof}
Suppose the contrary that $\{t_1,t'_1\}=\{t_2,t'_2\}$. Then the portrait of the co-critical point $\tilde{c}_{a_i}$ will be the same, i.e. the angles of the two external rays bounding $\tilde{c}_{a_1}$ equal those for $\tilde{c}_{a_1}$. On the other hand, since the parametrisation of $\mathcal{H}^0_\infty$ defined by $a\mapsto\phi^\infty_{0,a}(\tilde{c}_a)$ is an isomorphism, we conclude that the angles of parameter rays landing at $a_i$ are different. Thus in the dynamical plan, the portrait at $\tilde{c}_{a_i}$ should also be different, a contradiction.
\end{proof}

\begin{lemma}\label{lem.landing.boundary.H00}
There are exactly $4q$ external rays whose angles are of rotation number $p/q$ (under multiplication by 3) landing at $\partial\mathcal{H}^0_0$.
\end{lemma}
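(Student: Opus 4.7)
The plan is to reduce the count of external rays to a count of parabolic boundary parameters with the correct combinatorics, exploiting that each such parameter is renormalizable and hence receives exactly two external rays by Theorem \ref{thm.pascale.descriptionC0}. Concretely, I will match external rays to parameters via the dichotomy ``renormalizable $\Leftrightarrow$ 2 rays'' and count parameters via internal rays of $\mathcal{H}^0_0$ using Proposition \ref{prop.pascale.landingH0}.

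First, I would set up the correspondence internal rays~$\leftrightarrow$~parabolic boundary parameters. By Proposition \ref{prop.pascale.landingH0}, an internal ray $\mathcal{R}^{0,\ast}_0(t)$ in any sector lands at a parabolic parameter on $\partial \mathcal{H}^0_0$ of period $q$ exactly when $(t+1)/2$ is $q$-periodic under doubling. The combinatorial rotation number $p/q$ of the resulting parabolic cycle (living on $\partial U_0$, where $f_{0,a}$ is conjugate to doubling via the Böttcher coordinate $\phi^0_a$) coincides with the combinatorial rotation of $(t+1)/2$ in its doubling cycle. Since the doubling map possesses a unique $q$-cycle of rotation number $p/q$, there are exactly $q$ admissible values of $t$ per sector. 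This produces $q$ internal rays in each of the four sectors $\mathcal{S}$, $i\mathcal{S}$, $-\mathcal{S}$, $-i\mathcal{S}$, and by the degree-four covering $\Phi^0_0$ and the fact that $\partial \mathcal{H}^0_0$ is a Jordan curve, the $4q$ internal rays land at $4q$ \emph{a priori} distinct parabolic boundary parameters.

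Next, I would exploit the two symmetries of the family $f_{0,a}$ and the corresponding identities from the remark, namely $\Phi^0_0(-a)=\Phi^0_0(a)$ and $\Phi^0_0(\bar a)=\overline{\Phi^0_0(a)}$, together with the fact that $a\mapsto-a$ (conjugation by $z\mapsto-z$) preserves multipliers of periodic cycles while $a\mapsto\bar a$ conjugates them. These symmetries act on the four symmetric parabolic parameters associated to one value of $t$, forcing the two sectors $\mathcal{S},-\mathcal{S}$ to carry rotation $p/q$ and the two sectors $i\mathcal{S},-i\mathcal{S}$ to carry rotation $(q-p)/q$. Thus precisely $2q$ of the $4q$ parabolic parameters have rotation exactly $p/q$.

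Finally, I would invoke Theorem \ref{thm.pascale.descriptionC0}: each of these parabolic parameters is renormalizable (it is the root of a baby Mandelbrot copy of period $q$ sprouting from $\mathcal{H}^0_0$), so exactly two external rays land at it, and a portrait argument shows that both of their angles are in the tripling portrait of the parabolic cycle, hence of rotation number $p/q$ under multiplication by $3$. Conversely, any external ray of angle with rotation $p/q$ under tripling that lands at $\partial\mathcal{H}^0_0$ must land at such a parabolic parameter, by the landing analysis of Lemma \ref{lem.rational_rays_property} combined with Theorem \ref{thm.pascale.descriptionC0}. The total is $2q\cdot 2=4q$. The main technical obstacle is the degenerate case $p/q=1/2$, where the two conjugate rotations $p/q$ and $(q-p)/q$ coincide: here one must check separately that the four symmetric parabolic parameters associated to one value of $t$ collapse to two (equivalently, that the corresponding parameters lie on the real axis, matching the analogous quadratic picture at $c=-3/4$), so that the count still produces $4q$ rather than $8q$ rays.
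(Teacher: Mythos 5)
Your high-level reduction is the same as the paper's: count the parabolic parameters of rotation number $p/q$ on $\partial\mathcal{H}^0_0$ using internal rays and Proposition \ref{prop.pascale.landingH0}, attach exactly two external rays to each via Theorem \ref{thm.pascale.descriptionC0}, and argue conversely that any rotation-number-$p/q$ ray landing on $\partial\mathcal{H}^0_0$ lands at such a parameter. The gap is in the counting step. You claim that per sector there are exactly $q$ admissible angles $t$, obtained by pulling the $q$ elements of the unique doubling cycle $T_{p/q}$ back under $t\mapsto\frac{t+1}{2}$. But $\frac{t+1}{2}$ ranges only over $[\frac12,1)$ as $t$ ranges over $\mathbb{R}/\mathbb{Z}$, so only the elements of $T_{p/q}$ lying in $[\frac12,1)$ occur as $\frac{t+1}{2}$, and there are $p$ of them, not $q$ (for $p/q=1/3$, $T_{1/3}=\{1/7,2/7,4/7\}$ and the only such $t$ is $1/7$). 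Equivalently, for $t\in T_{p/q}$ the periodic preimage of $t$ is either $\frac{t+1}{2}$ or $\frac{t}{2}$, and Proposition \ref{prop.pascale.landingH0} produces a parabolic landing point only in the first case. This is exactly the parity subtlety that the paper's Claim (odd versus even numerator of $t=m/(2^q-1)$) is designed to handle: the $q-p$ missing parameters are recovered from the conjugate cycle $T_{1-p/q}$ via internal rays in the complex-conjugate sector, and only after this compensation does one find $q$ rotation-$p/q$ parameters in a half-plane, hence $2q$ in total and $4q$ rays. What is true per sector is that $q$ angles of $T_{p/q}\cup T_{1-p/q}$ are admissible, but they split between the two rotation numbers; assigning them all to $p/q$ is the error.

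As a consequence, your symmetry bookkeeping in the second paragraph cannot stand as written. The holomorphic symmetry $a\mapsto-a$ exchanges $\mathcal{S}$ with $-\mathcal{S}$ (preserving rotation numbers), while the anti-holomorphic symmetries exchange $\mathcal{S}$ with the adjacent sectors and reverse rotation numbers; in the paper's count the rotation-$p/q$ parameters of the right half-plane are split between the two sectors there ($k$ and $q-k$ of them with $0<k<q$ when $q\geq2$), so a single sector carries boundary parameters of both rotation numbers, contrary to your segregation of $p/q$ to $\mathcal{S}\cup(-\mathcal{S})$ and $(q-p)/q$ to $i\mathcal{S}\cup(-i\mathcal{S})$. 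Note also the internal tension: your first paragraph asserts that every admissible parameter has rotation number equal to that of the cycle of $\frac{t+1}{2}$, i.e.\ $p/q$ for all $4q$ of them, while your second paragraph declares that half of them have rotation number $(q-p)/q$; both cannot hold. The special treatment you propose for $p/q=1/2$ is a symptom of this miscount rather than a genuine degenerate case: with the paper's bookkeeping the total is $4q$ for every $p/q$, with no separate collapse argument needed.
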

\begin{proof}
By Theorem \ref{thm.goldberg}, the periodic cycle of rotation number $p/q$ under multiplication by 2 is unique. Denote this cycle by $T_{\frac{p}{q}}$. 
\begin{claim*}
Let $t\in T_{\frac{p}{q}}$ such that $\frac{t+1}{2}$ is \textbf{not} $q$-periodic. Then $1-t\in T_{1-\frac{p}{q}}$ and $\frac{(1-t)+1}{2}$ is $q-$periodic.
\end{claim*}
\begin{proof}[proof of the claim]
Let $t = \frac{m}{2^q-1}$. By hypothesis $2^q\cdot\frac{t+1}{2} \not= \frac{t+1}{2}$, which by an elementary computation is equivalent to $\frac{m}{2} \not= \frac{1}{2}$, i.e. $m$ is even. Hence $1-t =  \frac{\Tilde{m}}{2^q-1}$ where $\Tilde{m} = 2^q-1-m$ is odd. Hence $\frac{(1-t)+1}{2}$ is $q-$periodic.
\end{proof}
 Let $a(t)$ be the landing point of $\mathcal{R}^{0,\mathcal{S}}_0(t)$. Suppose there are $k$ element in $T_{\frac{p}{q}}$ such that $R^0_{a(t)}(t)$ lands at a parabolic periodic point, then from Proposition \ref{prop.pascale.landingH0} and the claim we see that there are $q-k$ elements in $T_{\frac{p}{q}}$ such that $R^0_{a(1-t)}(1-t)$ lands at a parabolic periodic point. Moreover, if we name these $k$ elements and $q-k$ elements by $t_1,...,t_k$, $\Tilde{t}_1,...,\Tilde{t}_{q-k}$, the claim tells us that $T_{p/q} = \{t_1,...,t_k,1-\Tilde{t}_1,...,1-\Tilde{t}_{q-k}\}$. Denote by $a(t_i)$ the landing point of $\mathcal{R}^{0,\mathcal{S}}_0(t_i)$ for $1\leq i\leq k$ and $a(\Tilde{t}_i)$ the landing point of $\mathcal{R}^{0,-i\mathcal{S}}_0(\Tilde{t}_i)$ for $1\leq i\leq q-k$. In the dynamical plan, let $(x_i)_{1\leq i\leq k}$ and $(\Tilde{x}_i)_{1\leq i\leq q-k}$ be the landing point of $R^0_{a(t_i)}(t_i)$ and $R^0_{{a}(\Tilde{t}_i)}(\Tilde{t}_i))$ respectively. Then $({x}_i),(\Tilde{x}_i)$ are all parabolic periodic points with rotation number $p/q$. By Theorem \ref{thm.pascale.descriptionC0}, for any $t\in T_{p/q}$, there are  exactly two parameter external rays $\mathcal{R}_{\infty}(\eta),\mathcal{R}_{\infty}(\eta')$ landing at $a(t)$ with $\eta,\eta'\in\bigcup_k\Theta_k$. Therefore we have found $2q$ external rays landing at parameters on $\partial\mathcal{H}_0^0$ in the right-half plane. By symmetric, there are in total $4q$ rays.
\end{proof}

\begin{lemma}\label{lem.landing.mandelbrot.Per0}
Suppose $t\in\mathbb{Q}/\mathbb{Z}$ has rotation number $p/q$ under multiplication by 3. Suppose that $a_0$ the landing point of $\mathcal{R}^{0}_{\infty}(t)$ is a parabolic parameter, i.e. $R^{\infty}_{0,a_0}(t)$ lands at a parabolic periodic point. Then there are exactly two external rays $\mathcal{R}^{0}_{\infty}(\theta^+),\mathcal{R}^{0}_{\infty}(\theta^-)$ landing at $a_0$ ($t$ is one of $\theta^+,\theta^-$). Moreover, $R^\infty_{0,a_0}(\theta^+),R^\infty_{0,a_0}(\theta^-)$ bound the critical value $v_{0,a_0}$, separating it from the immediate basin of 0.
\end{lemma}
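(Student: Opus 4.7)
My plan is to identify the dynamical portrait at $v_{0,a_0}$ and then use a holomorphic-motion / perturbation argument to transfer this information to parameter space. First I would show that $v_{0,a_0}$ equals the parabolic landing point of $R^\infty_{0,a_0}(t)$: if not, then by stability of the repelling Fatou coordinate (as in the proof of Lemma \ref{lem.rational_rays_property}), that landing point would persist as a holomorphically varying periodic point for $a$ near $a_0$, so $\mathcal{R}^0_\infty(t)$ could not accumulate at $a_0$, contradicting the hypothesis. Hence $v_{0,a_0}$ lies on a parabolic cycle of multiplier $e^{2\pi ip/q}$, and its portrait $\Theta = \{\theta_0 = t, \theta_1, \ldots, \theta_{q-1}\}$ consists of exactly $q$ co-landing rays, cyclically permuted with combinatorial rotation $p/q$ by the first-return map.

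Next I would perturb. By (the argument underlying) Lemma \ref{lem.landing.stable-region}, in a punctured neighborhood $V^*$ of $a_0$ with the discrete set of additional parabolic parameters removed, the $q$ rays $R^\infty_{0,a}(\theta_i)$ undergo a common dynamical holomorphic motion, co-landing at a nearby moving repelling cycle $x(a)$ and partitioning a neighborhood of $x(a)$ into $q$ topological sectors $S_0(a), \ldots, S_{q-1}(a)$. The holomorphic function $a \mapsto v(a)$ avoids $x(a)$ except on a proper subvariety, so on each connected piece of $V^*$ the critical value $v(a)$ sits in a definite sector $S_j(a)$, bounded by two rays whose angles I denote $\theta^+, \theta^- \in \Theta$.

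The heart of the argument is then to show that exactly $\mathcal{R}^0_\infty(\theta^+)$ and $\mathcal{R}^0_\infty(\theta^-)$ accumulate at $a_0$. Indeed, since $\Phi^0_\infty(a) = \phi^\infty_{0,a}(v(a))$, a parameter ray $\mathcal{R}^0_\infty(\theta_i)$ with $\theta_i \in \Theta$ lands at $a_0$ iff $v(a)$ tends to $v_{0,a_0}$ along the boundary ray of $S_j(a)$ corresponding to $\theta_i$; this forces $\theta_i \in \{\theta^+, \theta^-\}$. Conversely, by a symmetric construction --- moving $v(a)$ toward either boundary of $S_j(a)$ --- both $\mathcal{R}^0_\infty(\theta^\pm)$ do accumulate at $a_0$. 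Any other candidate angle $\eta \notin \Theta$ is ruled out via the parabolic analog of Lemma \ref{lem.landing.Misiurewicz}: the dynamical ray $R^\infty_{0,a_0}(\eta)$ would have to land at $v_{0,a_0}$, forcing $\eta \in \Theta$.

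Finally, for the separation statement: for $a \in \mathcal{H}^0_0$ close to $a_0$, the critical value $v(a)$ sits in $B_0$; when $a$ crosses $a_0$ into $V^*$, $v(a)$ moves into $S_j(a)$, which is disjoint from $B_0$. Taking the limit $a \to a_0$, the immediate basin $B_0$ must lie in the complement of the limit sector $S_j^{a_0}$ whose boundary at $v_{0,a_0}$ is formed by $R^\infty_{0,a_0}(\theta^+) \cup R^\infty_{0,a_0}(\theta^-)$. The main obstacle will be the third step, ruling out the other $q-2$ angles in $\Theta$ as landing angles: this requires careful control of the geometry of the holomorphic motion of $R^\infty_{0,a}(\theta_i)$ across $a_0$ and a precise description of how each sector pinches at $v_{0,a_0}$ along different approach curves in parameter space.
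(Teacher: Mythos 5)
Your Step~1 contains a fundamental error. You claim that $v_{0,a_0}$ equals the parabolic landing point of $R^\infty_{0,a_0}(t)$, arguing that otherwise the landing point ``would persist as a holomorphically varying periodic point.'' But stability of the landing point under perturbation is an argument that applies only to \emph{repelling} landing points (as in Lemma~\ref{lem.landing.Misiurewicz}); a parabolic periodic point does not persist holomorphically --- it undergoes parabolic implosion, splitting into attracting/repelling cycles in a direction-dependent way. In fact the very reason Lemma~\ref{lem.rational_rays_property} treats the repelling and parabolic cases with completely different arguments is that the identification $x_{\lambda,s_0}=\hat v(s_0)$ fails in the parabolic case. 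And indeed the conclusion of Step~1 is false: in this lemma $a_0$ is renormalisable, $v_{0,a_0}$ is the critical value of the quadratic-like return map, and under the hybrid conjugacy it corresponds to $c$ on the cardioid of $\textbf{M}$ --- which lies in the parabolic \emph{basin}, not on the parabolic cycle. (Compare $z^2-3/4$: the parameter ray of angle $1/3$ lands at $c=-3/4$, but the dynamical ray $R_{1/3}$ lands at the parabolic fixed point $-1/2\neq c$.) So $v_{0,a_0}$ is strictly inside the wake cut off by $R^\infty_{0,a_0}(\theta^\pm)$, and the whole point of the ``Moreover'' clause of the lemma is precisely that the rays \emph{bound} it, not that they land at it.

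Because the parabolic point does not move holomorphically, the holomorphic-motion picture you build in the next step --- the $q$ rays co-landing on ``a nearby moving repelling cycle $x(a)$'' and partitioning a neighborhood into sectors that vary continuously across $a_0$ --- is not available either; which bifurcated cycle the rays land on, and even whether they co-land, depends on the direction of approach to $a_0$ (this is the delicate ``Tour de valse'' phenomenon). The obstacle you flag at the end (Step~3, ruling out the other $q-2$ angles) is real but secondary; the primary gap is already at Step~1. The paper sidesteps both problems by leaning on the renormalisation structure from \cite{Roesch1}: the nested parameter puzzle pieces $\mathcal{P}^0_n(a_0)$ produce the Mandelbrot copy $\textbf{M}_{a_0}$, two canonical landing angles $\theta^\pm$ are extracted from the combinatorics of those puzzle boundaries, Step~5 shows $a_0$ lies on the cardioid (via rotation-number rigidity), and the satellite case (Step~3) is handled by pinching paths rather than by tracking the parabolic cycle through implosion.
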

\begin{proof}
Let $\mathcal{P}^{0}_n({a_0})$ be the puzzle piece ($a_0\in\mathcal{P}^{0}_n({a_0})$) constructed in \cite{Roesch1}. Since $a_0$ is renormalisable, by \cite[Prop. 3.26]{Roesch1}, $\textbf{M}_{a_0} := \bigcap_n\overline{\mathcal{P}^{0}_n({a_0})}$ is a copy of Mandelbrot set and let $\chi:\textbf{M}_{a_0}\longrightarrow\textbf{M}$ be the homeomorphism. Let $\theta^-_n$ (resp. $\theta^+_n$) be the smallest/largest angle of the external rays involved in $\partial\mathcal{P}^{0}_n({a_0})$. \\\vspace{-2.5mm}

\noindent\textbf{Step 1}. For $n$ large enough, $3^k\theta^{\pm}_{n} = \theta^{\pm}_{n-k}$.

By \cite[Lem. 3.17, Prop. 3.22]{Roesch1}, for $n$ large enough, there is a natural homeomorphism preserving equipotentials and rays between $\partial\mathcal{P}^{0}_n({a_0})$ and $\partial P^{a_0}_{0,n}$. Thus $\theta^{\pm}_n$ are also the largest/smallest angle involved in $\partial P^{a_0}_{0,n}$. Notice that $\mathcal{R}^0_{\infty}(\theta^{\pm}_n)$ land at the same hyperbolic component since adjacent and capture components have disjoint boundaries (cf. \cite[Lem. 1]{Roesch1}). Thus the two corresponding dynamical rays $R^{\infty}_{0,a_0}(\theta^{\pm}_n)$ land at the boundary of some Fatou component $U$. Let $x^{\pm}$ be their landing points and $y^{\pm}$ their intersection with the equipotential in $\partial P^{a_0}_{0,n}$. Then $y^{+},y^{-}$ are linked by a curve $L$ consisting of two segments of $R^{\infty}_{0,a_0}(\theta^{\pm}_n)$ linking $y^{\pm}$ to $x^{\pm}$ and a segment contained in $\overline{U}\cap\partial P^{a_0}_{0,n}$ linking $x^+,x^-$. Suppose the contrary that $3^k\theta^{\pm}_{n} \not= \theta^{\pm}_{n-k}$. Then there is a segment of equipotential $\gamma$ with angles between $(\theta^-_n,\theta^+_n)$ linking $f^k_{0,a_0}(y^+),f^k_{0,a_0}(y^-)$. Thus $\gamma\cup f^{k}_{0,a_0}(L)$ bounds a simply connected domain $P$ which do not contain the free critical value of $f_{0,a_0}$. Let $Q$ be the component of $f^{-k}_{0,a_0}(P)$ whose boundary contains $L$. Then $f^{k}_{0,a_0}:\overline{Q}\longrightarrow \overline{P}$ is injective. Hence $\partial Q$ contains the segment of equipotential consisting of angles in $\mathbb{S}^1\setminus(\theta^-_n,\theta^+_n)$, which in particular contains the angle 0. While $R^{\infty}_{0,a_0}(0)$ is fixed by $f_{0,a_0}$, this leads to a contradiction.

\noindent\textbf{Step 2}. Two external rays land at $\chi^{-1}(\frac{1}{4})$.

Since $\overline{P^{a_0}_{0,n}}\subset P^{a_0}_{0,n-k}$, we obtain a sequence of decreasing/increasing angles $\{\theta^{\pm}_{n_0+lk}\}_{l\geq 0}$ for some $n_0$ large enough. Let $l$ tend to infinity we get two limits $\theta^{\pm}$ which satisfy $3^k\theta^{\pm} = \theta^{\pm}$ by Step 1. Let $a^{\pm}$ be the landing point of $\mathcal{R}_{0,\infty}(\theta^{\pm})$. Clearly $a^{\pm}\in\textbf{M}_{a_0}$. We claim that $a^{+}=a^{-}=\chi^{-1}(\frac{1}{4})$. We prove for $a^{+}$ and the left one will be the same. If not, then $R^{\infty}_{0,a^+}(\theta^+)$ land at a repelling fixed point for $f^k_{0,a^+}$. By \cite[Lem. 2.24]{Roesch1}, this point is the free critical value of $f_{0,a^+}$. Let $P_{c^+}(z) = z^2+c^+$ be the corresponding quadratic polynomial to which $f^k_{0,a^+}|_{P^{a^+}_{0,n}}$ is conjugate. Then we have $P_{c^+}(c^+) = c^+$, i.e, $c^+ = 0$. While $c^+\in\partial\textbf{M}$, a contradiction. Thus we have proved that $\mathcal{R}^0_{\infty}(\theta^{\pm})$ land at $\chi^{-1}(\frac{1}{4})$. \\\vspace{-2.5mm}

\noindent\textbf{Step 3}. Two external rays land at $\chi^{-1}(c)$ for $c\not=\frac{1}{4}$ a parabolic parameter on the cardioid.

Let $a = \chi^{-1}(c)$ and $R^{\infty}_{0,a}(t),R^{\infty}_{0,a}(3^kt),...,R^{\infty}_{0,a}(3^{(s-1)k}t)$ be the cycle of external rays landing at the parabolic fixed point of $f^k_{0,a}$. Notice that $c$ is satellite, hence it is the intersection of the closure of the main hyperbolic component $\textbf{H}_0$ and the closure of the periodic hyperbolic component $\textbf{H}_1$ attached at $c$. Thus there are two pinching paths $\gamma_0\subset\chi^{-1}(\textbf{H}_0)$ and $\gamma_1\subset\chi^{-1}(\textbf{H}_1)$ converging to   $f_{0,a}$ such that
\begin{enumerate}
    \item if $a'\in\gamma_0$, then $\{R^{\infty}_{0,a'}(3^{kl}t)\}_{l\geq 0}$ land at a repelling $s$-periodic cycle of $f^k_{0,a'}$.
    \item if $a'\in\gamma_1$, then $\{R^{\infty}_{0,a'}(3^{kl}t)\}_{l\geq 0}$ land at a common repelling fixed point of $f^k_{0,a'}$.
\end{enumerate}
Since landing at a repelling periodic cycle is an open property, there must exist two external rays among $\{\mathcal{R}^0_{\infty}(3^{l}t)\}_{l\geq 0}$ landing at $a$.\\\vspace{-2.5mm}

\noindent\textbf{Step 4}. The two rays landing are unique. The proof of \cite[Thm. 3]{Roesch1} can be adapted.\\\vspace{-2.5mm}

\noindent\textbf{Step 5}. $a_0$ is on the cardioid of $\textbf{M}_{a_0}$.

Let $k$ be the period of renormalisation of $a_0$. Since the cycle $t,3t,...,3^{q-1}t$ has rotation number $p/q$ under multiplication by 3, then so does the cycle $t,3^kt,...,3^{sk}t$ under multiplication by $3^k$. By \cite[Prop. 3.22]{Roesch1}, $f^k_{0,a_0}:P^{a_0}_{0,n}\longrightarrow P^{a_0}_{0,n-k}$ is hybrid conjugate to a quadratic polynomial $P_c(z) = z^2+c$ for $n$ large enough, where $P^{a_0}_{0,n}$ is the dynamical puzzle piece containing the free critical value of $f_{0,a_0}$. Thus the cycle of the parabolic periodic point of $P_c$ has a cycle of access, which also admits a rotation number. This cycle of access is homotopic to a cycle of external rays with the same rotation number. Since by Theorem \ref{thm.goldberg}, there is only one cycle of angles for a given rotation number under multiplication by 2, this implies that the parabolic periodic point of $P_c$ is actually fixed, hence $c$ is on the cardioid, so is $a_0$.

From Step 1 and 3 we see that $R^\infty_{0,a_0}(\theta^+),R^\infty_{0,a_0}(\theta^-)$ bound a sector containing $v_{0,a_0}$, separating it from $z=0$.
\end{proof}

\begin{corollary}\label{cor.two_angles}
Under the same hypothesis and notations of Lemma \ref{lem.landing.mandelbrot.Per0}, the interval $(\theta^-,\theta^+)$ does not contain any angle in $\{3^k\theta^+;\,k\geq 0\}\cup\{3^k\theta^-;\,k\geq 0\}$.
\end{corollary}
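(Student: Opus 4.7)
I would argue by contradiction via the wake structure at the parabolic fixed point, using the notation of Lemma \ref{lem.landing.mandelbrot.Per0}. Suppose some angle $\eta = 3^\ell \theta^+$ (the case $\eta = 3^\ell \theta^-$ is symmetric) with $\ell \geq 1$ lies in $(\theta^-, \theta^+)$. Since Step 1 of the proof of Lemma \ref{lem.landing.mandelbrot.Per0} yields $3^k\theta^\pm = \theta^\pm$, where $k$ is the renormalisation period, the full $\times 3$-orbit of $\theta^+$ is finite and one may assume $1 \leq \ell < k$, so that $3^\ell \theta^+ \neq \theta^\pm$.

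First I would record the dynamical picture for $f_{0,a_0}$. The rays $R^\infty_{0,a_0}(\theta^\pm)$ land at the parabolic fixed point $p$ of $f^k_{0,a_0}$ and, together with $p$, split $\mathbb{C}$ into two open sectors; let $W$ denote the sector containing the free critical value $v_{0,a_0}$, as furnished by Lemma \ref{lem.landing.mandelbrot.Per0}. Pairwise disjointness of external rays identifies $W$ with $\bigcup_{\nu \in (\theta^-,\theta^+)} R^\infty_{0,a_0}(\nu)$. Applying this to $\nu = 3^\ell \theta^+$, the ray $R^\infty_{0,a_0}(3^\ell\theta^+) = f^\ell_{0,a_0}(R^\infty_{0,a_0}(\theta^+))$ lies inside $W$ and lands at $f^\ell(p) \in \overline{W}$. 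Since $p$ is the only point of the $f$-cycle $\{p, f(p), \ldots, f^{k-1}(p)\}$ on $\partial W$ and $f^\ell(p) \neq p$, this forces $f^\ell(p) \in W$; the analogous argument with $3^\ell \theta^-$ then shows $R^\infty_{0,a_0}(3^\ell\theta^-) \subset W$ as well.

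A contradiction then comes from iterating the wake structure. Let $W_\ell$ denote the sector at $f^\ell(p)$ bounded by the two rays $R^\infty_{0,a_0}(3^\ell\theta^\pm)$, chosen on the side containing $f^\ell(v_{0,a_0})$. Its boundary sits strictly inside $W$, whence $\overline{W_\ell}\subsetneq W$ (strict since $\partial W_\ell$ lands at $f^\ell(p)\neq p$). Iterating $f^\ell$ one gets $W_{2\ell\bmod k}=f^\ell(W_\ell)\subsetneq f^\ell(W)=W_\ell$, and more generally a strictly decreasing chain $W\supsetneq W_\ell\supsetneq W_{2\ell\bmod k}\supsetneq\cdots$ of wakes indexed by the finite cyclic group $\mathbb{Z}/k\mathbb{Z}$; this chain must eventually return to $W_0 = W$, yielding the contradiction $W\supsetneq W$. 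The main subtlety I foresee is rigorously verifying the strict inclusion $\overline{W_\ell}\subsetneq W$ and controlling $f^\ell|_W$ (which need not be injective, since $W$ may contain the free critical point), but the core ingredient is just the pairwise disjointness of external rays together with the fact that rays landing inside $W$ cannot cross $\partial W$. An alternative route avoiding this topological bookkeeping would be to transfer the problem through the hybrid equivalence of $f^k_{0,a_0}|_{P^{a_0}_{0,n}}$ with a parabolic quadratic polynomial $P_c$ from Step 5 of Lemma \ref{lem.landing.mandelbrot.Per0}, and then invoke the classical characteristic-arc property for parameters on the cardioid of the Mandelbrot set to handle the $\times 3^k$-iterates, the remaining $\times 3$-iterates being dealt with by the same wake-disjointness principle.
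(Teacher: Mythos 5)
Your overall strategy (assume $3^\ell\theta^+\in(\theta^-,\theta^+)$, build a wake $W_\ell\subset W$ at the image point, iterate and derive a contradiction from nesting) is the same circle of ideas as the paper's proof, which shows that the critical-value wake $W$ contains neither the free critical point nor any other point of the parabolic cycle; your finite strictly-nested chain simply replaces the paper's use of the Denjoy--Wolff theorem at the last step. However, you have left unproved exactly the step the paper works hardest on, and it is not mere bookkeeping. Your chain argument hinges on the equality $f^\ell(W)=W_\ell$ and on the strict inclusion $\overline{W_\ell}\subsetneq W$, and neither holds just from ``pairwise disjointness of external rays'': if $W$ contains the free critical point $-\tfrac{2a_0}{3}$, then $f^\ell(W)$ need not be a wake at all (it can be much larger than $W_\ell$), and the whole nesting collapses. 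The first half of the paper's proof is precisely a separate contradiction argument ruling this out: assuming the co-critical wake $\tilde{W}$ bounded by $R^\infty_{0,a_0}(\tilde{\theta}^\pm)$ with $3^{k-1}\tilde\theta^\pm$ mapping to $\theta^\pm$ sits inside $W$, one studies $f^k_{0,a_0}$ on $W\setminus\overline{\tilde W}$ (injective and proper there) and derives that the image cannot meet $W$, a contradiction. Your proposal flags this as ``the main subtlety I foresee'' but supplies no argument, and your fallback route (hybrid conjugacy with $P_c$ plus the classical characteristic-arc property) only controls the $3^k$-iterates and, as you yourself note, sends the intermediate iterates back to the same unproved wake-mapping principle. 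So there is a genuine gap here.

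A secondary problem: you deduce $f^\ell(p)\neq p$ from $3^\ell\theta^+\neq\theta^\pm$, but these are not equivalent. The parabolic point may have period a proper divisor of the renormalisation period $k$ (satellite-type situations, which is also why the paper first reduces to the case where $a_0$ is the root of $\mathbf{M}_{a_0}$, where the rays $\theta^\pm$ are genuinely fixed by $3^k$). If $f^\ell(p)=p$, your conclusion ``$f^\ell(p)\in W$'' fails, and with it both the strictness $\overline{W_\ell}\subsetneq W$ and your derivation that $3^\ell\theta^-$ also lies in $(\theta^-,\theta^+)$ (which you obtain from $f^\ell(p)\in W$ via ray disjointness). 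To make your argument complete you would need (i) the paper's reduction to the root case together with an argument that $p$ then has exact period $k$, or a separate treatment of rays landing at $p$ itself, and (ii) a proof that the critical point is not in $W$, after which $f^\ell(W)=W_\ell$ follows from a properness/degree argument and your finite nested chain (or Denjoy--Wolff, as in the paper) finishes the proof.
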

\begin{proof}
It suffices to prove for the case when $a_0$ is the root of $\textbf{M}_{a_0}$.

First we prove that in the dynamical plan of $f_{0,a_0}$, the wake $W$ bounded by ${R}^\infty_{0,a_0}(\theta^{\pm})$ (that is also the wake containing the critical value $v_{0,a}$) does not contain the critical point $-\frac{2a}{3}$. Let $\tilde{W}$ be the wake bounding $-\frac{2a}{3}$ and bounded by ${R}^\infty_{0,a_0}(\tilde{\theta}^{\pm})$. Suppose the contrary, then we have $\tilde{W}\subset W$. Hence $f^{k}_{0,a}|_{W\setminus\overline{\tilde{W}}}$ is injective and proper, where $k$ is the period of the parabolic cycle. Since ${3^{k-1}\theta}^{\pm} = \tilde{\theta}^{\pm}$, we conclude that  $f^{k}_{0,a}({W\setminus\overline{\tilde{W}}})\subset\tilde{W}$. However on the other hand since $\tilde{W}\subset W$, the wake bounded by ${R}^\infty_{0,a_0}(3^{k-1}\theta^{\pm})$ contains the wake bounded by ${R}^\infty_{0,a_0}(3^{k-1}\tilde{\theta}^{\pm})$. Thus $f_{0,a}(f^{k-1}_{0,a}(W\setminus\overline{\tilde{W}}))$ will not intersect $W$, a contradiction.

Next we prove that $W$ contains no other point in the parabolic periodic cycle. Suppose the contrary, then $W$ contains another wake $W'$ bounded by some ${R}^\infty_{0,a_0}(3^l{\theta}^{\pm})$ with $1\leq l\leq k-2$. Thus $f^l_{0,a_0}(W) = W'$. Apply Denjoy-Wolff theorem we obtain $f^{nl}_{0,a_0}(W)$ converges to $\infty$, a contradiction.
\end{proof}

\section{Description of $\mathcal{H}^\lambda$, $\lambda = e^{\frac{2\pi ip}{q}}$}\label{sec.prop1_and_prop2}
The main goal of this section is to prove the following:
\begin{propx}\label{Prop.grand1}
There are exactly $q$ Type (D) parameters, i.e. $\#\mathcal{A}_{p/q} = q$. The different $\boldsymbol{\mathrm{a}}_m\in\mathcal{A}_{p/q}$ ($0\leq m\leq q-1$) are characterized by the portrait at $z=0$. Moreover each $\boldsymbol{\mathrm{a}}_m$ admits 4 parameter external rays landing, cutting $Per_1(\lambda)$ with respect to the portrait at $z=0$. Moreover, these four rays are unique among all rays with rational angle.
\end{propx}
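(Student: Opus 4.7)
The plan is to combine an algebraic upper bound with a pinching construction for the lower bound, then obtain landing of the four parameter rays by a holomorphic-motion argument. For the counting, I would expand the $q$-th iterate near the parabolic fixed point as $f^q_{\lambda,a}(z) = z + Q_1(a)\,z^{q+1} + Q_2(a)\,z^{q+2} + \cdots$; here the coefficients of $z^2,\ldots,z^q$ vanish automatically since $\lambda$ is a primitive $q$-th root of unity, by formal normal-form theory. Then $a \in \mathcal{A}_{p/q}$ iff $Q_1(a) = 0$. A Taylor computation using the recursion $f^{n+1} = f \circ f^n$ shows that $Q_1$ is a polynomial in $a$ of degree exactly $q$ (easily checked by hand for $q = 1, 2$ and by induction in general), giving $\#\mathcal{A}_{p/q} \leq q$. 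For the matching lower bound I would apply Cui--Tan pinching (Appendix \ref{subsec.appendix_pinching}): to each immediate parabolic basin $\Delta_m$ ($m = 0, \ldots, q-1$) of $P_\lambda$ at $0$ there corresponds a pinching path in $Per_1(\lambda)$ terminating at a double parabolic cubic $\boldsymbol{\mathrm{a}}_m$ whose portrait at $z = 0$ is obtained from that of $P_\lambda$ at $0$ by a specific doubling operation supported at $\Delta_m$. Since different $m$ produce different doublings and hence different portraits, the $\boldsymbol{\mathrm{a}}_m$ are pairwise distinct and characterized by their portraits.

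For the four landing rays, fix $\boldsymbol{\mathrm{a}}_m$. The two critical values lie in two of the $2q$ attracting petals at $0$, and each such petal $P$ is supported by two external rays $R^\infty_{\lambda, \boldsymbol{\mathrm{a}}_m}(\theta^\pm(P))$ whose angles lie in the portrait at $0$, yielding four distinguished angles $\theta_1, \theta_2, \theta_3, \theta_4$. To show that each $\mathcal{R}^\lambda_\infty(\theta_i)$ lands at $\boldsymbol{\mathrm{a}}_m$, I would use a holomorphic-motion argument along the pinching path from the first paragraph: on this path, the four chosen dynamical rays land at repelling periodic points that coalesce with the parabolic orbit in the limit, so by an argument parallel to Step 3 in the proof of Lemma \ref{lem.landing.mandelbrot.Per0} the corresponding parameter rays must accumulate at $\boldsymbol{\mathrm{a}}_m$. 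The cutting of $Per_1(\lambda)$ by these four rays according to the portrait at $0$ is then inherited from the analogous dynamical-plane cutting in $f_{\lambda, \boldsymbol{\mathrm{a}}_m}$.

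For uniqueness among rational angles, by Lemma \ref{lem.landing.rational.external} any rational parameter ray landing at $\boldsymbol{\mathrm{a}}_m$ has angle pre-periodic to the portrait at $0$. Among the $2q$ portrait angles only the four $\theta_i$ support a critical-value petal; for each of the remaining $2q - 4$ portrait angles $t$ I would construct a short arc in $\mathring{\mathcal{H}^\lambda}$ approaching $\boldsymbol{\mathrm{a}}_m$ along which the dynamical ray $R^\infty_{\lambda, a}(t)$ lands at a repelling (pre-)periodic point whose orbit stays away from the critical values. This gives a landing that persists continuously through $\boldsymbol{\mathrm{a}}_m$, which in turn precludes $\mathcal{R}^\lambda_\infty(t)$ from landing at $\boldsymbol{\mathrm{a}}_m$; angles only pre-periodic to the portrait reduce to this case by iterating $\times 3$. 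The main obstacle will be the combinatorial identification at the limit --- precisely determining which petal each dynamical ray supports at $\boldsymbol{\mathrm{a}}_m$ --- which amounts to reversing the pinching construction and carefully tracking the portrait data along the path, and I expect this to be where the greatest technical difficulty arises.
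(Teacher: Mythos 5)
Your first half (the count $\#\mathcal{A}_{p/q}=q$) follows the paper's route: an exact degree-$q$ polynomial bound (the paper proves it as Lemma \ref{lem.deg.C_pq} for the marked family $g_{\lambda,c}$ by a rescaling limit to $P_\lambda$, which is also the clean way to get the \emph{exact} degree of your $Q_1(a)$ — the "induction on $f^{n+1}=f\circ f^n$" is the part you would actually have to work for) together with a pinching construction of $q$ parameters with pairwise distinct portraits $\Theta_m\cup\Theta_{m+1}$, distinctness resting on Lemma \ref{lem.different.angles} and Goldberg's count (Theorem \ref{thm.goldberg}). That part of your plan is sound.

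The genuine gap is in the landing and uniqueness of the four parameter rays. First, the pinching path that produces $\boldsymbol{\mathrm{a}}_m$ does not lie in $Per_1(\lambda)$: it runs through slices $Per_1(\lambda_t)$ with $|\lambda_t|<1$, so a "holomorphic-motion argument along the pinching path" gives no information about whether the rays $\mathcal{R}^\lambda_\infty(\theta_i)$ of the fixed parabolic slice accumulate at $\boldsymbol{\mathrm{a}}_m$; the paper instead gets the (at least) four landing rays from the open loci $\mathcal{O}_m$ of parameters whose $\Theta_m$-rays land at $0$, whose boundaries are contained in $\bigcup_{\theta\in\Theta_m}\overline{\mathcal{R}_\infty(\theta)}$ and which separate $\mathcal{B}_m$ from $\mathcal{B}_{m+1}$. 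Second, your exclusion of the other $2q-4$ portrait angles does not work as stated: at $\boldsymbol{\mathrm{a}}_m$ every ray with angle in $\Theta_m\cup\Theta_{m+1}$ lands at the degenerate parabolic point $0$, so there is no repelling landing point "persisting continuously through $\boldsymbol{\mathrm{a}}_m$"; and even one-sided persistence of a repelling landing point does not preclude a parameter ray from landing (for the Mandelbrot set, the two rays landing at a parabolic root land, for nearby parameters in the wake, at a repelling cycle). What you are sketching is precisely the "Tour de valse" analysis, which the paper explicitly declines to carry out because the fixed point is parabolic degenerate. Its substitute is a global ray count (Proposition \ref{prop.landing.four} and Corollary \ref{cor.landing.rational.double.parabolic}): there are exactly $3q(q+1)$ parameter rays whose angles have rotation number $p/q$; $3q(q+1)-4q$ of them are shown to land outside $\mathcal{A}_{p/q}$ by transporting Roesch's description of $Per_1(0)$ (Lemma \ref{lem.landing.boundary.H00}, Lemma \ref{lem.landing.mandelbrot.Per0}, Corollary \ref{cor.two_angles}) through pinching limits, with an injectivity claim guaranteeing distinct limits; the remaining $4q$ must then be the four at each $\boldsymbol{\mathrm{a}}_m$, and rational angles outside $\bigcup_k\Theta_k$ are excluded by the same transfer. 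Without this counting mechanism (or a fully worked-out valse argument at a degenerate parabolic point, which is a substantial piece of analysis), both "exactly four" and "unique among rational angles" remain unproved in your proposal.
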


\begin{propx}\label{Prop.grand2}
$\mathcal{H}^{\lambda}$ has exactly 2 adjacent components $\mathcal{B}_0,\mathcal{B}_q$ symmetric with respect to $a\mapsto-a$ and $q-1$ bitransitive components $\mathcal{B}_1,...,\mathcal{B}_{q-1}$. These components are characterized by the portrait at $z=0$. Moreover, \[\overline{\mathcal{B}_m}\cap{\mathcal{A}_{p/q}} = \boldsymbol{\mathrm{a}}_m\cup\boldsymbol{\mathrm{a}}_{m-1},\,\,1\leq m\leq q-1;\quad 
\overline{\mathcal{B}_0}\cap{\mathcal{A}_{p/q}} = \boldsymbol{\mathrm{a}}_0,\,\, \overline{\mathcal{B}_q}\cap{\mathcal{A}_{p/q}} = \boldsymbol{\mathrm{a}}_{q-1}.\]
\end{propx}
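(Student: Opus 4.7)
The plan is to combine three inputs from the present section: (i) the existence of at least one component per combinatorial type, from \S\ref{subsec.existence.compo}; (ii) the parametrisation of Type (A)/(B) components together with its uniqueness (Corollary \ref{cor.unique.B_m}); and (iii) the ray-landing structure at the $q$ Type (D) parameters supplied by Proposition \ref{Prop.grand1}. The combinatorial invariant labelling a Type (A)/(B) component will be the portrait of the parabolic fixed point $z=0$ together with the assignment of each critical point to one of the $q$ immediate petals, or equivalently the dynamical gap between these two petals.

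First I would enumerate the possible combinatorial classes. For $a\in\mathcal{H}^\lambda$, the immediate basin of $0$ contains $q$ petals cyclically permuted by $f_{\lambda,a}$, with one ``forced'' critical orbit converging to $0$; label the petals so that the forced critical point sits in the base petal, and let $m$ be the least iterate number sending this petal to the petal of the free critical point. Then $m=0$ corresponds to Type (A) and $m\in\{1,\ldots,q-1\}$ to Type (B). The parameter involution $a\mapsto -a$ arising from the conjugation $z\mapsto -z$ permutes these classes: a direct inspection of the portrait data shows that it swaps the two critical points and therefore acts non-trivially on the Type (A) class, doubling it in parameter space, while it acts as $m\mapsto q-m$ on the Type (B) classes, each of which is already realised as a single parameter component. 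This bookkeeping yields $q+1$ candidate components $\mathcal{B}_0,\ldots,\mathcal{B}_q$ with $\mathcal{B}_0,\mathcal{B}_q$ adjacent and $\mathcal{B}_1,\ldots,\mathcal{B}_{q-1}$ bitransitive.

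Existence of a component in each combinatorial class follows from the Cui--Tan pinching-deformation construction carried out in \S\ref{subsec.existence.compo}: starting from a suitably chosen parameter in $\mathcal{C}_0$ and pinching the external rays landing at the super-attracting fixed point produces a parabolic parameter in $\mathcal{H}^\lambda$ of the prescribed class. Uniqueness is supplied by Corollary \ref{cor.unique.B_m}, whose explicit parametrisation (via the position of the free critical value in the Fatou coordinate) is injective on each $\mathcal{B}_m$ modulo the failure curve; two parameters realising the same combinatorial class must therefore lie in a common component, and so these inputs together exhaust the $q+1$ Type (A)/(B) components of $\mathcal{H}^\lambda$.

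For the closure relations with $\mathcal{A}_{p/q}$, the decisive input is Proposition \ref{Prop.grand1}: the four parameter external rays landing at each $\boldsymbol{\mathrm{a}}_m$ separate $Per_1(\lambda)$ into four sectors distinguished by the portrait at $z=0$, and precisely two of them contain a Type (A)/(B) component accumulating at $\boldsymbol{\mathrm{a}}_m$. I would identify those two sectors with the combinatorial classes whose portraits degenerate to that of $\boldsymbol{\mathrm{a}}_m$ when one pinches the corresponding pair of rays at $0$; by tracking the migration of the free critical point across the pinch one recovers the adjacency $\overline{\mathcal{B}_m}\cap\mathcal{A}_{p/q}=\{\boldsymbol{\mathrm{a}}_{m-1},\boldsymbol{\mathrm{a}}_m\}$ for $1\le m\le q-1$ and the endpoint cases $\overline{\mathcal{B}_0}\cap\mathcal{A}_{p/q}=\{\boldsymbol{\mathrm{a}}_0\}$ and $\overline{\mathcal{B}_q}\cap\mathcal{A}_{p/q}=\{\boldsymbol{\mathrm{a}}_{q-1}\}$. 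The principal obstacle will be verifying that the failure curve within each $\mathcal{B}_m$ accumulates exactly at these two Type (D) parameters and at no others: this requires matching the asymptotics of the Fatou coordinate near each $\boldsymbol{\mathrm{a}}_m$ with the ray-portrait change across the pinch, and is the technically most delicate part of the argument.
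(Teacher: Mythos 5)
Your first two steps (combinatorial labelling by the gap $m$, existence of each class by Cui--Tan pinching as in \ref{subsec.existence.compo} and \ref{subsec.double_parabolic}, uniqueness via Corollary \ref{cor.unique.B_m} transferred by Lemma \ref{lem.relation}, and the symmetry $a\mapsto-a$ giving $\mathcal{B}_m=-\mathcal{B}_{q-m}$, hence two adjacent and $q-1$ bitransitive components) follow the paper's route and are fine. The problem is the last part, the closure relations $\overline{\mathcal{B}_m}\cap\mathcal{A}_{p/q}$, where your ``decisive input'' is Proposition \ref{Prop.grand1}. In the paper this is circular: the proof of Proposition \ref{Prop.grand1} (Proposition \ref{prop.landing.four} and Corollary \ref{cor.landing.rational.double.parabolic}) already uses the conclusions of Proposition \ref{Prop.grand2} --- the uniqueness of the $m$-components, the fact that $\boldsymbol{\mathrm{a}}_m$ is accumulated by both $\mathcal{B}_m$ and $\mathcal{B}_{m+1}$ (this is how one gets rays with angles in $\Theta_m$ and $\Theta_{m+1}$ landing at $\boldsymbol{\mathrm{a}}_m$ at all), and the relative positions of these components. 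So you cannot quote the four-ray landing statement to establish which $\boldsymbol{\mathrm{a}}_k$ lie on $\partial\mathcal{B}_m$ unless you first reprove Proposition \ref{Prop.grand1} independently, which your sketch does not do.

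Moreover, even granting some sector decomposition, the substantive content you would still need is exactly the point you defer as ``the principal obstacle'': that the critical-marking failure curve $\check{\mathcal{I}}_m$ (equivalently $\mathcal{I}_m$) is a curve whose two ends converge to double parabolic parameters, that these two ends are distinct (for $q>1$) and of types $m-1$ and $m$, and that no other type can occur on $\partial\mathcal{B}_m$. In the paper this is the actual proof: Lemma \ref{lem.limit.Im}(1), which rests on Proposition \ref{prop.first_critical_point} (persistence of which critical point sits on the boundary of the maximal petal, not on ray portraits), Proposition \ref{prop.endpoints_specialcurve} (the two ends are distinct, via the $\tau$-symmetry, the structure of $\mathbb{C}^*\setminus\check{\mathcal{C}}_\lambda$ and a $J$-stability argument), and Lemma \ref{lem.double.para.boudary} (stability of landing rays excludes double parabolic parameters of type $\neq m,m-1$), together with the pinching construction that produces $\boldsymbol{\mathrm{a}}_m$ with portrait $\Theta_m\cup\Theta_{m+1}$. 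Your proposal names this verification as delicate but supplies no argument for it, so as written the adjacency statements $\overline{\mathcal{B}_m}\cap\mathcal{A}_{p/q}=\{\boldsymbol{\mathrm{a}}_{m-1},\boldsymbol{\mathrm{a}}_m\}$ and the endpoint cases for $\mathcal{B}_0,\mathcal{B}_q$ are not established.
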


Our construction of $\boldsymbol{\mathrm{a}}_m$ and $\mathcal{B}_m$ is based on the technique of pinching deformation developed by Cui-Tan \cite{CuiTan}. Its advantage is that the combinatorical information is explicitly prescribed. See a different approach using transversality method applied to the same problem for quadratic rational maps \cite{buff}. Notice that the uniqueness of the 4 external rays in Proposition \ref{Prop.grand1} is not obvious: this is the cubic analogue to the ray landing problem at parabolic parameters in the Mandelbrot set, which has been solved by Douady-Hubbard \cite{DH}. See also \cite{lei_2000} for an easier presentation. Essentially, the "Tour de valse" argument should work, but should also be more delicate since the fixed point is parabolic degenerate. Instead, here we use a "ray counting" argument based on pinching deformation to avoid the complicate analysis in \cite{DH}. Let us mention that the landing problem at double parabolic parameters has been solved recently by \cite[Thm 11.7]{blokh}. However our method are different from theirs.

\subsection{Existence of adjacent and bitransitive components}\label{subsec.existence.compo}
 
In this subsection we prove existence of adjacent and bitransitive components satisfying any combinatorics given. The main tool is pinching deformation.
 
For $(\lambda,a)\in(0,1)\times(-\sqrt{3\lambda},\sqrt{3\lambda})$, $z=0$ is an attracting fixed point for $f_{\lambda,a}$. Denote by $\Omega_{\lambda,a}$ the maximal linearization domain at $z=0$. Since $\overline{f_{\lambda,\overline{a}}}=f_{\lambda,a}$, $\Omega_{\lambda,a}$ is symmetric with respect to $x-$axis and therefore $\partial\Omega_{\lambda,a}$ contains both critical points of $f_{\lambda,a}$. Denote by $c^{+}_{\lambda,a}$ (resp. $c^{-}_{\lambda,a}$) the one contained in the upper-half (resp. lower-half) plane. Let $\phi_{\lambda,a}:\overline{\Omega_{\lambda,a}}\longrightarrow\overline{\mathbb{D}}$ be the Koenigs coordinate normalised by $\phi_{\lambda,a}(c^{+}_{\lambda,a}) = 1$. Take a branch of $\log(\cdot)$ such that $\log(1) = 0$. Set $\varphi_{\lambda,a} = -\log(\phi_{\lambda,a})$. Then clearly $\varphi_{\lambda,a}(\partial\Omega_{\lambda,a}) = i(-2\pi,0]$.  Define $I_{\lambda}(a) =\mathfrak{Im}(-\varphi_{\lambda,a}(c^{-}_{\lambda,a}))$. Then by definition $I$ is strictly positive. It is not hard to prove the following lemma by quasiconformal deformation:

 \begin{lemma}\label{lem.descrip.I_lambda}
 For any $x\in (0,2\pi)$ there exists $(\lambda,a)\in(0,1)\times(-\sqrt{3\lambda},\sqrt{3\lambda})$ such that $x=I_{\lambda}(a)$.
 \end{lemma}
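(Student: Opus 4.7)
Fix $\lambda\in(0,1)$ and apply the intermediate value theorem to the continuous function $a\mapsto I_\lambda(a)$ on the connected interval $(-\sqrt{3\lambda},\sqrt{3\lambda})$. Continuity (in fact real-analyticity) on this interval follows from analytic dependence of the Koenigs coordinate $\phi_{\lambda,a}$ and of the critical points $c^{\pm}_{\lambda,a}$ on the parameters throughout the region where $0$ is attracting and $c^\pm$ lie on $\partial\Omega_{\lambda,a}$. To get a usable formula for $I_\lambda(a)$, pass to the un-normalized Koenigs coordinate $\phi^0_{\lambda,a}$ with $\phi^0_{\lambda,a}(0)=0$ and $(\phi^0_{\lambda,a})'(0)=1$, which is related to $\phi_{\lambda,a}$ by $\phi_{\lambda,a}=\phi^0_{\lambda,a}/\phi^0_{\lambda,a}(c^+_{\lambda,a})$. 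Real coefficients give $\phi^0_{\lambda,a}(\overline z)=\overline{\phi^0_{\lambda,a}(z)}$, so writing $\phi^0_{\lambda,a}(c^+_{\lambda,a})=\rho\,e^{i\beta}$ with $\rho>0$ and $\beta\in(0,\pi)$ (orientation preservation plus $c^+$ in the upper half plane), one finds $\phi_{\lambda,a}(c^-_{\lambda,a})=e^{-2i\beta}$ and
\[
I_\lambda(a)=2\pi-2\beta(\lambda,a)\in(0,2\pi).
\]

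The boundary analysis then reduces to computing limits of $\beta$. At $a=0$ the polynomial $f_{\lambda,0}$ is odd and $c^+_{\lambda,0}=i\sqrt{\lambda/3}$; oddness together with real-symmetry force $\phi^0_{\lambda,0}(c^+_{\lambda,0})\in i\mathbb{R}^+$, giving $\beta=\pi/2$ and $I_\lambda(0)=\pi$. As $a\to\sqrt{3\lambda}^-$ the critical points collide at $-\sqrt{\lambda/3}\in\mathbb{R}^-$, and a direct computation shows that its forward orbit under the limit map $f_{\lambda,\sqrt{3\lambda}}$ is a monotone sequence in $(-\sqrt{\lambda/3},0)$ converging to $0$, so $-\sqrt{\lambda/3}$ remains in the immediate basin. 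Carath\'eodory convergence of $\Omega_{\lambda,a}$ combined with a normal families argument for $\phi^0_{\lambda,a}$ then yields $\phi^0_{\lambda,a}(c^+_{\lambda,a})\to\phi^0_{\lambda,\sqrt{3\lambda}}(-\sqrt{\lambda/3})\in\mathbb{R}^-$, whence $\beta\to\pi$ and $I_\lambda(a)\to 0$. The symmetry $f_{\lambda,-a}(z)=-f_{\lambda,a}(-z)$ swaps $c^+$ and $c^-$ and yields $I_\lambda(-a)=2\pi-I_\lambda(a)$, which gives the symmetric limit $I_\lambda(a)\to 2\pi$ as $a\to-\sqrt{3\lambda}^+$. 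IVT on $(-\sqrt{3\lambda},\sqrt{3\lambda})$ then produces, for each prescribed $x\in(0,2\pi)$, some $a$ with $I_\lambda(a)=x$.

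The main obstacle is justifying continuity of $\phi^0_{\lambda,a}(c^+_{\lambda,a})$ at the degenerate endpoints $a=\pm\sqrt{3\lambda}$, where two distinct boundary points of $\Omega_{\lambda,a}$ collapse onto one. This is precisely where the quasiconformal flavour suggested by the author would enter: alternatively, for any target $x\in(0,2\pi)$, one can perform a QC surgery on $\phi_{\lambda,0}$ that transports $\phi(c^-)=-1$ to $e^{ix}$ via a QC self-map of $\mathbb{D}$ equivariant with respect to multiplication by $\lambda$, and then integrate the resulting $f$-invariant Beltrami form by the measurable Riemann mapping theorem to realize a new cubic polynomial with the prescribed value of $I$. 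Arranging the surgery so that the output polynomial sits in the real parameter slice is the delicate point of this alternative route, whereas the IVT route above requires only the boundary continuity of $\phi^0$.
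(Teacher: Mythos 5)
Your intermediate-value-theorem route is correct in substance and is genuinely different from the paper's: the paper obtains the lemma by quasiconformal deformation (essentially the alternative you sketch in your last paragraph, whose delicate point is indeed staying in the real slice), whereas you fix $\lambda\in(0,1)$ and prove the stronger statement that $I_\lambda$ already takes every value in $(0,2\pi)$ along the single segment $a\in(-\sqrt{3\lambda},\sqrt{3\lambda})$. Your endpoint analysis checks out: with $s=\sqrt{\lambda/3}$ one has $f_{\lambda,\sqrt{3\lambda}}(z)=(z+s)^3-s^3$, and since $s<1/\sqrt{3}$ the orbit of the merged critical point $-s$ increases in $(-s,0)$ to $0$, so it lies in the immediate basin and the limit of $\phi^0_{\lambda,a}(c^+_{\lambda,a})$ is a nonzero negative real; the symmetry $f_{\lambda,-a}(z)=-f_{\lambda,a}(-z)$ does give $I_\lambda(-a)=2\pi-I_\lambda(a)$, hence the limits $0$ and $2\pi$ at the two ends. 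What the QC route buys is explicit combinatorial control (it is the same deformation machinery used throughout Section 3 of the paper), while your route buys elementarity and the surjectivity of $I_\lambda$ for each fixed $\lambda$.

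Two refinements would make your argument airtight. First, the ``main obstacle'' you flag at $a=\pm\sqrt{3\lambda}$ is not actually an obstacle, and no Carath\'eodory convergence or normal-families argument is needed: define the Koenigs function on the whole basin by $\phi^0_{\lambda,a}(z)=\lim_{n}\lambda^{-n}f_{\lambda,a}^n(z)$. This limit converges locally uniformly on the open set $\{(a,z):\ z\in B_{\lambda,a}(0)\}$, so $(a,z)\mapsto\phi^0_{\lambda,a}(z)$ is jointly continuous (indeed analytic) there; since $c^+_{\lambda,a}$ lies in the immediate basin for all $a$ in the half-open interval (for $a$ interior by the standard lemma on the maximal linearization domain plus the real symmetry, at the endpoint by your monotone-orbit computation) and depends analytically on $a$, continuity of $a\mapsto\phi^0_{\lambda,a}(c^+_{\lambda,a})$ up to and including the endpoints is automatic; its value at a boundary point of $\Omega_{\lambda,a}$ agrees with the boundary extension used in the definition of $I_\lambda$. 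Second, your identity $I_\lambda(a)=2\pi-2\beta$ needs $\beta\in(0,\pi)$ strictly, while orientation and symmetry only give $\beta\in[0,\pi]$; to exclude $\beta\in\{0,\pi\}$, note that $\phi^0(c^+)\in\mathbb{R}$ would force $\phi^0(c^+)=\phi^0(c^-)$, and since $f|_{\Omega}=\psi\circ(\lambda\,\cdot)\circ\phi^0$ with $\psi=(\phi^0|_{\Omega})^{-1}$ continuous at the interior point $\lambda\phi^0(c^{\pm})$, this would give $f(c^+)=f(c^-)$, i.e. two distinct double roots of the cubic $f_{\lambda,a}(z)-f_{\lambda,a}(c^+)$, which is impossible. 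This also justifies the strict positivity of $I_\lambda$ asserted in the paper's setup.
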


\begin{figure}[H] 
\centering 
\includegraphics[width=0.4\textwidth]{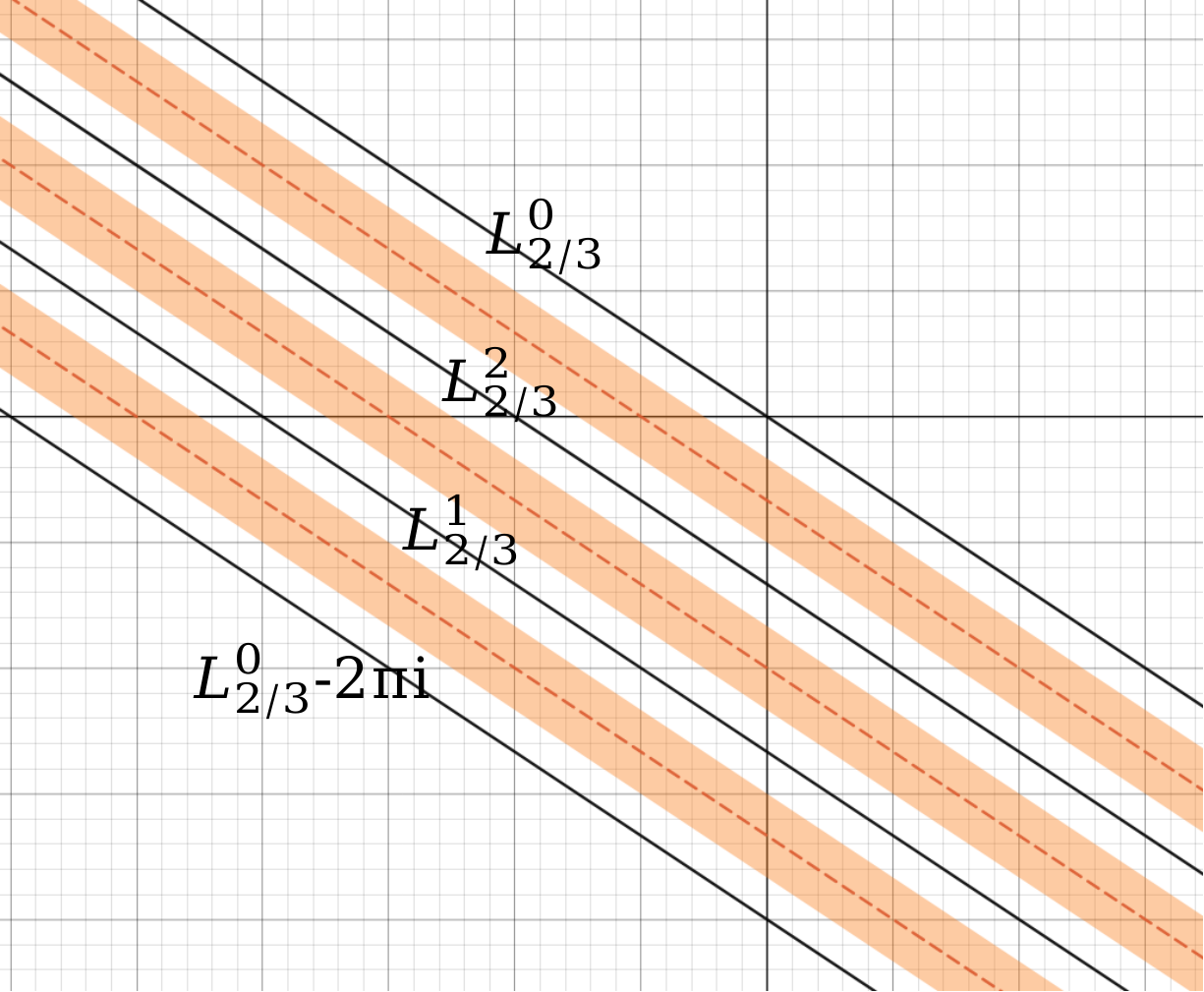} 
\caption{An illustration for $\frac{p}{q}=\frac{2}{3}$. The bands in orange are $\tilde{S}_k$.} 
\label{fig.main.parabolic} 
\end{figure}
 
Let $L^0_{p/q}$ be the line passing 0 and $q\log\lambda+2\pi (q- p) i$. For $k=0,1,...,q-1$, let $L^k_{p/q} = L^0_{p/q}-k\log\lambda\pmod {2\pi i}$ be the line intersecting $i(-2\pi,0]$. The open Strip $S$ bounded by $L^0_{p/q},L^0_{p/q}-2\pi i$ is divided into $q$ sub-strips by $L^k_{p/q}$. Let $S_k$ be the open sub-strip whose upper boundary is $L^k_{p/q}$. Let $\Tilde{L}^k_{p/q}$ be the central line of $S_k$ and suppose that it intersects $i\mathbb{R}$ at $i\Tilde{y}_k$. Notice that $\Tilde{y}_k$ does not depend on $(\lambda,a)$. For any $0\leq m\leq \lfloor\frac{q}{2}\rfloor$, choose $\lambda$ and $a_m\in(-\sqrt{3\lambda},\sqrt{3\lambda})$ such that $-I_{\lambda}(a)\not\in\{\Tilde{y}_0,...,\Tilde{y}_{q-1}\}$ and the interval $(-I_{\lambda}(a),0)$ contains exactly $m$ elements in $\{\Tilde{y}_0,...,\Tilde{y}_{q-1}\}$. Such $\lambda$ and ${a}_m$ necessarily exist by Lemma \ref{lem.descrip.I_lambda}. For each $k$ pick an open strips $\Tilde{S}_k$ centered at $\Tilde{L}^k_{p/q}$ such that  $\forall k,\, \varphi_{\lambda,a_m}(c^\pm_{\lambda,a_m})\not\in\overline{\Tilde{S}_k}$. Then $\{\Tilde{S}_k\}_{0\leq k\leq q-1}$ defines a non-separating multi-annulus $\mathscr{A}$ in the quotient space of $f_{\lambda,a_m}$ by setting 

\[\mathscr{A} = \pi\phi^{-1}_{\lambda,a_m}(\exp(-\bigcup_{k=0}^{q-1}\Tilde{S}_k))\]
which by Theorem \ref{thm.pinching} gives a converging pinching path $\psi_t\circ{f_{\lambda,a_m}}\circ{\psi_t^{-1}}\in Per_1(\lambda_t)$. Here we choose a different normalisation for $\psi_t$ by setting $\psi_t(0) = 0$ and $\psi_t(z) = z+{o}(1)$ at $\infty$. Therefore the resulting limit when $t\to\infty$
\[f_{\Tilde{a}_m}:=\psi_\infty\circ{f_{\lambda,a}}\circ{\psi_\infty^{-1}}\in Per_1(e^{2\pi i p/q})\]
is of adjacent type if $m=0$ and bitransitive type if $0 \textless m\leq \lfloor\frac{q}{2}\rfloor$. 

Now we give a more detailed description for the pinching limit in terms of the portrait at $z=0$. Notice that the $0$-level skeleton for $\mathscr{A}$ is a $q$-cycle of rays with rotation number $p/q$ starting from 0 and landing at $\partial B^*_{\lambda,a}(0)$. Denote by $\{x_i(a_m)\}$ this corresponding landing cycle of points. Then the period of $\{x_i(a_m)\}$ is $q$ since the cycle of external rays landing at $\{x_i(a_m)\}$ has rotation number $p/q$. Denote by $\{\theta_i(a_m)\}$ the angles for these external rays, then these angles form a cycle with rotation number $p/q$ under multiplication by 3. The external rays with the same angles, denoted by $R^\infty_{\Tilde{a}_m}{(\theta_i(a_m)})$, for the pinching limit $f_{\Tilde{a}_m}$ land at $z=0$.

\begin{lemma}\label{lem.different.angles}
   The mapping $m\mapsto\{\theta_i(a_m)\}$ is injective ($0 \leq m\leq \lfloor\frac{q}{2}\rfloor$).
\end{lemma}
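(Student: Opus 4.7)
The plan is to reduce the injectivity of $m\mapsto T_m:=\{\theta_i(a_m)\}$ to the injectivity of $m\mapsto\{x_i(a_m)\}$, viewing the latter as selecting one rotation-$p/q$ cycle among those on $\partial B^*_{\lambda,a_m}(0)$ for the induced degree-$3$ circle dynamics, and then invoking Yoccoz's uniqueness of landing at repelling cycles.

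First, I would observe that $T_m$ is invariant along the entire pinching path $t\mapsto\psi_t\circ f_{\lambda,a_m}\circ\psi_t^{-1}$: the Böttcher coordinate at $\infty$ varies holomorphically in $t$, and the cycle $\{x_i(a_m)\}$ remains repelling for $t<\infty$, collapsing to the parabolic fixed point only in the limit, so the landing angles are preserved throughout. In particular, $T_m$ coincides with the portrait at $z=0$ of the pinching limit $f_{\tilde a_m}\in Per_1(\lambda)$.

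Second, I would show that the cycles $\{x_i(a_m)\}$ are combinatorially distinct for distinct $m\in\{0,\dots,\lfloor q/2\rfloor\}$. In the log-Koenigs picture of $\Omega_{\lambda,a_m}$, each of the $q$ rays of the $0$-level skeleton leaves $\Omega_{\lambda,a_m}$ at some point $i\tilde y_k\in\partial\Omega_{\lambda,a_m}$ and then extends through the tree of univalent preimages of $\Omega_{\lambda,a_m}$ tiling $B^*_{\lambda,a_m}(0)\setminus\Omega_{\lambda,a_m}$; these preimages are attached along $\partial\Omega_{\lambda,a_m}$ at iterated preimages of the critical points $c^\pm$, and which branch each ray enters is determined by whether the exit point $i\tilde y_k$ lies above or below $-iI_\lambda(a_m)$ (i.e., on which side of $c^-$). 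Incrementing $m$ by $1$ pushes $c^-$ past one further central line $\tilde L^k_{p/q}$, changing exactly one such branch assignment; this reroutes the corresponding ray through a different part of the preimage tree and shifts its landing on $\partial B^*_{\lambda,a_m}(0)$ to a point on a new rotation-$p/q$ orbit of $f_{\lambda,a_m}|_{\partial B^*_{\lambda,a_m}(0)}$. Applying Yoccoz's theorem on uniqueness of landing at repelling cycles (as used in Lemma \ref{lem.rational_rays_property}), distinct cycles carry disjoint external-angle portraits, yielding $T_m\neq T_{m'}$ whenever $m\neq m'$.

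The main obstacle is this second step: rigorously verifying that the rerouting produced by crossing a single $\tilde L^k_{p/q}$ with $c^-$ always shifts the endpoint to a genuinely different orbit rather than merely to a different point of the same orbit. The cleanest way to handle it is to track the combinatorial gap-profile of $\{x_i(a_m)\}$ on $\partial B^*_{\lambda,a_m}(0)$, namely the distribution of the two critical markers of the degree-$3$ circle cover among the $q$ gaps between consecutive cycle points; each crossing redistributes exactly one marker, and two rotation-$p/q$ cycles with different marker distributions lie on different orbits. An alternative, more structural route is to work directly with the parabolic limits $f_{\tilde a_m}$ and note that the $\lfloor q/2\rfloor+1$ maps so obtained have pairwise distinct petal-distances between the critical values $v^+_{\tilde a_m}$ and $v^-_{\tilde a_m}$ (zero for $m=0$, equal to $m$ for $m>0$), and that this distinction is encoded combinatorially in the gap profile of $T_m$.
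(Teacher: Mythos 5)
Your proposal stops exactly where the lemma actually has content, and the device you invoke to close it does not apply. The first route cannot work as stated: the cycles $\{x_i(a_m)\}$ and $\{x_i(a_{m'})\}$ live in the dynamical planes of \emph{different} maps $f_{\lambda,a_m}$, $f_{\lambda',a_{m'}}$, so "uniqueness of landing at repelling cycles" (a statement internal to a single polynomial) gives no way to compare them; the only meaningful comparison is combinatorial, i.e.\ through the angle cycles $T_m$ themselves, which is precisely what is to be proved. You acknowledge the obstacle, but the proposed fix is an assertion of the missing step rather than a proof: the claims that "two rotation-$p/q$ cycles with different marker distributions lie on different orbits" and that the petal-distance $m$ "is encoded combinatorially in the gap profile of $T_m$" are exactly the statement $T_m\neq T_{m'}$ in disguise, and neither is established. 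To make them precise you would have to (i) show that the marker data is an \emph{intrinsic} invariant of the angle cycle (it is, since for adjacent gap lengths one has $3d_i=k_i+d_{i+1}$ with $k_i\in\{0,1,2\}$, so the critical gaps and their dynamical separation are recoverable from the cycle), and (ii) show that for the map $f_{\tilde a_m}$ the two critical gaps of $T_m$ are separated by exactly $m$ dynamical steps (double gap if $m=0$). Note also that this invariant is only the unordered separation $\{m,q-m\}$ — indeed $\Theta_{q-m}=\Theta_m+\tfrac12$ has the same marker data — so the restriction $0\le m\le\lfloor q/2\rfloor$ enters at precisely this point, a subtlety your sketch does not register.

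The paper closes the gap by a direct computation that your outline never performs: placing $c^+_m$ in the sector between $R^\infty(\theta_0(a_m))$ and $R^\infty(\theta_1(a_m))$ and $c^-_m$ in the sector $m$ steps further, it writes the gap relations $3d_0=\sum_k d_k+d_1$, $3d_m=\sum_k d_k+d_{m+1}$, $3d_i=d_{i+1}$ otherwise (and $3d_0=2\sum_k d_k+d_1$ when $m=0$), normalizes by $\sum_i d_i=1$, and solves to get $d_0=\frac{3^q}{3^q-1}\bigl(\frac{1}{3^{m+1}}+\frac13\bigr)$, $d_m=\frac{3^q}{3^q-1}\bigl(\frac{1}{3^{q+1-m}}+\frac13\bigr)$, which visibly distinguishes the cycles for $0\le m\le\lfloor q/2\rfloor$. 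Your "gap-profile" idea is the right one, but without either this explicit solution or the two verifications (i)–(ii) above, the argument is a restatement of the lemma, not a proof of it.
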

\begin{proof}
   Rearranging the index we may assume that $f^m_{\Tilde{a}_m}(c^+_m)$ belongs to the immediate basin containing $c^-_m$, where $c^\pm_m$ are the critical points of $f^m_{\Tilde{a}_m}$. Let $\{\theta_0(a_m),...,\theta_{q-1}(a_m)\}$ be such that $3\cdot\theta_i(a_m) = \theta_{i+1}(a_m)$ and $c_m^+$ is in the sector bounded by $R^\infty_{\Tilde{a}_m}{(\theta_0(a_m)}),R^\infty_{\Tilde{a}_m}{(\theta_1(a_m)})$. Let $d_i$ be the length of $[\theta_{i}(a_m),\theta_{i+1}(a_m)]$. Then $(d_i)_i$ satisfy

\begin{equation}
   \left\{
\begin{aligned}
&3d_0 = \sum_{k=0}^{q}d_k+d_1,\,\,3d_m = \sum_{k=0}^{q}d_k+d_{m+1}\\
&3d_i=d_{i+1},\,\,i\not=0,m
\end{aligned}
\right. \quad\quad\text{if } m\not=0,
\end{equation}

\begin{equation}
3d_0 = 2\sum_{k=0}^{q}d_k+d_1,\,\,3d_i=d_{i+1},\,\,i\not=0
\quad\quad\text{if } m=0.
\end{equation}
Notice that $\sum d_i = 1$, one can solve the equations above to get $d_0 = \frac{3^q}{3^q-1}\cdot(\frac{1}{3^{m+1}}+\frac{1}{3})$ and $d_m = \frac{3^q}{3^q-1}\cdot(\frac{1}{3^{q+1-m}}+\frac{1}{3})$ if $m\not=0$; $d_0 = \frac{3^q}{3^q-1}\cdot\frac{2}{3}$ if $m=0$. Thus the $\{\theta_i{(a_m)}\}$ are different for $0\leq m\leq \lfloor\frac{q}{2}\rfloor$.
   
\end{proof}

\begin{figure}[H] 
\centering 
\includegraphics[width=0.45\textwidth]{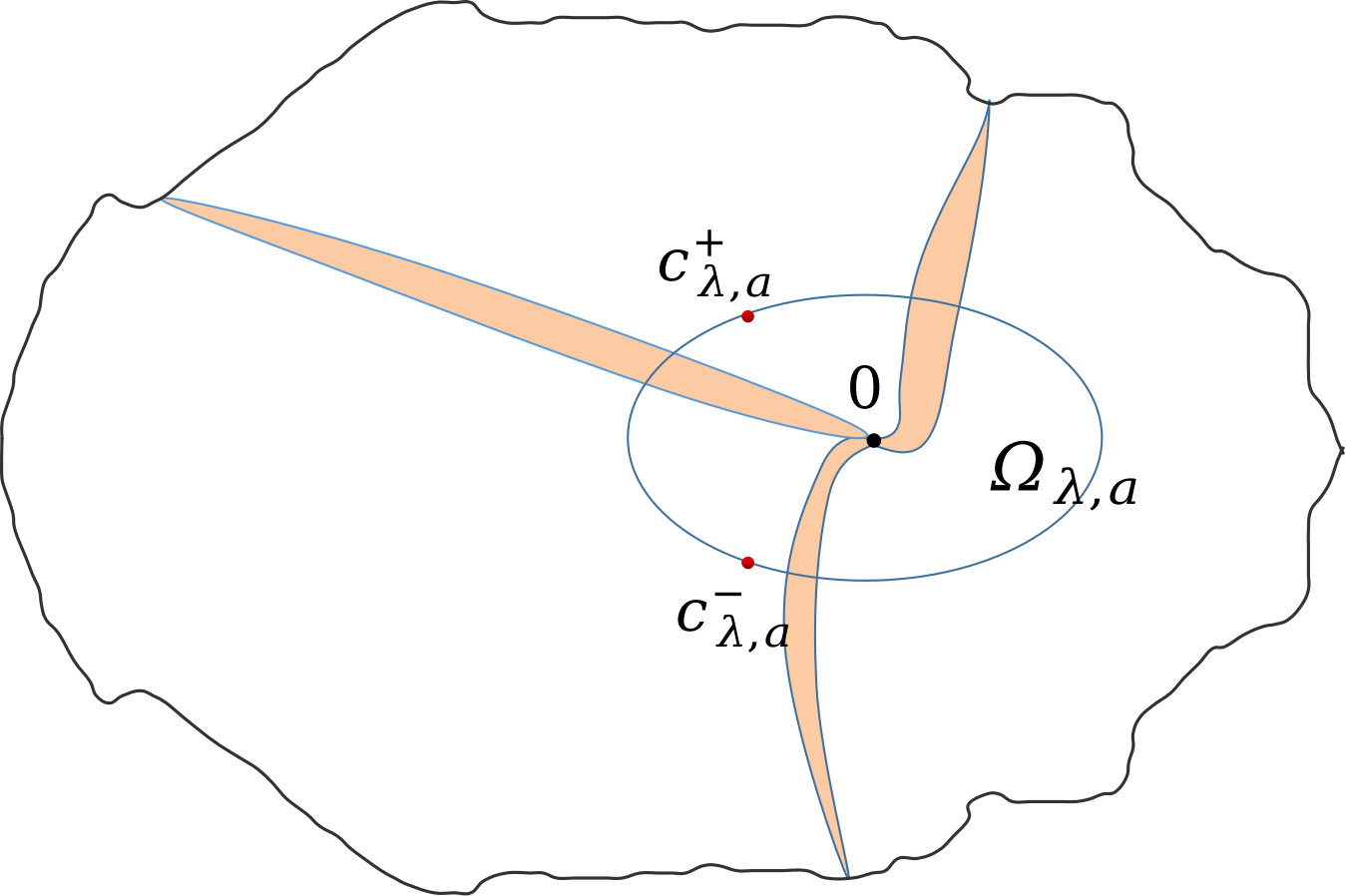} 
\caption{An illustration for pinching bands of depth 0 in $B^*_{\lambda,a}$ for $\frac{p}{q}=\frac{2}{3}$. The resulting pinching limit is in an 1-component.} 
\label{fig.main.parabolic} 
\end{figure}

\begin{definition}\label{def.rotation.angle}
By Lemma \ref{lem.different.angles}, we get $\lfloor\frac{q}{2}\rfloor+1$ different cycles of angles $\{\theta_i(a_m)\}$ ($0\leq m\leq \lfloor\frac{q}{2}\rfloor$) with rotation number $p/q$ under multiplication by 3. Notice that $\{\theta_i(a_m)+\frac{1}{2}\}$ is also a cycle with rotation number $p/q$ and is different from $\{\theta_i(a_m)\}$ if $m\not = \frac{q}{2}$. Therefore we get $q+1$ different cycles of rotation number $p/q$. By Theorem \ref{thm.goldberg}, there are exactly $q+1$ cycles of rotation number $p/q$ under multiplication by 3. Hence $\{\theta_i(a_m)\}, \{\theta_i(a_m)+\frac{1}{2}\}$ only depend on $m$. Denote by $\Theta_m$ the cycle $\{\theta_i(a_m)\}$ if $0\leq m\leq \lfloor\frac{q}{2}\rfloor$; the cycle $\{\theta_i(a_{q-m})+\frac{1}{2}\}$ if $\lfloor\frac{q}{2}\rfloor\textless m\leq q$.
\end{definition}

\begin{remark}\label{rem.angle.notdepending}
    The condition that $\Tilde{L}^k_{p/q}$ is the central line of $S_k$ is not essential: suppose $\Tilde{L}^0_{p/q}\subset S_0$ is any line parallel to $L^0_{p/q}$ and $\Tilde{L}^k_{p/q}=\Tilde{L}^k_{p/q} = L^0_{p/q}-k\log\lambda\pmod {2\pi i}$ be the line intersecting $i(-2\pi,0]$ at $\Tilde{y}_k$. For each $0\leq m\leq \lfloor\frac{q}{2}\rfloor$, similarly choose $(\lambda,a_m)$ so that $(-I_{\lambda}(a_m),0)$ contains $m$ elements in $\{\Tilde{y}_0,...,\Tilde{y}_{q-1}\}$. Then the portrait at $z=0$ for the resulting pinching limit is still $\Theta_m$.
\end{remark}

\begin{definition}\label{def.m-compo.f_a}
Let $0\leq m\leq q$. We call an adjacent or bitransitive component of the family $f_a$ a \textbf{$\textbf{m}$-component} if in which all the parameters have portrait $\Theta_m$ at $z=0$. In particular, an adjacent component is either a $0$-component or a $q$-component.
\end{definition}

\begin{definition}
Let $0\leq m\leq q-1$. For the family $g_{\lambda,c}$, an adjacent or bitransitive component $\check{\mathcal{B}}$ is called a \textbf{$\textbf{m}$-component} if for all $g_{\lambda,c}\in\check{\mathcal{B}}$, there are $m$ repelling axis between $1$ and $c$ in the counterclockwise direction. In particular, an adjacent component is a $0$-component.
\end{definition}

From the discussion above, we see that $m$-component exists with every $0\leq m\leq q$ for the family $f_{\lambda,a}$ and with every $0\leq m\leq q-1$ for the family $g_{\lambda,c}$.

\subsection{Double parabolic parameters}\label{subsec.double_parabolic}

Let $\lambda = e^{2\pi i \frac{p}{q}}$. By Fatou-Leau Theorem we have Taylor expansions near 0:

\begin{equation}\label{eq.formula.f^q}
    f^q_{\lambda,a}(z) = z+A_{p/q}(a)z^{q+1}+\mathcal{O}(z^{q+2}).
\end{equation}

\begin{equation}\label{eq.formula.g^q}
    g^q_{\lambda,c}(z) = z+C_{p/q}(c)z^{q+1}+\mathcal{O}(z^{q+2}).
\end{equation}

\begin{definition}
We say that $f_{\lambda,a}$ (resp. $g_{\lambda,c}$) is \textbf{double parabolic} if $A_{p/q}(a) = 0$ (resp. $C_{p/q}(c) = 0$). For the family $f_{\lambda,a}$, denote by $\mathcal{A}_{p/q}$ the collection of these parameters. 
\end{definition}

In this subsection we will show that there are exactly $q$ double parabolic parameters for families $f_{\lambda,a}$ and $g_{\lambda,c}$. 

\begin{lemma}[\cite{arnaud}]\label{lem.deg.C_pq}
$C_{p/q}(\frac{1}{c})$ is a polynomial in $c$ of degree $q$.
\end{lemma}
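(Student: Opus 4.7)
The plan is to combine a scaling symmetry of the family $g_{\lambda,c}$ with a direct inspection of which coefficients actually appear, and then to rule out one degenerate case using a classical dynamical fact about quadratic polynomials.

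First I would verify the affine conjugacy
\[g_{\lambda,1/c}(w)\;=\;\tfrac{1}{c}\,g_{\lambda,c}(cw),\]
a direct substitution in (\ref{eq.para.c}); the scaling $z\mapsto cz$ swaps the two critical points $1$ and $c$ and simultaneously replaces the parameter $c$ by $1/c$. Iterating this identity $q$ times and reading off the coefficient of $w^{q+1}$ in
\[g^q_{\lambda,1/c}(w)\;=\;\tfrac{1}{c}\,g^q_{\lambda,c}(cw)\;=\;w+c^{q}\,C_{p/q}(c)\,w^{q+1}+\mathcal{O}(w^{q+2})\]
yields the functional equation
\[C_{p/q}(1/c)\;=\;c^{q}\,C_{p/q}(c).\]

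Next I would observe, by inspection of (\ref{eq.para.c}), that all coefficients of $g_{\lambda,c}(z)$ in $z$ are polynomials in $u:=1/c$, each of $u$-degree at most one. Since polynomial composition preserves this property, every coefficient of the iterate $g^q_{\lambda,c}(z)$ lies in $\mathbb{C}[\lambda,u]$, so
\[C_{p/q}(c)\;=\;\sum_{k=0}^{d}\alpha_k\,u^{k}\qquad\text{for some }d\geq 0,\ \alpha_k\in\mathbb{C},\ \alpha_d\neq 0.\]
Substituting into the functional equation and comparing both sides as polynomials in $c$ gives
\[\sum_{k=0}^{d}\alpha_k\,c^{k}\;=\;\sum_{k=0}^{d}\alpha_k\,c^{q-k}.\]
Provided the constant term $\alpha_0$ is nonzero, matching the highest power of $c$ on each side forces $d=q$, and the desired conclusion that $C_{p/q}(1/c)=\sum_{k=0}^{q}\alpha_k c^{k}$ is a polynomial of degree exactly $q$ then follows (with the palindromic relations $\alpha_k=\alpha_{q-k}$ as a bonus).

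The remaining task, which I expect to be the main obstacle, is to prove that $\alpha_0=C_{p/q}(\infty)$ does not vanish. Setting $u=0$ corresponds to the limit $g_{\lambda,\infty}(z)=\lambda z(1-z/2)$, a quadratic polynomial with $z=0$ a parabolic fixed point of multiplier $\lambda=e^{2\pi ip/q}$. By the Leau--Fatou flower theorem, each cycle of attracting petals at this parabolic point must attract at least one critical orbit; since the quadratic has a single finite critical point $z=1$, there is exactly one such cycle, containing $q$ petals. Equivalently, $g^q_{\lambda,\infty}(z)-z$ vanishes to order exactly $q+1$ at the origin, hence $C_{p/q}(\infty)\neq 0$, and the lemma follows.
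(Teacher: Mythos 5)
Your proof is correct and essentially the paper's: both pass to the rescaled/conjugated family via $g_{\lambda,1/c}(w)=\tfrac{1}{c}g_{\lambda,c}(cw)$, which degenerates as $c\to\infty$ (equivalently $c\to 0$ on the other side of the involution $c\mapsto 1/c$) to the quadratic $\lambda z(1-z/2)$, and both invoke the Leau--Fatou count of critical orbits to conclude that the limiting coefficient of $z^{q+1}$ is nonzero, hence that the degree is exactly $q$. The explicit functional equation $C_{p/q}(1/c)=c^qC_{p/q}(c)$ and the palindromic symmetry it encodes are correct and clean bookkeeping, but they are not genuinely separate content --- the paper condenses all of this into the single observation that $c^qC_{p/q}(c)\to C_0\neq 0$ as $c\to 0$.
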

\begin{proof}
Since $C_{p/q}(c)$ is a polynomial in $\frac{1}{c}$, $C_{p/q}(\frac{1}{c})$ is a polynomial in $c$. When $c$ tends to 0, the map $g_c(z) = c^{-1}g_{\lambda,c}(cz)$ converges uniformly on compact subsets of $\mathbb{C}$ to $P_{\lambda}(z) = \lambda z(1-\frac{z}{2})$. Let $P^q_{\lambda}(z) =  z+C_0z^q+1+\mathcal{O}(z^{q+2})$. Then $C_0\not = 0$ because $P_{\lambda}$ has only one parabolic basin. While from (\ref{eq.formula.g^q}) we see that $g^q_c(z) = z + c^qC_{p/q}(c)z^{q+1}+\mathcal{O}(z^{q+2})$. Therefore $c^qC_{p/q}(c)$ converges to $C_0$ when $c\to 0$. Hence $C_{p/q}(\frac{1}{c})$ has degree $q$.
\end{proof}

By the above lemma and the relation between $f_{\lambda,a},g_{\lambda,c}$ given by Lemma \ref{lem.relation}, it suffices to find $q$ different double parabolic parameters for the family $f_{\lambda,a}$. Similarly we construct such parameters by pinching deformation.

Consider the family $f_{\lambda,a}$ for $0\textless\lambda\textless1$. Recall in Subsection \ref{subsec.existence.compo} the construction of lines $L^k_{p/q}$, the corresponding strips $S_k$ and the corresponding central lines $\Tilde{L}^k_{p/q}$, $0\leq k\leq q-1$. Recall that $\Tilde{L}^k_{p/q}$ intersect $i\mathbb{R}$ at $\Tilde{y}_k$. For $0\leq m\leq \lfloor\frac{q}{2}\rfloor$, choose $(\lambda,{a}'_m)\in(0,1)\times(-\sqrt{3\lambda},\sqrt{3\lambda})$ such that $-I_{\lambda}({a}'_m) = \Tilde{y}_m$ (Lemma \ref{lem.different.angles}). Let $S^+_k,S^-_k$ be the two components of $S_k\setminus \Tilde{L}^k_{p/q}$. Let $(\Tilde{L}^k_{p/q})^+,(\Tilde{L}^k_{p/q})^-$ be the central line respectively. For each $k$ pick two narrow strips $\Tilde{S}^+_k,\Tilde{S}^-_k$ centred at $(\Tilde{L}^k_{p/q})^+,(\Tilde{L}^k_{p/q})^-$. Define a non-seperating annulus for $f_{\lambda,\Tilde{a}_m}$: \[\Tilde{\mathscr{A}} = \pi\phi^{-1}_{\lambda,\tilde{a}_m}(\exp(-\bigcup_{k=0}^{q-1}\Tilde{S}^+_k\cup\Tilde{S}^-_k))\]
For $0\leq m\leq \lfloor\frac{q}{2}\rfloor$, the corresponding pinching limit yields a double parabolic parameter $\boldsymbol{\mathrm{a}}_m\in Per_1(e^{2\pi i\frac{p}{q}})$. Its portrait at $z=0$ is $\Theta_m\cup\Theta_{m+1}$ by Remark \ref{rem.angle.notdepending}. Therefore the portrait for $-\boldsymbol{\mathrm{a}}_m$ at $z=0$ is $\Theta_{q-m}\cup\Theta_{q-m-1}$. Thus for every $0\leq m\leq q-1$, we obtain a double parabolic parameter $\boldsymbol{\mathrm{a}}_m$ with portrait $\Theta_m,\Theta_{m+1}$ at $z=0$.
\begin{definition}\label{def.m-doubleparabolic}
Let $0\leq m\leq q-1$. $\boldsymbol{\mathrm{a}}_m$ is called the double parabolic parameter of {$\textbf{m}$}\textbf{-type}.
\end{definition}

\begin{lemma}\label{lem.double.para.boudary}
A $m-$component for $f_a$ can only have double parabolic parameters of type $m,m-1$ on its boundary.
\end{lemma}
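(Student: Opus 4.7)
My plan is to compare the portraits at the parabolic fixed point $z=0$ across the boundary transition from an $m$-component to a double parabolic parameter, and to use continuity of rational dynamical rays to force a combinatorial inclusion.

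Let $\mathcal{B}$ be an $m$-component of the family $f_a$, and suppose $\boldsymbol{\mathrm{a}}_k \in \overline{\mathcal{B}} \cap \mathcal{A}_{p/q}$; the goal is to show $k \in \{m-1, m\}$. By Definition \ref{def.m-compo.f_a}, every $a \in \mathcal{B}$ has portrait $\Theta_m$ at the simply parabolic fixed point $z=0$, i.e.\ the $q$ rational external rays $\{R^\infty_{\lambda,a}(\theta)\}_{\theta \in \Theta_m}$ all land at $0$. By the construction in Subsection \ref{subsec.double_parabolic}, the doubly parabolic $f_{\boldsymbol{\mathrm{a}}_k}$ has portrait $\Theta_k \cup \Theta_{k+1}$ at $z=0$. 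The entire argument then reduces to the combinatorial inclusion $\Theta_m \subseteq \Theta_k \cup \Theta_{k+1}$.

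To establish the inclusion, I would pick a sequence $a_n \in \mathcal{B}$ with $a_n \to \boldsymbol{\mathrm{a}}_k$ and fix $\theta \in \Theta_m$. The Böttcher coordinates $\phi^\infty_{\lambda,a}$ depend holomorphically on $a$ under the normalization $\phi^\infty_{\lambda,a}(z) = z + o(1)$, so on any compact subset of $\mathbb{C} \setminus K_{\lambda,\boldsymbol{\mathrm{a}}_k}$ the truncation of $R^\infty_{\lambda,a_n}(\theta)$ outside a small equipotential converges uniformly to the corresponding truncation of $R^\infty_{\lambda,\boldsymbol{\mathrm{a}}_k}(\theta)$. Since each $R^\infty_{\lambda,a_n}(\theta)$ lands at the persistent fixed point $z=0$, the impression of $R^\infty_{\lambda,\boldsymbol{\mathrm{a}}_k}(\theta)$ contains $0$. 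On the other hand, $\boldsymbol{\mathrm{a}}_k \in \mathcal{C}_\lambda$ and $\theta \in \mathbb{Q}/\mathbb{Z}$, so by the classical Douady--Hubbard landing theorem $R^\infty_{\lambda,\boldsymbol{\mathrm{a}}_k}(\theta)$ lands at a single (pre-)periodic point, which must therefore equal $0$. Hence $\theta$ belongs to the portrait of $f_{\boldsymbol{\mathrm{a}}_k}$ at $z=0$, so $\theta \in \Theta_k \cup \Theta_{k+1}$.

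To conclude, observe that $\Theta_m$, $\Theta_k$, $\Theta_{k+1}$ are all cycles of length $q$ under multiplication by $3$ (Definition \ref{def.rotation.angle}), and $\Theta_k \cap \Theta_{k+1} = \emptyset$ since they correspond to the two distinct cycles appearing in a single double parabolic portrait. As $\Theta_m$ is a single $3$-orbit, the inclusion $\Theta_m \subseteq \Theta_k \cup \Theta_{k+1}$ forces either $\Theta_m = \Theta_k$ (so $k = m$) or $\Theta_m = \Theta_{k+1}$ (so $k = m-1$), which is exactly the claim. The main obstacle is the impression-continuity step: crossing from the simply parabolic regime of $\mathcal{B}$ to the doubly parabolic limit $\boldsymbol{\mathrm{a}}_k$, one must verify that a ray persistently landing at $z=0$ on $\mathcal{B}$ continues to land at $z=0$ at the limit. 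This is delicate because the local dynamics at $z=0$ bifurcates (the number of repelling petals doubles from $q$ to $2q$), but it is handled cleanly by working with the impression of the ray, using the permanence of $z=0$ as a fixed point throughout the family together with the uniqueness of landing points for rational rays in polynomials with connected Julia set.
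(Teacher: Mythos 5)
Your reduction to the combinatorial inclusion $\Theta_m\subseteq\Theta_k\cup\Theta_{k+1}$ is fine, and the final counting step (three $q$-cycles under tripling, so $\Theta_m=\Theta_k$ or $\Theta_{k+1}$) is correct. The gap is exactly at the step you flag as "the main obstacle", and it is not handled by what you wrote. Locally uniform convergence of the Böttcher parametrizations on compact subsets of $\mathbb{C}\setminus K_{\lambda,\boldsymbol{\mathrm{a}}_k}$ only controls truncations of the rays above a fixed positive potential; it says nothing about the tails, and landing points of external rays are in general \emph{not} continuous in the parameter -- this is precisely the direction that breaks down near parabolic bifurcations. So "each $R^\infty_{\lambda,a_n}(\theta)$ lands at $0$" does not give "$0\in\mathrm{Imp}(R^\infty_{\lambda,\boldsymbol{\mathrm{a}}_k}(\theta))$", and "permanence of $z=0$ as a fixed point" supplies no semicontinuity. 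There is a second, independent problem: even if $0$ did lie in the impression, the impression of a prime end can strictly contain the landing point of a landing ray (landing only says the principal set is a singleton), so "the ray lands at a single point and $0$ is in the impression, hence it lands at $0$" is not a valid inference. As written, your central transfer step is therefore unproved, and it is essentially as strong as the lemma itself.

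The paper argues in the opposite, stable direction, which is the standard fix. Assume $\boldsymbol{\mathrm{a}}_k\in\partial\mathcal{B}$ with $k\neq m,m-1$. At $\boldsymbol{\mathrm{a}}_k$ the portrait at $z=0$ is $\Theta_k\cup\Theta_{k+1}$, so the rays with angles in $\Theta_m$ do not land at $0$; since both critical orbits of $f_{\boldsymbol{\mathrm{a}}_k}$ are absorbed by the degenerate parabolic point, there is no other parabolic cycle, hence these rays land at a repelling $q$-cycle. Landing at repelling (pre)periodic points is an open, stable property (cf.\ the stability used in Lemma \ref{lem.landing.stable-region} and \cite{DH}): the landing persists under perturbation of the parameter. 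Applying this at parameters of the $m$-component near $\boldsymbol{\mathrm{a}}_k$, the rays with angles in $\Theta_m$ would land at a repelling cycle there, contradicting Definition \ref{def.m-compo.f_a}, by which they land at $z=0$. If you want to keep your sequence-based setup, you must replace the impression argument by exactly this dichotomy at the limit parameter (repelling landing plus stability, and the exclusion of other parabolic cycles via the critical orbit count); the soft convergence of Böttcher coordinates alone cannot do the job.
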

\begin{proof}
Suppose the contrary that on the boundary there is a double parabolic parameter $a_0$ of type $n$, $n\not = m,m-1$. Then the cycle with angle $\Theta_m$ will land a repelling $q$-cycle of $f_{a_0}$. Since the landing property is stable (cf.\cite{DH}), we conclude that for all $a$ in this $m$-component, $R^\infty_{\lambda,a}(t)$ with $t\in\Theta_m$ lands at a repelling periodic point, a contradiction since these rays should land at $z=0$.
\end{proof}


 \subsection{Parametrisation of parabolic components}\label{subsec.parame.component}
 In this subsection always fix $\lambda = e^{2\pi i\frac{p}{q}}$. We parametrize $m$-components $\check{\mathcal{B}}_m$ of family (\ref{eq.para.c}) by locating the free critical value in the immediate basins of the quadratic model $P_{\lambda}(z) = \lambda z+ z^2$.  \\
 
The critical point of $P_{\lambda}$ is $-\frac{\lambda}{2}$. Denote by $B_0^*(0)$ the immediate basin of $P_{\lambda}$ containing $-\frac{\lambda}{2}$. There are exacty $q$ immediate basins attached at 0 in the cyclic order $B^*_0(0),...,B^*_{q-1}(0)$. Let $\Omega^0_0$ be an maximal admissible petal of $P^q_{\lambda}|_{B^*_0(0)}$. Let $\phi:\overline{\Omega^0_0}\longrightarrow\overline{\mathbb{H}}$ ($\mathbb{H}$ is the right half plan) be the Fatou coordiante normalised by $\phi({-\frac{\lambda}{2}}) = 0$. Let $1\leq m\leq q$. Define $\Tilde{\Omega}^0_{c,m} := g_{\lambda,c}^s(\Omega^0_{c,0})$. For any $0\leq k\leq q-1$, $n\in \mathbb{Z}$ such that $np+k=0 \pmod q$, define $\Omega^n_{k}$ and $\tilde{\Omega^n_{k}}$ as in  (\ref{eq.sequence}). For simplicity we will omit index $\lambda$ for all terms related to family $g_c := g_{\lambda,c}$.

Now consider family $g_c$. Let $\Omega^0_{c,0}\subset B^*_{c,0}(0)$ be the standard maximal petal of $g_c^q|_{B^*_{c,0}(0)}$. For $1\leq m\leq q-1$, let $\Tilde{\Omega}^0_{c,m} := g_c^s(\Omega^0_{c,0})$ where $1\leq s\leq q-1$ is the smallest integer such that $sp=m\pmod q$. For any $0\leq k\leq q-1$, $n\in \mathbb{Z}$ such that $np+k=0 \pmod q$, $\tilde{\Omega}^n_{c,k}$ is defined as in (\ref{eq.sequence}). The degree of $g^q_c|_{B^*_{c,m}(0)}$ is 3 if $m=0$ and is 4 if $1\leq m\leq q-1$.  In the latter case, denote by $1,cr_1,cr_2$ respectively the three critical points of $g^q_c|_{B^*_c(0)}$; denote by $c,\tilde{cr}_1,\tilde{cr}_2$ respectively the three critical points of $g^q_c|_{B^*_{c,m}(0)}$. Notice that $g^{r}_c(cr_{1,2}) = c$, $g^{q-r}_c(\tilde{cr}_{1,2}) = 1$.

We introduce the following loci in order to distinguish which critical point is on the maximal petal:
\begin{equation}\label{eq.special.locus}
\begin{split}
&{{{{{{\check{\mathcal{D}}}}}}}}_0 = \{c\in{\check{\mathcal{B}}}_0;\,1\in\partial\Omega_{c,0}^0 \text{ but } c\not\in\partial\Omega_{c,0}^0\}\\
&{\check{\mathcal{I}}}_0 = \{c\in{\check{\mathcal{B}}}_0;\,1,c\in\partial\Omega_{c,0}^0\}\\
&{{{{{{\check{\mathcal{D}}}}}}}}_m = \{c\in{\check{\mathcal{B}}}_m;\,1\in\partial\Omega_{c,0}^0 \text{ but } cr_1,cr_2\not\in\partial\Omega_{c,0}^0\},\text{ for }1 \leq m\leq q-1\\
&{\check{\mathcal{I}}}_m = \{c\in{\check{\mathcal{B}}}_m;\,1 \text{ and one of }cr_1,cr_2\in\partial\Omega_{c,0}^0\},\text{ for }1 \leq m\leq q-1
\end{split}
\end{equation}

Let $\Tilde{\check{\mathcal{D}}}_m = {\check{\mathcal{B}}}_m\setminus({\check{\mathcal{I}}}_m\cup{{{{{{\check{\mathcal{D}}}}}}}}_m)$. Clearly for $c\in \Tilde{\check{\mathcal{D}}}_m$, $1\not\in\partial\Omega^0_{c,0}$. In particular, if $1\leq m\leq q$, then $1\not\in\partial \Omega^0_{c,0}$, hence one of $cr_1,cr_2\in\partial \Omega^0_{c,0}$, and $\tilde{\Omega}^0_{c,m}\subset B^*_{c,m}(0)$ is a maximal petal for $g^q_c|_{B^*_{c,m}(0)}$, having $c$ on its boundary but not containing $\tilde{cr}_{1,2}$. Thus we have
\begin{equation}
    \begin{split}
         &\Tilde{{{{{{\check{\mathcal{D}}}}}}}}_0 = \{c\in{\check{\mathcal{B}}}_0;\,c\in\partial\Omega_{c,0}^0 \text{ but } 1\not\in\partial\Omega_{c,0}^0\}\\
         &\Tilde{{{{{{\check{\mathcal{D}}}}}}}}_m \subset \{c\in{\check{\mathcal{B}}}_m;\,c\in\partial\Tilde{\Omega}_{c,m}\text{ but }\Tilde{cr}_1,\Tilde{cr}_2\not\in\partial\Tilde{\Omega}_{c,m}\},\text{ for }1 \leq m\leq q-1
    \end{split}
\end{equation}

\begin{remark}
By Proposition \ref{prop.first_critical_point}, $\check{\mathcal{D}}_m,\tilde{\check{\mathcal{D}}}_m$ are open for $0\leq m\leq q-1$.
\end{remark}

\begin{lemma}\label{lem.cr1=cr2}
Let $c\in\check{\mathcal{B}}_m$ and $\Tilde{\phi}_c$ any Fatou coordiante for $g^q_c|_{B^*_{c,0}(0)}$. For $1\leq m\leq q-1$, if $g^q_c(cr_1,2),g^q_c(1)$ are contained $\Omega^0_{c,0}$ and $\Tilde{\phi}_c(cr_{1,2}) = \Tilde{\phi}_c(1)$, then $1=cr_1=cr_2$; for $m=0$, $g^q_c(c),g^q_c(1)$ are contained $\Omega^0_{c,0}$ and $\Tilde{\phi}_c(c) = \Tilde{\phi}_c(1)$, then $1=c$.
\end{lemma}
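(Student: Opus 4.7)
The plan is to reduce the statement to an elementary degree-counting argument once the Fatou-coordinate equality has been translated into an equality of values of $g^q_c$. The Fatou coordinate $\tilde\phi_c$ on $B^*_{c,0}(0)$ satisfies the functional equation $\tilde\phi_c\circ g^q_c = \tilde\phi_c+1$, and its restriction to the maximal attracting petal $\Omega^0_{c,0}$ is univalent onto $\mathbb{H}$. Applying this functional equation at $1$ and at $cr_i$, the hypothesis $\tilde\phi_c(cr_i)=\tilde\phi_c(1)$ yields $\tilde\phi_c(g^q_c(cr_i))=\tilde\phi_c(g^q_c(1))$; since both images lie in $\Omega^0_{c,0}$ by hypothesis, injectivity of $\tilde\phi_c$ there forces $g^q_c(cr_i)=g^q_c(1)=:v$, and similarly $g^q_c(c)=g^q_c(1)$ in the adjacent case $m=0$.

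For the bitransitive case $1\le m\le q-1$, the restriction $g^q_c|_{B^*_{c,0}(0)}$ is a proper holomorphic map of degree $4$ between simply connected domains with $1, cr_1, cr_2$ as simple critical points of local degree $2$ each (in agreement with Riemann-Hurwitz, $\sum_z(d_z-1)=d-1=3$). If $1,cr_1,cr_2$ were three distinct preimages of $v$, the fibre $(g^q_c)^{-1}(v)$ counted with multiplicity would have cardinality at least $2+2+2=6>4$, contradicting the degree. Hence at least two of $\{1,cr_1,cr_2\}$ must coincide as points. The adjacent case is analogous but simpler: $g^q_c|_{B^*_{c,0}(0)}$ has degree $3$ and two simple critical points $1,c$, so $1\ne c$ would give $\#(g^q_c)^{-1}(v)\ge 2+2=4>3$, forcing $1=c$.

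To finish the bitransitive case, one must upgrade this partial coincidence to the total coincidence $1=cr_1=cr_2$. By construction, $cr_1,cr_2$ are the two preimages of $c\in B^*_{c,m}(0)$ under $g^r_c|_{B^*_{c,0}(0)}$, a branched cover of degree $2$ whose unique simple critical point is $1$ (since $1$ is the only critical point of $g_c$ encountered along the $r$-step orbit of $B^*_{c,0}(0)$ before reaching $B^*_{c,m}(0)$). If $1\notin\{cr_1,cr_2\}$, i.e.\ $g^r_c(1)\ne c$, then $cr_1,cr_2$ are non-critical preimages of $c$ under $g^r_c$, hence genuinely distinct; so we must have $g^r_c(1)=c$, and then $1$ itself is a critical preimage of $c$ of local degree $2$ under $g^r_c$, saturating the full degree of the cover and forcing both labels $cr_1,cr_2$ to collapse onto $1$. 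I expect this last step to be the main subtlety, since the pure degree count of the previous paragraph only rules out three distinct preimages; invoking the specific origin of $cr_1,cr_2$ as preimages of $c$ under a degree-$2$ cover is what excludes the intermediate configuration where exactly two of the critical points coincide.
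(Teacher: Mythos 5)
Your proof is correct and follows essentially the same route as the paper: the functional equation together with injectivity of the Fatou coordinate on $\Omega^0_{c,0}$ converts the hypothesis into $g^q_c(cr_{1,2})=g^q_c(1)$, and a count of the fibre of this common value under the proper degree-$4$ (resp.\ degree-$3$) map $g^q_c|_{B^*_{c,0}(0)}$ finishes the argument. The only difference is in the bookkeeping at the end: the paper eliminates every configuration other than $1=cr_1=cr_2$ in one stroke by counting the fibre with multiplicity (a merged critical point has local degree at least $3$, so the fibre would still contain at least $5$ preimages, exceeding the degree $4$), whereas you dispose of the ``exactly two coincide'' case by the separate --- and also correct --- observation that $cr_1,cr_2$ form the fibre of $c$ under the degree-$2$ cover $g^r_c\colon B^*_{c,0}(0)\to B^*_{c,m}(0)$, branched only at $1$.
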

\begin{proof}
We only do the proof for $1\leq m\leq q-1$, the case $m=0$ is similar. By hypothesis, since $\tilde{\phi}_c$ is injective on $\Omega^0_{c,0}$, we have $g^q_c(cr_1,2)=g^q_c(1)$. If $cr_1,cr_2,1$ are not distinct, then $g^q_c(1)$ has at least $5$ preimages counting multiplicity, contradicting $deg(g^q_c|_{B^*_{c,0}(0)}) = 4$.
\end{proof}

For $0\leq m\leq q-1$ and $c\in{{{{{\check{\mathcal{D}}}}}}}_m$ (resp. $c\in\Tilde{{{{{{{\check{\mathcal{D}}}}}}}}}_0$), let
$\phi_c:\overline{\Omega^0_{c,0}}\longrightarrow\overline{\mathbb{H}}$ be the Fatou coordiante of $g^q_c|_{B^*_{c,0}(0)}$ normalised by $\phi_c({1}) = 0$ (resp. $\phi_c({c}) = 0$). Define $h_c:\Omega^0_{c,0}\longrightarrow\Omega_0^0$ by $h_c = \phi^{-1}\circ\phi_c$. Pull back $h_c$ by $g_c$ and $P_{\lambda}$ until we reach the critical value $g_c(c)$ (resp. $g_c(1)$):
 \begin{equation}\label{diag.commu1}
\begin{tikzcd}
\Omega_{c,\overline{m+p}}^{n_{c}} \ar[r]{}{g_{c}}\ar[d]{}{h_{c}} & \Omega_{c,\overline{m+2p}}^{n_{c}-1} \ar[d]{}{h_{c}} \ar[r]{}{g_{c}} & ... \ar[d]{}{h_{c}}  \ar[r]{}{g_{c}} & \Omega_{c,q-p}^1 \ar[d]{}{h_{c}} \ar[r]{}{g_{c}} & \Omega_{c,0}^0 \ar[d]{}{h_{c}}\\
\Omega^{n_c}_{\overline{m+p}} \ar[r]{}{P_{\lambda}} & \Omega^{n_c-1}_{\overline{m+2p}}  \ar[r]{}{P_{\lambda}} & ...  \ar[r]{}{P_{\lambda}} & \Omega^1_{q-p}  \ar[r]{}{P_{\lambda}} & \Omega^0_0
\end{tikzcd}
\end{equation}
where $n_c$ is the smallest integer such that $\Omega_{c,\overline{m+p}}^{n_{c}}$ contains $g_{c}(c)$ (resp. $g_c(1)$). Moreover at each step $h_c$ is conformal.

Similarly, for $1\leq m\leq q-1$ and $c\in\Tilde{{{{{{\check{\mathcal{D}}}}}}}}_m$, let $\Tilde{\phi_c}:\overline{\Tilde{\Omega}_{c,m}}\longrightarrow\overline{\Tilde{\mathbb{H}}}$ be the Fatou coordinate of $g^q_c|_{B^*_{c,m}(0)}$ normalised by $\Tilde{\phi}_{c}(c)=0$, where $\Tilde{\mathbb{H}} = \Tilde{\phi_c}(\Tilde{\Omega}_{c,m})$. Define $\Tilde{h}_c:\Tilde{\Omega}_{c,m} =: \Tilde{\Omega}^0_{c,m}\longrightarrow {\Omega}^0_{0}$ by $\Tilde{h}_c = \phi^{-1}\circ\Tilde{\phi}_c$. Pull back $\tilde{h}_c$ by $g_c$ and $P_{\lambda}$ until we reach the critical value $g_c({1})$.
 \begin{equation}\label{diag.commu2}
\begin{tikzcd}
\Tilde{\Omega}_{c,{p}}^{n_{c}} \ar[r]{}{g_{c}}\ar[d]{}{\tilde{h}_{c}} & \Tilde{\Omega}_{c,\overline{2p}}^{n_{c}-1} \ar[d]{}{\tilde{h}_{c}} \ar[r]{}{g_{c}} & ... \ar[d]{}{\tilde{h}_{c}}  \ar[r]{}{g_{c}} & \Tilde{\Omega}_{c,\overline{m-p}}^1 \ar[d]{}{\tilde{h}_{c}} \ar[r]{}{g_{c}} & \Tilde{\Omega}_{c,m}^0 \ar[d]{}{\tilde{h}_{c}}\\
{\Omega}^{n_c}_{\overline{p-m}} \ar[r]{}{P_{\lambda}} & {\Omega}^{n_c-1}_{\overline{2p-m}}  \ar[r]{}{P_{\lambda}} & ...  \ar[r]{}{P_{\lambda}} & {\Omega}^1_{q-p}  \ar[r]{}{P_{\lambda}} & {\Omega}^0_0
\end{tikzcd}
\end{equation}

Define four holomorphic mappings
\begin{equation}\label{eq.parametri}
\begin{split}
&\Phi^{adj}_{0}:\check{\mathcal{D}}_{0}\longrightarrow B^*_{{p}}(0),\,\,c\mapsto h_c(g_c(c))\\
&\Tilde{\Phi}^{adj}_{0}:\Tilde{\check{\mathcal{D}}}_{0}\longrightarrow B^*_{{p}}(0),\,\,c\mapsto {h}_c(g_c(1))\\
&\Phi^{bit}_{m}:{{{{{\check{\mathcal{D}}}}}}}_{m}\longrightarrow B^*_{\overline{m+p}}(0)\setminus \overline{{\Omega}^{-s}_{\overline{m+p}}},\,\,c\mapsto h_c(g_c(c))\\
&\Tilde{\Phi}^{bit}_{m}:\Tilde{\check{\mathcal{D}}}_{m}\longrightarrow B^*_{\overline{p-m}}(0)\setminus \overline{{\Omega}^{l}_{\overline{p-m}}},\,\,c\mapsto \Tilde{h}_c(g_c(1))
\end{split}
\end{equation}
where $2\leq s\leq q$, $0\leq l\leq q-1$ are the unique integers such that $-sp+m+p=0\pmod q$ and $lp+p-m=0\pmod q$.

\begin{lemma}\label{lem.proper.bit}
Let $\Phi$ be one of the four holomorphic maps in (\ref{eq.parametri}). Then $\Phi$ satisfies the following property: let $(c_n)\subset\check{\mathcal{B}}_m$ be a sequence contained in the domain of definition of $\Phi$ which converges to $\partial\check{\mathcal{B}}_m$ (resp. $\check{\mathcal{I}}_m$), then $\Phi(c_n)$ converges to the boundary of the corresponding immediate basin (resp. $\partial{\Omega^{-1}_{{p}}}\setminus\{0\},\partial{\Omega^{-s}_{\overline{m+p}}}\setminus\{0\}$ or $\partial{{\Omega}^{l}_{\overline{p-m}}}\setminus\{0\}$).
\end{lemma}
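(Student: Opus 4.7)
The plan is to prove Lemma \ref{lem.proper.bit} by a properness-type argument exploiting continuity of the Fatou coordinate within a fixed parabolic component. The four maps $\Phi^{adj}_0, \Tilde{\Phi}^{adj}_0, \Phi^{bit}_m, \Tilde{\Phi}^{bit}_m$ are treated identically, so I work with $\Phi^{adj}_0$ for concreteness. The essential input is the following continuity statement: whenever $c_n \in \check{\mathcal{D}}_0$ converges to $c_\infty \in \overline{\check{\mathcal{B}}_0}$ in such a way that the critical point $1$ persists on $\partial\Omega^0_{c_n,0}$, the normalized Fatou coordinates $\phi_{c_n}$ and their pullbacks $h_{c_n}$ through diagram (\ref{diag.commu1}) converge uniformly on compacts to the corresponding objects for $c_\infty$. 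This is to be established from the holomorphic dependence of the Fatou coordinate on parameters within a fixed $m$-component, where the Leau--Fatou combinatorics is frozen and the normalization $\phi_c(1)=0$ is enforced by the persistent configuration.

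For $c_n \to c_\infty \in \partial\check{\mathcal{B}}_0$, I argue by contradiction: if, after extraction, $\Phi^{adj}_0(c_n) \to z_\infty$ lies in the interior of $B^*_p(0)$, then pulling $z_\infty$ back through the limiting diagram gives a well-defined point in a pullback petal for $c_\infty$, which by the continuity above coincides with $\lim g_{c_n}(c_n) = g_{c_\infty}(c_\infty)$. This forces the critical orbit of $c_\infty$ to be attracted to $0$ with the combinatorics $\Theta_0$, placing $c_\infty$ inside $\check{\mathcal{B}}_0$ and contradicting $c_\infty \in \partial\check{\mathcal{B}}_0$. For $c_n \to c_\infty \in \check{\mathcal{I}}_0$ there is no bifurcation, since $c_\infty$ still lies in $\check{\mathcal{B}}_0$; the only change is that $c_\infty$ now lies on $\partial\Omega^0_{c_\infty,0}$, so $g_{c_\infty}(c_\infty) \in \partial\bigl(g_{c_\infty}(\Omega^0_{c_\infty,0})\bigr) = \partial\Omega^{-1}_{c_\infty,p}$. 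By continuity of $h_c$ along the one-step forward image, $\Phi^{adj}_0(c_n) \to h_{c_\infty}(g_{c_\infty}(c_\infty)) \in \partial\Omega^{-1}_p$; the limit avoids the parabolic fixed point $0$ because $\phi_{c_\infty}(c_\infty)$ is a finite point of $\partial\mathbb{H}$ prescribed by the Leau--Fatou cycle of $c_\infty$, not $\pm i\infty$ (which would force $c_\infty = 0$).

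The main obstacle is the continuity input itself, because Fatou coordinates can in principle undergo parabolic implosion as one approaches a parameter where the parabolic structure degenerates. The restriction to $\check{\mathcal{D}}_0$ (and symmetrically $\Tilde{\check{\mathcal{D}}}_0, \check{\mathcal{D}}_m, \Tilde{\check{\mathcal{D}}}_m$) is designed precisely to avoid this: keeping a critical point fixed on the boundary of the maximal petal locks the Fatou coordinate to a unique analytic choice and forbids implosion along the chosen sequence. This rigidity is why the loci $\check{\mathcal{I}}_m, \check{\mathcal{D}}_m, \Tilde{\check{\mathcal{D}}}_m$ are introduced in (\ref{eq.special.locus}) in the first place, and it reduces the continuity claim to the classical holomorphic dependence of the Koenigs/Fatou linearizer on the parameter, for which the Cui--Tan pinching deformation framework recalled in the appendix provides a convenient reference.
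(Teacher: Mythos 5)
Your treatment of the $\check{\mathcal{I}}_m$ case is essentially the paper's: within the component the parabolic point is non-degenerate, the Fatou coordinate depends continuously on the parameter (Corollary \ref{cor.fatou.extend}), and one tracks the phase of the free critical value to land on $\partial\Omega^{-1}_p\setminus\{0\}$ (resp. $\partial\Omega^{-s}_{\overline{m+p}}\setminus\{0\}$). The genuine problem is your boundary case. Your whole argument for $c_n\to c_\infty\in\partial\check{\mathcal{B}}_m$ rests on the "continuity input": that the normalized Fatou coordinates $\phi_{c_n}$ and the conjugacies $h_{c_n}$ of diagram (\ref{diag.commu1}) converge to "the corresponding objects for $c_\infty$". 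This is exactly what is unavailable at boundary parameters. First, $c_\infty$ may be double parabolic (Lemma \ref{lem.double.para.boudary} and Proposition \ref{Prop.grand2} show such parameters do lie on $\partial\check{\mathcal{B}}_m$); there the limit map has $2q$ petals, its maximal petal and immediate-basin structure are not limits of those along the sequence, and no convergence of $h_{c_n}$ to a conjugacy for $g_{c_\infty}$ holds. Your claim that restricting to $\check{\mathcal{D}}_0$ "forbids implosion along the chosen sequence" is an assertion, not an argument: fixing the normalization $\phi_{c_n}(1)=0$ does nothing to prevent the degeneration of the petal structure at the limit, and the Cui--Tan pinching framework does not supply parameter continuity of Fatou coordinates at such parameters. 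Second, even at a non-degenerate boundary point you would need to know that $1$ still lies in the closure of the immediate basin of $g_{c_\infty}$ and on the boundary of its maximal petal, so that "the corresponding objects for $c_\infty$" exist with your normalization; the appendix results go the other way (Proposition \ref{prop.first_critical_point} propagates a known limit configuration to nearby parameters, under the standing hypothesis that the limit parabolic point is non-degenerate), and this upper-semicontinuity is not automatic. Finally, the pullback step needs the inverse branches up to the depth of $z_\infty$ to persist near $c_\infty$, which again presupposes control of the limit dynamics you have not established.

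The paper's proof is designed precisely to avoid invoking any limit Fatou coordinate at $c_\infty$. It assumes $\Phi(c_n)\to z_0$ interior, observes that the inverse conjugacies $(h_{c_n}|_{\Omega^0_{c_n,0}})^{-1}$ form a normal family on the model petal $\Omega^0_0$, extracts a limit $\psi$, and shows $\psi(\Omega^0_0)$ avoids $J_{c_0}$ by a density-of-repelling-cycles plus holomorphic-motion argument (a repelling periodic point in the image would lie in the basin for nearby $c_n$, which is absurd). It then evaluates $\psi$ at $P_\lambda^s(z_0)$ to conclude that $g^s_{c_0}(c_0)$ and $g^q_{c_0}(1)$ lie in the same Fatou component, contradicting $c_0\in\partial\check{\mathcal{B}}_m$. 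If you want to salvage your structure, you must either restrict all limits to parameters where a continuity statement for the normalized Fatou coordinate is actually proved (which excludes exactly the boundary points you need), or replace the continuity input by a soft compactness argument of this kind that only uses normality and the motion of finitely many marked repelling points.
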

\begin{proof}
We only do the proof for $\Phi^{bit}_{m}$, the others are similar. First we justify that $\Phi^{bit}_{m}(\check{\mathcal{D}}_m)\subset B^*_{\overline{m+p}}(0)\setminus \overline{{\Omega}^{-s}_{\overline{m+p}}}$. Since $\Phi^{bit}_{m}$ is open, it suffices to prove that its image does not intersect ${\Omega}^{-s}_{\overline{m+p}}$. If not, then there exists $c\in\check{\mathcal{D}}_m$ such that $g_c(c)\in{\Omega}^{-s}_{c,\overline{m+p}}$, hence $g^q_c(cr_1) = g^q_c(cr_2)\in {\Omega}^{-q}_{c,0}$. Let $\Lambda= g^{-q}_c({\Omega}^{-q}_{c,{0}})\cap B^*_{c,0}(0)$. Then $g^q:\Lambda\longrightarrow{\Omega}^{-q}_{c,{0}}$ is of degree 4. Since $g_c$ is a polynomial, each component of $\Lambda$ is simply connected. Applying Riemann-Hurwitz formula one easily sees that $\Lambda$ should contain 3 critical points of $g_c^q$. However $1\not\in\Lambda$ since $1\in\partial{\Omega}^{0}_{c,{0}}$, so $g^q_c(1)\in\partial{\Omega}^{-q}_{c,{0}}$.

Next we verify properness of $\Phi^{bit}_m$. Clearly $\partial\check{\mathcal{D}}_m\subset{\check{\mathcal{I}}}_m\cup\partial{\check{\mathcal{B}}}_m$. Let $(c_n)\subset{{{{{\check{\mathcal{D}}}}}}}_m$ be a sequence converging to some $c_0\in\partial{{{{{\check{\check{\mathcal{D}}}}}}}}_m$. If $c_0\in{\check{\mathcal{I}}}_m$, then by Corollary \ref{cor.fatou.extend}, $g^k_{c_n}(c_n)$ is compactly contained in $\Omega^0_{c_0,0}$ for $n$ large enough, where $k$ is the smallest integer such that $g^k_{c_n}(c_n)\in B^*_{c_n,0}(0)$. Moreover $h_{c_n}(g^k_{c_n}(c_n))$ converges to $\partial{\Omega}^{-q}_{0}\setminus\{0\}$ since $\mathfrak{Re}\{\phi_{c_n}(g^k_{c_n}(c_n))-\phi_{c_n}(g^q_{c_n}(1))\}$ converges to 0 and $\mathfrak{Im}\{\phi_{c_n}(g^k_{c_n}(c_n))-\phi_{c_n}(g^q_{c_n}(1))\}$ remains bounded. (In fact we have $h_{c_n}(g^k_{c_n}(c_n))\to z_0\in\partial{\Omega}^{-q}_{0}\setminus\{0\}$ with $\phi(z_0) = I_m(c_0)$). This implies that $\Phi^{bit}_m({c_n}) = h_{c_n}(c_n) = P^{-k}_{\lambda}h_{c_n}(g^k_{c_n}(c_n))$ converges to $\partial{\Omega}^{-s}_{\overline{m+p}}\setminus\{0\}$.

So let $c_0\in\partial{\check{\mathcal{B}}}_m$. We want to prove that $\Phi^{bit}_m(c_n)$ converges to $\partial B^*_{\overline{m+p}}(0)$. Suppose the contrary that, up to taking a subsequence, $\Phi^{bit}_m(c_n)$ converges to $z_0\in B^*_{\overline{m+p}}(0)$. Clearly $\{(h_{c_n}|_{\Omega_{c_n,0}^0})^{-1}\}$ is a normal family on $\Omega_0^0$. Up to taking a subsequence, suppose $h^{-1}_{c_n}$ converges uniformly to $\psi$. We claim that $\psi(\Omega^0_0)$ does not intersect $J_{c_0}$, the Julia set of $g_{c_0}$. Indeed, if not, then there exists a repelling periodic point $x_{c_0}\in J_{c_0}\cap \psi(\Omega^0_0)$ (since the repelling cycles are dense in $J_{c_0}$). Notice that $x_{c_0}$ moves holomorphically for $c$ in a small neighborhood of $c_0$, which gives a repelling periodic point $x_c$ of $g_c$. However for $n$ large enough, $x_c\in h^{-1}_{c_n}(\Omega_0^0)\subset B^*_{c,0}(0)$, a contradiction. Therefore $\psi(\Omega^0_0)$ and $g^q_{c_0}(1)$ are contained in the same Fatou component of $g_{c_0}$. On the other hand, 
take $s\geq 0$ such that $P^s_{\lambda}(z_0)\in{\Omega}^0_0$, then by diagram (\ref{diag.commu1}), $g^s_{c_n}(c_n)\in \Omega^0_{c_n,0}$ and $h_{c_n}(g^s_{c_n}(c_n))$ converges to $P^s_{\lambda}(z_0)$. Therefore 
\[\psi(P^s_{\lambda}(z_0)) = \lim\limits_{n\to\infty}h^{-1}_{c_n}(h_{c_n}(g^s_{c_n}(c_n))) = g^s_{c_0}(c_0)\in\psi(\Omega^0_0)\]
which means that $1,c_0$ are eventually attracted by the same Fatou component, contradicts $c_0\in\partial{\check{\mathcal{B}}}_m$.
\end{proof}

\begin{proposition}\label{prop.injec.parame.bit}
The mappings $\Phi^{bit}_{m},\Tilde{\Phi}^{bit}_{m}$ in (\ref{eq.parametri}) are bijective.
\end{proposition}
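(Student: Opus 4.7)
Plan. The plan is to combine the properness established in Lemma \ref{lem.proper.bit} with an injectivity argument based on quasiconformal rigidity; I treat $\Phi^{bit}_m$ in detail, the argument for $\tilde{\Phi}^{bit}_m$ being formally identical after exchanging the roles of the two critical points. Since $\Phi^{bit}_m$ is holomorphic and proper, it is a branched covering of some finite degree $d\geq 1$ on each connected component of its domain, and the target $B^*_{\overline{m+p}}(0)\setminus\overline{\Omega^{-s}_{\overline{m+p}}}$ is connected. Once injectivity is established, this forces $d=1$ and surjectivity onto the full target simultaneously, so $\Phi^{bit}_m$ is a biholomorphism.

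For injectivity, suppose $c_1\neq c_2$ in $\check{\mathcal{D}}_m$ satisfy $\Phi^{bit}_m(c_1)=\Phi^{bit}_m(c_2)$. Unwinding diagram \eqref{diag.commu1}, the composition $H_0:=h_{c_2}^{-1}\circ h_{c_1}:\Omega^0_{c_1,0}\to\Omega^0_{c_2,0}$ is a conformal isomorphism that intertwines the return maps $g^q_{c_i}|_{\Omega^0_{c_i,0}}$ and matches the free critical values $g_{c_i}(c_i)$. Because $c_1,c_2$ lie in the same $m$-component, Definition \ref{def.m-compo.f_a} ensures that the portraits at $z=0$ coincide, so the cyclic arrangement of immediate basins attached at $0$, together with their iterated preimages, agree combinatorially on both sides. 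Extending $H_0$ equivariantly under pullback by $g_{c_1}$ and $g_{c_2}$ yields a conformal conjugacy on the entire parabolic basin of $z=0$. On the basin of infinity, the normalized Böttcher coordinates $\phi^\infty_{c_i}$ glue into a second conformal conjugacy. Both extend continuously to the Julia set (locally connected since $g_{c_i}$ is geometrically finite, both critical points being attracted to the parabolic cycle) and match there by matching combinatorics, producing a global quasiconformal homeomorphism $H:\mathbb{C}\to\mathbb{C}$ conjugating $g_{c_1}$ to $g_{c_2}$ and conformal on the Fatou set.

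Classical rigidity for geometrically finite polynomials asserts that the Julia set carries no invariant line field, so $H$ is in fact conformal on all of $\mathbb{C}$, hence affine; the normalization of the family $g_{\lambda,c}$ (fixing $0$ and the asymptotics at $\infty$) then forces $H=\mathrm{id}$, contradicting $c_1\neq c_2$. I expect the main obstacle to be the gluing step: one must verify that the equivariantly extended $H_0$ on the parabolic basin and the Böttcher conjugacy on the basin of infinity actually agree on the common Julia set boundary, which depends essentially on the matching portraits guaranteed by membership in the same $m$-component. Once this compatibility is checked, the proof of bijectivity reduces to the by-now standard rigidity principle for geometrically finite polynomials.
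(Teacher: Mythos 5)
Your first half runs parallel to the paper: properness from Lemma \ref{lem.proper.bit} reduces everything to injectivity, and the conjugacy on the parabolic basin is built exactly as in the paper, by lifting the Fatou-coordinate identification $\varphi=\phi^{-1}_{c_2}\circ\phi_{c_1}$ through $g_{c_1},g_{c_2}$, the hypothesis $\Phi^{bit}_m(c_1)=\Phi^{bit}_m(c_2)$ being what allows the lift past the critical value. The divergence, and the genuine gap, is at the step where you pass from ``conformal conjugacies on the Fatou set that agree on the Julia set'' to ``a global quasiconformal homeomorphism $H$''. That implication is false in general: a homeomorphism of $\mathbb{C}$ which is conformal off a compact set of measure zero need not be quasiconformal (this is precisely the conformal removability problem, and removability of these parabolic Julia sets is neither invoked nor available as a standard fact — the John-domain criterion that works in the hyperbolic/subhyperbolic setting fails at parabolic points). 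Without global quasiconformality you also cannot run your final rigidity step, since ``no invariant line field'' is only applicable once you have a qc conjugacy whose Beltrami coefficient lives on $J$. So the key analytic content of the proof is missing, not merely the combinatorial matching on $J$ that you flagged.

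The paper's proof is engineered exactly to avoid this trap. Instead of gluing two separately constructed conformal conjugacies, it uses that $c_1,c_2$ lie in the same component $\check{\mathcal{B}}_m$: along a connected set $\check{\mathcal{V}}\subset\check{\mathcal{B}}_m$ joining them, the Böttcher coordinate depends analytically on $c$, so $\psi_c=(\phi^\infty_c)^{-1}\circ\phi^\infty_{c_1}$ is a dynamical holomorphic motion of $\mathbb{C}\setminus\mathring{K}_{c_1}$; Slodkowski's theorem extends it to a \emph{globally} quasiconformal map $\psi_{c_2}$, which conjugates the dynamics off $\mathring{K}_{c_1}$ and agrees with $\varphi$ on $\partial K_{c_1}$ (this is where the matching you worried about is settled — by stability along the path rather than by an abstract portrait argument). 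Rickman's sewing lemma, which crucially needs the globally qc map $\psi_{c_2}$ as scaffold, then shows the map equal to $\varphi$ on $K_{c_1}$ and to $\psi_{c_2}$ outside is quasiconformal; being conformal off the measure-zero Julia set it is conformal, hence the identity by the normalization, giving $c_1=c_2$. If you want to salvage your outline, you must either supply this Slodkowski--Rickman mechanism (or an equivalent pullback/removability argument) to justify quasiconformality of $H$; as written, the decisive step is asserted rather than proved.
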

\begin{proof}
We only do for $\Phi^{bit}_{m}$, the other is similar. By Lemma \ref{lem.proper.bit}, it suffices to verify injectivity. Suppose $\Phi^{bit}_{m}(c_1) = \Phi^{bit}_{m}(c_2)$. Starting from the conjugacy $g_{c_2}\circ\phi^{-1}_{c_2}\circ\phi_{c_1} = \phi^{-1}_{c_2}\circ\phi_{c_1}\circ g_{c_1}$ on $\Omega^0_{c_1,0}$, lift $\phi^{-1}_{c_2}\circ\phi_{c_1}$ by $g_{c_1},g_{c_2}$ to $\Omega^{n}_{c_1,k}\subset B^*_{c_1,k}$ (recall definition of $\Omega^{n}_{c,k}$ at the beginning of \ref{subsec.parame.component}). Denote by $\varphi$ the lifting of $\phi^{-1}_{c_2}\circ\phi_{c_1}\circ g_{c_1}$. The only possible issue that might stop the lifting is when $k=\overline{m+p}$ and $g_{c_1}(c_1)\in\Omega^{n}_{c_1,k}$. However $\Phi^{bit}_{m}(c_1) = \Phi^{bit}_{m}(c_2)$ implies $\varphi(g_{c_1}(c_1)) = g_{c_2}(c_2)$, which ensures that the lifting is still valid (also notice that $g_{c_1}|_{B^*{c_1,m}},g_{c_1}|_{B^*{c_1,m}}$ are of degree 2). So $\varphi$ is extended to $\bigcup_i B^*_{c_1,i}$ and hence to all filled-in Julia set $K_{c_1}$.

On the other hand, take a connected open set $\check{\mathcal{V}}\subset\check{B}_m$ linking $c_1,c_2$, on which the Böttcher coordinate at $\infty$ depends analytically on $c$. Therefore for $c\in\check{\mathcal{V}}$, $\psi_c := (\phi^\infty_{c})^{-1}\circ\phi^\infty_{c_1}$ is a dynamical holomorphic motion on $C\setminus\mathring{K}_{c_1}$, and can be quasiconformally extend to $\mathbb{C}$ (Slodkowski's theorem). In particular,  $\psi_{c_2}$ conjugates $g_{c_1}$ to $g_{c_2}$ on $\mathbb{C}\setminus\mathring{K}_{c_1}$, coincides with $\varphi$ on $\partial K_{c_1}$. Applying Rickman's lemma, the global conjugacy defined by sewing $\varphi$ and $\psi_{c_2}$ is conformal, i.e. identity, so $c_1=c_2$. 
\end{proof}

The injectivity for $\Phi^{adj}_0,\tilde{\Phi}^{adj}_0$ is more subtle:
\begin{proposition}\label{prop.injec.parame.adj}
Let $c_1,c_2\in\check{\mathcal{D}}_0$ be such that $\Phi^{adj}_{0}(c_1) = \Phi^{adj}_{0}(c_2)$. 
\begin{enumerate}
    \item If $\Phi^{adj}_{0}(c_1)\in\overline{\Omega^{-1}_p}$, then $c_1=c_2$;
    \item if not, then for $i=1,2$, $(g^{-1}_{c_i})^{-1}(\overline{{\Omega^{-1}_{c_i,p}}})$ has two connected components, one of which has the figure of a filled eight, denoted by $H_{c_i}$. $H_{c_i}$ cuts $B^*_{c_i,0}(0)$ into two connected components  $\Delta^+_{c_i},\Delta^-_{c_i}$ ($+$ is on the right-hand side of $-$). If $c_i\in\Delta^+_{c_i}$ for $i=1,2$ or $c_i\in\Delta^-_{c_i}$ for $i=1,2$, then $c_1=c_2$.
\end{enumerate}
The same results hold for $\tilde{\Phi}^{adj}_0$. 
\end{proposition}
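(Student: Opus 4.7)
The plan is to adapt the argument of Proposition \ref{prop.injec.parame.bit} to the adjacent case. The new difficulty is that $g_c|_{B^*_{c,0}(0)}$ has degree $3$, with both critical points $1$ and $c$ lying in the same immediate basin; so when the lift of the conjugacy $\varphi = \phi_{c_2}^{-1}\circ\phi_{c_1}$ through the petal chain reaches a neighborhood of $c_1$, extending past the critical point requires a combinatorial choice. The two cases in the statement correspond precisely to the two ways of disambiguating this choice.

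First I would lift $\varphi$ from $\Omega^0_{c_1,0}$ along the chain of diagram (\ref{diag.commu1}) using $g_{c_1}$ and $g_{c_2}$; since the intermediate petals $\Omega^{n}_{c_1,k}$ contain no critical point of $g_{c_1}$, each step is a conformal lift. The hypothesis $\Phi^{adj}_0(c_1) = \Phi^{adj}_0(c_2)$ identifies $g_{c_1}(c_1)$ with $g_{c_2}(c_2)$ via $\varphi$, so the partial lifts are compatible at the critical values. In case (1), $g_{c_i}(c_i) \in \overline{\Omega^{-1}_{c_i,p}}$ means the critical value already sits in the standard petal chain; pulling back one more step via $g_{c_i}$ places $c_i$ inside $\overline{\Omega^{0}_{c_i,0}}$, and the two sheets of $g_{c_i}$ near $c_i$ fit together unambiguously, so $\varphi$ extends across $c_i$ directly.

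In case (2), $g_{c_i}(c_i)\notin\overline{\Omega^{-1}_{c_i,p}}$, and the analysis of $g_{c_i}^{-1}(\overline{\Omega^{-1}_{c_i,p}})\cap B^*_{c_i,0}(0)$ via Riemann--Hurwitz for the degree-$3$ map $g_{c_i}|_{B^*_{c_i,0}(0)}\colon B^*_{c_i,0}(0)\to B^*_{c_i,p}(0)$ gives exactly two connected components: the standard petal $\overline{\Omega^0_{c_i,0}}$ (with $1$ on its boundary) and a filled figure-eight $H_{c_i}$ pinched at $c_i$, whose two loops each map $1$-to-$1$ onto $\overline{\Omega^{-1}_{c_i,p}}$ with the pinch accounting for the local degree $2$ at $c_i$. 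The curve $H_{c_i}$ separates $B^*_{c_i,0}(0)$ into two open regions $\Delta^\pm_{c_i}$, encoding the two possible combinatorial positions of the orbit of $c_i$ relative to the petal chain. The assumption $c_i\in\Delta^+_{c_i}$ (or both $c_i\in\Delta^-_{c_i}$) matches which loop of $H_{c_1}$ should be sent to which loop of $H_{c_2}$ by the lift of $\varphi$, and this is exactly the data needed to make the extension past $c_i$ single-valued.

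Once $\varphi$ is extended through $c_1$, iterated pullback by $g_{c_1}$ and $g_{c_2}$ extends it to all of $B^*_{c_1,0}(0)$ (by Leau--Fatou the union of preimages of $\Omega^0_{c_1,0}$ exhausts the immediate basin), then by postcomposition with $g_{c_1}$ to all periodic components $B^*_{c_1,k}(0)$, and by further pullback to the whole of $K_{c_1}$. To close the argument I would use, exactly as in Proposition \ref{prop.injec.parame.bit}, the dynamical holomorphic motion $\psi_c = (\phi_c^\infty)^{-1}\circ\phi_{c_1}^\infty$ on $\mathbb{C}\setminus\mathring{K}_{c_1}$, extended quasiconformally by Slodkowski, and then glue via Rickman's lemma to obtain a global quasiconformal conjugacy from $g_{c_1}$ to $g_{c_2}$ normalized at $\infty$ and fixing $0$; by rigidity this forces the conjugacy to be the identity, hence $c_1=c_2$. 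The argument for $\tilde{\Phi}^{adj}_0$ is symmetric, using diagram (\ref{diag.commu2}) instead. I expect the main obstacle to be the justification of case (2), namely proving that the ``filled eight'' description of $g_{c_i}^{-1}(\overline{\Omega^{-1}_{c_i,p}})$ is correct and that the side condition $c_i\in\Delta^\pm_{c_i}$ really is the combinatorial datum that makes the lift past $c_i$ well defined; this is the genuine new content beyond Proposition \ref{prop.injec.parame.bit}.
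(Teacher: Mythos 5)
Your overall architecture is the same as the paper's: lift $\varphi=\phi_{c_2}^{-1}\circ\phi_{c_1}$ along the petal chain, use the side condition involving $\Delta^\pm_{c_i}$ to resolve the ambiguity created by the critical point, extend $\varphi$ to $K_{c_1}$, and conclude with the Slodkowski/Rickman gluing exactly as in Proposition \ref{prop.injec.parame.bit}. However, the part you yourself single out as the genuine new content is mis-described. In case (2) one has $g_{c_i}(c_i)\notin\overline{\Omega^{-1}_{c_i,p}}$, so $c_i$ does not even belong to $g_{c_i}^{-1}(\overline{\Omega^{-1}_{c_i,p}})$ and the filled eight cannot be pinched at $c_i$. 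The only critical value of $g_{c_i}|_{B^*_{c_i,0}(0)}$ meeting $\overline{\Omega^{-1}_{c_i,p}}$ is $g_{c_i}(1)\in\partial\Omega^{-1}_{c_i,p}$, so the pinch point is the critical point $1$: the eight is $\overline{\Omega^0_{c_i,0}}$ glued at $1$ to a second univalent preimage of $\overline{\Omega^{-1}_{c_i,p}}$, and the other connected component is a third univalent preimage disk, not the standard petal. Likewise your treatment of case (1) is off: pulling back cannot place $c_i$ inside $\overline{\Omega^0_{c_i,0}}$, since for $c_i\in\check{\mathcal{D}}_0$ the point $c_i$ is neither on $\partial\Omega^0_{c_i,0}$ nor (being a critical point of the return map) in the interior of the maximal petal.

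The more serious gap is that you assert, without a mechanism, that the hypothesis $c_i\in\Delta^+_{c_i}$ (or $\Delta^-_{c_i}$) for both $i$ ``is exactly the data needed'' to make the extension past the critical point single-valued, phrased as matching the two loops of $H_{c_1}$ with those of $H_{c_2}$. That is a purely local statement at the critical point and does not by itself produce a consistent lift of $\varphi$ from $\Omega^{k_0q}_{c_1,0}$ to $\Omega^{(k_0+1)q}_{c_1,0}$, which is where the actual obstruction lies (here $k_0q$ is the first level containing the free critical value of the return map). The paper resolves it combinatorially: the components $D^i_{\epsilon_0\ldots\epsilon_{k-2}}$ of $\Omega^{kq}_{c_i,0}\setminus\Omega^{(k-1)q}_{c_i,0}$ are coded by the itinerary of their $g^{jq}_{c_i}$-images relative to $\overline{\Delta^-_{c_i}}$ and $\overline{\Delta^+_{c_i}}$; the equality $\Phi^{adj}_0(c_1)=\Phi^{adj}_0(c_2)$ forces the critical values to sit in pieces with the same code, the side hypothesis pins down the first symbol of the code of the piece containing $c_i$, and $\varphi$ is then extended by matching equal-coded pieces; finally, since $\Omega^{(k_0+1)q}_{c_1,0}$ already contains both critical points and both cocritical points of $g^q_{c_1}|_{B^*_{c_1,0}(0)}$, all further pullbacks are unobstructed, which is what justifies your step ``iterated pullback extends $\varphi$ to all of $B^*_{c_1,0}(0)$''. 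Without this coding argument (or an equivalent), your sketch of case (2) does not close, even though the endgame (extension to $K_{c_1}$, holomorphic motion of the exterior, Rickman, rigidity) is the same as the paper's.
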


\begin{proof}
We do the proof for the second point. The first is similar. Without loss of generality suppsoe $c_i\in\Delta^-_{c_i}$ for $i=1,2$. Like what we did at the begining of the proof of Proposition \ref{prop.injec.parame.bit}, lift $\varphi = \phi^{-1}_{c_2}\circ\phi_{c_1}$ by $g_{c_1},g_{c_2}$. Similarly, when we lift $\varphi$ to $\Omega^n_{c_1,0}$ with $n_0$ the smallest integer such that $g^q_{c_1}(c_1)\in\Omega^{n_0}_{c_1,0}$, it is not clear whether $\varphi$ can be lifted once more to $\Omega^{n_0+q}_{c_1,0}$, where $n_0=k_0q$ for some $k_0\geq 1$. Notice that for $i=1,2$, $1\leq k\leq k_0+1$, $\Omega^{kq}_{c_i,0}$ is simply connected with piecewise smooth boundary intersecting $\partial B^*_{c_i,0}(0)$ at $2^{k}$ points, and $\Omega^{kq}_{c_i,0}\setminus\Omega^{(k-1)q}_{c_i,0}$ has $2^{k-1}$ connected components $D^i_{\epsilon_0...\epsilon_{k-2}}$ with
\[
\epsilon_j=\left\{
\begin{aligned}
&0, \text{ if }g^{jq}_{c_i}(D^i_{\epsilon_0...\epsilon_{k-2}}\cap\overline{\Omega^{kq}_{c_i,0}})\subset\overline{\Delta^-_{c_i}}  \\
&1, \text{ if }g^{jq}_{c_i}(D^i_{\epsilon_0...\epsilon_{k-2}}\cap\overline{\Omega^{kq}_{c_i,0}})\subset\overline{\Delta^+_{c_i}}
\end{aligned}
\right.\quad\]

Since $\Phi^{adj}_0(c_1) = \Phi^{adj}_0(c_2)$, there exists $\tilde{\epsilon}_0...\tilde{\epsilon}_{k-2}$ such that $g_{c_i}(c_i)\in D^i_{\tilde{\epsilon}_0...\tilde{\epsilon}_{k-2}}$. So $c_i\in D^i_{\delta_i\tilde{\epsilon}_0...\tilde{\epsilon}_{k-2}}$. But by hypothesis $c_i\in\Delta^-_{c_i}$, so $\delta_i = 0$ for $i=1,2$. Therefore we can extend $\varphi$ to $\Omega^{(k_0+1)q}_{c_1,0}$ by assigning injectively $D^1_{\epsilon_0...\epsilon_{k_0-1}}$ onto $D^2_{\epsilon_0...\epsilon_{k_0-1}}$. Notice that $\Omega^{(k_0+1)q}_{c_1,0}$ contain the two critical points and the two cocritical points of $g^q_{c_1}|_{B^*_{c_1,0}(0)}$, thus the lifting process is valid for all $k\geq k_0+1$, so $\varphi$ can be extended to $B^*_{c_1,0}(0)$, and hence to the filled-in Julia set $K_{c_1}$. Now apply the same strategy as in the second paragraph of the proof of Proposition \ref{prop.injec.parame.bit}, we conclude that $c_1=c_2$.
\end{proof}

\subsection{Describing the special locus ${\check{\mathcal{I}}}_m$}

Recall the definition of ${\check{\mathcal{I}}}_m$ in (\ref{eq.special.locus}). For $m=0$, $c\in {\check{\mathcal{I}}}_0$, define $I_0(c) = \mathfrak{Im}(\phi_c(c)-\phi_c(1))$; for $1\leq m\leq q-1$, $c\in {\check{\mathcal{I}}}_m$, define $I_m(c) = \mathfrak{Im}(\phi_c(cr)-\phi_c(1))$ where $cr = cr_1$ or $cr_2$, since $\phi_c(cr_1) = \phi_c(cr_2)$.

\begin{lemma}\label{cor.0in.partialN }
For every $m$-component $\check{\mathcal{B}}_m$, $I^{-1}_m(0)\not=\emptyset$.
\end{lemma}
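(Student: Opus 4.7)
My plan is to construct a parameter $c_m \in \check{\mathcal{I}}_m$ at which $I_m$ vanishes by Cui--Tan pinching deformation from a real-coefficient attracting polynomial, arranging the pinching data to be invariant under the reflection symmetry coming from complex conjugation. The idea is that real symmetry at the attracting source gets transported through a symmetric pinching procedure to yield a parabolic limit map whose two critical points are equidistributed above and below a horizontal axis in the parabolic Fatou coordinate, forcing $I_m = 0$.

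Concretely, start with $(\lambda_0, a_0) \in (0,1) \times (-\sqrt{3\lambda_0}, \sqrt{3\lambda_0})$, so $f_{\lambda_0, a_0}$ has real coefficients. In the $\varphi_{\lambda_0, a_0} = -\log \phi_{\lambda_0, a_0}$ coordinate, complex conjugation of $z$ becomes the anti-holomorphic reflection $\varphi \mapsto \bar{\varphi} - i I_{\lambda_0}(a_0)$ about the horizontal axis $\{\operatorname{Im}\varphi = -I_{\lambda_0}(a_0)/2\}$, exchanging the two critical images $\varphi(c^+) = 0$ and $\varphi(c^-) = -iI_{\lambda_0}(a_0)$. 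By Remark \ref{rem.angle.notdepending} we may translate the whole family $\{\tilde{L}^k_{p/q}\}$ in the real direction without altering the portrait of the pinching limit; since the $q$ intercepts $\tilde{y}_k$ always form an arithmetic progression of step $2\pi/q$ modulo $2\pi$, we can, using Lemma \ref{lem.descrip.I_lambda} to select $a_0$ and the translation to adjust the family, simultaneously arrange that the set $\{\tilde{y}_k\}$ is symmetric about $-I_{\lambda_0}(a_0)/2$ and that exactly $m$ of the $\tilde{y}_k$ lie in $(-I_{\lambda_0}(a_0), 0)$. Then the bands $\tilde{S}_k$ can be chosen invariant under the reflection, producing a conjugation-invariant non-separating multi-annulus $\tilde{\mathscr{A}}$.

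Apply Theorem \ref{thm.pinching} to this symmetric data. By the counting rule of Subsection \ref{subsec.existence.compo}, the pinching limit $f_{\tilde{a}_m}$ lies in $\mathcal{B}_m$, and after composing with the identification $Per_1(e^{2\pi ip/q}) \leftrightarrow Per_1(e^{-2\pi ip/q})$ induced by $z \mapsto \bar z$ on the target, it is invariant under an anti-holomorphic involution $\sigma$ that fixes $0$ and exchanges the two critical points. Transferring to family $g_{\lambda,c}$ via Lemma \ref{lem.relation} to obtain $c_m \in \check{\mathcal{B}}_m$, the involution $\sigma$ restricts to an anti-holomorphic symmetry of the cycle of immediate basins that either preserves $B^*_{c_m, 0}(0)$ (when $m = 0$) or exchanges $B^*_{c_m, 0}(0)$ with $B^*_{c_m, m}(0)$ (when $1 \leq m \leq q-1$); in either case, combined with the appropriate iterate of $g_{c_m}$ it induces an anti-holomorphic involution of $\Omega^0_{c_m, 0}$ sending the two critical points of $g^q_{c_m}|_{B^*_{c_m, 0}(0)}$ to each other on $\partial \Omega^0_{c_m, 0}$. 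This places $c_m$ in $\check{\mathcal{I}}_m$ and realizes $\sigma$ on the parabolic Fatou coordinate $\phi_{c_m}$ as $\phi_{c_m} \mapsto \overline{\phi_{c_m}} + \mathrm{const}$, an anti-holomorphic reflection across a horizontal line, which forces the two critical points to have equal imaginary Fatou coordinates, i.e., $I_m(c_m) = 0$.

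The main obstacle is making rigorous the descent of the reflection symmetry through the Cui--Tan pinching. Some care is required because the target slice $Per_1(e^{2\pi ip/q})$ is itself \emph{not} preserved by $z \mapsto \bar{z}$ (it is exchanged with $Per_1(e^{-2\pi ip/q})$), so the self-symmetry of $f_{\tilde{a}_m}$ only emerges after precomposition with the natural antiholomorphic identification between the two slices; checking that the normalisation $\psi_t(0)=0$, $\psi_t(z)=z+o(1)$ is compatible with this identification is the key verification. In the bitransitive case $1 \leq m \leq q-1$ the argument is further complicated by the fact that $\sigma$ exchanges two distinct immediate basins rather than preserving one, so one must carefully track how $\sigma$ interacts with the iterate $g^{r}_{c_m}$ that moves $B^*_{c_m, m}$ onto $B^*_{c_m, 0}$ in the definition of the additional critical points $cr_1, cr_2$ of $g^q_{c_m}|_{B^*_{c_m, 0}}$.
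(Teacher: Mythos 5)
Your construction cannot prove this lemma, for two separate reasons. First, the statement is quantified over \emph{every} $m$-component, and at this point of the paper the uniqueness of $m$-components is not yet available — indeed it is deduced \emph{from} this lemma (Corollary \ref{cor.unique.B0} concludes that $\check{\mathcal{B}}_0$ is unique precisely because every $0$-component must contain the single point $I_0^{-1}(0)=\{1\}$). A pinching construction only manufactures one particular parameter lying in \emph{some} $m$-component; it says nothing about an arbitrary given component $\check{\mathcal{B}}_m$, so your argument either proves a weaker statement or becomes circular when uniqueness is invoked.

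Second, even for the component you construct, the symmetry does not force $I_m=0$. An anti-holomorphic involution commuting with the return map and exchanging the two critical points acts on the Fatou cylinder as $\zeta\mapsto\bar{\zeta}+i\beta$; it only makes the two critical points mirror-symmetric about the horizontal line $\{\operatorname{Im}\zeta=\beta/2\}$. Since both points already lie on $\partial\mathbb{H}$ (the petal boundary), this is no constraint at all on $\operatorname{Im}\bigl(\phi_c(cr)-\phi_c(1)\bigr)=\beta$; you would need the involution to \emph{fix} each critical point, not swap them. Worse, by Lemma \ref{lem.cr1=cr2} the condition $I_m(c)=0$ on $\check{\mathcal{I}}_m$ is equivalent to a critical coincidence ($c=1$ for $m=0$, resp.\ $cr_{1,2}=1$, i.e.\ the orbit of $1$ hits $c$, for $m\geq1$). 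A pinching limit of an attracting real map cannot satisfy such a relation: the limiting semiconjugacy of Theorem \ref{thm.pinching} is injective off the fill-in skeletons and the critical orbits avoid the bands by construction, so the absence of the relation for $f_{\lambda_0,a_0}$ persists in the limit. Hence no choice of (symmetric or not) pinching data lands in $I_m^{-1}(0)$. The paper's proof is of a different nature: it uses the properness of the parametrization $\Phi$ of (\ref{eq.parametri}) on $\check{\mathcal{D}}_m$ (Lemma \ref{lem.proper.bit}) to produce, \emph{inside the given component}, a sequence $c_k$ with $\Phi(c_k)\to P^s_\lambda(-\tfrac{\lambda}{2})$, and shows the limit $c_0\in\check{\mathcal{I}}_m$ satisfies $\phi_{c_0}(cr_{1,2})=\phi_{c_0}(1)$, whence $I_m(c_0)=0$ by Lemma \ref{lem.cr1=cr2}.
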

\begin{proof}
Let $\Phi$ be the corresponding mapping in (\ref{eq.parametri}) defined on ${\check{\mathcal{D}}}_m$. Clearly $P^s_{\lambda}(-\frac{\lambda}{2})\in\partial\Phi(\mathcal{N})\cap B^*_{\overline{m+p}}(0)$. Thus by Lemma \ref{lem.proper.bit}, there exists a sequence of parameters $({c}_k)\subset\mathcal{N}$ converging to ${c}_0\in \check{\mathcal{I}}_m$ such that $\Phi({c}_k)$ converges to $P^s_{\lambda}(-\frac{\lambda}{2})$. Recall that $\Phi(c) = h_c(g_c(c))$, thus $P^{q-s}_{\lambda}\circ h_{{c}_k}(g_{{c}_k}({c}_k)) = h_c(g^{q-s+1}_{{c}_k}({c}_k))$ converges to $P^{q}_{\lambda}(-\frac{\lambda}{2})$, which implies that $\phi_{{c}_k}(g^q_{{c}_k}(c))-\phi_{{c}_k}(g^q_{{c}_k}(1))\to 0$ if $m=0$ and  $\phi_{{c}_k}(g^q_{{c}_k}(cr_{1,2}))-\phi_{{c}_k}(g^q_{{c}_k}(1))\to 0$ if $1\leq m\leq q-1$. Thus at ${c}_0$ we have respectively $\phi_{{c}_0}(c)=\phi_{{c}_0}(1)$ and $\phi_{{c}_0}(cr_{1,2})=\phi_{{c}_0}(1)$. By Lemma \ref{lem.cr1=cr2}, $c_0=1$ resp. $cr_{1,2} =1$, i.e. $I_m({c}_0) = 0$.
\end{proof}

\begin{corollary}\label{cor.unique.B0}
For family $g_c$ the $0$-component is unique. It contains $c=1$ and is symmetric with respect to $\tau:c\mapsto1/c$. 
\end{corollary}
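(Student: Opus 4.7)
The plan is to show that every $0$-component must contain the distinguished parameter $c=1$, which immediately forces uniqueness, and then to deduce the $\tau$-symmetry from the involution exchanging the two critical points.

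The main step is to apply Lemma \ref{cor.0in.partialN } with $m=0$ to an arbitrary $0$-component $\check{\mathcal{B}}_0$. This produces a parameter $c_0\in\check{\mathcal{I}}_0\cap\check{\mathcal{B}}_0$ with $I_0(c_0)=0$, i.e.\ $\phi_{c_0}(c_0)=\phi_{c_0}(1)$. Since $1,c_0\in\partial\Omega^0_{c_0,0}$ and the Fatou coordinate satisfies the functional equation $\phi_{c_0}\circ g^q_{c_0}=\phi_{c_0}+1$, the images $g^q_{c_0}(1)$ and $g^q_{c_0}(c_0)$ are pushed into the interior of the maximal petal $\Omega^0_{c_0,0}$ with equal Fatou coordinate values. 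Lemma \ref{lem.cr1=cr2} (in the $m=0$ statement) then applies and yields $c_0=1$. Hence every $0$-component contains $c=1$, and disjointness of connected components gives uniqueness; the unique $0$-component visibly contains $c=1$.

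For the symmetry under $\tau:c\mapsto 1/c$, I would invoke the affine conjugacy $g_{\lambda,1/c}(z)=c^{-1}g_{\lambda,c}(cz)$ between $g_{\lambda,c}$ and $g_{\lambda,1/c}$, which swaps the marked critical points $1$ and $c$ while preserving the dynamics up to affine change. Since the defining condition of a $0$-component (adjacent type with zero repelling axes between the two critical points in the cyclic boundary order of $B^*_{c,0}(0)$) is symmetric in the roles of $1$ and $c$, the image $\tau(\check{\mathcal{B}}_0)$ is again a $0$-component. The uniqueness just proved forces $\tau(\check{\mathcal{B}}_0)=\check{\mathcal{B}}_0$, which is consistent since $\tau(1)=1$ is itself contained in $\check{\mathcal{B}}_0$.

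The only genuine subtlety I foresee is at $c=1$ itself: the two marked critical points collide into a double critical point, so strictly speaking the set $\check{\mathcal{I}}_0$ is degenerate there. However the argument never needs to evaluate anything at $c=1$ beyond recognising that $c_0=1$ produced by Lemma \ref{lem.cr1=cr2} lies in $\check{\mathcal{B}}_0$; whether we regard $c=1$ as an interior point of $\check{\mathcal{I}}_0$ or as a limit of nearby non-degenerate parameters in $\check{\mathcal{B}}_0$ is irrelevant to the uniqueness conclusion, so this point does not obstruct the proof.
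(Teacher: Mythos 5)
Your argument is correct and follows essentially the same route as the paper: Lemma \ref{cor.0in.partialN } gives a point of $\check{\mathcal{I}}_0$ with $I_0=0$, Lemma \ref{lem.cr1=cr2} identifies it with $c=1$, so every $0$-component contains $1$ and is unique, and symmetry follows because $\tau(\check{\mathcal{B}}_0)$ is again a $0$-component. Your extra remark on the degeneracy at $c=1$ is a harmless refinement the paper leaves implicit.
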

\begin{proof}
By Lemma \ref{cor.0in.partialN }, $I^{-1}_0(0)\subset \check{\mathcal{B}}_0$, while by Lemma \ref{lem.cr1=cr2}, $I_0^{-1}(0) = 1$. Thus $\check{\mathcal{B}}_0$ is unique. Since $\tau(\check{\mathcal{B}}_0)$ is also a 0-component, hence $\tau(\check{\mathcal{B}}_0) = \check{\mathcal{B}}_0$.
\end{proof}

\begin{lemma}\label{lem.injectivity_Im}
Let $0\leq m\leq q-1$, ${\check{\mathcal{B}}}_m$ be a $m$-component,  ${\check{\mathcal{I}}}_m$ be as in (\ref{eq.special.locus}). Then $I_m:{\check{\mathcal{I}}}_m\longrightarrow (-\infty,+\infty)$ is injective.
\end{lemma}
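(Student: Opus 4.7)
The plan is to adapt the conjugacy-construction strategy used in Proposition \ref{prop.injec.parame.bit} and Proposition \ref{prop.injec.parame.adj}: given $c_1,c_2\in\check{\mathcal{I}}_m$ with $I_m(c_1)=I_m(c_2)$, I will build a conjugacy between $g_{c_1}$ and $g_{c_2}$ that is conformal on $\mathring{K}_{c_1}$, extend it through the B\"ottcher-coordinate holomorphic motion outside, and then invoke Rickman's lemma to conclude $c_1=c_2$.

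First, set $\varphi_0=\phi_{c_2}^{-1}\circ\phi_{c_1}:\Omega^0_{c_1,0}\longrightarrow\Omega^0_{c_2,0}$, which is a conformal conjugacy of $g_{c_1}^q$ to $g_{c_2}^q$ on the maximal petal. The normalization $\phi_{c_i}(1)=0$ together with the hypothesis $I_m(c_1)=I_m(c_2)$ yields $\phi_{c_1}(x^{(1)})=\phi_{c_2}(x^{(2)})$, where $x^{(i)}$ denotes the second critical point of $g_{c_i}^q$ on $\partial\Omega^0_{c_i,0}$ (namely $x^{(i)}=c_i$ when $m=0$, and $x^{(i)}\in\{cr_1,cr_2\}$ when $1\leq m\leq q-1$). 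Hence $\varphi_0$ extends continuously to the closure and matches these critical points. This boundary identification is precisely the feature which is absent for parameters in $\check{\mathcal{D}}_m$, and it is what will make every subsequent lift unambiguous.

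Next, lift $\varphi_0$ iteratively by $g_{c_1}^q,g_{c_2}^q$ to fill up the entire immediate basin $B^*_{c_1,0}(0)$. At each step the only possible obstruction is a preimage region straddling a critical point on $\partial\Omega^0_{c_i,0}$, but the boundary matching from the first step resolves the ambiguity in the same spirit as the figure-eight argument in Proposition \ref{prop.injec.parame.adj}. Once $\varphi_0$ is defined on $B^*_{c_1,0}(0)$, propagate it to the other immediate basins $B^*_{c_1,k}(0)$ via the cyclic action of $g_{c_1}$, and then to every other Fatou component by taking preimages. The outcome is a conformal conjugacy $\varphi$ between $g_{c_1}$ and $g_{c_2}$ on $\mathring{K}_{c_1}$ extending continuously to $\partial K_{c_1}$.

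Finally, as at the end of the proof of Proposition \ref{prop.injec.parame.bit}, the B\"ottcher coordinate at $\infty$ provides a dynamical holomorphic motion on $\mathbb{C}\setminus\mathring{K}_{c_1}$ linking $c_1$ to $c_2$, which I extend quasiconformally by Slodkowski to a neighborhood of $\overline{\mathbb{C}\setminus K_{c_1}}$. Sewing this extension with $\varphi$ along $\partial K_{c_1}$ via Rickman's lemma produces a globally conformal conjugacy between $g_{c_1}$ and $g_{c_2}$, which must be the identity, whence $c_1=c_2$. The hard part will be the lift step at the maximal petal: one has to verify that the lift of $\varphi_0$ remains single-valued as one crosses the two critical points on $\partial\Omega^0_{c_i,0}$, and the hypothesis $I_m(c_1)=I_m(c_2)$ is exactly what is needed to prevent the two possible preimage sheets from being swapped.
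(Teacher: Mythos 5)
Your proposal is correct and follows essentially the same route as the paper, whose proof is precisely to adapt Proposition \ref{prop.injec.parame.bit} (for $1\leq m\leq q-1$) and Proposition \ref{prop.injec.parame.adj} (for $m=0$): lift $\phi_{c_2}^{-1}\circ\phi_{c_1}$ through the basins, using that $I_m(c_1)=I_m(c_2)$ pins down the second critical point on the petal boundary (both Fatou coordinates being purely imaginary there), and then conclude with the B\"ottcher motion, Slodkowski and Rickman. You also correctly isolate the one point that needs checking, namely that this boundary matching makes every lift across the critical points unambiguous.
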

\begin{proof}
For $1\leq m\leq q-1$, adapt the proof of Proposition \ref{prop.injec.parame.bit}; for $m=0$ adapt that of Proposition \ref{prop.injec.parame.adj}.
\end{proof}

\begin{lemma}\label{lem.curve.smooth}
If there exists $c_{t_0}\in\check{\mathcal{I}}_m$ such that $t_0 := I_m(c_0)\textgreater 0$ (resp. $\textless 0$), then for any $t\textgreater 0$ (resp. $\textless0$) there is $c_t\in \check{\mathcal{I}}_m$ with $I_m(c_t) = t$. Moreover $I^{-1}_m((-\infty,0)),I^{-1}_m({(0,+\infty)})$ are curves parametrized by $I^{-1}_m$.
\end{lemma}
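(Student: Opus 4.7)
The statement amounts to showing that $I_m(\check{\mathcal{I}}_m)\cap(0,+\infty)$, once non-empty, equals $(0,+\infty)$ together with the curve structure. Given the injectivity of $I_m$ (Lemma \ref{lem.injectivity_Im}), the plan is to prove that this image is both open and closed in $(0,+\infty)$ and conclude by connectedness. The negative half-line case is identical.

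\textit{Openness.} At any $c_0\in\check{\mathcal{I}}_m$ with $I_m(c_0)=t_0>0$, I would use Lemma \ref{lem.proper.bit} to describe $\check{\mathcal{I}}_m$ locally as the interface separating $\check{\mathcal{D}}_m$ (where only $1$ lies on $\partial\Omega^0_{c,0}$) from $\tilde{\check{\mathcal{D}}}_m$ (where only the second marked critical point does). The parametrizations $\Phi^{bit}_m,\tilde{\Phi}^{bit}_m$ of (\ref{eq.parametri}) extend continuously to this interface and send it onto a piece of $\partial\Omega^{-s}_{\overline{m+p}}\setminus\{0\}$, which is a smooth real arc transverse to the imaginary Fatou coordinate direction. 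Composing with $I_m$, which reads off the imaginary part in the normalized Fatou coordinate, yields a continuous local homeomorphism from a neighborhood of $c_0$ in $\check{\mathcal{I}}_m$ onto a neighborhood of $t_0$ in $\mathbb{R}$. The adjacent case is treated identically via $\Phi^{adj}_0,\tilde{\Phi}^{adj}_0$.

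\textit{Closedness.} Let $c_n\in\check{\mathcal{I}}_m$ with $t_n:=I_m(c_n)\to t^*>0$; extract a limit $c_n\to c^*$ from the compact set $\check{\mathcal{C}}_\lambda$. If $c^*\in\check{\mathcal{B}}_m$, continuity of the Koenigs/Fatou coordinate and of the marked critical points pins both $1$ and $cr$ on $\partial\Omega^0_{c^*,0}$ (neither can retract inside the petal without forcing $t^*=0$), so $c^*\in\check{\mathcal{I}}_m$ and $I_m(c^*)=t^*$. If $c^*$ were to lie in a different $k$-component, this would violate the rigidity of the portrait at $z=0$ carried by every $c_n$ (stability of landings under perturbation, Lemma \ref{lem.landing.stable-region}). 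It remains the case $c^*\in\partial\check{\mathcal{B}}_m$: by Lemma \ref{lem.double.para.boudary} $c^*$ is either double parabolic of type $m$ or $m-1$, or a Misiurewicz–parabolic parameter; in either case one checks that the collapsing petal structure forces $\phi_{c_n}(1)$ and $\phi_{c_n}(cr)$ to have coinciding imaginary parts in the limit, yielding $t^*=0$, a contradiction.

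Combining openness and closedness with the non-emptiness hypothesis gives $I_m(\check{\mathcal{I}}_m)\supseteq(0,+\infty)$; by injectivity, $I_m$ is a continuous bijection onto $(0,+\infty)$ with continuous inverse (openness shows $I_m$ is open), so $I_m^{-1}((0,+\infty))$ is a topological arc parametrized by $I_m$. The principal obstacle is the exclusion of double parabolic limits in the closedness argument: one must show that as $c_n$ tends to $\boldsymbol{\mathrm{a}}_m$ or $\boldsymbol{\mathrm{a}}_{m-1}$, the imaginary Fatou coordinates of the two marked critical points necessarily coincide in the limit. This should be extracted from the pinching construction of Subsection \ref{subsec.double_parabolic}: the pinching annulus $\tilde{\mathscr{A}}$ that produces $\boldsymbol{\mathrm{a}}_m$ is symmetric about the central line $\tilde{L}^k_{p/q}$, and a converse continuity argument along pinching paths forces any approach to $\boldsymbol{\mathrm{a}}_m$ from $\check{\mathcal{I}}_m$ to have $I_m\to 0$.
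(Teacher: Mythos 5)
Your closedness step contains a claim that is the opposite of the actual geometry. You assert that if $c_n\in\check{\mathcal{I}}_m$ accumulates at a double parabolic (or Misiurewicz parabolic) boundary parameter, then the ``collapsing petal structure'' forces $I_m(c_n)\to 0$, and you propose to extract this from the symmetry of the pinching annulus of Subsection \ref{subsec.double_parabolic}. In fact, at $\boldsymbol{\mathrm{a}}_m$ the fixed point is degenerate and the two critical points of the return map sit in two different cycles of petals, so along $\check{\mathcal{I}}_m$ the Fatou-coordinate separation of the two critical points blows up as one approaches $\boldsymbol{\mathrm{a}}_m$: the double parabolic parameters are the \emph{ends} of $\check{\mathcal{I}}_m$, reached as $I_m\to\pm\infty$ (this is the content of Lemma \ref{lem.limit.Im}(1) and Proposition \ref{prop.endpoints_specialcurve}), whereas $I_m^{-1}(0)$ is the interior point of $\check{\mathcal{B}}_m$ where the two critical points of $g_c^q$ collide (Lemma \ref{lem.cr1=cr2}). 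The pinching construction does not back your claim either: the deformed parameter $a'_m$ producing $\boldsymbol{\mathrm{a}}_m$ has the free critical value exactly on the pinched central line, and pinching pushes the modulus between the two critical orbits to infinity rather than making their Fatou coordinates coincide. The correct exclusion of non--double-parabolic boundary limits is by stability of the critical point on the maximal petal (Proposition \ref{prop.first_critical_point}), as in Lemma \ref{lem.limit.Im}; the exclusion of double parabolic limits under the hypothesis $I_m(c_n)\to t^\ast$ finite is precisely the delicate point, and your proposal supplies no valid argument for it, so the open--closed scheme does not close.

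Two further remarks. Your openness step rests on an unproved continuous extension of $\Phi^{bit}_m$ to the interface $\check{\mathcal{I}}_m$; what is actually available is the limit argument via properness: for $z\in\partial\Omega^{-s}_{\overline{m+p}}\setminus\{0\}$ take $(\Phi^{bit}_m)^{-1}(z_n)$ with $z_n\to z$, extract a limit $c^\ast$, and use Lemma \ref{lem.proper.bit} (including the identification of the limiting boundary point with the value $I_m(c^\ast)$) together with Proposition \ref{prop.injec.parame.bit} to get $c^\ast\in\check{\mathcal{I}}_m$ with prescribed $I_m$ -- this is exactly how the paper later proves surjectivity in Proposition \ref{prop.endpoints_specialcurve}, and it does \emph{not} transfer ``identically'' to $m=0$, where $\Phi^{adj}_0$ is not a bijection (Proposition \ref{prop.injec.parame.adj} is conditional) and the paper resorts to the $\tau$-symmetry of $\check{\mathcal{B}}_0$ instead. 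Finally, the paper's own proof of this lemma is a different and much shorter route: a quasiconformal deformation supported on the quotient cylinder moves the imaginary Fatou coordinate of the free critical point continuously through all values of the same sign as $t_0$, which yields both the existence of $c_t$ for every such $t$ and the parametrization of $I_m^{-1}((0,+\infty))$ and $I_m^{-1}((-\infty,0))$ in one stroke, without any boundary analysis.
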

\begin{proof}
This can be shown by quasiconformal deformation.
\end{proof}

\begin{lemma}\label{lem.limit.Im}
Let $(c_k)\subset\check{\mathcal{I}}_m$ be a sequence of parameters. If
\begin{enumerate}
      \item $I_m(c_k)\to\pm\infty$, then $c_k$ converges to a double parabolic parameter;
    \item $I_m(c_k)\to0$, then $c_k$ converges to $I^{-1}_m(0)$.
\end{enumerate}

\end{lemma}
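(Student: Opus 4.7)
The overall strategy combines compactness of $\check{\mathcal{C}}_\lambda$ with stability of Fatou coordinates away from the double-parabolic locus. The only analytic input is that when $C_{p/q}(c_0)\neq 0$, the Fatou coordinate $\phi_c$ of $g^q_c|_{B^*_{c,0}(0)}$ depends continuously on $c$ near $c_0$, uniformly on compact subsets of the interior of the attracting basin.

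For part 1, I would extract a subsequential limit $c_k\to c_0\in\overline{\check{\mathcal{B}}_m}$ by compactness of $\check{\mathcal{C}}_\lambda$ and show $c_0$ is double parabolic. Assume for contradiction $C_{p/q}(c_0)\neq 0$. Applying Abel's equation $\phi_c\circ g^q_c=\phi_c+1$ gives
\[
\phi_{c_k}(cr_{c_k})-\phi_{c_k}(1)=\phi_{c_k}(g^q_{c_k}(cr_{c_k}))-\phi_{c_k}(g^q_{c_k}(1)),
\]
and the right-hand side can be read off from the critical values, which lie in the interior of $\Omega^0_{c_k,0}$ and, by continuity of $g^q_c$ in $c$, converge to interior points of $\Omega^0_{c_0,0}$. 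Stability of the Fatou coordinate on compacta then gives a finite limit for this difference, contradicting $I_m(c_k)\to\pm\infty$. To upgrade from subsequential to full convergence, I would use Lemmas \ref{lem.injectivity_Im} and \ref{lem.curve.smooth}, which exhibit $I^{-1}_m((0,+\infty))$ as a connected curve parametrized by $I_m$: its cluster set as $I_m\to+\infty$ is a connected subset of the set of double parabolic parameters, which is finite by Lemma \ref{lem.deg.C_pq}, hence a single point. The case $I_m\to-\infty$ is identical.

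For part 2, any subsequential limit $c_0$ of $(c_k)$ is not double parabolic, by the contrapositive of the argument in part 1. If $c_0\in\check{\mathcal{B}}_m$, then openness of $\check{\mathcal{D}}_m$ and $\tilde{\check{\mathcal{D}}}_m$ forces $c_0\in\check{\mathcal{I}}_m$; stability then yields continuity of $I_m$ at $c_0$, so $I_m(c_0)=0$, and Lemmas \ref{lem.injectivity_Im} together with \ref{lem.cr1=cr2} identify $c_0=I^{-1}_m(0)$. The remaining case $c_0\in\partial\check{\mathcal{B}}_m$ with $c_0$ not double parabolic must be excluded: I would argue that such a $c_0$ would force one of the critical points $1,cr_{c_0}$ to leave the closed basin $\overline{B^*_{c_0,0}(0)}$, whereas the bounded purely imaginary Fatou-coordinate difference $\phi_{c_k}(cr_{c_k})-\phi_{c_k}(1)\to 0$ combined with stability forces both critical points to remain in the closed basin in the limit, a contradiction.

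I expect the main obstacle to be precisely this last step, namely excluding non-double-parabolic boundary limits in part 2. Intuitively the pinning condition $c_k\in\check{\mathcal{I}}_m$ is too rigid to permit such escape, but turning this into a fully rigorous local argument requires a careful analysis of how $\partial\Omega^0_{c,0}$ deforms as $c$ approaches $\partial\check{\mathcal{B}}_m$, and in particular how the two boundary critical points of the maximal petal move jointly under the constraint that their Fatou-coordinate difference tends to zero.
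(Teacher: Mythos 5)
Your part 2 has a genuine logical gap: you dispose of the double parabolic case by appealing to ``the contrapositive of the argument in part 1'', but part 1 only proves the implication ``$I_m(c_k)\to\pm\infty$ $\Rightarrow$ every accumulation point is double parabolic''. Its contrapositive says nothing about sequences with $I_m(c_k)\to 0$: a priori a sequence in $\check{\mathcal{I}}_m$ with $I_m(c_k)\to 0$ could still converge to a double parabolic parameter on $\partial\check{\mathcal{B}}_m$, since at such a degenerate parameter the Fatou coordinates are not stable and nothing forces $I_m$ to blow up along every approaching sequence. Excluding precisely this is the heart of the paper's proof of part 2: assuming $c_0\in\mathcal{A}_{p/q}$ and (without loss of generality) $I_m(c_k)>0$, one combines part 1 with Lemma \ref{lem.curve.smooth} to see that $I_m^{-1}((0,+\infty))$ is a curve separating the simply connected domain $\check{\mathcal{B}}_m$ into two components; the component $\mathcal{N}$ not containing $I_m^{-1}(0)$ (which exists by Lemma \ref{lem.injectivity_Im}) would satisfy $\mathcal{N}\setminus\check{\mathcal{I}}_m=\check{\mathcal{D}}_m$ or $\tilde{\check{\mathcal{D}}}_m$, and since $\partial\check{\mathcal{D}}_m\cap\check{\mathcal{B}}_m=\partial\tilde{\check{\mathcal{D}}}_m\cap\check{\mathcal{B}}_m=\check{\mathcal{I}}_m$ this would force $I_m^{-1}(0)\in\partial(\mathcal{N}\setminus\check{\mathcal{I}}_m)$, a contradiction. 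Nothing in your sketch replaces this topological step.

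You also misidentify the remaining difficulty. The case you flag as the main obstacle, a limit $c_0\in\partial\check{\mathcal{B}}_m$ that is not double parabolic, is exactly what Proposition \ref{prop.first_critical_point} handles, in both parts of the lemma: at such a $c_0$ only one critical point (say $1$) lies in the immediate basins, and the proposition gives that for every $c$ near $c_0$ the continuation of $1$ is the \emph{unique} critical point of $g^q_c$ on the boundary of the maximal petal, which is incompatible with $c\in\check{\mathcal{I}}_m$; hence no sequence of $\check{\mathcal{I}}_m$ accumulates there at all. Your substitute for this in part 1, namely stability of Fatou coordinates on compacta together with the claim that the critical values converge to interior points of $\Omega^0_{c_0,0}$, is not justified when $c_0\in\partial\check{\mathcal{B}}_m$: parabolic basins do not vary continuously there, the second critical point $cr$ may leave the basin in the limit, and then $g^q_{c_0}(cr_{c_0})$ need not lie in (the closure of) the limiting petal, so you cannot conclude that $\phi_{c_k}(cr_{c_k})-\phi_{c_k}(1)$ remains bounded. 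Both parts of your argument should instead invoke Proposition \ref{prop.first_critical_point} at this point, as the paper does.
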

\begin{proof}
Let $c_0$ be any accumulation point of $c_k$.
\begin{enumerate}
    \item If $I_m(c_k)\to\pm\infty$, then clearly $c_0\in\partial\check{\mathcal{B}}_m$. If $c_0$ is not double parabolic, then only one critical point is in the immediate basins, say 1. One can then apply Proposition \ref{prop.first_critical_point}, which implies that for $c$ near $c_0$, $1$ is always on the boundary of the maximal petal of $g^q_c$ while the other critical points of $g^q_c$ are not, contradicting the definition of $\check{\mathcal{I}}_m$.
    
    \item If $I_m(c_k)\to0$. It suffices to prove that $c_0\in\check{\mathcal{B}}_m$, since then by taking limit in $k$, we get $I_m(c_0) = 0$. For the same reason as above, if $c_0$ is not double parabolic, then $c_0\not\in\partial\check{\mathcal{B}}_m$ and $c_0\in\check{\mathcal{B}}_m$. So it remains to show that $c_0$ is not double parabolic. Suppose the contrary. Without loss of generality we assume that $I_m(c_k)\textgreater 0$. Then by the first point and Lemma \ref{lem.curve.smooth}, $I^{-1}_m((0,+\infty))$ is a curve separating $\check{\mathcal{B}}_m$ into two simply connected components ($\check{\mathcal{B}}_m$ is simply connected, Lemma \ref{lem.boundary.components}). Let $\mathcal{N}$ be the one not containing ${I}^{-1}_m(0)$. This $\mathcal{N}$ exists since ${I}^{-1}_m(0)$ is a single point by Lemma \ref{lem.injectivity_Im}. Then $\mathcal{N}\setminus\check{\mathcal{I}}_m$ is ${{{{\check{\mathcal{D}}}}}}_{m}$ or $\Tilde{{{{{\check{\mathcal{D}}}}}}}_m$. But recall that $\partial\check{\mathcal{D}}_m\cap\check{\mathcal{B}}_m = \partial\tilde{\check{\mathcal{D}}}_m\cap\check{\mathcal{B}}_m = \check{\mathcal{I}}_m$, so $I_m^{-1}(0)\in\partial(\mathcal{N}\setminus\check{\mathcal{I}}_m)$, a contradiction. 
\end{enumerate}
\end{proof}

\begin{proposition}\label{prop.endpoints_specialcurve}
For $0\leq m\leq q-1$, $\check{\mathcal{I}}_m$ is a curve parametrized by $I^{-1}_m:(-\infty,+\infty)\longrightarrow\check{\mathcal{I}}_m$. Moreover, when $q\textgreater 1$, $\check{\mathcal{I}}_m$ has two different end points; when $q=1$, $\check{\mathcal{I}}_0$ has only one end point $-1$.
\end{proposition}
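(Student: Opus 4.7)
The plan is to combine Corollary~\ref{cor.0in.partialN } with Lemmas~\ref{lem.injectivity_Im}, \ref{lem.curve.smooth} and \ref{lem.limit.Im}, and to close the argument via a planar-topology observation on the simply connected set $\check{\mathcal{B}}_m$ (Lemma~\ref{lem.boundary.components}). By Lemma~\ref{cor.0in.partialN } the point $c_\star := I_m^{-1}(0)$ exists, and by Lemma~\ref{lem.injectivity_Im} it is unique. The crucial step is then to show that $I_m$ attains both strictly positive and strictly negative values on $\check{\mathcal{I}}_m$. For this I would use the partition $\check{\mathcal{B}}_m = \check{\mathcal{D}}_m \sqcup \check{\mathcal{I}}_m \sqcup \tilde{\check{\mathcal{D}}}_m$, in which $\check{\mathcal{D}}_m$ and $\tilde{\check{\mathcal{D}}}_m$ are open and both non-empty (each is produced by a variant of the pinching construction of \ref{subsec.existence.compo}, placing the free critical point on one or the other side of the central pinching lines). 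Suppose, for contradiction, that $I_m \geq 0$ on all of $\check{\mathcal{I}}_m$. Then by Lemma~\ref{lem.curve.smooth}, $I_m^{-1}((0,+\infty))$ is a smooth arc limiting to $c_\star$ as $t\to 0^+$ (Lemma~\ref{lem.limit.Im}(2)) and to a double parabolic parameter on $\partial\check{\mathcal{B}}_m$ as $t\to+\infty$ (Lemma~\ref{lem.limit.Im}(1)); hence $\check{\mathcal{I}}_m=\{c_\star\}\cup I_m^{-1}((0,+\infty))$ would be a Jordan arc with one endpoint in the interior of the simply connected domain $\check{\mathcal{B}}_m$ and the other on $\partial\check{\mathcal{B}}_m$. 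Such an arc cannot disconnect $\check{\mathcal{B}}_m$, contradicting the fact that $\check{\mathcal{B}}_m\setminus\check{\mathcal{I}}_m=\check{\mathcal{D}}_m\sqcup\tilde{\check{\mathcal{D}}}_m$ is disconnected. The symmetric argument rules out $I_m\leq 0$ on $\check{\mathcal{I}}_m$, so both signs are attained.

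Once both signs are attained, Lemma~\ref{lem.curve.smooth} produces two smooth arcs $I_m^{-1}((0,+\infty))$ and $I_m^{-1}((-\infty,0))$, and by Lemma~\ref{lem.limit.Im}(2) they concatenate continuously through $c_\star$. Together with the injectivity from Lemma~\ref{lem.injectivity_Im}, this gives the continuous bijection $I_m^{-1} : (-\infty,+\infty)\to\check{\mathcal{I}}_m$, establishing the curve claim. For the endpoints, Lemma~\ref{lem.limit.Im}(1) ensures that the limits $\boldsymbol{\mathrm{a}}^{\pm} := \lim_{t\to\pm\infty} I_m^{-1}(t)$ exist and are double parabolic parameters on $\partial\check{\mathcal{B}}_m$. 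When $q=1$, Lemma~\ref{lem.deg.C_pq} shows that $C_{0/1}(1/c)$ is a degree-one polynomial in $c$ (explicitly $-(1+c)/2$) with unique root $c=-1$, so both limits necessarily coincide at $-1$, which gives the single-endpoint statement. When $q>1$, I would distinguish $\boldsymbol{\mathrm{a}}^{+}$ from $\boldsymbol{\mathrm{a}}^{-}$ through their portraits at $z=0$: the regime $t\to+\infty$ corresponds to the free critical point approaching $\partial\Omega^0_{c,0}$ along the upper sub-strip $\tilde{S}^{+}_k$ in the notation of \ref{subsec.double_parabolic}, producing a double parabolic limit of portrait $\Theta_m\cup\Theta_{m+1}$, while $t\to-\infty$ corresponds to approach along $\tilde{S}^{-}_k$, producing portrait $\Theta_{m-1}\cup\Theta_m$. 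These portraits differ by Definition~\ref{def.rotation.angle}, so $\boldsymbol{\mathrm{a}}^{+}\neq\boldsymbol{\mathrm{a}}^{-}$.

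The main obstacle is the middle step: ruling out a constant sign for $I_m$. The crux is the planar-topology fact that a Jordan arc with one endpoint in the interior of a simply connected planar domain does not disconnect the domain, married to the partition of $\check{\mathcal{B}}_m$ into the two disjoint non-empty open pieces $\check{\mathcal{D}}_m$ and $\tilde{\check{\mathcal{D}}}_m$; here one must also justify non-emptiness of both pieces by invoking the symmetric variant of the pinching construction. A secondary subtlety is the identification of the two limit parameters when $q>1$, which requires carefully tracking the sign of $I_m$ against the orientation used in the pinching construction of \ref{subsec.double_parabolic} in order to match each end with the correct $\boldsymbol{\mathrm{a}}_k$.
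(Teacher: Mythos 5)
Your proposal takes a genuinely different route from the paper in both halves, and there are gaps in each. For the first half (showing $I_m$ attains both signs), your crosscut argument is a reasonable idea, but it hinges on \emph{both} $\check{\mathcal{D}}_m$ and $\tilde{\check{\mathcal{D}}}_m$ being non-empty, which you only justify by an appeal to an unspecified ``variant of the pinching construction'' -- this is not established anywhere. The paper's argument sidesteps this: for $m=0$ it uses the $\tau$-symmetry $\tau(\check{\mathcal{D}}_0)=\tilde{\check{\mathcal{D}}}_0$ (so emptiness of one forces emptiness of both, which is impossible since $\check{\mathcal{I}}_0$ has empty interior), while for $1\leq m\leq q-1$ it exploits the surjectivity of the conformal bijection $\Phi^{bit}_m$ from Proposition~\ref{prop.injec.parame.bit} together with Lemma~\ref{lem.proper.bit} to produce, for any prescribed $t\in\mathbb{R}$, a sequence $c_n\to c_0\in\check{\mathcal{I}}_m$ with $I_m(c_0)=t$; this only requires \emph{one} of the two pieces to be non-empty (which is automatic), so the paper never needs the fact your argument depends on. Note also that your argument tacitly excludes the degenerate possibility $I_m\equiv 0$, i.e.\ $\check{\mathcal{I}}_m=\{c_\star\}$, which falls outside the hypotheses of Lemma~\ref{lem.curve.smooth} and must be handled separately (it is easy -- removing a point does not disconnect -- but it needs a sentence).

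The decisive gap is in the second half: distinguishing the two endpoints when $q>1$. You propose to assign portrait $\Theta_m\cup\Theta_{m+1}$ to the $t\to+\infty$ end and $\Theta_{m-1}\cup\Theta_m$ to $t\to-\infty$, and you describe the mechanism only as ``carefully tracking the sign of $I_m$ against the orientation used in the pinching construction.'' This is exactly the content that needs a proof, and you have not given one. Lemma~\ref{lem.limit.Im} says the limits are double parabolic, and Lemma~\ref{lem.double.para.boudary} narrows the possibilities to two, but nothing tells you which end goes where or that the ends are not the same point. Turning your heuristic into an argument would require showing that the degeneration of $g_c$ as $I_m\to\pm\infty$ along $\check{\mathcal{I}}_m$ is (equivalent to) a Cui--Tan pinching path converging to the specific $\boldsymbol{\mathrm{a}}_k$ you want -- a substantial analytic claim absent from the paper. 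The paper proves distinctness indirectly and more robustly: assuming the two ends coincide makes $\overline{\check{\mathcal{I}}_m}$ a Jordan curve; Proposition~\ref{prop.zakeri.parametri} forces it to separate $0$ from $\infty$; $\tau$-invariance of $\check{\mathcal{B}}_m$ follows (via MSS $J$-stability), so the common end must be $c=-1$; pushing forward by $\sigma\iota^{-1}$ to the $f_a$-family then contradicts connectedness of the escape locus. The $m=0$ case uses a shorter $\tau$-symmetric argument forcing $q=1$ via Lemma~\ref{lem.double.para.boudary}. You should view this second half as the crux of the proposition, not a secondary subtlety.
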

\begin{proof}
First we prove that $I^{-1}_m$ is a parametrisation. By Lemma \ref{lem.curve.smooth} and \ref{lem.limit.Im}, it suffices to prove that $I^{-1}_m((-\infty,0))$ and $I^{-1}_m((0,+\infty))$ are not empty. Consider the case $m=0$: by Corollary \ref{cor.unique.B0}, $\check{\mathcal{B}}_0$ is unique and symmetric with respect to $\tau:c\mapsto1/c$. Moreover it is easy to see that $\tau({{{{\check{\mathcal{D}}}}}}_0) = \Tilde{{{{{\check{\mathcal{D}}}}}}}_0$, which are non empty. If both $I_0^{-1}((-\infty,0)),I_0^{-1}((0,+\infty))$ are empty, then one of ${{{{\check{\mathcal{D}}}}}}_0,\Tilde{{{{{\check{\mathcal{D}}}}}}}_0$ is empty, a contradiction. So suppose $I_0^{-1}((-\infty,0))\not=\emptyset$, hence $\tau (I_0^{-1}((-\infty,0))) = I_0^{-1}((0,+\infty)) \not =\emptyset$.

Next we prove for $1\leq m\leq q-1$. By Proposition \ref{prop.injec.parame.bit}, $\Phi^{bit}_{m}:{{{{{\check{\mathcal{D}}}}}}}_{m}\longrightarrow B^*_{\overline{m+p}}(0)\setminus \overline{{\Omega}^{-s}_{\overline{m+p}}}$ is conformal, so for any $z\in\partial{\Omega}^{-s}_{\overline{m+p}}\setminus\{0\}$, there exists $c_n\to c_0\in\check{\mathcal{I}}_m$ such that $\Phi^{bit}_{m}(c_n)\to z$. Thus $I_m(c_0)$ can take arbitrary value in $\mathbb{R}$ by choosing properly $z$.\\

Now we investigate the end points of $\check{\mathcal{I}}_m$. First consider the case $m=0$. Notice that $\check{\mathcal{I}}_0$ is symmetric with respect to $\tau:c\mapsto\frac{1}{c}$. If $q = 0$, it is easy to see that there is only one double parabolic parameter $-1$, hence by Lemma \ref{lem.limit.Im} the end point of $\check{\mathcal{I}}_0$ is $-1$. If $q\textgreater 1$, we prove that $\check{\mathcal{I}}_0$ ends at two different points. Suppose the contrary, then $\check{\mathcal{I}}_0$ must land at $-1$ since $\check{\mathcal{I}}_0 = \tau(\check{\mathcal{I}}_0)$. By Lemma \ref{lem.relation}, $\sigma\iota^{-1}(\check{\mathcal{I}}_0)$ is a curve symmetric with respect to $z\mapsto-z$, starting from $\sqrt{3e^{2\pi i\frac{p}{q}}}$ and ending at $-\sqrt{3e^{2\pi i\frac{p}{q}}}$. Moreover $0=\sigma\iota^{-1}(-1)$ is a double parabolic parameter for the family $f_{a}$. By Lemma \ref{lem.double.para.boudary}, $0$ is of type $0$ and $q-1$, i.e. $q=1$, a contradiction. 

Now we consider the case $1\leq m\leq q-1$. If not, then $\overline{\check{\mathcal{I}}_m}$ is a simple closed curve. We claim that $\overline{\check{\mathcal{I}}_m}$ must separate $0,\infty$. Suppose not, then $\overline{\check{\mathcal{I}}_m}$ will bound a simply connected region $\mathcal{O}\subset\check{\mathcal{B}}_m$, since by Proposition \ref{prop.zakeri.parametri} and Lemma \ref{lem.relation} there are only two connected components $\tau\check{\mathcal{H}}_\infty,\check{\mathcal{H}}_{\infty}$ of $\mathbb{C}^*\setminus\check{\mathcal{C}}_{\lambda}$, which are punctured neighborhoods of $0$ and $\infty$ respectively. Thus $\mathcal{O}$ is either ${{{{\check{\mathcal{D}}}}}}_m$ or $\Tilde{{{{{\check{\mathcal{D}}}}}}}_m$ and $\Phi^{bit}_{m}$ or $\Tilde{\Phi}^{bit}_{m}$ is well-defined on $\mathcal{O}$. But this contradicts Lemma \ref{lem.proper.bit} since $\partial\mathcal{O} = \check{\mathcal{I}}_m$. So $\overline{\check{\mathcal{I}}_m}$ is a closed curve separating $0,\infty$. Now we claim that $\check{\mathcal{B}}_m$ is invariant under $\tau:c\mapsto1/c$. Suppose not, then $\tau(\check{\mathcal{B}}_m)\cap\check{\mathcal{B}}_m = \emptyset$. In particular, $\tau(\check{\mathcal{I}}_m)\cap\check{\mathcal{I}}_m = \emptyset$ and both their closures separate $0,\infty$. Therefore $\mathbb{C}^*\setminus(\overline{\check{\mathcal{I}}_m\cup\tau(\check{\mathcal{I}}_m}))$ has a connected component $\mathcal{V}$ which do not intersect $\tau\check{\mathcal{H}}_{\infty},\check{\mathcal{H}}_{\infty}$. By MSS $J$-stability theorem, $g_c$ is stable on $\mathcal{V}$,  hence $\check{\mathcal{I}}_m,\tau(\check{\mathcal{I}}_m)$ are in fact in the same parabolic component $\check{\mathcal{B}}_m$, a contradiction. Since $\check{\mathcal{B}}_m$ is invariant under $\tau$, so is $\overline{\check{\mathcal{B}}_m}$. Write $\partial\check{\mathcal{B}}_m = c_0\cup\mathcal{K}_0\cup\mathcal{K}_{\infty}$, where $c_0$ is the end point of $\check{\mathcal{I}}_m$, $\mathcal{K}_0$ is the connected component of $\partial\check{\mathcal{B}}_m\setminus\{c_0\}$ contained in the bounded component of $\mathbb{C}\setminus\overline{\check{\mathcal{I}}_m}$ and $\mathcal{K}_\infty$ the one contained in the unbounded component of $\mathbb{C}\setminus\overline{\check{\mathcal{I}}_m}$. Clearly $\mathcal{K}_0\subset\partial\tau\check{\mathcal{H}}_{\infty}\setminus\partial\check{\mathcal{H}}_{\infty}$ and $\mathcal{K}_\infty\subset\partial\check{\mathcal{H}}_{\infty}\setminus\partial\tau\check{\mathcal{H}}_{\infty}$. By the relation $\frac{1}{c}g_c(cz) = g_{1/c}(z)$ we conclude that $\tau(\mathcal{K}_0) = \mathcal{K}_{\infty}$ and $\tau(\mathcal{K}_\infty) = \mathcal{K}_{0}$, hence $\tau(c_0) = c_0$, $c_0 = \pm1$, while $1\in\check{\mathcal{B}}_0$, so $c_0 = -1$. Thus by Lemma \ref{lem.relation}, $\sigma\iota^{-1}(-1) = 0$ and $\sigma\iota^{-1}(\check{\mathcal{I}}_m)$ is a closed curve separating $\mathbb{C}$ into two connected components, each of which intersects $\sigma(\hat{\mathcal{H}}_{\infty}\cup\tau\hat{\mathcal{H}}_{\infty})$, the complementary of the connected locus for the family $f_a$. But this contradicts Proposition \ref{prop.zakeri.parametri}, which says that $\sigma(\hat{\mathcal{H}}_{\infty}\cup\tau\hat{\mathcal{H}}_{0})$ has only one connected component.
\end{proof}

\begin{corollary}\label{cor.unique.B_m}
For $0\leq m\leq q-1$, the $m$-component for the family $g_{c}$ is unique.
\end{corollary}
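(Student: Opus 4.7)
The case $m = 0$ is Corollary \ref{cor.unique.B0}, so assume $1 \leq m \leq q-1$ (hence $q \geq 2$). The plan is to argue by contradiction, adapting the endpoint dichotomy from the proof of Proposition \ref{prop.endpoints_specialcurve}. Suppose $\check{\mathcal{B}}, \check{\mathcal{B}}'$ are two distinct $m$-components of $g_c$. By Proposition \ref{prop.endpoints_specialcurve}, each contains a curve $\check{\mathcal{I}}_m \subset \check{\mathcal{B}}$ and $\check{\mathcal{I}}'_m \subset \check{\mathcal{B}}'$ with two distinct endpoints on the respective boundary. Lemma \ref{lem.limit.Im} identifies these endpoints as double parabolic parameters, and the direct analog of Lemma \ref{lem.double.para.boudary} for family $g_c$ (proved identically via the stability of repelling Fatou coordinate at $z = 0$) constrains each to be of type $m$ or $m-1$. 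Combining Lemma \ref{lem.deg.C_pq}, which bounds the number of double parabolic parameters of $g_c$ by $q$, with the pinching construction of Subsection \ref{subsec.double_parabolic} transported through Lemma \ref{lem.relation} (yielding at least one representative of each type $0, \dots, q-1$), each type has exactly one representative. Hence the endpoint pairs of $\check{\mathcal{I}}_m$ and $\check{\mathcal{I}}'_m$ coincide: one endpoint is of type $m$ and the other of type $m-1$.

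Since $\check{\mathcal{B}} \cap \check{\mathcal{B}}' = \emptyset$, the two arcs share only their endpoints, so $\gamma := \overline{\check{\mathcal{I}}_m} \cup \overline{\check{\mathcal{I}}'_m}$ is a Jordan curve in $\mathbb{C}^*$. I now invoke the dichotomy used in Proposition \ref{prop.endpoints_specialcurve}. If $\gamma$ does not separate $0$ from $\infty$, then it bounds a Jordan domain $\mathcal{O} \subset \mathbb{C}$ avoiding both points. Since $\mathbb{C}^* \setminus \check{\mathcal{C}}_\lambda$ consists of only two unbounded components (Proposition \ref{prop.zakeri.parametri} and Lemma \ref{lem.relation}), we get $\mathcal{O} \subset \check{\mathcal{C}}_\lambda$. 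One-sided neighborhoods of $\check{\mathcal{I}}_m$ inside $\mathcal{O}$ lie in $\check{\mathcal{B}}$, while one-sided neighborhoods of $\check{\mathcal{I}}'_m$ inside $\mathcal{O}$ lie in $\check{\mathcal{B}}'$; restricting the parameterization $\Phi^{bit}_m$ (or $\tilde{\Phi}^{bit}_m$) to the component of $\mathcal{O} \cap \check{\mathcal{B}}$ abutting $\check{\mathcal{I}}_m$ and invoking properness from Lemma \ref{lem.proper.bit} forces the remainder of that component's boundary in $\mathcal{O}$ to project onto $\partial B^*_{\overline{m+p}}(0) \cup \partial \Omega^{-s}_{\overline{m+p}}$. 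However, part of $\partial \mathcal{O}$ is the arc $\check{\mathcal{I}}'_m \subset \check{\mathcal{B}}'$, which is disjoint from $\check{\mathcal{B}}$, producing the required contradiction. If on the contrary $\gamma$ separates $0$ from $\infty$, I transport the configuration to the $f_a$-plane via $\sigma \circ \iota^{-1}$ as in the second case of Proposition \ref{prop.endpoints_specialcurve}: the resulting closed curve would partition the escape region $\mathbb{C} \setminus \mathcal{C}_\lambda$ into two pieces, contradicting connectedness of $\mathbb{C} \setminus \mathcal{C}_\lambda$ (Proposition \ref{prop.parametri.H_infini}).

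The main obstacle is the non-separating case: making the properness argument rigorous requires a careful local analysis of $\Phi^{bit}_m$ and $\tilde{\Phi}^{bit}_m$ along $\check{\mathcal{I}}_m$, in particular identifying which side of the arc enters $\mathcal{O}$, so that Lemma \ref{lem.proper.bit} is applied to the correct open piece and the resulting boundary mismatch with $\check{\mathcal{I}}'_m \subset \check{\mathcal{B}}'$ becomes explicit. By contrast, the separating case is essentially a verbatim reprise of the symmetry/escape-locus argument already carried out in Proposition \ref{prop.endpoints_specialcurve}, with only minor combinatorial bookkeeping to verify that the image of $\gamma$ under $\sigma \circ \iota^{-1}$ still separates the $f_a$-escape region.
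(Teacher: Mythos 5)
Your reduction to a Jordan curve $\gamma=\overline{\check{\mathcal{I}}_m}\cup\overline{\check{\mathcal{I}}'_m}$ through the two (unique) double parabolic parameters of types $m-1$ and $m$ is fine, but the case you dismiss as ``essentially a verbatim reprise'' is precisely where the proposal breaks down. If $\gamma$ separates $0$ from $\infty$, the easy contradiction is unavailable (each complementary component of $\gamma$ then meets an escape region: one contains $\tau\check{\mathcal{H}}_\infty$, the other $\check{\mathcal{H}}_\infty$), and the separating-case argument of Proposition \ref{prop.endpoints_specialcurve} does not transfer: there the closed curve was $\overline{\check{\mathcal{I}}_m}$ of a \emph{single} component, and the contradiction in the $a$-plane was reached only after establishing, via the MSS $J$-stability step, that $\check{\mathcal{B}}_m$ is $\tau$-invariant and that the unique endpoint is $c_0=-1$, which maps to $a=0$; only with this symmetry does $\sigma\iota^{-1}$ of the curve become a closed curve with escaping parameters of the $f_a$-family on both sides. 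For your $\gamma$, built from two different components with endpoints at the two double parabolic parameters (in general $\neq -1$), none of this is available, and since $\sigma\circ\iota^{-1}$ identifies $s$ with $1/s$ (so it folds the inside of a curve winding around $0$ onto its outside), the image of $\gamma$ need not disconnect the escape locus of $f_a$ at all; ``contradicting connectedness of $\mathbb{C}\setminus\mathcal{C}_\lambda$'' is therefore an unproved claim, not minor bookkeeping. The paper's proof avoids this dichotomy altogether: it picks one $k$-component for \emph{every} $0\leq k\leq q-1$ and chains their curves into $\mathcal{Z}=\bigcup_k\overline{\check{\mathcal{I}}_k}$, a Jordan curve surrounding $0$; the extra arc $\overline{\check{\mathcal{I}}'_m}$, attached to $\mathcal{Z}$ only at its endpoints, then automatically cuts off a complementary component of $\mathbb{C}^*\setminus(\mathcal{Z}\cup\overline{\check{\mathcal{I}}'_m})$ avoiding both $0$ and $\infty$, hence disjoint from $\check{\mathcal{H}}_\infty\cup\tau\check{\mathcal{H}}_\infty$, while it must meet $\partial\check{\mathcal{C}}_\lambda$ because its boundary contains arcs of the two disjoint open components $\check{\mathcal{B}}'_m$ and some $\check{\mathcal{B}}_k$. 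So the winding problem you face simply never arises.

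In the non-separating case your conclusion is reachable, but not by the route you sketch: Lemma \ref{lem.proper.bit} gives properness of $\Phi^{bit}_m$ on all of $\check{\mathcal{D}}_m$ (sequences tending to $\partial\check{\mathcal{B}}_m$ or to $\check{\mathcal{I}}_m$), and does not yield properness of its restriction to an arbitrary sub-piece of $\mathcal{O}\cap\check{\mathcal{B}}$, which is why you correctly flag this as an unresolved obstacle. The parametrization is in fact unnecessary: once $\mathcal{O}$ avoids the escape regions you also get $\mathcal{O}\cap\partial\check{\mathcal{C}}_\lambda=\emptyset$ (any point of $\partial\check{\mathcal{C}}_\lambda$ is a limit of escaping parameters), and since $\partial\check{\mathcal{B}}\subset\partial\check{\mathcal{C}}_\lambda$ and $\partial\check{\mathcal{B}}'\subset\partial\check{\mathcal{C}}_\lambda$ by Lemma \ref{lem.boundary.components}, the connected set $\mathcal{O}$, having points in $\check{\mathcal{B}}$ near $\check{\mathcal{I}}_m$ and points in $\check{\mathcal{B}}'$ near $\check{\mathcal{I}}'_m$, would have to lie in both disjoint components at once --- the desired contradiction. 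With that repair the non-separating half is fine; the genuine gap is the separating case, which your two-arc approach cannot dispose of without either the full chain $\mathcal{Z}$ or a new argument replacing the symmetry input of Proposition \ref{prop.endpoints_specialcurve}.
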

\begin{proof}
The case $m=0$ has already been justified in Corollary \ref{cor.unique.B0}. We prove for $m\geq 1$. For every $0\leq k\leq q-1$ pick a $k$-component. By the above proposition and Lemma \ref{lem.double.para.boudary}, $\mathcal{Z} = \bigcup_{k=0}^{q-1}\overline{\check{\mathcal{I}}_k}$ is a simple closed curve surrounding 0. If for some $m\geq 1$ there exists another $m$-component $\check{\mathcal{B}}'_m$ with correspoinding $\check{\mathcal{I}}'_m$, then by the above proposition, $\mathbb{C}^*\setminus(\mathcal{Z}\cup\overline{\check{\mathcal{I}}'_m})$ has a component which do not intersect $\check{\mathcal{H}}_{\infty},\tau\check{\mathcal{H}}_\infty$ while it intersects $\partial\check{\mathcal{C}}_{\lambda}$, a contradiction.
\end{proof}

\subsection{Parametrizations transferred for family $f_a$}\label{subsec.summary.fa}

The first thing to do here is to find a dynamically defined curve $\mathcal{I}\subset Per_1(e^{2\pi i \frac{p}{q}})$ symmetric with respect to $a\mapsto -a$ linking $-\sqrt{3\lambda},\sqrt{3\lambda}$, so that on $\mathbb{C}\setminus \mathcal{I}$ we can define the two critical points of $f_a$ such that they vary analytically for $a\in \mathbb{C}\setminus\mathcal{I}$.

$\sigma\iota^{-1}(\overline{\bigcup_{m=0}^{q-1}\check{\mathcal{I}}_m)}$ is a "good" candidate, but it is not necessarily symmetric with respect to $a\mapsto -a$. In order to solve this problem, let us first notice that if $q$ is odd, then $c=-1$ is double parabolic; if $q$ is even, then $c=-1\in \check{\mathcal{D}}_{\frac{q}{2}}$. Let $\rho = \mathfrak{Re}\{\phi_{-1}(g_{-1}^{\frac{q}{2}})(-1)\}$, then $0\textless \rho\textless 1$. Let $\Lambda\subsetneq\Omega^0_0\subset B_0^*(0)$ be the petal of $P = P_\lambda$ of level $\rho$. Since
\[\Phi^{bit}_{\frac{q}{2}}:\check{\mathcal{D}}_{\frac{q}{2}}\longrightarrow B^*_{\overline{\frac{q}{2}+p}}(0)\setminus \overline{{\Omega}^{-\frac{q}{2}-1}_{\overline{\frac{q}{2}+p}}},\,\,c\mapsto h_c(g_c(c))\]
is an isomorphism (Lemma \ref{lem.proper.bit}),  $(\Phi^{bit}_{\frac{q}{2}})^{-1}(P^{-\frac{q}{2}+1}(\partial\Lambda))$ is a curve linking the two double parabolic parameters on $\partial\check{\mathcal{B}}_{\frac{q}{2}}$. Let $\check{\gamma}$ be the subcurve of it linking the double parabolic parameter on $\partial\check{\mathcal{B}}_{\frac{q}{2}}\cap\partial\check{\mathcal{B}}_{\frac{q}{2}-1}$ and $c=-1$.

Let $\mathcal{Z}= \overline{\bigcup_{m =0}^{\lfloor\frac{q}{2}\rfloor} \check{\mathcal{I}}_m}$ if $q$ is odd or $\mathcal{Z}= \overline{\bigcup_{m =0}^{\lfloor\frac{q-1}{2}\rfloor} \check{\mathcal{I}}_m}\cup\check{\gamma}$ if $q$ even. Then $\mathcal{Z}$ is a curve linking $c=1$ and $c=-1$. Now $\iota^{-1}(\mathcal{Z})$ (in the $s$-plane, recall in (\ref{eq.para.s})) has two connected component. Take the one containing $s=-1$ and denote its image under $\sigma$ by $\mathcal{G}$. Set $\mathcal{I} = \mathcal{G}\cup-\mathcal{G}$. Thus $\mathcal{I}$ is a curve passing $a = 0$, symmetric under $a\mapsto -a$. Set $\mathcal{I}_m := \mathcal{I}\cap \mathcal{B}_m$. 

\begin{remark}\label{rem.checkI.and.I}
When $q$ is even and $m=\frac{q}{2}$, we have $\mathcal{I}_m\cap\sigma\iota^{-1}(\check{\mathcal{I}}_m) = \emptyset$. To see this, it suffices to prove that $\iota\sigma^{-1}(\mathcal{I}_m)\cap{\check{\mathcal{B}}_m}\cap\check{\mathcal{I}}_m = \emptyset$. Indeed, by construction $\iota\sigma^{-1}(\mathcal{I}_m)\cap{\check{\mathcal{B}}_m} = \check{\gamma}\cup\tau\check{\gamma}$ ($\tau:c\mapsto1/c$) and $\check{\gamma}\cap\check{I}_m=\emptyset$. To see that $\tau\check{\gamma}\cap\check{I}_m=\emptyset$, it suffices to justify that for $c\in\check{\gamma}$, $\mathfrak{Re}\{\phi_c(g_c^{\frac{q}{2}}(1))-\phi_c(g_c^{q}(c))\}\textless 0$. This is clear since $g_c^{\frac{q}{2}}(c)\in \Omega^0_{c,0}$, while $1\in\partial\Omega^0_{c,0}$.
\end{remark}

Let $a\mapsto\sqrt{a^2-3\lambda}$ be the inverse branch defined on $\mathbb{C}\setminus\mathcal{I}$ such that $(a-\sqrt{a^2-3\lambda})\to 0$ as $|a|\to+\infty$. Define $c_\pm(a) = \frac{-a\pm\sqrt{a^2-3\lambda}}{3}$ and let $v_{\pm}(a) = f_a(c_{\pm}(a))$. \\

\begin{proof}[Proof of Proposition \ref{Prop.grand2}]

It is just a summary of what we have obtained for the family $f_a$:
\begin{itemize}
    \item  For every $0\leq m\leq q$, there is a unique $m$-component $\mathcal{B}_m$ (Definition \ref{def.m-compo.f_a}). This is direct from Corollary \ref{cor.unique.B_m} and  the relation between families $g_c$ and $f_a$ (Lemma \ref{lem.relation}). Moreover, since $\mathcal{C}_{\lambda}$ is symmetric under $a\mapsto -a$, we have $\mathcal{B}_m = -\mathcal{B}_{q-m}$.
    
    \item For every $0\leq m\leq q-1$, there is a unique double parabolic parameter $\boldsymbol{\mathrm{a}}_m$ of $m$-type (Definition \ref{def.m-doubleparabolic}). Moreover $\boldsymbol{\mathrm{a}}_m\in  \partial\mathcal{B}_{m}\cap\partial\mathcal{B}_{m+1}$ (Lemma \ref{lem.double.para.boudary}).
    
    \item From Lemma \ref{lem.double.para.boudary}, \ref{lem.limit.Im} and Proposition \ref{prop.endpoints_specialcurve}, the special curve $\mathcal{I}$ defined above passes $\bigcup_{m=0}^q \mathcal{B}_m$ in the following order: \[\mathcal{B}_0, \boldsymbol{\mathrm{a}}_0, \mathcal{B}_1, \boldsymbol{\mathrm{a}}_1,\mathcal{B}_2,...........,\mathcal{B}_{q-2}, \boldsymbol{\mathrm{a}}_{q-2}, \mathcal{B}_{q-1}, \boldsymbol{\mathrm{a}}_{q-1}, \mathcal{B}_q.\]
     Moreover for $a\in\mathbb{C}\setminus \mathcal{I}$, $c_+(a)$ is always on the boundary of the maximal petal for $f^q_{a}|_{B^*_{a,0}(0)}$.
\end{itemize}
\end{proof}

\begin{figure}[H] 
\centering 
\includegraphics[width=0.55\textwidth]{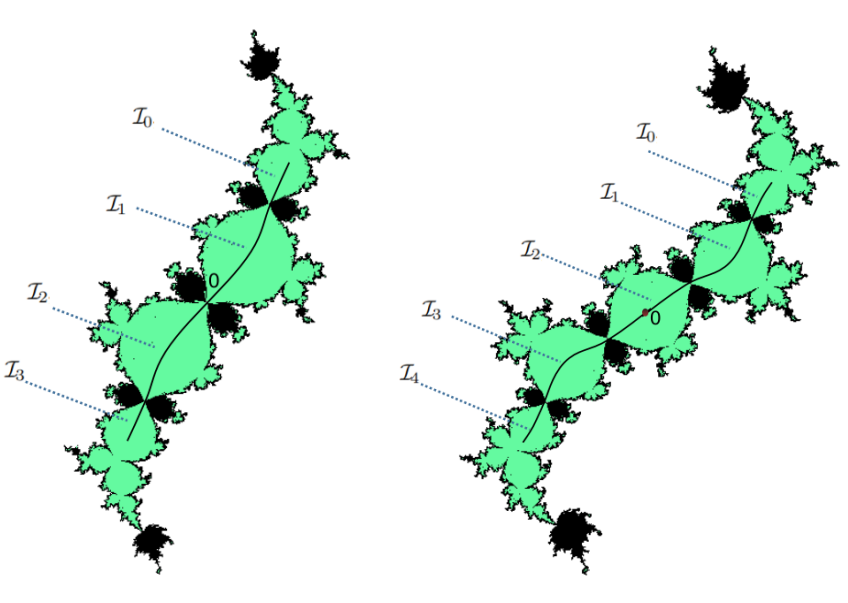} 
\caption{The curves $\mathcal{I}$ for $\mathcal{C}_\lambda$ with $\lambda = e^{\frac{2\pi i}{3}}$ and $e^{\frac{2\pi i}{4}}$.} 
\label{fig.main.parabolic} 
\end{figure}

The parametrization of $m$-components for the family $g_c$ (recall (\ref{eq.parametri})) can be transferred by $\sigma\iota^{-1}$ to the family $f_a$. More precisely:

\begin{itemize}
    \item For $m=0$, define $\Psi^{adj}_0:\mathcal{B}_m\setminus\mathcal{I}_0\longrightarrow B^*_p(0)$ by $\Psi^{adj}_0 = \Phi^{adj}_0\circ\iota\circ\sigma^{-1}$, where $\sigma^{-1}$ the inverse branch such that $\iota\circ\sigma^{-1}(\mathcal{B}_m\setminus\mathcal{I}_0) = \check{\mathcal{D}}_0$. 
    \item For $1\leq m\textless\lfloor \frac{q}{2}\rfloor$, define
$\mathcal{D}_m = \sigma\iota^{-1}(\check{\mathcal{D}}_m)$ and $\tilde{\mathcal{D}}_m = \sigma\iota^{-1}(\tilde{\check{\mathcal{D}}}_m)$, where $\iota^{-1}$ is the inverse branch such that $\sigma\iota^{-1}(\check{\mathcal{B}}_m) = \mathcal{B}_m$. Define $\Psi^{bit}_m:\mathcal{D}_m\longrightarrow B^*_{\overline{m+p}}(0)\setminus \overline{{\Omega}^{-s}_{\overline{m+p}}}$ by $\Psi^{bit}_m = \Phi^{bit}_m\circ\iota\circ\sigma^{-1}$ and $\tilde{\Psi}^{bit}_m:\tilde{\mathcal{D}}_m\longrightarrow B^*_{\overline{p-m}}(0)\setminus \overline{{\Omega}^{l}_{\overline{p-m}}}$ by $\tilde{\Psi}^{bit}_m = \tilde{\Phi}^{bit}_m\circ\iota\circ\sigma^{-1}$, where $\sigma^{-1}$ is the inverse branch such that $\iota\sigma^{-1}({\mathcal{B}}_m) = \check{\mathcal{B}}_m$.

    \item For $m = \frac{q}{2}$, $\mathcal{B}_m$ is divided by $\sigma\iota^{-1}(\check{I}_m)$ into two components. Let $\mathcal{D}_m$ be the one that is contained in a connected component of $\mathcal{B}_m\setminus\mathcal{I}_m$ (Remark \ref{rem.checkI.and.I}). Define similarly $\Psi^{bit}_m$ on $\mathcal{D}_m$.
\end{itemize}

From Proposition \ref{prop.injec.parame.bit}, we have

\begin{proposition}\label{prop.summary.parametri}
For $1\leq m\leq \lfloor \frac{q}{2}\rfloor$, $\Psi^{bit}_m,\tilde{\Psi}^{bit}_m$ are isomorphisms. For $m = \frac{q+1}{2}$, $\Psi^{bit}_m$ is an isomorphism.
\end{proposition}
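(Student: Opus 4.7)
The plan is to use the factorization $\Psi^{bit}_m = \Phi^{bit}_m \circ (\iota\circ\sigma^{-1})$ and similarly for $\tilde{\Psi}^{bit}_m$. Since $\Phi^{bit}_m$ and $\tilde{\Phi}^{bit}_m$ are already known to be isomorphisms by Proposition~\ref{prop.injec.parame.bit}, it suffices to establish that $\iota\circ\sigma^{-1}$ restricts to a biholomorphism between the correct domains. By Lemma~\ref{lem.relation}, the map $\sigma\colon s \mapsto -\sqrt{3\lambda}(s+1/s)/2$ is a degree-$2$ branched covering ramified precisely at $s = \pm 1$ with critical values $\mp\sqrt{3\lambda}$, and $\iota(s)=s^2$. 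The key observation is that $\sigma(s)=\sigma(1/s)$ while $\iota(s)\cdot \iota(1/s)=1$, so the two sheets of $\iota\circ\sigma^{-1}$ above a given $a$ correspond to the pair $\{c,1/c\}$ related by $\tau$.

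By the construction in Subsection~\ref{subsec.summary.fa}, the curve $\mathcal{I}$ is symmetric with respect to $a\mapsto -a$, links $-\sqrt{3\lambda}$ and $\sqrt{3\lambda}$, and passes through $a=0$; in particular it contains both ramification values of $\sigma$. Consequently on each connected component of $\mathbb{C}\setminus\mathcal{I}$ a single-valued holomorphic branch of $\sigma^{-1}$ can be selected, and thus $\iota\circ\sigma^{-1}$ is single-valued and holomorphic on each such component.

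For $1\leq m < \lfloor q/2\rfloor$, the sets $\mathcal{D}_m=\sigma\iota^{-1}(\check{\mathcal{D}}_m)$ and $\tilde{\mathcal{D}}_m=\sigma\iota^{-1}(\tilde{\check{\mathcal{D}}}_m)$ are defined using the specific branch of $\iota^{-1}$ such that $\sigma\iota^{-1}(\check{\mathcal{B}}_m)=\mathcal{B}_m$. Under this branch, $\iota\circ\sigma^{-1}\colon \mathcal{D}_m\to \check{\mathcal{D}}_m$ and $\iota\circ\sigma^{-1}\colon \tilde{\mathcal{D}}_m\to \tilde{\check{\mathcal{D}}}_m$ are, by construction, bijective holomorphic maps with holomorphic inverses, hence biholomorphisms. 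Composing with $\Phi^{bit}_m$ and $\tilde{\Phi}^{bit}_m$ respectively, Proposition~\ref{prop.injec.parame.bit} yields that $\Psi^{bit}_m$ and $\tilde{\Psi}^{bit}_m$ are isomorphisms onto their target images. For $m=(q+1)/2$ with $q$ odd, the same argument applies directly, since $\mathcal{B}_m\setminus\mathcal{I}_m$ lies entirely in one component of $\mathbb{C}\setminus\mathcal{I}$ and only one branch of $\sigma^{-1}$ enters the discussion.

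The main obstacle is the case $m=q/2$ with $q$ even, where $\mathcal{B}_{q/2}$ carries two overlapping cuts: $\mathcal{I}_{q/2}$ (from the definition of $\mathcal{I}$ via $\check{\gamma}$) and $\sigma\iota^{-1}(\check{\mathcal{I}}_{q/2})$ (from the $g_c$-side). The definition of $\mathcal{D}_{q/2}$ picks exactly the region contained in one connected component of $\mathcal{B}_{q/2}\setminus\mathcal{I}_{q/2}$, and Remark~\ref{rem.checkI.and.I} guarantees that $\mathcal{I}_{q/2}$ is disjoint from $\sigma\iota^{-1}(\check{\mathcal{I}}_{q/2})$. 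Hence on $\mathcal{D}_{q/2}$ the branch of $\sigma^{-1}$ is unambiguous and $\iota\circ\sigma^{-1}(\mathcal{D}_{q/2})$ coincides with $\check{\mathcal{D}}_{q/2}$. The composition argument then closes the proof. What requires a small verification here is that the image under $\Phi^{bit}_{q/2}\circ\iota\circ\sigma^{-1}$ lands in the correct complement $B^*_{\overline{q/2+p}}(0)\setminus\overline{\Omega^{-q/2-1}_{\overline{q/2+p}}}$; this follows from the explicit construction of $\check{\gamma}$ as the preimage under $\Phi^{bit}_{q/2}$ of the level-$\rho$ equipotential.
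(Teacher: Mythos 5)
Your proposal is correct and follows essentially the same route as the paper: the paper obtains Proposition \ref{prop.summary.parametri} directly from the bijectivity of $\Phi^{bit}_m,\tilde{\Phi}^{bit}_m$ (Proposition \ref{prop.injec.parame.bit}) composed with the chosen conformal branch of $\iota\circ\sigma^{-1}$ fixed in Subsection \ref{subsec.summary.fa}, exactly as you do. Your additional remarks on branch selection along $\mathcal{I}$ and on the $m=\frac{q}{2}$ case via Remark \ref{rem.checkI.and.I} simply make explicit what the paper leaves implicit.
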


Capture components can also be parametrized by locating $v_-(a)$:
\begin{proposition}\label{prop.paramet.cap}
Let $\mathcal{U}_k$ be a capture component of depth $k\geq 1$. For $a\in \mathcal{U}_k$, suppose $f_a^k(U_a) = B_{a,l}^*(0)$, where $U_a$ is the Fatou component containing $c_-(a)$. Then $\Psi_{\mathcal{U}_k}:\mathcal{U}_k\longrightarrow B_l^*(0)$ defined by $a\mapsto h_a(f^k(c_-(a)))$ is an isomorphism, where $h_a$ is the conjugating map between $f^q_{a}|_{B_{a,l}^*(0)}$ et $P^q|_{B_{l}^*(0)}$.
\end{proposition}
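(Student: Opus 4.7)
The strategy mirrors Proposition \ref{prop.injec.parame.bit}: I show $\Psi_{\mathcal{U}_k}$ is holomorphic, proper, and injective, and conclude it is a biholomorphism onto the simply connected domain $B^*_l(0)$. Holomorphy is immediate: on $\mathcal{U}_k \subset \mathbb{C}\setminus\mathcal{I}$ the free critical point $c_-(a)$ depends holomorphically on $a$ by construction of $\mathcal{I}$, and the conjugating map $h_a$ between $f^q_a|_{B^*_{a,l}(0)}$ and $P^q_\lambda|_{B^*_l(0)}$ is built (via $\sigma\iota^{-1}$ as in \ref{subsec.summary.fa}) from the Fatou coordinate normalized at $c_+(a)$, hence also varies holomorphically in $a$; since $f^k_a(c_-(a))\in B^*_{a,l}(0)$ throughout $\mathcal{U}_k$, the composite $\Psi_{\mathcal{U}_k}(a)=h_a(f^k_a(c_-(a)))$ is holomorphic.

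For properness I adapt Lemma \ref{lem.proper.bit}. Given $(a_n)\subset\mathcal{U}_k$ with $a_n\to a_0\in\partial\mathcal{U}_k$, I want to rule out $\Psi_{\mathcal{U}_k}(a_n)\to z_0\in B^*_l(0)$. Pick $s\geq 0$ with $P_\lambda^s(z_0)\in\Omega^0_0$; the analog of diagram (\ref{diag.commu1}) gives that the corresponding $h_{a_n}^{-1}(P_\lambda^s(z_0))$ stays inside $\Omega^0_{a_n,0}$ and converges to a point in a periodic Fatou component of $f_{a_0}$. A normal family argument on $(h_{a_n}^{-1})$ with limit $\psi$, combined with the density-of-repelling-periodic-points trick used in the proof of Lemma \ref{lem.proper.bit}, forces $\psi(\Omega^0_0)$ and the limit of $f^k_{a_n}(c_-(a_n))$ to lie in the same Fatou component of $f_{a_0}$. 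Hence $c_-(a_0)$ is captured at depth $k$ into $B^*_{a_0,l}(0)$, giving $a_0\in\mathcal{U}_k$, a contradiction.

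For injectivity, suppose $\Psi_{\mathcal{U}_k}(a_1)=\Psi_{\mathcal{U}_k}(a_2)$ and set $\varphi=h_{a_2}^{-1}\circ h_{a_1}$, which conjugates $f^q_{a_1}$ to $f^q_{a_2}$ on $B^*_{a_1,l}(0)$. Because $h_{a_i}$ is normalized at $c_+(a_i)$, lifting $\varphi$ around the cycle of immediate basins via $f_{a_1},f_{a_2}$ is consistent through the degree-$2$ step at $c_+$, extending $\varphi$ to $\bigcup_j B^*_{a_1,j}(0)$. Next, lift backwards along the preperiodic chain $U_{a_1}\to f_{a_1}(U_{a_1})\to\cdots\to f^k_{a_1}(U_{a_1})=B^*_{a_1,l}(0)$: only the first step is branched, of degree $2$ with critical point $c_-(a_1)$, and the lift through it is valid iff $\varphi(f_{a_1}(c_-(a_1)))=f_{a_2}(c_-(a_2))$, which is obtained by pulling back univalently the hypothesis $\varphi(f^k_{a_1}(c_-(a_1)))=f^k_{a_2}(c_-(a_2))$ through the $k-1$ intermediate non-critical steps. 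Iterating to the grand orbit of $0$ extends $\varphi$ to $\mathring{K}_{a_1}$; pasting with the B\"ottcher-coordinate holomorphic motion on $\mathbb{C}\setminus\mathring{K}_{a_1}$ (extended quasiconformally via Slodkowski) and invoking Rickman's lemma exactly as in the final paragraph of the proof of Proposition \ref{prop.injec.parame.bit} produces a global conformal conjugacy of $f_{a_1}$ and $f_{a_2}$, so $a_1=a_2$.

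The main obstacle is the branched lift through $U_{a_1}$ in the injectivity step, where the hypothesis $\Psi_{\mathcal{U}_k}(a_1)=\Psi_{\mathcal{U}_k}(a_2)$ must be transported back to the critical value through the intermediate univalent iterates; this is the preperiodic analogue of the compatibility check underlying Proposition \ref{prop.injec.parame.adj}. Properness similarly requires care to rule out boundary limits where $c_-(a_0)$ is captured at a shallower depth or falls onto the grand orbit of a critical point on $\partial\Omega^0_{a_0,0}$, but once these two verifications are made the remaining surgery is routine.
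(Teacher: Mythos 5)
Your proposal is correct and follows essentially the same route as the paper, which proves Proposition \ref{prop.paramet.cap} by declaring that the argument of Proposition \ref{prop.injec.parame.bit} (properness as in Lemma \ref{lem.proper.bit}, injectivity by lifting the Fatou-coordinate conjugacy through the basins and then gluing with the B\"ottcher motion via Slodkowski and Rickman's lemma) carries over verbatim. Your extra care with the single branched lift at $U_{a_1}$ and the transport of the hypothesis through the $k-1$ univalent intermediate steps is exactly the adaptation the paper leaves implicit.
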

\begin{proof}
The proof goes exactly the same as Proposition \ref{prop.injec.parame.bit}.
\end{proof}

\begin{proposition}\label{prop.summary.para.adj}
For $m=0$, $(\Psi^{adj}_0)^{-1}(\Omega_p^{-1})$ is a topological disk whose boundary is a piecewise smooth closed curve passing $\sqrt{3\lambda},2\sqrt{\lambda}$. Let $\mathcal{D}_0,\tilde{\mathcal{D}}_0$ be the two connected component of 
$(\mathcal{B}_0\setminus\mathcal{I}_0)\setminus\Psi^{adj}_0(\overline{\Omega^{-1}_p})$. Then $\Psi^{adj}_0|_{\mathcal{D}_0},\Psi^{adj}_0|_{\tilde{\mathcal{D}}_0}$
are isomorphisms with image $B^*_p(0)\setminus\overline{\Omega^{-1}_p}$. See Figure \ref{fig.adj.component}.
\end{proposition}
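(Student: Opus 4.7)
The plan is to transfer the content of Proposition \ref{prop.injec.parame.adj} from the $c$-family to the $a$-family through the biholomorphism $\iota \circ \sigma^{-1}: \mathcal{B}_0 \setminus \mathcal{I}_0 \to \check{\mathcal{D}}_0$ coming from the chosen branch, and then identify the two distinguished boundary points. By that biholomorphism, every structural property of $\Phi^{adj}_0$ on $\check{\mathcal{D}}_0$ pulls back directly to $\Psi^{adj}_0$ on $\mathcal{B}_0 \setminus \mathcal{I}_0$, so the real work reduces to the $c$-family.

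First I would show that $\Phi^{adj}_0$ restricts to a biholomorphism from $(\Phi^{adj}_0)^{-1}(\Omega^{-1}_p)$ onto $\Omega^{-1}_p$. Injectivity is exactly Proposition \ref{prop.injec.parame.adj}(1); openness of holomorphic maps combined with the properness from Lemma \ref{lem.proper.bit} (which forces the boundary to land on $\partial \Omega^{-1}_p$ or on $\check{\mathcal{I}}_0 \cup \partial \check{\mathcal{B}}_0$) supplies surjectivity onto the connected domain $\Omega^{-1}_p$. Pulling back via $(\iota \sigma^{-1})^{-1}$ then gives a biholomorphism $(\Psi^{adj}_0)^{-1}(\Omega^{-1}_p) \xrightarrow{\sim} \Omega^{-1}_p$; since $\Omega^{-1}_p$ is a Fatou petal bounded by finitely many analytic arcs meeting at the parabolic fixed point $0$ with a cusp and at pre-critical points of $P_\lambda^q$, the preimage inherits the topological disk structure with piecewise smooth boundary.

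For the complement, Proposition \ref{prop.injec.parame.adj}(2) partitions $\check{\mathcal{D}}_0 \setminus (\Phi^{adj}_0)^{-1}(\overline{\Omega^{-1}_p})$ into two disjoint open subsets $\check{\mathcal{D}}_0^\pm := \{c : c \in \Delta^\pm_c\}$, with common interior boundary $\{c : c \in H_c\} = (\Phi^{adj}_0)^{-1}(\partial \Omega^{-1}_p)$; on each $\check{\mathcal{D}}_0^\pm$ the map $\Phi^{adj}_0$ is injective, and properness together with connectedness of $B^*_p(0) \setminus \overline{\Omega^{-1}_p}$ forces the image of each restriction to be all of $B^*_p(0) \setminus \overline{\Omega^{-1}_p}$. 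Under the biholomorphism $\iota \sigma^{-1}$, the pair $\check{\mathcal{D}}_0^\pm$ corresponds to the two connected components $\mathcal{D}_0$ and $\tilde{\mathcal{D}}_0$ of $(\mathcal{B}_0 \setminus \mathcal{I}_0) \setminus (\Psi^{adj}_0)^{-1}(\overline{\Omega^{-1}_p})$, each mapped isomorphically by $\Psi^{adj}_0$ onto $B^*_p(0) \setminus \overline{\Omega^{-1}_p}$.

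Finally I identify the two distinguished boundary points. The Misiurewicz parabolic parameter $a = 2\sqrt{\lambda}$ satisfies $f_{2\sqrt{\lambda}}(-\sqrt{\lambda}) = 0$ by direct substitution, so the free critical value lands at the parabolic fixed point, which is the cusp of $\Omega^{-1}_p$; hence $2\sqrt{\lambda}$ lies on $\partial(\Psi^{adj}_0)^{-1}(\Omega^{-1}_p)$. The point $\sqrt{3\lambda}$ is a branch point of the $2$-valued $\sigma \iota^{-1}$ corresponding to the degenerate parameter $c = 1$, where the two critical points of $f_a$ merge, the curve $\mathcal{I}_0$ reaches its interior endpoint, and the two sides of the slit are identified in the $a$-plane; a local computation at $c = 1$ (using $g_1(1) = \lambda/3$ and tracking $h_1(\lambda/3)$ to the appropriate boundary arc of $\Omega^{-1}_p$) shows that the boundary curve of the preimage region closes up through $\sqrt{3\lambda}$. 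I expect this last step to be the main obstacle: reconciling the slit topology of $\mathcal{I}_0$ (with interior endpoint at the critical-merging parameter), the branching of $\sigma \iota^{-1}$, and the closedness of the boundary curve in $\overline{\mathcal{B}_0}$ requires an explicit local-coordinate calculation in a neighborhood of the branch point.
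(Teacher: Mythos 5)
Your overall skeleton (transfer to the $c$-family via $\iota\circ\sigma^{-1}$, then invoke Proposition \ref{prop.injec.parame.adj} for injectivity and Lemma \ref{lem.proper.bit} for properness of the restrictions to the two complementary components) is indeed the paper's route for the final assertion, but the proposal has a genuine gap at the two places where the paper's proof does its real work. First, surjectivity onto $\Omega^{-1}_p$ does not follow from ``openness plus properness'': Lemma \ref{lem.proper.bit} is not properness of $\Phi^{adj}_0$ as a map onto $B^*_p(0)$, because sequences tending to $\check{\mathcal{I}}_0$ have images tending to $\partial\Omega^{-1}_p\setminus\{0\}$, which is \emph{interior} to the basin. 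The open/closed argument only yields: \emph{if} the image meets $\Omega^{-1}_p$, then it contains it. The missing input is precisely the non-emptiness of $(\Psi^{adj}_0)^{-1}(\Omega^{-1}_p)$, i.e.\ the existence of parameters $a\in\mathcal{B}_0\setminus\mathcal{I}_0$ with $v_-(a)\in\overline{\Omega^{-1}_{a,p}}$ accumulating at $2\sqrt{\lambda}$; the paper produces them via a holomorphic motion of $\overline{\Omega^{-1}_{a,p}}$ near $a_0=2\sqrt{\lambda}$ together with Rouch\'e's theorem applied to $F(a,h)=v_-(a)-f^{1-q}_a\phi^{-1}_a\phi_{a_0}(z_h)$. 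Your computation that $v_-(2\sqrt{\lambda})=0$ is correct, but by itself it neither shows the preimage region is non-empty nor that $2\sqrt{\lambda}$ lies on its boundary.

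Second, you explicitly defer the step that is the core of the paper's argument: that $\partial\bigl[(\Psi^{adj}_0)^{-1}(\Omega^{-1}_p)\bigr]$ closes up through $\sqrt{3\lambda}$ and $2\sqrt{\lambda}$. The paper shows $(\Psi^{adj}_0)^{-1}(\partial\Omega^{-1}_p)$ consists of two arcs $\gamma_1,\gamma_2$, parametrized by $I(a)=\mathfrak{Im}\,\phi_a(c_-(a))$, with $a\to 2\sqrt{\lambda}$ as $|I(a)|\to\infty$, and $a\to\sqrt{3\lambda}$ as $|I(a)|\to 0$ by Lemma \ref{lem.cr1=cr2} \emph{once accumulation on $\partial\mathcal{B}_0$ is excluded}; the exclusions (no accumulation at $\partial\mathcal{B}_0\setminus\mathcal{A}_{p/q}$ other than $2\sqrt{\lambda}$, where stability of Fatou coordinates plus the fact that $\Psi^{adj}_0(\gamma_1)$ is a semi-arc of $\partial\Omega^{-1}_p$ not separating $B^*_p(0)$ gives a contradiction, and the impossibility that both arcs land at $\boldsymbol{\mathrm{a}}_0$, which would force $(\Psi^{adj}_0)^{-1}(0)$ to be simultaneously $2\sqrt{\lambda}$ and $\boldsymbol{\mathrm{a}}_0$, contradicting Lemma \ref{lem.proper.bit}) are global topological arguments, not a local-coordinate computation at the branch point $c=1$ as you suggest. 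As written, your proposal assumes exactly the boundary behaviour it is supposed to establish, so the statement about the disk $(\Psi^{adj}_0)^{-1}(\Omega^{-1}_p)$ and its boundary passing through $\sqrt{3\lambda}$ and $2\sqrt{\lambda}$ remains unproved; only the last sentence of the proposition (the two isomorphisms onto $B^*_p(0)\setminus\overline{\Omega^{-1}_p}$) is adequately covered by your argument.
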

\begin{proof}
First of all $\Psi^{adj}_0(\Omega_p^{-1})$ is not empty. Indeed, there is a holomorphic motion induced by $\phi^{-1}_a \circ\phi_{a_0}$ of $\overline{\Omega_{a,p}^{-1}}$ in a small neighborhood of $a_0=2\sqrt{\lambda}$. Notice that if $a_0=2\sqrt{\lambda}$, then $v_-(a_0) = 0$. Let $z_h\in\partial{\Omega_{a,0}^{-q}}$ be such that $\phi_{a_0}(z_h) = 1/h\in\mathbb{R}$. Apply Rouché's Theorem to $F(a,h) = v_-(a) - f^{1-q}_a\phi^{-1}_a\phi_{a_0}(z_h)$ for $h$ near 0 (take $f^{1-q}_a$ to be the inverse branch of $f^{q-1}_a|_{\overline{\partial\Omega^{-1}_{a,p}}}$), there is a sequence of $a_n\in\mathcal{B}_m\setminus\mathcal{I}_0$ converging to $2\sqrt{\lambda}$ such that $v_-(a_n)\in\overline{\Omega_{a,p}^{-1}}$, i.e. $a_n\in(\Psi^{adj}_0)^{-1}(\partial\Omega^{-1}_p)$ and $a_n\to2\sqrt{\lambda}$. So $\Psi^{adj}_0(\Omega_p^{-1})\not=\emptyset$. 

Next we investigate the end points of $\partial[(\Psi^{adj}_0)^{-1}(\Omega_p^{-1})]$. Notice that by properness of $\Psi^{adj}_0$ (Lemma \ref{lem.proper.bit}) and stability of Fatou coordinate, \[\partial[(\Psi^{adj}_0)^{-1}(\Omega_p^{-1})]\subset\{\boldsymbol{\mathrm{a}}_0,2\sqrt{\lambda}\}\cup(\Psi_0^{adj})^{-1}(\partial\Omega_p^{-1})\cup\mathcal{I}_0.\] 
By Proposition \ref{prop.injec.parame.adj} and a quasiconformal deformation argument, $\partial[ (\Psi_0^{adj})^{-1}(\partial\Omega_p^{-1})]$ is the closure of the union of two curves $\gamma_1,\gamma_2$ having $2\sqrt{\lambda}$ as a common end point. We parametrize $\gamma_1,\gamma_2$ by $I(a) = \mathfrak{Im}[\phi_(c_-(a))]$. By the above analysis, we see that as $|I_a|\to\infty$, $a\to2\sqrt{\lambda}$. We want to prove that as $|I_a|\to 0$, $a\to \sqrt{3\lambda}$. By Lemma \ref{lem.cr1=cr2}, it suffices to show that $a$ do not accumulate at $\partial\mathcal{B}_0$. Suppose the contrary. If $a\in\gamma_1$ accumualtes at $a_0\in\partial\mathcal{B}_0\setminus\mathcal{A}_{p/q}$, then by stability of Fatou coordinate, $a_0 = 2\sqrt{\lambda}$, then $\gamma_1$ surround a topological disk $U$ such that $\overline{\Psi^{adj}_{0}(U)}\cap\partial\mathcal{B}_0 = \{2\sqrt{\lambda}\}$. But $\Psi^{adj}_0(\partial U\setminus\{2\sqrt{\lambda}\}) = \Psi^{adj}_0(\gamma_1)$ is a semi-arc of $\partial\Omega^{-1}_{p}$, which does not separate $B_p^*(0)$, a contradiction. So it remains to exclude the case where both $\gamma_1,\gamma_2$ land at $\boldsymbol{\mathrm{a}}_0$ as $|I(a)|\to\infty$. Suppose we have this, Then $\gamma_1\cup\gamma_2$ bounds a topological disk $V$ such that $V\setminus\mathcal{I}_0$ is sent conformally onto $\Omega_p^{-1}$ by $\Psi^{adj}_0$ (Proposition \ref{prop.injec.parame.adj}). Since $\partial V\cup\mathcal{I}_0$ is locally connected, $(\Psi^{adj}_0)|_{\Omega_p^{-1}})^{-1}$ can be continuously extended to $\partial\Omega^{-1}_p$ and in particular $(\Psi^{adj}_0)^{-1}(0) = 2\sqrt{\lambda}$. On the other hand, by Lemma \ref{lem.proper.bit}, when $a\in \mathcal{B}_0\setminus\mathcal{I}_0$ tends to $\boldsymbol{\mathrm{a}}_0$, $\Psi_0^{adj}(a)\to\partial{B}^*_p(0)$. Hence $(\Psi^{adj}_0)^{-1}(0) = \boldsymbol{\mathrm{a}}_0$, a contradiction.

The rest of the proposition is immediate by Lemma \ref{lem.proper.bit} and Proposition \ref{prop.injec.parame.adj}. 
\end{proof}

\begin{figure}[H] 
\centering 
\includegraphics[width=0.5\textwidth]{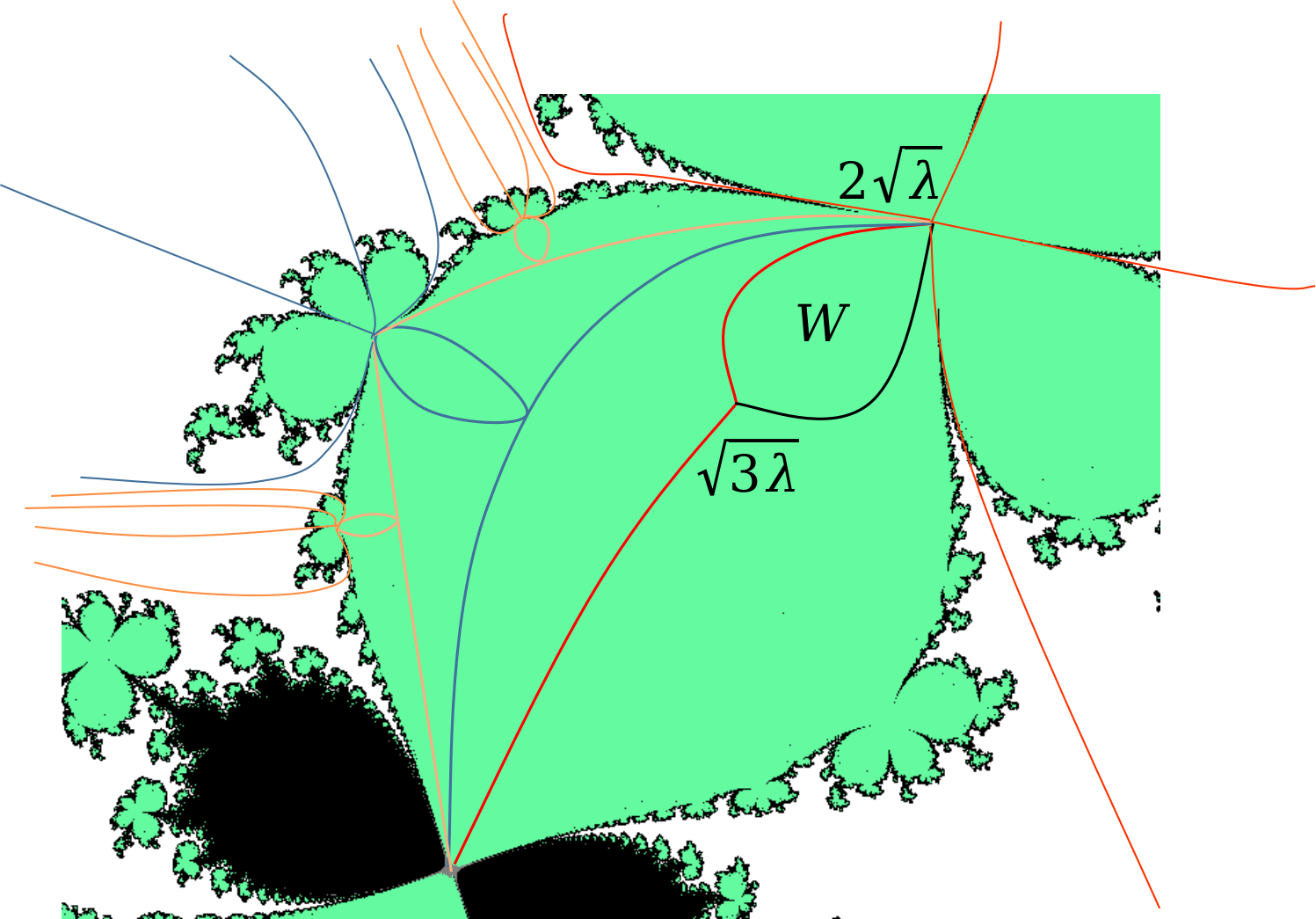} 
\caption{A zoom of $\mathcal{B}_0$ in $\mathcal{C}_{\lambda}$ with $\lambda = e^{2\pi i\frac{1}{4}}$. The union of curves in red, blue and orange is equipotentials and external rays of level 0,1,2 in $\mathcal{D}_0$. $W=(\Psi^{adj}_0)^{-1}(\Omega^{-1}_p)$. $a=\sqrt{3\lambda}$ is the parameter such that $c_+(a) = c_-(a)$; $a=\sqrt{3\lambda}$ is the parameter such that $v_-(a) = 0$.}  
\label{fig.adj.component} 
\end{figure}

\begin{figure}[H] 
\centering 
\includegraphics[width=0.8\textwidth]{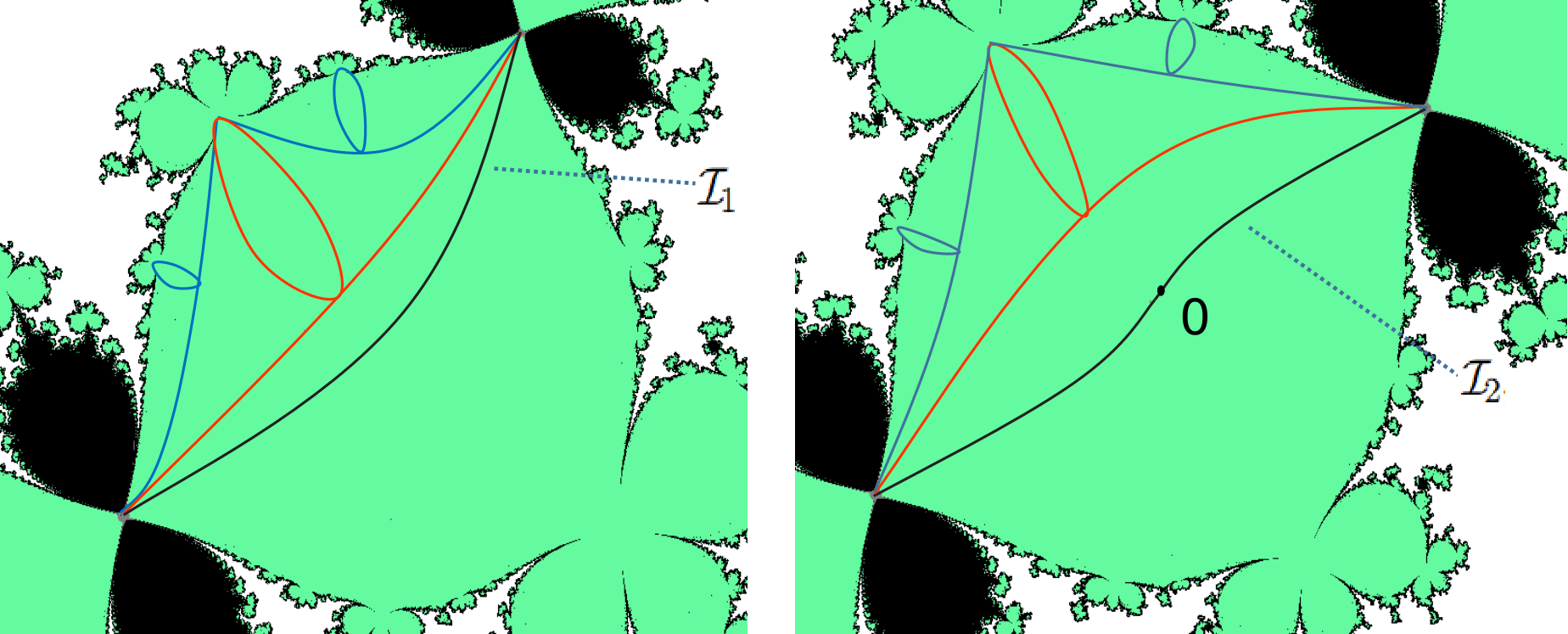} 
\caption{Equipotentials in $\mathcal{D}_1\subset\mathcal{C}_{\lambda}$ and $\mathcal{D}_2\subset\mathcal{C}_{\lambda}$, $\lambda = e^{2\pi i\frac{1}{4}}$} 
\label{fig.main.parabolic} 
\end{figure}

\subsection{Landing properties at double parabolic parameters}\label{subsec.landing_double_parabolic}

\begin{proposition}\label{prop.landing.four}
Let $\boldsymbol{\mathrm{a}}_m\in\mathcal{A}_{p/q}$ be the double parabolic parameter of $m$-type. Then among all the external rays with angles in $\bigcup_k\Theta_k$, there are exactly four landing at $\boldsymbol{\mathrm{a}}_m$.
\end{proposition}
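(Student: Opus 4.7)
The argument has two stages: a combinatorial reduction on allowable angles, followed by a count of landings via pinching deformation from $Per_1(0)$.

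First, I would apply the third bullet of Lemma~\ref{lem.landing.rational.external}: if $t \in \bigcup_k \Theta_k$ is such that $\mathcal{R}^\lambda_\infty(t)$ lands at $\boldsymbol{\mathrm{a}}_m$, then $t$ must be pre-periodic to the portrait at $z=0$ of $f_{\boldsymbol{\mathrm{a}}_m}$, which by the pinching construction of Section~\ref{subsec.double_parabolic} equals $\Theta_m \cup \Theta_{m+1}$. Since the $q+1$ cycles $\Theta_k$ under multiplication by $3$ are pairwise disjoint, this forces $t \in \Theta_m \cup \Theta_{m+1}$, reducing the candidates to $2q$ angles.

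For the exact count of four, I would use a pinching-transfer argument from $Per_1(0)$. Lemma~\ref{lem.landing.boundary.H00} produces $4q$ parameter external rays of rotation number $p/q$ landing on $\partial \mathcal{H}^0_0$, which by Lemma~\ref{lem.landing.mandelbrot.Per0} concentrate on $2q$ simply parabolic parameters of $\partial\mathcal{H}^0_0$ (two rays each). The pinching deformation of Section~\ref{subsec.double_parabolic} realises each $\boldsymbol{\mathrm{a}}_m$ as the continuous endpoint in $\mathcal{P}^3$ of a path issuing from $Per_1(0)$; under this deformation, the $2q$ simply parabolic parameters collapse two-to-one onto the $q$ double parabolic parameters $\{\boldsymbol{\mathrm{a}}_m\}$, the pairing being dictated by the two possible symmetric positions of the pinching strips $\tilde{S}^\pm_k$ around their central lines. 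Because $\phi^\infty_{\lambda,a}$ is jointly analytic in $(\lambda,a)$, parameter rays move continuously with their angles preserved, and landing at a geometrically finite limit is stable. Thus each $\boldsymbol{\mathrm{a}}_m$ inherits at least $2+2=4$ landing rays, and the global count $4q$ saturates the bound: exactly four rays land at each $\boldsymbol{\mathrm{a}}_m$.

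The main obstacle is verifying rigorously the pinching transfer of landings: (i) identifying the correct 2-to-1 pairing between simply parabolic parameters in $\partial\mathcal{H}^0_0$ and double parabolic parameters $\boldsymbol{\mathrm{a}}_m$, which requires matching combinatorial data (the cycle $T_{p/q}$ for multiplication by $2$ in $Per_1(0)$ against the cycles $\Theta_k$ for multiplication by $3$ in $Per_1(\lambda)$), and (ii) confirming that landings survive the degeneration of the multiplier through $\lambda = e^{2\pi i p/q}$, where the usual stability of rational rays must be supplemented by the analyticity of the Böttcher coordinate in the combined $(\lambda,a)$ variables.
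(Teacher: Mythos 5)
Your overall philosophy (count the rays with angles in $\bigcup_k\Theta_k$ by comparison with $Per_1(0)$ via pinching) matches the paper's, and you cite the right ingredients (Lemma \ref{lem.landing.boundary.H00}, Lemma \ref{lem.landing.mandelbrot.Per0}, the pinching construction), but the count is run in the wrong direction and the decisive step is left unproved. Your ``saturation'' conclusion needs an a priori upper bound of $4q$ on the total number of rays with angles in $\bigcup_k\Theta_k$ landing on $\mathcal{A}_{p/q}$. Your first stage only restricts the admissible angles at $\boldsymbol{\mathrm{a}}_m$ to $\Theta_m\cup\Theta_{m+1}$; since $\Phi^\lambda_\infty$ has degree $3$, each such angle still carries three parameter rays, so nothing you have said prevents, say, six rays from landing at a single $\boldsymbol{\mathrm{a}}_m$. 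The paper obtains the missing upper bound by counting the \emph{complementary} rays: of the $Q=3q(q+1)$ rays with angles in $\bigcup_k\Theta_k$, the $Q-4q$ rays that land on $\mathcal{C}_0\setminus\overline{\mathcal{H}^0_0}$ in $Per_1(0)$ are shown (via the injectivity claim for pinching limits, Lemma \ref{lem.landing.Misiurewicz} for the Misiurewicz landing points, and a plumbing/holomorphic-motion argument together with Corollary \ref{cor.two_angles} for the parabolic ones) to yield $Q-4q$ distinct rays in $Per_1(\lambda)$ landing at parameters that are \emph{not} double parabolic; only then does ``at least four at each of the $q$ points'' force ``exactly four''. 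You never account for these complementary rays, so your count does not close.

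The transfer underlying your lower bound is also unjustified. You push the $4q$ rays landing on $\partial\mathcal{H}^0_0$ through a ``pinching path issuing from $Per_1(0)$'' and invoke a two-to-one collapse of the $2q$ simply parabolic boundary parameters onto the $\boldsymbol{\mathrm{a}}_m$; but no such deformation of boundary parameters exists in the paper (pinching paths are launched from attracting parameters with $0<\lambda<1$, not from $Per_1(0)$, and no map $\partial\mathcal{H}^0_0\to Per_1(\lambda)$ is ever defined), and ``landing at a geometrically finite limit is stable'' is false for parameter rays at parabolic --- let alone degenerate parabolic --- parameters: this is precisely the delicate Douady--Hubbard ``Tour de valse'' issue that the paper's ray-counting argument is designed to avoid, as you yourself note in obstacle (ii) without resolving it. The paper instead proves the existence of the four rays entirely inside $Per_1(\lambda)$: it introduces the open sets $\mathcal{O}_m$ of parameters whose dynamical rays with angles $\Theta_m$ land at $0$, uses $\partial\mathcal{O}_m\subset\bigcup_{\theta\in\Theta_m}\overline{\mathcal{R}_\infty(\theta)}$ together with $\mathcal{B}_m\subset\mathcal{O}_m$ and $\mathcal{B}_m\cap\mathcal{O}_k=\emptyset$ for $k\neq m$, and extracts two rays with angles in $\Theta_m$ and two with angles in $\Theta_{m+1}$ landing at $\boldsymbol{\mathrm{a}}_m$. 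Both halves of your argument therefore need to be replaced.
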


\begin{proof}
First we prove the existence of four such rays. For $0\leq m\leq q$, let $\mathcal{O}_m\subset(\mathbb{C}\setminus\mathcal{A}_{p/q})$ be the set of parameters such that the dynamical external rays with angles $\Theta_m$ (recall in Definition \ref{def.rotation.angle}) land at 0. Then $\mathcal{O}_m$ is open. Its boundary is contained in $\bigcup_{\theta_i\in\Theta_m}\overline{\mathcal{R}_{\infty}(\theta_i)}$. From the discussion in \ref{subsec.existence.compo}, we have ${\mathcal{B}}_m\subset \mathcal{O}_m$ and $\mathcal{B}_m\cap\mathcal{O}_k = \emptyset$ for $k\not=m$. Therefore for $0\leq m\leq q-1$, there are at least four parameter external rays $\mathcal{R}_{\infty}(t_{1,2})$ with $t_{1,2}\in\Theta_m$ and two rays $\mathcal{R}_{\infty}(t'_{1,2})$ with $t'_{1,2}\in\Theta_{m+1}$ landing at the double parabolic parameter of $m$-type. Moreover both $\mathcal{R}_{\infty}(t_{1,2})$ and $\mathcal{R}_{\infty}(t'_{1,2})$ separate $\mathcal{B}_m,\mathcal{B}_{m+1}$.

The harder part is the uniqueness. Since the parametrisation $\Phi_{\infty}$ is of degree 3, there are in total $Q := 3q(q+1)$ parameter external rays whose angles belong to $\bigcup_k\Theta_k$. So in order to prove uniqueness, it suffices to find $Q-4q$ rays not landing at $\mathcal{A}_{p/q}$ among these $Q$ rays. By Lemma \ref{lem.landing.boundary.H00}, in $Per_1(0)$ there are $Q-4q$ rays landing at $\mathcal{C}_0\setminus\overline{\mathcal{H}^0_0}$. Let $A$ be the set of landing points of these $Q-4q$ rays. Write $A = \text{Mis}\cup \text{Par}$ where "Mis" (resp. "Par") means that $a_0$ is Misiurewicz (resp. parabolic).

\begin{claim*}
Let $a_1,a_2\not\in\mathcal{H}$ be two different geometrically finite parameters. let $a'_1,a'_2\in Per_1(e^{2\pi i\frac{p}{q}})$ be their pinching limit. If $a'_1,a'_2\not\in\mathcal{A}_{p/q}$, then $a'_1\not=a'_2$.
\end{claim*}

Admitting the claim, we finish the proof of the proposition: 
\begin{itemize}
    \item for $a_0\in \text{Mis}$, its pinching limit $a'_0\in Per_1(e^{2\pi i\frac{p}{q}})$ is also Misiurewicz (since $a_0\not\in \partial\mathcal{H}^0_0$ and $v_{0,a}$ is periodic, $v_{0,a_0}$ does not belong to any skeleton). Recall that the pinching deformation preserves external rays, thus the portrait at $v_{0,a_0}$ is the same as that of $v_{a'_0}$. Suppose there are $r$ external rays among the $Q-4q$ rays landing at $a_0$. Then by Lemma \ref{lem.landing.Misiurewicz}, there are also $r$ external rays with angles in $\bigcup_k\Theta_k$ landing at $a'_0$. 
    
     \item If $a_0\in \text{Par}$, then for the same reason its pinching limit $a'_0\in Per_1(e^{2\pi i\frac{p}{q}})$ is also parabolic. We want to prove that there exist two external rays with angles in $\bigcup_k\Theta_k$ landing at $a'_0$. Let $\theta_+,\theta_-$ be the two angles given in Lemma \ref{lem.landing.mandelbrot.Per0}. Since pinching preserves external rays, $R^{\infty}_{a'_0}(\theta_+),R^{\infty}_{a'_0}(\theta_-)$ land at the parabolic periodic point, bounding the critical value $v_{a'_0}$. By a simple plumbing surgery (see \cite{CuiTan}), for any neighborhood $U$ of $a_0$, there exists $a'_1\in U$ such that $R^{\infty}_{a'_1}(t_+),R^{\infty}_{a'_1}(t_-)$ land at the same repelling periodic point and bound $v_-({a'_1})$. By Lemma \ref{lem.landing.stable-region}, there is at least one ray $\mathcal{R}_{\infty}(\theta)$ with $\theta\in\bigcup_k\Theta_k$ landing at $a'_0$. Suppose the contrary that this is the only ray landing at $a'_0$. Then the holomorphic motion given by Lemma \ref{lem.landing.stable-region} implies that $R^{\infty}_{a}(\theta_+),R^{\infty}_{a}(\theta_-)$ land at the same repelling periodic point and bound $v_-({a})$ when $a$ is close to $\mathcal{R}_\infty({\theta})$. Since  $a\in\mathcal{R}_\infty({t})$ is equivalent to $v_-(a)\in R^\infty_a({t})$ (definition of parameter external rays), therefore $\theta\in(t_-,t_+)$. This contradicts Corollary \ref{cor.two_angles}.
\end{itemize}

To conclude, notice that by Lemma \ref{lem.landing.mandelbrot.Per0}, 
the $Q-4q$ rays in $Per_1(0)$ are decomposed into 
\[\left(\bigcup_{a_0\in\text{Mis}}\bigcup_{i=0}^r\mathcal{R}^0_\infty(s_i)\right)\cup\left(\bigcup_{a_0\in\text{Par}}(\mathcal{R}^0_\infty(t)\cup\mathcal{R}^0_\infty(t'))\right).\] 
While by the above discussion and the claim, we obtain $Q-4q$ external rays with angles in $\bigcup_m\Theta_m$. This finishs the proof.
\end{proof}

\begin{proof}[Proof of the claim]
Suppose the contrary that we have $a'_1 = a'_2$. Notice that in the dynamical plans of $a_1,a_2$, their critical values $v_{0,a_1},v_{0,a_2}$ are bounded respectively by wakes $W_1,W_2$ attached at the boundary of the immediate basin of 0. Since the pinching deformation preserves external rays, we conclude that the angles of the two rays defining $W_1$ are the same to those defining $W_2$. By Lemma \ref{lem.thurston.rigid}, this implies that $a_1,a_2$ belongs to the same wake (in the parameter plan) attached at $\partial\mathcal{H}^0_0$, which in particular is contained in a quadrant. Thus the angles of external rays landing at them are distinct, since the parametrisation $\Phi^0_\infty:\mathbb{C}\setminus\mathcal{C}_0\longrightarrow\mathbb{C}\setminus\overline{\mathbb{D}}$ is injective on each quadrant. Thus the pinching limits $a'_1,a'_2$ are distinct, a contradiction.
\end{proof}

\begin{definition}\label{def.angle.four.rays}
Let $\boldsymbol{\mathrm{a}}_m\in\mathcal{A}_{p/q}$ be of type $m$. The four rays landing at $\boldsymbol{\mathrm{a}}_m$ is separated by $\mathcal{I}$ into two groups $\alpha^+_m,{\beta}^+_{m}$ and  $\alpha^-_m,{\beta}^-_{m}$ with $\alpha^{\pm}_m\in\Theta_m,\beta^{\pm}_{m}\in\Theta_{m+1}$. They bound 2 open regions separated by $\mathcal{I}$. These two regions are called \textbf{double parabolic wakes} attached to $\boldsymbol{\mathrm{a}}_m$, denoted by $\mathcal{W}^{\pm}(\boldsymbol{\mathrm{a}}_m)$ respectively.
\end{definition}

\begin{figure}[H] 
\centering 
\includegraphics[width=0.6\textwidth]{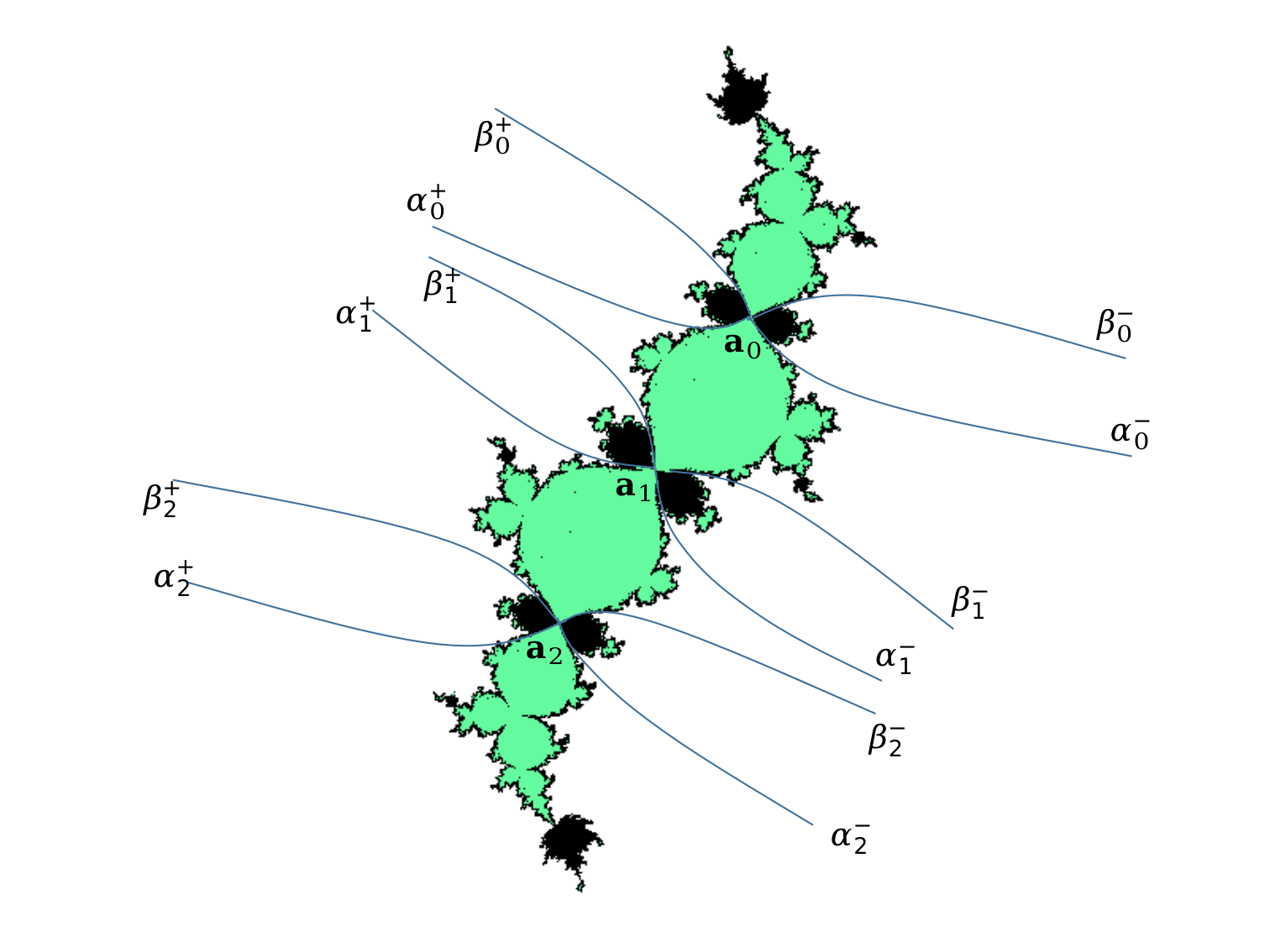} 
\caption{The external rays landing at double parabolic parameters for $\mathcal{C}_{\lambda}$, $\lambda = e^{2\pi i\frac{1}{3}}$.} 
\label{fig.preimage} 
\end{figure}

Denote by $\mathcal{S}_m$ the connected component of $\mathcal{V}$ containing $\mathcal{B}_m$, where
\begin{equation}
    \mathcal{V} := \mathbb{C}\setminus\bigcup_{m=0}^{q-1}(\overline{\mathcal{R}_\infty(\alpha^+_m)\cup\mathcal{R}_\infty({\alpha}^-_m)\cup\mathcal{R}_\infty(\beta^+_{m})\cup\mathcal{R}_\infty({\beta}^-_{m})})
\end{equation}

\begin{definition}\label{def.fundamental.Sm}
For $1\leq m\leq\lfloor\frac{q}{2}\rfloor$, denote by ${\mathcal{S}}^+_m$ the connected component of $\mathcal{S}_m\setminus\mathcal{I}_m$ intersecting $\mathcal{D}_m$, $\mathcal{S}^-_m$ the other component. For $m=0$, set ${\mathcal{S}}^\pm_0 = \mathcal{S}_0\setminus\mathcal{I}_0$.
\end{definition}

Now as a corollary of Proposition \ref{prop.landing.four}, we can give a description of the portrait at the parabolic fixed point $0$ for the family $f_a$:

\begin{corollary}\label{cor.double.para.wake}
Let $a\in\mathbb{C}\setminus\mathcal{A}_{p/q}$. If $a\in\mathcal{W}^\pm(\boldsymbol{\mathrm{a}}_m)$, 
the portrait of $f_a$ at the parabolic fixed point $0$ is $\Theta_m,\Theta_{m+1}$; if $a\in\mathcal{S}_m$, the portrait is $\Theta_m$. 
\end{corollary}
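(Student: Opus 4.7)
The plan is to show that the portrait at $z=0$ is locally constant on appropriate subregions of the parameter plane, and then to identify these regions with $\mathcal{S}_m$ and $\mathcal{W}^\pm(\boldsymbol{\mathrm{a}}_m)$. Denote by $\mathcal{P}(a)$ the portrait at $0$ of $f_a$, defined for $a \notin \mathcal{A}_{p/q}$. For such $a$, the fixed point $0$ is simply parabolic with multiplier $e^{2\pi i p/q}$, so Fatou--Leau theory combined with Theorem \ref{thm.goldberg} shows that $\mathcal{P}(a)$ is a disjoint union of some sub-collection of the $q+1$ cycles $\Theta_0, \ldots, \Theta_q$. Hence it suffices to determine which $\Theta_k$ belong to $\mathcal{P}(a)$ in each case.

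Next, I would establish local constancy. Fix $k$ and $t \in \Theta_k$. By Lemma \ref{lem.landing.stable-region}, on each connected component of
\[
\Omega_k := \mathbb{C} \setminus \mathcal{A}_{p/q} \setminus \bigcup_{t' \in \Theta_k} \overline{\mathcal{R}_\infty(t')},
\]
the landing point of $R^\infty_a(t)$ undergoes a dynamical holomorphic motion. Since $0$ is a persistent parabolic fixed point throughout $\mathbb{C} \setminus \mathcal{A}_{p/q}$ and holomorphic motion preserves fixed points, the landing either equals $0$ throughout the component or never; that is, the inclusion $\Theta_k \subseteq \mathcal{P}(a)$ is constant on each component of $\Omega_k$.

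For $a \in \mathcal{S}_m$, the construction in Subsection \ref{subsec.existence.compo} gives $\mathcal{P}(a) = \Theta_m$ on $\mathcal{B}_m \subset \mathcal{S}_m$. I would show that $\mathcal{S}_m$ lies in a single component of each $\Omega_k$. By construction, $\partial\mathcal{S}_m$ only involves closures of the parameter rays landing at $\boldsymbol{\mathrm{a}}_{m-1}$ and $\boldsymbol{\mathrm{a}}_m$, carrying angles in $\Theta_{m-1} \cup \Theta_m \cup \Theta_{m+1}$ by Proposition \ref{prop.landing.four} and Definition \ref{def.angle.four.rays}. Any remaining parameter ray with angle in some $\Theta_k$ lands, by Lemma \ref{lem.landing.rational.external}, at a Misiurewicz or parabolic parameter outside $\mathcal{A}_{p/q}$; via the correspondence with $\mathcal{C}_0$ used in the proof of Proposition \ref{prop.landing.four}, such landing parameters lie in renormalization limbs attached to $\partial\mathcal{B}_m$ (hence to $\partial\mathcal{S}_m$), so these rays do not cross the interior of $\mathcal{S}_m$. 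Thus $\mathcal{S}_m$ sits in the $\Omega_k$-component of $\mathcal{B}_m$ for every $k$, and applying Step 2 at $\mathcal{B}_m$ yields $\mathcal{P}(a) = \Theta_m$ on all of $\mathcal{S}_m$.

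For $a \in \mathcal{W}^\pm(\boldsymbol{\mathrm{a}}_m)$, the construction of $\boldsymbol{\mathrm{a}}_m$ in Subsection \ref{subsec.double_parabolic} gives $\mathcal{P}(\boldsymbol{\mathrm{a}}_m) = \Theta_m \cup \Theta_{m+1}$. Approaching $\boldsymbol{\mathrm{a}}_m$ from within a wake, the landing points of $R^\infty_a(\alpha^+_m)$ and $R^\infty_a(\beta^+_m)$ depend continuously on $a$ (their landings at $\boldsymbol{\mathrm{a}}_m$ agree with the landings of the parameter rays, both equal to $0$) and are (pre-)periodic of period dividing $q$. For $a \notin \mathcal{A}_{p/q}$ close to $\boldsymbol{\mathrm{a}}_m$, $0$ is an isolated parabolic fixed point, so any competing periodic point of period dividing $q$ is bounded away from $0$; continuity then forces both dynamical rays to land at $0$ itself. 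Since $\alpha^+_m \in \Theta_m$ and $\beta^+_m \in \Theta_{m+1}$ lie in distinct cycles, both inclusions $\Theta_m, \Theta_{m+1} \subseteq \mathcal{P}(a)$ hold near $\boldsymbol{\mathrm{a}}_m$ in the wake. I would propagate this to the entire wake by the stability argument of Step 2 (the wake's boundary only involves rays landing at $\boldsymbol{\mathrm{a}}_m$ together with a piece of $\mathcal{I}$), and then verify the reverse inclusion $\mathcal{P}(a) \subseteq \Theta_m \cup \Theta_{m+1}$ by the same stability principle applied at $\boldsymbol{\mathrm{a}}_m$ for each $\Theta_k$ with $k \notin \{m, m+1\}$.

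The main obstacle I expect is the ray-counting step: rigorously confirming that no parameter ray with angle in $\bigcup_k \Theta_k$, other than the eight boundary rays landing at $\boldsymbol{\mathrm{a}}_{m-1}$ and $\boldsymbol{\mathrm{a}}_m$, crosses the interior of $\mathcal{S}_m$, and symmetrically for the wakes. This demands a precise tracking of the landing of every one of the $3q(q+1)$ candidate parameter rays via the pinching correspondence with $\mathcal{C}_0$ set up in the proof of Proposition \ref{prop.landing.four}.
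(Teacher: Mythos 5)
Your overall scheme (stability of the portrait away from crashing rays and from $\mathcal{A}_{p/q}$, plus base points in $\mathcal{B}_m$ and near $\boldsymbol{\mathrm{a}}_m$) is in the right spirit, but two of its load-bearing steps fail. First, the claim that $\mathcal{S}_m$ (and likewise each wake) meets a single component of each $\Omega_k$ because no further parameter ray with angle in $\bigcup_k\Theta_k$ crosses its interior is false. Among the $3q(q+1)$ such rays, the $3q(q+1)-4q$ rays not landing at $\mathcal{A}_{p/q}$ land at Misiurewicz and parabolic parameters that lie \emph{inside} the sectors and wakes: for instance the pairs of rays co-landing at the roots of the satellite copies attached to $\partial\mathcal{B}_m$ (this is exactly the ``Par'' case in the proof of Proposition \ref{prop.landing.four}, via Lemma \ref{lem.landing.mandelbrot.Per0}). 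Those roots are in the interior of $\mathcal{S}_m$, not on $\partial\mathcal{S}_m$, and each co-landing pair of closed rays disconnects $\mathcal{S}_m$, cutting off sub-wakes that are different components of your $\Omega_k$ and contain no base point; your local-constancy step then says nothing about $\mathcal{P}(a)$ there. The paper circumvents precisely this: it takes the open set $\mathcal{O}$ of parameters with the prescribed portrait and shows that a boundary point of $\mathcal{O}$ not on the four rays of Proposition \ref{prop.landing.four} forces some dynamical ray with angle in the relevant cycles to crash on $c_-(a)$, hence lies on a parameter ray $\mathcal{R}_\infty(3t)$; by stretching, the whole ray sits in $\partial\mathcal{O}$, so it must land at $\boldsymbol{\mathrm{a}}_m$, contradicting the uniqueness part of Proposition \ref{prop.landing.four}. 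This crash-and-stretch argument is the ingredient your plan is missing and cannot be replaced by the topological claim you make.

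Second, your base point inside $\mathcal{W}^\pm(\boldsymbol{\mathrm{a}}_m)$ rests on the assertion that near $\boldsymbol{\mathrm{a}}_m$ ``any competing periodic point of period dividing $q$ is bounded away from $0$.'' This is false: at $\boldsymbol{\mathrm{a}}_m$ the fixed point $0$ of $f_a^q$ has multiplicity $2q+1$, and for nearby $a\notin\mathcal{A}_{p/q}$ a cycle of period dividing $q$ splits off from $0$ and converges to $0$ as $a\to\boldsymbol{\mathrm{a}}_m$ (this splitting is used explicitly in the proof of Proposition \ref{prop.nodouble.boundary.capture}). Deciding whether the rays with angles $\alpha^+_m$, $\beta^+_m$ land at $0$ or at this nearby cycle is exactly the content of the corollary, and the answer depends on the side of approach: for $a\in\mathcal{S}_m$ the rays with angles in $\Theta_{m+1}$ land at the nearby repelling cycle, not at $0$. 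A continuity argument blind to the sector of approach would therefore prove too much. The paper instead obtains nonemptiness of the good set in the wake by the cyclic-order argument at the start of its proof (the relative position of $\alpha^\pm_m$ and $\beta^\pm_m$ along $\mathcal{I}$), combined with Lemma \ref{lem.landing.stable-region} and the fact that $z=0$ must carry at least one cycle of rays of rotation number $p/q$; some argument of that combinatorial type is needed in place of your continuity step.
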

\begin{proof}
First we prove that when we go through $\mathcal{I}$ in the direction $\mathcal{B}_0,...,\mathcal{B}_q$, $\alpha^{\pm}_m$ is on the left-hand side of $\beta^\pm_m$. Suppose the contrary. Then By Proposition \ref{prop.landing.four}, there exists $a\in\mathcal{W}^\pm(\boldsymbol{\mathrm{a}}_m)\cap\mathcal{C}_\lambda$ such that $a$ and $\mathcal{B}_m$ (resp. $\mathcal{B}_{m+1}$) are contained in the same connected component of 
\[\mathbb{C}\setminus\bigcup_{\theta\in\hat{\Theta}_{m+1}}\overline{\mathcal{R}_\infty(\theta)},\,\, resp.\,\, \mathbb{C}\setminus\bigcup_{\theta\in\hat{\Theta}_{m}}\overline{\mathcal{R}_\infty(\theta)},\]
where $\hat{\Theta}_k= (\bigcup_i\Theta_i)\setminus\Theta_{k}$. Since the portrait at $z=0$ for $a$ in $\mathcal{B}_k$ is $\Theta_k$, Lemma \ref{lem.landing.stable-region} implies that the portrait at $z=0$ of $f_a$ can not contain $\Theta_m,\Theta_{m+1}$. But it can neither contain other $\Theta_k$ with rotation number $p/q$ for the same reason. But $z=0$ should at least admits a cycle of landing rays with rotation number $p/q$, a contradiction.

Let $\mathcal{O}^\pm_m\subset\mathcal{W}^\pm(\boldsymbol{\mathrm{a}}_m)$ be the collection of parameters whose portrait at $z=0$ is \textbf{exactly} $\Theta_m,\Theta_{m+1}$. Then clearly $\mathcal{O}^\pm_m$ is open.
By the above analysis, $\mathcal{O}^\pm_m\not=\emptyset$ and \[\overline{\mathcal{R}_\infty(\alpha^\pm_m)\cup\mathcal{R}_\infty({\beta}^\pm_m)}\subset\partial\mathcal{O}^\pm_m.\]
Suppose the inclusion above is strict. Then for any $a_0\in\partial\mathcal{O}^\pm_m\setminus\overline{\mathcal{R}_\infty(\alpha^\pm_m)\cup\mathcal{R}_\infty({\beta}^\pm_m)}$, there exists $t_{a_0}\in\Theta_m\cup\Theta_{m+1}$ such that $R^\infty_{a_0}(t_{a_0})$ crashes on $c_-(a_0)$. So $a_0\in\mathcal{R}_\infty(3t_{a_0})$. By stretching external rays we can show that for all $a\in\mathcal{R}_\infty(3t_{a_0})$, $R^\infty_{a}(t_{a_0})$ crashes on $c_-(a)$ and also $a\in\partial\mathcal{O}^\pm_m$. Therefore $\mathcal{R}_\infty(3t_{a_0})$ must land at $\boldsymbol{\mathrm{a}}_m$. By Proposition \ref{prop.landing.four}, $3t_{a_0}$ is one of $\alpha^\pm_m,\beta^\pm_m$, a contradiction. Hence $\mathcal{O}^\pm_m = \mathcal{W}^\pm(\boldsymbol{\mathrm{a}}_m)$.

Let $\tilde{\mathcal{O}}_m\subset\mathbb{C}$ be the collection of parameters whose portrait at $z=0$ is \textbf{exactly} $\Theta_m$. By a similar argument as above we can show that $\tilde{\mathcal{O}}_m = \mathcal{S}_m$. 

\end{proof}

The following finishes the proof of Proposition \ref{Prop.grand1}.

\begin{corollary}\label{cor.landing.rational.double.parabolic}
For any $m$, there are no other external rays with rational angles landing at $\boldsymbol{\mathrm{a}}_m\in\mathcal{A}_{p/q}$.
\end{corollary}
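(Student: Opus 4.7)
The plan is to extend the pinching/ray-counting transfer from the proof of Proposition~\ref{prop.landing.four} to strictly pre-periodic angles. By the third bullet of Lemma~\ref{lem.landing.rational.external}, any rational $t$ for which $\mathcal{R}^\lambda_\infty(t)$ accumulates at $\boldsymbol{\mathrm{a}}_m$ is pre-periodic, under multiplication by $3$, to the portrait $\Theta_m\cup\Theta_{m+1}$ at $z=0$. Periodic $t$ give exactly the four rays $\alpha^\pm_m,\beta^\pm_m$ of Proposition~\ref{prop.landing.four}, so everything reduces to excluding strictly pre-periodic $t$.

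For such $t$, I would first work inside the super-attracting slice $Per_1(0)$. Lemma~\ref{lem.rational_rays_property} places the landing of $\mathcal{R}^0_\infty(t)$ at a geometrically finite parameter $a^0_t\in\mathcal{C}_0$; strict pre-periodicity of $t$ forces the dynamical landing of $R^\infty_{0,a^0_t}(t)$ to be non-periodic, so $a^0_t$ cannot be a parabolic parameter on $\partial\mathcal{H}^0_0$ (these carry only periodic landing angles, by Lemma~\ref{lem.landing.boundary.H00} and Lemma~\ref{lem.landing.mandelbrot.Per0}). Hence $a^0_t$ is Misiurewicz, and by Lemma~\ref{lem.landing.Misiurewicz} the dynamical landing coincides with the free critical value $v_{0,a^0_t}$.

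I would then apply the pinching deformation of Proposition~\ref{prop.landing.four} to $a^0_t$. The distinctness claim buried in that proof, combined with the fact that pinching preserves the post-critical external-ray portrait, yields a well-defined geometrically finite limit $a^*_t\in Per_1(e^{2\pi ip/q})$ whose free critical value is strictly pre-periodic (either to a repelling cycle or to $z=0$). Consequently $a^*_t$ is Misiurewicz or Misiurewicz-parabolic, with critical value outside the parabolic basin, so $a^*_t\notin\mathcal{A}_{p/q}$ and in particular $a^*_t\neq\boldsymbol{\mathrm{a}}_m$. Invariance of the ray combinatorics under pinching, together with Lemma~\ref{lem.landing.Misiurewicz} applied at $a^*_t$, then forces $\mathcal{R}^\lambda_\infty(t)$ to land precisely at $a^*_t$, contradicting the hypothesis.

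The main technical obstacle is verifying the pinching step uniformly for all relevant Misiurewicz $a^0_t$, not only those belonging to the finite set $A$ of Proposition~\ref{prop.landing.four}'s proof, and checking that the limit $a^*_t$ always avoids $\mathcal{A}_{p/q}$. Both are instances of the Misiurewicz half of the transfer already carried out; the only genuinely new feature is that $t$ is no longer assumed to lie in $\bigcup_k\Theta_k$, which plays no role in the pinching/distinctness machinery since that machinery uses only post-critical finiteness of $f_{0,a^0_t}$.
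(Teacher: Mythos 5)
Your overall strategy mirrors the paper's: reduce via Lemma \ref{lem.landing.rational.external} to strictly pre-periodic $t$ with $3^nt\in\{\alpha^\pm_m,\beta^\pm_m\}$, show the corresponding landing parameters in $Per_1(0)$ are Misiurewicz rather than parabolic, and transfer to $Per_1(e^{2\pi ip/q})$ by pinching plus Lemma \ref{lem.landing.Misiurewicz}. But the final step has a genuine gap. Since $\Phi^\lambda_\infty$ is a degree $3$ covering, there are \emph{three} parameter external rays with angle $t$, and in the paper's convention ``$\mathcal{R}^\lambda_\infty(t)$ lands at $a$'' only means that one of these three components accumulates at $a$. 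Lemma \ref{lem.landing.Misiurewicz} applied at your pinching limit $a^*_t$ therefore only yields that \emph{some} ray of angle $t$ lands at $a^*_t$; it does not identify that ray with the one assumed to accumulate at $\boldsymbol{\mathrm{a}}_m$, and the pinching deformation (which acts on maps, not on parameter rays) provides no such identification. So ``forces $\mathcal{R}^\lambda_\infty(t)$ to land precisely at $a^*_t$, contradicting the hypothesis'' is a non sequitur: nothing in your argument prevents a different ray of the same angle $t$ from landing at $\boldsymbol{\mathrm{a}}_m$.

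The paper closes this hole by an exhaustive count which your proposal omits: the three rays of angle $t$ in $Per_1(0)$ land at three \emph{distinct} Misiurewicz parameters $a_1,a_2,a_3$; by the Claim in the proof of Proposition \ref{prop.landing.four} their pinching limits $a'_1,a'_2,a'_3$ are pairwise distinct and clearly not double parabolic; each $a'_i$ then receives a parameter ray of angle $t$ by the Misiurewicz transfer lemma, so all three available rays of angle $t$ are absorbed and none remains to land at $\boldsymbol{\mathrm{a}}_m$. This is the true role of the distinctness Claim --- keeping the three limits separate so that the count is exhaustive --- not the ``well-definedness'' of a single limit, as you use it. A smaller imprecision: to exclude that the landing parameter in $Per_1(0)$ is parabolic, one must first produce, by stability of rays landing at repelling points, a parameter ray with angle in $\Theta_m$ landing at it and only then invoke Lemma \ref{lem.landing.mandelbrot.Per0}; your parenthetical ``parabolic parameters carry only periodic landing angles'' skips this intermediate step.
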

\begin{proof}
By Lemma \ref{lem.landing.rational.external}, it suffices to treat the case $3^nt = \alpha^+_m$ for some $n\geq 1$. Since $t$ is rational, the three external rays with angle $t$ in $Per_1(0)$ land at 3 different parameters $a_1,a_2,a_3$. Now we prove that $a_1,a_2,a_3$ are all Misiurewicz parameters. Suppose the contrary that $a_1$ is a parabolic parameter, then by hypothesis on $t$, $R_\infty^0(\alpha_m^+)$ lands at the parabolic point of $f_{0,a_1}$. Then by stability of rays landing at repelling point, there exists $\theta\in\Theta_m$ such that $\mathcal{R}_\infty^0(\theta)$ lands at $a_1$. This contradicts Lemma \ref{lem.landing.mandelbrot.Per0}. Notice that by the claim in Proposition \ref{prop.landing.four}, the corresponding pinching limits (clearly not double parabolic) $a'_1,a'_2,a'_3\in Per_1(e^{2\pi i\frac{p}{q}})$ $a'_1,a'_2,a'_3$ are distinct. By Lemma \ref{lem.landing.Misiur.parabo}, they admit in total three external rays with angle $t$ landing.
\end{proof}

\section{Dynamical graphes and puzzles}\label{sec.dym.graph}
This section mainly aims to construct admissible graph for family (\ref{eq.family_a}) that infinitely rings the free critical point $c_-(a)$ in order to apply Yoccoz' Theorem (Theorem \ref{thm.yoccoz}) (For basic knowledge of admissible graphs, we refer the readers to \cite[Appendix]{Roesch1}). More precisely, under the following assumption on $f_a$
\begin{assum*}[$\Diamond$]
$a\in\mathcal{C}_\lambda\setminus\mathcal{H}^\lambda$ is not Misiurewicz parabolic and is not contained in any double parabolic wake.
\end{assum*}
we prove the following technical lemma (Proposition \ref{prop.summary.infinite.ringed}):
\begin{lem*}
If $a$ satisfies Assumption ($\Diamond$), then there is an admissible graph infinitely ringing $c_-(a)$.
\end{lem*}

Let us fix some notations. Let
$f_a\in \mathcal{C}_{\lambda}\setminus\mathcal{H}^\lambda$ and $f_a$ not Misiurewicz parabolic. In the sequel always fix such an $f_a$ and omit the index $a$. The critical point in the parabolic basin of 0 will be denoted by $c_+$, the free critical point by $c_-$. Let $B^*_0(0)$ be the immediate basin containing $c_+$ and $B^*_1(0),...,B^*_{q-1}(0)$ the other immediate basins in cycle order. Let $\{R^\infty(\theta_i);\,\theta_i\in\Theta\}$ be a cycle of external rays landing at 0 and  $R^\infty(\theta_0^{\pm})$ be the two external rays landing at 0 and bounding $B^*_0(0)$ with $\theta_0^{\pm}\in\Theta$. These $q$ rays together with $\{0\}$ divide $\mathbb{C}$ into $q$ sectors and
let $S_i$ be the sector containing $B^*_i(0)$. Let $\Omega\subset B^*_0(0)$ be the maximal petal, $\tilde{\Omega} = f^q(\Omega)$. For any $r\textgreater 1$, denote by $E^\infty(r)$ the external equipotential of potential $r$.

\subsection{Local connectivity at $f^{-n}(0)$}
Take some $r_0\textgreater 1$. Define the graph of depth $n$ by
\begin{equation}\label{eq.graph.Y}
    Y_0 = \left(\bigcup_{i=1}^{q}(f^{i}(\partial\tilde{\Omega})\cup R^\infty(\theta_i))\right)\cup E^\infty(r_0),\,\, Y_n = f^{-n}(Y_0).
\end{equation}

A puzzle piece $Q_n$ of depth $n$ is a connected component of $\mathbb{C}\setminus Y_n$ intersecting the Julia set. Let $Q^{\pm}_{n}$ be the puzzle piece whose boundary intersects $R^\infty(\theta_{\pm})$ respectively. 

\begin{proposition}\label{prop.localconnec.0}
Let $X_{\pm} = \bigcap_{k\geq 0}\overline{Q^{\pm}_{kq}}$. Then either $X_{\pm} = \{0\}$, or
it is a continuum and $f^q:X_{\pm}\longrightarrow X_{\pm}$ is a degree two ramified covering. Moreover in the second case, there are two cycles of external rays landing at 0 with angle cycles $\Theta,\Theta'$ and there exists $\zeta^{\pm}\in\Theta'$ such that $\gamma_\pm = R^\infty(\theta_0^{\pm})\cup R^\infty(\zeta^{\pm})\cup \{0\}$ separates $X_{\pm}\setminus\{0\}$ from $\overline{B^*_0(0)}\setminus\{0\}$.
\end{proposition}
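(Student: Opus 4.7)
The plan is to analyze $X_\pm$ as a decreasing intersection of nested puzzle pieces and to run a small-Julia-set-versus-trivial dichotomy based on the degree of the return map $f^q$, with the extra cycle of rays in the non-trivial case extracted either via a parabolic-like renormalization or, more economically, via the parameter-space results of \S\ref{sec.prop1_and_prop2}.

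First I would establish the basic topology. Each $\overline{Q^\pm_{kq}}$ is a compact continuum, bounded by finitely many arcs of $Y_{kq}$ and a pullback of the equipotential $E^\infty(r_0)$. Since $Y_{(k+1)q}=f^{-q}(Y_{kq})$ and every ray $R^\infty(\theta_i)$ is pointwise fixed by $f^q$, the piece $Q^\pm_{(k+1)q}$ must be the component of $f^{-q}(Q^\pm_{kq})$ whose boundary inherits the initial arc of $R^\infty(\theta_0^\pm)$ from $Q^\pm_{kq}$. Consequently $\overline{Q^\pm_{(k+1)q}}\subset\overline{Q^\pm_{kq}}$, and $X_\pm$ is either $\{0\}$ or a non-degenerate continuum containing $0$.

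Next I would treat the degree dichotomy. Each restriction $f^q:\overline{Q^\pm_{(k+1)q}}\to\overline{Q^\pm_{kq}}$ is a proper holomorphic branched cover of some degree $d_k$. Because $c_+$ is the only critical point of $f$ in $\bigcup_i B^*_i(0)$ and the orbit of the free critical point $c_-$ stays away from $0$ (it never accumulates at the parabolic point under the standing hypothesis $a\in\mathcal{C}_\lambda\setminus\mathcal{H}^\lambda$, not Misiurewicz parabolic), one checks that $d_k\in\{1,2\}$ for $k$ large. Now assume $X_\pm\neq\{0\}$. If $d_k=1$ for all large $k$, the pullbacks become univalent, and a Gr\"otzsch-type modulus estimate applied to a thickening of the pieces disjoint from the orbit of $c_-$ (adapted to the parabolic vertex as in \cite[Appendix]{Roesch1}) forces the annuli $\overline{Q^\pm_{kq}}\setminus\overline{Q^\pm_{(k+1)q}}$ to accumulate unbounded total modulus, collapsing $X_\pm$ onto $\{0\}$ — contradicting non-degeneracy. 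Hence $d_k=2$ for all sufficiently large $k$, and $f^q:X_\pm\to X_\pm$ is a degree-$2$ ramified covering.

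In this non-degenerate case, the pair $(f^q,X_\pm)$ can be thickened to a parabolic-like map in the sense of Lomonaco \cite{lomonaco}, whose straightening is a quadratic parabolic polynomial $P(z)=z+\alpha z^2+\cdots$ with $X_\pm$ as its filled-in Julia set. Under the hybrid conjugacy, the two cycles of external rays of $P$ landing at its parabolic fixed point correspond to two cycles of $f$-rays landing at $0$ — the pre-existing $\Theta$ (bounding $B^*_0(0)$) and a new cycle $\Theta'$. Taking $\zeta^\pm\in\Theta'$ to be the image under the hybrid conjugacy of the quadratic ray on the same side of $0$ as $R^\infty(\theta_0^\pm)$, the curve $\gamma_\pm=R^\infty(\theta_0^\pm)\cup R^\infty(\zeta^\pm)\cup\{0\}$ bounds a wake separating $X_\pm\setminus\{0\}$ from $\overline{B^*_0(0)}\setminus\{0\}$.

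The main obstacle is this last step, i.e.\ rigorously producing the parabolic-like renormalization and establishing the landing of the second cycle $\Theta'$. A more concrete alternative — avoiding the Lomonaco machinery — is to invoke Corollary~\ref{cor.double.para.wake}: the portrait at $0$ has two cycles $\Theta_m,\Theta_{m+1}$ precisely when $a$ lies in some double parabolic wake $\mathcal{W}^\pm(\boldsymbol{\mathrm{a}}_m)$; the argument then reduces to showing that non-degeneracy of $X_\pm$ is equivalent to $a$ lying in such a wake, after which $\zeta^\pm$ and the separation property follow directly from the description of $\mathcal{W}^\pm(\boldsymbol{\mathrm{a}}_m)$ and the angles $\alpha_m^\pm,\beta_m^\pm$ provided by Proposition~\ref{prop.landing.four}.
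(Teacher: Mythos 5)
Your step establishing the degree-two statement does not work as written. The nested pieces $Q^{\pm}_{kq}$ all have the parabolic point $0$ on their boundary, so $\overline{Q^{\pm}_{(k+1)q}}$ is \emph{not} compactly contained in $Q^{\pm}_{kq}$ and the sets $Q^{\pm}_{kq}\setminus\overline{Q^{\pm}_{(k+1)q}}$ are annuli pinched at $0$: they do not separate $X_{\pm}$ from $\partial Q^{\pm}_{kq}$ essentially, and there is no sum-of-moduli to diverge. This is exactly the situation in which a Gr\"otzsch-type estimate is unavailable (the paper makes this point explicitly when discussing parabolic pinch points), so your dichotomy ``$d_k=1$ for all large $k$ forces $X_{\pm}=\{0\}$'' is unsupported; moreover the preliminary claim $d_k\in\{1,2\}$ is only asserted, not checked (you must control where $c_-$ and its preimages sit relative to the pieces). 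The paper avoids all of this by uniformizing $\mathbb{C}\setminus X_{\pm}$, extending the induced map by Schwarz reflection to a neighbourhood of $\mathbb{S}^1$, and deducing $d=2$ from a fixed-point count for the resulting degree-$d$ circle covering together with the existence of an $f^q$-fixed access to $0$.

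The second half of your argument is also incomplete. The parabolic-like renormalization route is not carried out (you acknowledge this), and even granting a parabolic-like restriction with quadratic straightening, the hybrid conjugacy lives on a bounded domain and does not by itself transport the two cycles of \emph{quadratic} external rays to cycles of \emph{cubic} external rays landing at $0$ with periodic angles; a separate landing and counting argument is required. The paper obtains $\Theta'$ much more cheaply: the angles $\theta_k$ of the rays in $\partial Q^{\pm}_{kq}$ landing on $\partial B_0^*(0)$ are monotone, their limit $\theta'$ satisfies $3^q\theta'=\theta'$, the ray $R^\infty(\theta')$ enters every piece and hence lands at the unique fixed point $0$ corresponding to the fixed point of the circle map, and $\theta'\neq\theta_0^{\pm}$ because $X_{\pm}\neq\{0\}$. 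Finally, your fallback via Corollary~\ref{cor.double.para.wake} begs the question: it presupposes the equivalence ``$X_{\pm}$ nondegenerate $\iff$ $a$ lies in a double parabolic wake'', whose forward implication is precisely what this proposition is needed to prove (indeed the paper later derives the wake criterion for $L(0)=0$ \emph{from} Proposition~\ref{prop.localconnec.0}).
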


\begin{proof}
We only prove for $X = X_{+}$ and the other is similar. Set $\theta = \theta^+_0$. First notice that since $f^q(\overline{Q_{(k+1)q}}) = \overline{Q_{kq}})$, hence $f^q(X) = X$. Suppose that $X \not= \{0\}$, then there exists an isomorphism $\phi:\mathbb{C}\setminus X\longrightarrow \mathbb{C}\setminus\overline{\mathbb{D}}$. Take a small enlargement $U$ of X with $U$ simply connected and open, such that $f^q: U'\longrightarrow U$ has degree $d\geq2$ and $U\setminus X$ does not contain $f^q(c_-)$, where $U'$ is the connected component of $f^{-q}(U)$ containing $X$. Hence $f^{-q}(X)\cap U' = X$. Let $W = \phi(U)$ and $W' = \phi(U')$: Then the map $F = \phi\circ f\circ\phi^{-1}$ is a ramified covering of degree $d$. By Schwarz reflection principle, $F$ extends holomorphically to a neighborhood of $\mathbb{S}^1$. Let $g = F|_{\mathbb{S}^1}$ Adapting the same strategy in the proof of \cite[Prop. 2.4]{Roesch}, we can show that $g:{\mathbb{S}^1}\longrightarrow{\mathbb{S}^1}$ is a degree $d$ covering and has $d-1$ fixed points. Since in the dynamical plane of $f$, there is an access $\delta\in\mathbb{C}\setminus X$ to $0$ fixed by $f^q$ (for example one may take $\delta$ to be one of the external rays landing at 0), then $\phi(\delta)$ lands at $\mathbb{S}^1$ and gives a fixed point of $g$ (see \cite{GoldMil}). Thus $d = 2$.

Now we prove that if $X\not=\{0\}$, there is another cycle of external rays landing at 0 and satisfying the conclusion stated in the proposition. Let $R^\infty(\theta_k)$ be the external ray landing at $\partial B^*_0(0)\setminus\{0\}$ involved in $\partial Q_{kq}$. Then $3^{q}\theta'_{k+1} = \theta'_k$ since $f^q(R^\infty(\theta_{k+1}))=R^\infty(\theta_k)$. Clearly $\theta_k$ has a limit $\theta'$ when $k\to\infty$ since $\theta_k$ is monotone. Thus $3^q\theta = \theta$ and $R^\infty(\theta')$ enters every puzzle piece $Q_{kq}$. This implies that $R^\infty(\theta')$ lands at a fixed point $x\in X$ of $f^q$. Notice that $R^\infty(\theta')$ is also fixed by $f^q$, thus $x$ corresponds to the unique fixed point of $g$, i.e. $x = 0$. Since $X\not = \{0\}$ and is contained in every $Q_{kq}$, there exists $\eta$ between $\theta,\theta_k$ for all $k$ such that $R^(\eta)$ landing at $x'\in X$ with $x\not = 0$. Hence $\theta'\not = \theta$. Clearly $\gamma$ separates $X_{\pm}\setminus\{0\}$ from $\overline{B^*_0(0)}\setminus\{0\}$.
\end{proof}

We get immediately 
 \begin{corollary}\label{cor.loc.connec.inverse-orbit0}
 $\partial B^*_0(0)$ is locally connected at 0 and at its inverse orbit.
 \end{corollary}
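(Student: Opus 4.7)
The plan is to derive local connectivity of $\partial B^*_0(0)$ at $0$ from the shrinking of puzzle pieces furnished by Proposition \ref{prop.localconnec.0}, then propagate the result to the inverse orbit via local branches of $f^n$.

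First I would verify that the puzzle pieces of depth $kq$ adjacent to $0$ shrink to $\{0\}$ as $k\to\infty$. For the outer pieces $Q^{\pm}_{kq}$, Proposition \ref{prop.localconnec.0} gives two sub-cases. If $X_{\pm}=\{0\}$, the nested bounded compacta $\overline{Q^{\pm}_{kq}}$ have singleton intersection, so $\operatorname{diam}(Q^{\pm}_{kq})\to 0$. If $X_{\pm}$ is a non-trivial continuum, I would enrich $Y_0$ by adjoining the extra landing rays $R^\infty(\zeta^{\pm})$ from the proposition together with all their $f^{-n}$-preimages, and observe that the refined piece $\widetilde Q^{\pm}_{kq}$ between $R^\infty(\theta_0^{\pm})$ and $R^\infty(\zeta^{\pm})$ lies on the $\overline{B^*_0(0)}$-side of the separating curve $\gamma_{\pm}$; since $\gamma_{\pm}\cap X_{\pm}=\{0\}$, the nested intersection $\bigcap_k \overline{\widetilde Q^{\pm}_{kq}}$ reduces to $\{0\}$, yielding $\operatorname{diam}(\widetilde Q^{\pm}_{kq})\to 0$. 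For the other sectors $S_i$, $i\neq 0$, a symmetric argument (Proposition \ref{prop.localconnec.0} applied to each $B^*_i(0)$ via $f$-conjugacy) handles the analogous pieces. For the puzzle pieces inside $B^*_0(0)$ adjacent to $0$, the parabolic Fatou coordinate shows the iterated petals $f^{nq}(\Omega)$ retreat to $\{0\}$, so the relevant inner pieces shrink as well.

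With shrinking pieces in hand, the union $U_k$ of all depth-$kq$ (possibly refined) puzzle pieces sharing $0$ on their boundary, together with the arcs of the graph through $0$, forms a connected open neighborhood of $0$ with $\operatorname{diam}(U_k)\to 0$. The connected component of $U_k\cap\partial B^*_0(0)$ containing $0$ is then a connected neighborhood of $0$ in $\partial B^*_0(0)$, and the family $\{U_k\}$ provides a connected neighborhood basis of $0$ in $\partial B^*_0(0)$, which is local connectivity at $0$. For a preimage $z\in f^{-n}(0)\cap\partial B^*_0(0)$, pulling back $U_k$ through the local branch of $f^{-n}$ sending $0$ to $z$ (or through the local branched cover, if $z$ is critical for $f^n$) yields a shrinking basis of connected neighborhoods of $z$ in $\partial B^*_0(0)$, giving local connectivity at each inverse-orbit point.

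The main obstacle is the second case of Proposition \ref{prop.localconnec.0}: one must verify that enriching the graph with the extra rays $R^\infty(\zeta^{\pm})$ and their pullbacks produces a valid refined puzzle, with pieces that nest properly and whose intersection collapses to $\{0\}$. The key input is the separation property: $\gamma_{\pm}$ isolates $X_{\pm}\setminus\{0\}$ from $\overline{B^*_0(0)}\setminus\{0\}$, so the refined nested pieces lie entirely in the $\overline{B^*_0(0)}$-side and hence avoid $X_{\pm}$ except at $0$, which forces their collapse.
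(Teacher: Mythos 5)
Your core mechanism is the one the paper relies on: the corollary is stated as an immediate consequence of Proposition \ref{prop.localconnec.0}, because in the second case of the dichotomy the separating curve $\gamma_{\pm}$ already forces $X_{\pm}\cap\overline{B^*_0(0)}=\{0\}$, so the traces $\overline{Q^{\pm}_{kq}}\cap\partial B^*_0(0)$ are nested compact connected sets with singleton intersection and hence shrink to $0$; your extra refinement of the graph by $R^\infty(\zeta^{\pm})$ and its pullbacks is correct but not needed, since the separation statement applies directly to the unrefined pieces.

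One step, however, is false as written: the union $U_k$ of \emph{all} depth-$kq$ pieces adjacent to $0$ does not have diameter tending to $0$. The graph $Y_{kq}=f^{-kq}(Y_0)$ contains \emph{preimages} of the petal boundaries, i.e.\ boundaries of increasing petals exhausting the immediate basins; consequently the complementary regions inside $B^*_0(0)$ (and inside each $B^*_i(0)$) that touch $0$ grow rather than shrink, and your justification via the forward images $f^{nq}(\Omega)$ retreating to $0$ does not describe what the puzzle actually produces (forward petal boundaries of depth greater than $q$ are not contained in $Y_{kq}$). This does not sink the proof: regions contained in the Fatou set are not puzzle pieces in the paper's sense (a piece must meet the Julia set), and the closures of those inner regions meet $\partial B^*_0(0)$ only in finitely many preimages of $0$, all at positive distance from $0$; hence the connected component of $U_k\cap\partial B^*_0(0)$ containing $0$ --- which you do pass to --- reduces to the two arcs carried by $\overline{Q^{\pm}_{kq}}$, and these shrink by the separation argument, yielding the connected neighborhood basis at $0$. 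With that correction, the pullback to the inverse orbit (which contains no critical point, since $a$ is assumed not Misiurewicz parabolic) goes through as you describe, matching the paper's intended argument.
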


\subsection{Wakes attached to $\partial B^*_i(0)$}\label{subsec.wakes-to-B}
Let $T_0 = \left(\bigcup_{i=1}^{q}(f^{i}(\tilde{\Omega})\cup R^\infty(\theta_i))\right)\cup\{0\}$ and $T_n$ the connected component of $f^{-n}(T_0)$ containing 0. Let $\Delta_{\pm}$ be the two unbounded connected components of $S_0\setminus T_q$, such that $R^\infty(\theta_\pm)\subset\partial\Delta_{\pm}$. Hence any $z\in \partial B^*_0(0)$ which is not in the inverse orbit of 0 has a unique dyadic representation $(\epsilon_n)_{n\geq 0}$ encoding its orbit position under $f^q$. More precisely, $\epsilon_n = 1$ if $f^{nq}(z)\in \Delta_+$, $\epsilon_n = 0$ if $f^{nq}(z)\in \Delta_-$. For $z$ in the inverse orbit of 0, we take the convention that $\epsilon_n$ is zero for all but finitely many $n$.
Then the following mapping is well defined:
\[\kappa:\partial  B^*_0(0)\longrightarrow \mathbb{S}^1,\,\, z\mapsto (\epsilon_n)_{n\geq 0}\] 

For $i\geq 1$, $\kappa$ is naturally extended to $\partial  B^*_i(0)$ by $\kappa(z) = \kappa(w)$ where $w\in\partial B^*_0(0)$ is the first iterated image of $z$. Notice that there are exactly $q$ external rays involved in $T_n$ at any $u\in \bigcup_i\partial B^*_i(0)$ which is in the inverse orbit of 0. Suppose $u\not=0$, then among these $q$ rays there are two of them bounding a open region separating all the other $q-2$ rays with $\bigcup_i\partial B^*_i(0)$. Let $U(u)$ be the closure of this region. Now $\kappa$ is extended to $\mathbb{C}\setminus\bigcup_i B^*_0(0)$ in the following way: if $z$ belongs to some $U(u)$, then $\kappa(z) := \kappa(u)$ (which is dyadic); if $z\in T_0$, then $\kappa(z) := 0$; else, for every $n\geq 0$, let $U_n$ be the component of $\mathbb{C}\setminus T_n$ containing $z$, then set $z\in U(z) := \bigcap_n\overline{U_n}$ and define $\kappa(z) = \kappa(u)$ for any $u\in (\bigcup_i \partial B^*_0(0))\cap U(z)$. Notice that in the second case $\kappa(z)$ does not depend on the choice of $u$. $U(z)$ is called a \textbf{wake} attached to $\partial B^*_i(0)$. Define the corresponding \textbf{limb} by $L(z) = U(z)\cap K_f$. Denote by $U(t)$ resp. $L(t)$ the union of $U(z)$ resp. $L(z)$ with $\kappa(z) = t$.

\begin{remark}\label{rem.union.wakes-to-immediat-bassin}
By construction, $\mathbb{C} = \bigcup_i B_{i}^*(0)\cup\bigcup_t U(t)$, $K_f = \bigcup_i B_{i}^*(0)\cup\bigcup_t L(t)$.
\end{remark}

\begin{lemma}
$L(0) = 0$ if and only if $f$ does not belong to wakes attached at double parabolic parameters.
\end{lemma}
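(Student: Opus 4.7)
The plan is to combine Corollary~\ref{cor.double.para.wake}, which reads off the portrait of $f$ at $0$ from the location of $f$ in the parameter plane, with Proposition~\ref{prop.localconnec.0}, which relates the portrait at $0$ to the triviality of $X_\pm$. The translation between $L(0)$ and $X_\pm$ goes through the coding $\kappa$: a point $z \in K_f \setminus \{0\}$ satisfies $\kappa(z) = 0$ precisely when, for every $n$, the component of $\mathbb{C} \setminus T_n$ containing $z$ meets $\bigcup_i \partial B_i^*(0)$ only at $\{0\}$. After absorbing the equipotential cut-off present in $Y_n$ but not in $T_n$, such a $z$ is forced to lie in the nested intersection of puzzle pieces adjacent to $0$ in some sector $S_i$; using the cyclic action of $f$ on the sectors, this intersection is conjugate to $X_+ \cup X_-$. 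Hence $L(0) = \{0\}$ if and only if $X_+ = X_- = \{0\}$.

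For the direction $(\Leftarrow)$, suppose $f$ is not contained in any double parabolic wake. Corollary~\ref{cor.double.para.wake} then gives that the portrait of $f$ at $0$ is the single cycle $\Theta$. If either $X_+$ or $X_-$ were a non-trivial continuum, Proposition~\ref{prop.localconnec.0} would produce a second cycle $\Theta' \neq \Theta$ of external rays landing at $0$, contradicting Corollary~\ref{cor.double.para.wake}. Therefore $X_\pm = \{0\}$, and $L(0) = \{0\}$.

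For the direction $(\Rightarrow)$, we argue by contrapositive. Suppose $f \in \mathcal{W}^+(\boldsymbol{\mathrm{a}}_m) \cup \mathcal{W}^-(\boldsymbol{\mathrm{a}}_m)$ for some $m$. Corollary~\ref{cor.double.para.wake} then produces a second cycle $\Theta'$ of rays landing at $0$, whose angles interlace with those of $\Theta$ around the circle (same rotation number $p/q$). In particular some $\zeta \in \Theta'$ has $R^\infty(\zeta)$ landing at $0$ from inside the sub-sector $S_0^+$ between $R^\infty(\theta_0^+)$ and $\partial B_0^*(0)$. The two rays $R^\infty(\theta_0^+)$ and $R^\infty(\zeta)$ thus give two distinct accesses to $0$ on the same side of $\partial B_0^*(0)$, and the bounded region $\Sigma^+$ they enclose (together with $\{0\}$) must contain non-trivial Julia content, since two distinct prime ends of $K_f$ at $0$ cannot coincide. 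This Julia content is trapped in every puzzle piece $\overline{Q^+_{nq}}$, so $X_+ \supsetneq \{0\}$ and $L(0) \supsetneq \{0\}$.

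The main obstacle is rigorously establishing the identification $L(0) = \{0\} \Leftrightarrow X_+ = X_- = \{0\}$, which requires carefully matching wake components of $\mathbb{C} \setminus T_n$ with the puzzle pieces $Q^\pm_{nq}$ from $\mathbb{C} \setminus Y_n$ and handling the cyclic action of $f$ on the sectors $S_i$. The reverse direction additionally needs a partial converse to Proposition~\ref{prop.localconnec.0}, namely that two cycles of external rays landing at the parabolic fixed point $0$ necessarily enclose non-trivial Julia content in $X_\pm$, rather than merely providing extra rays with trivial impressions.
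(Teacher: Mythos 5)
Your raw ingredients are the same as the paper's (Corollary~\ref{cor.double.para.wake} to read off the portrait at $0$, the second half of the proof of Proposition~\ref{prop.localconnec.0} to produce a second cycle, and the fact that two distinct rays landing at $0$ cannot bound a Julia-free sector), but the pivot of your write-up --- the equivalence $L(0)=\{0\}\Leftrightarrow X_+=X_-=\{0\}$ --- is asserted, not proved, and it is exactly where the work lies. The half you need for $(\Leftarrow)$, namely $L(0)\neq\{0\}\Rightarrow X_+\cup X_-\neq\{0\}$, does not follow from the coding $\kappa$ and a matching of sectors: the graphs $Y_{nq}$ contain external rays landing at \emph{all} $f^{-nq}$-preimages of $0$, and a non-trivial limb at $0$ contains such preimages accumulating at $0$, so at every depth these rays may cut limb content out of the pieces $Q^{\pm}_{nq}$; being ``attached at $0$ outside the basins'' does not automatically place any non-trivial continuum in $\bigcap_k\overline{Q^{\pm}_{kq}}$. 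The same unproved step reappears in your $(\Rightarrow)$ direction as the claim that the Julia content of the wedge is ``trapped in every $\overline{Q^{+}_{nq}}$'', which you yourself concede amounts to a missing partial converse of Proposition~\ref{prop.localconnec.0}. As written, both implications therefore rest on statements you have not established.

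The detour through $X_\pm$ is also unnecessary, and dropping it is how the paper closes the argument. For $(\Rightarrow)$: if $f$ lies in a double parabolic wake, Corollary~\ref{cor.double.para.wake} gives two cycles landing at $0$; choosing adjacent rays $R^\infty(\theta)$, $R^\infty(\theta')$ from the two cycles, the component $S$ of the complement of $\overline{R^\infty(\theta)\cup R^\infty(\theta')}$ meeting no immediate basin satisfies $S\cap K_f\neq\emptyset$ (your harmonic-measure/access observation, which is the right justification and is only implicit in the paper), and since $\overline{S}$ meets $\bigcup_i\partial B_i^*(0)$ only at $0$, this content lies in $L(0)$ by the very definition of $\kappa$; hence $L(0)\neq\{0\}$ directly, with no puzzle pieces involved. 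For $(\Leftarrow)$, the paper does not invoke an identification with $X_\pm$ either: it reruns the angle-limit argument from the second part of the proof of Proposition~\ref{prop.localconnec.0} with the (assumed non-trivial) limb playing the role of $X$, producing a second $3^q$-fixed angle and a second cycle at $0$, which contradicts the single-cycle portrait given by Corollary~\ref{cor.double.para.wake}. So the statement you should prove is the one about the limb itself, not about $X_\pm$; in its present form your proposal has a genuine gap in both directions.
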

\begin{proof}
If $L(0) = 0$ but $f$ belongs to some wake attached at a double parabolic parameter. Then by Corollary \ref{cor.double.para.wake}, there are two cycles of external rays landing at 0. Suppose the corresponding cycle of angles are $\Theta,\Theta'$ respectively, then there exists $\theta\in\Theta,\theta'\in\Theta'$ such that $R^\infty(\theta),R^\infty(\theta')$ bound a open sector $S$ not intersecting any immediate basin at 0, hence $S\cap K_f\subset L(0)$, contradicting $L(0) = 0$.

If $f$ does not belong to wakes at double parabolic parameters, then one may deduce by the same argument in the second part of the proof of Proposition \ref{prop.localconnec.0} that $L(0) = 0$.
\end{proof}

\subsection{Finding infinitely ringed puzzle pieces around $c_-$}\label{subsec.infini.ringed}
\begin{theorem}[Yoccoz]\label{thm.yoccoz}
Let $f:U'\longrightarrow U$ be a quadratic rational-like map and $x_0$ be its unique critical point. Let $x\in K_f$. For any admissible graph $\Gamma$ that rings $x_0$ and rings infinitely $x$, we have the following alternative: 
\begin{itemize}
    \item if the tableau of $x_0$ is $k$-periodic, then $f^k:P_{l+k}(x_0)\longrightarrow P_l(x_0)$ is quadratic-like for $l$ large enough. $Imp(x)$ is either $x$ or a conformal copy of $Imp(x_0)$, depending on whether the forward orbit of $x$ intersects $Imp(x_0)$.
    \item if the tableau of $x_0$ is not periodic, then $Imp(x) = x$.
\end{itemize}
\end{theorem}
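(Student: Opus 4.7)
The plan is to set up puzzle pieces $P_n(x)$ as the connected components of $\mathbb{C}\setminus\Gamma_n$ containing $x$, where $\Gamma_n = f^{-n}(\Gamma)$, and then to analyze the tableau $T(x_0)$ of the critical point: the two-dimensional array whose $(m,n)$-entry records the combinatorial type of $P_n(f^m(x_0))$. Admissibility of $\Gamma$ gives $\overline{P_{n+1}(x)}\subset P_n(x)$, and the hypothesis that $\Gamma$ rings $x_0$ and infinitely rings $x$ ensures arbitrarily deep puzzle pieces surround both points. First I would establish the Branner--Hubbard--Yoccoz tableau rules: a column below a non-critical annulus consists of non-critical annuli whose moduli equal those of their $f$-images, so modulus is transported by $f$ without distortion, while a critical column contributes ramification of degree $2$ and may halve the modulus. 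With these rules in place the stated dichotomy becomes precisely the question of whether the critical column of $T(x_0)$ is eventually $k$-periodic.

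In the non-periodic case, I would run the standard Yoccoz/Branner--Hubbard combinatorial argument to conclude that $\sum_{n}\mathrm{mod}(A_n(x_0)) = +\infty$, where $A_n(x_0) = P_n(x_0)\setminus \overline{P_{n+1}(x_0)}$; Gr\"otzsch's inequality then forces $\bigcap_n\overline{P_n(x_0)} = \{x_0\}$, so $Imp(x_0) = \{x_0\}$. For any other infinitely ringed $x$, either pullback along univalent branches of $f$ away from $x_0$ gives $\sum_n\mathrm{mod}(A_n(x)) = +\infty$ as well, or the forward orbit of $x$ meets a shrinking critical puzzle piece; in both cases one concludes $Imp(x)=\{x\}$.

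In the periodic case of period $k$, the tableau structure yields that $f^k:P_{l+k}(x_0)\longrightarrow P_l(x_0)$ is proper of degree $2$ for $l$ large enough; since the puzzle pieces are topological disks with $\overline{P_{l+k}(x_0)}\subset P_l(x_0)$, this is a quadratic-like map in the sense of Douady--Hubbard, and $Imp(x_0)$ is its filled Julia set. If the forward orbit of $x$ never reaches $Imp(x_0)$, then infinitely many annuli $A_n(x)$ lift back conformally from non-critical annuli of positive modulus after leaving the critical column, giving $Imp(x)=\{x\}$; if the orbit of $x$ eventually lands in $Imp(x_0)$, pulling back by the univalent branch of $f^n$ mapping $P_n(x)$ onto the relevant critical piece produces a conformal copy of $Imp(x_0)$ inside $Imp(x)$, and equality follows by the same pullback.

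The main obstacle is the combinatorial modulus estimate in the non-periodic case. The delicate step of Yoccoz's argument is to rule out critical-tableau patterns with $\sum_n\mathrm{mod}(A_n(x_0))<+\infty$ despite non-periodicity; this requires the full Branner--Hubbard analysis of admissible tableaux and in particular the exclusion of ``escaping'' configurations of the critical column. Once divergence is secured at the critical point, propagation along orbits and the quadratic-like straightening in the periodic case are comparatively standard.
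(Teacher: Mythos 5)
The paper does not prove Theorem~\ref{thm.yoccoz}; it is cited as a known result of Yoccoz, and the reader is referred to \cite[Appendix]{Roesch1} for the background on admissible graphs, tableaux and puzzle pieces. There is therefore no in-paper proof against which to compare your argument.

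Your sketch correctly reproduces the skeleton of the standard Branner--Hubbard--Yoccoz puzzle argument: pull back the graph to form nested puzzle pieces, record the combinatorics in the critical tableau, split on whether the critical tableau is periodic, straighten the return map into a quadratic-like restriction in the periodic case, and derive shrinking via Gr\"otzsch's inequality in the aperiodic case. You also correctly flag the delicate step, the combinatorial modulus estimate around $x_0$. Two points deserve sharpening, however. First, the aperiodic case itself splits further: the ``children/excellent children'' construction giving $\sum_n\mathrm{mod}\,A_n(x_0)=\infty$ is designed for a \emph{recurrent} critical point, whereas for a non-recurrent critical point one usually invokes a Koebe-distortion or shrinking-lemma argument instead, since divergence of the moduli of the $A_n(x_0)$ is not automatic in that regime. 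Second, propagating from $x_0$ to a general $x$ needs more care than ``pullback along univalent branches'': that step only transports moduli while the pullback stays off the critical piece; when the orbit of $x$ re-enters shrinking critical pieces infinitely often one must transfer the divergence through those returns, which is a separate combinatorial argument. These are precisely the steps a complete proof must fill in; since the paper treats the theorem as a black box, doing so is outside its scope, but your sketch as written leaves them as genuine gaps.
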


\begin{observation*}
If $\Gamma$ rings infinitely $x_0$, then $\Gamma\cap K_f$ contains no parabolic cycle. Indeed, since a parabolic cycle (say period $k$) must attract a critical point, we may suppose that $f^{kn}(x_0)$ are close to each other when $n$ large enough. Fix such $n$, pick $n_i$ large enough such that $f^{nk}(x_0)$ is ringed at depth $n_i$, then $P_{n_i}(f^{nk}(x_0)$ is compactly contained in $f^{kN}(P_{n_i}(f^{nk}(x_0)) = P_{n_i-kN}(f^{(n+N)k}(x_0))$ for $N$ large enough. In particular $\Gamma\cap K_f$ does not contain the parabolic cycle.
\end{observation*}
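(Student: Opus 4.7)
The plan is to argue by contradiction. Assume $\Gamma\cap K_f$ contains a parabolic cycle $O$ of period $k$, and fix $p\in O$. By the Fatou--Julia theorem applied to the quadratic-like map $f\colon U'\to U$, every non-repelling cycle attracts a critical orbit; since $x_0$ is the unique critical point of $f$, we may choose $p$ so that $f^{nk}(x_0)\to p$ along some attracting petal $\mathcal{P}$ of $f^k$ at $p$. In particular the iterates $f^{nk}(x_0)$ accumulate at $p$ and cluster arbitrarily close to one another once $n$ is large.

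Next I would translate the infinite-ringing condition at $x_0$ into nested puzzle pieces along the orbit. The hypothesis furnishes arbitrarily large depths $n_i$ with $\overline{P_{n_i}(x_0)}\subset P_{n_i-j_i}(x_0)$ for some $j_i\ge 1$. Because $x_0$ is the only critical point, each forward iterate $f^{nk}\colon P_{n_i+nk}(x_0)\to P_{n_i}(f^{nk}(x_0))$ is proper and has its only possible critical value at $f^{nk}(x_0)$ itself, so the nesting descends: $\overline{P_{n_i}(f^{nk}(x_0))}\subset P_{n_i-j_i}(f^{nk}(x_0))$ for $n$ and $n_i$ large. Fix such an $n$ so that $f^{nk}(x_0)\in\mathcal{P}$, and likewise a large $n_i$.

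The heart of the argument is to push this compact nesting forward under $f^{kN}$ and read it in an attracting Fatou coordinate $\varphi\colon\mathcal{P}\to\mathbb{C}$ normalized by $\varphi\circ f^k=\varphi+1$. Because $f^{jk}(x_0)$ is never critical for $j\ge 1$, the standard identity $f^{kN}(P_m(y))=P_{m-kN}(f^{kN}(y))$ propagates compact containment along the orbit, yielding
\[
\overline{P_{n_i}(f^{nk}(x_0))}\;\subset\;P_{n_i-kN}\bigl(f^{(n+N)k}(x_0)\bigr)
\]
for every $N$ with $n_i-kN\ge 0$. Since $p\in\Gamma$ lies on the boundary of every puzzle piece of which it is a vertex, the compact containment forces $p\notin\overline{P_{n_i}(f^{nk}(x_0))}$, so $E:=\varphi(P_{n_i}(f^{nk}(x_0)))$ is a \textbf{bounded} open neighborhood of $w:=\varphi(f^{nk}(x_0))$. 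In $\varphi$-coordinates the right-hand piece is exactly $E+N$, and the compact nesting becomes $\overline{E}\subset E+N$, equivalently $\overline{E}-N\subset \overline{E}$. Iterating this inclusion shows that $\overline{E}$ contains points arbitrarily far to the left, contradicting boundedness as soon as $N$ exceeds the $\varphi$-diameter of $\overline{E}$.

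The main obstacle I anticipate is technical: keeping all the puzzle pieces that appear inside a single attracting petal on which the Fatou coordinate is defined, and verifying the puzzle-piece arithmetic $f^{kN}(P_m(y))=P_{m-kN}(f^{kN}(y))$ really propagates compact containment. Both are handled by enlarging $\mathcal{P}$ within the parabolic basin so that $\varphi$ extends over the finitely many pieces in play, and by noting that along the forward orbit of $f^{nk}(x_0)$ no secondary critical value of $f^{kN}$ can intervene, since $x_0$ is the sole critical point of $f$.
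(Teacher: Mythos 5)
Your opening steps — that the parabolic cycle attracts $x_0$ so the iterates $f^{nk}(x_0)$ accumulate at a point $p$ of the cycle, that infinite ringing at $x_0$ descends along the orbit, and that pushing forward by $f^{kN}$ gives $\overline{P_{n_i}(f^{nk}(x_0))}\subset P_{n_i-kN}(f^{(n+N)k}(x_0))$ — match the paper's proof. Your intermediate observation that the compact containment forces $p\notin\overline{P_{n_i}(f^{nk}(x_0))}$ (because a graph vertex cannot lie in the interior of any puzzle piece) is also correct and is in fact the crux of the matter.

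The problem is the closing argument via Fatou coordinates, which has a genuine gap. By definition a puzzle piece $P_{n_i}(f^{nk}(x_0))$ is a component of $\mathbb{C}\setminus Y_{n_i}$ that \emph{intersects the Julia set}; it is therefore never contained in the parabolic basin, let alone in an attracting petal. Consequently $\varphi(P_{n_i}(f^{nk}(x_0)))$ is simply not defined. Your suggested fix, ``enlarging $\mathcal{P}$ within the parabolic basin,'' cannot work: no enlargement stays inside the Fatou set while covering a set that meets $J_f$. Even if one restricts to $P_{n_i}\cap\mathcal{P}$, the image under $\varphi$ is not bounded (as one approaches $\partial B$ the extended Fatou coordinate escapes in the imaginary direction), and the compact containment of full puzzle pieces in $\mathbb{C}$ does not descend to a compact containment of their intersections with $\mathcal{P}$, since $\overline{P_{n_i}\cap\mathcal{P}}$ picks up points of $\partial\mathcal{P}$ and of $J_f$ that are no longer in $P_{n_i-kN}\cap\mathcal{P}$. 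So the inclusion $\overline{E}\subset E+N$ is not justified.

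The irony is that you had already found the contradiction and walked past it. Since $f^{nk}(x_0)\to p$ through the attracting petal, and since under the hypothesis $p\in\Gamma$ the graph $Y_{n_i}$ near $p$ cuts a neighborhood of $p$ into sectors with $p$ as a common vertex, the piece $P_{n_i}(f^{nk}(x_0))$ is exactly the sector containing the attracting direction, so $p\in\partial P_{n_i}(f^{nk}(x_0))$. This directly contradicts your conclusion $p\notin\overline{P_{n_i}(f^{nk}(x_0))}$, and no Fatou coordinate is needed — which is the route the paper takes. You should replace the Fatou-coordinate paragraph by this observation.
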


Now we start to construct admissible graphs for $f = f_a$ satisfying Assumption ($\Diamond$) until the end of this subsection. Firstly we work in the quadratic model $P_\lambda(z) = e^{2\pi i\frac{p}{q}}z+z^2$ with $\lambda = e^{2\pi i\frac{p}{q}}$. Recall the notations at the beginning of Subsection \ref{subsec.parame.component}.

Fix $0\leq m\leq q-1$. Suppose $m = lp \pmod{q}$ with $1 \leq l\leq q$. Let $H$ be the connected component of $P_\lambda^{-2q+l}(\overline{{\Omega^{-q}_0}})$ contained in $\overline{B^*_{m}(0)}$. Notice that $\mathbb{C}$ is separated by the $q$ external rays landing at 0 into $q$ sectors $S_0,...,S_{q-1}$ (written in cyclic order) with $B^*_i(0)\subset S_i$. Since $P^q:B^*_m(0)\longrightarrow B^*_m(0)$ is of degree 2, the dynamics of $P^q|_{B^*_m(0)}$ can be identified with $P_1|_{B(0)}$ by their Fatou coordinates. Thus the construction of "jigsawed" internal rays for $P_1$ (\cite{Roesch}, \cite{runze}) can be transferred to $P_\lambda$. For $k\geq 1$, let $R(\theta_\pm)\subset B^*_m(0)$ be the internal ray with angle $\theta_\pm = \frac{\pm 1}{2^k-1}$. For $0\leq i\leq k-1$, define $R(2^i\theta_\pm) = f^{qi}(R(\theta_\pm))\setminus P^q(\mathring{H})$. The rays $R(\theta)$ with $\theta$ strictly pre-periodic to some $2^i\theta_\pm$ are then defined to be the iterated preimages of $R(2^i\theta_\pm)$ in $B_m^*(0)$. Set $\Theta_\pm = \{\theta;\,\exists i \text{ s.t. } 2\theta = 2^i\theta_\pm\}$. Define the equipotential of depth $n\geq0$ by 
\begin{equation}\label{eq.equi.dym}
    E^m_0 = \bigcup_{i=1}^{q}P_\lambda^{i}(\partial{H}),\,\, E^m_n =P_\lambda^{-n}(E^m_0).
\end{equation}
Define the union of internal rays of depth $n\geq 0$ by
\begin{equation}\label{eq.internal-ray.dym}
    R^m_0 = \bigcup_{i = 1}^{q} \left(\bigcup_{\theta\in\Theta_\pm}P_\lambda^{i}(R(\theta)\setminus{H})\right),\,\, R^m_n =P_\lambda^{-n}(R^m_0).
\end{equation}

Now return to $f$. We abuse the notations of $S_i,B_i^*(0)$, etc.. Suppose $c_-\in S_m$.  Since the dynamics on the immediate basins of $f$ is equivalent to that of $P_\lambda$, $E_0$ in (\ref{eq.equi.dym}) and $R_0$ in (\ref{eq.internal-ray.dym}) can be transferred to the immediate basins of $f$. Define similarly $E_n$, $R_n$ for $f$ to be the $n$-th preimage. Let $\theta\in \Theta_\pm$, then the landing point $x_\theta$ of $R(\theta)$ is pre-periodic, hence there is at least an external ray $R^\infty(t)$ landing at it. For each $x_\theta$ we choose a $R^\infty(t)$ and denote by $T$ the collection of these $t$ such that $3^qT\subset T$.

Pick $r\leq 1$. Define the graph of depth $n\geq 0$ by

\begin{equation}\label{eq.graph.Si}
    X^m_n = E^m_n\cup R^m_n\cup\left(\bigcup_{t\in T}R^\infty(3^it)\right)\cup E^\infty(r).
\end{equation}

Notice that if $m=0$, the graph above can also be written as

\begin{equation}\label{eq.graph.S0}
    X^0_0 = \left(\bigcup_{i=1}^{q}f^{i}(\partial{\Omega})\right)\cup\bigcup_{i = 1}^{q} \left(\bigcup_{\theta\in\Theta_\pm}f^{i}(R(\theta)\setminus{\Omega})\cup\bigcup_{t\in T}R^\infty(3^it)\right)\cup E^\infty(r)
\end{equation}
\[X^0_n = f^{-n}(X^\pm_0).\]
Recall that $\Omega\subset B^*_0(0)$ is the maximal petal for the return map $f^q|_{B^*_0(0)}$. 

\begin{figure}[H] 
\centering 
\includegraphics[width=0.5\textwidth]{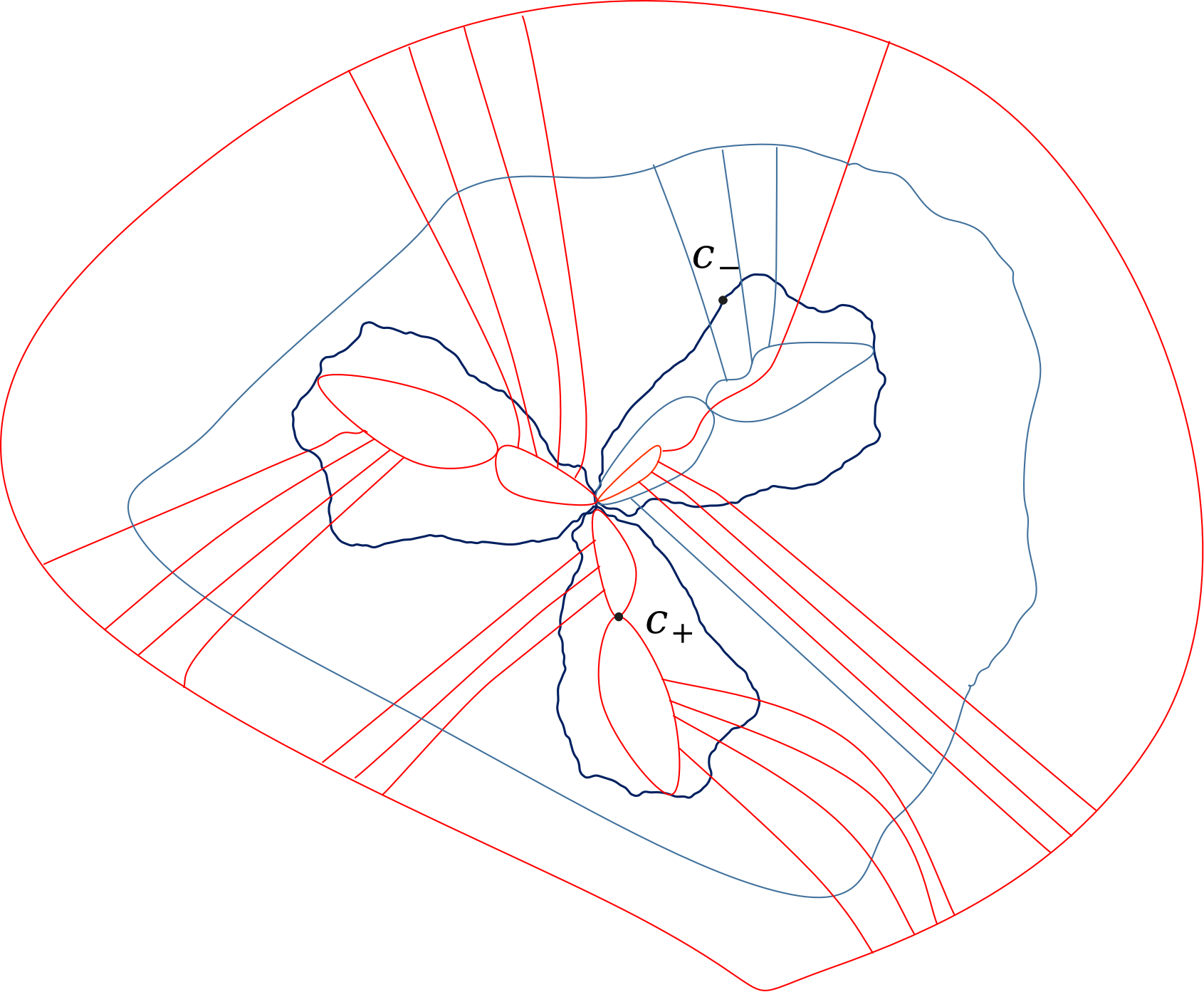} 
\caption{An admissible graph for $f\in Per_1(e^{\frac{2\pi i}{3}})$ with $m=1$ and $\theta$. The graph in red is $X^1_0$, the graph in blue is the first pre-image of $X^1_0$ that is not contained in $X^1_0$. For this graph, $c_-$ is ringed at depth 0.} 
\label{fig.preimage} 
\end{figure}

\begin{lemma}\label{lem.ringed}
If $z\in (K_f\setminus\bigcup_i B_i^*(0))\cap S_m$ satisfies $\kappa(z)\in[\frac{1}{4},\frac{1}{2})\cup(\frac{1}{2},\frac{3}{4}]$, then $z$ is ringed at depth 0 by one of the graphs (\ref{eq.graph.Si}) for all $k$ large enough. 
\end{lemma}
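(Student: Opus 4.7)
The plan is to exhibit, for each sufficiently large $k$, two internal rays $R(\alpha), R(\beta)\subset B^*_m(0)$ of the graph $X^m_0$ whose landing points on $\partial B^*_m(0)$ sandwich $z$ in the $\kappa$-coordinate and cut off an arc of $\partial B^*_m(0)$ avoiding the parabolic fixed point $0$. The depth-$0$ puzzle piece $P_0(z)$ is then a Jordan domain bounded by $R(\alpha),R(\beta)$, the external rays $R^\infty(\tau_\alpha),R^\infty(\tau_\beta)$ at the corresponding angles in $T$, and sub-arcs of the equipotentials $E^m_0,E^\infty(r)$, crossing $\partial B^*_m(0)$ transversally along the specified arc. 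Since this arc avoids $0$, the closure of $P_0(z)$ misses the parabolic fixed point, which is the content of $z$ being ringed at depth $0$.

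The key step is the following modular-arithmetic computation. With $\theta_\pm=\pm 1/(2^k-1)\pmod 1$, the set $\Theta_\pm$ contains the four angles
\[
A^-_k:=2^{k-2}\theta_-+\tfrac{1}{2},\quad A^+_k:=2^{k-2}\theta_+,\quad B^-_k:=2^{k-1}\theta_-,\quad B^+_k:=2^{k-1}\theta_+\pmod 1.
\]
A direct calculation gives $A^-_k\nearrow 1/4$ and $B^-_k\nearrow 1/2$ from below, while $A^+_k\searrow 1/4$ and $B^+_k\searrow 1/2$ from above as $k\to\infty$; by the symmetry $\theta\mapsto\theta+\tfrac{1}{2}$ preserved by $\Theta_\pm$ one obtains analogous one-sided approximations to $3/4$ and to $0$.

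Write $t_0:=\kappa(z)$. In the case $t_0\in[1/4,1/2)$, I take $\alpha:=A^-_k$ and $\beta:=B^+_k$; for $k$ large enough, $\alpha<1/4\le t_0<1/2<\beta$, and the cyclic arc $(\alpha,\beta)\subset(0,1)\subset\mathbb{T}\setminus\{0\}$. In the case $t_0\in(1/2,3/4]$, I take $\alpha:=B^-_k$ and $\beta:=A^+_k+\tfrac{1}{2}$; for $k$ large, $\alpha<1/2<t_0\le 3/4<\beta$ and $(\alpha,\beta)\subset(0,1)$. The strictly one-sided nature of the approximations (crucially, $A^\pm_k$ never reach $1/4$ and $A^\pm_k+\tfrac{1}{2}$ never reach $3/4$ for finite $k$) is exactly what is needed at the edge cases $t_0=1/4$ and $t_0=3/4$.

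The main obstacle is the monotone-convergence claim for $A^\pm_k,B^\pm_k$: the $\pmod 1$ reductions and the $+\tfrac{1}{2}$ symmetry must be handled carefully to confirm the correct side of the limits. Once this is granted, the puzzle-piece description follows directly from the construction of $X^m_0$ and the definition of $T$.
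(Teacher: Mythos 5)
There is a genuine gap, and it lies in what you take the statement to mean. In this paper (as in Roesch's puzzle machinery and in Yoccoz's Theorem as quoted in Theorem \ref{thm.yoccoz} and the Observation following it), ``$z$ is ringed at depth $0$'' means that the annulus $P_0(z)\setminus\overline{P_1(z)}$ is non-degenerate, i.e.\ the depth-$1$ piece containing $z$ is compactly contained in the depth-$0$ piece. Your proposal never looks at the depth-$1$ piece at all: you only argue that $\overline{P_0(z)}$ avoids the parabolic fixed point $0$, and you declare this to be ``the content of $z$ being ringed at depth $0$.'' That is not the definition, and it is not sufficient: the annulus can degenerate wherever $\overline{P_1(z)}$ meets $\partial P_0(z)$ --- along internal or external rays common to the depth-$0$ graph and its pullback, or at their landing points --- not only at $z=0$. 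So even granting all of your angle arithmetic (which is correct: $2^{k-2}\theta_-+\tfrac12\nearrow\tfrac14$, $2^{k-2}\theta_+\searrow\tfrac14$, $2^{k-1}\theta_\pm\to\tfrac12$ from the two sides), the argument establishes a different and weaker assertion than the lemma.

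The paper's proof goes exactly at the missing point: the annulus $P_0\setminus\overline{P_1}$ is non-degenerate precisely when $\partial P_1$ contains two of the internal rays at the \emph{strictly preperiodic} angles $2^j\theta_\pm+\tfrac12$, $j=0,\dots,k-2$ (these rays and their landing points are not shared with the pulled-back graph, unlike the periodic rays at angles $2^j\theta_\pm$, which persist at every depth and are exactly where degeneracy can occur). The monotone strings $\theta_-+\tfrac12>2\theta_-+\tfrac12>\dots>2^{k-2}\theta_-+\tfrac12$ (tending to $\tfrac14$ from below) and $\theta_++\tfrac12<\dots<2^{k-2}\theta_++\tfrac12$ (tending to $\tfrac34$ from above) then show that for $\kappa(z)\in[\tfrac14,\tfrac12)\cup(\tfrac12,\tfrac34]$ and $k$ large the piece $P_1(z)$ is indeed flanked by two such preperiodic rays. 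Note also that your sandwich uses the periodic angles $2^{k-2}\theta_+$, $2^{k-1}\theta_\pm$, which are exactly the rays one must avoid leaning on when proving compact containment. To repair the proposal you would have to (i) work with the correct notion of ringed, i.e.\ compare $P_1(z)$ with $P_0(z)$, and (ii) justify why boundary contact is excluded, which is where the preperiodic rays $R(2^j\theta_\pm+\tfrac12)$ enter.
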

\begin{proof}
By construction, the annulus $P_0\setminus\overline{P_1}$ of depth 0 is non-degenerated if $P_1$ is a piece whose boundary contains two of the internal rays $R^0(2^j\theta_\pm+\frac{1}{2}),\,j=0,...,k-2$. The lemma follows by noticing that 
\[\theta_++\frac{1}{2}\textless2\theta_++\frac{1}{2}\textless...\textless2^{k-2}\theta_++\frac{1}{2}\]
\[\theta_-+\frac{1}{2}\textgreater2\theta_-+\frac{1}{2}\textgreater...\textgreater2^{k-2}\theta_-+\frac{1}{2}\]
and $2^{k-2}\theta_++\frac{1}{2}\textgreater\frac{3}{4}$ tends to $\frac{3}{4}$ as $k\to\infty$; $2^{k-2}\theta_-+\frac{1}{2}\textless\frac{1}{4}$ tends to $\frac{1}{4}$ as $k\to\infty$. 
\end{proof}

\begin{lemma}\label{lem.ringed.critical}
Suppose that $t_0\not = \frac{1}{2}$. Then the critical value $v_- = f(c_-)\in S_{\overline{m+p}}$ and $f^q(c)\in L(2t_0)\cap S_m$. Moreover if $z\in L(t_0)\cap S_m$ such that $f(z)$ is not in the puzzle piece (defined by (\ref{eq.graph.Si})) of depth 0 containing $v_-$, then $z$ is ringed at depth 0.
\end{lemma}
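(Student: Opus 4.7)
First, for $v_-\in S_{\overline{m+p}}$: $f$ permutes the boundary external rays of the sectors by $R^\infty(\theta_i)\mapsto R^\infty(3\theta_i)$, and since $\Theta$ is a cycle of rotation number $p/q$ under multiplication by $3$, the sector $S_i$ is sent into $S_{\overline{i+p}}$ on the level of $K_f$. Applied to $c_-\in S_m$, this gives $v_-\in S_{\overline{m+p}}$ at once.

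For $f^q(c_-)\in L(2t_0)\cap S_m$: iterating the previous step, $f^q$ fixes each sector. The encoding $\kappa:\partial B_0^*(0)\to\mathbb{S}^1$ from Subsection~\ref{subsec.wakes-to-B} was designed so that the first-return map $f^q$ acts as the dyadic shift on the orbit-coding sequences, i.e.\ $\kappa\circ f^q=2\kappa$ on $\partial B_0^*(0)$; the extension of $\kappa$ to the other $\partial B_i^*(0)$ via first returns to $\partial B_0^*(0)$, and then to the wakes, preserves this doubling rule. Hence $\kappa(f^q(c_-))=2t_0$, placing $f^q(c_-)$ in $L(2t_0)\cap S_m$; the hypothesis $t_0\neq 1/2$ ensures $2t_0\neq 0$, so this is a non-trivial limb.

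For the ringing claim the plan is a pullback argument. The critical depth-$1$ piece $P_1(c_-)\subset P_0(c_-)$ is the unique component of $f^{-1}(P_0(v_-))$ whose closure contains $c_-$, and $f:P_1(c_-)\to P_0(v_-)$ is the degree-$2$ branched cover ramified at $c_-$. Any $w\in P_0(c_-)\setminus P_1(c_-)$ then satisfies $f(w)\notin P_0(v_-)$, so the hypothesis $f(z)\notin P_0(v_-)$ places $z$ outside the critical depth-$1$ piece. Combined with $z\in L(t_0)\cap S_m$, the internal rays of $R^m_0$ -- for $k$ large enough, exactly as in Lemma~\ref{lem.ringed} -- together with the external rays indexed by $T$ and the equipotentials $E^m_0$, $E^\infty(r)$, enclose $z$ in a bounded depth-$0$ piece of $X^m_0$ whose interior contains $z$.

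The main obstacle will be pinning down the combinatorics in this last step: tracing precisely which sub-arcs of $\partial H$ and which internal rays $f^i(R(\theta)\setminus H)$ with $\theta\in\Theta_\pm$ form the boundary of the depth-$0$ piece of $z$, and using the non-degeneracy $t_0\neq 1/2$ to prevent $L(t_0)$ from being pinched between two internal rays landing at the same pre-image of $0$ on $\partial B_m^*(0)$. This parallels the argument of Lemma~\ref{lem.ringed}, but the additional combinatorial input $f(z)\notin P_0(v_-)$ is what allows one to handle the case where $z$ lies in the same limb as the critical point $c_-$.
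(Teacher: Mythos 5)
Your proof of the first claim, $v_-\in S_{\overline{m+p}}$, rests on the assertion that ``the sector $S_i$ is sent into $S_{\overline{i+p}}$ on the level of $K_f$.'' That does not follow from $f$ permuting the boundary external rays, and it is not true in general: $K_f\cap S_i$ may contain a non-trivial preimage of $0$, and for $z$ in a limb through such a preimage, $f(L(t)\cap S_i)$ is attached to $0$ and can pass through $0$ into another sector. What makes the argument work for the \emph{critical} limb is exactly the hypothesis $t_0\neq\tfrac12$: both non-trivial preimages of $0$ (the one on $\partial B^*_0(0)$ and the one inside a limb) have $\kappa$-coordinate $\tfrac12$, since $f^q$ sends them to $0$ and $\kappa\circ f^q=2\kappa$ while $L(0)=\{0\}$. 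Hence $L(t_0)\cap S_m$ contains no preimage of $0$, its image under $f$ avoids $0$, stays connected, and remains attached to $\partial B^*_{\overline{m+p}}(0)$, which forces $v_-\in S_{\overline{m+p}}$. You invoke $t_0\neq\tfrac12$ only at the very end and for a different purpose, so this essential point is absent from your write-up; without it the sector claim is unjustified. The paper takes a different route entirely: it counts connected components of the preimage of the critical-value limb $L'$. Because $c_-$ has local degree $2$, $f^{-1}(L')$ has exactly two components; if $m'\neq\overline{m+p}$ one would produce three, a contradiction. Your approach can be made rigorous with the $\kappa=\tfrac12$ observation, but as written there is a genuine gap at the first step.

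The doubling statement $\kappa(f^q(c_-))=2t_0$ is correct, but the ``$\cap S_m$'' part of the conclusion relies on iterating the first claim.

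For the ringing claim you correctly turn the hypothesis $f(z)\notin P_0(v_-)$ into ``$z$ is outside the critical depth-$1$ piece,'' but this by itself does not yield a non-degenerate annulus around $z$. One still has to locate the boundary of the depth-$1$ piece $P_1(z)$ and show it is disjoint from $\partial P_0(z)$. The paper's point is that $L(t_0)\cap S_m$ contains a preimage of $\overline{\bigcup_i B^*_i(0)}$ (because $c_-$ is there), so the internal/external rays making up $\partial P_1(z)$ all stem from that preimage, none of which are depth-$0$ rays; this gives $\overline{P_1(z)}\subset P_0(z)$. Your final paragraph explicitly concedes that this combinatorial step is left open, and that step is precisely the content of the lemma, so the proposal does not close the argument.
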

\begin{proof}
Suppose $v_-\in S_{m'}$ and let $L'(t_0)\cap S_{m'}$ be the limb containing $v_-$. Notice that $f^{-1}(L'(t_0))$ has only two connected components, one containing $c_-$. If $m=0$, then $m' = p$, otherwise $f^{-1}(L'(t_0))$ will have three components: two in $S_0$ and one in $S_{\overline{m'-p}}$. If $m\not = 0$, then $m' = m+p\pmod{q}$, otherwise $f^{-1}(L'(t_0))$ will have three components: one in $S_m$, one in $S_0$ and one in $S_{\overline{m'-p}}$. It is then clear that $f^q(c)\in L(2t_0)\cap S_m$.

Now suppose $z\in L(t_0)\cap S_m$ such that $f(z)$ is not in the puzzle piece of depth 0 containing $v_-$. Notice that in $L(t_0)\cap S_m$ there is a preimage of $\overline{\bigcup_i B^*_i(0)}$. Hence $z$ is bounded by a puzzle piece $P_1$ of depth 1 whose boundary does not intersect puzzle pieces of depth 0, since the internal/external rays in $\partial P_1$ are all stemming from $f^{-1}(\overline{\bigcup_i B^*_i(0)})\cap L(t_0)$, which are different from the rays involved in depth 0.

\end{proof}

In the next lemma we do \textbf{not} suppose $L(0) = 0$.
\begin{lemma}\label{lem.orbit.hit.boundary}
Suppose $z\in K_f$ is not in the inverse orbit of 0. If $f^n(z)$ eventually hits $\partial B^*_0(0)$. Then $z$ is infinitely ringed by one of the graphs (\ref{eq.graph.S0}) for all $k$ large enough. 
\end{lemma}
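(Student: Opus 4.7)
The strategy is to reduce to showing that the boundary point $w := f^N(z)\in\partial B_0^*(0)$ is itself infinitely ringed, where $N$ is the first iterate of $z$ landing in $\partial B_0^*(0)$. Since $z$ is not in the inverse orbit of $0$, neither is $w$, and in particular $w\neq 0$. Once $w$ is infinitely ringed by the graphs $X_n^0$, pulling back by $f^N$ gives the desired infinite ringing of $z$: the component of $f^{-N}(P_n(w))$ containing $z$ is by construction $P_{n+N}(z)$, and a compactly nested pair $\overline{P_{n'}(w)}\subset P_n(w)$ lifts to $\overline{P_{n'+N}(z)}\subset P_{n+N}(z)$, even if the pullback ramifies at $c_+$ (which lies on $\partial\Omega\subset X_0^0$) or at $c_-$.

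For the reduced problem, the trace $X_n^0\cap\overline{B_0^*(0)}$ consists of $\partial\Omega$ together with the iterated $f^q$-preimages inside $B_0^*(0)$ of the internal rays of angles in $\Theta_\pm$. The conjugacy between $f^q|_{B_0^*(0)}$ and the quadratic model induces a semiconjugacy $\kappa:\partial B_0^*(0)\to\mathbb{S}^1$ identifying the inverse orbit of $0$ with the dyadic rationals. The landing angles of internal rays present at depth $n$ are the $n$-th preimages under doubling of the angles in $\Theta_\pm$; as $n\to\infty$ these become dense in $\mathbb{S}^1$. Since $\kappa(w)$ is not dyadic, for all sufficiently large $n$ a new internal ray lands at a preimage of $0$ lying strictly between $\kappa(w)$ and the internal rays already bounding $P_{n-1}(w)$. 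Pairing this internal ray with the external ray through the same preimage of $0$ (present in $X_n^0$ through the set $T$, which satisfies $3^qT\subset T$) yields strict shrinking on both sides simultaneously, hence an infinite sequence of puzzle pieces whose boundaries strictly separate from those of their predecessors.

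What remains is to promote the angular shrinking of $\partial B_0^*(0)\cap P_n(w)$ to Euclidean shrinking of the full puzzle piece $P_n(w)$. The tool is Corollary~\ref{cor.loc.connec.inverse-orbit0}: for each preimage $u$ of $0$ on $\partial B_0^*(0)$, the two rays of $X_n^0$ through $u$ together with the equipotential cut off a complex neighborhood of $u$ whose Euclidean diameter is controlled by the $\kappa$-length of the underlying arc. Sandwiching $P_n(w)$ between two such neighborhoods based at preimages of $0$ closely flanking $w$ on either side then forces $\mathrm{diam}\,P_n(w)\to 0$, which upgrades the strict separation of boundaries to genuine compact containment of closures.

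The main obstacle lies in this last step, since local connectivity is only guaranteed at the inverse orbit of $0$ and not at $w$ itself; one must propagate the Euclidean control from nearby preimages of $0$ to the generic point $w$, exploiting the expansion of $f^q$ on $\partial B_0^*(0)$ away from the parabolic cycle and the fact that the orbit of $w$ does not accumulate exclusively at $0$ (ensured because $w$ is not parabolic). Assumption~($\Diamond$) enters precisely here: excluding Misiurewicz parabolic parameters and double parabolic wakes guarantees that the portrait at $z=0$ is exactly the single cycle $\Theta$ used to construct $X_n^0$, that the petal $\Omega$ and the $f^q$-model on $B_0^*(0)$ are as described, and that Corollary~\ref{cor.loc.connec.inverse-orbit0} applies without accident.
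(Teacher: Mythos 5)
There is a genuine gap, and it sits exactly where you place your ``main obstacle''. The conclusion to be proved is combinatorial: $z$ is \emph{infinitely ringed}, i.e.\ at infinitely many depths the annulus between consecutive puzzle pieces is non-degenerate. This is not a statement about Euclidean diameters, and trying to ``promote the angular shrinking to Euclidean shrinking of $P_n(w)$'' is both unnecessary and unavailable: diameter decay is what one later extracts \emph{from} infinite ringing via Theorem \ref{thm.yoccoz} (and it even fails in the renormalizable case), so an argument that needs it as an intermediate step is circular in spirit, and you yourself acknowledge that the propagation of Euclidean control from preimages of $0$ to the generic point $w$ is not done. The paper's route avoids this entirely: it shows that infinitely many forward iterates $f^n(z)$ have $2^n\kappa(z)$ in the region $[\frac14,\frac12)\cup(\frac12,\frac34]$, applies Lemma \ref{lem.ringed} to get ringing \emph{at depth $0$} for those iterates (with the dichotomy on whether $2^{n_j}\kappa(z)$ accumulates at $\frac14$ or $\frac34$ dictating whether the graph built from $\theta_-$ or $\theta_+$ is used, which is also where ``for all $k$ large enough'' enters), and then pulls back; your proposal never engages with this dichotomy or with the role of $k$.

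The supporting claims are also misstated in a way that matters. The internal rays of (\ref{eq.graph.S0}) have angles in $\Theta_\pm$, the backward orbit of the periodic angles $\theta_\pm=\pm1/(2^k-1)$, so they land at (pre)periodic points $x_\theta$ on $\partial B_0^*(0)$, and the external rays with angles in $T$ are chosen to co-land at these same $x_\theta$ --- none of these are preimages of $0$. The preimages of $0$ enter the graph only as the attachment points of the petal-preimage curves ($E_n$, the preimages of $\bigcup_i f^i(\partial\Omega)$), and these are precisely the potential pinch points of the annuli in the parabolic setting (boundaries of pieces at different depths can keep touching at $0$ or at a common preimage of $0$). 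Your density argument shows that the trace interval around $\kappa(w)$ eventually loses both its endpoints, but strict nesting of trace intervals does not by itself rule out persistent contact of $\partial P_n(w)$ and $\partial P_m(w)$ at such parabolic attachment points; controlling this is exactly the content of the mechanism in Lemma \ref{lem.ringed} (the deeper piece being bounded by the rays $R(2^j\theta_\pm+\frac12)$, which stay away from the vertices of the shallower piece). Finally, Corollary \ref{cor.loc.connec.inverse-orbit0} concerns local connectivity of $\partial B_0^*(0)$ at the inverse orbit of $0$ and gives none of the ``complex neighborhoods with diameter controlled by $\kappa$-length'' at the points $x_\theta$ that actually bound your pieces, so the tool invoked for the missing step does not apply to it.
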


\begin{proof}
The proof goes the same as in \cite[Lem. 6.1]{Roesch}. The orbit of $z$ must satisfy one of the following conditions, and the result follows by Lemma \ref{lem.ringed}:
\begin{itemize}
    \item If there exists a subsequence $n_j$ such that $2^{n_j}\kappa(z)\to \frac{1}{4}$ resp. $\frac{3}{4}$, then $z$ is infinitely ringed by (\ref{eq.graph.Si}) defined by $\theta_-$ resp. $\theta_+$.
    \item Otherwise $z$ is ringed by (\ref{eq.graph.Si}) defined by $\theta_+$ and $\theta_-$.
\end{itemize}
\end{proof}

\begin{lemma}\label{lem.orbit.L1/2}
Suppose $z\in K_f$ is not in the inverse orbit of 0 and not in any basin of 0. If $z$ satisfies $f^n(z)\not\in L(t_0)$ for $n$ large enough, then $z$ is infinitely ringed by one of the graphs (\ref{eq.graph.S0}) for all $k$ large enough.
\end{lemma}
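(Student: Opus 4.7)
The plan is to combine an elementary bit-counting argument for the doubling map with Lemma \ref{lem.ringed} to produce infinitely many iterates $n$ at which $f^n(z)$ is ringed at depth $0$; pulling these back through $f^n$ will then give infinite ringing of $z$ by the preimages $X^0_n$ of a fixed graph (\ref{eq.graph.S0}).

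First, because $z\in K_f$ avoids every basin of $0$, Remark \ref{rem.union.wakes-to-immediat-bassin} places every iterate $w_n:=f^n(z)$ in a limb $L(\kappa_n)$ with $\kappa_n:=\kappa(w_n)$; and because $z$ is not in the inverse orbit of $0$, each $\kappa_n$ is non-dyadic. The return map $f^q$ preserves each sector $S_m$ and, by construction of $\kappa$ in \ref{subsec.wakes-to-B}, acts on the encoding as the doubling map $t\mapsto 2t\pmod 1$. Hence the subsequence $(\kappa_{N+nq})_{n\ge 0}$ is simply the doubling orbit of $\kappa_N$.

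The combinatorial core of the proof will be the elementary claim: for every non-dyadic $t\in[0,1)$, the orbit $\{2^n t\bmod 1\}_{n\ge 0}$ meets $[\tfrac14,\tfrac12)\cup(\tfrac12,\tfrac34]$ infinitely often. Membership in this set amounts to the first two binary digits being $01$ or $10$; if every pair of consecutive digits of $t$ past some position belonged to $\{00,11\}$, the tail of the expansion would be eventually constant and $t$ would be dyadic, a contradiction. Combined with the hypothesis $w_n\notin L(t_0)$ for $n$ large, this yields infinitely many $n$ with $\kappa_n\in[\tfrac14,\tfrac12)\cup(\tfrac12,\tfrac34]$ and $\kappa_n\ne t_0$. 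For each such $n$, Lemma \ref{lem.ringed} applies to $w_n$ in its sector $S_{m_n}$ and produces, for all $k$ large enough, a ringing at depth $0$ by a graph (\ref{eq.graph.Si}). Pulling back along the orbit, this depth-$0$ ringing lifts to a depth-$n$ ringing of $z$ by the preimage graph $X^0_n$ (for the same fixed $k$, after identifying $X^{m_n}_0$ with the relevant piece of $X^0_n$ around $z$).

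The main obstacle will be to track the pullback structure carefully: one must verify that along the orbit $z,f(z),\ldots,f^{n-1}(z)$ no critical point lies on the intermediate inverse images of $X^0_0$, so that the lifted graph around $z$ is indeed a properly nested piece of $X^0_n$. This is precisely where the hypothesis $f^n(z)\notin L(t_0)$ becomes indispensable: it excludes the one limb in which the depth-$0$ ringing could fail combinatorially (the failure mode treated separately by Lemma \ref{lem.ringed.critical}). Once this bookkeeping is handled, the chain of depth-$n$ rings obtained as $n$ ranges over the infinite set produced above gives the desired infinite ringing of $z$.
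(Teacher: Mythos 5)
Your central combinatorial claim rests on the assertion that because $z$ is not in the inverse orbit of $0$, each $\kappa_n:=\kappa(f^n(z))$ is non-dyadic. This is false. Recall how $\kappa$ was extended in \ref{subsec.wakes-to-B}: if $z$ lies in a wake $U(u)$ attached at a preimage $u$ of $0$, then $\kappa(z):=\kappa(u)$, which is \emph{dyadic}. The limb $L(\tfrac12)\cap S_0$, for example, is attached at the first preimage $x_0$ of $0$ on $\partial B^*_0(0)$, so every point of it (most of which are certainly not in the inverse orbit of $0$) has $\kappa$-value $\tfrac12$. Your bit-counting argument, which is correct for non-dyadic $t$, then says nothing about such iterates: the doubling orbit of $\tfrac12$ is $\tfrac12\mapsto 0\mapsto 0\mapsto\cdots$ and never visits $[\tfrac14,\tfrac12)\cup(\tfrac12,\tfrac34]$. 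Consequently, if the orbit of $z$ spends all (or a positive proportion) of its time in $L(\tfrac12)$, Lemma~\ref{lem.ringed} is never triggered and your argument produces no ringed depths.

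This is exactly the scenario the paper's proof is built around. The case where $f^n(z)$ eventually avoids $L(\tfrac12)$ (and $L(t_0)$, by hypothesis) is the only case in which your reduction to the doubling map is sound, and there the paper simply cites \cite[Lem.~6.1]{Roesch}. The new content is a two-way subcase analysis for orbits that meet $L(\tfrac12)$ infinitely often: either $f^n(z)\in L(\tfrac12)$ for all large $n$, in which case one uses the decomposition of $\mathring{L(\tfrac12)}\cap S_0$ into $q-1$ sectors at $x_0$, the puzzle pieces $Q^{\pm}_n$ of graph~(\ref{eq.graph.Y}), and the hypothesis $L(0)=0$ to trap a preimage of $L(\tfrac12)$ compactly inside a depth-$0$ piece; or there are infinitely many returns to $L(\tfrac12)^c$, which forces infinitely many passages through $(L(\tfrac14)\cup L(\tfrac34))\cap S_0$ (since these are the two limbs you cross on entering or leaving $L(\tfrac12)$) where Lemma~\ref{lem.ringed} does apply. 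You would need to supply an argument of this kind before the pullback step of your proof can be carried out. The pullback step itself (lifting a depth-$0$ ring to a depth-$n$ ring of $z$) is standard once the ringed iterates are produced, so the gap is genuinely in the combinatorial part.
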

\begin{proof}
The proof goes the same as in \cite[Lem. 6.1]{Roesch} if $f^n(z)$ avoid $L(\frac{1}{2})$ for $n$ large enough. If $f^n(z)$ meets infinitely many times $L(\frac{1}{2})$, then we treat by two subcases:
\begin{itemize}
    \item If $f^n(z)\in L(\frac{1}{2})$ for $n$ large enough. Let $U$ be a connected component of $f^{-j}(\mathring{L(\frac{1}{2})})$ ($1\leq j\leq q$) contained in $\mathring{L(\frac{1}{2})}\cap S_0$ that intersects infinitely the orbit of $z$. Notice that $\mathring{L(\frac{1}{2})}\cap S_0$ is divided into $q-1$ connected components by $q-2$ external rays landing at $x_0$, the first preimage of 0 (other than 0) on $\partial B_0^*(0)$. Suppose $U$ belongs to one of these $q-1$ components $V$, which is bounded by $R^\infty(\eta_1),R^\infty(\eta_2)$. Then $U\subset\subset V$ and the angles of external rays involved in $\partial U$ are between $\eta_1,\eta_2$. Recall graph (\ref{eq.graph.Y}) and its corresponding puzzle pieces $Q_n$. Let the $Q^{\pm}_n$ be the two puzzle pieces of depth $n$ at $x_0$. Since $L(0) = 0$, all the angles of external rays involved in $Q^{\pm}_n$ converge to $\eta_1$ or $\eta_2$. Thus if we take $k$ large enough in $\theta_\pm = \frac{\pm 1}{2^k-1}$, $U$ will be included by a puzzle piece $P^\pm_1$ of depth 1 defined by graph (\ref{eq.graph.S0}) such that $P^\pm_1$ is compactly contained in some puzzle piece of depth 0.
    
    \item There is a subsequence such that $f^{n_j}(z)\not \in L(\frac{1}{2})$. By hypothesis, $z$ returns infinitely many times to $L(\frac{1}{2})\cap S_0$. Thus $\{f^n(z);\, n\geq 0\}\cap (L(\frac{1}{2})\cap S_0)^c$ will meet infinitely $(L(\frac{1}{4})\cup L(\frac{3}{4}))\cap S_0$. By Lemma \ref{lem.ringed}, if $f^n(z)$ is contained in $(L(\frac{1}{4})\cup L(\frac{3}{4}))\cap S_0$, then it is ringed at depth 0.
\end{itemize}
Thus in both cases, we have found a graph that rings infinitely $z$.
\end{proof}

\begin{proposition}\label{prop.ringed.infini_return}
Suppose $t_0\not =\frac{1}{2}$ and that $z\in K_f$ returns infinitely many times to $L(c_-) = L(t_0)\cap S_m$ but never hits $\partial B^*_m(0)$. Then for all $k$ large enough, there exists one of the two graphs (\ref{eq.graph.Si}) that rings infinitely $z_-$ 
\end{proposition}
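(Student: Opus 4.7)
The plan is to convert each return visit of $z$ to $L(c_-)$ into a ringing of $z$ by one of the two graphs in (\ref{eq.graph.Si}), corresponding to the choices $\theta_+$ and $\theta_-$. Let $(n_j)$ be the subsequence with $f^{n_j}(z)\in L(c_-)=L(t_0)\cap S_m$; by hypothesis $n_j\to\infty$.

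First I would apply Lemma \ref{lem.ringed.critical} pointwise at each return: if $f^{n_j+1}(z)$ is not contained in the depth-$0$ puzzle piece $P_0^{v_-}$ containing $v_-$ (for the graph under consideration), then $f^{n_j}(z)$ is ringed at depth $0$, and pulling back through the $n_j$ iterates of $f$ (which is a univalent operation along the orbit of $z$, because by hypothesis the orbit avoids $\partial B^*_m(0)$ and hence avoids every vertex of the graph $X^m_0$) yields a ringing of $z$ at depth $n_j$. If this condition holds for infinitely many $j$ under either of the two graphs, then that graph already rings $z$ infinitely, and we are done.

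The remaining case is that for a given choice of graph, $f^{n_j+1}(z)\in P_0^{v_-}$ for all but finitely many $j$. Here I would iterate the argument at the critical value: let $d(j)\ge 1$ be the largest integer with $f^{n_j+1}(z)\in P_{d(j)}^{v_-}$. Because $z\neq c_-$ and $z$ does not meet $\partial B^*_m(0)$, the forward orbit of $z$ cannot be contained in $\bigcap_d P_d^{v_-}$, so each $d(j)$ is finite; pulling back $d(j)+1$ further steps of $f$ along the (univalent) branch following the orbit gives a ringing of $z$ at depth $n_j+d(j)$. To guarantee that these depths produce infinitely many distinct ringings rather than repeating the same piece, I would pigeonhole on the symbolic coding $\kappa$ introduced in \ref{subsec.wakes-to-B}: each return of $z$ to $L(c_-)$ carries a $+$ or $-$ combinatorial label according to which side of $c_-$ it falls on relative to the internal rays of angle $2^{i}\theta_\pm$. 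The side occurring infinitely often selects one of the two graphs in (\ref{eq.graph.Si}), and along that subsequence the nested structure of the $P_d^{v_-}$ yields the required ringings.

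The main obstacle is controlling the combinatorics near the parabolic point $0$: the rays whose angles lie in $\Theta_\pm$ accumulate on $\partial B^*_m(0)$, so one must verify that for $k$ large enough (equivalently, for $\theta_\pm=\pm 1/(2^k-1)$ close enough to $0$) the annuli $P_{d-1}^{v_-}\setminus\overline{P_d^{v_-}}$ arising in the pullback are genuinely nondegenerate and are not pinched by an iterate of the orbit lying on the graph. Both issues are resolved by the hypothesis that $z$ never hits $\partial B^*_m(0)$ together with Assumption $(\Diamond)$: they prevent the forward orbit of $z$ from ever crossing $X^m_n$, so that all backward branches used in the construction remain univalent and the nested puzzle pieces strictly shrink.
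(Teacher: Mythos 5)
Your opening reduction is sound and matches the start of the paper's argument: at any return time $n_j$ for which $f^{n_j+1}(z)$ lies outside the depth-$0$ piece containing $v_-$, Lemma \ref{lem.ringed.critical} rings $f^{n_j}(z)$ at depth $0$, and this ringedness does pull back to depth $n_j$ around $z$. The gap is in the remaining case, which is exactly where the real content of the proposition lies. First, your claim that each $d(j)$ is finite ``because $z\neq c_-$ and the orbit avoids $\partial B^*_m(0)$'' is not justified: Assumption $(\Diamond)$ allows $f$ to be renormalisable, in which case $\bigcap_d P^{v_-}_d$ is a nondegenerate continuum (the small filled Julia set), and the forward orbit of $z$ may enter it and never leave, so $d(j)=\infty$ and your pullback construction never starts. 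Second, even when $d(j)<\infty$, knowing $f^{n_j+1}(z)\in P^{v_-}_{d(j)}\setminus \overline{P^{v_-}_{d(j)+1}}$ does not make that point ringed at depth $d(j)$: ringedness means the annulus between the piece of depth $d(j)$ containing the point and the \emph{sub}piece of depth $d(j)+1$ containing it is nondegenerate, and this must be transferred from nondegenerate annuli of the critical nest via the tableau rules --- annuli which your argument has not produced (producing them is precisely the purpose of the construction). The final ``pigeonhole on the $\pm$ label'' paragraph does not supply this.

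What is missing, and what the paper actually does, is the combinatorial use of the hypothesis $t_0\neq\frac12$ through the doubling map on the addresses $\kappa$: the critical value sits in the limb of address $2t_0$, and depending on which dyadic interval $(\frac1{2^{n+1}},\frac1{2^n}]$ (or its mirror) contains $t_0$, one chooses $k$ large (and, in the delicate subcase, an auxiliary $k'$ slightly smaller, giving a graph with $\theta'_-=\frac{-1}{2^{k'}-1}$) so that \emph{every} return point $w$ is ringed at a uniformly bounded depth ($\leq$ a fixed multiple of $q$) by one of the finitely many graphs of type (\ref{eq.graph.Si}) fixed in advance: either Lemma \ref{lem.ringed} applies after a bounded number of iterates pushes $\kappa(f(w))$ into $[\frac14,\frac12+\theta_-)$, or Lemma \ref{lem.ringed.critical} applies with respect to the second graph because $f(w)$ avoids \emph{its} depth-$0$ critical-value piece. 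Bounded-depth ringedness at infinitely many returns then gives infinite ringedness of $z$, with no need to control how deep the orbit dives into the critical nest. Without this choice of $k,k'$ adapted to $t_0$, the case where the returns keep falling into the critical depth-$0$ piece is simply not handled by your proposal.
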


\begin{proof}
For simplicity we only do the proof for the case $m=0$. The case $m\not = 0$ will be similar. We adapt a similar strategy in the proof of \cite[Lem. 8.2]{Roesch}.
By hypothesis there is a subsequence $\{f^{n_i}(z)\}_i\subset L(t_0)$. Take any $w\in\{f^{n_i}(z)\}_i$.
\begin{itemize}
    \item $t_0\in[\frac{1}{4},\frac{1}{2})$. Then by Lemma \ref{lem.ringed}, $w$ is ringed at depth 0 by graph (\ref{eq.graph.S0}) defined by $\theta_-$ for $k$ large enough.
    \item $t_0\in(\frac{1}{8},\frac{1}{4})$. By Lemma \ref{lem.ringed.critical}, the limb containing the critical value $v_-$ is $L(2t_0)\cap S_p$ with $2t_0\in(\frac{1}{4},\frac{1}{2})$. Take $k$ large enough and the corresponding graph $X^+_0$ such that $L(2t_0)\cap S_p$ is bounded by the puzzle piece of depth 0 whose boundary intersects $f(R^0(2^{k-2}\theta_+)),f(R^0(2^{k-1}\theta_+))$. Thus by Lemma \ref{lem.ringed.critical}, any $w$ satisfying \[\kappa(f(w))\in(0,2^{k-2}\theta_+)\cup(2^{k-1}\theta_+,1)\] 
    is ringed by graph (\ref{eq.graph.S0}) defined by $\theta_+$ at depth 0. Hold the same $k$ and take $\theta_- = \frac{-1}{2^k-1}$.  If $w$ satisfies
    \[\kappa(f(w))\in[2^{k-2}\theta_+,\frac{1}{2}+\theta_-),\]
    then by Lemma \ref{lem.ringed}, $f^j(f(w))$ is ringed at depth 0 by graph (\ref{eq.graph.S0}) defined by $\theta_-$, where $j\leq q-1$ is the smallest integer such that $f^j(f(w))\in S_0$. Now suppose that $w$ satisfies 
    \[\kappa(f(w))\in[\frac{1}{2}+\theta_-,2^{k-1}\theta_+].\]
    Take $k'$ slightly smaller than $k$ such that $\theta'_- = \frac{-1}{2^{k'}-1}$ satisfies $2t_0\textless1+2^{k'-1}\theta'_-\textless\frac{1}{2}+\theta_-$. (adjust $k,k'$ to be larger if necessary). Thus we can apply Lemma \ref{lem.ringed.critical} to graph (\ref{eq.graph.S0}) defined by $\theta'_-$ and hence $w$ is ringed at depth 0. To conclude, we have proved that for any $w$, there exists a graph (\ref{eq.graph.S0}) defined by $\theta_+,\theta_-$ or $\theta_-'$ (not depending on $w$) that rings $w$ at some depth less than $q$.
    
    \item $t_0\in(\frac{1}{2^{n+1}},\frac{1}{2^n}]$, $n\geq 3$. The strategy is quite similar as above. In this case we have $2t_0\in(\frac{1}{2^{n-2}},\frac{1}{2^{n-1}}]$. For the same reason, $w$ is ringed by graph defined by $\theta_+$ for $k$ large if $\kappa(f(w))\in(0,2^{k-n}\theta_+)\cup(2^{k-n+1}\theta_+,1)$. Take $\epsilon\textgreater 0$ small enough such that $2^{n-2}(2^{k-n+1}\theta_+-\epsilon)\textgreater \frac{1}{2}+\theta_-$. Thus for $w$ satisfying \[\kappa(f(w))\in[2^{k-n}\theta_+,2^{k-n+1}\theta_+-\epsilon)\]
    and $j$ the smallest integer such that $f^j(f(w))\in S_0$, we have $\kappa(f^{N}(w))\in[\frac{1}{4},\frac{1}{2}+\theta_-)$, where $N = (n-2)q+j+1$. Thus $f^N(w)$ is ringed at depth 0 by graph defined by $\theta_-$. If $w$ satisfies
    \[\kappa(f(w))\in[2^{k-n+1}\theta_+-\epsilon,2^{k-n+1}\theta_+),\]
    and $j\leq q-1$ is the smallest integer such that $f^j(f(w))\in S_0$, then \[\kappa(f^{N'}(w))\in[2^{k-2}\theta_+-2^{n-3}\epsilon,2^{k-2}\theta_+)\subset[\frac{1}{4},\frac{1}{2}+\theta_-)\]
    (take $\epsilon$ smaller if necessary), where $N' = (n-3)q+j+1$. Hence $f^{N'}(w)$ is ringed at depth 0 by graph defined by $\theta_-$.
    
    \item $t_0 \in (\frac{1}{2},1)$. The argument is symmetric to the case $t_0\in(0,\frac{1}{2})$, which is already handled above. 
\end{itemize}
\end{proof}

\begin{lemma}\label{lem.critlimb=1/2}
Let $t_0 = \frac{1}{2}$ and $z$ satisfy the hypothesis in Proposition \ref{prop.ringed.infini_return}. Then $z$ is infinitely ringed by some graph (\ref{eq.graph.S0}).
\end{lemma}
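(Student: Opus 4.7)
The plan is to show that under Assumption $(\Diamond)$ the hypothesis of the lemma admits no point $z$, and therefore its conclusion is automatically satisfied by every graph of the form (\ref{eq.graph.S0}). The derivation is short and uses only two ingredients already in hand: the doubling rule for $\kappa$ under the first-return map $f^q$, and the identification $L(0) = \{0\}$ recorded in Subsection \ref{subsec.wakes-to-B}.

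First I would record the doubling rule. By the construction of $\kappa$ from the symbolic dynamics of $f^q$ on $\partial B^*_0(0)$ together with its propagation to the wakes $U(u)$ and the limit wakes $U(z)$, one has $\kappa(f^q(w)) \equiv 2\kappa(w) \pmod 1$ wherever $\kappa$ is defined. Taking $\kappa(w) = t_0 = 1/2$ yields $f^q(w) \in L(0)$ for every $w \in L(1/2)$. Next I would apply the lemma of Subsection \ref{subsec.wakes-to-B} which, under Assumption $(\Diamond)$, gives $L(0) = \{0\}$; combined with the previous sentence, this means $f^q(w) = 0$ for every $w \in L(1/2)$.

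Now let $z$ be any point satisfying the hypothesis, and let $n_1$ be the smallest integer with $f^{n_1}(z) \in L(1/2) \cap S_m$. The previous step forces $f^{n_1+q}(z) = 0$. Since $0$ is the common parabolic fixed point lying on $\partial B^*_i(0)$ for every $i$, in particular $0 \in \partial B^*_m(0)$, so the orbit of $z$ meets $\partial B^*_m(0)$ at time $n_1 + q$. This directly contradicts the second clause of the hypothesis of Proposition \ref{prop.ringed.infini_return}. Consequently no such $z$ exists, and the conclusion of the lemma is satisfied by any graph (\ref{eq.graph.S0})---say the one indexed by $k = 1$.

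The principal point of the proof is conceptual rather than combinatorial: one might anticipate that Lemma \ref{lem.critlimb=1/2} demands a bespoke graph adapted to the resonant angle $1/2$, but in fact the resonance $\kappa(c_-) = 1/2$ forces $c_-$ (via the doubling rule and $L(0) = \{0\}$) to be pre-periodic to $0$, i.e.\ $f$ to be Misiurewicz parabolic, which Assumption $(\Diamond)$ has already excluded. The lemma is therefore a completeness check confirming that the alternative $t_0 \neq 1/2$ treated in Proposition \ref{prop.ringed.infini_return} exhausts all relevant configurations under $(\Diamond)$.
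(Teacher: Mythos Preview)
Your argument rests on the claim that $\kappa(f^q(w))\equiv 2\kappa(w)\pmod 1$ for every $w$ where $\kappa$ is defined, and hence that $f^q(L(\tfrac12))\subset L(0)=\{0\}$. This is false. The doubling rule is the very definition of $\kappa$ on $\partial B^*_0(0)$, but it does not propagate to points in the wakes $U(u)$: for a dyadic angle such as $\tfrac12$, the wake $L(\tfrac12)$ is attached at the preimage $x_0\in\partial B^*_0(0)$ of $0$ and contains entire Fatou components (the pieces of $B_1$ in the notation of \S\ref{subsec.wake-end-points}). Under $f^q$ these components map onto the immediate basins themselves, so $f^q(L(\tfrac12))$ is far from being contained in $L(0)$. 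In particular, $c_-\in L(\tfrac12)$ does \emph{not} force $f^q(c_-)=0$, and the case $t_0=\tfrac12$ is not excluded by Assumption~($\Diamond$). The paper explicitly treats orbits that remain in $L(\tfrac12)$ forever (see the first bullet in the proof of Lemma~\ref{lem.orbit.L1/2}), which would be impossible if your implication held.

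The paper's own proof is not vacuous: it simply repeats the argument of Lemma~\ref{lem.orbit.L1/2}. One separates into the subcase where the orbit of $z$ eventually stays in $L(\tfrac12)$ --- handled by finding a preimage component $U\subset\subset V\subset L(\tfrac12)\cap S_0$ compactly contained in a depth-$0$ puzzle piece for $k$ large --- and the subcase where the orbit leaves $L(\tfrac12)$ infinitely often, in which case it must visit $L(\tfrac14)\cup L(\tfrac34)$ infinitely often and Lemma~\ref{lem.ringed} applies. To repair your write-up you need to drop the vacuity claim and carry out this two-case analysis.
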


\begin{proof}
The prove goes exactly the same as Lemma \ref{lem.orbit.L1/2}.
\end{proof}

To summarize all the cases above, Lemma \ref{lem.orbit.hit.boundary}, \ref{lem.orbit.L1/2}, \ref{lem.critlimb=1/2} and Proposition \ref{prop.ringed.infini_return} gives

\begin{proposition}\label{prop.summary.infinite.ringed}
Suppose $L(0) = 0$. Let $z\in K_f$ not in the inverse orbit of 0 and not in any basin of 0, then for all $k$ large enough, $z$ is infinitely ringed by one of the graphs (\ref{eq.graph.Si}).
\end{proposition}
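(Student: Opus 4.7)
The plan is to establish the proposition by a case analysis on the forward orbit of $z$, in each case invoking one of the previously proved technical lemmas to produce an admissible graph among (\ref{eq.graph.Si})/(\ref{eq.graph.S0}) that infinitely rings $z$. Since the statement allows us to pick $k$ large (and hence the defining angles $\theta_{\pm} = \pm 1/(2^k-1)$ arbitrarily close to $0$), we will freely enlarge $k$ finitely many times in the course of the argument; the finitely many thresholds can be taken simultaneously at the end.

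First I would split according to whether the orbit $\{f^n(z)\}_{n \geq 0}$ meets $\bigcup_i \partial B_i^*(0)$. If some $f^n(z)$ lands on $\partial B_i^*(0)$ (but $z$ is not in the inverse orbit of $0$ by hypothesis), then after applying finitely many iterates we are in the situation of Lemma \ref{lem.orbit.hit.boundary}, which directly produces a graph of the form (\ref{eq.graph.S0}) that rings $z$ infinitely, for all $k$ large enough. So we may reduce to the case where the orbit of $z$ never meets $\bigcup_i \partial B_i^*(0)$. Then by the extension of $\kappa$ in Subsection \ref{subsec.wakes-to-B} (and the hypothesis $L(0)=0$ ensuring $\kappa$ is well-defined on points not in the inverse orbit of $0$), $\kappa(f^n(z))$ is defined for all $n$ and lies in $\mathbb{S}^1 \setminus \{0\}$.

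Next, I would split further on whether the orbit enters the critical limb $L(c_-) = L(t_0) \cap S_m$ infinitely often, where $t_0 = \kappa(c_-)$. If not, then for $n$ large enough we have $f^n(z) \notin L(t_0)$, and Lemma \ref{lem.orbit.L1/2} provides a graph (\ref{eq.graph.S0}) infinitely ringing $z$ for $k$ sufficiently large. If the orbit does enter $L(c_-)$ infinitely often, then for $t_0 \neq 1/2$ we invoke Proposition \ref{prop.ringed.infini_return} directly: for each $w \in \{f^{n_i}(z)\}_i \subset L(c_-)$ the proposition supplies a uniform $k$ and a specific graph among (\ref{eq.graph.Si}) (defined via either $\theta_+, \theta_-$, or an auxiliary $\theta'_-$) under which $w$ is ringed at a depth bounded by a fixed constant; passing to infinitely many such returning times yields infinitely many distinct depths at which $z$ is ringed. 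For the remaining subcase $t_0 = 1/2$, Lemma \ref{lem.critlimb=1/2} handles it in the same manner as Lemma \ref{lem.orbit.L1/2}.

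The main subtlety — not really an obstacle but a book-keeping point — is that across these cases the graphs used are built from different choices of $\theta_{\pm}$ (and possibly $\theta'_-$), so we must ensure that a single $k$ works. Because the alternative is a finite decision tree (hitting $\partial B_i^*(0)$ or not; returning to $L(c_-)$ or not; and within Proposition \ref{prop.ringed.infini_return} a finite number of angular subintervals of $[0,1)$ determining which auxiliary angle to use), only finitely many lower bounds on $k$ are imposed, and we take the maximum. Finally, $z$ itself is ringed infinitely many times because each return time $n_i$ produces a piece of some bounded depth containing $f^{n_i}(z)$ that pulls back under $f^{n_i}$ to a piece about $z$ whose depth is at least $n_i$ minus a uniform constant; letting $n_i \to \infty$ gives rings about $z$ at arbitrarily large depth, completing the proof.
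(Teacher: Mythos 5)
Your proposal is correct and follows the paper's own argument, which proves the proposition simply by noting that Lemmas \ref{lem.orbit.hit.boundary}, \ref{lem.orbit.L1/2}, \ref{lem.critlimb=1/2} and Proposition \ref{prop.ringed.infini_return} together exhaust all cases; you have accurately reconstructed that decomposition and matched each branch with the right lemma. Two minor remarks: your closing concern about a single $k$ working uniformly across branches is moot, since a fixed $z$ falls into exactly one branch of the decision tree and the corresponding lemma already supplies the threshold on $k$; and the assertion that failing to return to $L(c_-)$ infinitely often forces $f^n(z)\notin L(t_0)$ for $n$ large (the hypothesis of Lemma \ref{lem.orbit.L1/2}), though true, deserves the one-line justification that every block of consecutive visits to $L(t_0)$ beyond the first begins in $S_p$ immediately after a passage through $S_0$, hence cycles through all $q$ sectors (in particular $S_m$) before the angle next doubles, so infinitely many visits to $L(t_0)$ entail infinitely many to $L(c_-)$.
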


Applying Yoccoz's Theorem to $v_- = f(c_-)$, $f = f_a$, we get

\begin{theorem}\label{thm.dym}
Let $\mathcal{U}\subset\mathcal{C}_{\lambda}$ be a parabolic component of adjacent, bitransitif or capture type that is not in any $\mathcal{W}^\pm(\boldsymbol{\mathrm{a}}_m)$. Suppose $a\in \partial\mathcal{U}$, $f^n_a(c_-(a))\not=0,\forall n\geq 1$ and $a\not\in\mathcal{A}_{p/q}$. Then there exists a graph (\ref{eq.graph.Si}) and a sequence of non-degenerated annuli $A^a_{n_i},{i\geq 0}$, such that 

\begin{enumerate}
    \item $A^a_{n_i} = P^{a,v}_{n_i}\setminus\overline{P^{a,v}_{n_i+1}}$, $i\geq 1$, where $P^{a,v}_{n}$ is the puzzle piece of depth $n$ containing $v_-(a)$.
    \item $f^{n_i-n_0}_a:A^a_{n_i}\longrightarrow A^a_{n_0}$ is a non-ramified covering.
    \item\label{dichotomie} either $\sum_i mod(A^a_{n_i}) = \infty$ or there exists $k\geq 1$ such that $f_a^k: P^{a,v}_{m+k}\longrightarrow P^{a,v}_m$ is quadratic-like for all $m$ large enough and $\bigcap P^{a,v}_m$ is the filled Julia set of the renormalized map $f_a^k$.
\end{enumerate}
\end{theorem}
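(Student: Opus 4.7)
First, the plan is to check that $a$ satisfies Assumption $(\Diamond)$. Since $\mathcal{U}$ is a parabolic component of adjacent, bitransitive or capture type not contained in any $\mathcal{W}^{\pm}(\boldsymbol{\mathrm{a}}_m)$, its closure sits in $\overline{\mathcal{S}_m}$ for some $m$, so $a\in\partial\mathcal{U}$ is not in any double parabolic wake. The hypothesis $a\notin\mathcal{A}_{p/q}$ excludes the double parabolic parameters themselves, and $f^n_a(c_-(a))\neq 0$ for all $n\geq 1$ excludes the Misiurewicz parabolic type. By the Lemma preceding Subsection \ref{subsec.infini.ringed}, this ensures $L(0)=\{0\}$ in the dynamical plane of $f_a$, so the puzzle machinery of Subsection \ref{subsec.infini.ringed} applies.

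Next, I would apply Proposition \ref{prop.summary.infinite.ringed} to the free critical value $v_-(a):=f_a(c_-(a))$. We have $v_-(a)\in K_{f_a}$ since $a\in\mathcal{C}_\lambda$; $v_-(a)$ is not in the inverse orbit of $0$ by assumption; and $v_-(a)$ is not in any basin of $0$ because $a\in\mathcal{C}_\lambda\setminus\mathcal{H}^\lambda$, so $c_-(a)$ is not attracted to $0$. The proposition then produces a graph of form (\ref{eq.graph.Si}) that rings $v_-(a)$ infinitely; pulling back under $f_a$, the same graph rings $c_-(a)$ infinitely as well. This yields the nested puzzle pieces $(P^{a,v}_n)_{n\geq 0}$ around $v_-(a)$ together with a corresponding nest at $c_-(a)$.

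I would then invoke Theorem \ref{thm.yoccoz}. Because $c_+(a)$ is trapped in the parabolic basin at $0$ and therefore, at sufficiently large depth, is disjoint from the puzzle pieces around $c_-(a)$, the first-return map to the nest at $c_-(a)$ is quadratic rational-like. Yoccoz's dichotomy applied to $c_-(a)$ gives exactly the two alternatives of conclusion (3): if the tableau of $c_-(a)$ is $k$-periodic, one obtains the quadratic-like renormalization $f^k_a:P^{a,v}_{m+k}\to P^{a,v}_m$; otherwise $\mathrm{Imp}(v_-(a))=\{v_-(a)\}$, and one extracts a subsequence $(n_i)$ from the tableau of $c_-(a)$ at its principal non-central returns, so that $f_a^{n_i-n_0}$ is an unramified covering between the Yoccoz annuli $A^a_{n_i}$ and $A^a_{n_0}$, and the Gr\"otzsch inequality forces $\sum_i mod(A^a_{n_i})=\infty$.

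The main technical obstacle is precisely arranging the subsequence $(n_i)$ so that the covering $f_a^{n_i-n_0}:A^a_{n_i}\to A^a_{n_0}$ is unramified of bounded degree. This requires a tableau analysis ensuring that $c_-(a)$ does not reenter the annuli $A^a_{n_i}$, together with a check that the pieces of $\partial\tilde\Omega$ and its iterates included in the graph (\ref{eq.graph.Si}) do not create spurious ramification when pulling back; this is the parabolic counterpart of the standard Yoccoz argument in \cite{Roesch1} and must accommodate the degenerate geometry near the parabolic fixed point $0$.
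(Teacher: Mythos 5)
Your proposal is correct and follows essentially the same route as the paper: verify Assumption ($\Diamond$) (no double parabolic wake, not double parabolic, not Misiurewicz parabolic, so $L(0)=\{0\}$), apply Proposition \ref{prop.summary.infinite.ringed} to $v_-(a)$ to get a graph (\ref{eq.graph.Si}) infinitely ringing it, and then invoke Yoccoz's Theorem \ref{thm.yoccoz} for the dichotomy, which is exactly how the paper derives Theorem \ref{thm.dym}. The only small slip is attributing the divergence $\sum_i mod(A^a_{n_i})=\infty$ to the Gr\"otzsch inequality: that divergence comes from the tableau/annuli combinatorics in the non-periodic case (Gr\"otzsch is what one later uses to deduce shrinking from it), but this does not affect the argument.
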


The following lemma allows us to apply Yoccoz's Theorem to other points of the Julia set.

\begin{lemma}\label{lem.ringed-critical}
Let $L(t_n)$ be the limb containing $f^n(c_-)$, $n\geq 0$. If
\begin{itemize}
    \item $t_n$ is not dyadic for $n$ large enough,
    \item or there exists $n_1,n_2$ such that $t_{n_1}\in [\frac{1}{4},\frac{1}{2})$, $t_{n_2}\in (\frac{1}{2},\frac{3}{4}]$,
\end{itemize}
then there exists $N$ such that $c_-$ is ringed at depth $N$ for \textbf{both} graphs (\ref{eq.graph.Si}) for all $k$ large enough.
\end{lemma}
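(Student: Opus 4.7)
The plan is to reduce both hypotheses to Lemma~\ref{lem.ringed} applied at a suitable iterate of $c_-$, and then to pull the resulting non-degenerate annulus back to $c_-$ itself.

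I would first treat the second hypothesis directly. Given indices $n_1,n_2$ with $t_{n_1}\in[\tfrac14,\tfrac12)$ and $t_{n_2}\in(\tfrac12,\tfrac34]$, Lemma~\ref{lem.ringed} supplies, for every $k$ sufficiently large, a non-degenerate depth-$0$ annulus around $f^{n_1}(c_-)$ for the graph defined by $\theta_-$ and one around $f^{n_2}(c_-)$ for the graph defined by $\theta_+$. Since each $f^{n_i}$ is a polynomial, the component of $f^{-n_i}(P_0\setminus\overline{P_1})$ containing $c_-$ is again a topological annulus with positive modulus (the covering degree only divides the modulus, it cannot collapse it), and this annulus is exactly $P_{n_i}(c_-)\setminus\overline{P_{n_i+1}(c_-)}$ in the corresponding graph. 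Taking $N=\max(n_1,n_2)+1$, we obtain that $c_-$ is ringed at some depth $\leq N$ by each of the two graphs, which is the desired conclusion.

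To reduce the first hypothesis to the second, I would invoke the identity $\kappa(f^q(z))=2\kappa(z)\pmod 1$ established in the proof of Lemma~\ref{lem.ringed.critical}. Applied iteratively along the critical orbit, this gives $t_{n+q}=2t_n\pmod 1$ for $n$ large enough, so the sub-sequence $(t_{n_0+kq})_{k\geq 0}$ is the doubling orbit of a non-dyadic number once $n_0$ is large. A non-dyadic binary expansion contains both the patterns ``$01$'' and ``$10$'' infinitely often, which is equivalent to the doubling orbit entering each of the intervals $[\tfrac14,\tfrac12)$ and $(\tfrac12,\tfrac34]$ infinitely often. This furnishes the required indices $n_1,n_2$ and places us in the situation of the second hypothesis.

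The main technical point to verify, and what I expect to be the principal obstacle, is the pull-back step: one must confirm that $f^{-n_i}(P_0\setminus\overline{P_1})$ really contains an annular component around $c_-$. This requires (i) that $c_-$ is not on the graph $X^m_{n_i}$, which follows from Assumption~$(\Diamond)$ since $a$ is neither Misiurewicz parabolic nor in a parabolic component; and (ii) that the forward orbit $c_-,f(c_-),\ldots,f^{n_i-1}(c_-)$ avoids the parabolic cycle, so that $f^{n_i}$ acts as a proper branched covering on the relevant puzzle pieces. Both points are routine under Assumption~$(\Diamond)$, and together they guarantee that the component of $f^{-n_i}(P_0\setminus\overline{P_1})$ containing $c_-$ is non-degenerate, giving the ringing at depth $n_i+1\leq N$ for the appropriate graph.
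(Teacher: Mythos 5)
Your argument is correct and, modulo the citation, it is essentially the paper's own route: case~2 is handled exactly as the paper indicates (apply Lemma~\ref{lem.ringed} to a suitable iterate of $c_-$, then pull back), and your reduction of case~1 to case~2 via the doubling observation is what the paper delegates to \cite[Lem.~6.2]{Roesch}. The key facts you use are sound: $\kappa$ doubles once per passage through $S_0$, so along multiples of $q$ one has $t_{kq}=2^{k}t_0$, and the doubling orbit of a non-dyadic number must enter both $[\tfrac14,\tfrac12)$ and $(\tfrac12,\tfrac34]$ infinitely often since a non-dyadic binary expansion contains both patterns $01$ and $10$ infinitely often. The pull-back step is likewise correct: $\partial P_{n_i+1}(c_-)$ and $\partial P_{n_i}(c_-)$ are disjoint because their $f^{n_i}$-images $\partial P_1$ and $\partial P_0$ are, so $\overline{P_{n_i+1}(c_-)}\subset P_{n_i}(c_-)$ compactly, giving a non-degenerate annulus of modulus at least $\mathrm{mod}(P_0\setminus\overline{P_1})$ divided by the covering degree.

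Two small imprecisions worth flagging. First, $c_-$ lies in $P_{n_i+1}(c_-)\subset f^{-n_i}(\overline{P_1})$, so it is \emph{not} contained in $f^{-n_i}(P_0\setminus\overline{P_1})$; the annulus you want is $P_{n_i}(c_-)\setminus\overline{P_{n_i+1}(c_-)}$, which need not equal a single component of $f^{-n_i}(P_0\setminus\overline{P_1})$ (there may be further preimages of $\overline{P_1}$ inside it), but its modulus is still bounded below as above. Second, Lemma~\ref{lem.ringed} is stated for $z$ lying in the \emph{same} sector $S_m$ as the index of the graph; since $f^n(c_-)\in S_{m+np\bmod q}$, you should choose the indices $n_1,n_2$ from the subsequence $q\mathbb{N}$. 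Your doubling argument already produces them there, and for hypothesis~2 with arbitrary $n_i$ one replaces $n_i$ by the nearest multiple of $q$, which carries the same $\kappa$-value. Neither point affects the validity of the argument, but both should be stated explicitly.
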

\begin{proof}
The proof for the first case is just a repeat of \cite[Lem. 6.2]{Roesch}. The second case is deduced directly from Lemma \ref{lem.ringed}
\end{proof}

\begin{theorem}\label{thm.local-connectivity-julia}
Let $f$ satisfy Assumption ($\Diamond$). For all $n\geq 0$, $f^{-n}(B^*_m(0))$ is locally connected. 
\end{theorem}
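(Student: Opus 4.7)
The plan is to first establish local connectivity of $\overline{B^*_m(0)}$ and then pull back to handle $f^{-n}(B^*_m(0))$ for $n\geq 1$. Under Assumption $(\Diamond)$, the free critical point $c_-$ lies outside $\bigcup_i\overline{B^*_i(0)}$ while $c_+$ is interior to $B^*_0(0)$; hence every connected component of $f^{-n}(B^*_m(0))$ is a proper branched covering of $B^*_m(0)$, ramified only at the interior critical point $c_+$, and local connectivity transfers under such coverings.

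Since $B^*_m(0)$ is simply connected, local connectivity of $\overline{B^*_m(0)}$ reduces to local connectivity of $\partial B^*_m(0)$ at every point $z$. First, if $z$ lies in the grand orbit of $0$ inside $\partial B^*_m(0)$, then Corollary \ref{cor.loc.connec.inverse-orbit0} combined with iterating $f$ yields local connectivity: at each such $z$ the iterates of $f$ act as local homeomorphisms since $c_-\notin\bigcup_i\overline{B^*_i(0)}$, so the property spreads from the already-treated point $0\in\partial B^*_0(0)$.

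Second, suppose $z\in\partial B^*_m(0)$ is not in the grand orbit of $0$. Then $z\in K_f$, $z$ is not in any basin of $0$, and Assumption $(\Diamond)$ together with the wake criterion in Subsection \ref{subsec.wakes-to-B} gives $L(0)=\{0\}$, so Proposition \ref{prop.summary.infinite.ringed} applies and produces a graph of type (\ref{eq.graph.Si}) infinitely ringing $z$. I then invoke Yoccoz's Theorem \ref{thm.yoccoz} at $c_-$. If the tableau of $c_-$ is not periodic, the theorem outputs $\mathrm{Imp}(z)=\{z\}$ for every infinitely-ringed point, which is exactly what we need.

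The main obstacle is the renormalizable sub-case, where the tableau of $c_-$ is $k$-periodic, $f_a^k$ is quadratic-like on deep puzzle pieces $P^{a,v}_{l+k}\to P^{a,v}_l$, and $K:=\mathrm{Imp}(c_-)=\bigcap_m P^{a,v}_m$ is a filled quadratic-like Julia set. Here Yoccoz's theorem gives $\mathrm{Imp}(z)=\{z\}$ precisely when the forward $f_a$-orbit of $z$ avoids $K$, and since that orbit is confined to $\bigcup_i\partial B^*_i(0)$, the claim reduces to the disjointness $K\cap\bigcup_i\partial B^*_i(0)=\emptyset$. My plan for this disjointness is to exploit the puzzle geometry: the parabolic fixed point $0$ lies on the graph defining the puzzle at every depth, hence on the boundary of every piece, so $0\notin K$; furthermore $K$ is compactly contained in a deep piece $P^{a,v}_{n_0}$ whose intersection with $\bigcup_i\partial B^*_i(0)$ consists only of landing points of internal rays of the immediate basins — all preimages of $0$, hence already handled by Case (i) and excluded from $z$'s orbit. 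A hypothetical further intersection point $w\in K\cap\partial B^*_j(0)$ would, by density of repelling periodic points in $\partial K$, yield a repelling periodic cycle of the renormalization lying on the parabolic basin boundary and inside the little Julia set $K$, which contradicts the description of periodic orbits on $\partial B^*_j(0)$ provided by the combinatorial structure set up in Section \ref{sec.prop1_and_prop2} (the only non-grand-orbit cycles on $\partial B^*_j(0)$ are repelling cycles whose rays land at points strictly separated from $K$ by the puzzle graph). This disjointness yields $\mathrm{Imp}(z)=\{z\}$ and completes the proof.
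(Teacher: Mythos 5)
Your decomposition (grand orbit of $0$ versus the rest, then Yoccoz's dichotomy at $c_-$) matches the paper's outline, and the non-renormalizable branch is handled correctly. But there is a genuine gap in the renormalizable branch: the disjointness $K\cap\bigcup_i\partial B^*_i(0)=\emptyset$ that your argument hinges on is false. When $f_a$ is renormalizable the small filled Julia set $K=\mathrm{Imp}(c_-)=\bigcap_n P^{a,v}_n$ typically does touch $\bigcup_i\partial B^*_i(0)$, namely at the $\beta$-fixed point of the renormalization (the root of the limb carrying $K$). This is exactly the situation isolated in Proposition~\ref{prop.mandel} and Corollary~\ref{cor.critical-value-on-boundary}: for $a\in\partial\mathcal{B}_m$ renormalizable, $v_-(a)=\bigcap_n\overline{P^{a,v}_n}\cap\partial B^*_m(0)$ is a periodic point on the basin boundary and the quadratic copy is rooted there. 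Your ``hypothetical further intersection point'' contradiction does not go through because a single periodic touching point is precisely the allowed configuration, not a forbidden one; density of repelling cycles in $\partial K$ does not force any arc of $\partial K$ into $\partial B^*_j(0)$. For such $z$, the forward orbit of $z$ \emph{does} meet $K$, so Yoccoz's Theorem outputs $\mathrm{Imp}(z)$ a conformal copy of $K$, not $\{z\}$. The paper accepts this outcome and instead observes that the copy sits in a wake $W$ separated from $B^*_m(0)$ by two external rays landing at $z$, and that $(P_n(z)\setminus W)_n$ is then a connected neighborhood basis of $z$ — an extra step your proposal is missing.

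A second, smaller gap: you invoke Proposition~\ref{prop.summary.infinite.ringed} to get a graph (\ref{eq.graph.Si}) infinitely ringing $z$ and then ``invoke Yoccoz's Theorem at $c_-$,'' but Theorem~\ref{thm.yoccoz} requires a \emph{single} admissible graph that rings $c_-$ (so the tableau is meaningful) \emph{and} infinitely rings $z$. The graph handed to you for $z$ by Proposition~\ref{prop.summary.infinite.ringed} is only ``one of'' the two graphs (parametrized by $\theta_\pm$), and there is no a priori reason it also rings $c_-$. The paper takes care of this by first applying Lemma~\ref{lem.ringed-critical} to get $c_-$ ringed by \emph{both} graphs, and then separately treats the leftover dyadic cases (critical limb $t_n$ always dyadic and confined to one side) via Lemma~\ref{lem.critlimb=1/2} and the shrinking lemma along the orbit. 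Your proposal omits this case analysis.
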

\begin{proof}
Let $z\in B^*_m(0)$. Corollary \ref{cor.loc.connec.inverse-orbit0} treat the case when $z$ is in the inverse orbit of $0$. For other $z$, suppose first that $c_-\in L(t_n)$ satisfies one of the hypothesis in Lemma \ref{lem.ringed}, then Lemma \ref{lem.orbit.hit.boundary} allows us to use Yoccoz's Theorem to get the dichotomy: if the intersection of the puzzle pieces containing $z$ shrink to $z$, then we are done; if the intersection is a quadratic copy, then this copy is separated from $B^*_m(0)$ by two external rays landing at $z$. Let $W$ be the open region bounded by the two rays containing the quadratic copy, then $(P_n(z)\setminus W)_n$ form a connected basis of $z$. For details of this part, see the proof of \cite[Thm. 1]{Roesch}.

The only case left not covered by Lemma \ref{lem.ringed-critical} is that $t_n$ is dyadic for all $n$, but either $t_{n}\not\in [\frac{1}{4},\frac{1}{2})$ for all $n$ or $t_{n}\not\in (\frac{1}{2},\frac{3}{4}]$ for all $n$. For example if it is the first alternative, we treat it furthermore by two subcases:
\begin{itemize}
    \item $t_n = \frac{1}{2}$ for all $n$. By Lemma \ref{lem.critlimb=1/2}, we may suppose (\ref{eq.graph.Si}) defined by $\theta_+$ rings $c_-$ for all $k$ large enough. By the proof of Lemma \ref{lem.orbit.hit.boundary}, $z$ might not be infinitly ringed by the graph of $\theta_+$ only when $\kappa(z)$ is recurrent to $\frac{1}{4}$. However, in this case there exists $k$ large enough such that the graph with $\theta_+$ does not contain $z$ for all $n$. There is $N$ such that the puzzle piece $P_0$ containing $w\in\partial B^*m(0)$ with $\kappa(w) = \frac{1}{4}$ contains $f^N(z)$. Then $\overline{P_0}$ does not contain forward orbit of $c_-$. Hence by the shrinking lemma, the pieces around $f^N(z)$ shrink to $f^N(z)$.
    \item $t_n$ is not always $\frac{1}{2}$. Without loss of generality, suppose $t_{n_0} = \frac{3}{4}$. Then by Lemma \ref{lem.ringed}, $c_-$ is ringed at depth 0 by graph of $\theta_+$. Similarly as the above case we can apply the shrinking lemma for $\kappa(z)$ recurrent to $\frac{1}{4}$.
\end{itemize}

\end{proof}

\subsection{Wakes attached to end points}\label{subsec.wake-end-points}
Suppose $f$ satisfy Assumption ($\Diamond$). Set $B_0 := \bigcup_m B^*_m(0)$. Define $B_n$ to be the union of Fatou components not in $B_{n-1}$ but attached to $\partial B_{n-1}$ at a preimage of $0$. Set $B = \overline{\bigcup_n B_n}$. For each component $U\subset B_n$, $\overline{U}$ is eventually, firstly and bijectively sent to some $\overline{B}_m^*(0)$ by a certain iteration of $f$. So any $z\in\partial U$ is associated to an angle $\omega_n$. Recall that $B_m^*(0)\subset S_m$, so we associate $U$ with $\epsilon_n = m$. Therefore we can using the following address to encode the position of $z$:
\begin{equation}\label{eq.parametri.julia}
    [(\epsilon_0,\omega_0);(\epsilon_1,\omega_1);...;(\epsilon_n,\omega_n))]
\end{equation}
where $\omega_0,...\omega_{n-1}$ are dyadic, $\omega_n\in\mathbb{R}/\mathbb{Z}$; $0\leq \epsilon_k\leq q-1$ is chosen so that $f^k(z)\in S_{\epsilon_k}$. In the same way we can encode the position of components of $B_n$.

Define wakes $W_U(\omega)$ and limbs $L_U(\omega)$ attached to $\partial U$ similarly as \ref{subsec.wakes-to-B}
Let $W$ be the open region bounded by the two external rays separating $U$ with $B_{n-1}$. Likewise we have $W  = U\cup\bigcup_{\omega\not=0} W_U(\omega)$, $K_f\cap W = U\cup\bigcup_{\omega\not=0} L_U(\omega)$. Notice that the wakes and limbs defined here include those in \ref{subsec.wakes-to-B}: $L_{B_m^*(0)}(\omega) = L(\omega)\cap S_m$.

\begin{proposition}\label{prop.non-trivial-limb}
Suppose $\omega$ is not dyadic. Let $\{z\} = L_U(\omega)\cap \partial U$. If $L_U(\omega)$ is non trivial, then either $z$ is in the inverse orbit of $c_-$ or $z$ is (pre)-periodic. Moreover, if $L_U(\omega)$ is non trivial, then there are exactly two external rays landing at $z$ separating $L_U(\omega)$ with $U$, otherwise there is only one.
\end{proposition}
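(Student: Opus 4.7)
The plan is to reduce first to the case $U = B_m^*(0)$ (every component of $B_n$ is eventually mapped homeomorphically onto some immediate basin by an iterate of $f$, preserving the combinatorial encoding $\omega$ and the limb structure), and then analyse $z$ via Yoccoz's theorem applied to the admissible graphs from Subsection \ref{subsec.infini.ringed}.

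For the first assertion I argue by contrapositive: assume $z$ is neither (pre-)periodic nor in the inverse orbit of $c_-$, and show $L_U(\omega) = \{z\}$. Since $\omega$ is non-dyadic, $z$ is not in the inverse orbit of $0$; combined with the hypotheses and Assumption ($\Diamond$) the forward orbit of $z$ avoids $c_-$ and the parabolic cycle. Following the case analysis of Theorem \ref{thm.local-connectivity-julia}, Lemma \ref{lem.ringed-critical} (supplemented by the boundary-angle arguments used at the end of that proof) produces, for $k$ large enough, an admissible graph of the form (\ref{eq.graph.Si}) that rings $c_-$ at some depth; pulling back by $f$ yields an infinite nest of puzzle pieces $P_n(z) \ni z$. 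The Yoccoz dichotomy (Theorem \ref{thm.yoccoz}) then gives: either $Imp(z) = \bigcap_n \overline{P_n(z)} = \{z\}$, in which case $K_f$ is locally connected at $z$ and the continuum $L_U(\omega)$, which meets $\partial U$ only at $z$, must reduce to $\{z\}$; or $Imp(z)$ is a conformal copy of $Imp(c_-)$ associated with a renormalization, which forces the forward orbit of $z$ to enter the filled Julia set of that quadratic-like renormalization. In the second case $z$ must eventually land on a specific (pre-)periodic boundary point of the small filled Julia set, contradicting the hypothesis that $z$ is not (pre-)periodic.

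For the ray-counting, use the definition $W_U(\omega) = \bigcap_n \overline{U_n}$: the boundaries $\partial U_n$ carry a leftmost and a rightmost external-ray arc, and by monotonicity of angles these arcs converge to at most two limit external rays $R^\infty(t^-), R^\infty(t^+)$ landing at $z$. When $t^- \neq t^+$, these two rays bound $W_U(\omega)$ and hence separate $L_U(\omega)$ from $U$; a further ray landing at $z$ inside $W_U(\omega)$ would split it into two wakes, contradicting that $\bigcap_n \overline{U_n}$ is a single component. When $t^- = t^+$ the wake degenerates to a single ray, so $L_U(\omega) \subset R^\infty(t^\pm) \cap K_f = \{z\}$, which is the trivial case; stability of landings at repelling (pre-)periodic points (Douady--Sullivan together with Lemma \ref{lem.landing.stable-region}), and Proposition \ref{prop.landing.four} in the inverse-orbit-of-$c_-$ case, rule out any additional external rays landing at $z$ in the complementary direction.

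The main obstacle is the renormalizable branch of Yoccoz's dichotomy: one must show carefully that $Imp(z)$ being a conformal copy of $Imp(c_-)$ forces the point $z$ itself (not merely its impression) to be pre-periodic. The key observation is that $z \in \partial U$ lives on the boundary of an immediate basin, so its forward orbit is constrained to the boundaries of parabolic basins, and hence must land on the (pre-)periodic boundary point of the quadratic-like filled Julia set sitting on that boundary.
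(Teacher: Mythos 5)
Your route is genuinely different from the paper's, but it has two gaps at the decisive points. The more serious one is the renormalizable branch of the Yoccoz dichotomy: your ``key observation'' that the forward orbit of $z$, being confined to $\bigcup_i\partial B^*_i(0)$, must land on ``the (pre-)periodic boundary point of the quadratic-like filled Julia set sitting on that boundary'' is exactly what needs to be proved. A priori $Imp(c_-)\cap\partial B^*_m(0)$ could be empty, could consist of a non-pre-periodic point, or could be larger than a point; the statement that a small copy attaches to the basin boundary at a single pre-periodic root is essentially the proposition itself (applied to the critical limb), so the argument is circular at this step. In addition, to invoke Theorem \ref{thm.yoccoz} at $x=z$ you need a single admissible graph that rings $c_-$ and infinitely rings $z$; Proposition \ref{prop.summary.infinite.ringed} and Lemma \ref{lem.ringed-critical} do not always supply one (the all-dyadic, one-sided cases in the proof of Theorem \ref{thm.local-connectivity-julia} are handled by the shrinking lemma, not by Yoccoz), so that case division would have to be carried out again here rather than waved at.

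The other branch and the ray count are also not tight. From $Imp(z)=\bigcap_n\overline{P_n(z)}=\{z\}$ you conclude $L_U(\omega)=\{z\}$, but this is the converse of the implication that is actually available: the pieces $P_n(z)$ are neighborhoods of $z$, and a non-trivial continuum attached at $z$ simply exits each small piece without contradiction; the paper only ever uses ``limb trivial $\Rightarrow$ pieces shrink'', never the reverse. Likewise, the landing at $z$ of the two limit rays and the uniqueness of that pair do not follow from ``$\bigcap_n\overline{U_n}$ is a single component''; in the paper they come from the shrinking of $P_n\setminus W$ and from the separate uniqueness argument quoted from \cite[Lem. 7.2]{Roesch}. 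For comparison, the paper's proof of the first assertion is elementary and avoids the dichotomy altogether: a non-trivial limb has a wake of positive angular width, each application of $f$ triples this width, so some $f^N$ maps $L_U(\omega)$ injectively onto the critical limb; the critical limb must be attached to an immediate basin $B^*_m(0)$ (otherwise its forward images would never again contain it), which forces its root to be periodic, hence $z$ is pre-critical or pre-periodic. If you want to salvage your approach, the renormalizable branch needs an independent proof that the copy meets $\bigcup_i\partial B^*_i(0)$ only at a pre-periodic point, and the non-renormalizable branch needs a direct argument that a positive-width wake at a non-pre-periodic, non-pre-critical root is impossible.
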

\begin{proof}
Suppose $L_U(\omega)$ is non trivial and $z$ not in the inverse orbit of $c_-$. Then $L_U(\omega)$ is sent injectively to the critical limb by some $f^N$ (since the width of $W_U(\omega)\textgreater 0$ and it is multiple by $3$ by every iteration of $f$). So we may suppose $L_U(\omega)$ is the critical limb. Then $U = B^*_m(0)$, since for otherwise $f^n(L_{f(U)}(f(z))$ will never contain $L_U(\omega)$. Hence $z$ is periodic. 

Since in the proof of Theorem \ref{thm.local-connectivity-julia}, we see that the pieces shrink to $z$ if the limb is trivial, so it is direct that there is only one external ray landing at $z$ if furthermore $z$ does not hit $c_-$. Existence of two external rays landing at $z$ if the tableau of $z$ is (pre-)periodic comes from the shrinking property of $P_n\setminus W$, where $W$ is as in the begining of the proof of Theorem \ref{thm.local-connectivity-julia}. For the proof of the uniqueness of these two rays, see \cite[Lem. 7.2]{Roesch}.
\end{proof}

More generally, consider $z\in B\setminus\bigcup_n\overline{B_n}$,  
and suppose $z_k\in\bigcup_n\overline{B}_n$ converging to $z$. Then for all $n$, there exists $z_k$ such that $z\in\overline{B}_n$. Otherwise, there exists $N$ such that for $k\geq N$, $z_k$ belongs to the closure of some Fatou component in $B_{N+1}$ attached at some $\partial U\subset B_N$. Clearly $z_k$ converges to some $L_{U}(t)$ with $z_k\not\in L_U(t)$ since $z\in B\setminus\bigcup_n\overline{B_n}$. Then $L_U(t)$ must be non trivial since $z$ is not the root of $L_U(t)$. By Proposition \ref{prop.non-trivial-limb}, $L_U(t)$ is separated from all other limbs by two external rays, so $z_k$ can not converge to $z$. Therefore there exists a unique sequence of Fatou components $U_n\subset B_n$ such that $z\in \bigcap_n L_{U_n}(\omega_n)$ with $\omega_n$ dyadic. Thus any $z\in \partial B\setminus\{0\}$ can be represented by a unique infinite sequence (if $z\in\partial\overline{B_n}$, add zeros after $(\epsilon_n,\omega_n)$):
\[[z] := [(\epsilon_0,\omega_0);(\epsilon_0,\omega_1);...;(\epsilon_n,\omega_n));...].\]
where $\omega_k\in(0,1)$, $0\leq \epsilon_k\leq q-1$. The coordinate for $f(z)$ is 

\begin{equation}
   [f(z)] = \left\{
\begin{aligned}
&[(\epsilon_0+p,2\omega_0);(\epsilon_1,\omega_1);...;(\epsilon_n,\omega_n))...],\,\,\text{if } \epsilon_0 = 0, \omega_0\not=\frac{1}{2}\\
&[(\epsilon_1,\omega_1);(\epsilon_2,\omega_2);...;(\epsilon_n,\omega_n))...],\,\,\text{if } \epsilon_0 = 0, \omega_0=\frac{1}{2}\\
&[(\epsilon_0+p,\omega_0);(\epsilon_1,\omega_1);...;(\epsilon_n,\omega_n))...],\,\,\text{if } \epsilon_0 \not= 0
\end{aligned}
\right.
\end{equation}

\begin{lemma}\label{lem.twopreimages.B}
$z\in B$ has at least 2 preimages in $B$.
\end{lemma}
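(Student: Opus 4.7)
My plan is to combine a backward invariance property of $B$ on Fatou components with continuity of $f^{-1}$, exploiting that $\deg f = 3$. The key intermediate claim is that every Fatou component of $f$ contained in the parabolic basin of $0$ belongs to $\bigcup_n B_n \subseteq B$; once this is established, any preimage Fatou component of a component $U \subseteq B$ automatically sits in $B$, since preimages of parabolic-basin components remain in the parabolic basin.

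I would prove this intermediate claim by induction on the smallest $n \geq 0$ with $f^n(F) = B^*_m$ for some $m$, where $F$ is a parabolic-basin Fatou component. The base case $n = 0$ is clear. For the inductive step, suppose $f(F) \in B_{n-1}$; then $\partial F$ contains a preimage $y$ of the root point of $f(F)$, which is itself a preimage of $0$. Using the local flower/cusp structure at $y$ together with the tree-like attachment of Fatou components in the parabolic basin, I identify a second local sheet at $y$ sitting in a Fatou component $W$ whose image under $f$ is the parent of $f(F)$. By induction $W \in B_{n-1}$, and $F$ attaches to $W$ at the preimage $y$ of $0$, placing $F \in B_n$.

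With the intermediate claim in hand, I handle the lemma in three cases. If $z$ is in (or on the closure of) a Fatou component $U \subseteq B$, the three preimages of $z$ lie in Fatou components of $f^{-1}(U) \subseteq B$; away from critical values this gives three distinct preimages in $B$. If $z \in B \setminus \bigcup_n \overline{B_n}$, the nested-limb description preceding the lemma realizes $z$ as a limit of boundary points of Fatou components in $B$, and the three preimages of $z$ arise as limits of preimages, hence lie in $\overline{B} = B$. The delicate critical-value case is $z = v_-$: if $v_- \in B$, then $v_-$ lies in the parabolic basin, which forces $c_-$ (the double preimage) to lie in the parabolic basin and thus in $B$; together with the simple third preimage, this yields at least two distinct preimages in $B$.

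The main obstacle is the inductive step of the intermediate claim, which requires a careful local analysis at preimages of $0$ to identify the correct parent Fatou component in $B_{n-1}$. Once the tree structure of the parabolic-basin components is established, the remaining arguments reduce to a degree count and continuity of $f^{-1}$.
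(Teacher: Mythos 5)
Your overall strategy matches the paper's: both you and the paper first dispatch $z$ in the closure of some $B_n$ and then handle end-points by approximating $z$ with $z_n \in \overline{B_{N_n}}$ and passing to limits of preimages, using that $B$ is closed. Your intermediate claim — that $\bigcup_n B_n$ exhausts the parabolic-basin Fatou components, hence is backward invariant on Fatou components — usefully spells out the structure the paper relies on when it declares the first case ``clear''.

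There is, however, a genuine flaw in Case~3. You assert that $v_- \in B$ implies $v_-$ lies in the parabolic basin, but $B = \overline{\bigcup_n B_n}$ is the \emph{closure} of that basin, and under Assumption~($\Diamond$) the free critical value $v_-$ is \emph{never} in the open parabolic basin: if $v_- = f(c_-)$ were in the open basin, complete invariance of the open basin would place $c_-$ in it as well, forcing $a \in \mathcal{H}^\lambda$, which Assumption~($\Diamond$) excludes. So the hypothesis of Case~3 never holds in the sense you need, and the intended inference ($c_-$ in the open basin) would contradict the standing assumption. Fortunately Case~3 is also unnecessary: when $z = v_-$, $z$ is either on the closure of a component $U \subseteq B$ or an end-point, and in both situations it has exactly two distinct preimages $c_-$ and $w$ (the critical point $c_-$ being simple), each lying in $B$ — either because they sit in $\overline{f^{-1}(U)} \subseteq B$ (Case~1, via your intermediate claim and complete invariance of the closed basin), or because each is a limit of preimages of $z_n$ and $B$ is closed (Case~2). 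You should drop the ``away from critical values'' caveat in Case~1, replace ``the three preimages of $z$'' in Case~2 by ``the preimages of $z_n$'' (since $z = v_-$ has only two distinct preimages), and delete Case~3.
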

\begin{proof}
This is clear for $z\in \overline{B_n}$. Take $z\in B\setminus\bigcup_n\overline{B_n}$, then $z$ is the limit of some $(z_n)$ with $z_n\in \overline{B_{N_n}}$. Each $z_n$ has two preimages $x_n,y_n$, which, up to taking a subsequence, have different limits $x,y$. Clearly $f(x) = f(y) = z$.
\end{proof}

\begin{remark}\label{rem.coordi.quadratic.model}
The construction of the sequence $[(\epsilon_0,\omega_0);(\epsilon_0,\omega_1);...;(\epsilon_n,\omega_n);...]$ is also valid for the quadratic model $P_{\lambda}(z)=e^{2\pi i\frac{p}{q}}z+z^2$. Moreover the following map is bijective:
\begin{equation}\label{eq.encoding.julia}
    \Xi: J_{\lambda}\setminus\{0\}\longrightarrow [{\mathbb{Z}}/{q\mathbb{Z}}\times(0,1)]^\mathbb{N},\,\,z\mapsto[z].
\end{equation}
Indeed, the surjectivity is clear; the injectivity is also clear for $[z]$ finite. Notice that $J_{\lambda}$ can be regarded as a pinching limit of the Julia set of $\lambda z+z^2$ with $|\lambda|\textless 1$ and the pinching map $\phi$ is injective beyond the skeletons, while the skeletons are $\phi^{-1}(y)$ with $y$ preimage of $0$ (cf. Theorem \ref{thm.pinching}). Thus if $[z]$ is infinite, then clearly $\phi^{-1}(z)$ does not intersect any skeleton, hence $\#\phi^{-1}(z) = 1$.
\end{remark}

\begin{lemma}\label{lem.unique.external.end-point}
Suppose $f$ is non renormalisable. If $v_- = f(c_-)\in B\setminus\bigcup_n\overline{B_n}$, then there is only one external ray landing at $v_-$.
\end{lemma}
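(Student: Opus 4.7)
The plan is to combine the shrinking of Yoccoz puzzle pieces around $v_-$ with the multiplication-by-$3$ rule on external ray angles, applied to the nested wakes coming from the address of $v_-$.

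First, I verify that Assumption $(\Diamond)$ is in force. Non-renormalizability is given. The condition $v_- \in B \setminus \bigcup_n \overline{B_n}$ forces $a \notin \mathcal{H}^\lambda$ (otherwise $v_-$ would lie in a Fatou component disjoint from the parabolic tree $B$), $f$ is not Misiurewicz parabolic (the orbit of $v_-$ does not hit $0$, since the address of $v_-$ is infinite), and $a$ does not lie in a double parabolic wake (by Corollary \ref{cor.double.para.wake} the portrait at $0$ would then contain two cycles of rays, which is incompatible with $v_-$ admitting a single nested chain of wakes of the form $L_{U_n}(\omega_n)$). By Proposition \ref{prop.summary.infinite.ringed} an admissible graph infinitely rings $v_-$. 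Theorem \ref{thm.dym} then gives $\sum_i \operatorname{mod}(A^a_{n_i}) = \infty$ (the renormalization alternative in item 3 is excluded by hypothesis), and consequently $\bigcap_n \overline{P^{a,v}_n} = \{v_-\}$.

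Next, I use the infinite address of $v_-$. By \ref{subsec.wake-end-points}, one has $v_- \in \bigcap_n L_{U_n}(\omega_n)$ for a unique strictly nested sequence of wakes $W_{U_n}(\omega_n)$, with $U_n \subset B_n$ a Fatou component and each $\omega_n \in (0,1)$ dyadic (because $v_- \notin \bigcup_n \overline{B_n}$). By the construction in \ref{subsec.wakes-to-B}, each such wake is bounded by exactly two external rays $R^\infty(\alpha_n^-), R^\infty(\alpha_n^+)$ landing at the common preimage $x_n := \overline{W_{U_n}(\omega_n)} \cap \partial U_n$ of $0$. Since $W_{U_n}(\omega_n) \subset P^{a,v}_{k(n)}$ with $k(n) \to \infty$, the shrinking of puzzle pieces forces $\operatorname{diam} W_{U_n}(\omega_n) \to 0$. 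Any external ray $R^\infty(\theta)$ landing at $v_-$ has a tail in every $W_{U_n}(\omega_n)$, so its angle $\theta$ lies in the arc $I_n := [\alpha_n^-, \alpha_n^+] \pmod{1}$ for all $n$.

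Finally, I compute the angular width of $I_n$. Let $m_n$ be the smallest integer such that $f^{m_n}(U_n)$ is an immediate basin $B^*_i(0)$; by construction $m_n = n$ and $f^{m_n}(x_n) = 0$. Because each intermediate Fatou component $f^k(U_n)$, $0 \leq k < m_n$, avoids the critical points ($c_+$ sits only in $B^*_0(0)$, not in any intermediate component, and $c_-$ is on $J_f$ and hence in no Fatou component), the iterate $f^{m_n}$ is univalent on a neighborhood of $\overline{U_n}$ and maps the bounding pair $R^\infty(\alpha_n^\pm)$ onto the boundary rays of one of the finitely many base wakes $U(t)$ attached to some $\partial B^*_i(0)$. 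Using $f(R^\infty(\theta)) = R^\infty(3\theta)$, one deduces $|I_n| = 3^{-m_n} w_0$ with $w_0$ the (uniformly bounded) angular width of $U(t)$. Hence $|I_n| \to 0$, the arcs $I_n$ collapse to a single angle, and exactly one external ray lands at $v_-$.

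The main obstacle is verifying that $a$ satisfies Assumption $(\Diamond)$ under the standing hypotheses — especially ruling out that $a$ lies in some double parabolic wake, which requires comparing the portrait at $z=0$ predicted by Corollary \ref{cor.double.para.wake} with the uniqueness of the nested wake chain at $v_-$ — and ensuring the univalence of $f^{m_n}$ on a neighborhood of $\overline{W_{U_n}(\omega_n)}$ so that the angle-multiplication formula produces the claimed geometric decay $|I_n| = 3^{-n} w_0$.
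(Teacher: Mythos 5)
Your reduction of uniqueness to the decay of the angular widths $|I_n|$ of the nested wakes $W_{U_n}(\omega_n)$ contains the real difficulty, and the step that is supposed to deliver it does not work as written. First, the inclusion $W_{U_n}(\omega_n)\subset P^{a,v}_{k(n)}$ cannot hold: wakes are unbounded (they are cut out by two external rays), while puzzle pieces are truncated by an equipotential; and even for the limbs, the assertion that they sit inside deep puzzle pieces around $v_-$ is not a consequence of $\bigcap_n\overline{P^{a,v}_n}=\{v_-\}$ — the inclusions go the wrong way, and shrinking of the limbs is essentially part of what is being established, not an available input. Second, and more seriously, the formula $|I_n|=3^{-m_n}w_0$ does not follow from $f(R^\infty(\theta))=R^\infty(3\theta)$ together with univalence of $f^{m_n}$ on a neighborhood of $\overline{U_n}$. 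Univalence near the Fatou component says nothing about the whole wake, which contains large parts of $J_f$ (including $v_-$, and possibly $c_-$ in some intermediate image $f^j(W_{U_n}(\omega_n))$). Knowing only that the two bounding rays map after $m_n$ steps onto the bounding rays of a base wake gives $3^{m_n}|I_n|\equiv w_0\pmod 1$, i.e.\ $|I_n|\in 3^{-m_n}(w_0+\mathbb{Z}_{\geq 0})$, which is perfectly compatible with $|I_n|$ staying bounded below. To pin down $|I_n|=3^{-m_n}w_0$ you need $f^{m_n}$ to be injective on the wake, i.e.\ you need the intermediate images of the wake to avoid the free critical point $c_-$; your remark that ``$c_-$ is on $J_f$ and hence in no Fatou component'' does not help, because wakes contain plenty of Julia set.

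This critical-point-avoidance issue is exactly the crux, and the paper resolves it by a different mechanism that your proposal never invokes: Lemma \ref{lem.twopreimages.B} shows every point of $B$ has at least two preimages in $B$, and since $f$ has degree $3$ and $c_-$ already accounts for a double preimage of $v_-$, this forces $c_-\in B$. The paper then argues in the contrapositive direction: if two rays landed at $v_-$, the open region $W$ they cut off from $B$ would, by the angle-tripling argument, be mapped injectively onto some $f^N(W)$ containing $c_-$, contradicting $c_-\in B$ since $f^N(W)$ stays separated from $B$. So your outer ingredients (Assumption $(\Diamond)$, Theorem \ref{thm.dym} for existence of a landing ray, the address/wake structure of \ref{subsec.wake-end-points}) are fine, but the uniqueness argument is missing the key input $c_-\in B$ (or an equivalent control of where the critical orbit of the wakes' images can go), and without it the width computation is circular. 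A minor additional slip: $m_n=n$ is not ``by construction''; the number of iterates needed for a component of $B_n$ to reach an immediate basin is not $n$ in general, though this does not affect the substance.
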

\begin{proof}
The existence of external ray landing at $v_-$ comes from the shrinking property of puzzles pieces around it given by Theorem \ref{thm.dym}. Suppose the contrary that there are two rays landing. Let $W$ be the open region separated from $B$ by these two rays. Then there exists $N$ such that $W$ is sent injectively onto $f^N(W)$ with $c_-\in f^N(W)$. But this is impossible, since by Lemma \ref{lem.twopreimages.B} $f^{-1}(v_-)$ has two preimages in $B$, so $c_-$ must belong to $B$ because $f$ has degree 3.
\end{proof}

\section{Passing to parameter plan}\label{sec.paragraph}
In this section we pass the dynamical combinatorics to parameter ones. To simplify the redaction, from now on we will mainly work in the  "fundamental region" ${\mathcal{S}}^+_m$ for $0\leq m\leq\lfloor\frac{q}{2}\rfloor$ (recall Definition \ref{def.fundamental.Sm}).

\subsection{Parameter equipotentials and rays}

For $m=0$, define parameter equipotentials and union of internal rays in ${\mathcal{D}}_0$ and $\tilde{\mathcal{D}}_0$ by

\begin{equation}
        {\mathcal{E}}^0_n = ({\Psi}^{adj}_0)^{-1}(E^0_n\cap B^*_{{p}}(0)),\,\,\mathcal{R}^0_n = ({\Psi}^{adj}_0)^{-1}(R^0_n\cap B^*_{{p}}(0)),\,\,  n\geq 0.
\end{equation}

For $1\leq m\leq \lfloor\frac{q}{2}\rfloor$, define parameter equipotentials and union of internal rays in ${\mathcal{D}}_m$ resp. $\tilde{\mathcal{D}}_m$ by 

\begin{equation}
        \mathcal{E}^m_n = (\Psi^{bit}_m)^{-1}(E^m_n\cap B^*_{\overline{m+p}}(0)),\,\, \mathcal{R}^m_n = (\Psi^{bit}_m)^{-1}(R^m_n\cap B^*_{\overline{m+p}}(0)),\,\, n\geq 0.
\end{equation}

Let $k\geq 1$, $\mathcal{U}_k\subset\tilde{\mathcal{S}}_m$ be a capture component. Suppose for $a\in\mathcal{U}_k$, $c_-(a)$ firstly hit $B_{a,l}^*(0)$ after $k$ iterations. Define the parameter equipotentials and internal rays by
\begin{equation}
    \mathcal{E}^{\mathcal{U}_k}_n = (\Psi_{\mathcal{U}_k})^{-1}(E^m_n\cap B^*_{{l}}(0)),\,\, \mathcal{R}^{\mathcal{U}_k}_n = (\Psi_{\mathcal{U}_k})^{-1}(R^m_n\cap B^*_{l}(0)),\,\, n\geq 0.
\end{equation}

Next we investigate landing properties for equipotentials in $\mathcal{B}_m$. We only state the result for $\mathcal{E}^m_n$ in $\mathcal{D}_m$ since it will be the same for $\tilde{\mathcal{E}}^m_n$.

\begin{proposition}\label{prop.landing.equi.bitran}
Let $n,k\geq 0$. Let $1 \leq l\leq q$ be such that $m = lp \pmod{q}$.   Then $\mathcal{E}_n^m\cap\partial\mathcal{D}_m$ consists of finitely many points. Denote by $\sfrac{\mathcal{E}_n^m}{\sim}$ the quotient space of $\mathcal{E}_n^m\cap\partial\mathcal{D}_m$ by 
\begin{itemize}
    \item gluing the double parabolic parameter on $\partial\mathcal{B}_0$ and $2\sqrt{\lambda}$ if $m=0$;
    \item gluing the two parabolic parameters on $\partial\mathcal{B}_m$ if $1\leq m\leq \frac{q}{2}$.
\end{itemize}
 Then $\sfrac{\mathcal{E}_n^m}{\sim}$ is homeomorphic to $E_n^m\cap B_{\overline{m+p}}^*$. In particular $\mathcal{E}^m_{kq} = \mathcal{E}^m_{kq+1} = ...= \mathcal{E}^m_{(k+1)q-1}$ and $(\mathcal{E}^m_{kq}\cap \partial\mathcal{D}_m)\subsetneq(\mathcal{E}_{(k+1)q}^m\cap \partial\mathcal{D}_m)$. Moreover for $a\in\mathcal{E}^m_{kq}\cap \partial\mathcal{D}_m$ not double parabolic, $f^{(k+1)q-l}_a(v_-(a))=0$.
\end{proposition}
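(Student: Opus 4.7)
The plan is to transfer the statements from parameter space to the dynamical plane via the conformal isomorphism $\Psi^{bit}_m : \mathcal{D}_m \to B^*_{\overline{m+p}}(0) \setminus \overline{\Omega^{-s}_{\overline{m+p}}}$ of Proposition \ref{prop.injec.parame.bit} (the case $m=0$ is handled similarly with $\Psi^{adj}_0$). By Lemma \ref{lem.proper.bit} this map extends continuously and properly to the boundary, sending $\mathcal{I}_m$ onto $\partial\Omega^{-s}_{\overline{m+p}}$ and $\partial\mathcal{D}_m \cap \partial\mathcal{B}_m$ onto $\partial B^*_{\overline{m+p}}(0)$. Consequently the structure of $\mathcal{E}^m_n$ is controlled by that of $E^m_n \cap \overline{B^*_{\overline{m+p}}(0)}$, modulo the gluing imposed by this boundary behavior.

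First, in the dynamical plane, $E^m_0 = \bigcup_{i=1}^q P^i(\partial H)$ meets $B^*_{\overline{m+p}}(0)$ only in the smooth curve $P(\partial H)$, whose closure intersects $\partial B^*_{\overline{m+p}}(0)$ at finitely many preimages of $0$. The $n$-th preimage $E^m_n \cap B^*_{\overline{m+p}}(0)$ is therefore a finite union of smooth arcs whose endpoints on $\partial B^*_{\overline{m+p}}(0) \cup \partial\Omega^{-s}_{\overline{m+p}}$ are all preimages of $0$. Pulling back through $\Psi^{bit}_m$ shows $\mathcal{E}^m_n \cap \partial\mathcal{D}_m$ is finite. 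The homeomorphism $\sfrac{\mathcal{E}^m_n}{\sim} \cong E^m_n \cap B^*_{\overline{m+p}}(0)$ then follows because $\overline{\Psi^{bit}_m}$ is one-to-one on $\partial\mathcal{D}_m \cap \partial\mathcal{B}_m$ and folds $\mathcal{I}_m$ two-to-one onto $\partial\Omega^{-s}_{\overline{m+p}}$; the gluing of the two endpoints of $\mathcal{I}_m$ (the double parabolic parameters on $\partial\mathcal{B}_m$ by Proposition \ref{prop.endpoints_specialcurve}) closes off the common pinch point $0 \in \partial B^*_{\overline{m+p}}(0) \cap \partial\Omega^{-s}_{\overline{m+p}}$.

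For the periodicity $\mathcal{E}^m_{kq} = \cdots = \mathcal{E}^m_{(k+1)q-1}$, the crucial dynamical identity is: for $z \in B^*_{\overline{m+p}}(0)$ and $n = kq+j$ with $0 \le j \le q-1$, the condition $P^n(z) \in E^m_0$ forces $P^n(z) \in P^{j+1}(\partial H)$, since $P^i(\partial H) \subset \overline{B^*_{\overline{m+ip}}(0)}$ and $P^n(z) \in B^*_{\overline{m+(j+1)p}}(0)$ force $i \equiv j+1 \pmod q$. Peeling off $P^{j+1}$ via its injective inverse branch in the basin cycle yields $P^{kq-1}(z) \in \partial H$, a condition independent of $j$. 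Hence $E^m_n \cap B^*_{\overline{m+p}}(0)$ does not depend on $j \in \{0,\dots,q-1\}$, and $\mathcal{E}^m_n$ inherits this property via pullback. The strict enlargement at $n = (k+1)q$ follows because one further return-map pullback by $P^q$ introduces strictly more preimages of $0$ on $\partial B^*_{\overline{m+p}}(0)$.

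For the moreover statement, stability of the Fatou coordinate at non-double-parabolic boundary parameters (Proposition \ref{prop.first_critical_point}) lets $\Psi^{bit}_m$ extend continuously to $a$ with $\Psi^{bit}_m(a) = h_a(v_-(a)) \in \overline{E^m_{kq}} \cap \partial B^*_{\overline{m+p}}(0)$, so $v_-(a) \in \partial B^*_{a,\overline{m+p}}(0)$ is the specific preimage of $0$ corresponding to this boundary endpoint. Tracing its orbit around the basin cycle, namely $q-l-1$ iterations of $f_a$ until the first entry into $\partial B^*_{a,0}(0)$, followed by $kq$ iterations of the return map $f^q_a|_{B^*_{a,0}(0)}$ landing at the non-trivial preimage of $0$ on $\partial B^*_{a,0}(0)$, and a final step $f_a$ sending this preimage to $0 \in \partial B^*_{a,p}(0)$, yields $f^{(k+1)q-l}_a(v_-(a)) = 0$. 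The main obstacle is the inverse-branch injectivity step in the periodicity argument, which requires verifying that the restriction of $P^{j+1}$ to the relevant region of the basin cycle avoids the critical orbit of the return map within one full return period.
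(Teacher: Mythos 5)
The paper itself gives no independent proof of this proposition; it merely cites the thesis \cite{runze} for the case $p/q=1$ and asserts that the general case is identical, so I compare your argument against the expected scheme rather than against written text.

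Your overall structure is the right one: transfer the statement from parameter space to the dynamical plane via $\Psi^{bit}_m$ (resp.\ $\Psi^{adj}_0$), identify $E^m_n\cap B^*_{\overline{m+p}}(0)$ as a finite union of arcs whose endpoints are iterated preimages of $0$, and read off the finiteness and the quotient structure from the boundary behavior given by Lemma~\ref{lem.proper.bit}. This matches the spirit of what the reference is doing.

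However, the ``inverse-branch injectivity'' step you flag is a genuine issue, and the way you phrase it cannot be patched: the map $P^{j+1}$ restricted to the basin cycle from $B^*_m(0)$ to $B^*_{\overline{m+(j+1)p}}(0)$ is \emph{not} injective whenever $j\geq q-l$, since one then passes through the critical basin $B^*_0(0)$, and the critical point $-\lambda/2$ actually lies on the graph (by construction of $H$), so it cannot be ``avoided.'' The preimage $P^{-1}\bigl(P^{j+1}(\partial H)\bigr)$ acquires a second branch $\sigma(P(\partial H))$, where $\sigma$ is the nontrivial deck transformation of $P|_{B^*_0(0)}$. What saves the argument — and what your proof is missing — is that $P(\partial H)$ is automatically $\sigma$-invariant. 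Indeed, $H$ is the component of $P^{-(2q-l)}(\overline{\Omega^{-q}_0})$ in $\overline{B^*_m(0)}$, the map $P^{2q-l}|_{B^*_m(0)}$ has degree $2$, and its critical value $P^q(-\lambda/2)$ lies on $\partial\Omega^{-q}_0$; hence the preimage of $\overline{\Omega^{-q}_0}$ is connected and contains the critical point, so it is invariant under the covering involution. Since $\sigma$ also satisfies $P^{2q-l}\circ\sigma=P^{2q-l}$ (it is a deck map for every iterate factoring through it), one has $\sigma(P(\partial H))=P(\partial H)$, so the extra branch contributes nothing new and the periodicity $\mathcal{E}^m_{kq}=\cdots=\mathcal{E}^m_{(k+1)q-1}$ follows. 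You should insert this symmetry observation; without it the step does not close, and the correct mechanism is symmetry rather than injectivity.

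Two further points to tighten: the $m=0$ case is not ``handled similarly'' without comment — $\Psi^{adj}_0$ is only a bijection piece-by-piece (Propositions~\ref{prop.injec.parame.adj}, \ref{prop.summary.para.adj}), and the gluing there identifies $\boldsymbol{\mathrm{a}}_0$ with $2\sqrt{\lambda}$, which live at opposite ends of $\mathcal{I}_0$, not at the two endpoints of a curve as in the bitransitive case; and the claim that $\overline{\Psi^{bit}_m}$ ``extends continuously'' on $\partial\mathcal{D}_m$ should be argued (Lemma~\ref{lem.proper.bit} only gives properness, so you need the finiteness of the endpoint set, which you establish afterward, together with connectedness of an accumulation set to get unique landing). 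The ``moreover'' computation of $f^{(k+1)q-l}_a(v_-(a))=0$ has the right bookkeeping of the $q$-cycle and the depth $kq$.
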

\begin{proof}
The proof has no difference to the case $p/q = 1$. See \cite[Prop. 3.2.10]{runze}. 
\end{proof}

By the above proposition, we see that $\mathcal{D}_m$ is cut "binarily" by $\mathcal{E}^m_{kq}$ into $2^k$ pieces if $m=0$ and $2^{k+1}$ pieces otherwise. For a double parabolic parameter $b\in\partial\mathcal{B}_m$, let $\{a_k\}_{k\geq0}$ be the sequence such that $a_k\in\partial\mathcal{B}_m$ be the closest summit to $b$ among $\mathcal{E}^m_{kq}\cap\partial\mathcal{B}_m$. By Lemma \ref{lem.landing.Misiurewicz}, there are $q$ parameter external rays landing at $a_k$ and $q$ dynamical rays with the same angles landing at $v_-(a_k)$. The angles are preimages of 0 under multiplication by $3$. Denote by $T_k$ the set of these angles.

\begin{lemma}\label{lem.external.angle.limit}
$3^qT_{k+1} = T_k$. Moreover all angles in $T_k$ converges to $\alpha$, the angle of the external ray $\mathcal{R}_\infty(\alpha)$ landing at $b$ with $\mathcal{R}_\infty(\alpha)\subset\partial\mathcal{S}_m$.
\end{lemma}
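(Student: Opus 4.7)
The strategy is to exploit the parametrization $\Psi^{bit}_m$ (or $\Psi^{adj}_0$ if $m=0$) to transfer the combinatorial information from the parameter plane to the dynamical plane of the quadratic model $P_\lambda$, and then apply the holomorphic stability of external rays (Lemma \ref{lem.landing.stable-region}).

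\textbf{Step 1 (identification via $\Psi^{bit}_m$).} Set $\sigma_k := \lim_{a\to a_k, a\in\mathcal{D}_m}\Psi^{bit}_m(a)\in\partial B^*_{\overline{m+p}}(0)$. By Proposition \ref{prop.landing.equi.bitran}, $\sigma_k$ is a preimage of $0$ under $P_\lambda$ of exact level $(k+1)q-l$, and the condition that $a_k$ is the summit of $\mathcal{E}^m_{kq}$ closest to $b$ selects $\sigma_k$ as the unique summit on $\partial B^*_{\overline{m+p}}(0)$ of depth $kq$ that is closest (in boundary position) to the limit $\sigma_\infty:=\lim_{a\to b, a\in\mathcal{D}_m}\Psi^{bit}_m(a)$. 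Since $E^m_{(k+1)q}=P^{-q}_\lambda(E^m_{kq})$, and since $P^q_\lambda$ is a local homeomorphism on $\partial B^*_{\overline{m+p}}(0)$ away from $0$, the "closest to $\sigma_\infty$" property is preserved, so $P^q_\lambda(\sigma_{k+1})=\sigma_k$.

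\textbf{Step 2 (passing to dynamical combinatorics).} By definition of $\Psi^{bit}_m$, $h_{a_k}(v_-(a_k))=\sigma_k$ in the dynamical plane of $f_{a_k}$, and similarly $h_{a_{k+1}}(v_-(a_{k+1}))=\sigma_{k+1}$. Applying $P^q_\lambda$ on the right-hand side of the second relation and using Step 1 gives
\[ h_{a_{k+1}}\bigl(f^q_{a_{k+1}}(v_-(a_{k+1}))\bigr)=\sigma_k, \]
so that $v_-(a_k)$ and $f^q_{a_{k+1}}(v_-(a_{k+1}))$ correspond to the same point $\sigma_k$ under the Fatou conjugacies $h_{a_k}, h_{a_{k+1}}$. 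Choose a path $\gamma\subset\overline{\mathcal{B}_m}\setminus\mathcal{A}_{p/q}$ linking $a_k$ to $a_{k+1}$; along $\gamma$, the preimage of $\sigma_k$ under $h_a$ defines a holomorphic motion of a pre-periodic point of $f_a$, whose associated set of landing angles is constant by Lemma \ref{lem.landing.stable-region}. Specializing at the two endpoints of $\gamma$ gives $T_k=3^qT_{k+1}$.

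\textbf{Step 3 (convergence of angles).} Let $(t_k)_k$ be any sequence with $t_k\in T_k$, and extract a subsequential limit $\alpha_\infty$. Since $\mathcal{R}_\infty(t_k)$ lands at $a_k\to b$, a standard Hausdorff-limit argument on parameter rays shows $\mathcal{R}_\infty(\alpha_\infty)$ accumulates at $b$; by Proposition \ref{prop.landing.four} and Corollary \ref{cor.landing.rational.double.parabolic}, $\alpha_\infty\in\{\alpha^\pm_m,\beta^\pm_m\}$. Taking limits in $3^qt_{k+1}=t_k$ yields $3^q\alpha_\infty=\alpha_\infty$, so $\alpha_\infty$ has period dividing $q$; combined with the constraint $3^{(k+1)q-l}t_k\in\Theta_m$, we deduce $\alpha_\infty\in\Theta_m$, hence $\alpha_\infty\in\{\alpha^+_m,\alpha^-_m\}$.

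\textbf{Step 4 (pinning down $\alpha$).} Because $a_k$ lies on the $\mathcal{D}_m$-side of $\mathcal{I}$ and approaches $b$ within the sector at $b$ cut out by the four rays $\mathcal{R}_\infty(\alpha^\pm_m),\mathcal{R}_\infty(\beta^\pm_m)$ and containing $\mathcal{B}_m$, the parameter rays $\mathcal{R}_\infty(t_k)$ accumulate on $b$ along the single bounding ray of this sector that lies in $\partial\mathcal{S}_m$ and carries an angle in $\Theta_m$, namely $\mathcal{R}_\infty(\alpha)$. Thus $\alpha_\infty=\alpha$, independently of the choice of subsequence or of track within $T_k$, so the full sets $T_k$ converge in Hausdorff distance to $\{\alpha\}$.

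The main obstacle will be Step 4: making rigorous the geometric claim that \emph{all} $q$ tracks inside $T_k$ (not merely one) collapse to the single angle $\alpha$. This amounts to showing that the "closest summit to $b$" property propagates under $P^q_\lambda$-preimage so as to uniformly force the $q$-element cluster $T_k$ to contract into a single limit direction at $b$, which in turn relies on the fact that only two angles of $\Theta_m$ (namely $\alpha^\pm_m$) lie in the portrait of $b$ and on the choice-of-side specified by $\mathcal{D}_m$.
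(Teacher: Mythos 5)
Your Steps 3--4 contain the decisive error. In Step 3 you infer from ``$3^{(k+1)q-l}t_k\in\Theta_m$ together with $3^q\alpha_\infty=\alpha_\infty$'' that $\alpha_\infty\in\Theta_m$, hence $\alpha_\infty\in\{\alpha^+_m,\alpha^-_m\}$. This is a non sequitur: the iterated preimages of $\Theta_m$ under multiplication by $3$ are dense in the circle, so a $3^q$-fixed limit of such preimages need not lie in $\Theta_m$ at all -- and in fact it does not. The correct limit is the angle of the second cycle: for $b=\boldsymbol{\mathrm{a}}_m$ it is $\beta^+_m\in\Theta_{m+1}$ (and symmetrically an angle of $\Theta_{m-1}$ when $b=\boldsymbol{\mathrm{a}}_{m-1}$). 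The boundary ray of $\mathcal{S}_m$ at $\boldsymbol{\mathrm{a}}_m$ is $\mathcal{R}_\infty(\beta^+_m)$, not a ray with angle in $\Theta_m$: the rays $\mathcal{R}_\infty(\alpha^\pm_m)$ bound the double parabolic wakes on the $\mathcal{S}_{m+1}$ side. (If a ray with angle in $\Theta_m$ separated $\mathcal{S}_m$ from $\mathcal{W}^+(\boldsymbol{\mathrm{a}}_m)$, then by Lemma \ref{lem.landing.stable-region} the landing of the $\Theta_{m+1}$-rays would move holomorphically across it, so the $\Theta_{m+1}$-portrait could not jump from a repelling cycle to $0$, contradicting Corollary \ref{cor.double.para.wake}.) Thus Step 4 starts from the false premise that the $\partial\mathcal{S}_m$-ray at $b$ ``carries an angle in $\Theta_m$'', and your conclusion $\alpha_\infty=\alpha^+_m$ contradicts the paper's proof and its later uses, where the limit is explicitly $\beta^+_m$ (see the proofs of Proposition \ref{prop.loc.connec.double-parabolic} and of the claim inside Lemma \ref{lem.homeo.para.dym}).

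The missing mechanism is the transfer of $T_k$ to the dynamical plane: the paper shows, using the holomorphic motion of $\bigcup_{t\in T_k}R^\infty_a(t)\cup\overline{\Omega^{kq-1}_{a,p}}$ over a region $\tilde{\mathcal{U}}_{k-1}$ cut out by $\mathcal{R}_\infty(\alpha^+_m)$, $\mathcal{E}^m_{kq}$ and a nearby ray (Corollary \ref{cor.landing.rational.double.parabolic} guaranteeing the auxiliary rays avoid $\boldsymbol{\mathrm{a}}_m$), that $T_k$ is exactly the set of angles of the rays landing at the point of $f_a^{-kq}(0)\cap\partial B^*_{a,p}(0)$ closest to $\beta^+_m$; both $3^qT_{k+1}=T_k$ and the convergence then follow from the monotone-limit argument of Proposition \ref{prop.localconnec.0}, where the limit is a $3^q$-fixed angle whose ray enters every puzzle piece and lands at $0$, hence belongs to the \emph{second} cycle and differs from the $\Theta_m$-angles bounding the basin. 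Your Steps 1--2, while in a similar spirit, also have gaps: $\Psi^{bit}_m$ and $h_{a_k}$ are not defined at the boundary parameters $a_k$ without a limiting argument (cf.\ Lemma \ref{lem.proper.bit}, Corollary \ref{cor.fatou.extend}); the preservation of ``closest summit'' under $P^{-q}_\lambda$ is asserted rather than proved; and Lemma \ref{lem.landing.stable-region} does not license a holomorphic motion ``along a path in $\overline{\mathcal{B}_m}$'', since its stability regions exclude the closures of the parameter rays with angles $3^jt$, whose landing points are precisely the summits $a_j\in\partial\mathcal{B}_m$ at the endpoints of your path.
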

\begin{proof}
We only do the proof for $m=0$ to illustrate the idea. In this case, $b = \boldsymbol{\mathrm{a}}_0$ and hence $\theta = \beta^+_{0}$ (Definition \ref{def.angle.four.rays}). For $n\geq0$, let $\mathcal{U}_k$ be the component bounded by $\mathcal{R}_\infty(\alpha^+_{0})\cup\mathcal{E}^0_{kq}\cup\mathcal{R}_\infty(t_k)$, where $\mathcal{R}_\infty(t_k)$ is the external ray closest to $\mathcal{R}_\infty(\beta^+_{0})$ landing at $a_k\in\mathcal{E}^0_{kq}\cap\partial\mathcal{B}_0$, the point closest to $\boldsymbol{\mathrm{a}}_0$ (in the binary sense). By Corollary \ref{cor.landing.rational.double.parabolic}, external rays $\mathcal{R}_\infty(t)$ with $3^{kq}t\in\Theta_0$ do not land at $ \boldsymbol{\mathrm{a}}_0$. Hence 
\[ \mathcal{U}_k\setminus\bigcup_{\{t;\,3^{kq}t\in\Theta_0\}}\mathcal{R}_\infty(t)\]
contains $\mathcal{W}^+(\boldsymbol{\mathrm{a}}_0)$, and denote by $\tilde{\mathcal{U}}_k$ the component containing it. Notice that for $a\in\tilde{\mathcal{U}}_{k-1}$, there is a dynamical holomorphic motion of $\bigcup_{t\in T_k}R^\infty_{a}(t)\cup\overline{\Omega_{a,p}^{kq-1}}$. Therefore $T_k$ is exactly the set of angle of external rays landing at $f_a^{-kq}(0)\cap\partial B_{a,p}^*(0)$ which is closest to $\beta^+_{0}$. By the proof of Proposition \ref{prop.localconnec.0} we see that $T_k$ satisfies $3^qT_{k+1} = T_k$ and tends to $\beta^+_{0}$ as $k\to+\infty$.

\end{proof}

\begin{proposition}\label{prop.nodouble.boundary.capture}
Let $\mathcal{U}_k$ be a capture component. Then $\partial\mathcal{U}_k$ contains no double parabolic parameter.
\end{proposition}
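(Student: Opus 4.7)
Suppose for contradiction that $\boldsymbol{\mathrm{a}}_m \in \partial \mathcal{U}_k$, and pick a sequence $(a_n) \subset \mathcal{U}_k$ with $a_n \to \boldsymbol{\mathrm{a}}_m$. For each $n$, the free critical point $c_-(a_n)$ lies in a Fatou component $U_{a_n}$ of depth $k\geq 1$ with $f^k_{a_n}(U_{a_n}) = B^*_{a_n,l}(0)$, the integer $l$ being a combinatorial invariant of $\mathcal{U}_k$. In particular, all immediate basins $B^*_{a_n,j}(0)$, $0\leq j\leq q-1$, lie in the single $f_{a_n}$-cycle of immediate basins containing $c_+(a_n)$, hence so does $f^k_{a_n}(c_-(a_n))\in B^*_{a_n,l}(0)$.

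The heart of the argument is a combinatorial incompatibility between the Type (C) structure at $a_n$ and the Type (D) structure at $\boldsymbol{\mathrm{a}}_m$: since $A_{p/q}(\boldsymbol{\mathrm{a}}_m)=0$, the return map $f^q_{\boldsymbol{\mathrm{a}}_m}$ admits $2q$ attracting petals at $0$, which organize into two disjoint $q$-cycles of immediate basins under $f_{\boldsymbol{\mathrm{a}}_m}$; call these cycles $\mathcal{C}_\pm$, and by the Disjoint defining property of Type (D) parameters, $c_+(\boldsymbol{\mathrm{a}}_m):=\lim_n c_+(a_n)$ lies in an open basin of $\mathcal{C}_+$ while $c_-(\boldsymbol{\mathrm{a}}_m):=\lim_n c_-(a_n)$ lies in an open basin of $\mathcal{C}_-$. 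Because each $\mathcal{C}_\pm$ is $f_{\boldsymbol{\mathrm{a}}_m}$-invariant, $f^k_{\boldsymbol{\mathrm{a}}_m}(c_-(\boldsymbol{\mathrm{a}}_m))$ lies in an \emph{open} immediate basin of $\mathcal{C}_-$, in particular in the open Fatou set.

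On the other hand, by continuity of the marked critical point and by stability of the repelling Fatou coordinate on the surviving parabolic axis at $0$ (equivalently, by Carathéodory kernel convergence with base points chosen along the forward orbit of $c_+(a_n)$), the cycle of immediate basins $(B^*_{a_n,j}(0))_{0\leq j\leq q-1}$ converges to the $\mathcal{C}_+$-cycle of $\boldsymbol{\mathrm{a}}_m$. Passing to the limit in $f^k_{a_n}(c_-(a_n))\in B^*_{a_n,l}(0)$ thus forces $f^k_{\boldsymbol{\mathrm{a}}_m}(c_-(\boldsymbol{\mathrm{a}}_m))\in \overline{B^*_{\boldsymbol{\mathrm{a}}_m,l}(0)}$, an immediate-basin closure in $\mathcal{C}_+$. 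But the open basins of $\mathcal{C}_+$ are pairwise disjoint from those of $\mathcal{C}_-$, and the boundary of a $\mathcal{C}_+$-basin lies in the Julia set, hence disjoint from any open $\mathcal{C}_-$-basin. This contradicts the previous paragraph and completes the proof.

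The main delicate point in writing this out will be justifying the convergence $B^*_{a_n,j}(0)\to B^*_{\boldsymbol{\mathrm{a}}_m,j}(0)\in\mathcal{C}_+$ when $(a_n)$ approaches $\boldsymbol{\mathrm{a}}_m$ from inside one of the double parabolic wakes $\mathcal{W}^\pm(\boldsymbol{\mathrm{a}}_m)$. In that regime the dynamical plane of $f_{a_n}$ already carries $2q$ external rays landing at $0$ (with angles $\Theta_m\cup\Theta_{m+1}$, by Corollary \ref{cor.double.para.wake}), and one must track that among these the $q$ rays of $\Theta_m$, say, persist at the limit as boundary rays of the $\mathcal{C}_+$-cycle, while the $\Theta_{m+1}$-rays, which at $a_n$ land at a nearby repelling $q$-cycle supplied by Lemma \ref{lem.landing.stable-region}, collide with $0$ at $\boldsymbol{\mathrm{a}}_m$ to produce the new $\mathcal{C}_-$-cycle; the continuity $c_+(a_n)\to c_+(\boldsymbol{\mathrm{a}}_m)$ then pins the surviving basins on the $\mathcal{C}_+$ side, and the above argument goes through unchanged.
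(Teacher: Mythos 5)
The step that fails is the passage to the limit: from $f^k_{a_n}(c_-(a_n))\in B^*_{a_n,l}(0)$ and $a_n\to\boldsymbol{\mathrm{a}}_m$ you conclude $f^k_{\boldsymbol{\mathrm{a}}_m}(c_-(\boldsymbol{\mathrm{a}}_m))\in\overline{B^*_{\boldsymbol{\mathrm{a}}_m,l}(0)}$. That is an \emph{upper} semicontinuity statement for immediate basins, and neither tool you invoke gives it: stability of repelling Fatou coordinates and Carath\'eodory kernel convergence (with base points on the orbit of $c_+$) give the \emph{opposite} inclusion --- compact subsets of the limit basin are eventually contained in $B^*_{a_n,j}(0)$ --- and say nothing about where the rest of $B^*_{a_n,l}(0)$ accumulates. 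At a double parabolic parameter this is exactly what breaks. Take $q=1$, $f_a(z)=z+az^2+z^3$, $a\searrow 0$ along the reals: for small real $a\neq 0$ the unique immediate basin $B^*_{a,0}(0)$ contains \emph{both} critical points $\frac{-a\pm\sqrt{a^2-3}}{3}$, which converge to $\pm i/\sqrt{3}$, and these two limit points lie in the two \emph{different} cycles of immediate basins of $f_0(z)=z+z^3$. So points of the immediate basin at $a_n$ really do accumulate inside an open basin of the ``other'' cycle $\mathcal{C}_-$ of the limit map, and the contradiction you draw evaporates. Note that your argument uses the capture hypothesis only through the bare membership $f^k_{a_n}(c_-(a_n))\in B^*_{a_n,l}(0)$, which is equally true along adjacent and bitransitive components --- and those genuinely have $\boldsymbol{\mathrm{a}}_m$ on their boundary (Proposition \ref{Prop.grand2}) --- so no continuity argument of this shape can prove the statement. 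Your ray-tracking remedy does not close the gap either: if $\mathcal{U}_k$ approaches $\boldsymbol{\mathrm{a}}_m$ from inside $\mathcal{S}_m$, only the $q$ rays of angle $\Theta_m$ land at $0$ for $a_n$, and each of the $q$ resulting sectors contains at the limit one basin of each of $\mathcal{C}_\pm$, so sector trapping yields nothing; inside a wake, persistence of the landing pattern $\Theta_m\cup\Theta_{m+1}$ does not give the Hausdorff-type control on the ray closures as $a_n\to\boldsymbol{\mathrm{a}}_m$ (the holomorphic motion of Lemma \ref{lem.landing.stable-region} explicitly stops short of $\mathcal{A}_{p/q}$) that you would need to keep $B^*_{a_n,l}(0)$ away from the limit $\mathcal{C}_-$ basins.

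For comparison, the paper avoids any such limit of basins and argues by rigidity of combinatorial data over $\mathcal{U}_k$. If $\mathcal{U}_k\subset\mathcal{S}_m$, a double parabolic point on $\partial\mathcal{U}_k\subset\partial\mathcal{C}_\lambda$ would force an external ray inside $\mathcal{S}_m$ to accumulate there, contradicting Lemma \ref{lem.external.angle.limit}. If $\mathcal{U}_k$ lies in a double parabolic wake, one considers the $q$ solutions of $f^q_a(z)=z$ that split off from $0$ as $a$ leaves $b=\boldsymbol{\mathrm{a}}_m$ (the multiplicity of $0$ drops from $2q+1$ to $q+1$); for $a\in\mathcal{U}_k$ at least one of them is a repelling cycle $C$, its portrait $\Theta$ is constant over $\mathcal{U}_k$ by the B\"ottcher-coordinate holomorphic motion and cannot equal $\Theta_m$ or $\Theta_{m+1}$, hence the rays of angle $\Theta$ land at a repelling cycle for $f_b$ as well; that cycle persists under perturbation, contradicting the choice of $C$ among the orbits collapsing into $0$ at $b$. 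Some input of this kind --- exploiting that portraits are rigid along $\mathcal{U}_k$, not soft convergence of basins --- is what your proposal is missing.
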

\begin{proof}
Suppose the contrary that there exists $b\in\mathcal{A}_{p/q}\cap\partial\mathcal{U}_k$. Consider respectively two cases:

\begin{itemize}
    \item $\mathcal{U}_k$ is contained in some $\mathcal{S}_m$. Since $\partial\mathcal{U}_k\subset\partial\mathcal{C}_{\lambda}$ (Lemma \ref{lem.boundary.components}), there is an external ray contained in $\mathcal{S}_m$ accumulating to $\partial\mathcal{U}_k$. This contradicts Lemma \ref{lem.external.angle.limit}.
    
    \item $\mathcal{U}_k$ is contained in some double parabolic wake. Consider equation of $z$, $f_a^q(z) = z$. When $a = b$, 0 is a multiple solution of order $2q+1$, while for $a\not=b$ near $b$, 0 is has order $q+1$. All the other solution besides 0 of $f_b^q(z) = z$ gives a repelling periodic cycle for $b$, which admits a holomorphic motion for $a$ near $b$. Thus there are only $q$ non-zero solutions left of $f_a^q(z) = z$ for $a\not=b$ ($a$ near $b$) which do not come from the holomorphic motion. Among these $q$ solutions, there is at least a repelling cycle $C$ of period dividing $q$ for $f_a$. Now suppose furthermore $a\in\mathcal{U}_k$. Let $\Theta$ be the set of angle of external rays for $f_a$ landing at $C$. Notice that $\Theta$ does not depend on $a$ since in $\mathcal{U}_k$ there is a holomorphic motion of $B_a(\infty)$ induced by Böttcher coordinate. $\Theta$ can not be $\Theta_m,\Theta_{m+1}$ since $C$ is not $0$. Hence for $f_b$, the external rays with angles in $\Theta$ land at a repelling cycle, which in turn gives a repelling cycle for $f_a$ by holomorphic motion. This contradicts how we choose $C$. 
\end{itemize}
\end{proof}

We get immediately
\begin{corollary}\label{cor.landing.Equi.capture}
$\overline{\mathcal{E}^{\mathcal{U}_k}_n}$ is homeomorphic to $\overline{\Psi_{\mathcal{U}_k}(\mathcal{E}^{\mathcal{U}_k}_n)}$. Moreover $a\in\overline{\mathcal{E}^{\mathcal{U}_k}_n}\cap\partial\mathcal{U}_k$ is Misiurewicz parabolic.
\end{corollary}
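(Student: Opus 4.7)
The strategy mirrors Proposition \ref{prop.landing.equi.bitran}, with the additional simplification afforded by Proposition \ref{prop.nodouble.boundary.capture}. Since $\Psi_{\mathcal{U}_k}\colon \mathcal{U}_k\to B^*_l(0)$ is a conformal isomorphism (Proposition \ref{prop.paramet.cap}), it already restricts to a homeomorphism from $\mathcal{E}^{\mathcal{U}_k}_n$ onto $E^m_n\cap B^*_l(0)$, so the entire issue is to understand how it extends to the boundary.

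First I would establish properness of $\Psi_{\mathcal{U}_k}$ along $\partial \mathcal{U}_k$, by a normal families argument analogous to the second part of the proof of Lemma \ref{lem.proper.bit}: if $a_j\to a_0\in\partial\mathcal{U}_k$ with $\Psi_{\mathcal{U}_k}(a_j)\to w\in B^*_l(0)$, then the inverse branches $\Psi_{\mathcal{U}_k}^{-1}$ would form a normal family near $w$, producing a limit map that conjugates $f_{a_0}$ to $P_\lambda$ on a neighborhood of $w$, contradicting $a_0\in\partial\mathcal{U}_k$. Applied to a sequence $a_j\in \mathcal{E}^{\mathcal{U}_k}_n$ converging to $a\in\partial\mathcal{U}_k$, this forces $\Psi_{\mathcal{U}_k}(a_j)$ to accumulate on the finite set $\overline{E^m_n}\cap\partial B^*_l(0)$. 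Each point $z$ of this set is a preimage of $0$ under some iterate $P_\lambda^{Nq-l'}$, so by continuity of the commutative diagram defining $\Psi_{\mathcal{U}_k}$ (which uses only Fatou coordinates on $B^*_{a,l}(0)$, stable as long as $a$ is not double parabolic) one deduces $f_a^{k+Nq-l'}(v_-(a))=0$. Together with Proposition \ref{prop.nodouble.boundary.capture}, which eliminates the double parabolic case on $\partial\mathcal{U}_k$, this shows $a$ is Misiurewicz parabolic.

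Next I would assemble the homeomorphism on closures. Surjectivity of the induced boundary map is by a compactness argument: any $z\in \overline{E^m_n}\cap\partial B^*_l(0)$ is the limit of some $w_j\in E^m_n\cap B^*_l(0)$, whose preimages $a_j=\Psi_{\mathcal{U}_k}^{-1}(w_j)$ remain in the compact set $\overline{\mathcal{U}_k}\subset\mathcal{C}_\lambda$ and, by properness, accumulate only on $\partial\mathcal{U}_k$. For injectivity, suppose two distinct boundary parameters $a_1,a_2\in\partial\mathcal{U}_k$ are both sent to the same $z$. Both are Misiurewicz parabolic with the same combinatorial pattern (same iterate of $v_-$ landing on $0$ through the same itinerary through the Fatou components of the $B^*_i(0)$ cycle, as prescribed by the position of $z$). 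Then the rigidity argument of Proposition \ref{prop.injec.parame.bit} applies: lift the Fatou/Böttcher conjugacies through the critical orbit to obtain a holomorphic conjugacy between $f_{a_1}$ and $f_{a_2}$ on $\overline{\bigcup_i B_{a_1,i}^*(0)}$, extend by Slodkowski's theorem to a quasiconformal conjugacy off the filled Julia set, and glue by Rickman's lemma to force $a_1=a_2$.

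The main obstacle is the last step, the rigidity/injectivity on the boundary; however the argument is entirely parallel to the one already carried out in Proposition \ref{prop.injec.parame.bit}, and is in fact easier here since capture components lift to fewer branches (only the free critical orbit needs to be tracked, and only at a prescribed pre-periodic moment). The remaining tasks (properness, continuous extension via the explicit conjugacy $h_a$) are straightforward adaptations of the bitransitive arguments already in the paper.
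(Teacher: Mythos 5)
Your overall route is the intended one: the corollary is the capture analogue of Proposition \ref{prop.landing.equi.bitran}, and Proposition \ref{prop.nodouble.boundary.capture} removes double parabolic parameters from $\partial\mathcal{U}_k$, so the Fatou coordinates (hence $\Psi_{\mathcal{U}_k}$, the conjugacies $h_a$ and the equipotentials) stay stable up to the boundary and any landing parameter satisfies $f_a^{N}(v_-(a))=0$, i.e.\ is Misiurewicz parabolic; this is exactly how the paper gets the statement "immediately" from Proposition \ref{prop.nodouble.boundary.capture}. Note that properness is automatic here, since $\Psi_{\mathcal{U}_k}$ is a homeomorphism onto $B^*_l(0)$ (Proposition \ref{prop.paramet.cap}); your normal-families detour is harmless but not needed. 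Also, to get landing (not just accumulation) of each equipotential end you should add the one-line remark that the accumulation set is connected while the parameters with $f_a^{N}(v_-(a))=0$ form a finite set.

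The genuine gap is in your injectivity step on the boundary. You invoke the rigidity scheme of Proposition \ref{prop.injec.parame.bit} for two distinct $a_1,a_2\in\partial\mathcal{U}_k$ sent to the same model point, but that scheme does not transfer: there the quasiconformal conjugacy off the filled Julia set comes from the motion $\psi_c=(\phi^\infty_c)^{-1}\circ\phi^\infty_{c_1}$ of $\mathbb{C}\setminus\mathring{K}_{c_1}$ over a connected open set \emph{inside} the parabolic component, i.e.\ inside a region of $J$-stability, and Slodkowski is applied to that motion. Misiurewicz parabolic parameters lie in the bifurcation locus, the free critical value sits on the Julia set, and no such motion joins $a_1$ to $a_2$; moreover the map you propose to glue is only defined on the immediate basins and on $\mathbb{C}\setminus K_{a_1}$, so it is not a globally defined homeomorphism and Rickman's lemma cannot be applied as stated (extending the conjugacy to all the remaining bounded Fatou components and continuously to the Julia set is precisely the rigidity of geometrically finite maps, a harder statement than Proposition \ref{prop.injec.parame.bit}). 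The identification you need is available with the paper's lighter tools: if $v_-(a_1),v_-(a_2)$ sit at the same preimage of $0$ with the same itinerary, the holomorphic motion of the rays landing at the inverse orbit of $0$ (Lemma \ref{lem.landing.stable-region}) gives the same portrait of rational external angles at the free critical value, and Lemma \ref{lem.landing.Misiurewicz} then forces the same parameter external rays to land at $a_1$ and at $a_2$, whence $a_1=a_2$; alternatively, a local Rouch\'e/holomorphic-motion analysis at the candidate landing parameter, as in the proof of Proposition \ref{prop.summary.para.adj}, shows directly that the two equipotential arc-ends land at one and the same parameter.
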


\begin{corollary}\label{cor.loc.connected.cap-in-double-wake}
Let $\mathcal{U}_k$ be a capture component contained in some double parabolic wake $\mathcal{W}^{\pm}(\boldsymbol{\mathrm{a}}_m)$, then for $a_0\in\partial\mathcal{U}_k$, $f^k_{a_0}(c_-(a_0))$ is on the boundary of some $B^*_{a_0,m}$. Moreover $\partial\mathcal{U}_k$ is a Jordan curve.
\end{corollary}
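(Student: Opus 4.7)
My plan is to derive both assertions from the properness of $\Psi_{\mathcal{U}_k}:\mathcal{U}_k\to B^*_l(0)$ (Proposition \ref{prop.paramet.cap}) combined with the fact that $\partial\mathcal{U}_k$ contains no double parabolic parameters (Proposition \ref{prop.nodouble.boundary.capture}), ensuring that the conjugacy $h_a$ to the quadratic model $P_\lambda$ behaves stably up to the boundary. For the first claim, fix $a_0\in\partial\mathcal{U}_k$; since $a_0\notin\mathcal{A}_{p/q}$, both the Fatou coordinate of $f_{a_0}|_{B^*_{a_0,l}(0)}$ and hence $h_a$ depend holomorphically on $a$ near $a_0$. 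Replaying the properness argument of Lemma \ref{lem.proper.bit} verbatim with $\Psi_{\mathcal{U}_k}$ in place of $\Phi^{bit}_m$ shows that whenever $a_n\in\mathcal{U}_k$ converges to $a_0$, every accumulation value of $\Psi_{\mathcal{U}_k}(a_n)=h_{a_n}(f^k_{a_n}(c_-(a_n)))$ lies on $\partial B^*_l(0)$; continuity of $h_a$ and of $a\mapsto f^k_a(c_-(a))$ then forces $f^k_{a_0}(c_-(a_0))\in\partial B^*_{a_0,l}(0)$.

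For the Jordan curve statement, I first observe that $\partial B^*_l(0)$ is itself a Jordan curve, which is classical for the parabolic quadratic polynomial $P_\lambda$, whose Julia set is locally connected and whose immediate basin components are Jordan domains. By Carath\'eodory the conjugacy $h_a$ extends to a homeomorphism $\overline{B^*_{a,l}(0)}\to\overline{B^*_l(0)}$, and using continuity in $a$ one gets a continuous extension $\overline{\Psi}_{\mathcal{U}_k}:\overline{\mathcal{U}_k}\to\overline{B^*_l(0)}$ that, by the first claim and properness, sends $\partial\mathcal{U}_k$ surjectively onto $\partial B^*_l(0)$. To promote this to a homeomorphism, I invoke the dynamical holomorphic motion of $\overline{B^*_{a,l}(0)}$ available in a neighborhood of any $a_0\in\partial\mathcal{U}_k$ (valid since $a_0\notin\mathcal{A}_{p/q}$): combined with the conformal structure of $\Psi_{\mathcal{U}_k}$, this motion transfers the Jordan curve structure of $\partial B^*_l(0)$ to show local connectivity of $\partial\mathcal{U}_k$. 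Injectivity of the boundary map then follows by the same argument as in Lemma \ref{lem.boundary.components}: two distinct preimages of a single boundary point would yield two non-equivalent accesses to a common parameter $a^*\in\partial\mathcal{U}_k$, bounding a bounded simply connected region whose boundary meets $\partial\mathcal{U}_k\subset\partial\mathcal{C}_\lambda$ non-trivially, contradicting the fullness of $\mathcal{C}_\lambda$.

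The main obstacle is the transfer of the local Jordan structure of $\partial B^*_l(0)$ to $\partial\mathcal{U}_k$, which is routine away from (pre-)periodic boundary points. At the Misiurewicz parabolic parameters $a^*\in\partial\mathcal{U}_k$ (those for which $\overline{\Psi}_{\mathcal{U}_k}(a^*)$ is (pre-)periodic on $\partial B^*_l(0)$, by Corollary \ref{cor.landing.Equi.capture}), the holomorphic motion may degenerate, and one instead produces a shrinking basis of parameter neighborhoods using the parameter equipotentials $\mathcal{E}^{\mathcal{U}_k}_n$ of Proposition \ref{prop.landing.equi.bitran}.
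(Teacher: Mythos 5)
Your plan is, in substance, the paper's own proof: Proposition \ref{prop.nodouble.boundary.capture} guarantees that no double parabolic parameter lies on $\partial\mathcal{U}_k$, hence the closed immediate basins $\overline{\bigcup_i B^*_{a,i}(0)}$ move holomorphically (Fatou coordinates plus the $\lambda$-lemma) over a neighbourhood of $\overline{\mathcal{U}_k}$; the first assertion then follows because $f^k_a(c_-(a))\in \overline{B^*_{a,l}(0)}$ for $a\in\mathcal{U}_k$ and the limit cannot lie in the open basin (otherwise $a_0$ would sit in an open capture component, contradicting $\partial\mathcal{U}_k\subset\partial\mathcal{C}_\lambda$); local connectivity of $\partial\mathcal{U}_k$ is then transferred from the boundary of the basin of a geometrically finite map (\cite{TaYi}), and Lemma \ref{lem.boundary.components} upgrades local connectivity to the Jordan property. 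Your detour through the properness argument of Lemma \ref{lem.proper.bit} for the first claim works, but note that by itself it only rules out an interior limit; to exclude a limit outside the closed basin you still need the continuity of $\overline{B^*_{a,l}(0)}$ coming from the motion, which you do invoke later.

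Two points in your write-up need correction, though neither is fatal. First, the claim that Carath\'eodory gives a homeomorphism $\overline{B^*_{a,l}(0)}\to\overline{B^*_l(0)}$ is not justified as stated: local connectivity of $J_a$ (geometric finiteness) yields only a continuous extension of $h_a$, and promoting it to a homeomorphism would require knowing that $\partial B^*_{a,l}(0)$ is a Jordan curve for the cubic $f_a$, which is neither established at this stage nor needed. The paper sidesteps this entirely by using the motion itself, which is injective on the closure by definition of a holomorphic motion, and studying the single map $H(a)=h_a^{-1}(f^k_a(c_-(a)))\in\partial B^*_{a'_0,m}$ with a fixed base point $a'_0\in\mathcal{U}_k$. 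Second, your worry that the holomorphic motion "may degenerate" at Misiurewicz parabolic parameters of $\partial\mathcal{U}_k$ is unfounded: the motion of the closed immediate basins is driven by the persistent parabolic fixed point at $z=0$ and only fails at double parabolic parameters, which Proposition \ref{prop.nodouble.boundary.capture} has excluded; Misiurewicz parabolic boundary points merely mean the critical orbit lands on the basin boundary, so the extra argument with the equipotentials $\mathcal{E}^{\mathcal{U}_k}_n$ is unnecessary. With these adjustments your proposal coincides with the paper's proof.
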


\begin{proof}
By Proposition \ref{prop.nodouble.boundary.capture}, there is an open neighborhood of $\mathcal{U}_k$, on which the union of immediate basins $\bigcup_i B_{a,i}^*(0)$ admits an holomorphic motion $h_a$ induced by Fatou coordinate. One can choose the base point $a'_0$ of $h_a$ in $\mathcal{U}_k$. By $\lambda$-lemma, the motion is extended to $\overline{\bigcup_i B_{a,i}^*(0)}$. For $a\in\mathcal{U}_k$, by definition of capture component there exists $m$ such that $f^k_{a}(c_-(a))\in B^*_{a,m}$. Let $a$ tend to $a_0$, we get $f^k_{a_0}(c_-(a_0))\in \partial B^*_{a,m}$ since $\overline{B^*_{a,m}}$ moves holomorphically.

Define $H:\partial\mathcal{U}_k\longrightarrow\partial B^*_{a'_0,m}$ by $a\mapsto h^{-1}_a(f_a^k(c_-(a)))$. $H$ is locally regular, hence $\partial\mathcal{U}_k$ is locally connected since $\partial B^*_{a'_0,m}$ is (Julia set of $f_{a'_O}$ is locally connected since it is geometrically finite \cite{TaYi}). $\partial\mathcal{U}_k$ is then a Jordan curve by Lemma \ref{lem.boundary.components}.
\end{proof}

From Proposition \ref{prop.landing.equi.bitran} and \ref{prop.nodouble.boundary.capture}, we see that $\mathcal{R}^m_n,\mathcal{R}^{\mathcal{U}_k}_n$ can not land at double parabolic parameters. We have the following landing property of internal rays (the proof is similar to that for external rays) :

\begin{lemma}\label{lem.landing.rational.internal-ray}
Let $0\leq m\leq\lfloor\frac{q}{2}\rfloor$ and $\mathcal{U}_k$ be a capture component not in double parabolic wake. Then any component of $\mathcal{R}^{m}_n$ resp. $\mathcal{R}^{\mathcal{U}_k}_n$ land at some $a_0\in\partial\mathcal{B}_m$ resp. $\partial\mathcal{U}_k$ which is neither Misiurewicz parabolic nor double parabolic. The landing points of $f^{-k}_{a_0}(R^m_{a_0})$ are (pre-)periodic to the same cycle, where $R^m_{a_0}$ is just $R^m_0$ in (\ref{eq.internal-ray.dym}). Moreover if this cycle is repelling, then some component of $f^{-k}_{a_0}(R^m_{a_0})$ will land at $v_-(a_0)$.
\end{lemma}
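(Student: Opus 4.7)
The plan is to mirror the structure of the proof of Lemma~\ref{lem.rational_rays_property}, replacing the B\"ottcher coordinate at infinity by the attracting Fatou coordinate on the parabolic petal. I treat the bitransitive case in detail; the adjacent and capture cases are handled by the same argument using $\Psi^{adj}_0$ or $\Psi_{\mathcal{U}_k}$ in place of $\Psi^{bit}_m$. By Proposition~\ref{prop.summary.parametri} and Lemma~\ref{lem.proper.bit}, the map $\Psi^{bit}_m : \mathcal{D}_m \to B^*_{\overline{m+p}}(0) \setminus \overline{\Omega^{-s}_{\overline{m+p}}}$ is a proper isomorphism onto its target. Fix a component $\gamma$ of $\mathcal{R}^m_n$. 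Since the dynamical ray $R^m_n$ lands at a single point $z_0 \in \partial B^*_{\overline{m+p}}(0)$ and does not enter the degenerate boundary pieces $\Psi^{bit}_m(\mathcal{I}_m)$, properness forces the accumulation set $X$ of $\gamma$ to lie in $\partial\mathcal{B}_m$.

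I next show $X$ is a single point. Let $a_0 \in X$ and pick $a_n \in \gamma$ with $a_n \to a_0$. Via the commutative diagram~\ref{diag.commu1}, $v_-(a_n) = f_{a_n}(c_-(a_n))$ tracks a point tending to the landing point $z_0$ of $R^m_n$, which is (pre-)periodic under $P_\lambda$. Provided $a_0$ is not a degenerate parabolic parameter, the combinatorics of the petal-chain stabilises in a neighbourhood of $a_0$ and one obtains that $v_-(a_0)$ is (pre-)periodic; explicitly $a_0$ satisfies $f_{a_0}^{N+k}(v_-(a_0)) = f_{a_0}^{N}(v_-(a_0))$ for integers $N, k$ depending only on $n$ and on the chosen angle in $\Theta_\pm$. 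This algebraic system has finitely many solutions, so $X$ is finite; being connected, $X = \{a_0\}$.

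To exclude $a_0 = \boldsymbol{\mathrm{a}}_j \in \mathcal{A}_{p/q}$, I argue as follows. By Lemma~\ref{lem.external.angle.limit} the equipotentials $(\mathcal{E}^m_{kq})$ pinch toward the four external rays landing at $\boldsymbol{\mathrm{a}}_j$ described in Proposition~\ref{prop.landing.four}, while $\gamma$ crosses every $\mathcal{E}^m_{kq}$ exactly once and remains in a wedge of positive angular width relative to these external rays; this forces a contradiction. Equivalently, at $\boldsymbol{\mathrm{a}}_j$ the coefficient $A_{p/q}$ vanishes and the conjugacy $h_a$ of diagram~\ref{diag.commu1} degenerates, so $\gamma$ cannot terminate there. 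If $a_0$ were Misiurewicz parabolic, then stability of the repelling Fatou coordinate at $z=0$ (available since $a_0 \notin \mathcal{A}_{p/q}$) combined with the algebraic equation of the previous step would force $z_0 = 0$ in the quadratic model, contradicting $z_0 \in \partial B^*_{\overline{m+p}}(0) \setminus \{0\}$. Hence $a_0$ is neither double parabolic nor Misiurewicz parabolic. The dynamical internal rays $f^{-k}_{a_0}(R^m_{a_0})$ then land at a common (pre-)periodic cycle of $f_{a_0}$ by construction. When that cycle is repelling it moves holomorphically in $a$ near $a_0$; passing to the limit along $\gamma$, where $v_-(a)$ lies on the corresponding moving internal ray by construction, yields $v_-(a_0)$ equal to the landing point of some component of $f^{-k}_{a_0}(R^m_{a_0})$.

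The main obstacle is the exclusion of the double parabolic case: as $a \to \boldsymbol{\mathrm{a}}_j$ the normalisation of $\Psi^{bit}_m$ degenerates because the maximal petal of $f^q_a$ changes topology in the limit, and ruling out $\boldsymbol{\mathrm{a}}_j \in X$ really requires the precise information on external rays landing at $\boldsymbol{\mathrm{a}}_j$ supplied by Proposition~\ref{prop.landing.four} together with the asymptotic description of Lemma~\ref{lem.external.angle.limit}. Everything else is a fairly direct transposition of the external-ray landing argument.
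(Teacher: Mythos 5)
Your overall strategy is the one the paper intends: the paper's proof of this lemma is literally "similar to that for external rays" (Lemma \ref{lem.rational_rays_property}), with the double-parabolic exclusion quoted from Propositions \ref{prop.landing.equi.bitran} and \ref{prop.nodouble.boundary.capture}. However, as written your argument has a genuine gap in the finiteness step. You assert that at any accumulation parameter $a_0$ of the ray the critical value is (pre-)periodic, i.e.\ $f_{a_0}^{N+k}(v_-(a_0))=f_{a_0}^{N}(v_-(a_0))$, and count solutions of this equation. That forcing only works when the cycle to which the dynamical internal rays converge is repelling (or the landing point is in the inverse orbit of $0$), so that stability of the landing point pins it to $v_-(a_0)$. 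But at a boundary parameter this cycle may have become parabolic --- precisely the possibility the lemma's last sentence is hedging against (e.g.\ $a_0$ the root of a Mandelbrot copy attached to $\partial\mathcal{B}_m$); then the ray lands at a parabolic point which need not be $v_-(a_0)$, your equation fails, and such parameters are not covered by your finiteness count. In Lemma \ref{lem.rational_rays_property} this case is treated separately via the snail lemma and the discreteness of parabolic parameters of bounded period; the analogous argument must be supplied here, otherwise connectedness of the accumulation set does not give a single landing point. (Your Misiurewicz-parabolic exclusion, by contrast, is in the right spirit: at such a parameter all cycles other than $0$ are repelling, and the forced identification of $v_-(a_0)$ with a landing point of non-dyadic itinerary contradicts $v_-(a_0)$ being a preimage of $0$.)

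The second weak point is the double-parabolic exclusion, which is asserted rather than proved: "remains in a wedge of positive angular width \dots this forces a contradiction" derives no contradiction, and "the conjugacy $h_a$ degenerates, so $\gamma$ cannot terminate there" is not an argument. The paper's route is different and cleaner: for a capture component $\mathcal{U}_k$ the exclusion is immediate from Proposition \ref{prop.nodouble.boundary.capture}, since $\partial\mathcal{U}_k$ contains no double parabolic parameter at all (note your wedge picture does not even apply in that case), while for $\partial\mathcal{B}_m$ it follows from the description of equipotential landing in Proposition \ref{prop.landing.equi.bitran}. If you prefer to keep your separation idea, you must actually exhibit the separating arcs --- pieces of the equipotentials $\mathcal{E}^m_{kq}$ together with the external rays $\mathcal{R}_\infty(t_k)$ of Lemma \ref{lem.external.angle.limit} --- and show that the tail of the parameter internal ray eventually lies on the far side of one of them from $\boldsymbol{\mathrm{a}}_j$, in the manner of Proposition \ref{prop.loc.connec.double-parabolic}.
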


We also have the analogue to Lemma \ref{lem.landing.Misiur.parabo}:
\begin{lemma}\label{lem.landing.Misiure.internal-rays}
Suppose $a_0\in\mathcal{C}_{\lambda}\setminus\bigcup_m\mathcal{W}^\pm(\boldsymbol{\mathrm{a}}_m)$ is a Misiurewicz parameter. Then $a_0$ is the landing point of some $\mathcal{R}^m_n$ or $\mathcal{R}^{\mathcal{U}_k}_n$, if and only if $v_-(a_0)$ is the landing point of some $f^{-k}_{a_0}(R^m_{a_0})$.
\end{lemma}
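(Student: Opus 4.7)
The plan is to carry out an internal-ray analogue of Lemma \ref{lem.landing.Misiurewicz}, with the Böttcher coordinate at $\infty$ replaced by the Fatou coordinates on the immediate parabolic basins. Two stability inputs drive the argument. First, because $a_0$ is Misiurewicz, is outside every double parabolic wake, and in particular is outside $\mathcal{A}_{p/q}$, Corollary \ref{cor.double.para.wake} forces the portrait at $z=0$ to be a single cycle $\Theta_{m'}$ on a connected open neighborhood $\mathcal{V}$ of $a_0$; hence the Fatou coordinates $\phi_{a,i}$ on the $q$ immediate basins $B^*_{a,i}(0)$ depend holomorphically on $a\in\mathcal{V}$, and so do the conjugacies $h_a$ to the quadratic model used in $\Psi^{adj}_0,\Psi^{bit}_m,\Psi_{\mathcal{U}_k}$. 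This yields a holomorphic extension $\widetilde{\Psi}:\mathcal{V}\to\mathbb{C}$ of the relevant parametrization. Second, the repelling pre-periodic landing points of the rays $f^{-k}_{a_0}(R^m_{a_0})$ admit a holomorphic motion $a\mapsto x(a)$ on $\mathcal{V}$ by standard stability of repelling cycles, and via $h_a$ this corresponds to a holomorphic motion of the dynamical internal rays in the quadratic model, landing at $y(a)=h_a(x(a))$.

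First I would dispatch the forward direction. If $\mathcal{R}^m_n$ (resp. $\mathcal{R}^{\mathcal{U}_k}_n$) lands at $a_0$, take a sequence $a_i$ on the parameter ray with $a_i\to a_0$. By definition $\widetilde{\Psi}(a_i)$ lies on the dynamical internal ray in the model; passing to the limit and using that that ray lands at a definite pre-periodic point $y_0$ (Yoccoz's landing theorem applied to the quadratic model) forces $\widetilde{\Psi}(a_0)=y_0$. Pulling back by $h_{a_0}$ translates this into $v_-(a_0)$ being the landing point of the corresponding $f^{-k}_{a_0}(R^m_{a_0})$, which is what we want.

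For the converse, suppose $v_-(a_0)=x_0$ is the landing point of some $f^{-k}_{a_0}(R^m_{a_0})$. Then $\widetilde{\Psi}(a_0)=h_{a_0}(x_0)=y_0$ is the landing point in the quadratic model of the associated internal ray $\gamma$. By Proposition \ref{prop.summary.parametri} (resp. Proposition \ref{prop.paramet.cap}), $\Psi$ is a conformal isomorphism from $\mathcal{D}_m$ (resp.\ $\mathcal{U}_k$) onto its image, so $\widetilde{\Psi}$ is non-constant, hence open, at $a_0$. The open mapping theorem then produces a local branch of $\widetilde{\Psi}^{-1}(\overline{\gamma})$ accumulating at $a_0$; the content of the proof is that this branch comes from inside $\mathcal{D}_m$ (resp.\ $\mathcal{U}_k$), so that it genuinely coincides with $\mathcal{R}^m_n$ (resp.\ $\mathcal{R}^{\mathcal{U}_k}_n$) near $a_0$.

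The main obstacle is precisely this final point: to guarantee that the chosen component of $\widetilde{\Psi}^{-1}(\overline{\gamma})$ lies in $\mathcal{B}_m$ (resp.\ $\mathcal{U}_k$) rather than spilling outside. I plan to handle it by exploiting that $\Psi$ realises $\mathcal{D}_m$ conformally onto $B^*_{m+p}(0)\setminus\overline{\Omega^{-s}_{m+p}}$, together with the fact that near $a_0$ the boundary of this target extends continuously across $y_0$ through the holomorphic motion $y(a)=h_a(x(a))$. Consequently the sheet of $\widetilde{\Psi}^{-1}$ that corresponds to the interior of the image sits on the $\mathcal{D}_m$-side of $a_0$, so the preimage of $\gamma$ on this sheet is exactly a piece of $\mathcal{R}^m_n$ terminating at $a_0$. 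The capture-component case is identical with $\Psi_{\mathcal{U}_k}$ and Proposition \ref{prop.paramet.cap} replacing $\Psi^{bit}_m$ and Proposition \ref{prop.summary.parametri}.
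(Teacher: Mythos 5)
The paper offers no explicit proof of this lemma; it is simply declared the analogue of Lemma~\ref{lem.landing.Misiur.parabo}, whose proof is in turn a pointer to holomorphic motion and Rouch\'e's theorem as in the proof of Proposition~\ref{prop.summary.para.adj}. Your overall strategy is therefore aligned with what the paper intends, and your forward direction is essentially fine: it does not really need the extension $\widetilde{\Psi}$, since you can pass to the limit along the parameter ray directly -- properness (Lemma~\ref{lem.proper.bit}) drives $\Psi(a_i)$ to the landing point $y_0$ of $\gamma$, the internal rays $h_{a_i}^{-1}(\gamma)$ move holomorphically with landing points $x(a_i)\to x(a_0)$, and $v_-(a_i)\to v_-(a_0)$, forcing $v_-(a_0)=x(a_0)$.

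The gap is in the converse, and it is precisely the step you flag as ``the content of the proof,'' but the real difficulty is earlier than you think. You assert that because the Fatou coordinates $\phi_{a,i}$ and the conjugacies $h_a$ depend holomorphically on $a$, the parametrization $\Psi(a)=h_a(v_-(a))$ extends holomorphically to a full neighborhood $\mathcal{V}$ of $a_0$. This does not follow: $h_a$ is only defined on (pull-backs of) attracting petals inside the immediate basins, and for $a_0$ Misiurewicz one has $v_-(a_0)\in J_{a_0}$, so for generic $a$ near $a_0$ the point $v_-(a)$ is not in the domain of $h_a$ at all. Holomorphy of the Fatou coordinate by itself produces no candidate value for $\widetilde{\Psi}(a)$ off the parabolic component. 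To make the extension legitimate you need the repelling K\"onigs coordinate $\Lambda_a$ at the pre-periodic point $x(a)$, which \emph{does} vary holomorphically on a full neighborhood and is compatible with $h_a$ on the overlap of the linearizing domain with the attracting petals; the composite $\Lambda^{-1}_P\circ(\text{normalizing constant})\cdot\Lambda_a(v_-(a))$ is the holomorphic $\widetilde{\Psi}$ you want, and \emph{then} openness at $a_0$ and the $\mathcal{D}_m$-side argument go through. Alternatively -- and this is closer to what the paper actually invokes -- skip the extension entirely: parametrize $\gamma$ by $s\in[0,1)$ with $\gamma(1)=y_0$, set $F(a,s)=v_-(a)-h_a^{-1}(\gamma(s))$ (using only the Fatou-coordinate holomorphic motion, which is defined for $s<1$), note $F(a_0,1)=0$ and $F(\cdot,1)=v_-(\cdot)-x(\cdot)$ is a non-constant holomorphic function, and apply Rouch\'e to get $a_s\to a_0$ with $v_-(a_s)=h_{a_s}^{-1}(\gamma(s))$, i.e.\ $a_s\in\mathcal{R}^m_n$. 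Since $v_-(a_s)$ lies on the dynamical internal ray at a point inside the immediate basin, $a_s$ automatically lies in $\mathcal{B}_m$ (resp.\ $\mathcal{U}_k$), which also disposes of your ``which side of $a_0$'' worry.
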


\subsection{Dynamical objects move holomorphically}\label{subsec.dynamics-objects}

\begin{definition}
Let $A_0\subset\mathbb{S}^1$ be finite and satisfy $3A_0 = A_0$, $A_n$ be the $n$-th preimage of $A_0$ under multiplication by 3. A parameter object of depth $n\geq0$ is the intersection of one of the following four sets and $\tilde{\mathcal{S}}_m$.
\begin{equation}
        \bigcup_{\eta\in A_n}\overline{\mathcal{R}_{\infty}(\eta)},\,\,
        \bigcup_{k=1}^{n}\left(\bigcup_{\mathcal{U}_k\subset\mathcal{H}_k}\overline{\mathcal{E}^{\mathcal{U}_k}_{n-k}}\right)\cup\overline{\mathcal{E}^m_n},\,\,
        \bigcup_{k=1}^{n}\left(\bigcup_{\mathcal{U}_k\subset\mathcal{H}_k}\overline{\mathcal{R}^{\mathcal{U}_k}_{n-k}}\right )\cup\overline{\mathcal{R}^m_n},\,\,\mathcal{E}_{\infty}(r^{1/3^n}).
\end{equation}
\end{definition}

Let $\mathcal{O}_n$ be a parameter object of depth $n$. Let $\tilde{\mathcal{O}}_n$ be a connected component of ${\mathcal{S}}^+_m\setminus\mathcal{O}_n$ intersecting $\partial\mathcal{C}_{\lambda}$. Let $a\in\tilde{\mathcal{O}}_n$. 

When $\mathcal{O}_n$ is $\bigcup_{\eta\in A_n}\overline{\mathcal{R}_{\infty}(\eta)}$ or $\mathcal{E}_{\infty}(r^{1/3^n})$, $\bigcup_{\eta\in A_{n+1}}\overline{{R}^{\infty}_a(\eta)}$ resp. ${E}^{\infty}_a(r^{1/3^{n+1}})$ does not contain $c_-(a)$. Hence each external ray $R_a^\infty(\eta)$ is well defined and lands at $J_a$, resp. $E_a^\infty(r^{1/3^{n+1}})$ is homeomorphic to $\mathbb{S}^1$. Moreover there is a dynamical holomorphic motion of $\bigcup_{\eta\in A_{n+1}}\overline{{R}^{\infty}_a(\eta)}$ resp. ${E}^{\infty}_a(r^{1/3^{n+1}})$ induced by Böttcher coordinate $\phi_a^\infty$.

Now let $\mathcal{O}_n=\bigcup_{k=1}^{n}\left(\bigcup_{\mathcal{U}_k\subset\mathcal{H}_k}\overline{\mathcal{E}^{\mathcal{U}_k}_{n-k}}\right)\cup\overline{\mathcal{E}^m_n}$. We want to find a dynamical holomorphic motion for equipotentials in $B_a(0)$. Recall that $\Omega_a$ is the maximal petal contained in $B^*_{a,0}(0)$ and $\tilde{\Omega}_a$ Let $H_a$ be the connected component of $P^{-2q+l}(\overline{\tilde{\Omega}_a})$ contained in $\overline{B^*_{a,m}(0)}$. Then by definition of $\mathcal{E}_n^m,\mathcal{E}^{\mathcal{U}_k}_{n-k}$, $f^{-n-1}_a(E^a_0)$ does not contain $c_-(a)$, where $E_0^a := \bigcup_{i=1}^{q}f_a^{i}(\partial{H}_a)$. Hence there is a dynamical holomorphic motion of $E_{n+1}^a:=f^{-n-1}_a(E^a_0)$ induced by Fatou coordinate and the pull back of $f_a$.

Now let $\mathcal{O}_n=\bigcup_{k=1}^{n}\left(\bigcup_{\mathcal{U}_k\subset\mathcal{H}_k}\overline{\mathcal{R}^{\mathcal{U}_k}_{n-k}}\right)\cup\overline{\mathcal{R}^m_n}$. Notice that here the construction of internal rays is more subtle, since $c_-(a)$ might be in $B^*_{a,m}(0)$, as a consequence we can not identify completely the dynamics in $B^*_{a,m}(0)$ with the quadratic model (but still partially, up to certain depth, see below the rewritten diagram of (\ref{diag.commu1})). The idea is to pull back the beginning of the internal rays so long as they do not contain $v_-(a)$. Similar to the quadratic model, one can define internal rays $R_a(\theta)\subset B^*_{a,m}(0)$ with $\theta\in\Theta_\pm = \{\theta;\,\exists i \text{ s.t. } 2\theta = 2^i\theta_\pm\}$ where $\theta_\pm = \frac{\pm1}{2^k-1}$. Define similarly $R^a_0$ to $R^m_0$ in (\ref{eq.internal-ray.dym}). There is also a dynamical holomorphic motion of $R_{n+1}^a:=f^{-n-1}_a(R_0^a)$. See \cite[3.1]{runze} for more details. 

\begin{equation}\label{diag.commu.f_a}
\begin{tikzcd}
\Omega_{a,\overline{m+p}}^{n_{a}} \ar[r]{}{f_{a}}\ar[d]{}{h_{a}} & \Omega_{a,\overline{m+2p}}^{n_{a}-1} \ar[d]{}{h_{a}} \ar[r]{}{f_{a}} & ... \ar[d]{}{h_{a}}  \ar[r]{}{f_{a}} & \Omega_{a,q-p}^1 \ar[d]{}{h_{a}} \ar[r]{}{f_{a}} & \Omega_{a,0}^0 \ar[d]{}{h_{a}}\\
\Omega^{n_a}_{\overline{m+p}} \ar[r]{}{P_{\lambda}} & \Omega^{n_a-1}_{\overline{m+2p}}  \ar[r]{}{P_{\lambda}} & ...  \ar[r]{}{P_{\lambda}} & \Omega^1_{q-p}  \ar[r]{}{P_{\lambda}} & \Omega^0_0
\end{tikzcd}
\end{equation}
In the above diagram,  $\phi_a:\overline{\Omega^0_{a,0}}\longrightarrow\overline{\mathbb{H}}$ is the Fatou coordiante of $f^q_{a}|_{B^*_{a,0}(0)}$ normalised by $\phi_a(c_+(a)) = 0$, $n_a$ is the smallest integer such that $\Omega_{a,\overline{m+p}}^{n_{a}}$ contains $v_-(a)$.

\begin{definition}
Let $a\in\tilde{\mathcal{O}}_n$. The dynamical objects $O^a_n$ of depth $n$ corresponding to the four parameter ones are
\begin{equation}\label{eq.dynamical.object}
        \bigcup_{\eta\in A_{n+1}}\overline{{R}^{\infty}_a(\eta)},\,\, f^{-n-1}_a(E^a_0),\,\,f^{-n-1}_a(\overline{R^a_0}),\,\,{E}^{\infty}_a(r^{1/3^{n+1}})
\end{equation}
\end{definition}
 
To conclude from the discussion in this subsection:
\begin{proposition}\label{prop.holomorphic.motion.dym.obj}
Let $\mathcal{O}_n$ be a parameter object of depth $n$, $a,a_0\in\tilde{\mathcal{O}}_n$. Let $O^a_{n+1},O^{a_0}_{n+1}$ be the corresponding dynamical object of depth $n+1$. There exists a dynamical holomorphic motion $L_n:\mathcal{O}_n\times O^{a_0}_{n+1}\longrightarrow\mathbb{C}$ with $L_n(a,O^{a_0}_{n+1}) = O^{a}_{n+1}$. Moreover, if $O^a_{n+1}$ is the first one or the third one in (\ref{eq.dynamical.object}), $O^a_{n+1}\cap J_a$ is either repelling or pre-periodic to $z=0$.

\end{proposition}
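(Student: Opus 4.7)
The plan is to split the four cases of (\ref{eq.dynamical.object}) into two groups according to which coordinate supplies the motion. The first and fourth objects (external rays and equipotentials) are handled via the Böttcher coordinate $\phi_a^\infty$ at infinity, whereas the second and third objects (internal equipotentials and rays in $B_a(0)$) are handled via the Fatou coordinate $\phi_a$ on the maximal petal $\Omega_a \subset B^*_{a,0}(0)$. In both groups the core observation is the same: the definition of a parameter object $\mathcal{O}_n$ is rigged so that $a \in \tilde{\mathcal{O}}_n$ forces $c_-(a)$ not to lie on the $(n+1)$-th preimage of the level-$0$ set, hence the inductive pull-back by $f_a$ is unobstructed and produces a sheet depending analytically on $a$.

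First I would dispatch the external case. Since $\phi_a^\infty$ is normalised by $\phi_a^\infty(z) = z + o(1)$ at infinity, it depends analytically on $a \in \mathbb{C} \setminus \mathcal{C}_\lambda$ and extends analytically to a neighbourhood of $E^\infty_a(r^{1/3^{n+1}})$. For $\eta \in A_{n+1}$, one has $3^k\eta \in A_n$ for every $k\geq 1$; therefore $a \in \tilde{\mathcal{O}}_n$ implies $a \notin \overline{\mathcal{R}_\infty(3^k\eta)}$ for all $k\geq 1$, which by the standard parameter/dynamics equivalence forces the forward iterates of $R_a^\infty(\eta)$ to avoid $c_-(a)$. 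Pulling back $\phi_a^{\infty,-1}$ along the branches determined by the chosen component of $O^{a_0}_{n+1}$ then gives a holomorphic family over $\tilde{\mathcal{O}}_n$, i.e.\ a dynamical holomorphic motion $L_n$. The classification of landing points of $R_a^\infty(\eta)$ is then immediate from Lemma \ref{lem.landing.stable-region}: the landing point is either repelling (pre-)periodic or in the inverse orbit of $0$, and in either case it moves dynamically with $a$ (holomorphically on each stratum).

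Next I would treat the Fatou coordinate case. On $\tilde{\mathcal{S}}_m^+ \subset \mathbb{C}\setminus \mathcal{I}$, the critical point $c_+(a)$ is by construction a well-defined holomorphic function of $a$, so the normalisation $\phi_a(c_+(a)) = 0$ produces an analytic family of Fatou coordinates on $\Omega_a$. The commutative diagram (\ref{diag.commu.f_a}) identifies, for each $a$, the sub-basin up to depth $n_a$ with the corresponding region in $P_\lambda$, where $n_a$ is the first integer for which $v_-(a) \in \Omega^{n_a}_{a,\overline{m+p}}$. The very definition of $\mathcal{E}^m_n$, $\mathcal{R}^m_n$ (and of $\mathcal{E}^{\mathcal{U}_k}_{n-k}$, $\mathcal{R}^{\mathcal{U}_k}_{n-k}$ via the capture parametrisations $\Psi_{\mathcal{U}_k}$) as pull-backs through $\Psi^{adj}_0$, $\Psi^{bit}_m$, $\tilde\Psi^{bit}_m$, $\Psi_{\mathcal{U}_k}$ translates the condition $a \in \tilde{\mathcal{O}}_n$ into $n+1 \leq n_a$; thus the diagram (\ref{diag.commu.f_a}) can be pulled back one more step without hitting $c_-(a)$. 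This produces an analytic branch of $f_a^{-n-1}(E_0^a)$ or $f_a^{-n-1}(\overline{R_0^a})$ over $\tilde{\mathcal{O}}_n$, giving the required dynamical holomorphic motion. For internal rays the landing-point statement follows from Lemma \ref{lem.landing.rational.internal-ray} and a local argument analogous to Lemma \ref{lem.landing.stable-region}: the angles in $A_{n+1}$ are pre-periodic under doubling, so the landing points are (pre-)periodic, and under the dynamical motion they remain either repelling or in the inverse orbit of $0$.

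The main obstacle is the second group: keeping track of the maximal petal across $\tilde{\mathcal{O}}_n$. The petal $\Omega_a$ has $c_+(a)$ on its boundary, so as $a$ crosses $\mathcal{I}$ the roles of $c_\pm(a)$ would swap and the normalisation $\phi_a(c_+(a)) = 0$ would fail to be holomorphic. This is exactly why the entire discussion is confined to $\mathcal{S}^+_m \subset \tilde{\mathcal{S}}_m^+ \subset \mathbb{C}\setminus \mathcal{I}$, where $c_+(a)$ is unambiguously defined. A secondary technicality is matching the motions supplied by different pieces of $\mathcal{O}_n$ (the main basin piece $\mathcal{E}^m_n$ and the capture pieces $\mathcal{E}^{\mathcal{U}_k}_{n-k}$) on their common boundary, but this follows because both are determined by the same pull-back of $P_\lambda$-internal rays through the conjugating maps $h_a$ of Proposition \ref{prop.paramet.cap}, which depend holomorphically on $a$ wherever they are defined.
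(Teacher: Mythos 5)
Your proposal takes essentially the same route as the paper: the paper's own justification is exactly the discussion of Subsection 5.2 — the Böttcher coordinate handles the external rays and equipotentials, the Fatou coordinate together with the partial identification of diagram (\ref{diag.commu.f_a}) handles the internal equipotentials and rays, the definition of the parameter objects is what guarantees that $c_-(a)$ avoids the depth-$(n+1)$ preimages so the pull-backs are unobstructed, and the classification of the landing points follows from the stability argument of Lemma \ref{lem.landing.stable-region} (and its analogue for internal rays). One minor slip that does not affect the argument: for the internal-ray object the correct translation of $a\in\tilde{\mathcal{O}}_n$ is that $v_-(a)$ avoids the dynamical ray set of depth $n$, hence $c_-(a)\notin R^a_{n+1}$, rather than the petal-level inequality $n+1\le n_a$, which is the relevant condition only for the equipotential object.
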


\subsection{Parameter graphs and puzzles}\label{sub.sec.paragraph}

\paragraph{Parameter graphs $\mathcal{Y}_n$ and puzzles $\mathcal{Q}_n$ in $\overline{\mathcal{S}^+_m}$}\mbox{}\\
Fix some $r\textgreater 1$, for each $n\geq0$ define the graph adapted for parameters of Misiurewicz parabolic type.
\[\mathcal{Y}_n = \bigcup_{k=1}^{n}\left(\bigcup_{\mathcal{U}_k\subset\mathcal{H}_k\cap\mathcal{S}^+_m}\overline{\mathcal{E}^{\mathcal{U}_k}_{n-k}}\right)\cup\left(\bigcup_{t\in T_n}\overline{\mathcal{R}_{\infty}(t)}\right)\cup\mathcal{E}^m_n\cup(\mathcal{E}_{\infty}(r^{1/3^n})\cap\mathcal{S}^+_m)\]
where 
\[T_n = \{t;\,3^{n+l}t \in\Theta_m\}\cap[\alpha_{m-1}^+,\beta^+_{m}],\,\, 0\leq l\leq q-1 \text{ s.t. } lp+m = 0 \pmod q\]

By Proposition \ref{prop.landing.equi.bitran}, Corollary \ref{cor.landing.Equi.capture}, the landing points of equipotentials in parabolic components are Misiurewicz parabolic. By Lemma \ref{lem.landing.Misiur.parabo} these points are also landing points of external rays with angles in $T_n$. Hence $\mathcal{Y}_n$ is connected, and every connected component of ${\mathcal{S}}^+_m\setminus\mathcal{Y}_n$ is simply connected. We call such a connected component $\mathcal{Q}$ a puzzle piece associated to $\mathcal{Y}_n$ if it is bounded and $\partial \mathcal{Q}\cap\mathcal{E}_{\infty}(r^{1/3^n})\not =\emptyset$. We denote it by $\mathcal{Q}_n$ in the sequel. By construction, every $\mathcal{Q}_{n+1}$ is contained in a unique puzzle piece $\mathcal{Q}_{n}$.

\paragraph{Parameter graphs $\mathcal{X}_n$ and puzzles $\mathcal{P}_n(a_0)$ in $\overline{{\mathcal{S}^+_m}}$}\mbox{}\\
Next we define the graph adapted for parameters which are \textbf{not} of Misiurewicz parabolic type. Let $a_0\in \mathcal{S}^+_{m}$ such that $f_{a_0}$ satisfies Assumption ($\Diamond$). By Theorem \ref{thm.dym}, there is a graph (\ref{eq.graph.Si}) infinitely ringing $v_-(a_0)$. Recall that this graph is associated to the angle $\theta_{l} = \frac{\pm1}{2^l-1}$ of an internal ray. Let $H$ be the collection of angles of all external rays in (\ref{eq.graph.Si}) of depth 0. For each $n\geq0$ consider
\[\mathcal{X}_n = \bigcup_{k=1}^{n}\left(\bigcup_{\mathcal{U}_k\subset\mathcal{H}_k\cap\mathcal{S}^+_m}\overline{\mathcal{E}^{\mathcal{U}_k}_{n-k}}\cup\overline{\mathcal{R}^{\mathcal{U}_k}_{n-k}}\right)\cup\left(\bigcup_{\eta\in A_n}\overline{\mathcal{R}_{\infty}(\eta)}\right)\cup\mathcal{E}^m_n\cup\mathcal{R}^m_n\cup(\mathcal{E}_{\infty}(r^{1/3^n})\cap\mathcal{S}^+_m)\]
where
\[A_n = \{\eta;\,\,3^n\eta\in H\}\cap [\alpha^+_{m-1},\beta^+_m].\]
 We call a connected component $\mathcal{P}$ of ${\mathcal{S}}^+_m\setminus\mathcal{X}_n$ a puzzle piece associated to $\mathcal{X}_n$ if it is bounded and $\partial \mathcal{P}\cap\mathcal{E}_{\infty}(r^{1/3^n})\not =\emptyset$. We denote it by $\mathcal{P}_n$ in the sequel. Clearly every $\mathcal{P}_{n+1}$ is contained in a unique $\mathcal{P}_n$. Define $\mathcal{P}_n(a_0)$ to be the puzzle piece containing $a_0$. This is well-defined since $a_0\not\in \mathcal{X}_n$, for otherwise some external ray $R^{\infty}_{a_0}(\eta)$ or internal ray will land at a parabolic pre-periodic point or $v_-(a_0)$, contradicting with the construction of the dynamical graph for $f_{a_0}$ (see the observation at the beginning of \ref{subsec.infini.ringed}).

\begin{lemma}\label{lem.para.Misiur}
Let $n\geq 1$. Any $a\in \mathcal{X}_n\cap\partial\mathcal{H}\cap \mathcal{P}_0(a_0)$ is a Misiurewicz parameter. It is the landing point of an internal ray $\mathcal{R}\subset\mathcal{X}_n$ and an external ray $\mathcal{R}_\infty(t)\subset\mathcal{X}_n$. In particular $\mathcal{X}_n$ is connected.
\end{lemma}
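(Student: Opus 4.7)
The plan is to dissect $\mathcal{X}_n$ into its component pieces---parameter equipotentials and internal rays inside $\mathcal{D}_m$ or capture components, external rays with angles in $A_n$, and the outer equipotential $\mathcal{E}_\infty(r^{1/3^n})$---and to locate every possible intersection with $\partial\mathcal{H}$ using the landing lemmas already proved. The relative interior of each of these pieces lies either in $\mathbb{C}\setminus\mathcal{C}_\lambda$ (for external rays and the outer equipotential) or strictly inside a component of $\mathcal{H}^\lambda$ (for parameter equipotentials and internal rays), both disjoint from $\partial\mathcal{H}$. Hence every $a\in\mathcal{X}_n\cap\partial\mathcal{H}\cap\mathcal{P}_0(a_0)$ is forced to be a landing endpoint of at least one arc of $\mathcal{X}_n$.

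First I would check that inside the puzzle piece $\mathcal{P}_0(a_0)$ the only arcs of $\mathcal{X}_n$ reaching $\partial\mathcal{H}$ are internal rays and external rays. The key point is that $a_0$ lies outside every parabolic component, so $\mathcal{P}_0(a_0)$ is separated from the ``central'' region of each $\mathcal{D}_m$ and each $\mathcal{U}_k$ by the depth-$0$ equipotentials $\mathcal{E}^m_0$ and $\mathcal{E}^{\mathcal{U}_k}_0$ already present in $\mathcal{X}_0$; consequently the deeper equipotentials in $\mathcal{X}_n$ whose closures reach $\partial\mathcal{D}_m$ or $\partial\mathcal{U}_k$ (the only candidates for Misiurewicz parabolic landings by Proposition \ref{prop.landing.equi.bitran} and Corollary \ref{cor.landing.Equi.capture}) are cut off from $\mathcal{P}_0(a_0)$. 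Once this is established, Lemmas \ref{lem.landing.rational.internal-ray} and \ref{lem.landing.rational.external} identify the remaining landing points: the free critical value $v_-(a)$ must coincide with the common repelling (pre-)periodic landing point of a dynamical internal ray and a dynamical external ray, so $a$ is Misiurewicz.

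Second I would establish the simultaneous landing of an internal and external parameter ray at each such $a$. The angle set $H$ defining the external rays in $\mathcal{X}_0$ was chosen precisely as the set of angles of the dynamical external rays $R^\infty_{a_0}(t)$ landing at the points $x_\theta$ ($\theta\in\Theta_\pm$) which are the landing points of the internal rays $R(\theta)$; by naturality this property propagates to every $k$-th pre-image through $3^k\eta\in H$. Therefore at any $a$ for which $v_-(a)$ lies at an iterated pre-image of some $x_\theta$, the corresponding dynamical internal ray and the dynamical external ray with angle in $3^{-n}H$ land at the same point of $J_{f_a}$. Pulling this coincidence back through the parametrisations $\Psi^{adj}_0$, $\Psi^{bit}_m$, $\widetilde{\Psi}^{bit}_m$, $\Psi_{\mathcal{U}_k}$ and the Koebe-type pullback defining the parameter internal/external rays yields an internal ray $\mathcal{R}\subset\mathcal{X}_n$ and an external ray $\mathcal{R}_\infty(t)\subset\mathcal{X}_n$ both landing at $a$. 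The main obstacle will be the bookkeeping that identifies the parameter-plan pulled-back angles with the dynamical-plan pre-image angles, which I expect to handle by the dynamical holomorphic motion provided by Proposition \ref{prop.holomorphic.motion.dym.obj} over the relevant parameter regions together with stability of repelling landings.

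Finally connectedness of $\mathcal{X}_n$ becomes automatic. The outer equipotential $\mathcal{E}_\infty(r^{1/3^n})$ meets every external ray in $\mathcal{X}_n$; every parameter internal ray lands at a Misiurewicz parameter where, by the previous step, an external ray of $\mathcal{X}_n$ also lands; and every parameter equipotential in $\mathcal{X}_n$ lands, by Proposition \ref{prop.landing.equi.bitran} (resp. Corollary \ref{cor.landing.Equi.capture}) together with Lemma \ref{lem.landing.Misiurewicz} applied to the associated dynamical rays, at a parameter where an external ray of $\mathcal{X}_n$ also lands. Chaining all pieces to $\mathcal{E}_\infty(r^{1/3^n})$ through these common landing points shows $\mathcal{X}_n$ is connected.
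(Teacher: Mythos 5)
Your overall strategy (landing lemmas to force $a$ to be a Misiurewicz parameter, identification of simultaneously landing internal and external rays, connectedness via chaining to the outer equipotential) is the same as the paper's. The paper's proof, however, is a four-sentence argument that bypasses your Step~1 entirely: it invokes Proposition~\ref{prop.holomorphic.motion.dym.obj} (landing points of the dynamical internal rays $R^0_a$ are repelling), then Lemma~\ref{lem.landing.rational.internal-ray} (so the parameter internal ray lands at a Misiurewicz parameter), then observes that the corresponding dynamical landing point is by construction also landed by a dynamical external ray whose angle lies in $H$, so that Lemma~\ref{lem.landing.Misiurewicz} produces a parameter external ray $\mathcal{R}_\infty(\eta)\subset\mathcal{X}_n$ landing at the same $a$; connectedness then follows.

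The part of your argument that does not survive scrutiny is Step~1. You assert that $\mathcal{P}_0(a_0)$ is separated from the central region of each $\mathcal{U}_k$ by $\mathcal{E}^{\mathcal{U}_k}_0$ ``already present in $\mathcal{X}_0$.'' But in the definition
\[
\mathcal{X}_n \;=\; \bigcup_{k=1}^{n}\Bigl(\;\bigcup_{\mathcal{U}_k\subset\mathcal{H}_k\cap\mathcal{S}^+_m}\overline{\mathcal{E}^{\mathcal{U}_k}_{n-k}}\cup\overline{\mathcal{R}^{\mathcal{U}_k}_{n-k}}\Bigr)\cup\cdots
\]
the outer union runs over $1\le k\le n$, so for $n=0$ it is empty: $\mathcal{X}_0$ contains no equipotential of any capture component $\mathcal{U}_k$ with $k\geq 1$. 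Consequently those components (and the landing points of $\overline{\mathcal{E}^{\mathcal{U}_k}_{n-k}}$ on $\partial\mathcal{U}_k$, which by Corollary~\ref{cor.landing.Equi.capture} are Misiurewicz parabolic) are not cut off from $\mathcal{P}_0(a_0)$ by $\mathcal{X}_0$, and your dissection step does not give the conclusion you want. Note that the paper's own proof also silently treats every $a\in\mathcal{X}_n\cap\partial\mathcal{H}\cap\mathcal{P}_0(a_0)$ as a landing point of an internal ray, without addressing the closures of the capture equipotentials; if you want to fill that in rigorously you will need an argument different from the one you gave (for instance, observing that the relevant Misiurewicz parabolic points of $\partial\mathcal{U}_k$ are the landing points of external rays whose angles are preimages of $\Theta_m$, and these angles never lie in $A_n$, so such points cannot enter the part of $\mathcal{X}_n$ that the lemma is really concerned with), rather than the nonexistent depth-$0$ capture equipotentials.

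Your Steps~2 and 3 correctly reproduce the paper's reasoning, but the appeal to ``Koebe-type pullback'' and bookkeeping of angles is more machinery than is needed: once the dynamical internal and external rays are known to land at the same repelling pre-periodic point and this point is $v_-(a)$, Lemmas~\ref{lem.landing.rational.internal-ray} and~\ref{lem.landing.Misiurewicz} transfer both landings to the parameter plane directly. Your Step~4 (connectedness by chaining) matches the paper's one-line conclusion.
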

\begin{proof}
By Proposition \ref{prop.holomorphic.motion.dym.obj}, landing points of $R^0_a$ are repelling. Therefore by Lemma \ref{lem.landing.rational.internal-ray}, $f_a$ is Misiurewicz. Also each landing point is landed by some external ray $R^\infty_a(\eta)$ with $R^\infty_{a_0}(\eta)$ involved in the admissible graph of $f_{a_0}$. By Lemma \ref{lem.landing.Misiur.parabo}, $a$ is the landing point of $\mathcal{R}_\infty(\eta)$. Hence $\mathcal{X}_n$ is connected.
\end{proof}

\paragraph{Dynamics graphs defined up to certain depth}\mbox{}\\
From the discussion in Subsection \ref{subsec.dynamics-objects}, we see that for $a$ satisfying the condition in Proposition \ref{prop.holomorphic.motion.dym.obj}, $E^a_0,R^a_0$ are well defined and homeomorphic to the corresponding objects in the quadratic model (\ref{eq.equi.dym}), (\ref{eq.internal-ray.dym}). Set $E^a_n = f_a^{-n}(E^a_0),R^a_n = f_a^{-n}(R^a_0)$. Define
\[ Y^a_n = E^a_n\cup\left(\bigcup_{t\in T_n}\overline{R_a^\infty(3^it)}\right)\cup E_a^\infty(r).\]
 \[ X^a_n = E^a_n\cup R^a_n\cup\left(\bigcup_{\eta\in A_n}\overline{R_a^\infty(3^i\eta)}\right)\cup E_a^\infty(r).\]

Proposition \ref{prop.holomorphic.motion.dym.obj} gives immediately

\begin{lemma}\label{lem.holomotion.parabolic}
Let $a_0\in \mathcal{Q}_n$. Then for $0\leq k\leq n+1$ there exists a dynamical holomorphic motion $L_k:\mathcal{Q}_n\times Y^{a_0}_k\longrightarrow \mathbb{C}$ with base point $a_0$ such that $L_k(a,Y^{a_0}_k) = Y^a_k$.
\end{lemma}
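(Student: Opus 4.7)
The plan is to decompose $Y^{a_0}_k$ into its three types of constituent pieces -- the external equipotential $E^\infty_{a_0}(r)$, the finitely many external rays $\overline{R^\infty_{a_0}(t)}$ appearing in it, and the parabolic equipotential $E^{a_0}_k$ -- produce a dynamical motion of each piece over $\mathcal{Q}_n$ via Proposition \ref{prop.holomorphic.motion.dym.obj}, and then glue them along their common landing points.

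Every piece of the parameter graph $\mathcal{Y}_n$ is a parameter object in the sense of Subsection~\ref{subsec.dynamics-objects}, so $\mathcal{Q}_n$, being a connected component of $\mathcal{S}^+_m\setminus\mathcal{Y}_n$, is contained in the complementary region $\tilde{\mathcal{O}}_n$ attached to each such piece. Applying Proposition \ref{prop.holomorphic.motion.dym.obj} to each piece separately thus yields: a dynamical holomorphic motion of the external equipotential $E^\infty_{a_0}(r)$ via the Böttcher coordinate at $\infty$; a dynamical holomorphic motion of each external ray $\overline{R^\infty_{a_0}(t)}$, again via the Böttcher coordinate, for all $t$ appearing in $Y^{a_0}_k$ (this uses $T_k\subset T_n$ for $k\leq n$); and a dynamical holomorphic motion of $E^{a_0}_k$, obtained by pulling back through $f_a$ the attracting Fatou coordinate of $f^q_a$ at the parabolic fixed point $z=0$, which is well-defined because no dynamical equipotential of depth $\leq n+1$ meets $v_-(a)$ when $a\in\mathcal{Q}_n$.

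Pasting is then routine. The pieces of $Y^{a_0}_k$ intersect only at (pre)periodic landing points of external rays: either repelling (pre)periodic points lying on $\partial E^a_k$ (iterated preimages of $0$ under $f^q_a$), at which both partial motions coincide with the one given by the implicit function theorem; or the parabolic fixed point $z=0$ itself, which is fixed by every member of $Per_1(\lambda)$ and whose portrait equals the constant cycle $\Theta_m$ throughout $\mathcal{S}_m\supset\mathcal{Q}_n$ by Corollary~\ref{cor.double.para.wake}. In both cases the partial motions agree on their overlap, so they glue to a dynamical holomorphic motion of the finite graph $Y^{a_0}_k$ with the required properties; no Slodkowski extension is needed here.

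The main delicate point is verifying that for every $a\in\mathcal{Q}_n$ no external ray in $Y^a_k$ crashes on $c_-(a)$ before landing; this follows from $\mathcal{Q}_n\cap\overline{\mathcal{R}_\infty(t)}=\emptyset$ for each relevant angle $t$, which by the very definition of parameter external rays prevents $v_-(a)$ from sitting on the orbit of the dynamical rays appearing in $Y^a_k$. Once this is secured, the combinatorial type of $Y^a_k$ is locally constant as $a$ varies in $\mathcal{Q}_n$, and the pasting argument above goes through unambiguously.
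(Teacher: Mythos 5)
Your proof is correct and takes essentially the same route the paper intends: the paper states only that the lemma ``follows immediately'' from Proposition~\ref{prop.holomorphic.motion.dym.obj}, and what you have written is precisely the elaboration that is implicit --- decompose $Y^{a_0}_k$ into its external equipotential, external rays, and parabolic equipotential, apply the proposition to each parameter object, and glue along the common (pre)periodic landing points where the Böttcher- and Fatou-coordinate motions agree.
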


\begin{lemma}\label{lem.holomotion.misur}
Let $a_0\in \mathcal{S}^+_{m}$ such that $f_{a_0}$ satisfies the Assumption ($\Diamond$). For $0\leq k\leq n+1$, there is a dynamical holomorphic motion $L_k:\mathcal{P}_n(a_0)\times X^{a_0}_k\longrightarrow \mathbb{C}$ with base point $a_0$ such that $L_k(a,X^{a_0}_k) = X^a_k$.
\end{lemma}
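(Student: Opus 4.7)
The plan is to assemble the desired motion piece by piece from Proposition~\ref{prop.holomorphic.motion.dym.obj}, and then to verify the mutual compatibility of these partial motions on their overlaps.

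First, I would unwind the definitions. By construction, $\mathcal{P}_n(a_0)$ is a connected component of $\mathcal{S}^+_m\setminus\mathcal{X}_n$, and $\mathcal{X}_n$ is a finite union of parameter objects of depth at most $n$: the external rays $\overline{\mathcal{R}_\infty(\eta)}$ with $\eta\in A_n$, the outer equipotential $\mathcal{E}_\infty(r^{1/3^n})\cap\mathcal{S}^+_m$, the parabolic-component equipotentials and internal rays $\mathcal{E}^m_n,\mathcal{R}^m_n$, and their capture-component analogues $\overline{\mathcal{E}^{\mathcal{U}_k}_{n-k}},\overline{\mathcal{R}^{\mathcal{U}_k}_{n-k}}$. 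Since $\mathcal{P}_n(a_0)$ lies in the complement of each of these, Proposition~\ref{prop.holomorphic.motion.dym.obj} applies to each in turn and produces, over $\mathcal{P}_n(a_0)$, a dynamical holomorphic motion of the corresponding dynamical object of depth $n+1$. Concretely: each external ray $R^\infty_a(\eta')$ with $\eta'\in A_{n+1}$ and the outer equipotential $E^\infty_a(r^{1/3^{n+1}})$ move via the Böttcher coordinate $\phi^\infty_a$; the equipotentials $E^a_k$ and internal rays $R^a_k$ for $0\leq k\leq n+1$ move via the Fatou coordinate $\phi_a$ composed with pullback under $f_a$, as in the construction leading to diagram~(\ref{diag.commu.f_a}).

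Second, I would verify that these partial motions glue continuously at their common points. The only nontrivial overlaps occur at landing points in $J_a$ where external rays meet endpoints of internal rays or points of $\partial B^*_{a,i}(0)$. At $a_0$, the graph $X^{a_0}_k$ is by construction the admissible graph of $f_{a_0}$ infinitely ringing $v_-(a_0)$, so every landing point is repelling (pre-)periodic or in the inverse orbit of $0$; this is precisely the second statement of Proposition~\ref{prop.holomorphic.motion.dym.obj}. Such landing points admit a canonical analytic continuation over $\mathcal{P}_n(a_0)$ by Koenigs- and repelling-Fatou-coordinate stability, and this analytic continuation agrees, on each side, with the limit of the incoming motions of the rays and equipotentials. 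The key point is that Lemmas~\ref{lem.landing.rational.internal-ray}--\ref{lem.para.Misiur} identify $\mathcal{X}_n$ with exactly the parameter locus where a landing-point coincidence of this sort could degenerate (some external ray crashing on the free critical point, or some internal ray ceasing to land at the prescribed point), and by definition $\mathcal{P}_n(a_0)$ lies strictly in the complement of this locus.

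Finally, the resulting map $L_k:\mathcal{P}_n(a_0)\times X^{a_0}_k\to\mathbb{C}$ is injective on each slice, holomorphic in $a$ for each $z$, and equal to the identity at $a=a_0$, all by inheritance from the individual motions; the dynamical relation $f_a\circ L_k(a,\cdot)=L_{k-1}(a,\cdot)\circ f_{a_0}$ descends likewise from the Böttcher/Fatou-coordinate descriptions of the motions of rays and equipotentials. The main obstacle in this argument is precisely the gluing step in the second paragraph: one must rule out the possibility that, as $a$ varies in $\mathcal{P}_n(a_0)$, some ray in the dynamical plane crashes on a critical point or that the landing points fail to match across the two coordinate descriptions. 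This is exactly what the construction of the parameter graph $\mathcal{X}_n$, via Lemma~\ref{lem.para.Misiur} together with Proposition~\ref{prop.holomorphic.motion.dym.obj}, is designed to prevent; once that is handled, the remaining properties of a dynamical holomorphic motion are automatic.
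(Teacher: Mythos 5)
Your proposal is correct and follows essentially the same route as the paper, which states this lemma (together with Lemma \ref{lem.holomotion.parabolic}) as an immediate consequence of Proposition \ref{prop.holomorphic.motion.dym.obj}, since $\mathcal{P}_n(a_0)$ lies in the complement of every parameter object making up $\mathcal{X}_n$ and the graph $X^a_k$ is exactly the union of the corresponding dynamical objects. Your additional gluing discussion at landing points only makes explicit what the paper leaves implicit; no new idea is involved.
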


\section{Local connectivity of the central part $\partial\mathcal{K}_{\lambda}$, $\lambda = e^{2\pi i\frac{p}{q}}$}\label{sec.loc}

By Yoccoz's Theorem, parameters in 
\[\mathcal{C}_\lambda\setminus\left[\mathring{\mathcal{H}^\lambda}\cup\left(\bigcup_{m=0}^{q-1}\mathcal{W}^+(\boldsymbol{\mathrm{a}}_m)\cup\mathcal{W}^-(\boldsymbol{\mathrm{a}}_m)\right)\right].\]
are divided into four cases:
\begin{itemize}
    \item[1.] Double parabolic: $a\in\mathcal{A}_{p/q}$.
    \item[2.] Misiurewicz parabolic: $\exists n\geq 1$ s.t. $f_{a}^n(c_-(a)) = 0$.
    \item[3.] Renormalisable (in the sense of Douady-Hubbard \cite{ASENS_1985_4_18_2_287_0}): there exists $U'\subset\subset U$ surrounding $c_-(a)$ and $k\geq 1$ such that $f^k|_{U'}: U'\longrightarrow U$ is quadratic like, and the copy of the quadratic Julia set is connected.
    \item[4.] Non renormalisable.
\end{itemize}

In Section \ref{sec.dym.graph}, we have constructed dynamical admissible graph infinitely ring the free critical point (critical value) for $f_a$ and obtain the dichotomy given by Yoccoz's Theorem. Now we pass this dichotomy to the parameter plane:

\begin{thmx}\label{thm.0}
If $a$ satisfies Assumption ($\Diamond$), i.e. is of case 3 and 4, then there is a dynamically defined nest of puzzles $\{\mathcal{P}_n(a)\}_n$ surrounding $a$, such that $\bigcap_n\mathcal{P}_n(a) = \{a\}$ if $a$ is non renormalisable; $\bigcap_n\mathcal{P}_n(a) = \textbf{M}_a$ is a copy of the Mandelbrot set containing $a$ if $a$ is renormalisable. In particular, $\mathcal{C}_\lambda$ is locally connected at $a$ if $a$ is non renormalisable.
\end{thmx}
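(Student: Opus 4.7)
The plan is to transfer the dynamical dichotomy given by Theorem \ref{thm.dym} to the parameter plane, using the dynamical holomorphic motion of the graph provided by Lemma \ref{lem.holomotion.misur}. I will set up a comparison map between parameter puzzle pieces and dynamical puzzle pieces around the critical value, then invoke either a modulus/Grötzsch argument (non-renormalisable case) or a quadratic-like family argument (renormalisable case).

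First, since $a$ satisfies Assumption $(\Diamond)$, Theorem \ref{thm.dym} supplies an admissible dynamical graph of the form (\ref{eq.graph.Si}) infinitely ringing $v_-(a)$, together with the alternative: either $\sum_i \operatorname{mod}(A^a_{n_i}) = \infty$, or $f_a^k$ restricts to a quadratic-like map $P^{a,v}_{m+k} \to P^{a,v}_m$ for all large $m$. I would use the same admissible graph to construct the parameter graphs $\mathcal{X}_n$ and the nested parameter puzzle pieces $\mathcal{P}_n(a)$ following Subsection \ref{sub.sec.paragraph}; these are, by construction, open, connected, simply connected neighbourhoods of $a$ inside $\overline{\mathcal{S}^+_m}$.

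The central step is to define the comparison map
\[
    \Psi_n \colon \overline{\mathcal{P}_n(a)} \longrightarrow \overline{P^{a,v}_{n+1}}, \qquad b \longmapsto L_{n+1}^{-1}\bigl(b, v_-(b)\bigr),
\]
where $L_{n+1}$ is the dynamical holomorphic motion of Lemma \ref{lem.holomotion.misur}, quasiconformally extended to a neighbourhood of the graph via Slodkowski's theorem. I would verify that $\Psi_n$ is continuous on $\overline{\mathcal{P}_n(a)}$, sends $\partial\mathcal{P}_n(a)$ homeomorphically onto $\partial P^{a,v}_{n+1}$ (each boundary arc of $\mathcal{P}_n(a)$ is, by definition of parameter external/internal rays and equipotentials combined with Proposition \ref{prop.landing.equi.bitran}, Lemma \ref{lem.landing.rational.internal-ray} and Lemma \ref{lem.landing.Misiure.internal-rays}, exactly the locus where $v_-(b)$ lands on the corresponding dynamical arc), and has topological degree one by an argument-principle computation along an equipotential of $\mathcal{P}_0(a)$. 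Then $\Psi_n$ is a homeomorphism with quasiconformal dilatation uniformly bounded in $n$ (the bound descending from $\mathcal{P}_0(a)$ by iterated pull-back under $f_a$, which is conformal on parameter pieces), yielding a modulus comparison
\[
    \operatorname{mod}\bigl(\mathcal{P}_{n}(a) \setminus \overline{\mathcal{P}_{n+1}(a)}\bigr) \;\geq\; K^{-1}\operatorname{mod}\bigl(P^{a,v}_{n+1} \setminus \overline{P^{a,v}_{n+2}}\bigr)
\]
with $K$ independent of $n$.

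In the non-renormalisable case Theorem \ref{thm.dym}(\ref{dichotomie}) gives $\sum_i \operatorname{mod}(A^a_{n_i}) = \infty$, so the parameter annuli satisfy the same divergence and Grötzsch's inequality forces $\bigcap_n \overline{\mathcal{P}_n(a)} = \{a\}$; connectedness of each $\mathcal{P}_n(a)$ then makes $\{\mathcal{P}_n(a) \cap \mathcal{C}_\lambda\}$ a basis of connected neighbourhoods of $a$ in $\mathcal{C}_\lambda$, giving local connectivity. In the renormalisable case, the quadratic-like restriction persists under small perturbation of $a$, yielding an analytic family of quadratic-like maps $\{f_b^k \colon P^{b,v}_{m_0+k} \to P^{b,v}_{m_0}\}_{b \in \mathcal{P}_{m_0}(a)}$ for $m_0$ large; by Douady--Hubbard's theorem on families of quadratic-like maps, its connectedness locus is a homeomorphic copy $\mathbf{M}_a$ of $\mathbf{M}$, and the standard tube argument identifies it with $\bigcap_n \mathcal{P}_n(a)$. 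The principal obstacle is the degree-one / homeomorphism property of $\Psi_n$: although the boundary correspondence is built into the construction of parameter objects, one must carefully rule out folding of $\Psi_n$ inside $\mathcal{P}_n(a)$; this amounts to showing that the map $b \mapsto v_-(b)$ from the parameter piece to the relevant dynamical piece, followed by the inverse holomorphic motion, inherits degree $1$ from the fact that the underlying quadratic-like renormalised map has degree $2$ at its critical point, combined with the standard "unfolding" principle in one-parameter families.
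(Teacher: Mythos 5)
Your overall architecture matches the paper's: use Theorem \ref{thm.dym} for the dynamical dichotomy, build the parameter puzzles $\mathcal{P}_n(a)$ from Subsection \ref{sub.sec.paragraph}, compare them with the dynamical pieces around $v_-(a)$, then conclude by Gr\"otzsch in the non-renormalisable case and by Douady--Hubbard quadratic-like families in the renormalisable case (the latter is exactly what Proposition \ref{prop.mandel} does, deferring to Roesch). The gap is in your central step. You define $\Psi_n(b)=L_{n+1}^{-1}(b,v_-(b))$ on the \emph{whole closed} piece $\overline{\mathcal{P}_n(a)}$ and claim it is a homeomorphism with quasiconformal dilatation uniformly bounded in $n$. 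Two problems. First, the holomorphic motion $L_{n+1}$ of Lemma \ref{lem.holomotion.misur} is only defined on the graph $X^{a_0}_{n+1}$; a Slodkowski extension to the interior is non-canonical, and its dilatation at the point $v_-(b)$ is controlled only by the hyperbolic distance from $b$ to the base point inside the parameter domain of the motion, hence blows up as $b$ approaches $\partial\mathcal{P}_n(a)$ -- there is no uniform-in-$n$ bound for free. Second, your stated source of the bound, ``descending from $\mathcal{P}_0(a)$ by iterated pull-back under $f_a$, which is conformal on parameter pieces,'' does not make sense: $f_a$ acts on the dynamical plane, not on parameter pieces, so there is no pull-back mechanism transporting a dilatation bound down the parameter nest. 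Likewise your ``no folding / degree one'' step is exactly the hard point, and the appeal to an ``unfolding principle'' and to the degree of the renormalised quadratic-like map is not an argument.

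The paper sidesteps both issues by never asking for a map of the interiors: Lemma \ref{lem.homeo.para.dym} only establishes a bijection between the \emph{graphs} $\mathcal{P}_n(a_0)\cap\mathcal{X}_{n+1}$ and $P^{a_0,v}_n\cap X^{a_0}_{n+1}$ (injectivity is checked ray-by-ray and equipotential-by-equipotential, surjectivity via the landing Lemmas \ref{lem.para.Misiur}, \ref{lem.landing.Misiure.internal-rays} and the claim excluding $\mathcal{R}_\infty(\alpha^+_{m-1}),\mathcal{R}_\infty(\beta^+_m)$), which with Corollaries \ref{cor.criticalpuzzle} and \ref{cor.homeo.dym.parapuzzle} yields the nesting $\overline{\mathcal{P}_{n+1}(a_0)}\subset\mathcal{P}_n(a_0)$ and a boundary homeomorphism $\partial\mathcal{P}_{n+1}(a_0)\to\partial P^{a_0,v}_{n+1}$. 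The modulus comparison with a constant $K$ independent of $n$ is then obtained by Shishikura's trick (Lemma \ref{lem.shishikura}, following \cite{roesch_2000}), which is precisely the device that replaces your unproved uniform quasiconformality. To repair your proposal you would either have to reproduce that argument or prove the uniform dilatation bound by some other means; as written, the divergence of $\sum_i\operatorname{mod}(\mathcal{A}_{n_i})$ does not follow from the divergence of $\sum_i\operatorname{mod}(A^a_{n_i})$.
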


In general, we do not get local connectivity at all renormalisable parameters, since essentially this needs the MLC conjecture to be true. However for certain $a$ renormalisable (for instance the cusp of the main cardioid or the tips of $\textbf{M}_a$), we can separate $\textbf{M}_a$ from $\mathcal{C}_\lambda$ by two external rays landing at $a$. Hence $\mathcal{C}_\lambda$ is also locally connected at such $a$. If we are in case 1 or 2, then the parameter puzzles we construct no longer surround $a$, but have $a$ on their boundaries. But still there is a decreasing sequence of parameter puzzles shrinking to $a$, since the corresponding decreasing dynamical puzzles shrink to a certain preimage of $z=0$, and this shrinking property can be passed to parameter puzzles by holomorphic motion.\\

To summarize, we obtain

\begin{thmx}\label{thm.A}
The boundary of every component of $\mathring{\mathcal{H}^\lambda}$ is a Jordan curve.
\end{thmx}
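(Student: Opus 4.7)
The plan is to reduce Theorem \ref{thm.A} to the local connectivity of $\partial\mathcal{U}$ for each component $\mathcal{U}$ of $\mathring{\mathcal{H}^\lambda}$: by Lemma \ref{lem.boundary.components} (transferred from $g_{\lambda,c}$ to $f_{\lambda,a}$ via Lemma \ref{lem.relation}), $\mathcal{U}$ is simply connected with $\partial\mathcal{U}\subset\partial\mathcal{C}_\lambda$, so local connectivity of $\partial\mathcal{U}$ upgrades directly to its being a Jordan curve. Among the capture components, those contained in a double parabolic wake $\mathcal{W}^{\pm}(\boldsymbol{\mathrm{a}}_m)$ are already handled by Corollary \ref{cor.loc.connected.cap-in-double-wake}, so the remaining cases are the adjacent components $\mathcal{B}_0,\mathcal{B}_q$, the bitransitive components $\mathcal{B}_1,\dots,\mathcal{B}_{q-1}$, and the capture components lying in the open regions $\mathcal{S}_m$.

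The strategy in all remaining cases is the same: exploit the explicit conformal parametrisations of Propositions \ref{prop.summary.parametri}, \ref{prop.paramet.cap} and \ref{prop.summary.para.adj}. On $\mathcal{U}\setminus\mathcal{I}$ (and, in the adjacent case, further split along $(\Psi_0^{adj})^{-1}(\overline{\Omega_p^{-1}})$) the map $\Psi$ sends the open region isomorphically onto a Jordan subdomain of an immediate basin of $P_\lambda$. Local connectivity of $\partial\mathcal{U}$ is then equivalent to continuous extension of $\Psi^{-1}$ to the boundary of its target, which I plan to obtain prime-end by prime-end. The candidate accesses come from the parameter equipotentials $\mathcal{E}^m_n$, $\mathcal{E}^{\mathcal{U}_k}_n$ and internal rays $\mathcal{R}^m_n$, $\mathcal{R}^{\mathcal{U}_k}_n$ built in Section \ref{sec.paragraph} as $\Psi$-preimages of the corresponding objects in the model.

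The main analytic input is the shrinking of parameter puzzles obtained by transferring dynamical shrinking through the holomorphic motions of Lemmas \ref{lem.holomotion.parabolic} and \ref{lem.holomotion.misur}. For $a_0\in\partial\mathcal{U}$ such that $v_-(a_0)$ eventually hits $z=0$, the dynamical pieces of the graph $Y^{a_0}_n$ around this preimage shrink to a point by Theorem \ref{thm.local-connectivity-julia}, and the motion of Lemma \ref{lem.holomotion.parabolic} converts this into shrinking of $\mathcal{Q}_n\ni a_0$. Combined with Lemma \ref{lem.landing.rational.internal-ray} and Lemma \ref{lem.landing.Misiure.internal-rays}, which guarantee that parameter rays land at Misiurewicz parabolic parameters and conversely, this gives continuity of $\Psi^{-1}$ at a dense set of prime ends. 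The same mechanism with $\mathcal{X}_n$ and $\mathcal{P}_n$ handles accesses to boundary points where $v_-(a_0)$ lies on $\partial B^*_{a_0,l}$ but is not pre-periodic to $0$.

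The main obstacle, as emphasized in the introduction, is the behaviour at the double parabolic endpoints of $\mathcal{I}_m$, i.e.\ the $\boldsymbol{\mathrm{a}}_m\in\mathcal{A}_{p/q}$: here Fatou coordinates degenerate and the holomorphic motion argument fails. To close up the two halves of each bitransitive component at $\boldsymbol{\mathrm{a}}_m$, I will use the exact ray picture of Proposition \ref{prop.landing.four} and Corollary \ref{cor.landing.rational.double.parabolic}: precisely four parameter external rays $\mathcal{R}_\infty(\alpha^\pm_m), \mathcal{R}_\infty(\beta^\pm_m)$ land at $\boldsymbol{\mathrm{a}}_m$ and, together with $\mathcal{I}$, form the local frame separating $\mathcal{D}_m$ from $\tilde{\mathcal{D}}_m$. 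The properness statement of Lemma \ref{lem.proper.bit} together with the landing of equipotentials in Proposition \ref{prop.landing.equi.bitran} and Lemma \ref{lem.external.angle.limit} then forces any sequence in $\mathcal{D}_m$ whose $\Psi^{bit}_m$-image converges to the puncture on $\partial\Omega^{-s}_{\overline{m+p}}$ to converge to $\boldsymbol{\mathrm{a}}_m$, and similarly for $\tilde{\mathcal{D}}_m$; this is exactly the continuous glueing needed. Injectivity of the resulting boundary extension is automatic from $\partial\mathcal{U}\subset\partial\mathcal{C}_\lambda$, exactly as in the last step of Lemma \ref{lem.boundary.components}, completing the proof.
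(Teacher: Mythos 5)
Your overall reduction is the paper's: Lemma \ref{lem.boundary.components} turns local connectivity of $\partial\mathcal{U}$ into the Jordan curve statement, Corollary \ref{cor.loc.connected.cap-in-double-wake} disposes of capture components inside double parabolic wakes, and for the rest you transfer dynamical puzzle shrinking to parameter puzzles via the holomorphic motions of Lemmas \ref{lem.holomotion.parabolic} and \ref{lem.holomotion.misur}, with the four-ray picture of Proposition \ref{prop.landing.four} doing the glueing at the points of $\mathcal{A}_{p/q}$. That matches Subsections \ref{subsec.loc.connec.Misiurewicz-parabolic} and \ref{subsec.loc.connec.Non-renormalizable}.

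However, there is a genuine gap: you never treat renormalizable parameters on $\partial\mathcal{B}_m$ or on the boundary of a capture component, and your shrinking mechanism cannot reach them. For such an $a_0$ the nested parameter puzzles do \emph{not} shrink to a point: by Proposition \ref{prop.mandel}, $\bigcap_n\mathcal{P}_n(a_0)=\textbf{M}_{a_0}$ is a copy of the Mandelbrot set, so neither the Grötzsch/Shishikura argument nor your ``continuous extension of $\Psi^{-1}$ prime-end by prime-end'' applies at the corresponding accesses. Your phrase ``boundary points where $v_-(a_0)$ lies on $\partial B^*_{a_0,l}$ but is not pre-periodic to $0$'' only covers the non-renormalizable case, since Corollary \ref{cor.critical-value-on-boundary} (eventual landing of $v_-(a_0)$ on the boundary of the immediate basins) is itself proved only for non-renormalizable boundary parameters. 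Renormalizable boundary points do occur (they are the roots of the Mandelbrot copies visible in Figure \ref{fig.firstpicture}), and the paper needs all of Subsection \ref{subsec.renor} for them: Proposition \ref{prop.mandel} shows a renormalizable $a_0\in\partial\mathcal{B}_m$ must be the cusp $\chi^{-1}(\frac{1}{4})$ of $\textbf{M}_{a_0}$, Proposition \ref{prop.renor.capture} shows a renormalizable $a_0$ on the boundary of a capture component is a tip, and in both cases exactly two parameter external rays land at $a_0$ and separate $\textbf{M}_{a_0}$ from the component, so that $\mathcal{P}_n(a_0)\setminus\overline{\mathcal{W}(a_0)}$ gives a connected neighborhood basis. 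Without an argument of this kind your proof establishes local connectivity of $\partial\mathcal{U}$ only at a (dense) subset of boundary points, which is not enough to conclude; a related but secondary weakness is that continuity of $\Psi^{-1}$ at a dense set of prime ends does not by itself yield a continuous extension to all prime ends.
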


Theorem \ref{thm.0} actually gives more than local connectivity: the shrinking property of decreasing parameter puzzle pieces gives landing parameter external rays, which provide clean cuts of $\mathcal{C}_\lambda$. This enables us to give a global description of $\mathcal{C}_\lambda$ (see Theorem \ref{thm.decompo.para} for a precise statement):

\begin{thmx}\label{thm.B}
$\mathcal{C}_{\lambda}$ is decomposed into the central part $\mathcal{K}_{\lambda}$ and the limbs attached to them at end points.
\end{thmx}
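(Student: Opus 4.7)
The plan is to promote the dichotomy of Theorem \ref{thm.0} to a parameter-space decomposition, in direct analogy with Theorem \ref{thm.pascale.descriptionC0}. First I would define the set $\mathcal{E}$ of end points of $\mathcal{K}_\lambda$ to consist of: the renormalizable $a_0 \in \partial \mathcal{K}_\lambda$ at which Theorem \ref{thm.0} yields a Mandelbrot-like copy $\textbf{M}_{a_0}$, the double-parabolic parameters $\boldsymbol{\mathrm{a}}_m \in \mathcal{A}_{p/q}$ sitting on $\partial \mathcal{K}_\lambda$, and the non-renormalizable parameters of $\partial \mathcal{K}_\lambda$ that are not in $\overline{\mathring{\mathcal{H}^\lambda}}$. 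At each renormalizable $a_0$ I would adapt Steps 1--5 of Lemma \ref{lem.landing.mandelbrot.Per0} (using the renormalization of $f_{a_0}$ in place of the quadratic model) to produce exactly two parameter external rays $\mathcal{R}_\infty(\theta^\pm)$ landing at $a_0$, bounding a wake $\mathcal{W}_{a_0}$ containing $\textbf{M}_{a_0}\setminus\{a_0\}$; then set $\mathcal{L}_{a_0} := \overline{\mathcal{W}_{a_0}} \cap \mathcal{C}_\lambda$. At each $\boldsymbol{\mathrm{a}}_m$ the four landing rays from Proposition \ref{Prop.grand1} already supply the two wakes $\mathcal{W}^\pm(\boldsymbol{\mathrm{a}}_m)$, and each $\mathcal{L}_{\boldsymbol{\mathrm{a}}_m}^\pm := \overline{\mathcal{W}^\pm(\boldsymbol{\mathrm{a}}_m)} \cap \mathcal{C}_\lambda$ defines a limb. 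For the remaining $a_0 \in \partial \mathcal{K}_\lambda$, Theorem \ref{thm.0} gives $\bigcap_n \mathcal{P}_n(a_0) = \{a_0\}$, so I set $\mathcal{L}_{a_0} := \{a_0\}$ and no nontrivial limb is attached.

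Then I would establish the decomposition $\mathcal{C}_\lambda = \mathcal{K}_\lambda \cup \bigcup_{a_0 \in \mathcal{E}} \mathcal{L}_{a_0}$ in two steps. Step (i): distinct limbs are pairwise disjoint outside $\mathcal{K}_\lambda$, since the bounding parameter rays of different wakes cannot cross, and any strict containment $\mathcal{W}_{a_1} \subset \mathcal{W}_{a_0}$ is excluded because both roots lie on $\partial \mathcal{K}_\lambda$ and $\mathcal{K}_\lambda$ is connected. Step (ii): any $a \in \mathcal{C}_\lambda \setminus \mathcal{K}_\lambda$ is contained in some $\mathcal{L}_{a_0}$. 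For step (ii) I would join $a$ to the interior of $\mathcal{B}_0 \subset \mathcal{K}_\lambda$ by a continuous arc in $\mathbb{C}$ (using that $\mathcal{C}_\lambda$ is a full continuum), detect the first pair of landing parameter rays $\mathcal{R}_\infty(\theta^\pm)$ that the arc crosses, and invoke Lemma \ref{lem.landing.rational.external} together with the shrinking-puzzle analysis of Section \ref{sec.paragraph} to show that $\theta^\pm$ share a common landing point $a_0 \in \mathcal{E}$ with $a \in \mathcal{L}_{a_0}$.

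The main obstacle will be controlling the structure of $\mathcal{C}_\lambda$ in a neighborhood of the pinch points $\boldsymbol{\mathrm{a}}_m$ and the Misiurewicz-parabolic parameters on $\partial \mathcal{K}_\lambda$, where the parameter puzzle nest only abuts $a_0$ rather than surrounding it, so the standard Gr\"otzsch-type shrinking argument degenerates. Here I would combine the holomorphic-motion argument of \ref{subsec.loc.connec.Misiurewicz-parabolic} with the uniqueness of the four rays landing at $\boldsymbol{\mathrm{a}}_m$ (Proposition \ref{Prop.grand1}) to prove that the two wakes $\mathcal{W}^\pm(\boldsymbol{\mathrm{a}}_m)$ together with a neighborhood of $\boldsymbol{\mathrm{a}}_m$ in $\mathcal{K}_\lambda$ exhaust a full punctured neighborhood of $\boldsymbol{\mathrm{a}}_m$ in $\mathcal{C}_\lambda$. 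This prevents any parameter from ``escaping'' between $\mathcal{K}_\lambda$ and the limbs at a pinch point, and closes the decomposition.
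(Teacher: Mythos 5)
Your overall decomposition plan is in the right spirit and matches the paper's precise version (Theorem~\ref{thm.decompo.para} and the preceding Lemma~\ref{lem.decompo.para}): limbs are attached at renormalisable boundary points via a pair of landing rays, at the $\boldsymbol{\mathrm{a}}_m$ via the four rays of Proposition~\ref{Prop.grand1}, and non-renormalisable end points carry trivial limbs because only one ray lands there (Proposition~\ref{prop.landing.mathscrH}). Step~(i) also goes through: the separation statement in Propositions~\ref{prop.mandel} and \ref{prop.renor.capture} together with connectedness of $\mathcal{K}_\lambda$ does exclude nested wakes rooted at distinct points.

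The genuine gap is in step~(ii). Joining $a$ to $\mathcal{B}_0$ by an arc and ``detecting the first pair of landing parameter rays it crosses'' is not a well-posed operation: such an arc crosses infinitely many rays with accumulation, the rays it crosses are mostly rays landing at Misiurewicz-parabolic parameters (of which there are $q$ at each point, not $2$), and there is no a priori ``innermost'' wake. What makes coverage work in the paper is a \emph{recursive} sector construction. One first proves Lemma~\ref{lem.decompo.para}: using, for each depth $n$, the finitely many Misiurewicz-parabolic parameters of depth $n$ on $\partial\mathcal{B}_m$ and the $q$ rays landing at each (Lemma~\ref{lem.landing.Misiur.parabo}), one cuts $\mathcal{S}_m^+\setminus\overline{\mathcal{B}_m}$ into a nested hierarchy of sectors; any $b$ not in $\overline{\mathcal{B}_m}$ and not in a level-$1$ capture limb lies in a shrinking nest $\mathcal{S}_n(b)$ whose bounding angles converge, and the dichotomy ``one ray at non-renormalisable landing points, two rays at renormalisable ones'' (Propositions~\ref{prop.landing.mathscrH}, \ref{prop.non-trivial-limb}, Lemma~\ref{lem.unique.external.end-point}) then forces the limit to be a renormalisable root of a Mandelbrot copy. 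This is iterated across all capture levels, and the combinatorial rigidity (Corollary~\ref{cor.rigidity.mathscrH}) guarantees that the nested limbs $\mathcal{L}(\boldsymbol{\mathrm{r}}_{\mathcal{U}_n})$ intersect to a single point when there is no renormalisation. Your ``shrinking-puzzle analysis of Section~\ref{sec.paragraph}'' citation is pointing in the right direction, but you need to articulate this nested-sector/rigidity step concretely --- it is the entire content of the theorem, not a back-reference. Without it, you cannot show that the pair of rays you extract actually land at a common point of $\partial\mathcal{K}_\lambda$ rather than at two distinct Misiurewicz-parabolic points, nor that the wakes exhaust $\mathcal{C}_\lambda\setminus\mathcal{K}_\lambda$.
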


Section \ref{sec.loc} is organised as follows: Theorem \ref{thm.0} together with Theorem \ref{thm.A} is a synthesis of  \ref{subsec.loc.connec.Misiurewicz-parabolic}, \ref{subsec.loc.connec.Non-renormalizable} and \ref{subsec.renor}; \ref{sub.sec.global} is devoted to the proof of Theorem \ref{thm.B}, the Main Theorem and Corollary \ref{cor.blokh}.

\subsection{Misiurewicz parabolic and double parabolic case}\label{subsec.loc.connec.Misiurewicz-parabolic}

\begin{proposition}\label{prop.loc.paramisiur}
Let $\mathcal{U}\subset\mathcal{S}_m^+$ be a parabolic component. Then $\partial\mathcal{U}$ is locally connected at all Misiurewicz parabolic parameters.
\end{proposition}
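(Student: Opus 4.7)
The plan is to promote the parametrisation $\Psi$ of $\mathcal{U}$ (from \ref{subsec.summary.fa}) to a local biholomorphism defined on a full neighbourhood of $a_0$ in $\mathbb{C}$, and read off local connectivity of $\partial\mathcal{U}$ from the local connectivity of the image boundary in the dynamical plane of $P_\lambda$. The substitute for the missing nested puzzles is the stability of the attracting Fatou coordinate at the parabolic fixed point $0$, which persists because the multiplier $\lambda$ is constant in $a$.

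First I would write $\Psi(a)=h_a(f^\ell_a(c_-(a)))$ for the appropriate $\ell$ as in \ref{subsec.summary.fa}, depending on whether $\mathcal{U}$ is adjacent, bitransitive, or capture. Let $N\geq 1$ be minimal with $f^N_{a_0}(c_-(a_0))=0$, and replace $\ell$ by some $\ell'$ close to $N$ so that $f^{\ell'}_{a_0}(c_-(a_0))$ lies in the interior of an attracting petal compactly contained in some immediate basin $B^*_{a_0,\ast}(0)$. On this petal the attracting Fatou coordinate $\phi_a$ varies holomorphically in $a$ for $a$ in a neighbourhood $V$ of $a_0$ (standard stability, used already in the proofs of Lemmas \ref{lem.rational_rays_property} and \ref{lem.landing.Misiurewicz}), hence so does the conjugating map $h_a$. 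The formula $\tilde\Psi(a):=h_a\bigl(f^{\ell'}_a(c_-(a))\bigr)$ then defines a holomorphic extension of $\Psi$ to all of $V$.

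Next I would show that $\tilde\Psi$ is a local biholomorphism at $a_0$. The univalence of $\Psi$ on $\mathcal{U}$ (Propositions \ref{prop.summary.parametri}, \ref{prop.paramet.cap}, \ref{prop.summary.para.adj}) shows that $\tilde\Psi'$ is non-vanishing on $\mathcal{U}\cap V$, and a critical point of $\tilde\Psi$ at $a_0$ would, by the open mapping theorem, create two preimages of a nearby target point inside $\mathcal{U}$, contradicting that very injectivity. So $\tilde\Psi$ is univalent on a possibly smaller $V$, sending $a_0$ to a preperiodic point $z_0\in\partial B^*_\ast$ of $P_\lambda$ with $P_\lambda^{N-\ell'}(z_0)=0$. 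Because $P_\lambda$ is geometrically finite, $J_{P_\lambda}$ is locally connected (\cite{TaYi}), so $\partial B^*_\ast$ is a Jordan curve and is locally connected at $z_0$; the biholomorphism $\tilde\Psi$ transfers this local connectivity to $\partial\mathcal{U}$ at $a_0$, giving the proposition.

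The main obstacle will be the extension of $h_a$ holomorphically in $a$ far down the forward orbit: one must ensure that the inverse branches of $f_a$ used to push the Fatou coordinate out of the chosen petal along the orbit of $c_-(a_0)$ depend holomorphically on $a\in V$, and in particular avoid the other critical point $c_+(a)$ sitting on $\partial\Omega_a$. This stability of the route requires using crucially that $a_0\notin\mathcal{A}_{p/q}$ (so the parabolic structure at $0$ does not degenerate) and that the orbits of $c_-$ and $c_+$ do not collide at $a_0$; once this is secured, both the extension and the transversality claim follow from classical repelling/parabolic coordinate stability.
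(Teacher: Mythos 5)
Your construction collapses at its first step. At a Misiurewicz parabolic parameter $a_0$ the free critical orbit satisfies $f^N_{a_0}(c_-(a_0))=0$, so every point $f^{j}_{a_0}(c_-(a_0))$ with $j\leq N$ lies in the backward orbit of the parabolic fixed point, hence in $J_{a_0}$, and for $j\geq N$ the orbit sits at $0$ itself; at no moment does it enter the interior of an attracting petal. Thus there is no $\ell'$ for which $f^{\ell'}_{a_0}(c_-(a_0))$ lies compactly inside the parabolic basin, the conjugacy $h_{a_0}$ (built from the attracting Fatou coordinate, defined only on the basin) is simply not defined along this orbit, and the formula $\tilde\Psi(a)=h_a\bigl(f^{\ell'}_a(c_-(a))\bigr)$ does not extend $\Psi$ holomorphically through $a_0$. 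The problem is not merely at the single point $a_0$: for parameters $a$ in a full neighbourhood $V$ of $a_0$ but outside $\overline{\mathcal{U}}$ (limbs, other components, escape regions), the orbit of $c_-(a)$ need not be attracted to $0$ at all, so $h_a(f^{\ell'}_a(c_-(a)))$ has no meaning there; the image of $\Psi$ lies in the open basin of the model and cannot be pushed across $\partial B^*$ by attracting-coordinate stability, which is a one-sided tool. Consequently the later steps (local injectivity of $\tilde\Psi$, and the transfer of local connectivity of $\partial B^*_{\ast}$ at $z_0$ to $\partial\mathcal{U}$ at $a_0$, which would in addition require the unproved local boundary correspondence $\tilde\Psi(\partial\mathcal{U}\cap V)=\partial B^*_\ast\cap\tilde\Psi(V)$) have nothing to stand on. You appear to have conflated ``Misiurewicz parabolic'' with a capture-type situation where the critical orbit falls into the basin.

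The paper's proof goes a different way precisely because no parametrisation survives at $a_0$: by Lemma \ref{lem.landing.Misiur.parabo} there are $q$ parameter external rays landing at $a_0$, which together with the graphs $\mathcal{Y}_n$ cut out $q+1$ nested parameter pieces $\mathcal{Q}^i_n$ attached at $a_0$. The holomorphic motion of the dynamical graphs (Lemma \ref{lem.holomotion.parabolic}) shows that for $a\in\mathcal{Q}^i_n$ the critical value $v_-(a)$ lies in the corresponding dynamical puzzle piece, and the shrinking of the dynamical pieces at preimages of $0$ (Proposition \ref{prop.localconnec.0}) forces $\bigcap_n\mathcal{Q}^i_n=\emptyset$; the unions of these pieces with ray segments and $\{a_0\}$ then form a basis of connected neighbourhoods of $a_0$ in $\partial\mathcal{U}$. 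If you want to salvage a coordinate-style argument near $a_0$, the natural map would be $a\mapsto f^N_a(c_-(a))$ measured against the repelling Fatou coordinate at $0$, not the attracting one, and you would still need a transversality statement plus control of $\partial\mathcal{U}$ from both sides, which is essentially what the puzzle argument supplies.
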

\begin{proof}
Let $a_0\in \partial\mathcal{U}_k$ be Misiurewicz parabolic. By Lemma \ref{lem.landing.Misiur.parabo}, let $t^1_0,t^2_0,...,t^{q-1}_0$ be the $q$ angles such that $\mathcal{R}_{\infty}(t^i_0)$ lands at $a_0$. Then $\mathcal{R}_{\infty}(t_0)$ is part of $\mathcal{Y}_n$ for all $n\geq N$, where $N$ is some fixed integer. Therefore there are exactly $q+1$ adjacent puzzle pieces of depth $n$ $\mathcal{Q}^{0}_n,\mathcal{Q}^{1}_n,...,\mathcal{Q}^{q}_n$ attached at $a_0$. Clearly $\mathcal{Q}^{i}_{n+1}\subset\mathcal{Q}^{i}_{n}$. Now we claim that for $a\in \mathcal{Q}^{i}_{n}$, $v_-(a)\in (Q^a_n)^{i}$. Indeed, for any $\epsilon\textgreater 0$, there exists $n'\textgreater n$ large enough and $t'_0$ such that $3^{n'}t'_0 = 1$, $\mathcal{R}_{\infty}(t'_0)\cap \mathcal{Q}^{i}_n \not=\emptyset$ and $|t'_0-t_0|\textless \epsilon$. Hence for $a'\in \mathcal{R}_{\infty}(t'_0)\cap \mathcal{Q}^{i}_n$, $v_-(a')\in (Q^{a'}_n)^{\pm}$. By Lemma \ref{lem.holomotion.parabolic}, $Y^a_n$ moves holomorphically for $a\in \mathcal{Q}^{\pm}_n$. Notice that $v_-(a)$ also moves holomorphically and it does not belong to $Y^a_n$ when $a\in \mathcal{Q}^{i}_n$, we deduce that $v_-(a)\in (Q^a_n)^{i}$ since $v_-(a')\in (Q^{a'}_n)^{i}$. Now if $a\in \bigcap_n\mathcal{Q}_n^{i}$, then $a$ is in $\mathcal{C}_{\lambda}$ but does not belong to any parabolic component since $a$ is excluded by every equipotentials in $\mathcal{H}_{\infty}$ and $\mathcal{H}$. Moreover $f^n_a(c_-(a)) \not= 0,\forall n\geq 1$ by definition of $\mathcal{Q}_n$. Therefore by Proposition \ref{prop.localconnec.0}, $\bigcap_{m\geq1} (Q^{a}_m)^{i} = \emptyset$. But on the other hand $v_-(a)\in \bigcap_n(Q^a_n)^{\pm}$, a contradiction. Hence $\bigcap_n\mathcal{Q}_n^{\pm} = \emptyset$. This means that if we set 
  \[\mathcal{O}_n := \bigcup_{i=0}^{q}\mathcal{Q}_n^{i}\cup\bigcup_{i=0}^{q-1}{\mathcal{R}_{\infty}(t_0)*(0,r^{1/3^n})}\cup\{a_0\}.\]
  then $(\mathcal{O}_n\cap\partial\mathcal{U})_n$
form a basis of connected neighborhood of $a_0$.
\end{proof}

Notice that $a_0 = \boldsymbol{\mathrm{a}}_m\in\mathcal{A}_{p/q}$ is not covered by Proposition \ref{prop.loc.paramisiur}. But still we can verify the local connectivity with a similar argument. 
\begin{proposition}\label{prop.loc.connec.double-parabolic}
Let $\mathcal{Q}^{\pm}_n\subset\mathcal{S}_m$ be the unique two puzzle pieces of depth $n$ containing $\boldsymbol{\mathrm{a}}_m$ on their boundaries. Without loss of generality suppose that their boundaries all contain $\mathcal{R}_\infty(\alpha^+_m)*(0,r^{1/3^n})$. Then $\bigcap_n\mathcal{Q}_n^\pm = \emptyset$.
\end{proposition}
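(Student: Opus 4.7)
The proof will follow the same skeleton as Proposition \ref{prop.loc.paramisiur}: transfer the shrinking of a dynamical nest to the parameter nest via the holomorphic motion Lemma \ref{lem.holomotion.parabolic}, the crucial dynamical input being Proposition \ref{prop.localconnec.0}. The essential new feature, and the reason the argument has to be rewritten, is that $\boldsymbol{\mathrm{a}}_m$ is not itself a Misiurewicz parabolic parameter, so it is not the landing point of any parameter ray in the graph $\mathcal{Y}_n$ of finite depth $n$; one must instead exploit that $\boldsymbol{\mathrm{a}}_m$ is approached by the increasing sequence of angles in $T_n$ (cf.\ Lemma \ref{lem.external.angle.limit}) so that the two parameter puzzle pieces attached at $\boldsymbol{\mathrm{a}}_m$ are well-defined at every depth and open up toward $\mathcal{R}_\infty(\alpha_m^+)$.

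\textbf{Step 1.} For every $a\in\mathcal{Q}_n^\pm$, $v_-(a)\in(Q_n^a)^\pm$ (the two dynamical puzzle pieces from Proposition \ref{prop.localconnec.0}). By construction, $\mathcal{Y}_n$ is exactly the locus where $v_-(a)\in Y_n^a$, so on $\mathcal{Q}_n^\pm$ the critical value $v_-(a)$ avoids the graph. Since $\mathcal{Q}_n^\pm\subset\mathcal{S}^+_m\subset\mathbb{C}\setminus\mathcal{I}$, the critical point $c_-(a)$, hence $v_-(a)$, depends holomorphically on $a$; combined with the dynamical holomorphic motion of $Y_n^a$ provided by Lemma \ref{lem.holomotion.parabolic}, the connected component of $\mathbb{C}\setminus Y_n^a$ that contains $v_-(a)$ is locally constant, hence constant on the connected set $\mathcal{Q}_n^\pm$. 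To identify this component, pick (using Lemma \ref{lem.external.angle.limit}) a pre-periodic angle $t'$ close to $\alpha_m^+$, of depth larger than $n$, with $\mathcal{R}_\infty(t')\cap\mathcal{Q}_n^\pm\neq\emptyset$; for $a'$ on this parameter ray, $v_-(a')\in R_{a'}^\infty(t')$, and $R_{a'}^\infty(t')$ lies in the dynamical puzzle piece $(Q_n^{a'})^\pm$ adjacent to $R_{a'}^\infty(\alpha_m^+)$ on the same side of this ray as $\mathcal{Q}_n^\pm$. This fixes the correspondence $\mathcal{Q}_n^\pm\leftrightarrow(Q_n^a)^\pm$.

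\textbf{Step 2.} Suppose for contradiction $a\in\bigcap_n\mathcal{Q}_n^\pm$. Then $a\in\overline{\mathcal{S}_m}$ and $a\neq\boldsymbol{\mathrm{a}}_m$ (the latter is only on the boundary of every $\mathcal{Q}_n^\pm$). Because $a$ avoids every parameter equipotential of $\mathcal{Y}_n$ we have $a\notin\mathcal{H}^\lambda$; because $a$ avoids every parameter ray $\mathcal{R}_\infty(t)$ with $t\in T_n$ and, by Lemma \ref{lem.landing.Misiur.parabo}, every Misiurewicz parabolic parameter lies on such a ray, $a$ is not Misiurewicz parabolic, i.e.\ $f_a^n(c_-(a))\neq 0$ for all $n\geq 1$. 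Since $a\in\mathcal{S}_m$, Corollary \ref{cor.double.para.wake} implies that the portrait of $f_a$ at $0$ is exactly $\Theta_m$, i.e.\ there is only one cycle of external rays landing at $0$, placing us in the first alternative of Proposition \ref{prop.localconnec.0}: $\bigcap_k\overline{(Q_k^a)^\pm}=\{0\}$. Combining this with Step~1 forces $v_-(a)=f_a(c_-(a))=0$, a contradiction.

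\textbf{Main obstacle.} The delicate point is Step~1, in particular the identification of the correct dynamical piece. Since $\boldsymbol{\mathrm{a}}_m$ is parabolic degenerate, the naive transfer of the parametrization (used for Misiurewicz parabolic or hyperbolic parameters) breaks down at $\boldsymbol{\mathrm{a}}_m$; one must instead identify the dynamical piece on a neighboring parameter ray of higher combinatorial depth and then propagate the answer back to $\mathcal{Q}_n^\pm$ by the holomorphic motion. The fact that this propagation is valid relies on the non-trivial input that the dynamical graph $Y_n^a$ does move holomorphically across the whole puzzle piece---a fact that, in turn, uses stability of the repelling Fatou coordinate at $0$ away from $\mathcal{A}_{p/q}$ and was set up precisely for this purpose in Subsection \ref{subsec.dynamics-objects}.
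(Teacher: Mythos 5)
Your proposal is correct and follows essentially the same route as the paper's proof: identify the dynamical piece containing $v_-(a)$ via the holomorphic motion of $Y_n^a$ together with the rays $\mathcal{R}_\infty(t_n)$ from Lemma \ref{lem.external.angle.limit}, then invoke Proposition \ref{prop.localconnec.0} (with Corollary \ref{cor.double.para.wake} ruling out the second alternative) to force $v_-(a)=0$ and obtain the contradiction. Your write-up is a bit more explicit than the paper's — in particular you spell out the holomorphic-motion step transferring $\mathcal{Q}_n^\pm\leftrightarrow(Q_n^a)^\pm$, and you cite Corollary \ref{cor.double.para.wake} to exclude the continuum case of Proposition \ref{prop.localconnec.0}, both of which the paper leaves implicit — but the substance is the same.
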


\begin{proof}
     We only prove for the sequence $\{\mathcal{Q}_n^+\}_n$, the case left is similar. Suppose the contrary $\bigcap{\mathcal{Q}^+_n}\not = \emptyset$. Recall that in Lemma \ref{lem.external.angle.limit} there is a sequence of external rays $\mathcal{R}_\infty(t_n)$ landing at $\partial\mathcal{B}_m$ with $t_k$ in the preimage of $\Theta_m$ and $t_n$ converging monotonously to $\beta_m^+$ as $n\to \infty$. In particular $\mathcal{R}_\infty(t_n)*(0,r^{1/3^n})\subset\partial\mathcal{Q}^+_n$. Therefore for $a\in\mathcal{Q}^+_n$, the critical value $v_-(a)$ is contained in $Q^a_n$, the puzzle piece whose boundary contains $R^{\infty}_a(\alpha_m^+)*(0,r^{1/3^n}),R^{\infty}_a(t_n)*(0,r^{1/3^n})$. Now if $a\in\bigcap_n\mathcal{Q}^+_n$, then $a\in\mathcal{C}_{\lambda}$ is not Misiurewicz parabolic and not in any parabolic component. This contradicts Proposition \ref{prop.localconnec.0} which tells us that $\bigcap_{n}Q^a_n=\emptyset$.
\end{proof}

\subsection{Non renormalizable case}\label{subsec.loc.connec.Non-renormalizable}
In this subsection, we always fix some $a_0\in \mathcal{S}^+_{m}$ such that $f_{a_0}$ satisfies the Assumption ($\Diamond$). Consider the corresponding graphs $\mathcal{X}_n$ and puzzle pieces $\mathcal{P}_n(a_0)$.

We give the relation between para-puzzles and dynamical puzzles:
\begin{lemma}\label{lem.homeo.para.dym}
Let $n\geq 0$. The mapping $H_n:\mathcal{P}_n(a_0)\cap\mathcal{X}_{n+1}\longrightarrow P^{a_0,v}_{n}\cap X_{n+1}^{a_0}$ defined by $H_n(a) = (L^a_{n+1})^{-1}(v_-(a))$ is injective. Moreover there exists $N\geq 0$ such that $\forall n\geq N$, $H_n$ is surjective.
\end{lemma}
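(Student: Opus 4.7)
The plan has two parts: injectivity on all depths $n \geq 0$, and surjectivity from some depth $N$ onward.

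For injectivity, I would argue as follows. Suppose $a_1, a_2 \in \mathcal{P}_n(a_0)\cap \mathcal{X}_{n+1}$ satisfy $H_n(a_1)=H_n(a_2)=z_0$. Since $z_0\in X^{a_0}_{n+1}$ belongs to exactly one of the four constituents of the graph (an external ray, an equipotential at infinity, an internal ray, or an equipotential in a Fatou component of $f_{a_0}$), and since the holomorphic motion $L_{n+1}$ preserves the combinatorial type and the angle/potential of each constituent (by the construction in Subsection~\ref{subsec.dynamics-objects}), the critical values $v_-(a_1)$ and $v_-(a_2)$ lie on the corresponding dynamical objects of $f_{a_1}$ and $f_{a_2}$ with the same angle and the same potential. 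This forces $a_1, a_2$ to lie on the same parameter object (external ray, equipotential in $\mathcal{H}_\infty$, internal ray, or equipotential in $\mathcal{D}_m, \tilde{\mathcal{D}}_m$ or a capture component contained in $\mathcal{S}^+_m$) at the same level. Restricted to the fundamental region $\mathcal{S}^+_m$, which is a single sheet of the degree three covering $\Phi^\lambda_\infty$ and on which each $\Psi^{bit}_m,\tilde\Psi^{bit}_m,\Psi^{adj}_0,\Psi_{\mathcal{U}_k}$ is bijective (Propositions~\ref{prop.summary.parametri}, \ref{prop.summary.para.adj}, \ref{prop.paramet.cap}), the position on the parameter object determines the parameter uniquely. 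Hence $a_1=a_2$.

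For surjectivity at deep enough levels, the strategy is a matching argument between the parameter-puzzle and the dynamical-puzzle combinatorics. Choose $N$ large enough so that, for all $n\geq N$: (i) the puzzle $P^{a_0,v}_n$ is compactly contained in $P^{a_0,v}_{n-1}$ (this holds eventually by admissibility of the graph and Theorem~\ref{thm.dym}), (ii) the restriction of $f^{n+1}_{a_0}$ to each component of the pullback defining $X^{a_0}_{n+1}\cap P^{a_0,v}_n$ is univalent, and (iii) the iterated preimages only involve dynamical objects whose angles lie among those that parametrize the boundaries of $\mathcal{P}_n(a_0)$. Given $z\in P^{a_0,v}_n\cap X^{a_0}_{n+1}$, it lies on some dynamical ray/equipotential of depth $n+1$; by Lemmas~\ref{lem.landing.rational.external}, \ref{lem.landing.rational.internal-ray} and \ref{lem.landing.Misiur.parabo} (and the capture analogue via $\Psi_{\mathcal{U}_k}$), the parameter object with the same angle and level exists inside $\mathcal{S}^+_m$. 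Following this parameter object from the cell of $\mathcal{P}_n(a_0)$ containing $a_0$ (in which $v_-$ varies, by the holomorphic motion, over the corresponding dynamical cell of $P^{a_0,v}_n$), continuity and the openness of the parametrizations show that it must cross $\mathcal{P}_n(a_0)$ at a parameter $a$ whose critical value, transported by $L_{n+1}$, reaches $z$. By the injectivity already proven, this $a$ is unique, giving surjectivity.

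The main obstacle, as usual in dynamical--parameter puzzle arguments, is point (iii) above: ensuring that no dynamical ray or equipotential of $X^{a_0}_{n+1}\cap P^{a_0,v}_n$ is ``new'' in the sense of having an angle whose parameter analogue fails to cross $\mathcal{P}_n(a_0)$. I would handle this by inducting on $n$: at depth $0$, the parameter graph $\mathcal{X}_0$ is set up to mirror $X^{a_0}_0$ exactly (this defines $N$ as the first depth at which the admissible graph of $f_{a_0}$ is fully realized). At higher depths the matching propagates automatically because $f_a^{-1}$ varies holomorphically on $\mathcal{P}_n(a_0)$ while staying disjoint from $v_-(a)$, so pulling back respects both graphs simultaneously. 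A subtler point is that equipotentials in a parabolic component accumulate on double parabolic or Misiurewicz parabolic parameters on $\partial\mathcal{B}_m$; here Proposition~\ref{prop.landing.equi.bitran} and Corollary~\ref{cor.landing.Equi.capture} guarantee that the quotient/matching is as expected, which is precisely what is needed for the boundary of $\mathcal{P}_n(a_0)$ to match that of $P^{a_0,v}_n$ via $H_n$.
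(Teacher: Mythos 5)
Your injectivity argument has a genuine gap at exactly the point the paper spends most of its effort on. When $H_n(a_1)=H_n(a_2)$ lies on an \emph{internal} ray or internal equipotential, you assert that this ``forces $a_1,a_2$ to lie on the same parameter object,'' and then conclude from injectivity of $\Psi^{bit}_m$, $\Psi^{adj}_0$, $\Psi_{\mathcal{U}_k}$. But nothing you say rules out the dangerous case: $a_1$ and $a_2$ lying on internal rays of two \emph{different} components $\mathcal{U}\neq\mathcal{U}'$ (say two distinct capture components) whose critical values occupy the same position of the transported graph. A correspondence between the parameter components appearing in $\mathcal{X}_{n+1}$ and the Fatou components of $f_{a_0}$ carrying the pieces of $X^{a_0}_{n+1}$ is not available at this stage — it is essentially part of what this lemma (and ultimately $\mathfrak{G}$) is meant to produce, so invoking it is circular. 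The paper handles this case directly: it compares the landing points $b,b'$ of the two parameter internal rays, uses the external rays of the graph landing there (if $b\neq b'$ the two transported external rays would land at a common point $x(a_0)$, impossible by injectivity of $f_{a_0}$ near the orbit of $x(a_0)$ and the fact that only one graph ray lands at each such point), and when $b=b'$ runs a loop argument producing a Jordan curve in $\overline{\mathcal{U}\cup\mathcal{U}'}$ surrounding escaping parameters. (You also skip the well-definedness of $H_n$, i.e.\ that $(L^a_{n+1})^{-1}(v_-(a))$ actually lies in $P^{a_0,v}_n$, which the paper proves by a continuation argument along paths in $\mathcal{P}_n(a_0)$; this is minor but not free.)

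The surjectivity part also misses the actual source of the threshold $N$. Your conditions (i)--(iii) do not address the real obstruction, which is that for small $n$ the boundary of $\mathcal{P}_n(a_0)$ may contain pieces of the sector-boundary rays $\mathcal{R}_\infty(\alpha^+_{m-1})$, $\mathcal{R}_\infty(\beta^+_m)$, for which there is no matching piece of $X^{a_0}_{n+1}\cap P^{a_0,v}_n$. The paper's $N$ comes from the Claim that this stops for $n$ large, proved via Lemma \ref{lem.external.angle.limit} together with $\bigcap_n\mathcal{Q}^{\pm}_n=\emptyset$ (Proposition \ref{prop.loc.connec.double-parabolic}); once that is known, surjectivity on external rays and equipotentials follows from injectivity, and surjectivity on internal objects is obtained not by your ``follow the parameter object until it crosses $\mathcal{P}_n(a_0)$'' continuity argument (which is not justified: openness of the parametrizations does not by itself show the relevant parameter internal ray meets $\mathcal{P}_n(a_0)$), but by the transfer lemmas: the landing point of the dynamical internal ray is landed by a graph external ray $R^\infty_{a_0}(t)$, the parameter ray $\mathcal{R}_\infty(t)$ lands at a Misiurewicz parameter accessible by a parameter internal ray (Lemmas \ref{lem.para.Misiur} and \ref{lem.landing.Misiure.internal-rays}), and that parameter ray is sent by $H_n$ onto the given dynamical ray. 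Your inductive treatment of obstacle (iii) is only a sketch and would need to be replaced by arguments of this kind.
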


\begin{proof}
First we prove that $H_n$ is well-defined. By definition of $\mathcal{X}_n$, we have $v_-(a)\in P_n^{a,v}\cap X_{n+1}^{a}$. Consider the holomorphic motion starting at $a$: $\Tilde{L}_{n+1}(a',z) := L_{n+1}(a',(L^a_{n+1})^{-1}(z))$. By continuity there exists a disk $B(a,r)$ on which $v_-(a')$ is surrounded by $\Tilde{L}_{n+1}(a',\partial P_n^{a,v})$. Pick any $a'$ on $\partial B(a,r)$, if $a'\not\in \partial\mathcal{P}_n(a_0)$, then for any $a''$ in a small disk $B(a',r')$, $v_-(a'')$ is surrounded by $\Tilde{L}_{n+1}(a'',\partial P_n^{a,v})$. Hence we can extend this property step by step until we reach $\partial \mathcal{P}_n(a_0)$. In particular $H_n(a)\in P^{a_0,v}_{n}$. By Lemma \ref{lem.holomotion.misur}, $\partial P_{n+1}^{a_0,v}$ moves holomorphically when $a\in \mathcal{P}_n(a_0)$, hence $H_n(a)\in X_{n+1}^{a_0}$.

Next we verify injectivity. The injectivity is clear when $a$ belongs to parameter external rays and equipotentials since in the dynamical plan, angles of external rays and equipotentials are preserved by $L^a_{n+1}$. We will only prove injectivity for $a$ belongs to parameter internal rays. The proof for internal equipotentials is similar. Suppose there are two distinct parameters $a,a'$ such that $H_n(a) = H_n(a')$. Then clearly $a,a'$ belong to different components $\mathcal{U},\mathcal{U}'$. Set $a\in \overline{\mathcal{R}_\mathcal{U}},a'\in\overline{\mathcal{R}_\mathcal{U'}}$. 
\begin{itemize}
    \item First suppose that the landing point $b,b'$ of $\mathcal{R}_\mathcal{U},\mathcal{R}_\mathcal{U'}$ do not coincide. Consider the external rays $\mathcal{R}_{\infty}(s),\mathcal{R}_{\infty}(s')$ involved in $\mathcal{X}_{n+1}$ landing at $b,b'$ respectively. Then the two rays $H_n(\mathcal{R}_{\infty}(s)),H_n(\mathcal{R}_{\infty}(s'))$ land at a common point $x(a_0)$ since $H_n(a) = H_n(a')$. But this is impossible since $f_{a_0}$ is injective near the forward orbit of $x(a_0)$.
    \item Next if $b = b'$. There are two possibilities: $H_n(\mathcal{R}_{\infty}(s))=H_n(\mathcal{R}_{\infty}(s'))$ or they are two different rays landing at a common point $x(a_0)$. While in the graph $X^{a_0}_{n+1}$, only one internal ray lands at $x_(a_0)$, so the second is impossible. Then there exists two other internal rays $\tilde{\mathcal{R}}_{\mathcal{U}}\subset\mathcal{U},\tilde{\mathcal{R}}_{\mathcal{U}'}\subset\mathcal{U'}$ landing at $\Tilde{b},\Tilde{b}'$ respectively which have the same image under $H_n$. Then $\Tilde{b}\not = \Tilde{b}'$, for otherwise one finds a loop in $\overline{\mathcal{U}\cup\mathcal{U}'}$ surrounding points in $\mathcal{H}_{\infty}$. We repeat the argument above to $\Tilde{b},\Tilde{b}'$ and get a contradiction.
\end{itemize}

Finally we verify surjectivity. First we prove 
\begin{claim*}
$\partial\mathcal{P}_n(a_0)\cap(\mathcal{R}_\infty(\alpha^+_{m-1})\cup\mathcal{R}_\infty(\beta^+_{m}))=\emptyset$ for $n$ large enough.
\end{claim*}
\begin{proof}[Proof of the claim]
    Suppose for example $\partial\mathcal{P}_0(a_0)$ contains parts of $\beta^+_m$ (the proof proceeds the same way if it is $\alpha^+_{m-1}$). Recall in Lemma \ref{lem.external.angle.limit}, there is a sequences of external rays $\mathcal{R}_{\infty}(s_n)$ landing at $a_n\in\partial\mathcal{B}_m$ with $s_n\to\beta^+_{m})$ and $a_n$ Misiurewicz parabolic. For $n$ large enough, let $\mathcal{R}_n\subset\mathcal{B}_m\cap\mathcal{X}_n\cap\mathcal{P}_0(a_0)$ be a sequence of internal rays landing at $b_n\in\partial\mathcal{B}_m$. By Lemma \ref{lem.para.Misiur}, there exists $\mathcal{R}_\infty(t_n)\subset\mathcal{X}_n$ landing at $b_n$ with $|t_n-\beta_m^+|\textless |s_n-\beta_m^+|$. Thus if $\partial\mathcal{P}_n(a_0)\cap\mathcal{R}_\infty(\beta^+_{m}) \not= \emptyset$, then $a_0\in\mathcal{Q}^+_n$, where $\mathcal{Q}^+_n$ is as defined in Proposition \ref{prop.loc.connec.double-parabolic}. Recall there we have shown that $\bigcap_n\mathcal{Q}^+_n = \emptyset$. Hence $\partial\mathcal{P}_n(a_0)\cap\mathcal{R}_\infty(\beta^+_{m}) = \emptyset$ for $n$ large enough.
\end{proof}

$H_n$ is therefore surjective on the part of external equipotential and external rays by its injectivity and the claim. So we verify surjectivity for internal rays and for internal equipotentials will be similiar. Let $z_0\in X_{n+1}^{a_0}\cap B_{a_0}(0)\cap P^{a_0,v}_n$. Suppose that $z_0$ is on some connected component $\zeta$ of $f^{-k}_{a_0}(R_0^{a_0})$. Let $x_0$ be the landing point of $\zeta$, then there is an external ray $R^{\infty}_{a_0}(t)\subset X_{n+1}^{a_0}$ landing at $x_0$. By Lemma \ref{lem.para.Misiur}, $\mathcal{R}_{\infty}(t)$ lands at a Misiurewicz parameter $a'$ which is accessible by some internal ray $\mathcal{R}_{\mathcal{U}}$ (Lemma \ref{lem.landing.Misiure.internal-rays}). Clearly $H_n(\mathcal{R}_{\mathcal{U}}) = \zeta$.
\end{proof}

\begin{corollary}\label{cor.criticalpuzzle}
Let $a\in \mathcal{P}_{n-1}(a_0)$. Let $C^a_n$ be the puzzle piece bounded by $L_n(\partial P^{a_0,v}_n)$. Then $a\not\in \overline{\mathcal{P}_{n}(a_0)}$ if and only if $v_-(a)\not\in \overline{C^a_n}$.
\end{corollary}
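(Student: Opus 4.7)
The plan is to derive this corollary from Lemma~\ref{lem.homeo.para.dym} via a connectedness and continuity argument. For $a\in \mathcal{P}_{n-1}(a_0)$, the holomorphic motion $L_n$ from Lemma~\ref{lem.holomotion.misur} is defined on all of $\partial P^{a_0,v}_n\subset X^{a_0}_n\cap P^{a_0,v}_{n-1}$, so $C^a_n$ is a well-defined dynamical puzzle piece of $f_a$ of depth $n$ contained in $P^{a,v}_{n-1}$, satisfying $C^{a_0}_n=P^{a_0,v}_n\ni v_-(a_0)$. I decompose $\mathcal{P}_{n-1}(a_0)\setminus \mathcal{X}_n$ into its connected components $\{\mathcal{P}^{(i)}\}_{i\in I}$ (each a puzzle piece of depth $n$), labelling $\mathcal{P}^{(0)}=\mathcal{P}_n(a_0)$, and likewise $P^{a,v}_{n-1}\setminus X^a_n=\bigsqcup_j D^a_j$, where the correspondence $D^{a_0}_j\mapsto D^a_j=L_n(a,D^{a_0}_j)$ is induced by the holomorphic motion, with $D^{a_0}_0=P^{a_0,v}_n$, so that $C^a_n=D^a_0$.

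First I would prove the forward direction $a\in\mathcal{P}_n(a_0)\Rightarrow v_-(a)\in C^a_n$. By the injectivity part of Lemma~\ref{lem.homeo.para.dym} (applied at depth $n-1$), any $a\in \mathcal{P}_{n-1}(a_0)\setminus\mathcal{X}_n$ has $v_-(a)\notin L_n(a,\partial P^{a_0,v}_n)$, and more generally $v_-(a)\notin X^a_n\cap P^{a,v}_{n-1}$, so $v_-(a)$ falls into exactly one component $D^a_{j(a)}$. Since $a\mapsto v_-(a)$ and $a\mapsto \partial D^a_j$ both move continuously and never cross on $\mathcal{P}^{(i)}$, the index $j(\cdot)$ is locally constant, hence constant on each $\mathcal{P}^{(i)}$. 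Evaluating at $a_0$ gives $j\equiv 0$ on $\mathcal{P}_n(a_0)$, so $v_-(a)\in C^a_n$ throughout. Continuity then promotes this to $v_-(a)\in\overline{C^a_n}$ whenever $a\in\overline{\mathcal{P}_n(a_0)}\cap\mathcal{P}_{n-1}(a_0)$.

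For the converse, suppose $a\in \mathcal{P}_{n-1}(a_0)\setminus\overline{\mathcal{P}_n(a_0)}$, so $a\in\mathcal{P}^{(i)}$ with $i\neq 0$. Applying the same argument within $\mathcal{P}^{(i)}$ yields a constant index $j(i)$ with $v_-(a)\in D^a_{j(i)}$ for all $a\in\mathcal{P}^{(i)}$. I then verify $j(i)\neq 0$: the portion of $\mathcal{X}_n$ separating $\mathcal{P}^{(i)}$ from $\mathcal{P}^{(0)}$ is mapped by the homeomorphism of Lemma~\ref{lem.homeo.para.dym} to a subarc of $X^{a_0}_n\cap P^{a_0,v}_{n-1}$ that must separate $D^{a_0}_{j(i)}$ from $D^{a_0}_0$ in $P^{a_0,v}_{n-1}$, forcing $j(i)\neq 0$. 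Since $v_-(a)$ lies in the interior of $D^a_{j(i)}$ and distinct components $D^a_0,D^a_{j(i)}$ have disjoint interiors, $v_-(a)\notin\overline{D^a_0}=\overline{C^a_n}$.

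The main obstacle I anticipate is this last combinatorial step: identifying the correspondence between the parameter components $\mathcal{P}^{(i)}$ and the dynamical components $D^a_j$ and checking that $H_{n-1}$ respects the planar structure separating them. Some care is needed here because $\mathcal{X}_n$ contains edges beyond $\partial P^{a_0,v}_n$ (internal/external rays and equipotentials of other capture pieces), so one must first isolate the subgraph of $\mathcal{X}_n$ which serves as the common boundary between $\mathcal{P}^{(0)}$ and $\mathcal{P}^{(i)}$ and appeal to the homeomorphism statement of Lemma~\ref{lem.homeo.para.dym} on precisely this subgraph. Once this matching is established, everything else reduces to continuity of the holomorphic motion $L_n$ and of $a\mapsto v_-(a)$.
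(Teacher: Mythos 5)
Your proposal is correct and takes essentially the paper's approach, repackaged. The paper chooses a simple path in $\mathcal{P}_{n-1}(a_0)$ from $a_0$ to $a$ that leaves $\overline{\mathcal{P}_n(a_0)}$ exactly once and invokes Lemma~\ref{lem.homeo.para.dym} to conclude that $v_-(a_t)$ cannot re-enter $\overline{C^{a_t}_n}$ after it exits; the second half of the paper's proof is exactly your local-constancy argument (the index $j$ is constant on the connected set $\mathcal{P}_n(a_0)$, forcing $j=0$ there, and $v_-(a)\notin X^a_n$ when $a\notin\mathcal{X}_n$). So the two directions are merely swapped relative to your numbering, and the mechanism is the same.

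The step you yourself flag as the obstacle --- that the arc of $\mathcal{X}_n$ between $\mathcal{P}^{(0)}$ and $\mathcal{P}^{(i)}$ is carried by $H_{n-1}$ to an arc separating $D^{a_0}_0$ from $D^{a_0}_{j(i)}$ --- is the only non-automatic point, and the bare homeomorphism statement of Lemma~\ref{lem.homeo.para.dym} does not give it: a bijection between two abstract $1$-complexes need not respect which planar side of an edge an adjacent piece lies on. What closes the gap, and is left implicit in the paper's phrase ``will never enter again,'' is holomorphicity. Extend the motion $L_n$ by Slodkowski to a continuous family $\hat{L}^a$ of orientation-preserving homeomorphisms of $\mathbb{C}$; the map $G(a):=(\hat{L}^a)^{-1}(v_-(a))$ is then continuous on $\mathcal{P}_{n-1}(a_0)$, restricts to $H_{n-1}$ on $\mathcal{X}_n$, and sends $\mathcal{P}_{n-1}(a_0)\setminus\mathcal{X}_n$ into $\mathbb{C}\setminus X^{a_0}_n$. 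Since $a\mapsto v_-(a)$ is holomorphic and each $\hat{L}^a$ is orientation-preserving, $G$ is a local orientation-preserving homeomorphism at a generic point of an edge $e\subset\mathcal{X}_n$, and therefore carries the two sides of $e$ to the two sides of $H_{n-1}(e)$. This supplies exactly your ``$j(i)\neq 0$'' (and the analogous matching of adjacent pieces for any edge), after which your argument is complete.
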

\begin{proof}
If $a\not\in \overline{\mathcal{P}_{n}(a_0)}$, then take a simple path $a_t\subset \mathcal{P}_{n-1}(a_0)$ connecting $a_0,a$ such that $a_t\cap\mathcal{P}_{n}(a_0)$ only contains one point. Thus Lemma \ref{lem.homeo.para.dym} ensures that once $a_t$ goes out of $\overline{\mathcal{P}_{n}(a_0)}$, $f_{a_t}(c_-(a_t))$ will never enter again $\overline{C^a_n}$. Conversely if $a\in\mathcal{P}_n(a_0)$, then clearly $v_-(a)\in C^a_n$ since $\partial P^{a_0,v}_n$ moves holomorphically and $v_-(a)$ does not intersect $X^a_n$.
\end{proof}

\begin{corollary}\label{cor.homeo.dym.parapuzzle}
For $n$ large enough, if $\overline{P^{a_0,v}_{n+1}}\subset P^{a_0,v}_n$, then $\overline{\mathcal{P}_{n+1}(a_0)}\subset\mathcal{P}_n(a_0)$. Moreover \[H_n:\partial\mathcal{P}_{n+1}(a_0)\longrightarrow\partial P^{a_0,v}_{n+1}\] is a homeomorphism.
\end{corollary}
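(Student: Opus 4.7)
The plan is to propagate the dynamical inclusion $\overline{P^{a_0,v}_{n+1}}\subset P^{a_0,v}_n$ to the parameter side using the dictionary provided by Lemma \ref{lem.homeo.para.dym} and Corollary \ref{cor.criticalpuzzle}, and then to obtain the homeomorphism from a standard compactness argument. I will fix $n$ large enough that $H_n:\mathcal{P}_n(a_0)\cap\mathcal{X}_{n+1}\longrightarrow P^{a_0,v}_n\cap X^{a_0}_{n+1}$ is bijective and observe that $H_n$ extends continuously to $\overline{\mathcal{P}_n(a_0)}\cap\mathcal{X}_{n+1}$, since the holomorphic motion $L_{n+1}$ extends continuously to $\overline{\mathcal{P}_n(a_0)}$ by the $\lambda$-lemma and $v_-$ is holomorphic.

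For the first inclusion I would argue by contradiction. Suppose there exists $b\in \partial\mathcal{P}_{n+1}(a_0)\cap\partial\mathcal{P}_n(a_0)$. Since $b\in\mathcal{X}_{n+1}\cap\overline{\mathcal{P}_n(a_0)}$, I can approach $b$ from inside $\mathcal{P}_n(a_0)\cap\mathcal{X}_{n+1}$ by a sequence $(a_k)$ running along an edge of $\mathcal{X}_{n+1}$. For each $k$, $H_n(a_k)=(L^{a_k}_{n+1})^{-1}(v_-(a_k))$ lies in $P^{a_0,v}_n\cap X^{a_0}_{n+1}$. By the continuity of the extended motion and of $v_-$, the sequence $H_n(a_k)$ converges to $(L^b_{n+1})^{-1}(v_-(b))$. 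Using Corollary \ref{cor.criticalpuzzle}, the condition $b\in\partial\mathcal{P}_n(a_0)$ forces $v_-(b)\in L^b_n(\partial P^{a_0,v}_n)$ so the limit lies in $\partial P^{a_0,v}_n$; likewise $b\in\partial\mathcal{P}_{n+1}(a_0)$ forces the limit into $\partial P^{a_0,v}_{n+1}$. But $\overline{P^{a_0,v}_{n+1}}\subset P^{a_0,v}_n$ gives $\partial P^{a_0,v}_n\cap\partial P^{a_0,v}_{n+1}=\emptyset$, the desired contradiction. Hence $\overline{\mathcal{P}_{n+1}(a_0)}\subset\mathcal{P}_n(a_0)$.

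For the homeomorphism, the strict inclusion just obtained makes $\partial\mathcal{P}_{n+1}(a_0)$ a compact subset of $\mathcal{P}_n(a_0)\cap\mathcal{X}_{n+1}$, so $H_n$ is defined on it and continuous by construction, injective by the first half of Lemma \ref{lem.homeo.para.dym}, and takes values in $\partial P^{a_0,v}_{n+1}$ by the same boundary-tracking reasoning applied to parameters on the depth-$(n+1)$ part of $\mathcal{X}_{n+1}$. Surjectivity follows from the surjectivity clause of Lemma \ref{lem.homeo.para.dym}: each $z\in\partial P^{a_0,v}_{n+1}\subset P^{a_0,v}_n\cap X^{a_0}_{n+1}$ admits a unique preimage $a\in\mathcal{P}_n(a_0)\cap\mathcal{X}_{n+1}$, and the reverse direction of the parameter/dynamical correspondence forces $a\in\partial\mathcal{P}_{n+1}(a_0)$. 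A continuous bijection from a compact space onto a Hausdorff space is automatically a homeomorphism, concluding the proof.

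The only non-formal ingredient — and therefore the main thing to verify carefully — is the boundary-tracking principle: a parameter $a$ lies on a depth-$(n+1)$ edge of $\mathcal{X}_{n+1}$ precisely when $v_-(a)$ lies on the corresponding depth-$(n+1)$ edge of $X^a_{n+1}$. This is essentially built into the definitions of the parameter equipotentials and rays in Section \ref{sec.paragraph}, but applying it at limit points where the graph $\mathcal{X}_n$ of a lower depth is also present needs the continuous extension of the holomorphic motion $L_{n+1}$ to the closure. Once this point is granted, the rest of the argument is standard topology.
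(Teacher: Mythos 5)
Your argument hinges on the step you yourself flag as the ``only non-formal ingredient'': the continuous extension of the holomorphic motion $L_{n+1}$ (hence of $H_n$) to $\overline{\mathcal{P}_n(a_0)}$, which you attribute to the $\lambda$-lemma. This is a genuine gap. The $\lambda$-lemma extends a holomorphic motion in the \emph{moved} variable (to the closure of the moved set, or to all of $\mathbb{C}$ via Slodkowski), with the parameter kept in the open parameter domain; it gives no extension of the motion to boundary \emph{parameters}. In the present situation the extension is not merely unproved but problematic: a point $b\in\partial\mathcal{P}_n(a_0)$ lies on the parameter graph $\mathcal{X}_n$, so $v_-(b)$ sits on the depth-$n$ dynamical objects and consequently the depth-$(n+1)$ graph at $b$ degenerates (its preimage pieces crash on the free critical point $c_-(b)$). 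Thus the expression $(L^{b}_{n+1})^{-1}(v_-(b))$, on which your contradiction at the common boundary point $b\in\partial\mathcal{P}_{n+1}(a_0)\cap\partial\mathcal{P}_n(a_0)$ rests, need not be defined, and the limit of $H_n(a_k)$ cannot be identified the way you claim. (The second half of your argument, deducing the homeomorphism once the compact inclusion is known, is fine: it only uses Lemma \ref{lem.homeo.para.dym} and Corollary \ref{cor.criticalpuzzle} at interior parameters.)

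The paper's proof avoids boundary parameters altogether, and this is the idea your proposal is missing. By hypothesis $\partial P^{a_0,v}_{n+1}$ is a compact subset of $P^{a_0,v}_n\cap X^{a_0}_{n+1}$, so by the bijectivity of $H_n$ (Lemma \ref{lem.homeo.para.dym}, $n$ large) its preimage $H_n^{-1}(\partial P^{a_0,v}_{n+1})$ lies inside the open piece $\mathcal{P}_n(a_0)$, where all motions are defined, and bounds a depth-$(n+1)$ puzzle piece $\mathcal{P}$ compactly contained in $\mathcal{P}_n(a_0)$. One then shows $a_0\in\mathcal{P}$, i.e.\ $\mathcal{P}=\mathcal{P}_{n+1}(a_0)$: if not, some $a'\in\partial\mathcal{P}$ satisfies $a'\notin\overline{\mathcal{P}_{n+1}(a_0)}$, whence $v_-(a')\notin\overline{C^{a'}_{n+1}}$ by Corollary \ref{cor.criticalpuzzle}, contradicting $H_n(a')\in\partial P^{a_0,v}_{n+1}$. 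This yields the compact inclusion and identifies $\partial\mathcal{P}_{n+1}(a_0)$ with $H_n^{-1}(\partial P^{a_0,v}_{n+1})$ in one stroke, after which the homeomorphism statement follows as in your last paragraph. If you want to keep your contradiction scheme, you would have to replace the boundary extension of $L_{n+1}$ by an argument of exactly this type; as written, the step fails.
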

\begin{proof}
By Lemma \ref{lem.homeo.para.dym} $H^{-1}_n(\overline{\partial P^{a_0}_{n+1}})\subset \mathcal{P}_n(a_0)$ bounds a puzzle piece $\mathcal{P}$. So it suffices to prove that $a_0\in\mathcal{P}$. Suppose not, then there is $a'\in\partial\mathcal{P}$ but $a'\not \in \overline{\mathcal{P}_{n+1}(a_0)}$. By Corollary \ref{cor.criticalpuzzle}, $v_-(a')\not\in\overline{C^{a'}_{n+1}}$, a contradiction.
\end{proof}

\begin{corollary}\label{cor.critical-value-on-boundary}
Let $\mathcal{U}\subset\mathcal{S}_m^+$ be a capture component. If $a_0\in\partial\mathcal{U}$ or $\partial\mathcal{B}_m$ is non renormarlisable, then $v_-(a_0)$ eventually hit the boundary of immediate basins.
\end{corollary}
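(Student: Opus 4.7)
The plan is to reduce to parameters satisfying Assumption $(\Diamond)$ and then adapt the argument of Corollary~\ref{cor.loc.connected.cap-in-double-wake}, using a holomorphic motion of the closures of the immediate basins in a neighborhood of $a_0$.

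The degenerate cases are easy. If $a_0$ is Misiurewicz parabolic, some iterate of $c_-(a_0)$ equals $0\in\bigcup_i\partial B^*_{a_0,i}(0)$. By Proposition~\ref{prop.nodouble.boundary.capture} no double parabolic parameter lies on the boundary of a capture component, and by Proposition~\ref{Prop.grand2} the only double parabolic parameters on $\partial\mathcal{B}_m$ are the $\boldsymbol{\mathrm{a}}_m$'s, for which the conclusion is immediate from the pinching construction in Subsection~\ref{subsec.double_parabolic}. Hence I may assume $a_0$ satisfies $(\Diamond)$.

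For the core argument, since $a_0$ satisfies $(\Diamond)$, the Fatou coordinates of $f^q_a|_{B^*_{a,i}(0)}$ depend holomorphically on $a$ in a neighborhood $V$ of $a_0$, so $\bigcup_i B^*_{a,i}(0)$ admits a holomorphic motion on $V$ with base point chosen in $\mathcal{U}$ (resp.\ $\mathcal{B}_m$), which the $\lambda$-lemma extends continuously to $\bigcup_i\overline{B^*_{a,i}(0)}$. Pick $a_n\in\mathcal{U}$ (resp.\ $a_n\in\mathcal{B}_m$) with $a_n\to a_0$. In the capture case of depth $k$, $f^k_{a_n}(c_-(a_n))\in B^*_{a_n,l}(0)$; by Hausdorff continuity of $\overline{B^*_{a,l}(0)}$ together with continuity of the dynamics in $a$, the limit $f^k_{a_0}(c_-(a_0))$ lies in $\overline{B^*_{a_0,l}(0)}$. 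It cannot lie in the open set $B^*_{a_0,l}(0)$: otherwise the parametrisation $\Psi_{\mathcal{U}}$ of Proposition~\ref{prop.paramet.cap} would extend analytically across $a_0$ and force $a_0\in\mathcal{U}$, contradicting $a_0\in\partial\mathcal{U}$. Therefore $f^k_{a_0}(c_-(a_0))\in\partial B^*_{a_0,l}(0)$. The case $a_0\in\partial\mathcal{B}_m$ is strictly analogous, with $v_-(a)\in B^*_{a,\overline{m+p}}(0)$ (or $B^*_{a,p}(0)$ when $m=0$) and the parametrisations $\Psi^{bit}_m$ or $\Psi^{adj}_0$ of Propositions~\ref{prop.summary.parametri} and \ref{prop.summary.para.adj} playing the role of $\Psi_{\mathcal{U}}$.

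The principal obstacle is establishing the global holomorphic motion of $\bigcup_i\overline{B^*_{a,i}(0)}$ on $V$. Under $(\Diamond)$ the motion on the standard attracting petals follows immediately from holomorphic dependence of Fatou coordinates; it must then be extended equivariantly to the full basins by pullback under $f^q_a$ (with no critical-orbit obstruction, since $(\Diamond)$ rules out crashes onto $\partial B^*_{a,i}(0)$), and finally to the closures by the $\lambda$-lemma. This is exactly the verification already performed in Proposition~\ref{prop.nodouble.boundary.capture} and Corollary~\ref{cor.loc.connected.cap-in-double-wake}, with $\mathcal{S}_m^+$ replacing the double parabolic wake.
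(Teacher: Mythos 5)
The paper proves this corollary in one line from the puzzle machinery: by Corollary~\ref{cor.homeo.dym.parapuzzle} the map $H_n$ identifies $\partial\mathcal{P}_{n+1}(a_0)$ with $\partial P^{a_0,v}_{n+1}$, and since $a_0\in\partial\mathcal{U}$ (or $\partial\mathcal{B}_m$), each $\partial\mathcal{P}_{n}(a_0)$ contains arcs of $\mathcal{R}^{\mathcal{U}}$ or $\mathcal{R}^m$, hence each $\partial P^{a_0,v}_n$ contains arcs inside a fixed Fatou component $U$; as $a_0$ is non-renormalisable, Theorem~\ref{thm.dym} forces $\bigcap_n\overline{P^{a_0,v}_n}=\{v_-(a_0)\}$, so $v_-(a_0)\in\partial U$. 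Your argument takes a quite different route and, I believe, has a real gap in the case $a_0\in\partial\mathcal{B}_m$.

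The issue is the claimed holomorphic motion of $\bigcup_i\overline{B^*_{a,i}(0)}$ on a full neighborhood $V$ of $a_0$ \emph{with base point inside} $\mathcal{B}_m$. For $a\in\mathcal{B}_m$ the free critical point $c_-(a)$ lies in the union of immediate basins, so $f^q_a$ restricted to the immediate basins has degree $4$ (or $3$ in the adjacent case), whereas for $a=a_0\in\partial\mathcal{B}_m$ the degree is $2$. A conjugating holomorphic motion $h_a=\phi_a^{-1}\circ\phi_{a_0'}$ built from Fatou coordinates and equivariant pullback cannot extend across $\partial\mathcal{B}_m$: the pullback step meets an extra critical point on one side but not the other, so there is no conjugacy between $\bigcup_i B^*_{a_0',i}(0)$ and $\bigcup_i B^*_{a_0,i}(0)$. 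This is precisely why Corollary~\ref{cor.loc.connected.cap-in-double-wake} works only for capture components: in a neighborhood of a capture component (avoiding $\mathcal{A}_{p/q}$) the number of critical points in the immediate basins is locally constant (equal to one), which fails in a neighborhood of a point of $\partial\mathcal{B}_m$. Consequently the invocation of ``Hausdorff continuity of $\overline{B^*_{a,l}(0)}$'' is unjustified on the $\partial\mathcal{B}_m$ side, which is the only genuinely new case over Corollary~\ref{cor.loc.connected.cap-in-double-wake}. The paper's route via the para-puzzle/dynamical-puzzle homeomorphism (Lemma~\ref{lem.homeo.para.dym}, Corollary~\ref{cor.homeo.dym.parapuzzle}) is designed to avoid exactly this problem, since those constructions only use holomorphic motions of bounded-depth dynamical graphs (Lemma~\ref{lem.holomotion.misur}), which remain valid even though the full basins do not move holomorphically across $\partial\mathcal{B}_m$.

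A smaller point: your reduction claims the conclusion is ``immediate from the pinching construction'' for $a_0=\boldsymbol{\mathrm{a}}_m$. But for a double parabolic parameter both critical points lie in the interiors of parabolic petals and their orbits converge to $0$ without ever reaching $\bigcup_i\partial B^*_{a_0,i}(0)$, so the stated conclusion does not hold there at all. The corollary implicitly works under Assumption~$(\Diamond)$ (its proof cites Theorem~\ref{thm.dym} and Corollary~\ref{cor.homeo.dym.parapuzzle}, which both assume it), so the double parabolic case should simply be excluded rather than ``handled''.
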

\begin{proof}
By Corollary \ref{cor.homeo.dym.parapuzzle}, $\partial P^{a_0,v}_n$ intersects a Fatou component $U$ (preimage to some immediate basin) for all $n$. Hence by Theorem \ref{thm.dym} $\bigcap_n\overline{P^{a_0,v}_n}$ intersects $\partial U$ at $v_-(a_0)$.
\end{proof}

By Theorem \ref{thm.dym}, there exists graph (\ref{eq.graph.Si}), such that in the dynamical plane of $f_{a_0}$ one has a sequence of non-degenerated annuli $A^{a_0}_{n_i} := P^{a_0,v}_{n_i}\setminus\overline{P^{a_0,v}_{n_i+1}}$. The above corollary implies that in the parameter plane, $\mathcal{A}^{a_0}_{n_i} := \mathcal{P}_{n_i}(a_0)\setminus\overline{\mathcal{P}_{n_i+1}}(a_0)$ is also non-degenerated for $i$ large enough. Applying Shishikura's trick \cite{roesch_2000}, we can get the distortion control between the moduli of para-annuli and dynamical annuli:

\begin{lemma}\label{lem.shishikura}
There exists $K\textgreater 1$ such that for $i$ large enough
\[\frac{1}{K}mod(A^{a_0}_{n_i})\leq mod\mathcal(\mathcal{A}_{n_i})\leq Kmod(A^{a_0}_{n_i}).\]
\end{lemma}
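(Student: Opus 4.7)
The plan is to apply the Shishikura-style trick as in \cite{roesch_2000}: I will build a quasi-conformal homeomorphism from the parameter annulus $\mathcal{A}_{n_i}$ onto the dynamical annulus $A^{a_0}_{n_i}$ whose dilatation is bounded independently of $i$, and then invoke the fact that a $K$-quasi-conformal map between annuli distorts modulus by at most the factor $K$. First I would take the dynamical holomorphic motion $L_{n_i+1}: \mathcal{P}_{n_i-1}(a_0) \times X^{a_0}_{n_i+1} \to \mathbb{C}$ provided by Lemma \ref{lem.holomotion.misur}, and extend it via Slodkowski's theorem to a holomorphic motion $\tilde{L}$ of all of $\widehat{\mathbb{C}}$ over $\mathcal{P}_{n_i-1}(a_0)$, whose slices $\tilde{L}(a,\cdot)$ are quasi-conformal with dilatation controlled by the hyperbolic position of $a$.

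Next I would introduce the transfer map
\[\Phi : \mathcal{P}_{n_i-1}(a_0) \longrightarrow P^{a_0,v}_{n_i-1}, \qquad \Phi(a) := \tilde{L}(a,\cdot)^{-1}\bigl(v_-(a)\bigr).\]
Since $a\mapsto v_-(a)$ is holomorphic and each inverse slice of $\tilde{L}$ is quasi-conformal with holomorphic dependence on $a$, the map $\Phi$ is quasi-regular and its pointwise dilatation coincides with that of $\tilde{L}(a,\cdot)$. By construction, on $\partial\mathcal{P}_{n_i}(a_0)$ and $\partial\mathcal{P}_{n_i+1}(a_0)$ the map $\Phi$ agrees with the boundary identification $H_n$ of Lemma \ref{lem.homeo.para.dym}, which by Corollary \ref{cor.homeo.dym.parapuzzle} is a homeomorphism onto $\partial P^{a_0,v}_{n_i}$ resp.\ $\partial P^{a_0,v}_{n_i+1}$. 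A degree argument exploiting openness of quasi-regular maps then forces $\Phi|_{\mathcal{A}_{n_i}}$ to be a quasi-conformal homeomorphism onto $A^{a_0}_{n_i}$.

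What remains is to bound the dilatation of $\Phi$ on $\mathcal{A}_{n_i}$ by a constant $K_0$ independent of $i$. By the Bers--Royden inequality for Slodkowski extensions, the dilatation $K(a)$ of $\tilde{L}(a,\cdot)$ is controlled by the hyperbolic distance from $a$ to $\partial\mathcal{P}_{n_i-1}(a_0)$ inside $\mathcal{P}_{n_i-1}(a_0)$. Because $\mathcal{A}_{n_i}$ is shielded from $\partial\mathcal{P}_{n_i-1}(a_0)$ by the annular buffer $\mathcal{P}_{n_i-1}(a_0)\setminus\overline{\mathcal{P}_{n_i}(a_0)}$, this hyperbolic distance is bounded below by the modulus of that buffer.

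The main obstacle is securing a uniform lower bound for this buffer modulus, since a priori the moduli $\mathrm{mod}(\mathcal{A}_{n_j})$ may behave wildly in $j$. The standard remedy is to perform the construction at a fixed shallow depth: I would work over a fixed parameter puzzle $\mathcal{P}_{n_0}(a_0)$ and exploit the dynamical self-similarity given by the second item of Theorem \ref{thm.dym}, namely that $f^{n_i-n_0}_{a_0} : A^{a_0}_{n_i} \to A^{a_0}_{n_0}$ is an unramified covering onto the common shallow annulus. Transferring this dynamical buffer to the parameter plane through $\tilde{L}$, iterating the transfer procedure along the tower of pull-backs, and using that unramified coverings preserve modulus (up to the covering degree) yields a uniform dilatation bound $K_0$, and hence the desired two-sided comparison of moduli with $K = K_0$.
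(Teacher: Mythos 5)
There is a genuine gap, and it sits exactly at the point you yourself flag as "the main obstacle". Your construction of the transfer map is fine: defining $\Phi(a)=\tilde L(a,\cdot)^{-1}(v_-(a))$ from a Slodkowski extension and computing that its pointwise dilatation equals that of the slice $\tilde L(a,\cdot)$ at the image point is the standard computation, and the boundary identification with $H_n$ (Lemma \ref{lem.homeo.para.dym}, Corollary \ref{cor.homeo.dym.parapuzzle}) is the right ingredient. (Minor slip: Lemma \ref{lem.holomotion.misur} only moves $X^{a_0}_{n_i+1}$ over $\mathcal{P}_{n_i}(a_0)$, not over $\mathcal{P}_{n_i-1}(a_0)$.) The problem is your dilatation bound. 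The Bers--Royden/Slodkowski estimate controls the dilatation of the slice at $a$ by the hyperbolic distance from $a$ to the \emph{base point} $a_0$ inside the domain of the motion, not by the distance from $a$ to the boundary $\partial\mathcal{P}_{n_i-1}(a_0)$; and what one needs is an \emph{upper} bound on that distance, whereas your "buffer" argument produces a lower bound on the wrong quantity. Since $a_0\in\bigcap_n\mathcal{P}_n(a_0)$, a point $a\in\mathcal{A}_{n_i}$ is separated from $a_0$ by all the deeper parameter annuli, so $d(a,a_0)$ grows with $i$ -- and it diverges precisely in the non-renormalisable case where $\sum\mathrm{mod}(A^{a_0}_{n_i})=\infty$, i.e.\ exactly when the lemma is needed. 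Hence no uniform $K$ can come out of a dilatation bound for the Slodkowski extension, no matter how the buffer annuli are arranged. The closing paragraph does not repair this: the covering $f^{n_i-n_0}_{a_0}:A^{a_0}_{n_i}\to A^{a_0}_{n_0}$ from Theorem \ref{thm.dym} lives entirely in the dynamical plane and gives no control whatsoever on the parameter-direction Beltrami coefficient of $\tilde L$; "transferring the dynamical buffer through $\tilde L$ and iterating" is not an argument.

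This is also where your route genuinely diverges from what the paper invokes. Shishikura's trick, as presented in \cite{roesch_2000}, is designed precisely to avoid any uniform quasiconformality bound on the motion deep in the puzzle: the two-sided comparison of moduli is obtained with a constant depending only on the degree of the parameter-to-dynamical transfer correspondence (here uniformly bounded, indeed the boundary correspondence is a homeomorphism by Corollary \ref{cor.homeo.dym.parapuzzle}), using the quasiregular transfer map together with the holomorphic-motion structure of its Beltrami coefficient rather than an $L^\infty$ bound on it. So while your first two paragraphs reproduce the correct skeleton (transfer map, boundary matching, degree argument), the uniformity in $i$ -- which is the whole content of the lemma -- is not established, and the proposed mechanism for it would fail.
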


\begin{corollary}\label{cor.loc.connec.non-renormalisable}
$\mathcal{C}_{\lambda}$ is locally connected at non renormalisable parameters.
\end{corollary}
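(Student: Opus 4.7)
The plan is to reduce to the case where $a_0$ satisfies Assumption~$(\Diamond)$ and then to transfer the divergence of dynamical moduli to the parameter plane via Shishikura's trick. Non-renormalizable parameters in $\mathring{\mathcal{H}^\lambda}$ and those on the boundary that are Misiurewicz parabolic or double parabolic have already been handled by Theorem~\ref{thm.A} and Propositions~\ref{prop.loc.paramisiur}, \ref{prop.loc.connec.double-parabolic}; a non-renormalizable $a_0$ sitting inside a double parabolic wake $\mathcal{W}^{\pm}(\boldsymbol{\mathrm{a}}_m)$ but outside every parabolic component is cut off from the rest of $\mathcal{C}_\lambda$ by the four rays at $\boldsymbol{\mathrm{a}}_m$, so the shrinking argument in \ref{subsec.loc.connec.Misiurewicz-parabolic} applies essentially without change. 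Only a non-renormalizable $a_0$ satisfying $(\Diamond)$ remains.

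For such $a_0$ I would first invoke Theorem~\ref{thm.dym}: because $a_0$ is non-renormalizable, the dichotomy forces a sequence of non-degenerate dynamical annuli $A^{a_0}_{n_i}$ around $v_-(a_0)$ with $\sum_i \operatorname{mod}(A^{a_0}_{n_i}) = \infty$. Corollary~\ref{cor.homeo.dym.parapuzzle} transports the nesting to the parameter plane, giving $\overline{\mathcal{P}_{n_{i+1}}(a_0)} \subset \mathcal{P}_{n_i}(a_0)$ for $i$ large, and Corollary~\ref{cor.critical-value-on-boundary} combined with the bijection $H_n$ of Lemma~\ref{lem.homeo.para.dym} ensures that the parameter annuli $\mathcal{A}^{a_0}_{n_i} = \mathcal{P}_{n_i}(a_0) \setminus \overline{\mathcal{P}_{n_i+1}(a_0)}$ are non-degenerate. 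Lemma~\ref{lem.shishikura} then upgrades this to $\sum_i \operatorname{mod}(\mathcal{A}^{a_0}_{n_i}) = \infty$. The Grötzsch inequality applied to this disjoint nest of annuli forces $\bigcap_i \overline{\mathcal{P}_{n_i}(a_0)} = \{a_0\}$, so $\{\mathcal{P}_{n_i}(a_0) \cap \mathcal{C}_\lambda\}_i$ is a basis of connected neighborhoods of $a_0$ in $\mathcal{C}_\lambda$, proving local connectivity at $a_0$.

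The main obstacle will not be the final Grötzsch step, which is standard once the annuli are in place, but the bookkeeping around the reduction: one must verify that on each of the two rays bounding a double parabolic wake the nest of parameter pieces still shrinks to a point for non-renormalizable $a_0$ inside the wake, so that those parameters are genuinely reached by a basis of connected neighborhoods in $\mathcal{C}_\lambda$. This will follow from the same mechanism as in Proposition~\ref{prop.loc.connec.double-parabolic}, combined with the quasiconformal comparison of Lemma~\ref{lem.shishikura} applied to the puzzle pieces of either side of the wake. Everything else (the existence of an admissible dynamical graph from Proposition~\ref{prop.summary.infinite.ringed}, the bijection $H_n$, and the modulus comparison) has already been established above.
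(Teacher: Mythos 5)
Your core argument for a parameter $a_0$ satisfying Assumption $(\Diamond)$ is exactly the paper's: Theorem \ref{thm.dym} gives divergent dynamical moduli for non-renormalizable $a_0$, Corollary \ref{cor.homeo.dym.parapuzzle} gives compactly nested parameter pieces, Lemma \ref{lem.shishikura} transfers the modulus divergence, and Gr\"otzsch forces $\bigcap_n \overline{\mathcal{P}_n(a_0)} = \{a_0\}$. The paper's proof of this corollary is just that one line, because the whole of Subsection \ref{subsec.loc.connec.Non-renormalizable} is written under the standing hypothesis that $a_0$ satisfies $(\Diamond)$, so there is no reduction step left to perform.

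The preliminary reduction you set up is therefore not part of what this corollary needs, and one piece of it is not actually correct as stated: for a non-renormalizable $a_0$ that lies strictly \emph{inside} a double parabolic wake $\mathcal{W}^\pm(\boldsymbol{\mathrm{a}}_m)$ but outside every parabolic component, the fact that the four rays at $\boldsymbol{\mathrm{a}}_m$ ``cut off'' the wake from the rest of $\mathcal{C}_\lambda$ says nothing about local connectivity at $a_0$. Moreover, the shrinking arguments of Subsection \ref{subsec.loc.connec.Misiurewicz-parabolic} (Propositions \ref{prop.loc.paramisiur}, \ref{prop.loc.connec.double-parabolic}) are holomorphic-motion arguments for parameters sitting on the \emph{boundary} of a nest of puzzle pieces (Misiurewicz parabolic or double parabolic points), and they do not transfer ``essentially without change'' to a generic $a_0$ in the interior of a wake, where no such boundary nest exists. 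That case simply falls outside the scope of this corollary (and indeed outside the central part $\mathcal{K}_\lambda$, which is confined to the sectors $\mathcal{S}_m$), so nothing is lost by dropping it; but as written, the sentence would not survive scrutiny. The rest, including the minor remark that the non-degeneracy of the parameter annuli $\mathcal{A}^{a_0}_{n_i}$ really comes from the compact nesting in Corollary \ref{cor.homeo.dym.parapuzzle} rather than from Corollary \ref{cor.critical-value-on-boundary}, is fine.
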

\begin{proof}
This follows immediately from  the above lemma and Grötzsch's inequality.
\end{proof}

\subsection{Renormalizable case}\label{subsec.renor}

In this subsection, we always fix some $a_0\in \mathcal{S}^+_{m}$ such that $f_{a_0}$ satisfies the Assumption ($\Diamond$) and is renormalisable. Recall that a set $\textbf{M}'\subset \mathbb{C}$ is called a copy of the Mandelbrot set (cf. \cite{ASENS_1985_4_18_2_287_0}) $\textbf{M}$ if there exists $k\geq 1$ and a homeomorphism $\chi:\textbf{M}'\longrightarrow\textbf{M}$ such that $f_a$ is $k-$renormalisable and $f^k_a$ is quasiconformally conjugated to $z^2+\chi(a)$.

\begin{proposition}\label{prop.mandel}
$\textbf{M}_{a_0} := \bigcap\mathcal{P}_n(a_0)$ is a copy of the Mandelbrot set. Moreover, there are exactly 2 external rays landing at the cusp $\chi^{-1}(\frac{1}{4})$, separating $\textbf{M}_{a_0}$ with $\mathcal{B}_m$. In particular, if $a_0\in\partial\mathcal{B}_m$, then $a_0 = \chi^{-1}(\frac{1}{4})$, $\partial\mathcal{B}_m$ is locally connected at $a_0$.
\end{proposition}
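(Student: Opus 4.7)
The plan is to apply the Douady--Hubbard theory of Mandelbrot-like families to the renormalisation furnished by Theorem \ref{thm.dym}. First I would fix $N$ large enough that $f_{a_0}^k : P^{a_0,v}_{N+k} \to P^{a_0,v}_N$ is quadratic-like. Since by Lemma \ref{lem.holomotion.misur} the bounding graph $X^{a_0}_{N+k+1}$ moves holomorphically over $\mathcal{P}_N(a_0)$, and by Corollary \ref{cor.homeo.dym.parapuzzle} the boundary map $H_N : \partial \mathcal{P}_N(a_0) \to \partial P^{a_0,v}_N$ is a homeomorphism, the assignment $a \mapsto (f_a^k : P^{a,v}_{N+k} \to P^{a,v}_N)$ defines an analytic family of quadratic-like maps with winding number one around the critical value. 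The straightening theorem of Douady--Hubbard then yields a continuous surjection $\chi : \textbf{M}_{a_0} \to \textbf{M}$.

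Next I would prove injectivity of $\chi$ by a rigidity argument modelled on Proposition \ref{prop.injec.parame.bit}. If $\chi(a_1) = \chi(a_2)$, then the two renormalisations are hybrid conjugate on their filled-in Julia sets via some quasiconformal $\varphi$; this $\varphi$ can be glued along $\partial P^{a_1,v}_{N+k}$ with the dynamical holomorphic motion produced by Lemma \ref{lem.holomotion.misur} (extended to all of $\mathbb{C}$ by Slodkowski) to yield a global quasiconformal conjugacy between $f_{a_1}$ and $f_{a_2}$ that is conformal off a set of Lebesgue measure zero. Rickman's lemma then forces $a_1 = a_2$, so $\chi$ is a homeomorphism.

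For the two external rays at the cusp, I would adapt Steps 1--4 of Lemma \ref{lem.landing.mandelbrot.Per0} essentially line by line. Let $\theta_n^\pm$ denote respectively the smallest and largest angles of parameter external rays appearing in $\partial \mathcal{P}_n(a_0)$. The nested structure $\overline{\mathcal{P}_{n+k}(a_0)} \subset \mathcal{P}_n(a_0)$ combined with the identification $H_n$ of Corollary \ref{cor.homeo.dym.parapuzzle} yields $3^k \theta_{n+k}^\pm = \theta_n^\pm$ for $n \geq N$; the monotone limits $\theta^\pm := \lim_j \theta_{N+jk}^\pm$ are then fixed by multiplication by $3^k$, the parameter rays $\mathcal{R}_\infty(\theta^\pm)$ enter every $\mathcal{P}_n(a_0)$, and hence accumulate in $\textbf{M}_{a_0}$. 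In the dynamical plane of any accumulation parameter, $R^\infty_a(\theta^\pm)$ is fixed by $f_a^k$ and bounds $v_-(a)$; under straightening this forces the accumulation parameter to be $\chi^{-1}(1/4)$. Uniqueness of these two parameter rays follows from \cite[Thm.~3]{Roesch1} exactly as in Step 4 of Lemma \ref{lem.landing.mandelbrot.Per0}.

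The separation claim and its corollary are then formal. Since $\mathcal{R}_\infty(\theta^\pm)$ bound a wake $\mathcal{W}$ containing $\textbf{M}_{a_0}$, and every $\mathcal{P}_n(a_0)$ excludes $\mathcal{B}_m$ by construction, $\mathcal{B}_m$ lies in the complement of $\overline{\mathcal{W}}$. Hence any $a_0 \in \partial \mathcal{B}_m \cap \textbf{M}_{a_0}$ must lie on the common boundary of the two regions, forcing $a_0 = \chi^{-1}(1/4)$; a fundamental system of connected neighborhoods of $a_0$ in $\overline{\mathcal{B}_m}$ is then obtained by truncating $\mathcal{R}_\infty(\theta^\pm)$ with parameter equipotentials of $\mathcal{B}_m$ shrinking to $a_0$. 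The main technical obstacle I anticipate is the injectivity step: gluing the hybrid conjugacy with the exterior holomorphic motion across $\partial P^{a_1,v}_{N+k}$ requires compatibility along external rays landing at iterated preimages of the parabolic fixed point $z=0$, which must be handled via the Fatou coordinate rather than the Koenigs coordinate available in the super-attracting case of \cite{Roesch1}.
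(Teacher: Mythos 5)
Your proposal is essentially the paper's own proof, which is given purely by citation: the Mandelbrot copy is \cite[Prop.~3.26]{Roesch1} (the Douady--Hubbard Mandelbrot-like family criterion, verified exactly through the para-puzzle/dynamical-puzzle correspondence of Corollary \ref{cor.homeo.dym.parapuzzle} and the holomorphic motion of Lemma \ref{lem.holomotion.misur}), the existence of the two rays at the cusp is obtained by adapting Steps 1--2 of Lemma \ref{lem.landing.mandelbrot.Per0}, and uniqueness is quoted from \cite[Thm.~3]{Roesch1} --- precisely your outline, so the approach matches. One caveat: your separate injectivity paragraph is both redundant (once the winding-number-one condition is checked, the Douady--Hubbard theorem for Mandelbrot-like families already gives that $\chi$ is a homeomorphism, which is all that \cite[Prop.~3.26]{Roesch1} uses) and, as sketched, not quite correct, since a Slodkowski extension of the motion of the graphs is not equivariant off those graphs; if you wanted that route you would have to first spread the hybrid conjugacy dynamically over the whole filled Julia set using that $a_1,a_2$ share every puzzle of every depth, as in Proposition \ref{prop.injec.parame.bit}, before sewing with the B\"ottcher motion and applying Rickman's lemma.
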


\begin{proof}
For the first statement, see \cite[Prop. 3.26]{Roesch1}. As for the second statement, the proof for the existence is similar to Step 1, Step 2 in Lemma \ref{lem.landing.mandelbrot.Per0}; for the uniqueness, see the proof of \cite[Thm. 3]{Roesch1}
\end{proof}

\begin{remark}
By the above proposition, the wake $\mathcal{W}(a)$ and limb $\mathcal{L}(a)$ can be defined in the classical sense.
\end{remark}

\begin{proposition}\label{prop.renor.capture}
Let $\mathcal{U}\subset\mathcal{S}_m^+$ be a capture component. Suppose $a_0\in\partial\mathcal{U}$ is renormalisable. Then $a_0$ is a tip of $\textbf{M}_{a_0}$, with $\textbf{M}_{a_0}\cap\mathcal{B}_m = a_1$. Moreover there are exactly two external rays landing at $a_0$, separating $\mathcal{U}$ with $\textbf{M}_{a_0}$. In particular, $\partial\mathcal{U}$ is locally connected at $a_0$.
\end{proposition}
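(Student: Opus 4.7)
The plan is to follow the strategy used for $\partial\mathcal{H}^0_k$ in the super-attracting case, combining the capture hypothesis, the Mandelbrot-copy structure from Proposition \ref{prop.mandel}, and the parameter/dynamical puzzle correspondence from Corollary \ref{cor.homeo.dym.parapuzzle}. First I would exploit the capture assumption: for $a\in\mathcal{U}$ of depth $k$, one has $f_a^k(c_-(a))\in B^*_{a,l}(0)$ for some immediate basin $B^*_{a,l}(0)$; by openness of the capture locus, if $f_{a_0}^k(c_-(a_0))$ lay in the interior of $B^*_{a_0,l}(0)$ then $a_0$ would belong to $\mathcal{U}$, contradicting $a_0\in\partial\mathcal{U}$. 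Hence $f_{a_0}^k(c_-(a_0))\in\partial B^*_{a_0,l}(0)$, so the forward orbit of $v_-(a_0)$ eventually enters $\partial B$ and, by invariance of $\partial B^*$ under $f_{a_0}^q$, remains there.

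Next I would invoke the renormalization. By Proposition \ref{prop.mandel}, $\textbf{M}_{a_0}=\bigcap_n\mathcal{P}_n(a_0)$ is a Mandelbrot copy and, for $n$ large, $f_{a_0}^{k'}:P^{a_0,v}_{n+k'}\to P^{a_0,v}_n$ is quadratic-like with filled Julia set $K^{\mathrm{ren}}\ni v_-(a_0)$. Let $\chi:\textbf{M}_{a_0}\to\textbf{M}$ be the associated straightening and set $c_0:=\chi(a_0)$. The graph $X^{a_0}_n$ cuts $\partial B$ along arcs that, as $n\to\infty$, shrink to a single $f_{a_0}^{k'}$-fixed point $\zeta\in\partial K^{\mathrm{ren}}\cap\partial B$, which is the $\beta$-fixed point of the renormalization. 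Since the $f_{a_0}^{k'}$-orbit of $v_-(a_0)$ is contained in $K^{\mathrm{ren}}$ and eventually in $\partial B$, it must eventually land on $\zeta$; translating through $\chi$, the critical orbit of $z^2+c_0$ lands on the $\beta$-fixed point of $z^2+c_0$, that is, $c_0$ is a tip of $\textbf{M}$.

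A tip of $\textbf{M}$ has exactly one external ray landing at it by Douady--Hubbard. I would transport this to the cubic slice by adapting Steps 1--3 of Lemma \ref{lem.landing.mandelbrot.Per0}: the homeomorphism of Corollary \ref{cor.homeo.dym.parapuzzle} between $\partial\mathcal{P}_n(a_0)$ and $\partial P^{a_0,v}_n$ identifies limiting parameter angles with limiting dynamical angles, and $\zeta$ is accessed from two sides in $\mathbb{C}\setminus K^{\mathrm{ren}}$ by two external rays $R^\infty_{a_0}(\theta^{\pm})$ of $f_{a_0}$, since $\zeta$ is a vertex of the puzzle graph $X^{a_0}_n$ at every deep level. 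By Lemma \ref{lem.landing.Misiurewicz}, the corresponding parameter rays $\mathcal{R}_\infty(\theta^{\pm})$ land at $a_0$; by construction they separate $\mathcal{U}$ on one side from $\textbf{M}_{a_0}\setminus\{a_0\}$ on the other. Since the cusp $\chi^{-1}(1/4)$ lies in $\partial\mathcal{B}_m$ by Proposition \ref{prop.mandel} and the separating rays sit elsewhere, we deduce $\textbf{M}_{a_0}\cap\mathcal{B}_m=\{a_1\}$ with $a_1=\chi^{-1}(1/4)$. Local connectivity of $\partial\mathcal{U}$ at $a_0$ then follows as in Proposition \ref{prop.loc.paramisiur}: the family $\{\mathcal{P}_n(a_0)\setminus\overline{\mathcal{W}(a_0)}\}_n$ together with $\overline{\mathcal{W}(a_0)}$ provides a shrinking connected neighborhood basis of $a_0$ inside $\partial\mathcal{U}$.

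The main obstacle is matching the dynamical landing of $v_-(a_0)$ on $\partial B$ with the outermost ($\beta$-type) fixed point of the renormalization, rather than with some non-$\beta$ repelling periodic point on $\partial K^{\mathrm{ren}}$: this is exactly what distinguishes a tip from a more general Misiurewicz point of $\textbf{M}$, and it requires a careful control of the $\partial B^*$-arcs appearing in $\partial P^{a_0,v}_n$ together with their orbit combinatorics under $f_{a_0}^{k'}$. Once this identification is in place, counting the two landing external rays and deducing local connectivity are direct adaptations of arguments already present in this section.
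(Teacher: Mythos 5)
Your outline points in the right direction (it is the scheme of the superattracting argument that the paper itself invokes, since the paper's proof of this proposition is simply a citation of \cite[Prop. 4.15, Thm. 4]{Roesch1}), but the crux of the statement is exactly the step you leave open. In your second paragraph you assert that the arcs of $\partial B$ appearing in $X^{a_0}_n$ shrink to a single $f^{k'}_{a_0}$-fixed point $\zeta$ with $K^{\mathrm{ren}}\cap\partial B=\{\zeta\}$, that $\zeta$ is the $\beta$-fixed point of the renormalisation, and you then deduce that the renormalised critical orbit lands on $\zeta$, i.e.\ that $\chi(a_0)$ is a tip. But the deduction ``the orbit is in $K^{\mathrm{ren}}$ and eventually in $\partial B$, hence it lands on $\zeta$'' only works once one already knows $K^{\mathrm{ren}}\cap\partial B$ is that single point; and in your final paragraph you concede that this identification is ``the main obstacle'' and do not prove it. Since this single-point contact of the copy with the boundary of the capture structure, together with its identification with the $\beta$-point, \emph{is} the content of ``$a_0$ is a tip'' (and is the substance of the cited results of Roesch), the proposal has a genuine gap at its centre rather than a complete proof.

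Two further steps are also not secured. First, your openness argument only shows $f^k_{a_0}(c_-(a_0))\notin B^*_{a_0,l}(0)$; to place it on $\partial B^*_{a_0,l}(0)$ you need continuity of $a\mapsto\overline{B^*_{a,l}(0)}$ near $a_0$ (for instance the holomorphic motion of the closed immediate basins used in Corollary \ref{cor.loc.connected.cap-in-double-wake}, available thanks to Proposition \ref{prop.nodouble.boundary.capture}), and note that Corollary \ref{cor.critical-value-on-boundary}, which yields this conclusion via the puzzle correspondence, is stated only for non-renormalisable parameters, so it cannot be quoted here as is. Second, your justification of $\textbf{M}_{a_0}\cap\mathcal{B}_m=\{a_1\}$ misreads Proposition \ref{prop.mandel}: that proposition says that \emph{if} $a_0\in\partial\mathcal{B}_m$ then $a_0$ is the cusp, and that two rays at the cusp separate $\textbf{M}_{a_0}$ from $\mathcal{B}_m$; it does not say that the cusp of this copy lies on $\partial\mathcal{B}_m$, so that intersection statement still requires proof. (A smaller imprecision: the parameter rays at $a_0$ should be obtained from dynamical rays landing at $v_-(a_0)$, i.e.\ pullbacks along the critical orbit of the rays at $\zeta$, before Lemma \ref{lem.landing.Misiurewicz} can be applied, and the uniqueness of exactly two such rays needs the adaptation of \cite[Thm. 3]{Roesch1} rather than only Steps 1--3 of Lemma \ref{lem.landing.mandelbrot.Per0}.)
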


\begin{proof}
See \cite[Prop. 4.15, Thm. 4]{Roesch1}.
\end{proof}

\begin{proof}[\textbf{Proof of Theorem~\ref{thm.A}}]
Let $\mathcal{U}$ be an open connected component of $\mathcal{H}^{\lambda}$. Corollary \ref{cor.loc.connected.cap-in-double-wake} solves already the case where $\mathcal{U}$ is contained in a double parablic wake. So suppose $\mathcal{U}$ is in the complement. From the discussion in \ref{subsec.loc.connec.Misiurewicz-parabolic}, \ref{subsec.loc.connec.Non-renormalizable} and \ref{subsec.renor} we see that $\partial\mathcal{U}$ is locally connected. It is then a Jordan curve by Lemma \ref{lem.boundary.components}.
\end{proof}

\subsection{Global descriptions}\label{sub.sec.global}
\begin{lemma}\label{lem.landing.Misiur.parabo}
 Let $a_0\in \mathcal{C}_{\lambda}$ be Misiurewicz parabolic. Suppose moreover that $a_0$ is not in any double parabolic wake. Then there are $q$ external rays landing at it, each two adjacent rays bound a parabolic component.
\end{lemma}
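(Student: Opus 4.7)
The plan is to transfer the portrait at $z=0$ in the dynamical plane of $f_{a_0}$ to the parameter plane via Lemma \ref{lem.landing.Misiurewicz}, pulled back along the strictly preperiodic orbit of $v_-(a_0) := f_{a_0}(c_-(a_0))$. Misiurewicz parabolicity provides $k \geq 0$ with $f^k_{a_0}(v_-(a_0)) = 0$, and the orbit $\{f^j_{a_0}(v_-(a_0))\}_{0 \leq j < k}$ is strictly preperiodic to $0$, hence lies outside the parabolic basin and in particular avoids $c_+(a_0)$; thus $f^k_{a_0}$ is a local biholomorphism at $v_-(a_0)$. Since $a_0$ lies in some $\mathcal{S}_m$ and not in any double parabolic wake, Corollary \ref{cor.double.para.wake} forces the portrait at $z=0$ to be exactly $\Theta_m$, so there are exactly $q$ external rays $R^\infty_{a_0}(\theta_j)$ landing at $0$. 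Pulling these back by $f^k_{a_0}$ yields $q$ distinct dynamical rays $R^\infty_{a_0}(t_i)$ landing at $v_-(a_0)$ with $3^k t_i \in \Theta_m$. By both directions of Lemma \ref{lem.landing.Misiurewicz}, the rational-angle parameter rays landing at $a_0$ are exactly $\{\mathcal{R}^\lambda_\infty(t_i)\}_{i=0}^{q-1}$. Irrational angles are ruled out by continuity: the defining property $a \in \mathcal{R}^\lambda_\infty(t) \Leftrightarrow v_-(a) \in R^\infty_a(t)$ forces any accumulation of $\mathcal{R}^\lambda_\infty(t)$ at $a_0$ to give accumulation of $R^\infty_{a_0}(t)$ at $v_-(a_0)$, whose finite portrait consists only of rational angles.

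For the sector assertion, cyclically order the $q$ parameter rays $\mathcal{R}^\lambda_\infty(t_0), \ldots, \mathcal{R}^\lambda_\infty(t_{q-1})$. In the dynamical plane the corresponding rays split a small disk around $v_-(a_0)$ into $q$ sectors $\Xi_0, \ldots, \Xi_{q-1}$; via $f^k_{a_0}$ each $\Xi_i$ maps biholomorphically onto a sector at $0$ bordered by two consecutive rays of $\Theta_m$, which contains exactly one immediate basin $B^*_{j(i),a_0}(0)$ with $0$ on its boundary. Hence $\Xi_i$ contains an iterated preimage $U_i \subset f_{a_0}^{-n_i}(B^*_{j(i),a_0}(0))$ with $v_-(a_0) \in \partial U_i$. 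For $a$ close to $a_0$ and off the $q$ parameter rays, the entire configuration $(R^\infty_a(t_i), \Xi_i(a), U_i(a))$ is transported by a dynamical holomorphic motion coming from Fatou-coordinate stability at $z=0$ and from the classical motion of repelling preperiodic points along the forward orbit of $v_-$. The characterizing equivalence above then identifies the parameter sector between $\mathcal{R}^\lambda_\infty(t_i)$ and $\mathcal{R}^\lambda_\infty(t_{i+1})$ with the set of $a$ (near $a_0$) for which $v_-(a) \in \Xi_i(a)$. Perturbing $a$ inside this sector so that $v_-(a)$ crosses $\partial U_i(a)$ into $U_i(a)$ yields $f^{n_i}_a(c_-(a)) \in B^*_{j(i),a}(0)$, so $a$ is a type (A), (B), or (C) parameter in a parabolic component $\mathcal{U}_i$ with $a_0 \in \partial \mathcal{U}_i$.

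The main obstacle is making precise this transversality step: one needs both the dynamical holomorphic motion to extend through $a_0$ despite the parabolic bifurcation, and to confirm that $v_-(a)$ genuinely enters $U_i(a)$ rather than staying outside. The first point is handled by the hypothesis $a_0 \notin $ any double parabolic wake, which via Corollary \ref{cor.double.para.wake} keeps the portrait at $z=0$ locally constant and makes Fatou coordinates at $0$ depend holomorphically on $a$ in a full disk around $a_0$. The second point can be resolved by using the parametrization of the capture component $\mathcal{U}_i$ given by Proposition \ref{prop.paramet.cap} (or its adjacent/bitransitive analogues of Proposition \ref{prop.summary.parametri} when $n_i = 0$): the map $\Psi_{\mathcal{U}_i}$ is a conformal isomorphism onto $B^*_{j(i)}(0)$ and is proper in the sense of Lemma \ref{lem.proper.bit}, so its inverse sends any path in $B^*_{j(i)}(0)$ reaching $0$ through the appropriate petal to a path in $\mathcal{U}_i$ converging to a boundary point; the combinatorial data (portrait at $0$, angles $\alpha_i^\pm, \beta_i^\pm$) then identifies this boundary point as $a_0$.
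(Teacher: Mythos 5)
Your first half is fine: since $a_0\notin\mathcal{A}_{p/q}$ and lies in some $\mathcal{S}_m$, Corollary \ref{cor.double.para.wake} gives portrait $\Theta_m$ at $z=0$, the orbit of $v_-(a_0)$ before hitting $0$ avoids both critical points, so pulling back by the local biholomorphism $f^k_{a_0}$ yields exactly $q$ dynamical rays at $v_-(a_0)$, and Lemma \ref{lem.landing.Misiurewicz} converts these into $q$ parameter rays landing at $a_0$. This matches the paper's toolkit (stability of the repelling Fatou coordinate, i.e.\ holomorphic motion, is what drives Lemma \ref{lem.landing.Misiurewicz}). Two side remarks: your exclusion of irrational angles by ``continuity'' is not a proof (an irrational-angle parameter ray could a priori accumulate on a continuum through $a_0$ without $R^\infty_{a_0}(t)$ landing), and you do not rule out two parameter rays of the same angle; but exactness is not really what the lemma asserts --- the paper itself only deduces ``exactly $q$'' afterwards by a separate limiting argument on angles of nearby Misiurewicz parabolic parameters.

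The genuine gap is in the second half, and it sits precisely at the step you flag as ``the main obstacle''. To show that each sector between adjacent rays bounds a parabolic component attached at $a_0$, you must \emph{produce} parameters $a$ arbitrarily close to $a_0$ with $v_-(a)\in U_i(a)$; nothing in the holomorphic motion forces $v_-(a)$ to enter $U_i(a)$ rather than remain in the complementary limb structure inside $\Xi_i(a)$. Your proposed fix is circular: Proposition \ref{prop.paramet.cap} and Lemma \ref{lem.proper.bit} parametrize a capture component that is \emph{assumed} to exist with the prescribed combinatorics, whereas the existence of such a component with $a_0$ in its closure is exactly what is to be proved; and even granting its existence, properness of $\Psi_{\mathcal{U}_i}$ only says the pulled-back path tends to $\partial\mathcal{U}_i$ --- the phrase ``the combinatorial data $(\alpha_i^\pm,\beta_i^\pm)$ identifies this boundary point as $a_0$'' carries no argument, since those angles are attached to the double parabolic parameters and do not distinguish $a_0$ from other Misiurewicz parabolic parameters in the same $\mathcal{S}_m$. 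The paper closes this step differently: as in the proof of Proposition \ref{prop.summary.para.adj}, one takes a holomorphically moving point $w_h(a)$ in the pulled-back petal attached at the moving preimage $z(a)$ of $0$ (with $z(a_0)=v_-(a_0)$, $w_h(a)\to z(a)$ as $h\to 0$) and applies Rouch\'e's Theorem to $F(a,h)=v_-(a)-w_h(a)$, whose zero at $h=0$ is $a=a_0$; for small $h$ this yields parameters $a$ near $a_0$ with $v_-(a)=w_h(a)$, hence lying in a parabolic component, simultaneously giving the attached components in each sector and the landing of the parameter equipotentials/internal rays at $a_0$. Without this (or an equivalent transversality/argument-principle input), your proof of the second assertion does not go through.
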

\begin{proof}
One can prove the existence of parabolic component attached to $a_0$ by showing that there are parameter equpotentials rays landing at $a_0$. To prove the existence of landing rays, one uses holomorphic motion and Rouché's Theorem, similar to the proof of Proposition \ref{prop.summary.para.adj}.
\end{proof}

Let $a_0\in\partial\mathcal{B}_m$ be Misiurewicz parabolic. By Lemma \ref{lem.landing.Misiur.parabo}, there are $q$ external rays (whose angles are preimages of angles in $\Theta_m$) landing at $a_0$ each two adjacent rays separate a capture component attached at $a_0$ with $\mathcal{B}_m$. Applying a similar argument as in the proof of Lemma \ref{lem.external.angle.limit}, one can show that when a sequence of Misiurewicz parabolic parameters $a_n\in\partial\mathcal{B}_m$ converges to $a_0$, the angles of external rays landing at $a_n$ converges to the biggest or smallest angle among the angles of the $q$ external rays landing at $a_0$. Therefore there are \textbf{exactly} $q$ external rays landing at $a_0$ and \textbf{exactly} $q-1$ capture components attached at $a_0$. A similar result holds for the capture components attached at $\partial\mathcal{B}_m$, also for the next "generation" of capture components attached, and so on.

\begin{definition}
Let $0\leq m\leq \lfloor\frac{q}{2}\rfloor$. A capture component $\mathcal{U}_1\subset\mathcal{S}_m^+$ is of level 1, if it is attached to $\partial\mathcal{B}_m$ at a Misiurewicz parabolic parameter. A capture component $\mathcal{U}_n$ is of level $n\geq 2$ if it is not of level $n-1$, and is attached to some $\partial\mathcal{U}_{n-1}$ at a Misiurewicz parabolic parameter with $\mathcal{U}_{n-1}$ of level $n-1$.  
The point $\boldsymbol{\mathrm{r}}_n = \partial\mathcal{U}_n\cap\partial\mathcal{U}_{n-1}$ is called the root of $\mathcal{U}_n$. The open region bounded by the two external rays containing $\mathcal{U}_n$ is called the wake of $\mathcal{U}_n$, denoted by $\mathcal{W}_{\mathcal{U}_n}$. The corresponding limb is defined to be $\mathcal{L}_{{\mathcal{U}_n}} = \overline{\mathcal{W}_{\mathcal{U}_n}}\cap\mathcal{C}_{\lambda}$.
\end{definition}

\begin{definition}\label{def.central-component}
Set $\mathscr{K}^m_0 = \mathcal{B}_m\cap\mathcal{S}_m^+$, $\mathscr{K}^m_n$ to be the union of all level $n\geq 1$ capture components in $S^+_m$, $\mathscr{K}^m :=\overline{\bigcup_{n\geq0}\mathscr{K}^m_n}$ is called a \textbf{central component}. Let $\mathscr{K} := \bigcup_{m=0}^{\lfloor q/2\rfloor}(\mathscr{K}^m$. Similarly, if we work in $\mathcal{S}^-_m$, we can construct $\tilde{\mathscr{K}}^m_n,\tilde{\mathcal{K}}^m,\tilde{\mathcal{K}}$.
\end{definition}

\paragraph{Construction of $\mathfrak{G}$ in the Main Theorem.} Recall $P_{\lambda}(z) = \lambda z+z^2$, $K_\lambda$ its filled-in Julia set, $\Omega^0_0$ its maximal petal contained in $B^*_0(0)$, $\Omega^k_l$ defined as in (\ref{eq.sequence}). Let $D_0,\tilde{D}_0$ be the two connected components of $B^*_0(0)\setminus\overline{P^{-1}_\lambda(\Omega^{-1}_p)}$, such that $D_0$ is on the left-hand side of $\tilde{D}_0$. Let $\tilde{\Omega} = P_\lambda^{-1}(\Omega^{-1}_p)\setminus\Omega_0^0$. Recall also the parametrisations in \ref{subsec.summary.fa}: $\Psi^{adj}_0:\mathcal{B}_m\setminus\mathcal{I}_0\longrightarrow B_p^*(0)$, $\Psi^{bit}_m:\mathcal{D}_m\setminus\mathcal{I}_m\longrightarrow B^*_{\overline{m+p}}(0)\setminus\overline{\Omega^{-s}_{\overline{m+p}}}$ and $\Psi_{\mathcal{U}_k}:\mathcal{U}_k\longrightarrow B^*_l(0)$. By Proposition \ref{prop.injec.parame.adj} and \ref{prop.summary.para.adj}, the following mapping is an isomorphism from $\mathcal{B}_0\setminus\mathcal{I}_0$ to $B^*_0(0)\setminus\overline{\Omega^0_0}$:
\begin{equation}\label{eq.critical.adj}
   a\mapsto \left\{
\begin{aligned}
&(P_\lambda|_{D_0})^{-1}\circ\Psi_0^{adj}(a),\,\,\text{if } a \in \mathcal{D}_0,\\
&(P_\lambda|_{\tilde{D}_0})^{-1}\circ\Psi_0^{adj}(a),\,\,\text{if } a \in \tilde{\mathcal{D}}_0,\\
&[(P_\lambda|_{\overline{\Omega}\setminus\{-\frac{\lambda}{2}\}})^{-1}\circ\Psi_0^{adj}(a),\,\,\text{if } a \in \overline{W}\setminus\{\sqrt{3\lambda}\}
\end{aligned}
\right.
\end{equation}
Next, by Proposition \ref{prop.summary.parametri}, the following mapping is an isomorphism from $\mathcal{D}_m\setminus\mathcal{I}_m$ to $B^*_m(0)\setminus\overline{\Omega_m^{-s+1}}$, where $2\leq s\leq q$ is the unique integer such that such that $-sp+m+p=0\pmod q$:
\begin{equation}\label{eq.critical.bit}
    a\mapsto(P_\lambda|_{B^*_m(0)})^{-1}\circ\Psi^{bit}_m(a)
\end{equation}
Finally, for a capture component $\mathcal{U}_n$ of $\mathscr{K}_n^m$, we define its address $ [\mathcal{U}_n]$ as the address of the Fatou component containing $c_-(a)$ for $a\in\mathcal{U}_n$ (recall \ref{eq.parametri.julia}). By Proposition \ref{prop.paramet.cap} the mapping defined below
\begin{equation}\label{eq.critical.cap}
    a\mapsto(P_\lambda|_{U_n})^{-1}\circ\Psi_{\mathcal{U}_n}(a)
\end{equation}
is an isomorphism from $\mathcal{U}_n$ to $U_n$, where $U_n$ is the Fatou component of $P_\lambda$ with address $[\mathcal{U}_n]$. Combining (\ref{eq.critical.adj}) (\ref{eq.critical.bit}) (\ref{eq.critical.cap}), we have constructed an bijection ($s=1$ if $m=0$)
\[\mathfrak{G}^m:\bigcup_{n\geq0}\mathscr{K}^m_n\longrightarrow (\mathring{K_\lambda}\cap S_m)\setminus\overline{\Omega^{-s+1}_m}.\]
For $1\leq m\leq\lfloor\frac{q}{2}\rfloor$, if we work in $\mathcal{S}_m^-$, then similarly we can define $\tilde{\mathscr{K}}_n^m$, $\tilde{\mathscr{K}}^m$ and construct an bijection
\[\tilde{\mathfrak{G}}^m:\bigcup_{n\geq0}\tilde{\mathscr{K}}^m_n\longrightarrow (\mathring{K_\lambda}\cap S_{q-m})\setminus\overline{\Omega^{l+1}_{q-m}}.\]
where $0\leq l\leq q-1$ is the unique integer such that $lp+p-m=0\pmod q$. So to prove the Main Theorem,
it remains to extend $\mathfrak{G}^m$ (resp. $\tilde{\mathfrak{G}}^m$) to $\mathscr{K}^m\cap\mathcal{S}^+_m$ (resp. $\tilde{\mathscr{K}}^m\cap\mathcal{S}^-_m$). We will only do the extension for $\mathfrak{G}^m$, since it is the same for $\tilde{\mathfrak{G}}^m$.

\paragraph{The set  $\mathscr{K}^m\setminus\bigcup_{n\geq0}\mathscr{K}^m_n$.} 
By Proposition \ref{prop.renor.capture}, there are no renormalisable parameter on $\partial\mathcal{U}_n$ for $n \geq 1$. By Corollary \ref{cor.critical-value-on-boundary}, if
$a \in\partial\mathcal{U}_n$, $c_-(a)$ eventually hits the boundary of immediate basins for $a \in\partial\mathcal{U}_n$. For $a \in\partial\mathcal{U}_n$, define its address by
 \begin{equation}\label{eq.parame.finite.H}
     [[a]] := [c_-(a_n)] = [(m,\omega_0);(\epsilon_1,\omega_1);...;(\epsilon_n,\omega_n)].
 \end{equation}
where $\omega_0,...\omega_{n-1}$ are dyadic, $\omega_n\in(0,1)$; $0\leq \epsilon_k\leq q-1$ is chosen so that $f^k_{a_n}(c_-(a_n))\in S_{a,\epsilon_k}$ ($S_{a,\epsilon_k}$ is the sector $S_{\epsilon_k}$ in \ref{subsec.wake-end-points}). If $a\in\partial\mathcal{B}_m\setminus\mathcal{A}_{p/q}$, define $[[a]]$ to be $[r_a]$ if $a$ is renormalisable, where $r_a\in \partial B^*_{a,m}(0)\setminus\{0\}$ is the parabolic periodic point (Proposition \ref{prop.mandel}); otherwise $[[a]] :=  [c_-(a_n)]$.    \\

\begin{lemma}
For $a_0\in\partial\mathcal{U}_n$ or $\partial\mathcal{B}_m$ non renormalisable, there is a unique external ray landing at it.
\end{lemma}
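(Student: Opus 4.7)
The plan is to pass from the uniqueness of the external ray landing at the critical value $v_-(a_0)$ in the dynamical plane to the corresponding uniqueness at $a_0$ in the parameter plane, by means of the homeomorphism $H_n$ between the boundaries of nested parameter puzzle pieces and of nested dynamical puzzle pieces (Corollary \ref{cor.homeo.dym.parapuzzle}).

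First I would establish that exactly one dynamical external ray lands at $v_-(a_0)$. Since $a_0$ is non-renormalisable, the polynomial-like alternative in Theorem \ref{thm.dym} is ruled out, so $\sum_i \mathrm{mod}(A^{a_0}_{n_i}) = \infty$ and Yoccoz's theorem yields $\bigcap_i \overline{P^{a_0,v}_{n_i}} = \{v_-(a_0)\}$. By Corollary \ref{cor.critical-value-on-boundary}, $v_-(a_0) \in \partial U$ for some Fatou component $U$ which is a preimage of an immediate basin; the limb $L_U(\omega)$ attached at $v_-(a_0)$ on the side opposite to $U$ is contained in the above intersection, so it is trivial, and Proposition \ref{prop.non-trivial-limb} then gives exactly one external ray $R^\infty_{a_0}(\eta)$ landing at $v_-(a_0)$.

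Next I would transfer this uniqueness to the parameter plane. Lemma \ref{lem.shishikura} gives that the parameter annuli $\mathcal{A}_{n_i}$ have moduli comparable to those of $A^{a_0}_{n_i}$, so Grötzsch's inequality yields $\bigcap_n \overline{\mathcal{P}_n(a_0)} = \{a_0\}$. Let $\eta^-_n, \eta^+_n$ denote the two extremal angles among the external rays appearing in $\partial\mathcal{P}_n(a_0)$. Since the dynamical holomorphic motion $L^a_{n+1}$ is induced by the Böttcher coordinate and therefore preserves external ray angles, Corollary \ref{cor.homeo.dym.parapuzzle} identifies $\mathcal{R}_\infty(\eta^\pm_n)$ with the extremal external rays $R^\infty_{a_0}(\eta^\pm_n)$ on $\partial P^{a_0,v}_n$. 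Because $P^{a_0,v}_n$ shrinks to $v_-(a_0)$ and a single external ray lands there, both $\eta^-_n$ and $\eta^+_n$ must converge to the common angle $\eta$. As the rays $\mathcal{R}_\infty(\eta^\pm_n)$ land inside $\overline{\mathcal{P}_n(a_0)}$, which itself shrinks to $\{a_0\}$, they converge to a single ray $\mathcal{R}_\infty(\eta)$ landing at $a_0$; moreover, any external ray $\mathcal{R}_\infty(t)$ landing at $a_0$ must have its angle trapped in $[\eta^-_n, \eta^+_n]$ for every large $n$, forcing $t = \eta$.

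The main obstacle is precisely the last trapping statement: one must verify that every external ray accumulating at $a_0$ is confined, near its accumulation point, to the "external sector" at $a_0$ determined by the two extremal rays of $\partial\mathcal{P}_n(a_0)$. Since the ray lies in $\mathbb{C}\setminus\mathcal{C}_\lambda$ while the other pieces of $\partial\mathcal{P}_n(a_0)$ are either internal rays and equipotentials sitting inside $\mathcal{C}_\lambda$ or external equipotentials far from $a_0$, the ray cannot cross them; by Lemma \ref{lem.para.Misiur} the internal rays on $\partial\mathcal{P}_n(a_0)$ land at Misiurewicz parameters distinct from $a_0$, so no spurious access to $a_0$ can open up elsewhere on the puzzle boundary, and the only access to $a_0$ from $\mathbb{C}\setminus\mathcal{C}_\lambda$ through $\mathcal{P}_n(a_0)$ runs between $\mathcal{R}_\infty(\eta^-_n)$ and $\mathcal{R}_\infty(\eta^+_n)$, as required.
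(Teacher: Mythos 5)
Your argument is correct and follows essentially the same route as the paper, whose proof is exactly the combination of Proposition \ref{prop.non-trivial-limb} (triviality of the limb at $v_-(a_0)$ in the non-renormalisable case, hence a single dynamical ray) with the para-puzzle/dynamical-puzzle correspondence of Corollary \ref{cor.homeo.dym.parapuzzle} and the shrinking of the parameter pieces $\mathcal{P}_n(a_0)$ via Lemma \ref{lem.shishikura}. You have simply written out in detail the transfer and trapping steps that the paper leaves implicit.
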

\begin{proof}
This comes from Proposition \ref{prop.non-trivial-limb} and the homeomorphism between para-puzzles and dynamical puzzles, cf. Corollary \ref{cor.homeo.dym.parapuzzle}.
\end{proof}

\begin{lemma}\label{lem.decompo.para}
We have decompositions
\[\mathcal{S}_m^+\cap\mathcal{C}_{\lambda} = (\overline{\mathcal{B}_m}\cap\mathcal{S}_m^+)\cup\left(\bigcup_{\mathcal{U}_1\subset\mathscr{K}^m_1}\mathcal{L}_{{\mathcal{U}_1}}\cup\bigcup_{\substack{a\in\partial\mathcal{B}_m\cap\mathcal{S}_m^+\\\text{parabolic}}}\mathcal{L}(a)\right)\]
\[\mathcal{L}_{{\mathcal{U}_n}}= \overline{\mathcal{U}_n}\cup\bigcup_{\substack{\mathcal{U}_{n+1}\subset\\\mathscr{K}^m_{n+1}\cap\mathcal{W}_{{\mathcal{U}_n}}}}\mathcal{L}_{{\mathcal{U}_{n+1}}}.
\]
As a direct consequence, $\overline{\mathscr{K}^m_n}\setminus\mathcal{A}_{p/q} = \bigcup_{\mathcal{U}_n\subset\mathscr{K}^m_n}\overline{\mathcal{U}_n}$.
\end{lemma}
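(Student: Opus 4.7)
The plan is to mirror the decomposition argument of Theorem \ref{thm.pascale.descriptionC0} from \cite{Roesch1}, replacing the main hyperbolic component by $\mathcal{B}_m$ and using the ray-landing results of this section. Both decompositions have a trivial inclusion ($\supseteq$): each wake $\mathcal{W}_{\mathcal{U}_1}$ and $\mathcal{W}(a)$ lies in $\mathcal{S}_m^+$, and its limb is its trace on $\mathcal{C}_\lambda$. The substantive direction is to show that any $a\in(\mathcal{S}_m^+\cap\mathcal{C}_\lambda)\setminus\overline{\mathcal{B}_m}$ lies in one of the enumerated limbs.

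For the first decomposition, I would classify the boundary points of $\mathcal{B}_m$ in $\mathcal{S}_m^+$ into three types: Misiurewicz parabolic (Lemma \ref{lem.landing.Misiur.parabo} gives $q$ landing rays bounding $q-1$ level-one capture wakes), renormalisable (Proposition \ref{prop.mandel} gives two landing rays bounding a Mandelbrot-type wake), and all others, which admit only one landing ray. The disjoint wakes of the first two types, together with $\overline{\mathcal{B}_m}\cap\mathcal{S}_m^+$, should exhaust $\mathcal{S}_m^+$. To confirm this, given $a$ in the exterior component of $\mathcal{S}_m^+\setminus\overline{\mathcal{B}_m}$ (well-defined since $\partial\mathcal{B}_m$ is a Jordan curve by Theorem \ref{thm.A}), I would construct the parameter puzzle nest $\mathcal{P}_n(a)$ (or $\mathcal{Q}_n(a)$ if $a$ is Misiurewicz parabolic) from Section \ref{sub.sec.paragraph}. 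The bounding rays of $\mathcal{P}_n(a)$ that touch $\partial\mathcal{B}_m$ accumulate onto a common root $a_0\in\partial\mathcal{B}_m$ as $n\to\infty$; the shrinking dichotomy of Theorem \ref{thm.0} together with Proposition \ref{prop.mandel} forces $a_0$ to be of type (i) or (ii), so $a$ lies in the corresponding wake.

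The recursive formula is proved identically, with $\partial\mathcal{U}_n$ in place of $\partial\mathcal{B}_m$: it is a Jordan curve (Theorem \ref{thm.A} or Corollary \ref{cor.loc.connected.cap-in-double-wake}), and Proposition \ref{prop.renor.capture} tells us that renormalisable boundary points of $\mathcal{U}_n$ are tips at which $\textbf{M}_{a_0}$ sits on the far side, attached to a level-$(n+1)$ capture component rather than contributing a separate Mandelbrot limb. Thus the only non-trivial attachments come from Misiurewicz parabolic points of $\partial\mathcal{U}_n$, producing exactly the level-$(n+1)$ capture components inside $\mathcal{W}_{\mathcal{U}_n}$. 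The consequence $\overline{\mathscr{K}_n^m}\setminus\mathcal{A}_{p/q}=\bigcup\overline{\mathcal{U}_n}$ then follows by induction on $n$: accumulation points of the collection $\{\overline{\mathcal{U}_n}\}$ inside $\partial\mathcal{U}_{n-1}$ can only occur at Misiurewicz parabolic attachment points or at double parabolic points (by the angular-width analysis of Lemma \ref{lem.external.angle.limit}), while Proposition \ref{prop.nodouble.boundary.capture} guarantees no double parabolic parameter sits on $\partial\mathcal{U}_n$.

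The main obstacle is establishing the trichotomy of boundary points of $\mathcal{B}_m$, specifically the uniqueness of the landing ray at the "irrational" type-(iii) points. This parallels \cite[Thm. 3]{Roesch1} and requires combining Theorem \ref{thm.0} with the Shishikura-type distortion control of Lemma \ref{lem.shishikura}: a second landing ray at such a tip $a_0$ would carve out a non-trivial parameter wake containing neither Misiurewicz parabolic nor renormalisable parameter, contradicting the shrinking property of the nest $\mathcal{P}_n(a_0)$. A second delicate point is ensuring that the accumulation of level-$n$ capture components in the consequence does not produce extraneous points outside $\bigcup\overline{\mathcal{U}_n}$, which reduces to bounding the sum of angular widths of the wakes $\mathcal{W}_{\mathcal{U}_n}$ by a controlled quantity coming from the combinatorial dynamics of multiplication by $3$ on the ideal circle.
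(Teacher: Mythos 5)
Your overall strategy is the paper's: classify the points of $\partial\mathcal{B}_m\cap\mathcal{S}_m^+$ (Misiurewicz parabolic with $q$ landing rays, renormalisable cusps with two, all others with exactly one landing ray), and show that a parameter $b\in\mathcal{S}_m^+\cap\mathcal{C}_\lambda$ lying outside $\overline{\mathcal{B}_m}$ and outside every listed limb gets trapped in nested ray-bounded regions whose roots converge on the Jordan curve $\partial\mathcal{B}_m$, so that the unique-ray alternative forces $b$ to coincide with a boundary point, a contradiction. Note that the uniqueness of the landing ray at non-renormalisable boundary points, which you single out as the main obstacle and propose to re-derive from Theorem \ref{thm.0} and Lemma \ref{lem.shishikura}, is precisely the lemma stated immediately before the present one (a consequence of Proposition \ref{prop.non-trivial-limb} and Corollary \ref{cor.homeo.dym.parapuzzle}); it can simply be cited.

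The step that does not work as written is your localization of $b$ by the nest $\mathcal{P}_n(b)$ (or $\mathcal{Q}_n$). These nests are only constructed for parameters satisfying Assumption ($\Diamond$) or for Misiurewicz parabolic ones, whereas the exhaustion must also handle $b$ lying in components of $\mathring{\mathcal{H}^\lambda}$ outside $\mathscr{K}^m$ (for instance capture components sitting inside renormalisable limbs), to which neither construction applies; moreover $\partial\mathcal{P}_n(b)$ contains external rays landing at Misiurewicz parameters on boundaries of capture components (the $\mathcal{R}^{\mathcal{U}_k}_{n-k}$ part of the graph $\mathcal{X}_n$), so the claim that its bounding rays ``touch $\partial\mathcal{B}_m$ and accumulate onto a common root $a_0\in\partial\mathcal{B}_m$'' is not justified as stated. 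The paper uses a simpler device immune to both problems: for each depth $n$ it takes the Misiurewicz parabolic parameters of depth $n$ on $\partial\mathcal{B}_m\cap\mathcal{S}_m^+$ together with their $q$ landing rays (Lemma \ref{lem.landing.Misiur.parabo}); these rays and $\partial\overline{\mathcal{B}_m}$ cut $\mathcal{S}_m^+\setminus\overline{\mathcal{B}_m}$ into sectors whose boundary rays land on $\partial\mathcal{B}_m$ by construction. Following the nested sectors containing $b$, the landing points $a_n,a'_n$ of the bounding rays converge to some $a\in\partial\mathcal{B}_m$ because $\partial\mathcal{B}_m$ is a Jordan curve; if $a$ were not renormalisable, the unique-ray lemma would force the angles $t_n,t'_n$ to converge to a common angle, giving $b=a$ and contradicting $b\notin\overline{\mathcal{B}_m}$, so $b$ lies in a limb $\mathcal{L}(a)$ with $a$ renormalisable. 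The recursion is handled the same way with $\partial\mathcal{U}_n$ in place of $\partial\mathcal{B}_m$, and the paper reads the last identity $\overline{\mathscr{K}^m_n}\setminus\mathcal{A}_{p/q}=\bigcup_{\mathcal{U}_n\subset\mathscr{K}^m_n}\overline{\mathcal{U}_n}$ off directly from the two decompositions, with no angular-width estimate and no appeal to Proposition \ref{prop.nodouble.boundary.capture} needed at that point.
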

\begin{proof}
This is a direct consequence of the above lemma. Let us do the proof for $\mathcal{S}_m^+\cap\mathcal{C}_{\lambda}$. For any $n\geq0$, consider all the Misiurewicz parabolic parameters of depth $n$ (that is, $f^n_a(v_-(a)) = 0$) on $\partial\mathcal{B}_m\cap{S}_m^+$ and the $q$ unique landing external rays. These rays together with $\partial\overline{\mathcal{B}_m}$ separate $\mathcal{S}_m^+\setminus\overline{\mathcal{B}_m}$ into several open sectors of depth $n$. Take $b\in{\mathcal{C}}_{p/q}\cap{(\overline{\mathcal{U}_0}})^c\cap\mathcal{S}_m^+$ not in any $\mathcal{L}_{{\mathcal{U}_1}}$. Let $\mathcal{S}_n(b)$ be the sector of depth $n$ containing $b$. Let $\mathcal{R}_{\infty}(t_n),\mathcal{R}_{\infty}(t'_n)$ be the two external rays bounding $\mathcal{S}_n(b)$ and $a_n,a'_n$ their landing point repectively. Clearly $a_n,a'_n$ converges to some $a\in\partial\mathcal{B}_m$ since $\partial\mathcal{B}_m$ is a Jordan curve. Then $a$ must be renormalisable. If not, then by the previous lemma, $t_n,t'_n$ converge to the same angle, which implies that $b = a$, a contradiction since we take $b\not\in\overline{\mathcal{B}_m}$.
\end{proof}

\begin{proposition}\label{prop.landing.mathscrH}
Let $a_0\in\mathscr{K}^m\setminus\bigcup_{n\geq0}\overline{\mathscr{K}^m_n}$ not be double parabolic. Then $a_0$ is contained in a infinite sequence of limbs, i.e. there exist $\mathcal{U}_n\subset\mathscr{K}_n$ such that $a_0\in\bigcap_n\mathcal{L}({\boldsymbol{\mathrm{r}}_{\mathcal{U}_n}})$. Moreover $a_0$ can not be Misiurewicz parabolic. If $a_0$ is non renormalisable, there is only one external ray landing at it; if it is renormalisable, then it is the cusp of $\textbf{M}_{a_0}$ and exactly two external rays land at it.
\end{proposition}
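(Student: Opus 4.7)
The proof plan proceeds in three stages.

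First, I would establish the nested limb structure by iterating Lemma \ref{lem.decompo.para}. Since $a_0 \notin \overline{\mathscr{K}^m_0}$, the decomposition of $\mathcal{C}_\lambda \cap \mathcal{S}_m^+$ forces $a_0$ to lie either in some $\mathcal{L}_{\mathcal{U}_1}$ with $\mathcal{U}_1 \subset \mathscr{K}^m_1$, or in a Mandelbrot limb $\mathcal{L}(a)$ at a renormalisable $a\in \partial\mathcal{B}_m$. The latter is impossible: by Proposition \ref{prop.mandel}, the two external rays landing at the cusp $a$ separate $\mathcal{L}(a)\setminus\{a\}$ from the rest of $\mathscr{K}^m$, so $a_0 \in \mathcal{L}(a) \cap \mathscr{K}^m$ would force $a_0=a\in\overline{\mathscr{K}^m_0}$. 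Applying the same dichotomy recursively inside each $\mathcal{L}_{\mathcal{U}_n}$ (using the recursive decomposition, augmented by Mandelbrot limbs at renormalisable points of $\partial\mathcal{U}_n$, which are ruled out by Proposition \ref{prop.renor.capture} for the same reason) yields $a_0\in\bigcap_n \mathcal{L}(\boldsymbol{\mathrm{r}}_{\mathcal{U}_n})$.

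Second, I would rule out Misiurewicz parabolic. If $a_0$ were Misiurewicz parabolic, it would lie on the boundary of some parabolic component $\mathcal{V}$ of $\mathcal{C}_\lambda$; Proposition \ref{prop.nodouble.boundary.capture} excludes $\mathcal{V}$ from any double parabolic wake. If $\mathcal{V}\subset\mathscr{K}^m$ is central at some level $n$, then $a_0\in\overline{\mathscr{K}^m_n}$, contradicting the hypothesis. Otherwise $\mathcal{V}$ is a renormalised parabolic component inside a Mandelbrot limb, and by the separation argument above such limbs meet $\mathscr{K}^m$ only at the (renormalisable, not Misiurewicz-parabolic) cusp of the Mandelbrot copy, also a contradiction.

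Third, I would count landing parameter rays by transferring the question to the dynamical plane via the puzzle homeomorphism $H_n:\partial\mathcal{P}_{n+1}(a_0)\longrightarrow\partial P^{a_0,v}_{n+1}$ of Corollary \ref{cor.homeo.dym.parapuzzle}: parameter rays landing at $a_0$ correspond bijectively to dynamical rays landing at $v_-(a_0)$. In the non-renormalisable case, the orbit of $c_-(a_0)$ never hits an immediate basin (else $a_0\in\overline{\mathscr{K}^m_n}$), so $v_-(a_0)\notin\bigcup_n\overline{B_n}$; on the other hand, approximating $a_0$ by $a_k\in\mathcal{U}_{n_k}$ with $n_k\to\infty$ and using the shrinking of Fatou components with depth gives $v_-(a_0)\in\partial B\subset B$. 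Lemma \ref{lem.unique.external.end-point} then delivers a unique dynamical ray at $v_-(a_0)$, hence a unique parameter ray at $a_0$. In the renormalisable case, Theorem \ref{thm.0} gives $\bigcap_n\mathcal{P}_n(a_0)=\textbf{M}_{a_0}$; the interior of $\textbf{M}_{a_0}$ consists of non-central parabolic components, and the two rays separating $\textbf{M}_{a_0}\setminus\{\text{cusp}\}$ from $\mathscr{K}^m$ force $\textbf{M}_{a_0}\cap\mathscr{K}^m=\{\text{cusp}\}$, so $a_0=\chi^{-1}(1/4)$; the existence of exactly two landing rays follows by a satellite-pinching argument analogous to Lemma \ref{lem.landing.mandelbrot.Per0} and Proposition \ref{prop.mandel} applied to the renormalisation of $f_{a_0}^k$.

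The main obstacle will be extending the cusp ray-count of Proposition \ref{prop.mandel}, originally proved for $a_0\in\partial\mathcal{B}_m$, to our setting where $a_0$ sits at arbitrary depth in the central part and lies on no $\partial\mathcal{U}_n$. This demands a careful propagation of the satellite-perturbation argument through the nested renormalisable combinatorics, using the puzzle homeomorphism $H_n$ to keep control of which angles stay inside $\partial\mathcal{P}_n(a_0)$ in the limit.
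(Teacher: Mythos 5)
Your stages 1 and 3 follow essentially the paper's route: the nested limbs come from iterating Lemma \ref{lem.decompo.para}, and the ray count is obtained exactly as in the paper via the puzzle correspondence (Lemma \ref{lem.homeo.para.dym}) together with Lemma \ref{lem.unique.external.end-point} in the non-renormalisable case, and via Proposition \ref{prop.mandel} in the renormalisable case. Note that the "main obstacle" you anticipate does not exist: Proposition \ref{prop.mandel} is stated for any renormalisable $a_0$ satisfying Assumption ($\Diamond$), not only for $a_0\in\partial\mathcal{B}_m$, so no extension of the cusp ray-count is needed.

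The genuine gap is in your stage 2, the exclusion of Misiurewicz parabolic parameters. You argue that the parabolic component $\mathcal{V}$ attached at $a_0$ (whose existence follows from Lemma \ref{lem.landing.Misiur.parabo}) must be either a central capture component of some finite level $n$ or a renormalised component inside a Mandelbrot limb. This dichotomy is not exhaustive at this point of the theory: since $a_0$ lies in the open wake $\mathcal{W}_{\mathcal{U}_{n+1}}$ for every $n$, any component attached at $a_0$ is forced into $\bigcap_n\mathcal{W}_{\mathcal{U}_n}$, hence is neither a finite-level component nor contained in a Mandelbrot limb rooted on $\partial\mathcal{B}_m$ or $\partial\mathcal{U}_n$. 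Ruling out such an "infinite-depth" capture component attached at $a_0$ is precisely the content of the claim you are trying to prove (given Lemma \ref{lem.landing.Misiur.parabo}, it is equivalent to $a_0$ not being Misiurewicz parabolic), and the classification you invoke is only available through Theorem \ref{thm.decompo.para}, which is proved afterwards and uses this proposition; so your argument is circular. The paper avoids this by a direct dynamical argument: assuming $f^N_{a_0}(v_-(a_0))=0$, it takes Misiurewicz parabolic parameters $a_n\in\overline{\mathcal{U}_{N_n}}$ converging to $a_0$ whose preperiods $M_n$ tend to infinity, and uses a holomorphic motion of a fixed high preimage $\tilde{B}_{N''}$ of the immediate basins, together with the motion of the two external rays bounding the wake containing $f^N_{a_n}(v_-(a_n))$, to show that $f^N_{a_n}(v_-(a_n))$ stays away from $0$, contradicting continuity as $a_n\to a_0$. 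Your proposal needs this (or an equivalent) argument; as written, step 2 does not go through.
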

\begin{proof}
The existence of $\mathcal{U}_n$ is just a direct consequence of Lemma \ref{lem.decompo.para}. Next we prove that $a_0$ is not Misiurewicz parabolic. Suppose the contrary that $f_{a_0}^N(v_-(a_0)) = 0$. Let $a_n\in\overline{\mathcal{U}_{N_n}}\subset\mathscr{K}^m_{N_n}$ be a sequence converging to $a_0$ and suppose $f^{M_n}_{a_n}(v_-(a_n)) = 0$. Clearly $M_n\geq n$. Fix $N'>>N$, set $N'':=M_{N'}-N$. Let $\tilde{B}_{N''}$ be the connected component containing 0 of $f^{-N''}_{a_0}(\overline{\bigcup_i B_{a_0,i}^*(0)})$. Notice that there exists a neighborhood $\mathcal{V}$ of $a_0$ on which there is a dynamical holomorphic motion $h_a$ of $\tilde{B}_{N''}$. By construction, $f_{a_n}^N(v_-({a_n}))$ is bounded by a wake attached at a preimage of 0 (not equal to 0) on $\partial (h_a(\tilde{B}_{N''}))$. Moreover the angles of the two external rays $R^\infty_{a_n}(t_1),R^\infty_{a_n}(t_1)$ determining this wake do not depend on $n$. Shrink $\mathcal{V}$ if necessary so that there is a holomorphic motion of $R^\infty_{a_0}(t_1)\cup R^\infty_{a_0}(t_2)$. This implies that $f^N_{a_n}(v_-(a_n))$ does not converge to $0$ as $n\to\infty$, contradicting $a_n\to a_0$.

So $a_0$ is either non renormalisable or renormalisable. Suppose the first case. The existence of external ray landing comes from the shrinking property of parameter puzzles around $a_0$ (Theorem \ref{thm.dym} and Lemma \ref{lem.shishikura}). The uniqueness comes from Lemma \ref{lem.homeo.para.dym} and Lemma \ref{lem.unique.external.end-point}. Next suppose $a_0$ is renormalisable. By Proposition \ref{prop.mandel}, $\textbf{M}_{a_0}$ is separated from $\mathscr{K}^m$ by two external rays landing at the cusp. While $a_0\in\mathscr{K}^m$, it has to be the cusp.
\end{proof}

We conclude that $\partial\mathscr{K}^m$ is combinatorically rigid:
\begin{corollary}\label{cor.rigidity.mathscrH}
Suppose $a_0\in\mathscr{K}^m\setminus\bigcup_{n\geq0}\overline{\mathscr{K}^m_n}$. Let $\{\mathcal{U}_n\}_n$ be as in Proposition \ref{prop.landing.mathscrH}. Then $\bigcap_n\mathcal{L}({{\mathcal{U}_n}})\cap\mathscr{K}^m = \{a_0\}$.
\end{corollary}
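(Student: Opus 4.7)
The plan is to combine a combinatorial uniqueness argument, showing that any $b$ in the intersection must share $a_0$'s full address in the nested limb structure, with a shrinking/separation argument based on the parameter external rays bounding successive wakes.

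First I would rule out $b \in \bigcup_n \overline{\mathscr{K}^m_n}$. Suppose by contradiction $b \in \overline{\mathcal{U}_N}$ for some level-$N$ capture component appearing in the nest. By the disjoint decomposition of Lemma \ref{lem.decompo.para}, the intersection $\mathcal{L}_{\mathcal{U}_{N+1}} \cap \overline{\mathcal{U}_N}$ reduces to the single root $\boldsymbol{\mathrm{r}}_{\mathcal{U}_{N+1}}$, forcing $b = \boldsymbol{\mathrm{r}}_{\mathcal{U}_{N+1}}$; since $\mathcal{U}_{N+2}$ is attached to $\partial\mathcal{U}_{N+1}$ at the \emph{distinct} point $\boldsymbol{\mathrm{r}}_{\mathcal{U}_{N+2}}$, one has $\boldsymbol{\mathrm{r}}_{\mathcal{U}_{N+1}} \notin \overline{\mathcal{W}_{\mathcal{U}_{N+2}}}$, contradicting $b \in \mathcal{L}_{\mathcal{U}_{N+2}}$. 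Having excluded this, Proposition \ref{prop.landing.mathscrH} applied to $b$ yields its own nested limb sequence $\{\mathcal{L}_{\mathcal{U}'_n}\}$; the pairwise disjointness of the level-$(n+1)$ sub-limbs that decompose $\mathcal{L}_{\mathcal{U}_n}$ (they are separated by external rays landing at distinct Misiurewicz parabolic roots) then forces $\mathcal{U}'_n = \mathcal{U}_n$ for every $n$ by induction.

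Next I would look at the two parameter external rays $\mathcal{R}_\infty(t_n^\pm)$ bounding $\mathcal{W}_{\mathcal{U}_n}$ and landing at $\boldsymbol{\mathrm{r}}_{\mathcal{U}_n}$. The sequences $\{t_n^\pm\}$ are monotone and converge to limits $t^\pm$; using stability of landing rays at repelling preperiodic points together with Proposition \ref{prop.landing.mathscrH}, these $\mathcal{R}_\infty(t^\pm)$ are precisely the parameter external rays landing at $a_0$ — a single ray in the non-renormalisable case (so $t^+ = t^-$), two rays in the renormalisable case. Consequently $\bigcap_n \overline{\mathcal{W}_{\mathcal{U}_n}}$ is either the single closed ray together with $\{a_0\}$, or the closed wake at $a_0$ bounded by the two landing rays.

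The conclusion then follows: in the non-renormalisable case, $\bigcap_n \mathcal{L}_{\mathcal{U}_n} = \{a_0\}$ since $\mathcal{C}_\lambda$ meets no interior point of an escaping ray; in the renormalisable case, the closed wake meets $\mathcal{C}_\lambda$ in $\mathbf{M}_{a_0}$, but the two rays landing at $a_0$ separate $\mathbf{M}_{a_0} \setminus \{a_0\}$ from $\mathscr{K}^m$ (Proposition \ref{prop.landing.mathscrH}), so $\bigcap_n \mathcal{L}_{\mathcal{U}_n} \cap \mathscr{K}^m = \{a_0\}$. The main obstacle is the identification $t_n^\pm \to t^\pm$ as parameter rays landing at $a_0$: justifying that the monotone limits of these angles are exactly the angles of rays landing at $a_0$ requires transferring the dynamical-combinatorial convergence of the preimages $\boldsymbol{\mathrm{r}}_{\mathcal{U}_n}$ to the parameter plane via the para-puzzle / holomorphic motion correspondence developed in Section \ref{sec.paragraph}, and in the renormalisable case invoking the cusp description of $\mathbf{M}_{a_0}$ supplied by Proposition \ref{prop.mandel}.
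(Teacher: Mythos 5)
Your combinatorial first step (ruling out $b\in\bigcup_n\overline{\mathscr{K}^m_n}$ and forcing any $b$ in the intersection to share the same nested sequence $\{\mathcal{U}_n\}$ as $a_0$) is fine, but it only shows that $b$ has the same address as $a_0$; it does not show $b=a_0$, which is exactly the rigidity being claimed. The weight of the proof therefore falls entirely on your separation step, and that step has a genuine gap. From the monotone convergence of the wake angles $t_n^\pm$ and the fact (Proposition \ref{prop.landing.mathscrH}) that one ray (resp.\ two rays at the cusp) lands at $a_0$, you conclude that $\bigcap_n\overline{\mathcal{W}_{\mathcal{U}_n}}$ is the closed limit ray together with $\{a_0\}$ (resp.\ the closed wake at $a_0$). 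This does not follow. Since local connectivity of $\mathcal{C}_\lambda$ at $a_0$ is not yet available at this stage, the prime-end impression of the limit angle $t^\pm$ may a priori be a nondegenerate continuum of $\mathcal{C}_\lambda$ attached at $a_0$ and contained in every wake $\mathcal{W}_{\mathcal{U}_n}$; rays of a single angle accumulating on such a continuum are perfectly possible for a non--locally-connected full compactum, so the limbs $\mathcal{L}_{\mathcal{U}_n}$ need not be squeezed down to $\{a_0\}$ by the bounding rays alone. In other words, even after the angle identification you flag as the "main obstacle", your finishing argument ("$\mathcal{C}_\lambda$ meets no interior point of an escaping ray") would still not close the proof.

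The missing ingredient is the metric shrinking input, and this is precisely what the paper's proof consists of: for $a_0$ non-renormalisable, $\bigcap_n\mathcal{P}_n(a_0)=\{a_0\}$ by the divergence of the para-annuli moduli (Theorem \ref{thm.dym} transferred by Lemma \ref{lem.shishikura} and Gr\"otzsch), and for $a_0$ renormalisable one uses the shrinking of $\mathcal{P}_n(a_0)\setminus\overline{\mathcal{W}(a_0)}$, where $\mathcal{W}(a_0)$ is the wake at the cusp given by Proposition \ref{prop.mandel}. Since the nested limbs $\mathcal{L}_{\mathcal{U}_n}$ are eventually trapped in these shrinking parameter puzzle pieces (via the para-puzzle/dynamical-puzzle correspondence of Corollary \ref{cor.homeo.dym.parapuzzle}), their intersection with $\mathscr{K}^m$ reduces to $\{a_0\}$. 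To repair your argument you would have to import this puzzle-shrinking statement explicitly; once you do, the detour through the limit angles $t^\pm$ becomes unnecessary.
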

\begin{proof}
This comes from the shrinking property of puzzle pieces $\mathcal{P}_n(a)$ when $a$ is non renormalisable; for $a$ renormalisable, it comes from the shrinking property of $\mathcal{P}_n(a)\setminus\overline{\mathcal{W}(a)}$.
\end{proof}

\begin{theorem}\label{thm.decompo.para}
We have the decomposition
\[\mathcal{S}_m^+\cap\mathcal{C}_{\lambda} = \mathscr{K}^m\cap\mathcal{S}_m^+\cup\bigcup_{\substack{a\in\partial\mathscr{K}^m\\\text{renormalisable}}}\mathcal{L}(a).\]
In particular, $\mathcal{K}_\lambda=\mathscr{K}\cup-\mathscr{K}\cup\tilde{\mathscr{K}}\cup-\tilde{\mathscr{K}}$.
\end{theorem}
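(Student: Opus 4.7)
The plan is to iterate Lemma~\ref{lem.decompo.para} and then identify the residual set (parameters lying in infinitely deep nested chains of capture limbs) via the rigidity results already proven. Applying Lemma~\ref{lem.decompo.para} once gives
\[
\mathcal{S}_m^+\cap\mathcal{C}_\lambda = (\overline{\mathcal{B}_m}\cap\mathcal{S}_m^+) \cup \bigcup_{\substack{a\in\partial\mathcal{B}_m\cap\mathcal{S}_m^+\\ \text{renormalisable}}} \mathcal{L}(a) \cup \bigcup_{\mathcal{U}_1\subset\mathscr{K}^m_1} \mathcal{L}_{\mathcal{U}_1};
\]
substituting the second identity of Lemma~\ref{lem.decompo.para} into each $\mathcal{L}_{\mathcal{U}_n}$ peels off the closure $\overline{\mathcal{U}_n}$ and exposes deeper limbs $\mathcal{L}_{\mathcal{U}_{n+1}}$, so after $N$ iterations I obtain
\[
\mathcal{S}_m^+\cap\mathcal{C}_\lambda = (\overline{\mathcal{B}_m}\cap\mathcal{S}_m^+) \cup \bigcup_{\substack{a\in\partial\mathcal{B}_m \\ \text{ren.}}} \mathcal{L}(a) \cup \bigcup_{n=1}^{N}\bigcup_{\mathcal{U}_n\subset\mathscr{K}^m_n} \overline{\mathcal{U}_n} \cup \bigcup_{\mathcal{U}_{N+1}} \mathcal{L}_{\mathcal{U}_{N+1}}.
\]
Letting $N\to\infty$, the only parameters not yet absorbed are those belonging to some infinite decreasing chain $\mathcal{L}_{\mathcal{U}_1}\supset\mathcal{L}_{\mathcal{U}_2}\supset\cdots$ with $\mathcal{U}_n\subset\mathscr{K}^m_n$ of strictly increasing level.

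For such a residual parameter $a_0$, Proposition~\ref{prop.landing.mathscrH} applies: $a_0\in\mathscr{K}^m\setminus\bigcup_n\overline{\mathscr{K}^m_n}$ (so $a_0\in\mathscr{K}^m$ by the defining closure), $a_0$ is neither Misiurewicz parabolic nor double parabolic, and it is either non-renormalisable or is the cusp of a Mandelbrot copy $\textbf{M}_{a_0}$ with exactly two external rays landing at it. Corollary~\ref{cor.rigidity.mathscrH} further gives $\bigcap_n\mathcal{L}_{\mathcal{U}_n}\cap\mathscr{K}^m=\{a_0\}$. Hence in the non-renormalisable case the whole residual nest reduces to the single point $a_0\in\mathscr{K}^m\cap\mathcal{S}_m^+$; in the renormalisable case $a_0\in\partial\mathscr{K}^m$ is the cusp of $\textbf{M}_{a_0}$ and the entire limb $\mathcal{L}(a_0)$ emanating from $a_0$ is attached to $\mathscr{K}^m$ only at $a_0$. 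Combining this with the remark that every renormalisable point on any $\partial\mathcal{U}_n$ is, by Proposition~\ref{prop.renor.capture}, a tip of a Mandelbrot copy whose cusp lies on $\partial\mathcal{B}_m$ and is therefore already contained in a first-layer limb $\mathcal{L}(a_1)$ with $a_1\in\partial\mathcal{B}_m$ renormalisable, every parameter of $\mathcal{S}_m^+\cap\mathcal{C}_\lambda$ is accounted for either by $\mathscr{K}^m\cap\mathcal{S}_m^+$ or by a renormalisable limb $\mathcal{L}(a)$ with $a\in\partial\mathscr{K}^m$, which is the claimed identity.

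For the global identity $\mathcal{K}_\lambda=\mathscr{K}\cup -\mathscr{K}\cup\tilde{\mathscr{K}}\cup -\tilde{\mathscr{K}}$, I would use the structural picture established in Section~\ref{sec.prop1_and_prop2}: by Proposition~\ref{Prop.grand2} the adjacent and bitransitive components $\mathcal{B}_0,\ldots,\mathcal{B}_q$ are linked successively by the double parabolic parameters $\boldsymbol{\mathrm{a}}_0,\ldots,\boldsymbol{\mathrm{a}}_{q-1}$ lying on the curve $\mathcal{I}$, while the symmetry $a\mapsto -a$ sends $\mathcal{B}_m$ to $\mathcal{B}_{q-m}$ and sends each fundamental region $\mathcal{S}_m^\pm$ to $\mathcal{S}_{q-m}^\mp$. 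The four pieces $\mathscr{K},-\mathscr{K},\tilde{\mathscr{K}},-\tilde{\mathscr{K}}$ thus exhaust the closures of all adjacent, bitransitive, and capture components lying outside the double parabolic wakes; their union is connected through the $\boldsymbol{\mathrm{a}}_m$'s, closed in $\mathbb{C}$, and contains $a=0$, so it is contained in the connected component $\mathcal{K}_\lambda$ of $\overline{\mathcal{H}^\lambda}$ through $0$. The remaining and most delicate point is to verify that $\mathcal{K}_\lambda$ does not extend into any double parabolic wake $\mathcal{W}^\pm(\boldsymbol{\mathrm{a}}_m)$: this is the place where I expect the main obstacle. The key is Proposition~\ref{prop.nodouble.boundary.capture}, which asserts that no capture component inside such a wake has $\boldsymbol{\mathrm{a}}_m$ on its closure; together with Proposition~\ref{Prop.grand1} (the four external rays landing at $\boldsymbol{\mathrm{a}}_m$ separating the wakes from the central chain) this shows that the parabolic components inside wakes form connected pieces of $\overline{\mathcal{H}^\lambda}$ disjoint from the central chain, so they do not belong to $\mathcal{K}_\lambda$, completing the equality.
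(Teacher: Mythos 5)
Your overall strategy---iterating Lemma \ref{lem.decompo.para} and analysing the residual nests with Proposition \ref{prop.landing.mathscrH}---is close in spirit to the paper's proof, which instead re-runs the sector argument of Lemma \ref{lem.decompo.para} with rays landing at $\partial\mathscr{K}^m$ in place of $\partial\mathcal{B}_m$. However, there is a genuine gap in your treatment of the residual set. The parenthetical claim that a residual parameter lies in $\mathscr{K}^m$ ``by the defining closure'' is false: the intersection $\bigcap_n\mathcal{L}_{\mathcal{U}_n}$ of an infinite chain of limbs may contain an entire Mandelbrot copy attached at a renormalisable end point of $\mathscr{K}^m$, and its parameters (for instance the disjoint-type parameters at centers of hyperbolic components of the copy) are not limits of points of $\bigcup_n\mathscr{K}^m_n$, hence not in $\mathscr{K}^m=\overline{\bigcup_n\mathscr{K}^m_n}$. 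Consequently Proposition \ref{prop.landing.mathscrH} cannot be applied to an arbitrary residual parameter: it takes as input a point of $\mathscr{K}^m\setminus\bigcup_n\overline{\mathscr{K}^m_n}$ and produces a chain of limbs, whereas you need the converse direction. One must first show that each infinite chain determines a point $a_0$ of $\partial\mathscr{K}^m$ inside its intersection (for example an accumulation point of the roots $\boldsymbol{\mathrm{r}}_n$, which lie in every closed limb of the chain), and only then apply the proposition to that point.

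Second, Corollary \ref{cor.rigidity.mathscrH} as stated only controls $\bigcap_n\mathcal{L}_{\mathcal{U}_n}\cap\mathscr{K}^m$; it says nothing about the part of the nest lying outside $\mathscr{K}^m$, which is exactly what the theorem must account for. To conclude you need $\bigcap_n\mathcal{L}_{\mathcal{U}_n}=\{a_0\}$ when $a_0$ is non-renormalisable, and $\bigcap_n\mathcal{L}_{\mathcal{U}_n}\subset\{a_0\}\cup\mathcal{L}(a_0)$ when $a_0$ is renormalisable, and this requires invoking the shrinking of the parameter puzzles $\mathcal{P}_n(a_0)$, resp.\ of $\mathcal{P}_n(a_0)\setminus\overline{\mathcal{W}(a_0)}$ (the content of Theorem \ref{thm.0} and of the proof of Corollary \ref{cor.rigidity.mathscrH}), together with the fact that the deep limbs of the chain eventually enter these puzzle pieces; as written, your non-renormalisable case asserts this rather than proving it. This is the step the paper performs by nesting a stray parameter in sectors bounded by rays landing at Misiurewicz parabolic points of $\partial\mathscr{K}^m$ and using the one-ray/two-ray dichotomy of Proposition \ref{prop.landing.mathscrH} at the limit of their landing points. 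Your discussion of the global identity for $\mathcal{K}_\lambda$ is at the level of detail of the paper itself; note only that Proposition \ref{prop.nodouble.boundary.capture} rules out a single capture component in a wake touching $\boldsymbol{\mathrm{a}}_m$, not by itself a connected chain of them accumulating there, so a word on that accumulation is still needed to exclude the wakes from $\mathcal{K}_\lambda$.
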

\begin{proof}
Here $\mathscr{K}$ is parallel to $\overline{\mathcal{B}_m}$ in Lemma \ref{lem.decompo.para}, and instead of considering sectors defined by rays landing at $\partial\mathcal{B}_m$, one should consider sectors defined by rays landing at $\partial\mathscr{K}^m$. One then concludes by using the landing property given in Proposition \ref{prop.landing.mathscrH}.
\end{proof}

\paragraph{Extension of $\mathfrak{G}^m$.}
By Proposition \ref{prop.landing.mathscrH}, every $a\in\partial\mathscr{K}^m\setminus\bigcup_n\partial\mathscr{K}^m_n$ can be represented by a infinite sequence
\[[[a]] := [(m,\omega_0);(\epsilon_1,\omega_1);(\epsilon_2,\omega_2);...;(\epsilon_n,\omega_n),...]\]
with $\omega_k$ dyadic, $0\leq \epsilon_k\leq q-1$ are chosen so that $f^k_{a_n}(c_-(a_n))\in S_{a,\epsilon_k}$. Compare with Remark \ref{rem.coordi.quadratic.model} the way we encode $J_{\lambda}\setminus\{0\}$, where $J_{\lambda}$ is the Julia set of the quadratic model $P_{\lambda} = \lambda z+z^2$.

\begin{proof}[\textbf{End of the proof of the Main Theorem}]
First of all, $\mathfrak{G}^m$ extends continuously to $\bigcup_n\overline{\mathscr{K}_n^m}$, since by Lemma \ref{lem.decompo.para},  $\overline{\mathscr{K}_n^m}\cap\mathcal{S}_m^+ = \bigcup_{\mathcal{U}_n\subset\mathscr{K}^m_n}\overline{\mathcal{U}_n}$, and by Theorem \ref{thm.A}, every component of $\mathscr{K}_n^m$ is a Jordan curve. Let $\{a_k\}_k\subset\bigcup_n\overline{\mathscr{K}_n^m}$ be a sequence converging to some $b\in(\mathscr{K}^m\cap\mathcal{S}_m^+)\setminus\bigcup_n\overline{\mathscr{K}_n^m}$. Let $\mathcal{U}_k$ be the parabolic component whose closure containing $a_k$, and $\{\mathcal{U}_n(b)\}_n$ the sequence of parabolic components associated to $b$ given by Proposition \ref{prop.landing.mathscrH}. Then by Corollary \ref{cor.rigidity.mathscrH}, for any $n$, the first $n$ coordinates fo $[[\mathcal{U}_k]]$ will be the same for all $k$ large enough, i.e. equal to $[[\mathcal{U}_n(b)]]$. Define $\mathfrak{G}^m(b) := \lim\limits_{n\to\infty}\Xi^{-1}([[\mathcal{U}_n(b)]])$. Clearly $\mathfrak{G}^m|_{\partial\mathscr{K}^m\cap\mathcal{S}_m^+}$ coincides with the following mapping
\[\mathfrak{H}:\partial\mathscr{K}^m\cap\mathcal{S}_m^+\longrightarrow J_{\lambda}\cap S_{m},\,\,a\mapsto \Xi^{-1}([[a]]),\]
where $\Xi^{-1}$ is defined in (\ref{eq.encoding.julia}). Now we prove that $\mathfrak{H}$ is a homeomorphism. Clearly $\mathfrak{H}$ is continuous and surjective on $\partial\mathfrak{H}^m\cap\mathcal{S}^+_m$. Moreover it is injective on $\partial\mathscr{K}^m\cap\overline{\mathscr{K}^m_n}$ for all $n$. The injectivity on $\partial\mathscr{K}^m\setminus\bigcup_n\partial\mathscr{K}^m_n$ comes from the combinatorical rigidity (Corollary \ref{cor.rigidity.mathscrH}) i.e. the mapping $a\mapsto[[a]]$ is injective. It remains to verify continuity of . Take any sequence of $\{z\}_k\subset J_\lambda$ converging to some $w\in J_\lambda$. If $w$ is on the boundary of some Fatou Component, then the continuity of $\mathfrak{H}^{-1}$ at $w$ is guaranteed by Lemma \ref{lem.decompo.para}; if $w$ an end point, then the continuity is given by Corollary \ref{cor.rigidity.mathscrH}.
\end{proof}

\begin{proof}[\textbf{Proof of Corollary \ref{cor.blokh}}]
Clearly $\mathring{\mathcal{K}_\lambda}\subset\mathcal{CU}_\lambda$. Without loss of generality we may suppose $a\in\partial\mathscr{K}^m$. Then $a$ is either Misiurewicz parabolic, either non renormalisable, either double parabolic or renormalisable with a parabolic periodic point of multiplier 1 (Proposition \ref{prop.mandel} and \ref{prop.landing.mathscrH}). Moreover there is only one external ray landing at every repelling periodic point of $f_a$ (Proposition \ref{prop.non-trivial-limb} Lemma \ref{lem.unique.external.end-point}). Thus there are no repelling cut point in $J_a$, $a\in\mathcal{CU}_\lambda$.

Conversely, take $a\in\mathcal{CU}_\lambda$. Without loss of generality let $a\in\mathcal{S}^+_m$. By Theorem \ref{thm.decompo.para}, $a\in\mathscr{K}^m\subset\mathcal{K}_\lambda$.
\end{proof}

\begin{figure}[H] 
\centering 
\includegraphics[width=0.8\textwidth]{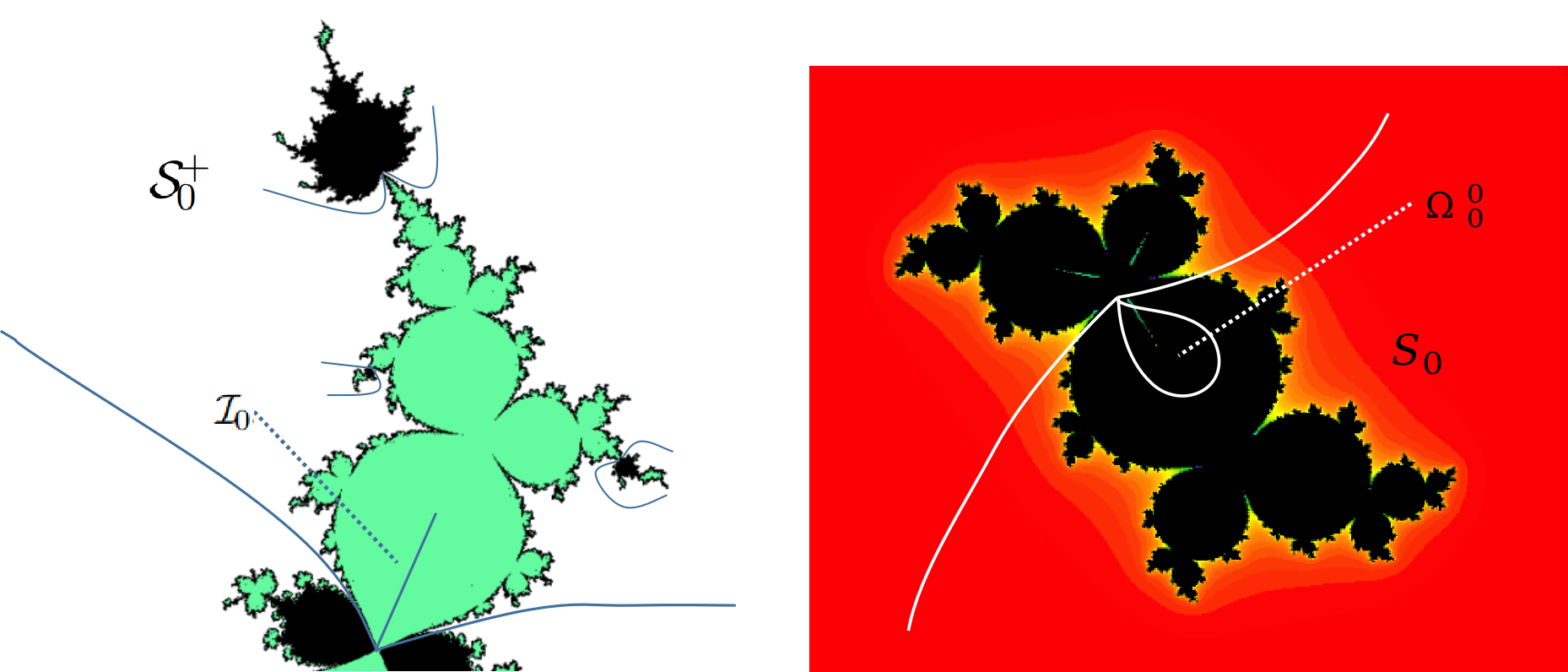} 
\caption{$\mathscr{K}^0\cap\mathcal{S}^+_0$ and $K_\lambda\cap S_0$ with $\lambda = e^{2\pi i\frac{1}{3}}$.} 
\label{fig.preimage} 
\end{figure}

\appendix
\section*{Appendices}
\addcontentsline{toc}{section}{Appendices}
\renewcommand{\thesubsection}{\Alph{subsection}}

\subsection{Rotation number}

\begin{definition}
Let $f:\mathbb{R}/\mathbb{Z}\longrightarrow\mathbb{R}/\mathbb{Z}$ be the $d$-fold covering map, i.e. $f(\theta) = d\theta$. A periodic cycle $\Theta = \{\theta_0,...,\theta_{q-1}\}$ has rotation number $\frac{p}{q}$ if $,\forall i, f(\theta_i) = \theta_{i+p \bmod q}$.
\end{definition}

\begin{theorem}[\cite{Goldberg_1992_4_25_6_679_0}]\label{thm.goldberg}
The $d-$fold mapping $\theta\mapsto d\theta$ has ${d+q-2}\choose{q}$ periodic cycles of rotation number $p/q$.
\end{theorem}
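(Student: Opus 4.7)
The plan is to set up an explicit bijection between cycles of rotation number $p/q$ and certain integer tuples, and then count the tuples. Label the $q$ points of a given cycle in cyclic order on $\mathbb{R}/\mathbb{Z}$ as $\theta_{(0)}, \ldots, \theta_{(q-1)}$ and lift them to $\tilde\theta_{(i)} \in \mathbb{R}$ with $\tilde\theta_{(i+q)} = \tilde\theta_{(i)} + 1$, so that $\tilde\theta_{(0)}, \ldots, \tilde\theta_{(q-1)} \in [0,1)$ sit consecutively in a fundamental domain. Since $f(\theta_{(i)}) = \theta_{((i+p)\bmod q)}$, there are integers $K_i$ satisfying $d\tilde\theta_{(i)} = \tilde\theta_{(i+p)} + K_i$; define gaps $e_i := \tilde\theta_{(i+1)} - \tilde\theta_{(i)} > 0$ (so $\sum e_i = 1$) and wrap numbers $w_i := K_{i+1} - K_i$.

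Subtracting consecutive instances of the defining relation yields the system $d\,e_i = e_{i+p} + w_i$. Summing over $i$ gives $\sum_i w_i = K_q - K_0 = d-1$, and $w_i = d\,e_i - e_{i+p} > -1$ being an integer forces $w_i \geq 0$. Conversely, for any non-negative integers $w_0, \ldots, w_{q-1}$ summing to $d-1$, the condition $\gcd(p,q) = 1$ makes $i \mapsto i+p$ a single $q$-cycle on $\mathbb{Z}/q\mathbb{Z}$, so iterating the recursion yields the unique solution
\[ e_i = \frac{1}{d^q - 1} \sum_{k=0}^{q-1} d^{q-1-k}\, w_{(i+kp)\bmod q}, \]
which is strictly positive because not all $w_j$ can vanish.

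The gap sequence determines the cycle only up to a translation, which is recorded by $K_0$: the defining relation at $i=0$ gives $(d-1)\tilde\theta_{(0)} = K_0 + \sum_{j=0}^{p-1} e_j$, and $\tilde\theta_{(0)} \in [0,1)$ together with $\sum_{j<p} e_j \in (0,1)$ forces $K_0 \in \{0, 1, \ldots, d-2\}$, that is, exactly $d-1$ admissible choices. One checks that every admissible tuple $(w_0, \ldots, w_{q-1}, K_0)$ produces a valid labeled cycle: the $\tilde\theta_{(i)}$ lie in $[\tilde\theta_{(0)}, \tilde\theta_{(0)}+1)$ and, upon reduction modulo $1$, form a bona fide orbit of $f$ with rotation number exactly $p/q$ (the exactness again using $\gcd(p,q)=1$).

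This sets up a bijection between (cycle, choice of starting element) pairs and admissible tuples $(w_0, \ldots, w_{q-1}, K_0)$. The number of tuples is $(d-1)\binom{d+q-2}{q-1}$ (a composition of $d-1$ into $q$ non-negative parts, times $d-1$ choices of $K_0$), and dividing by the $q$ starting elements per cycle yields
\[ \#\{\text{cycles of rotation number } p/q\} \;=\; \frac{d-1}{q}\binom{d+q-2}{q-1} \;=\; \binom{d+q-2}{q}, \]
as claimed. The main technical point to verify carefully is the bijectivity, in particular that distinct admissible tuples produce distinct (cycle, starting element) pairs and that no tuple gives a degenerate configuration (repeated points, wrong cyclic order, or smaller period); all three issues hinge on the coprimality $\gcd(p,q) = 1$.
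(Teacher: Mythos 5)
Your argument is correct, but note that there is nothing in the paper to compare it with: Theorem \ref{thm.goldberg} is quoted from Goldberg's paper and no proof is given, so yours is a genuinely independent, self-contained derivation. The bookkeeping checks out: the wrap numbers $w_i=de_i-e_{i+p}$ are integers summing to $d-1$ and non-negative since $w_i>-1$; because $\gcd(p,q)=1$ makes $i\mapsto i+p$ a $q$-cycle, the linear system $de_i=e_{i+p}+w_i$ has exactly the positive solution you display (the homogeneous system forces $e\equiv0$), and $\sum_i e_i=1$ follows automatically; the relation $(d-1)\tilde\theta_{(0)}=K_0+\sum_{j<p}e_j$ pins down the translation, and your reconstruction shows the map from pairs (cycle, base point) to tuples $(w_0,\dots,w_{q-1},K_0)$ is a bijection onto compositions of $d-1$ into $q$ parts times $\{0,\dots,d-2\}$, whence $(d-1)\binom{d+q-2}{q-1}/q=\binom{d+q-2}{q}$. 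This is essentially dual to Goldberg's original count, which parametrizes $p/q$-cycles directly by a deployment vector, i.e.\ a composition of $q$ into $d-1$ non-negative parts recording how the cycle is distributed among $d-1$ arcs, giving $\binom{d+q-2}{q}$ with no quotient by relabelling; your version trades that for an elementary gap/wrap-number computation plus the division by the $q$ base points, which is harmless since the tuple determines the labelled cycle uniquely. One cosmetic caveat: the step ``$\sum_{j<p}e_j\in(0,1)$'' presumes $1\le p\le q-1$, so the degenerate case $q=1$ (fixed points, where the formula gives $d-1$) should be noted separately, though the count of admissible $K_0$ is unaffected.
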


\begin{lemma}\label{lem.rotation.number}
If $\Theta$ has rotation number $p/q$, then $\Tilde{\Theta} = \{\Tilde{\theta}_0,...,\Tilde{\theta}_{q-1}\}$ has rotation number $1-\frac{p}{q}$, where $\Tilde{\theta}_i = 1-\theta_{q-i-1}$.
\end{lemma}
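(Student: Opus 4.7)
The plan is to verify Lemma~\ref{lem.rotation.number} by a direct calculation using the involution $\theta \mapsto 1-\theta$ on $\mathbb{R}/\mathbb{Z}$, which reverses the cyclic order on the circle and conjugates the $d$-fold map to itself in a sign-reversing way.

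First I would check that $\tilde{\Theta}$ is well-defined as an ordered cycle. Without loss of generality assume $\Theta = \{\theta_0,\dots,\theta_{q-1}\}$ is indexed so that $0 \leq \theta_0 < \theta_1 < \cdots < \theta_{q-1} < 1$, and the condition that $\Theta$ has rotation number $p/q$ means $f(\theta_i) = \theta_{i+p \bmod q}$. Setting $\tilde{\theta}_i = 1 - \theta_{q-i-1}$, the reversal of order under $x \mapsto 1-x$ gives
\[
\tilde{\theta}_0 < \tilde{\theta}_1 < \cdots < \tilde{\theta}_{q-1},
\]
so $\tilde{\Theta}$ is indexed in increasing cyclic order, consistent with the definition of rotation number.

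Next I would compute the action of $f$ on $\tilde{\theta}_i$ modulo $1$:
\[
f(\tilde{\theta}_i) = d(1 - \theta_{q-i-1}) \equiv -d\theta_{q-i-1} \equiv -\theta_{(q-i-1)+p} \equiv 1 - \theta_{(q-i-1)+p \bmod q} \pmod{1}.
\]
On the other hand, writing the rotation number $1-p/q$ as $(q-p)/q$, one has
\[
\tilde{\theta}_{i + (q-p) \bmod q} = 1 - \theta_{q - (i + q - p) - 1 \bmod q} = 1 - \theta_{(p-i-1) \bmod q}.
\]
Since $(q-i-1)+p \equiv (p-i-1) \pmod q$, these two expressions coincide, showing
\[
f(\tilde{\theta}_i) = \tilde{\theta}_{i + (q-p) \bmod q},
\]
which is exactly the statement that $\tilde{\Theta}$ has rotation number $1 - p/q$.

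There is no substantive obstacle here; the only care needed is in keeping the two indexings (the combinatorial order on $\tilde{\Theta}$ and the dynamical order induced by $f$) straight, and in checking the index arithmetic modulo $q$. The whole argument is a one-line congruence once the indexing of $\tilde{\Theta}$ by order on the circle is justified.
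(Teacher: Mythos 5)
Your proposal is correct and follows essentially the same route as the paper: a direct computation showing $f(\tilde{\theta}_i) \equiv -\,\theta_{(q-i-1)+p} \equiv \tilde{\theta}_{i+(q-p) \bmod q} \pmod 1$. The extra remark that $x\mapsto 1-x$ reverses the cyclic order (so the indexing of $\tilde{\Theta}$ is consistent) is a harmless refinement of the same one-line argument.
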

\begin{proof}
By definition, it suffices to prove that $f(\Tilde{\theta}_i) = \Tilde{\theta}_{i+q-p \bmod q}$:
\[f(\Tilde{\theta}_i) = d(1-\theta_{q-i-1}) = -d\theta_{q-i-1} = -\theta_{q-i-1+p \bmod q} =  -\theta_{-i-1+p \bmod q} = \Tilde{\theta}_{i+q-p \bmod q}.\]
\end{proof}

\subsection{Pinching deformation}\label{subsec.appendix_pinching}
This part is a resume of the theory of pinching deformation developed by Cui-Tan in \cite{CuiTan}.\\
Let $f:\hat{\mathbb{C}}\longrightarrow\hat{\mathbb{C}}$ be a rational map. Denote by $\mathcal{F}_f,\mathcal{J}_f$ the Fatou, Julia set respectively. Denote by $Crit_f$ the set of critical points of $f$. Define the post-critial set of $f$ by 
\[Post_f := \overline{\bigcup_{n\textgreater 0}f^n(Crit_f)}.\] 
$f$ is called \textbf{geometrically finite} if the accumulation points of $Post_f$ is finite. Define $\Tilde{\mathscr{R}}_f\subset\mathcal{F}_f$ as follows: $z\in\Tilde{\mathscr{R}}_f$ if and only if 
\begin{enumerate}
    \item $z$ does not belong to supper-attracting basins, 
    \item $\#(\{f^n(z);\,n\geq 0\})=+\infty$ and $\{f^n(z);\,n\geq 0\}\cap Post_f=\emptyset$.
\end{enumerate}
\begin{definition}
The \textbf{quotient space} $\mathscr{R}_f$ of $f$ is defined as the set $\Tilde{\mathscr{R}}_f$ quotient the grand orbit equivalence relation $\sim$, i.e. $z_1\sim z_2$ if and only if $\exists m,n\geq 0,\, f^m(z_1) = f^n(z_2)$. Denote by $\pi_f:\Tilde{\mathscr{R}}_f\longrightarrow\mathscr{R}_f$ the natural projection.
\end{definition}

\begin{remark}
$\mathscr{R}_f$ has only finitely many components, each of them is either an at least 1-punctured torus, corresponding to an attracting basin, of an at least 1-punctured infinite cylinder, corresponding to a parabolic basin. Refer to \cite{McMullen1998QuasiconformalHA} for details.
\end{remark}

In the sequel we always assume that $\Tilde{\mathscr{R}}_f\not=\emptyset$.

\begin{lemma}
Let $\gamma$ be a Jordan curve. Then either each component of $\pi_f^{-1}(\gamma)$ is a Jordan curve or each component of $\pi_f^{-1}(\gamma)$ is an eventually periodic arc. Moreover, every eventually periodic arc $\beta$ lands at both ends.
\end{lemma}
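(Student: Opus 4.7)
The plan is to exploit the covering-space interpretation of $\pi_f$. By the remark preceding the lemma, every connected component of $\mathscr{R}_f$ is either a finitely punctured torus (for an attracting basin) or a finitely punctured infinite cylinder (for a parabolic basin), and the restriction of $\pi_f$ to the preimage of such a component is a regular covering whose deck group is the infinite cyclic group $\langle f \rangle$ acting on the grand orbit of the chosen basin. I will first reduce to a single connected component $\mathscr{R}_f^i$ of $\mathscr{R}_f$ containing $\gamma$.

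Standard covering-space theory then yields the dichotomy via monodromy. Fix $z_0 \in \gamma$ and a lift $\tilde z_0 \in \pi_f^{-1}(z_0)$, and lift $\gamma$ once to a path $\tilde\gamma$ whose terminal endpoint lies in the same grand orbit as $\tilde z_0$; this endpoint therefore equals $f^n(\tilde z_0)$ (after possibly exchanging $\tilde z_0$ with the terminal point or reversing orientation) for a unique $n \in \mathbb{Z}$ equal to the image of $[\gamma]$ under the monodromy homomorphism $\pi_1(\mathscr{R}_f^i) \to \mathbb{Z}$. If $n = 0$, then $\tilde\gamma$ closes up as a Jordan curve, and the same conclusion holds for every other component of $\pi_f^{-1}(\gamma)$, obtained by varying the lift. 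If $n \neq 0$, iterating the lifting procedure produces an embedded bi-infinite arc $\beta$ passing through $\{f^{kn}(\tilde z_0)\}_{k \in \mathbb{Z}}$; embeddedness holds because consecutive lift segments of $\gamma$ overlap only at their common endpoints and $\gamma$ is a Jordan curve.

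Next I will argue that these arcs are eventually periodic under $f$. The map $f$ permutes the components of $\pi_f^{-1}(\gamma)$, and a direct check using the lifting description shows $f^{|n|}(\beta) = \beta$; more generally $f$ acts on the set of arc components through the finite quotient of the deck group, so every arc component is, up to finitely many $f$-iterates, periodic.

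Finally, landing at both ends is obtained as follows. Replace $\beta$ by a forward iterate to reduce to a truly periodic arc fixed by some $f^k$, and, passing to $f^{2k}$ if necessary, assume each end of $\beta$ is individually $f^{2k}$-invariant. Along such an end, the dynamics of $f^{2k}$ is conjugate, via Koenigs coordinates in the attracting case and Fatou coordinates in the parabolic case, to multiplication by a constant of modulus strictly less than $1$, respectively to a unit translation toward the puncture in a half-plane; in both models forward orbits converge to the corresponding attracting or parabolic periodic point. Hence the end of $\beta$ accumulates at a single point in $\overline{\tilde{\mathscr{R}}_f}$. The main technical obstacle is the parabolic case, where one must rule out spiraling of $\beta$ near its end: this is handled by the snail lemma applied to the $f^{2k}$-invariant end of $\beta$, which forces the limit to be approached tangentially along a single attracting petal direction.
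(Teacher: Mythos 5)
The paper itself gives no proof of this lemma (it is quoted from Cui--Tan's pinching theory), so your argument has to stand on its own, and it has gaps. The first is in the covering-space setup: $\pi_f$ is the quotient by the grand orbit relation, which is \emph{not} the orbit relation of a group action. The map $f$ is non-invertible on $\tilde{\mathscr{R}}_f$, the fiber over a point of $\gamma$ is a whole grand orbit (containing many points ``at the same level'', e.g.\ distinct preimages under $f^{m}$ of a common forward image), not $\{f^n(\tilde z_0)\}_{n\in\mathbb{Z}}$, and the covering is not regular with deck group $\langle f\rangle$ (loops around punctures coming from the critical grand orbit already have lifts of different degrees). So your monodromy dichotomy does not go through as written: when the dynamical winding number of $\gamma$ vanishes you only know the lift ends at a grand-orbit-equivalent point at the same level, not that it closes up, and nothing excludes an infinite lift wandering at bounded level. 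The usual repair is a level-function argument: $\log|\phi|$ (Koenigs) resp.\ $\mathfrak{Re}\,\Phi$ (Fatou) is proper on each Fatou component of the basin, so preimages of compact curves in the linearizing/Fatou plane are compact, giving closed lifts when the winding is $0$, while nonzero winding makes the level unbounded along any lift, excluding closed components. Likewise ``$f^{|n|}(\beta)=\beta$'' is false for components lying in strictly preperiodic Fatou components; eventual periodicity needs the facts that each arc lies in one Fatou component of the basin, every such component eventually maps onto a periodic one, and (again by properness of the level) there are only finitely many arc components inside each periodic component, which are therefore permuted.

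The main gap is the landing statement: your argument only treats the attracting end. Koenigs/Fatou coordinates model the forward dynamics, and indeed show that the forward end of a periodic arc converges to the attracting or parabolic periodic point. But the repelling end is the limit of the \emph{backward} orbit along $\beta$, which leaves every compact subset of the Fatou component and accumulates on the Julia set, where no Koenigs or Fatou coordinate is available and where you do not yet know that the accumulation set is a single point. This is precisely the analogue of the landing of periodic external or internal rays, and it is the nontrivial content of the lemma; in Cui--Tan it is established using geometric finiteness of $f$ (expansion away from parabolic cycles, local connectivity of the relevant boundaries), not by local coordinates at the end. The snail lemma cannot substitute for this: it identifies the nature of a landing point once landing is known (and excludes irrationally indifferent landing points), but it does not prove that the repelling end lands.
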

\begin{definition}
Let $\beta$ be a $k$-periodic arc and $b\in\beta$. The limit point of the forward (resp. backward) orbit of $b$ under $f^k$ is called the \textbf{attracting end} (resp. \textbf{repelling end}). Notice that the two ends might coincide if both of them are parabolic.
\end{definition}

\begin{definition}
A \textbf{multi-annulus} $\mathscr{A}\subset\mathscr{R}_f$ is a finite disjoint union of annuli whose boundaries are pairewise disjoint simple closed curves such that each component of $\pi_f^{-1}(e(\mathscr{A}))$ is an eventually periodic arc, where $e(\mathscr{A})$ is the union of the equators of the annuli in $\mathscr{A}$. A multi-annulus $\mathscr{A}$ is called \textbf{non-separating} if for any choice of finitely many components of $\pi_f^{-1}(e(\mathscr{A}))$, the closure of their union (denote by $K$) does not separate $\mathcal{J}_f$, i.e. there is only one component of $\hat{\mathbb{C}}\setminus K$ intersecting $\mathcal{J}_f$.
\end{definition}

\begin{definition}
A component $B$ of $\pi_f^{-1}(\mathscr{A})$ is called a \textbf{band}. Notice that $B$ is a topological open disk bounded by two eventually periodic arcs, and hence itself is also eventually periodic and lands at both ends. The \textbf{core arc} $\beta$ of $B$ is the lift of the equator of $\pi_f(B)$ to $B$, whose two (or maybe one) end points are exactly those of $B$. A band $B$ (resp. its core arc $\beta$) is of \textbf{level} $n$ ($n\geq 0$) if $n$ is the smallest integer such that $f^{n}(B)$ (resp. $f^{n}(\beta)$) is periodic. A \textbf{skeleton of level} $n$ is a connected component of $\overline{\bigcup\limits_{\text{level }n}\beta}$. The \textbf{fill-in} of a skeleton $S$, denote by $\hat{S}$, is the union of $S$ with all its complement components disjoint from $\mathcal{J}_f$. $\hat{S}$ is called a \textbf{fill-in skeleton}.
\end{definition}

For $r\textgreater1$, denote by $\mathbb{A}(r) := \{z;\,\frac{1}{r}\textless |z|\textless{r}\},\,\,\text{for }r \textgreater 1$. For $t\geq0$, let $w_{t,r}:\mathbb{A}(r)\longrightarrow\mathbb{A}(r^{1+t})$ be the pinching model defined as in \cite{CuiTan}, 5.1. Let $\mathscr{A}=\cup A_i$ be a non-separating multi-annulus. Let $\chi_i:A_i\longrightarrow \mathbb{A}(r_i)$ be a conformal representation and $\mu_{i,t}$ be the Beltrami differential of $w_{t,r_i}\circ{\chi_i}$. Set $\mu_t = \mu_{i,t}$ on $A_i$ and $\mu_t = 0$ elsewhere. Define $\Tilde{\mu_t}$ to be the pullback of $\mu_t$ under $\pi_f$, i.e. $\Tilde{\mu_t} = \pi_f^*\mu_t$ on $\Tilde{\mathscr{R}}_f$ and $\Tilde{\mu_t} = 0$ elsewhere. Integrate $\mu_t$ with a choice of normalization (not depending on $t$) by fixing 3 distinct points in $Post_f$, we get a quasiconformal mapping $\phi_t$. 

\begin{definition}
The path $f_t = \phi_t\circ{f}\circ{\phi^{-1}_t}$ ($t\geq 0$) is called a \textbf{pinching path} starting from $f$ supported on $\mathscr{A}$.
\end{definition}

The following result in \cite{CuiTan} affirms that the pinching path is converging:
\begin{theorem}\label{thm.pinching}
Let $f$ be a geometrically finite rational map. Let $f_t = \phi_t\circ{f}\circ{\phi_t^{-1}}$ be a pinching path starting from $f$ supported on a non-separating multi-annulus. Then the following properties hold:
\begin{enumerate}
    \item $f_t$ converges uniformly to a geometrically finite rational map $g$ as $t\to \infty$.
    \item $\phi_t$ converges uniformly to a continuous surjective map $\varphi$ with $\varphi(\mathcal{J}_f) = \mathcal{J}_g$.
    \item For each fill-in skeleton $\hat{S}$, $\varphi(\hat{S})$ is a parabolic (pre-)periodic point. Moreover $\varphi$ is injective in the complement of the union of all fill-in skeletons.
\end{enumerate}
\end{theorem}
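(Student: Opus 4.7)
The plan is to combine quasiconformal surgery (to construct $\phi_t$ and $f_t$), compactness of normalised quasiconformal maps (to extract a limit), and a modulus-degeneration analysis (to identify parabolicity at the collapsed skeletons).

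First I would verify that $\tilde{\mu}_t$ is $f$-invariant: the projection $\pi_f$ semi-conjugates $f$ to the identity on each grand-orbit class of $\tilde{\mathscr{R}}_f$, so the pulled-back differential satisfies $f^{*}\tilde{\mu}_t = \tilde{\mu}_t$ on $\tilde{\mathscr{R}}_f$ and vanishes elsewhere. Since $\|\tilde{\mu}_t\|_{\infty}<1$, the measurable Riemann mapping theorem with the prescribed three-point normalisation in $\mathrm{Post}_f$ produces a unique quasiconformal solution $\phi_t$, and $f_t := \phi_t\circ f\circ \phi_t^{-1}$ is automatically holomorphic, hence rational of the same degree as $f$.

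Second, I would establish compactness of $\{\phi_t\}$. Off any fixed neighbourhood of the skeletons the dilatation of $\phi_t$ is uniformly bounded, so normal-family arguments applied to the normalised family yield subsequential limits $\phi_{t_n}\to\varphi$ locally uniformly outside the skeletons. Inside each band $B$, the modulus of $\phi_t(B)$ degenerates to zero by construction of the pinching model $w_{t,r}$; the non-separation hypothesis on $\mathscr{A}$ then allows one to conclude, via an extremal-length estimate, that every fill-in skeleton $\hat{S}$ is collapsed by $\varphi$ to a single point. Outside the fill-in skeletons $\varphi$ is injective (even conformal on Fatou components disjoint from $\mathscr{A}$) because the limiting dilatation there is zero. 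The limit $g$ of $f_{t_n}$ is then a locally uniform limit of rational maps of fixed degree, hence itself rational of degree $\deg f$; geometric finiteness follows by tracking $\mathrm{Post}_f$ through $\phi_t$, using that $\mathscr{A}\subset \mathscr{R}_f$ avoids the postcritical orbit.

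The main obstacle will be showing that $\varphi(\hat{S})$ is truly parabolic and that the whole path $\phi_t$ converges rather than merely a subsequence. For parabolicity, each band is eventually periodic; if $f^{k}(\hat{S})=\hat{S}$, then in the pinched complex structure a cylindrical neighbourhood $A$ of $\hat{S}$ satisfies $\mathrm{mod}(A)\to\infty$, while $f_t^{k}$ maps the corresponding cylinder in the target conformally into itself preserving the boundary components (up to exchange). As the modulus diverges, the Schwarz lemma forces $(g^{k})'(\varphi(\hat{S}))=1$, ruling out attracting, repelling, and Siegel/Cremer alternatives and yielding parabolicity. For uniqueness of the limit, one uses rigidity: any two subsequential limits $g, g'$ are conjugated by a homeomorphism of $\hat{\mathbb{C}}$ that is conformal off a set of measure zero (the image of the skeletons) and respects the chosen three-point normalisation, hence is the identity; combined with the fact that $w_{t,r}$ converges as $t\to\infty$ at the level of $\mathscr{R}_f$, this upgrades subsequential convergence to convergence of the entire family.
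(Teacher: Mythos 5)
This theorem is not proved in the paper at all: the appendix is an explicit summary of Cui--Tan's pinching theory and Theorem \ref{thm.pinching} is quoted from \cite{CuiTan}, so the relevant comparison is with Cui--Tan's actual argument, and against that your sketch has a genuine gap at its central point. Since the pinching model $w_{t,r}$ has dilatation tending to infinity as $t\to\infty$, we have $\|\tilde{\mu}_t\|_\infty\to 1$, so the family $\{\phi_t\}$ is \emph{not} uniformly quasiconformal; normal-family arguments off the bands (where $\phi_t$ is conformal) do not by themselves prevent degeneration. A priori the subsequential limits of $\phi_t$ could collapse far more than the fill-in skeletons (whole Julia components, say), could fail to be continuous or surjective on the sphere, and the limits of $f_{t_n}$ could drop degree: your assertion that ``a locally uniform limit of rational maps of fixed degree is rational of the same degree'' is false without uniform convergence on all of $\hat{\mathbb{C}}$, and establishing that uniformity is exactly the non-degeneration problem. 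Ruling all of this out is the actual content of Cui--Tan's theorem; it is where the non-separating hypothesis enters through quantitative length--area and modulus estimates comparing the deformed and original structures, and your single phrase ``via an extremal-length estimate'' is the place where the real proof lives but is not supplied.

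Two further steps are also not justified as stated. For parabolicity, the Schwarz-lemma/modulus heuristic presupposes that the limiting dynamics near $\varphi(\hat S)$ is nondegenerate and that the attracting and repelling ends of the periodic core arcs merge into a single fixed point of multiplier $1$; without the preceding non-degeneration control one cannot even speak of the multiplier, and Cui--Tan obtain parabolicity by tracking the two ends of the arcs and the Fatou coordinates of the limit, not by a soft modulus argument. For uniqueness of the limit, composing two subsequential semiconjugacies does not produce a conjugacy ``conformal off a measure-zero set'': $\varphi$ is not injective on the skeletons, so $h=\varphi'\circ\varphi^{-1}$ is not defined there, and the rigidity you invoke is not literally available; full convergence of $\phi_t$ and $f_t$ is instead derived from convergence of the deformed complex structures on the quotient space together with the same hard estimates. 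In short, your outline has the right shape, but the heart of the theorem --- non-degeneration under pinching --- is assumed rather than proved.
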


\subsection{Admissible petals}
Let $R:\mathbb{C}\longrightarrow\mathbb{C}$ be a rational map with $R_a(0) = 0,R'_a(0) = e^{2\pi i\frac{p}{q}}$. Write the Taylor expansion near 0:
$R^q(z) = z + \omega(a)z^{vq+1} + o(z^{vq+1})$.
There exactly $v$ cycles of immediate basins with rotation number $p/q$ around 0. Let $B^*_0(0),...,B^*_{q-1}(0)$ be one of such cycle, written in cycle order. On every $B^*_m(0)$ there is a unique Fatou coordinate up to translation $\phi_k:B^*_m(0)\longrightarrow\mathbb{C}$ semi-conjugating $R$ to $z\mapsto z+1$. 

\begin{definition}
An \textbf{admissible petal} $P^{\gamma}\subset B^*_m(0)$ is a petal such that the boundary of $\phi_k(P)$ is a smooth curve $\gamma$ (well-defined up to translation) intersecting every horizontal line $y=b$ at exactly one point. We say that $P^{\gamma}$ is \textbf{standard} if $\gamma$ is a vertical line. $P^\gamma$ is called \textbf{maximal} if $\partial P^{\gamma}$ contains at least one critical point of $R^q|_{B^*_m(0)}$. Clearly once $\gamma$ is chosen, the corresponding maximal petal $P^\gamma$ is unique.
\end{definition}

\begin{lemma}\label{lem.admissible.petal}
If $P^{\gamma}\subset B^*_m(0)$ is an admissible petal, then so is $R^n(P^{\gamma})$ for any $n\geq 0$. Moreover $R^n(P^{\gamma})$ are also associated to $\gamma$.
\end{lemma}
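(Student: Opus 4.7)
The plan is to exploit the fact that $R$ acts as a pure translation when read in the Fatou coordinates of the cyclically-permuted basins. First I would let $k$ denote the index of the basin $R(B^*_m(0))$ (so $k \equiv m+p \pmod q$) and establish the identity
\[\phi_k \circ R \;=\; \phi_m + c_m \quad\text{on } B^*_m(0)\]
for some constant $c_m \in \mathbb{C}$. The reason is that $\phi_k\circ R$ also semi-conjugates $R^q|_{B^*_m(0)}$ to $z \mapsto z+1$: since $R$ commutes with $R^q$,
\[(\phi_k \circ R) \circ R^q \;=\; \phi_k \circ R^q \circ R \;=\; (\phi_k + 1)\circ R \;=\; (\phi_k \circ R) + 1.\]
By uniqueness of the Fatou coordinate on a parabolic basin up to an additive constant, the two coordinates differ by a translation.

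Granted this, the one-step case is essentially a formal computation. If $P^\gamma \subset B^*_m(0)$ is admissible, then by definition $\phi_m(P^\gamma)$ is a region bounded by $\gamma$, and the relation above gives
\[\phi_k(R(P^\gamma)) \;=\; \phi_m(P^\gamma) + c_m,\]
which is a translate of that region, with boundary $\gamma + c_m$. Translation in $\mathbb{C}$ preserves both smoothness of a curve and the property of meeting every horizontal line in exactly one point, so $\gamma + c_m$ is again an admissible boundary. Since in the definition of admissible petal the curve $\gamma$ is already declared well-defined only up to translation, $R(P^\gamma)$ is admissible and associated to the same $\gamma$. The general statement follows by induction: $R^n(P^\gamma) = R(R^{n-1}(P^\gamma))$, and the inductive hypothesis applied to $R^{n-1}(P^\gamma) \subset B^*_{(m+(n-1)p)\bmod q}(0)$ reduces the step to the one just handled.

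I do not foresee any substantive obstacle. The only technical detail worth flagging is that $R$ must be injective on $P^\gamma$ so that $R(P^\gamma)$ is a genuine simply-connected petal rather than a branched image; this is immediate from the injectivity of $\phi_m$ on $P^\gamma$ (implicit in $P^\gamma$ being a topological disk with boundary prescribed by $\gamma$) together with the identity $\phi_k \circ R = \phi_m + c_m$, which realizes $R|_{P^\gamma}$ as a conjugate of the translation $z \mapsto z+c_m$.
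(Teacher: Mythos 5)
The paper states this lemma without proof, and your argument --- reducing everything to the identity $\phi_k\circ R=\phi_m+c_m$ (where $k\equiv m+p \pmod q$) together with the observation that admissibility is a condition on $\phi$-images that is invariant under translation --- is correct and is essentially the intended justification. The only step worth making explicit is that "uniqueness of the Fatou coordinate up to an additive constant" applies to $\phi_k\circ R$ only once one knows it is univalent on some attracting petal of $B^*_m(0)$: this holds because $R'(0)=\lambda\neq 0$, so $R$ maps a sufficiently small attracting petal of $B^*_m(0)$ injectively into the petal of $B^*_k(0)$ on which $\phi_k$ is injective, and the resulting identity $\phi_k\circ R=\phi_m+c_m$ then propagates from that small petal to all of $B^*_m(0)$ by the Abel relation $\phi\circ R^q=\phi+1$.
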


Let $P^0_m:=P\subset B^*_m(0)$ be a maximal admissible petal (we omit the index of associated curve $\gamma$). For any $0\leq k\leq q-1$, $n\in \mathbb{Z}$ such that $np+k=m \pmod q$, $P^n_{k}$ is defined to be the (unique) connected component of $R^{-n}(P^0_m)$ satisfying $P^n_{k}\subset B^*_k(0)$ and $0\in\partial P^n_{k}$. See the sequence of mappings below, where the bar means $\pmod q$:
\begin{equation}\label{eq.sequence}
    P^n_{\overline{m-np}}\xrightarrow[]{R}...\xrightarrow[]{R}P^1_{\overline{m-p}}\xrightarrow[]{R} P^0_m\xrightarrow[]{R} P^{-1}_{\overline{m+p}}\xrightarrow[]{R}...\xrightarrow[]{R}P^{-n}_{\overline{m+np}}
\end{equation}

\subsection{Holomorphic dependence of Fatou coordinate}

Let $R_{a}:\mathbb{C}\longrightarrow\mathbb{C}$ be an analytic family (parametrized by $a\in \Lambda\subset \mathbb{C}$, $\Lambda$ open) with $R_a(0) = 0,R'_a(0) = e^{2\pi i\frac{p}{q}}$. Write the Taylor expansion near 0:
$R^q_{a}(z) = z + \omega(a)z^{vq+1} + o(z^{vq+1})$. In this section we always suppose that at some $a_0$, $\omega(a_0)\not = 0$. Then for every attracting (resp. repelling) axis of $R_{a_0}$, there exists
\begin{itemize}
    \item a small neighborhood $D_{a_0}$ of $a_0$; a topological disk $V_{att} = \{x+iy;\,x\textgreater c-b|y|\}$ (resp. $V_{rep} = \{x+iy;\,x\textless c-b|y|\}$), where the constants $b,c$ are positive and do not depend on $a$; a family of attracting (resp. repelling) petals $(P_a)_{a\in B(a_0)}$
    \item a family of Fatou coordinates $(\phi_a)_{a\in D_{a_0}}$ of $R_a$ such that $\phi_a: P_a\longrightarrow V_{att}\,\, (\text{resp. }V_{rep})$ is conformal and $\phi^{-1}_a$ is analytic in $a$.
\end{itemize}

An immediate consequence from this is that $h_a = \phi^{-1}_a\circ{\phi_{a_0}}$ (parametrized by $a\in U_{a_0}$) defines a dynamical holomorphic motion of ${P_{a_0}}$ such that $h_a({P_{a_0}})={P_{a}}$.

Next we prove that $h_a$ can be extended to "sub-petal" arbitrarily close to the maximal petal $\Omega_{a_0}$ of $R_{a_0}$. Suppose that $\phi_{a_0}:\Omega_{a_0}\longrightarrow \mathbb{H}_\rho$ is conformal, where $\mathbb{H}_\rho = \{z;\,\mathfrak{Re}(z)\textgreater\rho\}$. For any $\delta\textgreater 0$, define
$\Omega^{\delta}_{a_0} = \phi^{-1}_{a_0}(\mathbb{H}_{\rho+\delta})$. Notice that there exists $k\geq 1$ such that $h_a$ is well-defined on $\Tilde{P}_{a_0} := R^{kq}_{a_0}(\Omega^{\delta}_{a_0})$. Set $\Tilde{P}_{a} := h_a(\Tilde{P}_{a_0})$.

\begin{lemma}
Let $W_a$ be the connected component of $R_a^{-kq}(\Tilde{P}_{a})$ containing $\Tilde{P}_a$. Then for $a$ close enough to $a_0$, $R^{kq}_a:W_a\longrightarrow\Tilde{P}_{a}$ is conformal (injective, surjective).
\end{lemma}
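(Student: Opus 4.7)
The plan is to construct, for $a$ sufficiently close to $a_0$, a biholomorphism $H_a:\Omega^\delta_{a_0}\to W_a$ that lifts the holomorphic motion $h_a$ along $R_a^{kq}$, thereby transferring the conformality of $R^{kq}_{a_0}|_{\Omega^\delta_{a_0}}$ to $R^{kq}_a|_{W_a}$.

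First I would isolate a simply connected open neighborhood $\mathcal{V}$ of $\overline{\Omega^\delta_{a_0}}$ on which all iterates behave conformally. Note that $\overline{\Omega^\delta_{a_0}}$ is compact (inside the bounded immediate basin $B^*_m(0)$) and contains no critical point of $R^{kq}_{a_0}$: the Fatou coordinate $\phi_{a_0}$ conjugates $R^{q}_{a_0}|_{\Omega_{a_0}}$ to the unit translation on $\mathbb{H}_\rho$, so $R^{kq}_{a_0}|_{\Omega_{a_0}}$ is conformal, while at the parabolic point $0$ one has $(R^{kq}_{a_0})'(0)=1\ne 0$. Consequently $R^{kq}_{a_0}$ extends to a homeomorphism of $\overline{\Omega^\delta_{a_0}}$ onto $\overline{\tilde P_{a_0}}$, and I can choose a bounded simply connected neighborhood $\mathcal{V}\supset\overline{\Omega^\delta_{a_0}}$ on which $R^{kq}_{a_0}$ remains injective; then $R^{kq}_{a_0}:\mathcal{V}\to R^{kq}_{a_0}(\mathcal{V})$ is a biholomorphism, with $R^{kq}_{a_0}(\partial\mathcal{V})=\partial R^{kq}_{a_0}(\mathcal{V})$ disjoint from $\overline{\tilde P_{a_0}}$. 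By continuous dependence of $R_a$ on $a$, these properties persist on a neighborhood $D$ of $a_0$: $R^{kq}_a|_{\mathcal{V}}$ remains a conformal isomorphism onto its image, and $R^{kq}_a(\partial\mathcal{V})\cap\overline{\tilde P_a}=\emptyset$ for all $a\in D$. I would then solve the functional equation $R^{kq}_a(H_a(z))=h_a(R^{kq}_{a_0}(z))$ on $\Omega^\delta_{a_0}$ with $H_{a_0}=\mathrm{id}$: the implicit function theorem produces local solutions near each $z_0\in\Omega^\delta_{a_0}$, which glue into a single holomorphic map $H_a:\Omega^\delta_{a_0}\to\mathcal{V}$ by simple connectedness of $\Omega^\delta_{a_0}$ and injectivity of $R^{kq}_a|_{\mathcal{V}}$. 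Injectivity of $H_a$ follows from that of $h_a$ and of $R^{kq}_{a_0}|_{\Omega^\delta_{a_0}}$, and uniqueness forces $H_a|_{\tilde P_{a_0}}=h_a|_{\tilde P_{a_0}}$; so setting $W'_a:=H_a(\Omega^\delta_{a_0})$, the map $R^{kq}_a:W'_a\to\tilde P_a$ is conformal and $\tilde P_a\subset W'_a\subset W_a$.

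The hard part will be showing $W'_a=W_a$. The boundary disjointness $R^{kq}_a(\partial\mathcal{V})\cap\overline{\tilde P_a}=\emptyset$ implies that no connected component of $R_a^{-kq}(\tilde P_a)$ can straddle $\partial\mathcal{V}$. Combined with $\tilde P_a\subset\mathcal{V}$, which holds because $h_a$ converges to $\mathrm{id}$ on a neighborhood of $\overline{\tilde P_{a_0}}$ including the parabolic point $0$ (via the asymptotics $h_a(z)\sim(\omega(a_0)/\omega(a))^{1/q}z$ as $z\to 0$), this forces $W_a\subset\mathcal{V}$. But $R^{kq}_a|_{\mathcal{V}}$ is injective, so $R_a^{-kq}(\tilde P_a)\cap\mathcal{V}$ has a single connected component, equal to $W'_a$; hence $W_a=W'_a$ and the lemma follows.
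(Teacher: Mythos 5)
Your proposal is correct, but it takes a genuinely different route from the paper's. The paper argues by contradiction: for $a_n\to a_0$ it shows, by extracting Hausdorff limits of paths inside $\overline{W_{a_n}}$ joining $0$ to a putative far-away point, that $\overline{W_{a_n}}$ is eventually contained in any $\epsilon$-neighborhood of $\overline{\Omega^{\delta}_{a_0}}$; since this compact set is at positive distance from the finitely many critical points of $R^{kq}_{a_0}$, the component $W_a$ contains no critical point of $R^{kq}_a$ for $a$ close to $a_0$, and the conclusion follows because the proper map $R^{kq}_a:W_a\longrightarrow\tilde{P}_a$ is then an unbranched covering of the simply connected domain $\tilde{P}_a$, hence has degree one. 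You instead thicken $\overline{\Omega^{\delta}_{a_0}}$ to a domain of univalence $\mathcal{V}$, check that univalence of $R^{kq}_a$ on $\mathcal{V}$ and the separation $R^{kq}_a(\partial\mathcal{V})\cap\overline{\tilde{P}_a}=\emptyset$ persist under perturbation, and use the separation to trap $W_a$ inside $\mathcal{V}$, where the preimage of $\tilde{P}_a$ is exactly the univalent piece $W'_a=H_a(\Omega^{\delta}_{a_0})$. This is more constructive: it avoids both the rather terse path/Hausdorff-limit step of the paper and any covering-space argument, at the cost of the preliminary thickening (standard, since $\overline{\Omega^{\delta}_{a_0}}$ is a full compact set on which $R^{kq}_{a_0}$ is injective with nonvanishing derivative) and a Hurwitz/Rouch\'e-type persistence argument.

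Two points you gloss over can be supplied easily but deserve a sentence. First, you need $h_a\to\mathrm{id}$ \emph{uniformly} on $\overline{\tilde{P}_{a_0}}$, including at the parabolic point $0$; your asymptotics $h_a(z)\sim(\omega(a_0)/\omega(a))^{1/q}z$ is stated for fixed $a$ as $z\to 0$, whereas uniformity in $a$ is what is used — the cleanest justification is the $\lambda$-lemma applied to the holomorphic motion $h_a$ of the petal (joint continuity on the closure), exactly as the paper does elsewhere. Second, the final step ``$R_a^{-kq}(\tilde{P}_a)\cap\mathcal{V}$ has a single connected component'' uses that $\tilde{P}_a\subset R^{kq}_a(\mathcal{V})$ (otherwise $\tilde{P}_a\cap R^{kq}_a(\mathcal{V})$ could be disconnected); this containment is indeed guaranteed by the existence of your lift $H_a$ (or directly by Rouch\'e plus the Hausdorff closeness of $\overline{\tilde{P}_a}$ to $\overline{\tilde{P}_{a_0}}$), but it should be stated, since the connectedness claim rests on it. Also note that simple connectedness of $\mathcal{V}$ is never really needed: uniqueness of values of the local solutions inside $\mathcal{V}$, coming from injectivity of $R^{kq}_a|_{\mathcal{V}}$, already makes them glue.
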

\begin{proof}
Let $a_n\to a_0$. First we prove that 
\[\forall\epsilon\textgreater 0, \exists N \text{ such that }\forall n\geq N, \overline{W_{a_n}}\subset_\epsilon \overline{\Omega^{\delta}_{a_0}}\]
where for two sets $A,B\subset\mathbb{C}$, $A\subset_{\epsilon}B$ means that $A$ is contained in the $\epsilon$-neighborhood of $B$.
Suppose the contrary, then up to taking a subsequence we may suppose that 
\begin{equation}\label{eq.pfdetail}
    \exists\epsilon_0\textgreater 0, \text{ a sequence } x_n\in \overline{W_{a_n}} \text{ with } x_n\to x_0,\text{ such that } dist(x_n,\overline{\Omega^{\delta}_{a_0}})\textgreater \epsilon_0.
\end{equation}
Clearly $R^{kq}_{a_0}(x_0)\in \overline{\Tilde{P}_{a_0}}$ since $R^{kq}_{a_n}(x_n)\in \overline{\Tilde{P}_{a_n}}$. For every $n$ take a path $\gamma_n\subset \overline{W_{a_n}}$ connecting $0,x_n$ such that $\gamma\cap \overline{\Tilde{P}_{a_0}}$ does not depend on $n$. This is possible since $\overline{\Tilde{P}_{a_0}}$ moves holomorphically. Moreover since the Hausdorff topology on the space of compact sets is sequentially compact, we may suppose that $\gamma_n$ converges to a connected compact set $\gamma_0$ containing 0 and $x_0$. This implies that $dist(x_0,\overline{\Tilde{\Omega}_{a_0}}) = 0$, contradicting (\ref{eq.pfdetail}). Hence for $a$ close enough to $a_0$, $W_{a}$ contains no critical points of $R^{kq}_a$ and the lemma is proven.
\end{proof}

Pull back $h_a$ by $R^{kq}_{a_0},R^{kq}_{a}$ and apply $\lambda$-Lemma we get immediately:
\begin{corollary}\label{cor.fatou.extend}
For any $\delta\textgreater 0$, the holomorphic motion $h_a:\Tilde{P}_{a_0}\longrightarrow\Tilde{P}_{a}$ can be dynamically extended to $\overline{\Omega^{\delta}_{a_0}}$ when $a$ is close enough to $a_0$. 
\end{corollary}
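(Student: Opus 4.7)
The plan is to use the preceding lemma (which ensures $R^{kq}_a: W_a \to \tilde{P}_a$ is a conformal isomorphism for $a$ close enough to $a_0$) to define the extension by dynamical pullback. Concretely, for $a$ in a small neighborhood of $a_0$ I would set
\[
\tilde{h}_a(z) \;:=\; (R^{kq}_a|_{W_a})^{-1} \circ h_a \circ R^{kq}_{a_0}(z), \qquad z \in \Omega^\delta_{a_0}.
\]
This is well defined because $R^{kq}_{a_0}$ sends $\Omega^\delta_{a_0}$ conformally into $\tilde{P}_{a_0}$ (in Fatou coordinates this is just the translation by $k$ from $\mathbb{H}_{\rho+\delta}$ into $\mathbb{H}_{\rho+\delta+k}$), $h_a$ is defined and holomorphic on $\tilde{P}_{a_0}$, and by the preceding lemma $(R^{kq}_a|_{W_a})^{-1}$ makes sense on $\tilde{P}_a = h_a(\tilde{P}_{a_0})$.

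Next I would check that $\tilde{h}_a$ really extends the original motion. For $z \in \tilde{P}_{a_0}$ itself, the semiconjugacy property of Fatou coordinates (namely $h_a \circ R^{kq}_{a_0} = R^{kq}_a \circ h_a$, which is immediate from $h_a = \phi_a^{-1} \circ \phi_{a_0}$ and the fact that both Fatou coordinates conjugate $R^{kq}$ to translation by $k$) gives $\tilde{h}_a(z) = h_a(z)$, provided $h_a(z)\in W_a$ — and this inclusion holds for the same reason, tautologically, since $\tilde{P}_a \subset W_a$. Holomorphic dependence of $\tilde{h}_a$ on $a$ is inherited from $h_a$ and from the analytic dependence of $R_a$ and of its inverse branch on $W_a$ (the latter is uniform, again by the preceding lemma).

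The final step is to promote $\tilde{h}_a$ to a holomorphic motion defined on the closure $\overline{\Omega^\delta_{a_0}}$: this is a direct application of the $\lambda$-lemma to the holomorphic motion $a \mapsto \tilde{h}_a$ with base point $a_0$ and initial set $\Omega^\delta_{a_0}$. The dynamical character (i.e.\ that the extended motion conjugates $R_{a_0}$ to $R_a$ where both sides make sense) follows from the construction, since the pullback formula is dynamically defined. I do not anticipate a serious obstacle here; the content of the corollary is essentially packaged into the preceding lemma and the $\lambda$-lemma, with the only delicate point being that $W_a$ stays close to $\overline{\Omega^\delta_{a_0}}$ for $a$ near $a_0$ — but this has already been established in the proof of that lemma.
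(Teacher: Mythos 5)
Your proposal is correct and takes essentially the same route as the paper: the paper's proof is exactly to pull back $h_a$ by $R^{kq}_{a_0}$ and $R^{kq}_{a}$ (legitimized by the preceding lemma, which guarantees the inverse branch on $W_a$ is conformal for $a$ near $a_0$) and then apply the $\lambda$-lemma to pass to $\overline{\Omega^{\delta}_{a_0}}$. Your write-up simply makes the pullback formula, the compatibility with $h_a$ on $\tilde{P}_{a_0}$, and the holomorphic dependence explicit.
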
 
 
\begin{proposition}\label{prop.first_critical_point}
Suppose for $a$ in a neighborhood of $a_0$, $R^q_{a}$ has $l$ (does not depend on $a$) distinct critical points $c_1(a),...,c_l(a)$ which vary analytically on $a$. Suppose moreover that the immediate basin of $R_a$ is always simply connected. If $c_1(a_0)$ is the unique critical point on $\partial\Omega_{a_0}$, then for $a$ close enough to $a_0$, $c_1(a)$ is also the unique critical point on $\partial\Omega_{a_0}$.
\end{proposition}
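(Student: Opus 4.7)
The strategy is to rephrase the proposition in terms of Fatou coordinates and to combine Corollary~\ref{cor.fatou.extend} with a Riemann--Hurwitz count.

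First, I would normalize the Fatou coordinate $\phi_a$ on $B^*_0(a)$ so that $\phi_a(c_1(a)) = 0$ and so that $\phi_a^{-1}$ depends analytically on $a$ on every fixed right half-plane $\{\mathfrak{Re}\, z > M\}$. In this normalization,
\[
\Omega_a = \phi_a^{-1}\bigl(\{\mathfrak{Re}\, z > \rho(a)\}\bigr), \qquad \rho(a) = \min\bigl\{\mathfrak{Re}\, \phi_a(c_i(a)) : c_i(a)\in B^*_0(a)\bigr\},
\]
and the critical points of $R^q_a$ lying on $\partial\Omega_a$ are exactly those $c_i(a)\in B^*_0(a)$ realizing this minimum. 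The hypothesis thus becomes: $\rho(a_0)=0$ is attained only by $c_1$, which means that for every $i\geq 2$, either $c_i(a_0)\notin \overline{B^*_0(a_0)}$, or $c_i(a_0)\in B^*_0(a_0)$ with $\mathfrak{Re}\, \phi_{a_0}(c_i(a_0))>0$.

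Next, for $i\geq 2$ with $c_i(a_0)\in B^*_0(a_0)$, I would pick $N$ large enough that $R^{qN}_{a_0}(c_i(a_0))$ lies deep inside a small standard petal $P$ where $\phi_a$ is jointly holomorphic in $(a,z)$. By continuity of iterates and Corollary~\ref{cor.fatou.extend}, one has $R^{qN}_a(c_i(a))\in P$ for $a$ close to $a_0$, so the formula $\phi_a(c_i(a)) := \phi_a(R^{qN}_a(c_i(a)))-N$ provides an analytic extension of the Fatou-coordinate value; in particular the strict inequality $\mathfrak{Re}\, \phi_a(c_i(a))>0$ persists in a neighborhood of $a_0$, so such $c_i(a)$ stays strictly inside $\Omega_a$. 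For $i\geq 2$ with $c_i(a_0)\notin \overline{B^*_0(a_0)}$, I would invoke the standing assumption that the immediate basin is always simply connected: Riemann--Hurwitz applied to the proper self-map $R^q_a : B^*_0(a)\to B^*_0(a)$ forces the number of critical points of $R^q_a$ inside $B^*_0(a)$ (counted with multiplicity) to equal $\deg(R^q_a|_{B^*_0(a)})-1$, a locally constant integer; since the $c_i(a)$ vary analytically, the set of indices $i$ with $c_i(a)\in B^*_0(a)$ is locally constant at $a_0$, so no new critical point can enter the basin and land on $\partial\Omega_a$ under a small perturbation.

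Combining the two cases shows that $c_1(a)$ remains the unique critical point of $R^q_a$ on $\partial\Omega_a$ for $a$ in a small neighborhood of $a_0$. The main obstacle I foresee is the borderline situation where a critical point of $R^q_{a_0}$ could sit on $\partial B^*_0(a_0)\setminus\{0\}$: a priori such a point could migrate across the Julia set into the basin as $a$ varies, upsetting the index count; it is precisely the Riemann--Hurwitz count enforced by simple-connectivity that is needed to rule out this pathology and close the argument.
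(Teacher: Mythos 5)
Your reduction to Fatou coordinates rests on two claims that are both false: the identity $\Omega_a=\phi_a^{-1}(\{\mathfrak{Re}\,z>\rho(a)\})$ with $\rho(a)$ the minimum of $\mathfrak{Re}\,\phi_a$ over the basin critical points, and the dichotomy that every $c_i(a_0)$, $i\geq2$, is either outside $\overline{B^*_0(a_0)}$ or has $\mathfrak{Re}\,\phi_{a_0}(c_i(a_0))>0$ and hence sits strictly inside $\Omega_{a_0}$. The extended Fatou coordinate is not injective on the basin, so a super-level set $\{\mathfrak{Re}\,\phi_a>t\}$ has many connected components, and the maximal petal is only the principal one. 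In fact no critical point of $R^q_{a_0}$ can lie strictly inside the maximal petal at all ($\phi_{a_0}$ is univalent there, so $\phi_{a_0}'\neq0$), so the inference from $\mathfrak{Re}\,\phi_a(c_i(a))>0$ to ``$c_i(a)$ stays strictly inside $\Omega_a$'' is wrong: such points live in other components (the lobes attached at $c_1$). More importantly, the hypothesis does not force the remaining basin critical points to have $\mathfrak{Re}\,\phi_{a_0}>0$: a second critical point typically lies at arbitrary depth in the basin, with $\mathfrak{Re}\,\phi_{a_0}$ as negative as one likes, while still being off $\partial\Omega_{a_0}$ — this configuration is ubiquitous in the paper (it is exactly what the surjectivity of $\Phi^{adj}_0$, $\Phi^{bit}_m$ and the equipotentials $\mathcal{E}^m_n$ of every depth encode). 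For those critical points the danger is precisely that, under perturbation, they become the obstruction to growing the petal, i.e.\ land on $\partial\Omega_a$ (this is what happens when the parameter crosses the curves $\check{\mathcal{I}}_m$), and nothing in your argument excludes it. The paper rules it out by a compactness argument you do not have: assuming $c_2(a_n)$ enters the petal component for some $a_n\to a_0$, it takes Hausdorff limits of $\overline{\Omega_{a_n}}$, uses simple connectivity of the basin to know that exactly $m$ other components of $(R^q_a)^{-1}(\tilde{\Omega}_a)$ are attached at $c_1(a)$ ($m$ the multiplicity), concludes that the limit is $\overline{\Omega_{a_0}}$, and hence that $c_2(a_0)\in\partial\Omega_{a_0}$, a contradiction.

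Your second case has a further gap: you assert that $\deg\bigl(R^q_a|_{B^*_0(a)}\bigr)-1$ is a locally constant integer, but local constancy of this degree is essentially equivalent to what must be proved there (that no critical point crosses $\partial B^*_0(a)$ as $a$ varies), since the basin itself moves with $a$; Riemann--Hurwitz only converts one statement into the other. This part is comparatively easy to repair (the paper handles the analogous point for $c_1$ by a short path argument), but the main missing idea is the one above: controlling, uniformly in $a$, in which component of the preimage of the petal each critical point lies.
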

\begin{proof}
Clearly for $a$ close enough to $a_0$, $c_1(a)$ is in the immediate basin. Indeed, first take a path $\gamma_{a_0}$ in the immediate basin, linking $c_1(a_0)$ and some $z_{a_0}$ in a small petal $P_{a_0}$. One can connected it with $h_a(z_{a_0})\in P_a = h_a(P_{a_0})$ by a compact path $\gamma_a$ which is very close to $\gamma_{a_0}$. Hence for $a$ very close to $a_0$, $\gamma_{a}$ is attracted by $z=0$.

Hence we can normalise $\phi_a$ by taking $\phi_a(c_1(a)) = 0$.  By Corollary \ref{cor.fatou.extend}, $\phi^{-1}_a:\mathbb{H}_{1}\longrightarrow \tilde{\Omega}_a$ is conformal such that $R^q_{a}(c_1(a))\in\partial\tilde{{\Omega}}_a$. Now we prove that for $a$ close enough to $a_0$, the component of $(R^q)^{-1}_a(\tilde{\Omega}_a)$ containing $\tilde{\Omega}_a$, denoted by ${\Omega}_a$, contains no critical point of $R^q_a(\tilde{\Omega}_a)$. Suppose the contrary, then there is a sequence $a_n\to a_0$ such that, without loss of generality, $c_2(a_n)\in{\Omega}_{a_n}$. Up to taking a subsequence, $\overline{{\Omega}_{a_n}}$ has a limit ${\Omega}^*$ in the Hausdorff topology. Moreover ${\Omega}^*$ is compact, connected and ${\Omega}^*$ is the closure of the union of several components of $(R^q)^{-1}_{a_0}(\tilde{\Omega}_{a_0})$. Clearly $\overline{{\Omega}_{a_0}}\subset {\Omega}^*$. Suppose $c_1(a)$ has multiplicity $m$, then besides ${\Omega}_{a}$, there are $m$ distinct components of $(R^q_{a})^{-1}(\tilde{\Omega}_{a})$ attached at $c_1(a)$ since the basin is simply connected. Denote by $U_{a,1},...,U_{a,m}$ these components. Then up to taking subsequences, $\overline{U_{a_n,i}}$ admits a limit $U^*_{i}$ containing $\overline{U_{a_0,i}}$. Since ${\Omega}_{a_n}\cap U_{a_n,i} = \emptyset$, so ${\Omega}^*\cap U_i^*\subset \partial\Omega^*$, ${\Omega}^* = \Omega_{a_0}$. Hence $c_2(a_0)\in\partial {\Omega}_{a_0}$, a contradiction.
\end{proof}

\bibliographystyle{plain}
\bibliography{references}

\href{mailto:runze.zhang@math.univ-toulouse.fr}{\nolinkurl{runze.zhang@math.univ-toulouse.fr}}
\end{document}